\newcommand{\new}[1]{\textcolor{black}{#1}}
\newtheorem{Theorem}{Theorem}
\newtheorem{Definition}[Theorem]{Definition}
\newtheorem{Proposition}[Theorem]{Proposition}
\newtheorem{Assumption}{Assumption}
\newtheorem{Lemma}[Theorem]{Lemma}
\newtheorem{Corollary}[Theorem]{Corollary}
\newtheorem{Remark}[Theorem]{Remark}
\newtheorem{Example}[Theorem]{Example}
\makeatletter \@addtoreset{equation}{section}
\def \Sum{\displaystyle\sum}
\def\esssup{{\rm ess}\!\sup\limits}
\newcommand{\cA}{\mathcal{A}}
\newcommand{\cC}{\mathcal{C}}
\newcommand{\cD}{\mathcal{D}}
\newcommand{\cF}{\mathcal{F}}
\newcommand{\cH}{\mathcal{H}}
\newcommand{\cI}{\mathcal{I}}
\newcommand{\cJ}{\mathcal{J}}
\newcommand{\cK}{\mathcal{K}}
\newcommand{\cM}{\mathcal{M}}
\newcommand{\cO}{\mathcal{O}}
\newcommand{\cP}{\mathcal{P}}
\newcommand{\cR}{\mathcal{R}}
\newcommand{\cS}{\mathcal{S}}
\newcommand{\cU}{\mathcal{U}}
\newcommand{\cV}{\mathcal{V}}
\newcommand{\cW}{\mathcal{W}}
\newcommand{\cX}{\mathcal{X}}
\newcommand{\cY}{\mathcal{Y}}
\newcommand{\cZ}{\mathcal{Z}}
\newcommand{\E}{\mathbb{E}}
\renewcommand{\P}{\mathbb{P}}
\newcommand{\R}{\mathbb{R}}
\def \proof{{\noindent \bf Proof. }}
\def \eproof{\hbox{ }\hfill$\Box$}
\newcommand{\ud}{\mathrm{d}}
\newcommand{\HYP}[1]
    {\ensuremath{(\bm{H#1})}}
\newcommand{\1}{{\bf 1}}
\newcommand{\HP}[1] 
    {\ensuremath{\mathscr{H}^{#1}}}    
\newcommand{\NH}[2] 
    { \ensuremath{ \|#2\|_{\mathcal{H}^{#1}} } }
\newcommand{\NS}[2]
    {\ensuremath{\|#2\|_{ \mathcal{S}^{#1} }}}
\newcommand{\esp}[1]{\ensuremath{\mathbb{E}\!\left[#1\right] }}
\newcommand{\EFp}[2]
    {\ensuremath{
     \mathbb{E}_{#1}\!\left[#2\right] }}
\newcommand{\cc}[1]{\ensuremath{ \langle #1 \rangle} } 
\newcommand{\law}[1]{\ensuremath{ [#1]} } 
\newcommand{\hesp}[1]{\ensuremath{ 
\widehat{\mathbb{E}}\left[#1\right]} 
} 
\newcommand{\dzeki}{\ensuremath{ \partial^2_{\zeta,\chi} } }
\newcommand{\NHt}[2]{\ensuremath{ \|#2\|_{\mathcal{H}^{#1},t} }}
\newcommand{\NSt}[2]{\ensuremath{ \|#2\|_{\mathcal{S}^{#1},t} }}
\newcommand{\NL}[2]{\ensuremath{ \|#2\|_{#1} }}
\newcommand{\Nt}[1]{\ensuremath{ \vvvert#1\vvvert }}
\title
[Classical solutions to the master equation]
{A Probabilistic approach to classical solutions of the master equation for large population 
equilibria}
\author[Jean-Fran\c{c}ois Chassagneux, Dan Crisan and Fran\c{c}ois Delarue]{Jean-Fran\c{c}ois Chassagneux$^*$, Dan Crisan$^{\dagger}$ and Fran\c{c}ois Delarue$^{\ddagger}$ }
\begin{document}

\maketitle

\renewcommand{\thefootnote}{\fnsymbol{footnote}}
 \footnotetext[1]{Department of Mathematics, Imperial College London. {\sf j.chassagneux@imperial.ac.uk}}
 \footnotetext[2]{Department of Mathematics, Imperial College London. {\sf d.crisan@imperial.ac.uk}}
 \footnotetext[3]{Laboratoire Jean-Alexandre Dieudonn\'e, 
 Universit\'e de Nice Sophia-Antipolis. {\sf delarue@unice.fr}}

 \renewcommand{\thefootnote}{\arabic{footnote}}

\begin{abstract}
\new{We analyze a class of nonlinear partial differential equations (PDEs) defined on $\R^d \times \cP_2(\R^d),$ where $\cP_2(\R^d)$ is the Wasserstein space of probability measures on 
$\R^d$ with a finite second-order moment. We show that such equations  admit a  classical solutions for sufficiently small time intervals. Under additional constraints, we prove that their solution can be extended to arbitrary large intervals. These nonlinear PDEs arise in the recent developments         in the theory of large population stochastic control. More
precisely they are the so-called  \textit{master equations}    
corresponding to asymptotic equilibria for a large population 
of controlled players with mean-field interaction and subject
to minimization constraints. The results in the paper are deduced by exploiting  this connection. In particular,  we study the differentiability with respect to  the  initial condition of the flow generated 
by a forward-backward stochastic system of McKean-Vlasov type.    
As a byproduct, we prove that the decoupling field 
generated by the forward-backward system
is a classical solution of the corresponding master equation.  Finally, we give several applications to mean-field games and to the control of McKean-Vlasov diffusion processes. }
\end{abstract}

\vspace{2mm}

\noindent{\bf Keywords:} Master equation;  McKean-Vlasov SDEs; 
forward-backward systems; 
\noindent decoupling field; Wasserstein space; master equation.
\vspace{2mm}

\noindent {\bf MSC Classification (2000):}  
Primary 93E20; secondary 60H30, 60K35.

 \section{Introduction}

{The theory of} large population stochastic control 
describes asymptotic equilibria among a large population 
of controlled players  
{with mean field interaction} 
and subject to minimization constraints. 
It has received a lot of interest since the earlier works  on mean-field games  
of Lasry and Lions \cite{MFG1,MFG2,MFG3} and of
Huang, Caines and Malham\'e \cite{HuangCainesMalhame2}. Mean-field game theory is 
{the} branch of large population stochastic control theory that corresponds to the case when 
equilibria inside the population are understood in the sense of Nash and thus describe consensus between the players  
{that make the best decision they can, taking into account the 
current states
of the others in the game.
We cover this class of control problems  in Section \ref{subse mfg}.} There are other 
types of large population equilibria in the literature yielding different types of asymptotic control problems. 
As an example, the case when players 
obey a common 
 policy controlled by a single center of decision is investigated in 
 \cite{carmona:delarue:aop,carmona:delarue:lachapelle}. We cover this distinct control problem  
 in Section \ref{subse ussr}. 
 
\new{Lasry and Lions described equilibria by means of a fully-coupled forward-backward system consisting of two partial differential equations: a (forward) 
Fokker-Planck equation describing the dynamics of the population and a (backward) Hamilton-Jacobi-Bellman 
equation describing the optimization constraints. 
In his seminal lectures at the \textit{Coll\`ege de France}, Lions noticed that 
the flow of measures solving the
Fokker-Planck equation (that is the forward part of the system) can be interpreted as the 
characteristic trajectories of a nonlinear PDE.
 The equilibrium of a large population 
of players with mean field interaction is characterized through a nonlinear partial differential equation set on an enlarged state space that contains both the private position
of a typical player and the distribution of the population. The solution of the  PDE  contains all the necessary information to entirely describe the equilibria of the game and, \textcolor{black}{on the model of the Chapman-Kolmogorov equation for the evolution of a Markov semi-group}, it is called the \textit{master equation} of the game. This equation has the form }\footnote{\new{The master equation is introduced here in its forward form. However in its application to mean field games it is used in its backward form, see equation (\ref{eq:master:PDE}). }} 
\begin{equation}\label{form0}
 \partial_t u(t,x,\mu)=Au(t,x,\mu)+ f\bigl(x,u(t,x,\mu), Bu(t,x,\mu),
 {\nu} 
 \bigr) +\int_{\R^d} \bigl[ Cu(t,x,\mu) \bigr](\cdot) \ud \mu(\cdot), 
\end{equation}
\new{for $t>0$ and $(x,\mu)\in \R^d \times \cP_{2}(\R^d),$ where $\cP_2(\R^d)$ is the Wasserstein space of probability measures on 
$\R^d$ with a finite second-order moment.  In (\ref{form0}), 
$\nu$ is the image of $\mu$ by the mapping 
$\R^d \ni x \mapsto (x,u(t,x,\mu))$; moreover,
$A$ and $B$ are differential operators that differentiate in the $x$ variable, respectively at the second and first order, whilst $C$ is a non-local operator that involves differentiation in the $\mu$ variable. The notion of differentiation in the measure variable follows Lions' definition (see \cite{cardaliaguet}).}  

%

Since \new{its introduction} in Lions' lectures, there have been only a few papers on the master equation. 
In the notes he wrote following Lions' lectures (see \cite{cardaliaguet}), 
Cardaliaguet 
discusses 
the particular case 
when players have
deterministic trajectories, and where the solutions to the master equation is understood in the 
viscosity sense. 
In this framework,
the existence of  
classical solutions has just been investigated for short time 
horizons by Gangbo and Swiech 
in the preprint \cite{gangbo:swiech}. Recently, in the 
independent works  \cite{bensoussan:yam,carmona:delarue:lyons,gomes:saude,gomes:voskanyan}
and with different approaches,
several authors
revisited, mostly heuristically, the  
master equation in the  case when the dynamics of the players
are stochastic.
A few months ago,
in a lecture at the \textit{Coll\`ege de France} 
\cite{Lions:video},
 Lions gave an outline of 
 a proof, based on PDE arguments, for investigating the master 
 equation rigorously in the latter case. 
 In \cite{bensoussan:yam,carmona:delarue:lyons}, 
 the notion of master equation 
 is extended to  other types of stochastic control problem
\new{ with players that obey a common 
 policy controlled by a single center of decision.
 } 
%

The goal of \new{this} paper is 
\new{to develop a probabilistic analysis of the class of equations \eqref{form0}. 
We seek \emph{classical} solutions for a class of PDEs that incorporates the master equations }
for both types of policies (individual or collective) \new{and for players with dynamics that can be either deterministic or stochastic. } 
Beyond  \new{their} purely theoretical  \new{interest},  
classical solutions (\new{as opposed to} viscosity solutions)  \new{are} expected to be 
\new{of use when handling} approximated equilibria
\new{in a variety of} situations: 
For instance, \new{they help in proving} the   
convergence of the equilibria, when computed 
over finite systems of players, toward the
equilibria of the 
asymptotic game. This is indeed a challenging 
question that remains partially open\new{.\footnote{\new{See however the recent advances 
in \cite{fischer,lacker}}.}} 
Similarly, the analysis of numerical schemes 
for computing the equilibria certainly benefits 
from robust regularity estimates \new{for} the solution of the master equation.

One of the reason for using  a probabilistic approach is that there has been an expanding 
literature in probability theory on forward-backward systems, which have been widely used in
stochastic control. 
Although mostly limited to the finite dimension, the existing theory gives a 
helpful insight into 
the general mechanism for deriving the master equation. One of the most noticeable result\new{s} 
is that a forward-backward system 
may be decoupled by means of a \textit{decoupling field} provided  \new{the system} is uniquely solvable,  
see \new{e.g.} 
\cite{ma:wu:zhang:zhang,Ma_sHJB}. 
\new{More precisely}, the decoupling field 
allows one to express the backward 
component of the solution as a function of the forward one. When the coefficients
of the forward-backward system are deterministic, \new{the decoupling field} satisfies 
(in a suitable sense) a quasilinear PDE.  
In the case of mean-field games,
the forward-backward system \new{consists} of two coupled PDEs, one 
of Fokker-Planck type and another one of Hamilton-Jacobi-Bellman type, 
and the \new{corresponding} quasilinear PDE is nothing but the master equation. 

\new{Another} reason for \new{analysing} the master equation by means of probabilistic arguments is that 
equilibria in large population stochastic control problems driven by 
\new{either} individual or collective policies may be characterized as solutions of finite-dimensional 
forward-backward systems of the McKean-Vlasov type, see  \cite{carmona:delarue:aop,carmona:delarue:sicon}. 
The reformulation is based 
either on
the connection between Hamilton-Jacobi-Bellman equations and 
backward SDEs
or on the stochastic Pontryagin principle, see  
\cite{fleming:soner,yong:zhou}
for the basic mechanisms in the non McKean-Vlasov framework. 
This reformulation has a crucial role as it allows one to reduce the infinite-dimensional system 
made of the Fokker-Planck equation and of the Hamilton-Jacobi-Bellman equation
\new{to} a finite dimensional system. 
The price to pay is that the coefficients of the finite dimensional 
system may depend upon the law of the solution, in the spirit of McKean's theory  \new{of} nonlinear 
SDEs. Inspired by 
Pardoux and Peng's work \cite{pardoux:peng}
on the connection between backward SDEs and classical solutions to semilinear PDEs, we 
 \new{develop a}  systematic approach for 
 \new{analyzing} the smoothness
of the solution of the master equation by investigating the smoothness of the flow 
generated by the solution of the 
McKean-Vlasov
forward-backward system with respect to the initial input. However, because of the McKean-Vlasov nonlinearity, 
\new{the analysis is far from a straightforward adaptation of the classical result of Pardoux and Peng} \cite{pardoux:peng}. 
The main issue is that the independent variable includes a probability measure, which  
requires a non-trivial extension of the notion of differentiability with respect to a probability measure. 
 
Several notions of derivatives with respect to a probability measure have been introduced in the literature. 
For example, the notion of Wasserstein derivative has been discussed within the context of optimal transport, 
see the monograph by Villani \cite{villani}. An alternative, though connected, approach 
was suggested by Lions, see \cite{cardaliaguet}. 
Generally speaking, Lions' approach  consists in lifting (\new{in a canonical manner})  functions defined on the 
Wasserstein space \new{$\cP_2(\R^d)$} (\new{the} space of probability measures on $\R^d$, with finite second-order moments 
\new{endowed with the Wasserstein metric}) 
into functions defined on 
\new{$L^2(\Omega,\mathcal{A},\mathbb{P};\R^d),$} 
\new{the space of square integrable $d$-dimensional random variables defined on the probability space $(\Omega,\cA,\P)$}. 
In this way, 
\new{the operation of differentiation}  with respect to a probability measure 
\new{is defined as the Fr\'echet differentiation in $L^2(\Omega,\mathcal{A},\mathbb{P};\R^d)$.} 
This approach is especially suited to  \new{the mean field games framework}. Indeed, 
the probabilistic representation we use 
yields a canonical lifted representation of the equilibria on  \new{$L^2(\Omega,\mathcal{A},\mathbb{P};\R^d)$}
that carries the underlying noise.
\new{The} McKean-Vlasov forward-backward system that 
 \new{models} the equilibria \new{consists of a}  forward component  \new{describing} the dynamics of the population 
and  \new{a} backward one \new{describing} the dynamics of the solution of the master equation 
along the state of the population.  
Any perturbation in \new{$L^2(\Omega,\mathcal{A},\mathbb{P};\R^d)$} of the initial condition of the forward component thus generates 
a perturbation in \new{$L^2(\Omega,\mathcal{A},\mathbb{P};\R^d)$} of the solution of the master equation. 
Using this strategy, the smoothness of the solution of the master equation \new{is deduced} 
by investigating the smoothness of the flow generated by the McKean-Vlasov forward-backward system  
with respect to an initial condition in \new{$L^2(\Omega,\mathcal{A},\mathbb{P};\R^d)$}. 

In the sequel, we apply this strategy to general forward-backward systems of equations of McKean-Vlasov type. 
Under suitable assumption, we prove that existence and uniqueness of solutions hold\new{s for the system} and that the corresponding 
\textit{decoupling field}
is the unique classical solution of \new{the time reversed version of the PDE \eqref{form0}}. 
\new{ To do this we prove first the} smoothness of the decoupling field by using the notion of differentiation described above. 
\new{Next}, we apply a tailor-made chain rule on the Wasserstein space to identify 
the structure of the PDE from the coefficients of the forward-backward system. 
In general, the result holds  
\new{for sufficiently small time intervals}, as \new{it is usually the case} with forward-backward processes. 

Inspired by \cite{del02}, 
we then show that, provided we have an \textit{a priori} estimate for the gradient of the 
solution of the master equation, existence and uniqueness of a classical solution 
may be extended, via an inductive argument, to arbitrary large time intervals. 
This requires the identification of  a suitable space of solutions that is \new{left} invariant along the induction, which 
is one of the most technical issue\new{s} of the paper. 
In the framework of large population stochastic control, 
we identify three classes of examples under which the \textit{a priori} bound for the gradient is shown to hold.  
The \new{first two belong to} the framework of mean-field games. 
To bound the gradient in each of them, we combine
either convexity (in the first example) or ellipticity 
(in the second example) with the so-called Lasry\new{-}Lions condition, used for guaranteeing uniqueness 
of the equilibria, see \cite{cardaliaguet}. 
\new{To} the best of our knowledge, 
except the aforementioned video by Lions \cite{Lions:video},
the solvability of the master equation in the classical sense is, in both cases, a new result\footnote{As far as we understand the sketch of the proof in 
\cite{Lions:video}, the underlying arguments are 
reminiscent of the way in which we use convexity in the first class of examples.}.
The third example concerns the situation when players
obey a common center of decision, in which case the stochastic control problem
may be reformulated as an optimization problem over controlled McKean-Vlasov diffusion processes.
In \new{this} last example,  the proof mainly relies on convexity. 

\new{I}n a parallel work to ours
\new{made available recently}, Buckdahn
\textit{et al.}
\cite{buck:li:rain:peng} 
adopted a similar approach \new{to} study 
forward flows, proving that 
the semigroup of a standard McKean-Vlasov stochastic differential equation
with smooth coefficients
 \new{is the} classical solution  \new{of} a  linear PDE 
 \new{defined on $\R^d \times \cP_2(\R^d)$}. 
\new {The results in \cite{buck:li:rain:peng} do not cover nonlinear PDEs of the type \eqref{form0} 
that include master equations for large population equilibria. }  
It must be also noticed that a crucial assumption is made therein 
on the smoothness of the coefficients, which restrict rather drastically the scope 
of application\new{s}. 
We avoid this, however, we do pay a heavy price for working under 
more tractable assumptions, see Remark \ref{rem:comparaison:buck}  \new{below}. 

\new{We treat here}  systems of players driven by idiosyncratic (or independent) noises. 
Motivated by practical applications, see  
\cite{carmona:delarue:lyons,GueantLasryLions.pplnm},
\new{in subsequent work, the players will be driven by an additional} common source of noise, in which case 
the McKean-Vlasov interaction in the forward-backward equations under consideration
becomes random itself, 
as it then stands for the conditional distribution of the population given the common source of randomness.

The paper is organized as follows. The general set-up together with the main results are described in Section 
\ref{se intro}. The chain rule on the Wasserstein space is discussed in Section \ref{se chain rule}. \new{The} smoothness 
of the flow of a McKean-Vlasov forward-backward system is investigated in small time in Section \ref{se smoothness}. 
In Section \ref{se app}, we provide some applications to large population stochastic control. The proofs of
some technical results are given in Appendix.

\section{General step-up and overview of the results}
\label{se intro}
Let $(\Omega,\cA,\P)$ be a probability space supporting a $d$-dimensional Brownian motion 
$(W_{t})_{t \geq 0}$ and a square integrable random variable $\xi$, independent of $(W_{t})_{t \geq 0}$. 
We denote by $(\cF^{\xi,W}_t)_{t \ge 0}$ the augmented  
filtration generated by $\xi$ and $(W_{t})_{t \geq 0}$. 
For a given terminal time $T>0$, we consider the following system of equations:
\begin{align}
\left\{
\begin{array}{rcl}
X_s &= &\xi + \int_0^s b(X_r, Y_r,Z_r,\P_{(X_r, Y_r)} )\ud r +\int_0^s \sigma(X_r, Y_r,\P_{(X_r, Y_r)}) \ud W_r,
\\
Y_s &= &g(X_T,\P_{X_T} ) + \int_s^T f(X_r, Y_r,Z_r,\P_{(X_r, Y_r)}) \ud r
- \int_s^T Z_r \ud W_r,
\end{array}
\right. \ s \in [t,T]
\label{eq intro XYZ}
\end{align}
\new{The processes $X$, $Y$ and $Z$ are $d$, $m$ and 
$m \times d$ dimensional, respectively. The coefficients $b: \R^d\times \R^m \times \R^{m\times d} \times \cP_{2}(\R^{d}\times \R^m)\rightarrow \R^d,$
$\sigma: \R^d\times  \R^m \times \cP_{2}(\R^{d}\times \R^m)\rightarrow \R^{d\times d},$
$f: \R^d \times \R^m \times \R^{m\times d} \times \cP_{2}(\R^{d}\times \R^m)\rightarrow \R^m$ 
and $g: \R^d \times \cP_{2}(\R^{d})\rightarrow \R^m$ are measurable functions that 
satisfy conditions that will be imposed below. $\P_{(X_r, Y_r)}$ denotes the law of $(X_r, Y_r)$. The system \eqref{eq intro XYZ} is called a 
forward-backward system of McKean-Vlasov type.} 
Notice that, for simplicity, the coefficients $b$, $\sigma$ and $f$ are time homogeneous and $X$ has same dimension 
as the noise $W$. These constraints can however be lifted and a similar analysis will apply.   

{
{In the following, we will} show that, under convenient assumptions, 
there exists a unique solution  {of} the {forward-backward} system \eqref{eq intro XYZ} together with a 
\textit{decoupling field} 
$U: [0,T]\times \R^d \times \cP_{2}(\R^{d})\rightarrow \R^m$ to \eqref{eq intro XYZ}. Namely,
 $U$ is a function such that 
 \begin{align}
Y_s = U(s, X_s, \P_{X_s})\,, \;0 \leq s \le T\,.
\label{eq thedecouplingproperty}
 \end{align}
}
\new{Finally, we will show 
\[
(t,x,\mu)\in[0,T]\times \R^d \times \cP_{2}(\R^{d})\to U(T-t,x,\mu)\
\] 
is a classical solution of the equation \eqref{form0}. }

\subsection{Definition of $U$}
{
The construction 
of the decoupling field $U$  is typically 
discussed under the assumption that the existence and uniqueness 
of the solution of the system \eqref{eq intro XYZ} is holds. 
See, e.g. \cite{carmona:delarue:aop,carmona:delarue:ecp,carmona:delarue:sicon}, 
for conditions under which this holds for an arbitrary time horizon $T$. 
We adopt here a different approach: We first focus on the case where $T$
is sufficiently small so that
the existence and uniqueness of the solution of the system \eqref{eq intro XYZ} 
hold.
This helps us construct the decoupling 
field $U$ for the same time horizon and, therefore deduce the existence of a unique local solution 
of PDE \eqref{form0}. Secondly we use results from \cite{carmona:delarue:aop,carmona:delarue:sicon} 
to pass from a small time to an arbitrary time horizon and there justify 
the existence of a unique global solution to \eqref{form0}.}

A common \new{strategy to introduce} the decoupling field consists in letting the initial time in \eqref{eq intro XYZ} vary. 
Without any loss of generality, we can assume that
$(\Omega,\cA,\P)$ is equipped with
a filtration 
$({\mathcal F}_{t})_{t \geq 0}$ (satisfying the usual condition)
such that $(\Omega,{\mathcal F}_{0},\P)$ is rich enough 
to carry $\R^d$-valued random variables with any arbitrary distribution
in $\cP_{2}(\R^d)$
and 
$(W_{t})_{t \geq 0}$ is an $({\mathcal F}_{t})_{t \geq 0}$-Brownian motion. 
In particular, $\xi$ in 
\eqref{eq intro XYZ}
may be taken as an $\cF_{0}$-measurable square-integrable random variable. 

%
In the sequel, we often use the symbol $\mu$ to denote the law of $\xi$. 
\new{We will use the notation}  
$\law{\Theta} := \P_{\Theta}$ \new{to denote the law of the} random variable $\Theta$
(so then $\mu = \law{\xi}$). 
Within this set-up,
we consider the following version of \eqref{eq intro XYZ} 
with the forward component starting at time $t$ from $\xi \in L^2(\Omega,\cF_{t},\P;\R^d)$:
\begin{align}
\left\{
\begin{array}{rcl}
X^{t,\xi}_s &=& \xi + \int_t^s b \bigl(\theta^{t,\xi}_r,\law{\theta^{t,\xi,(0)}_r} 
\bigr)\ud r +\int_t^s \sigma\bigl(\theta^{t,\xi,(0)}_r,\law{\theta^{t,\xi,(0)}_r}\bigr) \ud W_r,
\\
Y^{t,\xi}_s &=& g\bigl(X^{t,\xi}_T,\law{X^{t,\xi}_T} \bigr) + \int_s^t 
f\bigl(\theta^{t,\xi}_r,\law{\theta^{t,\xi,(0)}_r}\bigr) \ud r
- \int_t^s Z^{t,\xi}_r \ud W_r,
\end{array}
\right. \ \ s\in[t,T]
 \label{eq X-Y t,xi}
\end{align}
with $\theta^{t,\xi} = (X^{t,\xi},Y^{t,\xi},Z^{t,\xi})$ and $\theta^{t,\xi,(0)} 
= (X^{t,\xi},Y^{t,\xi})$.

A crucial remark for the subsequent analysis is to notice that the Yamada--Watanabe
theorem extends to equations of the same type as \eqref{eq X-Y t,xi}. 
More precisely, one can prove that, whenever 
pathwise uniqueness holds, solutions are also unique in law \cite[Example 2.14]{kur14}. 
As a consequence, it  \new{follows} that the law of $(X^{t,\xi},Y^{t,\xi})$ only depends upon 
the law of $\xi$. In other words, $\law{(X^{t,\xi}_r,Y^{t,\xi}_r)}$ is \new{a function of}  $\law{\xi}=\mu$. 
Given $\mu \in \cP_{2}(\R^d)$, it thus makes sense to consider $(\law{(X^{t,\xi}_r,Y^{t,\xi}_r)})_{t \leq r \leq T}$
without specifying the choice of the lifted random variable $\xi$ that has $\mu$ as distribution. 
We then introduce, for any $x \in \R^d$,
a stochastic flow associated to the system  \eqref{eq X-Y t,xi}, defined as
 \begin{align}
\left\{
\begin{array}{rcl}
X^{t,x,\mu}_s &=& x + \int_t^s b\bigl(\theta_r^{t,x,\mu}, 
\law{\theta^{t,\xi,(0)}_r}\bigr) \ud r 
+\int_t^s \sigma\bigl(\theta^{t,x,\mu,(0)}_r, \law{\theta^{t,\xi,(0)}_r}\bigr) \ud W_r,
\\
Y^{t,x,\mu}_s &=& g\bigl(X^{t,x,\mu}_T,\law{X^{t,\xi}_T} \bigr) + \int_s^t 
f\bigl(\theta^{t,x,\mu}_r, \law{\theta^{t,\xi,(0)}_r}\bigr) \ud r
- \int_t^s Z^{t,x,\mu}_r \ud W_r,
\end{array}
\right.
 \label{eq X-Y t,x,mu}
\end{align}
with $\theta^{t,x,\mu} = (X^{t,x,\mu}, Y^{t,x,\mu},Z^{t,x,\mu})$ and 
$\theta^{t,x,\mu,(0)} = (X^{t,x,\mu}, Y^{t,x,\mu})$.

We now have all the \new{ingredients} to give the definition of a 
decoupling field to \eqref{eq X-Y t,xi} on $[0,T]\times\R^d \times \cP_2(\R^d)$.
For the following definition, assume for the moment that, for any $(t,x,\mu) \in 
[0,T]\times\R^d \times \cP_2(\R^d)$ and any random 
variable $\xi \in L^2(\Omega,{\mathcal F}_{t},\P;\R^d)$ with distribution $\mu$,
\eqref{eq X-Y t,xi} has a unique (progressively-measurable) solution $(X^{t,\xi}_{s},Y^{t,\xi}_{s},Z^{t,\xi}_{s})_{s \in [t,T]}$
such that $$\sup_{s \in [t,T]} \vert X_{s}^{t,\xi} \vert^2,
\quad  
\sup_{s \in [t,T]} \vert Y_{s}^{t,\xi} \vert^2, \quad \textrm{and}
\quad \int_{t}^T \vert Z_{s}^{t,\xi} \vert^2 \ud s,$$
are integrable. Assume also that \eqref{eq X-Y t,x,mu} has a 
unique (progressively-measurable) solution 
$(X^{t,x,\xi}_{s},Y^{t,x,\xi}_{s},Z^{t,x,\xi}_{s})_{s \in [t,T]}$
such that 
$$\sup_{s \in [t,T]} \vert X_{s}^{t,x,\xi} \vert^2,
\quad 
\sup_{s \in [t,T]} \vert Y_{s}^{t,x,\xi} \vert^2, \quad 
\textrm{and} \quad 
\int_{t}^T \vert Z_{s}^{t,x,\xi} \vert^2 \ud s,$$
are integrable. Then, we may let:
 
 \begin{Definition}[The decoupling field $U$] \label{le dec field}
\new{The function $U : [0,T] \times \R^d \times \cP_{2}(\R^d)\mapsto\R^m$ defined as    
\begin{align}
\label{eq:decoupling:field}
U(t,x,\mu) =Y_{t}^{t,x,\mu},\ \ \ (t,x,\mu)\in[0,T] \times \R^d \times \cP_{2}(\R^d) \end{align}
is called the decoupling field of the forward-backward system \eqref{eq X-Y t,xi} (or, equivalently, of the  corresponding stochastic flow \eqref{eq X-Y t,x,mu}).}
\end{Definition}

The decoupling property \eqref{eq thedecouplingproperty} of $U$ is proved in Proposition
\ref{pr thedecouplingproperty} below, 
under assumptions that guarantee existence and uniqueness 
to \eqref{eq X-Y t,xi} and \eqref{eq X-Y t,x,mu}.

\vspace{5pt}

\new{Recall now that the $2$-Wasserstein distance $W_{2}$, defined on $\cP_{2}(\R^k)$, $k \geq 1$ is given by
\begin{equation*}
\begin{split}
&W_{2}(\mu,\nu) 
= \inf_{\gamma} \biggl[ \int_{(\R^k)^2}
\vert u-v\vert^2 \gamma(\ud u, \ud v)  ; \ \gamma( \cdot \times \R^k) = \mu, \
\gamma( \R^k \times \cdot) = \nu \biggr]^{1/2}.
\end{split}
\end{equation*}
 } As already \new{mentioned}, a very convenient way to prove strong existence 
and uniqueness to \eqref{eq X-Y t,xi} and \eqref{eq X-Y t,x,mu}
consists in working  \new{first with small time horizons}. For $T$ \new{sufficiently} small, there exists a unique solution to the systems \eqref{eq X-Y t,xi} and
\eqref{eq X-Y t,x,mu} under the following assumption:

\begin{Assumption}[\HYP{0}(i)]
\notag There exists a constant $L>0$ such that the 
mappings $b$, $\sigma$, $f$ and $g$ are 
$L$-Lipschitz continuous in all the variables, the distance on $\cP_{2}(\R^d \times \R^m)$, respectively 
$\cP_{2}(\R^d)$ being the $2$-Wasserstein distance. 
\end{Assumption}
 

The existence of a local solution to the systems \eqref{eq X-Y t,xi} and
\eqref{eq X-Y t,x,mu} under assumption \HYP{0}(i) is not new (see for instance \cite[Proof of Lemma 2]{carmona:delarue:ecp}). 
The proof consists of a  straightforward adaption of the results in \cite{del02} 
for \new{classical forward backward stochastic differential equations (FBSDEs)}. 
\new{To be precise, one} shows that the systems \eqref{eq X-Y t,xi} and
\eqref{eq X-Y t,x,mu} 
are uniquely solvable under assumption \HYP{0}(i) provided $T \leq c$ for a constant $c:=c(L)>0$. 
Examples \new{where the result can be extended to long time horizons} will be discussed in Section \ref{se app}. 
\vspace{2mm}

{It is quite illuminating to observe that} the system \eqref{eq X-Y t,x,mu} can be rewritten as a classical coupled  FBSDE with time dependent coefficients,
\new{as follows}
\begin{align}
\left\{
\begin{array}{rcl}
X^{t,x,\mu}_s &=& x + \int_t^s \hat{b}_{t,\mu}(r,\theta^{t,x,\mu}_r) \ud r 
+\int_t^s \hat{\sigma}_{t,\mu}(r,\theta^{t,x,\mu,(0)}_r) \ud W_r,
\\
Y^{t,x,\mu}_s & = & \hat{g}_{t,\mu}(X^{t,x,\mu}_T ) + \int_s^t 
\hat{f}_{t,\mu}(r,\theta^{t,x,\mu}_r) \ud r
- \int_t^s Z^{t,x,\mu}_r \ud W_r,
\end{array}
\right. \label{eq reec mkvfbsde}
\end{align}
with 
$(\hat{b}_{t,\mu},\hat{f}_{t,\mu},\hat{\sigma}_{t,\mu},\hat{g}_{t,\mu})(r,x,y,z) := (b,f,\sigma,g)
(x,y,z,\law{\theta^{t,\xi,(0)}_r})$.
Basically, \new{for} this new set of coefficients, the dependence upon the measure is frozen
since $\mu$ and $\law{\theta^{t,\xi,(0)}}$ \new{are fixed} and do not 
depend on $x$.
In particular, when replacing $x$ by $\xi$ in 
\eqref{eq X-Y t,x,mu} and 
\eqref{eq reec mkvfbsde},
for some random variable $\xi$ with $\mu$ as distribution, uniqueness of
solutions to the classical (time-inhomogeous) FBSDE  \eqref{eq reec mkvfbsde} implies that
$(X^{t,\xi,\mu},Y^{t,\xi,\mu},Z^{t,\xi,\mu})=\theta^{t,\xi}$.
Then, the representation \eqref{eq reec mkvfbsde} allows us to characterize the decoupling field of the system \eqref{eq X-Y t,xi} as follows:    

Under \HYP{0}(i),
we know from the classical theory of coupled FBSDEs \cite{del02} 
that, for $T$ \new{sufficiently} small, for any $t \in [0,T]$, there exists a continuous decoupling field $\hat{U}_{t,\mu}: 
[t,T] \times \R^d \ni (s,x) \mapsto \hat{U}_{t,\mu}(s,x)$
\textcolor{black}{to}
\eqref{eq reec mkvfbsde} such that 
$Y^{t,x,\mu}_s = \hat{U}_{t,\mu}(s,X^{t,x,\mu}_s)$ for $s \in [t,T]$, the representation remaining 
true when $x$ is replaced by an ${\mathcal F}_{t}$-measurable square-integrable random variable (see
\cite[Corollary 1.5]{del02}). In particular, choosing $s=t$, we get 
$U(t,x,\mu) = \hat{U}_{t,\mu}(t,x)$. 
We deduce that 
{
\begin{Proposition} 
\label{pr thedecouplingproperty}
Given $(t,x,\mu) \in [0,T] \times \R^d \times \cP_{2}(\R^d)$
and $\xi \in L^2(\Omega,{\mathcal F}_{t},\P;\R^d)$, with $[\xi]=\mu$, 
we have, for $T$ small enough, for all $s \in [t,T]$, 
\begin{align*}
\hat{U}_{t,\mu} (s,x) = U(s,x,\law{X_s^{t,\xi}})\,,\;
Y^{t,x,\mu}_s = U(s,X_s^{t,x,\mu},\law{X_s^{t,\xi}})  \; \text{ and }
Y^{t,\xi}_s = U(s,X_s^{t,\xi},\law{X_s^{t,\xi}}).
\end{align*}
\end{Proposition}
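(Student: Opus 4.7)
The strategy is to combine the classical decoupling result for FBSDEs from \cite{del02}, already used to introduce $\hat{U}_{t,\mu}$ for the frozen system \eqref{eq reec mkvfbsde}, with a flow property at the level of the McKean-Vlasov FBSDE \eqref{eq X-Y t,xi}. The target is the first identity, from which the other two follow by substitution.

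\emph{Step 1 (flow property).} Fix $s \in [t,T]$, set $\xi' := X_s^{t,\xi}$ and $\mu' := \law{X_s^{t,\xi}}$. The restriction of $(X^{t,\xi},Y^{t,\xi},Z^{t,\xi})$ to $[s,T]$ solves the McKean-Vlasov system \eqref{eq X-Y t,xi} with initial data $(s,\xi')$ in place of $(t,\xi)$: this can be read off directly, since the generator, diffusion coefficient and terminal condition are time-homogeneous in the way the system is posed. Pathwise uniqueness under \HYP{0}(i), available for $T$ small, then forces $(X_r^{t,\xi},Y_r^{t,\xi},Z_r^{t,\xi}) = (X_r^{s,\xi'},Y_r^{s,\xi'},Z_r^{s,\xi'})$ for $r \in [s,T]$. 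In particular $\law{\theta_r^{t,\xi,(0)}} = \law{\theta_r^{s,\xi',(0)}}$ for $r \in [s,T]$; by the Yamada--Watanabe argument invoked above (\cite[Example 2.14]{kur14}), this common law depends only on $\mu'$, not on the representative $\xi'$.

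\emph{Step 2 (identification of $\hat{U}_{t,\mu}(s,\cdot)$).} The coefficients $\hat{b}_{t,\mu},\hat{\sigma}_{t,\mu},\hat{f}_{t,\mu}$ of the frozen FBSDE \eqref{eq reec mkvfbsde}, restricted to $[s,T]$, are built from $(\law{\theta_r^{t,\xi,(0)}})_{r\in[s,T]}$, which by Step 1 coincides with $(\law{\theta_r^{s,\xi',(0)}})_{r\in[s,T]}$, and similarly for the terminal data. Hence on $[s,T]$ the frozen system associated to $(t,\mu)$ and that associated to $(s,\mu')$ are the \emph{same} time-inhomogeneous classical FBSDE. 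By uniqueness of the associated decoupling field \cite[Corollary 1.5]{del02}, $\hat{U}_{t,\mu}(s,x) = \hat{U}_{s,\mu'}(s,x)$ for every $x \in \R^d$, and the right-hand side equals $Y_s^{s,x,\mu'} = U(s,x,\mu')$ by Definition \ref{le dec field}. This yields the first announced identity.

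\emph{Step 3 (the other two identities).} Composing the first identity with the random point $X_s^{t,x,\mu}$ and recalling $Y_s^{t,x,\mu} = \hat{U}_{t,\mu}(s,X_s^{t,x,\mu})$ (once again \cite[Corollary 1.5]{del02}) gives the second identity. For the third, one substitutes $\xi$ for the deterministic $x$ in \eqref{eq reec mkvfbsde}: since $\mu = [\xi]$, the frozen coefficients match those driving \eqref{eq X-Y t,xi} and the random-initial-condition version of \cite[Corollary 1.5]{del02} identifies the resulting solution with $(X^{t,\xi},Y^{t,\xi},Z^{t,\xi})$. Applying the second identity at $x = \xi$ then gives $Y_s^{t,\xi} = U(s,X_s^{t,\xi},\law{X_s^{t,\xi}})$. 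The main obstacle is Step 1, namely the rigorous justification that the measure flow is intrinsic to the pair (starting time, starting law) and is preserved along the trajectory; this is exactly where strong uniqueness combined with the Yamada--Watanabe-type transfer from pathwise to weak uniqueness is essential.
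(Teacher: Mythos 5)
Your proposal is correct and follows essentially the same route as the paper's proof: establish the flow property by strong uniqueness on $[s,T]$, deduce that the marginal-law flows driving the frozen FBSDE \eqref{eq reec mkvfbsde} agree so that $\hat{U}_{t,\mu}(s,\cdot)=\hat{U}_{s,[X_s^{t,\xi}]}(s,\cdot)$, then obtain the second and third identities by composing with $X_s^{t,x,\mu}$ and by substituting $\xi$ for $x$ via the random-initial-condition version of \cite[Corollary 1.5]{del02}. Your write-up is somewhat more explicit about the Yamada--Watanabe transfer (pathwise to weak uniqueness) underlying the well-definedness of the law flow, but the decomposition and key lemmas match the paper's.
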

}
\proof 
By uniqueness of the solution to \eqref{eq X-Y t,xi}, the processes
$$(X^{s,X_{s}^{t,\xi}}_{u},Y_{u}^{s,X_{s}^{t,\xi}})_{u \in [s,T]}
\quad
\textrm{and}
\quad 
(X^{t,\xi}_{u},Y_{u}^{t,\xi})_{u \in [s,T]}$$
coincide, so that $[(X_{u}^{s,X_{s}^{t,\xi}},Y_{u}^{s,X_{s}^{t,\xi}})]
= [(X^{t,\xi}_{u},Y_{u}^{t,\xi})]$ for $u \in [s,T]$. We deduce that 
$$\hat{U}_{t,\mu}(s,\cdot) = \hat{U}_{s,[X_{s}^{t,\xi}]}(s,\cdot),$$ 
which is the first equality. 
Now, the second equality follows from the fact that 
$$Y_{s}^{t,x,\mu} = \hat{U}_{t,\mu}(s,
X_{s}^{t,x,\mu}) = \hat{U}_{s,[X_{s}^{t,\xi}]}(s,X_{s}^{t,x,\mu}).$$ The last inequality
is obtained by inserting $X^{t,\xi}_s$ instead of $x$ in the first equality and observing
$\hat{U}_{t,\mu}(s,X^{t,\xi}_s)=\hat{U}_{t,\mu}(s,X^{t,\xi,\mu}_s)=Y^{t,\xi,\mu}_s=Y^{t,\xi}_s$.
%
%
\eproof

\vspace{2mm}

\subsection{Smoothness of $U$}

Introduce now the additional assumption: 
\begin{Assumption}[\HYP{0}(ii)] 
\new{The functions $b$, $\sigma$, $f$ and $g$ are twice differentiable in the
variables 
 $x$, $y$ and $z$, the derivatives of order $1$ and $2$ being uniformly bounded and uniformly Lipschitz-continuous
in the variables $x$, $y$ and $z$, uniformly in the parameter $\mu$}.
\end{Assumption} 

Under \HYP{0}(i--ii),
we know from the classical theory of coupled FBSDEs \cite{del02} 
that the decoupling field $\hat{U}_{t,\mu}$
is once continuously differentiable in time and twice continuously differentiable in \new{ the $x$ variable}
on $[t,T] \times \R^d$. It also 
satisfies $Z^{t,x,\mu}_s = \hat{V}_{t,\mu}(s,X^{t,x,\mu}_s)$ with 
\begin{align}
\hat{V}_{t,\mu}(s,x) := \partial_x \hat{U}_{t,\mu}(s,x) \hat{\sigma}_{t,\mu} \big(s,x, \hat{U}_{t,\mu}(s,x) \big) 
\; , 
\quad s \in [t,T], \; x \in \R^d.  
\label{eq de hvmu}
\end{align}
Notice that, \new{throughout \textcolor{black}{the paper}}, gradients of real-valued functions are expressed as row vectors. \new{In particular}, the
term $\partial_{x} \hat{U}_{t,\mu}(s,x)$ is thus an $m \times d$ matrix as $\hat{U}_{t,\mu}$
takes values in $\R^m$. 

Moreover, the decoupling field is a classical solution of the following quasi-linear PDE (or
system of quasi-linear PDEs since $m$ may be larger than $1$)\footnote{\new{For any matrix $a$ 
we will denote its transpose  by $a^{\dagger}$ and its trace by ${\rm Tr}(a)$. }}:
\begin{align}
&\partial_s \hat{U}_{t,\mu}(s,x) +
 \partial_x\hat{U}_{t,\mu}(s,x) 
 \hat{b}_{t,\mu} \big (s,x,\hat{U}_{t,\mu}(s,x),\hat{v}_{t,\mu}(s,x) \big) \label{eq PDE hat u}
 \\
&\hspace{5pt}
 + \frac12 
 {\rm Tr}\bigl[\partial^2_{xx}\hat{U}_{t,\mu}(s,x)   \big( \hat{\sigma}_{t,\mu} \hat{\sigma}_{t,\mu}^{\dagger} \big)
 \big (s,x,\hat{U}_{t,\mu}(s,x)\big)  \bigr]
+ \hat{f}_{t,\mu}\big (s,x,\hat{U}_{t,\mu}(s,x),\hat{v}_{t,\mu}(s,x) \big)  = 0, \nonumber
\end{align}
on $[t,T]\times \R^d$ with the \new{final} boundary condition $\hat{U}_{t,\mu}(T,x) = \hat{g}_{t,\mu}(x)$, $x \in \R^d$. 
\new{In \eqref{eq PDE hat u},} the trace 
reads as the $m$ dimensional vector 
 $({\rm Tr}[\partial^2_{xx}\hat{U}^i_{t,\mu}(s,x)  ( \hat{\sigma}_{t,\mu} \hat{\sigma}_{t,\mu}^{\dagger})
 (s,x,\hat{U}_{t,\mu}(s,x))  ])_{1 \leq i \leq m}$. 
Recalling the link between $\hat{U}_{t,\mu}(t,\cdot)$ and $U(t,\cdot,\mu)$, it is
then 
 clear that the function $U(t,\cdot,\mu)$ is twice continuously differentiable in \new{ the $x$ variable}. 
\vspace{5pt}

A more challenging question is the smoothness in the direction of the measure. 
Generally speaking, 
we will show that $U(t,x, \cdot)$ is \new{twice differentiable in the measure direction}, in a suitable sense,  
provided that the coefficients \new{of the system \eqref{eq intro XYZ}}
are \new{also} regular in the \new{measure} direction. 
For the reader's convenience,
we provide next a brief introduction to the notion of regularity with 
respect to the measure argument, further details being given in 
Section \ref{se chain rule}. 

Given a function $V:\cP_2(\R^d)\to\R$,  
we call the 
\textit{lift of $V$} on the probability space $(\Omega,{\mathcal A},\P)$\footnote{For notational convenience, the lifting procedure is done onto the same probability space that carries the driving Brownian motion $W$. Alternatively, one can use an arbitrary rich enough atomless probability space, see \cite{cardaliaguet} and Section 
\ref{se chain rule} for details.} 
the mapping ${\mathcal V}:L^2(\Omega,\cA,\P)\to \R $ defined by  
\begin{equation*}
{\mathcal V}(X) = V\bigl( \law{X} \bigr), \quad X \in L^2(\Omega,\cA,\P).
\end{equation*}
\new{Following Lions} (see \cite{cardaliaguet}), the mapping $V$ is then said to be differentiable (resp. $\cC^1$) on the 
Wasserstein space $\cP_2(\R^d)$ if the lift ${\mathcal V}$ is 
\new{Fr\'echet} differentiable (resp. {Fr\'echet} differentiable with continuous derivatives) on $L^2(\Omega,\cA,\P)$. 
{The main feature of this approach is that the Fr\'echet derivative 
$D {\mathcal V}(X)$, when seen as an element of $L^2(\Omega,\cA,\P)$ via Riesz' theorem, can be represented as }
\begin{equation*}
D {\mathcal V}(X) = \partial_{\mu} V \bigl( \law{X} \bigr)(X),
\end{equation*}
{where $\partial_{\mu} V(\law{X}) : \R^d \ni v \mapsto \partial_{\mu} V(\law{X})(v) \in \R^d$ is in $L^2(\R^d,\law{X};\R^d)$.
In this way the tangent space to ${\mathcal P}_{2}(\R^d)$ at a probability measure $\mu$ 
is identified with a subspace of $L^2(\R^d,\mu;\R^d)$. }

Note that the map 
 {$\partial_{\mu} V(\mu) : \R^d \ni v \mapsto 
\partial_{\mu} V(\mu)(v) \in \R^d$} 
is uniquely 
defined up to a $\mu$-negligible Borel subset. Choosing a version for each $\mu$ might be a problem
for handling it as a function of the joint variable $(v,\mu)$. 
In the next section, we will present conditions under 
which
a continuous version 
of $\partial_{\mu} U(\mu)(\cdot)$
can be identified,
such a version being uniquely defined  
on the support of 
$\mu$.
The next step is then to discuss the smoothness of the map 
$\R^d \times \cP_{2}(\R^d) \ni (v,\mu)
\mapsto 
\partial_{\mu} V(\mu)(v)$. We  say that $V$ is \textit{partially $\cC^2$} if the mapping
${\mathcal P}_{2}(\R^d) \times \R^d \ni (\mu,v) \mapsto \partial_{\mu} 
V(\mu)(v)$
{
is continuous at any point $(\mu,v)$ such that $v \in \textrm{Supp}(\mu)$} and if, 
for any $\mu \in {\mathcal P}_{2}(\R^d)$, the mapping 
$\R^d \ni v \mapsto \partial_{\mu} V(\mu)(v)$ is differentiable, its derivative 
being jointly continuous with respect to $\mu$ and $v$
{
at any point $(\mu,v)$ such that $v \in \textrm{Supp}(\mu)$.}
The 
gradient is then denoted by $\partial_{v} [\partial_{\mu} V(\mu)](v) \in \R^{d 
\times d}$. 

\new{Note that}, $\partial_{\mu} V(\mu)(v)$ is a $d$-dimensional row vector and 
$\partial_{v} [\partial_{\mu} V(\mu)](v)$ is a $d \times d$ matrix. 


\subsection{Solution of a Master PDE}
\label{subse:solution:PDE}
In Section \ref{se chain rule}, we prove a chain rule for functions defined on the space $\cP_2(\R^d)$ 
which are \emph{partially} $\cC^2$ in the above sense.
\new{Applying the chain rule} to $U(t,x,\cdot)$, \new{we get}:
\begin{align}
&U\bigl(t,x,\law{X_s^{t,\xi}}\bigr) - U\bigl(t,x,\law{\xi}\bigr) \nonumber
\\
&= \int_t^s \hesp{\partial_\mu U\bigl(t,x,\law{X^{t,\xi}_r}\bigr)
\bigl(
\cc{X^{t,\xi}_r}\bigr)  
b\bigl(\cc{\theta^{t,\xi}_r},\law{\theta^{t,\xi,(0)}_r}\bigr)} \ud r \label{eq:chain:rule:PDE}
 \\
 &\hspace{15pt} + \frac12 \int_t^s \hesp{
 \text{Tr} \bigl[
 \partial_v \bigl[\partial_\mu U\bigl(t,x,\law{X^{t,\xi}_r}\bigr)\bigr]\bigl(\cc{X^{t,\xi}_r}\bigr)
 \bigl( \sigma
  \sigma^{\dagger}\bigr)\bigl(  
\cc{\theta^{t,\xi,(0)}_r},\law{\theta^{t,\xi,(0)}_r}\bigr)
\bigr]
} \ud r. \nonumber
 \end{align}
 The above identity relies on new notations. Indeed, in order to distinguish the original randomness in the dynamics 
of \eqref{eq X-Y t,xi}, which has a physical meaning, from the randomness used to represent the derivatives
on the Wasserstein space, we will represent the derivatives on the Wasserstein space on another probability space, denoted by 
$(\hat{\Omega},\hat{\mathcal A},\hat{\P})$. \new{$(\hat{\Omega},\hat{\mathcal A},\hat{\P})$ is}  a copy of the original space 
$(\Omega,{\mathcal A},\P)$. In particular, for a random variable $\xi$ defined on 
$(\Omega,{\mathcal A},\P)$, we denote by $\cc{\xi}$ its copy on $\hat{\Omega}$. 
All the expectations in the above expression
may be translated into expectations under $\E$. \new{Nevertheless, we will refrain from doing this to avoid ambiguities 
between ``lifts'' and}
random variables constructed 
on the original space $(\Omega,{\mathcal A},\P)$. 
\new{We will state conditions} 
under which the expectations in \eqref{eq:chain:rule:PDE}
are indeed well-defined. 

Notice that, in \eqref{eq:chain:rule:PDE}, we used the same convention as in 
\eqref{eq PDE hat u} for denoting gradients. The term $\partial_\mu U(t,x,\law{X^{t,\xi}_r})
(\cc{X^{t,\xi}_r})$ is thus seen as an $m \times d$ matrix and the trace term 
$\text{Tr} [
 \partial_v [\partial_\mu U](t,x,\law{X^{t,\xi}_r})(\cc{X^{t,\xi}_r})
 ( \sigma
  \sigma^{\dagger})(  
\cc{\theta^{t,\xi,(0)}_r},\law{\theta^{t,\xi,(0)}_r})
]$
as a vector of dimension $m$.   

Combined with the analysis of the smoothness of $U$, 
we will then 
show that 
{the function
$[0,T] \times \R^d \times \cP_{2}(\R^d) 
\ni (t,x,\mu) \mapsto U(t,x,\mu)$
solves, up to a time reversal, a PDE of the form \eqref{form0}}.
{For the time being, 
we present a formal calculation to deduce this claim, 
the complete argument being given in 
Section \ref{se smoothness}}. The basic observation is that, 
{in 
the framework of Proposition 
\ref{pr thedecouplingproperty}}, the
 time-increments of $U$ may be expanded as 
\begin{equation}
\label{eq:expansion:master:PDE}
\begin{split}
U(s+h,x,\law{X^{t,\xi}_s}) - U(s,x,\law{X^{t,\xi}_s}) &= 
U(s+h,x,\law{X^{t,\xi}_s}) - U(s+h,x,\law{X^{t,\xi}_{s+h}})
\\
&\hspace{15pt}+ \hat{U}_{t,\mu}(s+h,x) - \hat{U}_{t,\mu}(s,x),
\end{split}
\end{equation}
for $t \leq s \leq s+h \leq T$. 
Applying the chain rule to the difference term 
$U(s+h,x,\law{X^{t,\xi}_s}) - U(s+h,x,\law{X^{t,\xi}_{s+h}})$
on the right hand side of the previous equality 
and assuming that the derivatives in the chain rule are continuous
\new{in time} so that we can let $h$ tend to 
$0$, we obtain  
\begin{align}
&\bigl[ \frac{\ud}{\ud h} \bigr]_{\vert h=0} U(s+h,x,\law{X^{t,\xi}_s}) 
= 
- \hesp{
b\bigl(\cc{\theta^{t,\xi}_s},\law{\theta^{t,\xi,(0)}_s}\bigr)
\partial_\mu U(s,x,\law{X^{t,\xi}_s})
\bigl(\cc{X^{t,\xi}_s}\bigr) 
} \label{eq:time:derivative:U}
\\
&\hspace{15pt} - \frac12  \hesp{
 {\rm Tr} \bigl[ 
  \partial_v [\partial_\mu U(s,x,\law{X^{t,\xi}_s})]\bigl(\cc{X^{t,\xi}_s}\bigr) 
 \bigl( \sigma \sigma^{\dagger} \bigr) 
 \bigl(\cc{\theta^{t,\xi,(0)}_s},\law{\theta^{t,\xi,(0)}_s}\bigr)\bigr]}
  + \partial_{s} \hat{U}_{t,\mu}(s,x). \nonumber
\end{align}
Choosing $s=t$, we deduce that $U$ is right differentiable in time
\textcolor{black}{(it is then
differentiable in time provided that the right-hand side is continuous in time)}.
Recalling \eqref{eq PDE hat u}
together with the notation $\mu = \law{\xi}$
and using the transfer theorem
to express the expectations that appear in the chain rule 
as integrals over $\R^d$,
we then get that $U$ is a solution to
the equation
\begin{equation}
\label{eq:master:PDE}
\begin{split}
 \partial_t U(t,x,\mu) &+ 
   \partial_xU(t,x,\mu) b \big(x,U(t,x,\mu),\partial_{x}^\sigma U(t,x,\mu),\nu) 
   \\
  &+ \frac12 {\rm Tr} 
  \bigl[\partial^2_{xx}U(t,x,\mu) \bigl( \sigma \sigma^{\dagger} \bigr) 
 \bigr]
+ f\big (x,U(t,x,\mu),\partial_{x}^\sigma U(t,x,\mu), \nu \big)  
\\
&+ 
\int_{\R^d}
\partial_\mu U(t,x,\mu)(v) 
b\bigl(v,U(t,v,\mu),\partial_{x}^\sigma U(t,v,\mu),\nu\bigr) \ud\mu(v)
\\
& +  \frac12  \int_{\R^d} {\rm Tr} \bigl[ 
\partial_v \bigl[ \partial_\mu U(t,x,\mu)\bigr] (v)
\bigl( \sigma \sigma^{\dagger}\bigr)\bigl(v,U(t,v,\mu),\nu\bigr)
\bigr] \ud\mu(v) = 0,
\end{split}
\end{equation}
for $(t,x,\mu) \in [0,T] \times \R^d \times \cP_{2}(\R^d)$,
with the terminal condition $U(T,x,\mu) = g(x,\mu)$,
where $\nu$ is the law
of $(\xi,U(t,\xi,\mu))$ when $[\xi]=\mu$ and 
$$\partial_{x}^\sigma U(t,x,\mu) =  \partial_{x} U(t,x,\mu) \sigma(x,U(t,x,\mu),\nu).$$
In particular, $u(t,\cdot,\cdot) = U(T-t,\cdot,\cdot)$
satisfies the equation \eqref{form0}, 
the operators $A$, $B$ and $C$ therein being defined as follows:
\begin{eqnarray}
\label{operators}
A u(t,x,\mu)&=&\partial_x u(t,x,\mu) b \big(x,u(t,x,\mu),\partial_{x}^\sigma u(t,x,\mu),\nu \big)  \nonumber 
\\
&&+ \frac12 {\rm Tr} 
  \bigl[\partial^2_{xx} u(t,x,\mu) \bigl( \sigma \sigma^{\dagger} \bigr) \bigl(x,u(t,x,\mu),\nu \bigr)
 \bigr]  \nonumber
 \\
B u(t,x,\mu)&=&  \partial_{x} u(t,x,\mu) \sigma(x,u(t,x,\mu),\nu)  \nonumber \\
C u(t,x,\mu)(v)& =& \partial_\mu u(t,x,\mu)(v) 
b\bigl(v,u(t,v,\mu),\partial_{x}^\sigma u(t,v,\mu),\nu\bigr)\nonumber\\
&& + {1\over 2} {\rm Tr} \bigl[ 
\partial_v \bigl[ \partial_\mu u(t,x,\mu)\bigr] (v)
\bigl( \sigma \sigma^{\dagger}\bigr)\bigl(v,u(t,v,\mu),\nu\bigr)
\bigr], \ v \in \R^d,
 \nonumber
\end{eqnarray} 
with the initial condition $u(0,x,\mu) = g(x,\mu)$, and with 
the same convention as above for 
the meaning of $\nu$ and of 
$\partial_{x}^\sigma u$. 
\color{black}

\color{black}

{Our} first main result is that, for small time horizons, all 
the partial derivatives that appear above make sense as continuous functions 
whenever the coefficients are sufficiently smooth. In this sense, 
$U$ is a ``classical'' solution of \eqref{eq:master:PDE}, see Theorem 
\ref{main:thm:short:time} right below. We can actually prove that it is the unique
one to satisfy suitable growth conditions, see Theorem
\ref{main:thm:uniqueness}. 
{Our} second main result is the extension to arbitrarily large time horizons for three classes of population equilibria. 
We refer the reader
to Subsection \ref{subse:main:results} for a short account of the second result and to 
Section \ref{se app} for complete statements.

\subsection{Assumptions}
\label{subse:assumption}
For an $L^2$ space, we use the notation  
$\NL{2}{\cdot}$ as a generic notation for the corresponding $L^2$-norm.  
For a linear mapping $\Upsilon$ on an $L^2$ space, we let 
$ \Nt{\Upsilon} := \sup_{\NL{2}{\upsilon} = 1} \NL{2}{\Upsilon({\upsilon})}$,
and for a bilinear form on an $L^2$ space, we let in the same way 
$ \Nt{\Upsilon} := \sup_{\NL{2}{\upsilon_1} = 1, \NL{2}{\upsilon_2} = 1} 
\NL{2}{\Upsilon({\upsilon}_1,{\upsilon}_2)}$. 

For a function $h$ from \textcolor{black}{a} product space of the form $\R^k \times {\mathcal P}_{2}(\R^{l})$
into $\R$, where 
$k,l \geq 1$, we denote by $\partial_{w} h(w,\mu)$ the derivative (if it exists) 
of $h$ with respect to the Euclidean variable $w$ and 
by $D{\mathcal H}(w,\chi)$ the Fr\'echet derivative of the 
lifted mapping  ${\mathcal H} : L^2(\Omega,\cA,\P;\R^l) \ni \chi \mapsto h(w,\law{\chi})$. 
The Fr\'echet derivative is seen as a linear form on $L^2$. 
{Concerning the first order differentiability  of the coefficients, we shall 
assume:}

%
{
\begin{Assumption}{\HYP{1}}
{In addition to  
\HYP{0}{\rm (i)}},
the mappings $b$, $f$, 
$\sigma$, $g$ are differentiable in $(w=(x,y,z),\mu)$\footnote{Here $\mu$ stands for the generic symbol to denote the measure argument.}
with jointly continuous derivatives
in $(w,\mu)$\footnote{Under the standing assumptions on the joint continuity of 
the derivatives, it is easily checked that the joint differentiability is equivalent  
to partial differentiability in each 
of the two directions $w$ and $\mu$.}
in the following sense: There exist a constant $\tilde{L}$ (in addition to the constant $L$ 
defined in $\HYP{0}\textrm{\rm (i)}$),
a constant $\alpha \geq 0$ 
and a functional $\Phi_{\alpha} : [L^2(\Omega,\cA,\P;\R^l)]^2  \ni (\chi,\chi') \mapsto \Phi_{\alpha}(\chi,\chi') \in \R_{+}$,
continuous at any point $(\chi,\chi)$ of the diagonal,
such that, for all $\chi,\chi' \in L^2(\Omega,\cA,\P;\R^l)$,
 \begin{equation}
 \label{phialpha}
\Phi_{\alpha}(\chi,\chi') \leq  
{\mathbb E}
\Bigl\{ \bigl( 1+ \vert \chi \vert^{2\alpha} + \vert \chi' \vert^{2\alpha} 
+ \| \chi \|_{2}^{2\alpha}  \bigr)
 \vert \chi - \chi' \vert^2 \Bigr\}^{1/2} \quad \text{when} \ \chi \sim \chi',
\end{equation}
and, for $h$ matching 
{any of the coordinates}\footnote{For the presentation of the assumption, it is here easier to take $h$ as a real-valued function, which explains
why we identify $h$ with a coordinate of $b$, $f$, $\sigma$ or $g$; however, 
we will sometimes say --rather abusively-- that $h$ matches $b$, $f$, $\sigma$ or $g$.}
of $b$, $f$, $\sigma$ or $g$,
for all
$w,w' \in \R^k$ and 
 $\chi,\chi' \in L^2(\Omega,\cA,\P;\R^l)$,
 \begin{align*}
&\Nt{D{\mathcal H}(w,\chi)} \le L \; , \quad  
 \Nt{D{\mathcal H}(w,\chi)-D{\mathcal H}(w',\chi')} \le \tilde{L} \bigl(|w- 
w'| + 
 \Phi_{\alpha}(\chi,\chi')
 \bigr), 
 \end{align*}
and
\begin{align*}
|\partial_{w} h(w,[\chi])| \leq L \; , \quad
|\partial_{w} h(w,[\chi])- \partial_{w}h(w',[\chi']) |
 \le \tilde{L} \bigl(|w-w'| + 
 \Phi_{\alpha}(\chi,\chi')
 \bigr).
 \end{align*}
Moreover, for any $\chi \in L^2(\Omega,\cA,\P;\R^l)$, 
the family $(D \cH(w,\chi))_{w \in \R^k}$, identified
by Riesz' theorem 
with a collection of elements in 
$L^2(\Omega,\cA,\P;\R^l)$, is uniformly square integrable.
\end{Assumption}
}
\begin{Remark}
{
(i) In particular we note that
$|D {\mathcal H}(w,\chi) \cdot \chi'|\le L \NL{2}{\chi'}$ (where 
`$\cdot$' denotes the action of the duality).}

{(ii) Notice that the right-hand side in  
 \eqref{phialpha} might not be finite. Actually,
we shall make use of \eqref{phialpha} when $\chi$ and $\chi'$  
coincide outside a bounded subset $\R^l$, namely $\chi(\omega) = \chi'(\omega)$ whenever 
$\vert \chi(\omega) \vert$ and $\vert \chi'(\omega) \vert$ 
are larger than some prescribed $R \geq 0$, in which cases the right-hand side in \eqref{phialpha} is finite. 
For instance, choosing $\chi = \chi'$,
we get from \eqref{phialpha} that $\Phi_{\alpha}$ is zero on the diagonal. 
Notice also that, when $\alpha=0$, we can directly choose 
$\Phi_{\alpha}(\chi,\chi') = \E[\vert \chi - \chi' \vert^2]^{1/2}$.}

{(iii) Proposition \ref{prop:lipschitz:lifted} below shows that, under \HYP{1}, 
the function $\R^l \ni v \mapsto \partial_{\mu} h(w,\mu)(v)$ admits, for any 
$w \in \R^k$ and $\mu \in {\mathcal P}_{2}(\R^l)$, a continuous version.
%
It allows to represent $D {\mathcal 
H}(w,\chi)$, when identified with an element of $L^2(\Omega,\cA,\P;\R^l)$
by Riesz' theorem, in the form
$\partial_{\mu}h(w,[\chi])(\chi)$. We stress that 
{such a continuous version of $\R^l \ni v \mapsto \partial_{\mu} h(w,\mu)(v)$ 
is uniquely defined on the support of $\mu$}.
Reexpressing the bounds in \HYP{1}, it satisfies
\begin{equation}
\label{eq:to compare}
\begin{split}
&{\mathbb E} \bigl[ \bigl\vert 
\partial_{\mu} h(w,[\chi])(\chi)
\bigr\vert^2 \bigr]^{1/2} \leq {L},
\\
&{\mathbb E} \bigl[ \bigl\vert 
 \partial_{\mu} h (w,[\chi])(\chi)
- \partial_{\mu} h (w',[\chi']) (\chi')
\bigr\vert^2 \bigr]^{1/2}
\leq \tilde{L} \bigl\{ 
\vert w- w' \vert 
+ 
\Phi_{\alpha}(\chi,\chi') \bigr\},
\end{split}
\end{equation}
Moreover, the uniform square integrability property is equivalent to say that the family 
$(\partial_{\mu} h(w,[\chi])(\chi))_{w \in \R^k}$ is uniformly square integrable
for any $\chi \in L^2(\Omega,\cA,\P;\R^d)$. }

{
(iv) The uniform integrability assumption plays a major role in our analysis. Taking into account the fact that 
all the $(D \cH(w,\chi))_{w \in \R^k}$ have a norm less than $L$, this amounts to require that 
\begin{equation*}
\lim_{\P(A) \rightarrow 0, A \in \cA}
\sup_{w \in \R^k} \sup_{\Lambda \in L^2(\Omega,\cA,\P;\R^l) : \| \Lambda \|_{2} \leq L}
\bigl|D {\mathcal H}(w,\chi) \cdot \bigl( \Lambda {\mathbf 1}_{A} \bigr) \bigr\vert
= 0.
\end{equation*}
We stress the fact that it is automatically satisfied when 
$\alpha=0$ in  \eqref{phialpha}. Indeed, we shall prove in 
\eqref{eq:lineargrowth} below that, whenever 
$\alpha=0$, there exists a constant 
$C \geq 0$ such that, for all $w \in \R^k$, 
$\vert D {\mathcal H}(w,\chi) \vert$ (identified with a random variable)
is less than $C(1+ \vert \chi \vert + \| \chi \|_{2})$.  }\end{Remark}

{Concerning the second order differentiation of the coefficients}, we shall assume:

\begin{Assumption}[\HYP{2}] 
{In addition to  \HYP{1},} all the mappings \new{$(x,y,z)\mapsto b(x,y,z,\mu)$, $(x,y,z)\mapsto f(x,y,z,\mu)$, 
$(x,y)\mapsto \sigma(x,y,\mu)$ and $x\mapsto g(x,\mu)$ are 
twice differentiable for any $\mu \in {\mathcal P}_{2}(\R^l)$} 
the second-order derivatives being jointly continuous 
in $(x,y,z)$ and $\mu$. Moreover, 
for $h $ equal to 
{any of the coordinates of} $b$, $f$, $\sigma$ or $g$,
for any $w \in \R^k$ and 
$\mu \in {\mathcal P}_{2}(\R^l)$,
with the appropriate dimensions $k$ and $l$,
{there exists a continuously differentiable version of the 
mapping $\R^l \ni v \mapsto 
\partial_{\mu} h(w,\mu)(v)$ 
such that
the mapping $\R^k \times \R^l \ni (w,v) \mapsto 
\partial_{\mu} h(w,\mu)(v)$ is differentiable
(in both variables)
at any point $(w,v)$ such that $v \in \textrm{\rm Supp}(\mu)$, 
the partial derivative
$\R^k \times \R^l \ni (w,v)
\mapsto 
\partial_{v} [\partial_{\mu} h(w,\mu)](v)$
being continuous at any $(w,v)$
such that $v \in \textrm{\rm Supp}(\mu)$
and 
the partial derivative
$\R^k \times \textrm{\rm Supp}(\mu) \ni (w,v)
\mapsto 
\partial_{w} [\partial_{\mu} h(w,\mu)](v)$
being continuous in $(w,v)$}.   
With the same constants $\tilde{L}$ and $\alpha$ as in 
\HYP{1}, for $w \in \R^k$ and $\chi \in L^2(\Omega,{\mathcal A},\P;\R^l)$,
\begin{equation*}
\bigl\vert \partial_{ww}^2 h(w,[\chi])
\bigr\vert
+
{\mathbb E} \bigl[ \bigl\vert \partial_{w} \bigl[\partial_{\mu} h(w,[\chi])\bigr](\chi)
\bigr\vert^2 \bigr]^{1/2} 
+
{\mathbb E} \bigl[ \bigl\vert \partial_{v} \bigl[\partial_{\mu} h(w,[\chi])\bigr](\chi)
\bigr\vert^2 \bigr]^{1/2} \leq \tilde{L},
\end{equation*}
and, for $w,w' \in \R^k$ and $\chi,\chi' \in L^2(\Omega,{\mathcal A},\P;\R^l)$,
\begin{equation*}
\begin{split}
&\bigl\vert \partial_{ww}^2 h(w,[\chi]) - \partial_{ww}^2 
h (w',[\chi'])(\chi') \bigr\vert
\\
&\hspace{15pt} 
+ {\mathbb E} \bigl[ \bigl\vert \partial_{w}\bigl[ \partial_{\mu} h (w,[\chi])\bigr] (\chi)
- \partial_{w} \bigl[ \partial_{\mu} h (w',[\chi'])\bigr] (\chi')
\bigr\vert^2 \bigr]^{1/2} 
\\
&\hspace{15pt} + {\mathbb E} \bigl[ \bigl\vert \partial_{v} \bigl[ \partial_{\mu} h (w,[\chi])\bigr](\chi)
- \partial_{v} \bigl[ \partial_{\mu} h (w',[\chi']) \bigr](\chi')
\bigr\vert^2 \bigr]^{1/2} 
\\
&\leq \tilde{L} \bigl\{ 
\vert w- w' \vert 
+ 
\Phi_{\alpha}(\chi,\chi')
%
\bigr\},
\end{split}
\end{equation*}
In \HYP{2}, we include the assumption:
\end{Assumption}
\begin{Assumption}[\HYP{\sigma}]
The function $\sigma$ is bounded by $\tilde{L}$.
\end{Assumption}
{Note that \HYP{2} contains \HYP{0}(ii) (and obviously 
\HYP{0}(i)
and
\HYP{1}).}

\begin{Remark}
%
The specific form of \HYP{1} and \HYP{2} is dictated by our desire to establish 
results for arbitrary large horizons. 
Generally speaking, such results are established by means of a recursive argument, 
which consists in using the current value 
$U(t,\cdot,\cdot)$ of the decoupling field at time $t$ as a new boundary condition, or put it differently in letting 
$U(t,\cdot,\cdot)$ play at time $t$ the role of $g$ at time $T$
when the FBSDEs  \eqref{eq X-Y t,xi} and \eqref{eq X-Y t,x,mu} are considered on $[0,t]$ instead of $[0,T]$.  
A delicate point in this construction is to choose a space of boundary 
conditions which is stable, namely in which $U(t,\cdot,\cdot)$ remains along the recursion. We remark that we cannot prove that boundary conditions with globally 
Lipschitz derivatives in the measure argument are stable, even in small time. 
One of the contribution of the paper is thus to identify a space of terminal 
conditions which are indeed stable and which permits to apply the recursion method.
\end{Remark}

\begin{Remark}
\label{rem:comparaison:buck}
The reader may compare \HYP{0}, \HYP{1} and \HYP{2} with the assumptions
in \cite{buck:li:rain:peng}. 
We first point out that, 
in \cite{buck:li:rain:peng}, 
the first 
$L^2$ bound in \eqref{eq:to compare} is assumed to hold in $L^{\infty}$. 
The example 
$h([\xi])=\| \xi \|_{2}$, for which the derivative 
has the form $\partial_{\mu} h([\xi])(v) = v/\| \xi \|_{2}$, 
shows that asking $\partial_{\mu} h$ to be in $L^\infty$
is rather restrictive. 
We also observe that, differently from 
\cite{buck:li:rain:peng},
we do not require  
the coefficients to admit second-order derivatives of the 
type 
$\partial^2_{\mu\mu}$. 
The reason is that we here 
establish the chain rule for functions 
from $\cP_{2}(\R^d)$ to $\R$ that may not have second-order derivatives of the type 
$\partial^2_{\mu \mu}$, see
Theorem \ref{thm:partial:C2}.
\end{Remark}

\subsection{Main results: from short to long time horizons and application to control}

\label{subse:main:results}
Inspired by Assumptions 
{\HYP{0}(i)}, \HYP{1} and \HYP{2}, we let:
\begin{Definition}
\label{eq:sol:admissible}
Given non-negative real numbers $\beta,a,b$, with $a<b$, 
we denote by $\cD_{\beta}([a,b])$ the space of functions $V : [a,b] \times \R^d \times \cP_{2}(\R^d) 
\ni (t,x,\mu) \mapsto V(t,x,\mu) \in \R^m$ for which we can find a constant $C \geq 0$ such that

\textit{(i)} For any $t \in [a,b]$, the function $V(t,\cdot,\cdot) : \R^d \times 
\cP_{2}(\R^d) \ni (x,\mu) \mapsto V(t,x,\mu)$ satisfies the same assumption 
as $g$ in \HYP{0}{\rm (i)}, \HYP{1}
and \HYP{2}, but with $\alpha$ replaced by $\beta$ and with $L$ and $\tilde{L}$ replaced by $C$ (and thus 
with $w=x \in \R^d$, $v \in \R^d$ and $\chi \in L^2(\Omega,\cA,\P;\R^d)$
in the various inequalities where these letters appear);

\textit{(ii)} For any $x \in \R^d$ and $\mu \in \cP_{2}(\R^d)$, the function 
$[a,b] \ni t \mapsto V(t,x,\mu)$ is differentiable, 
{the derivative being 
continuous  with respect to $(t,x,\mu)$ on the set $[a,b] \times \R^d \times \cP_{2}(\R^d)$}. 
Moreover, the functions
\begin{eqnarray*}
[a,b] \times \R^d
\times L^2(\Omega,\cA,\P;\R^d)
\ni (t,x,\xi)&\mapsto& 
\partial_{x} V(t,x,[\xi]) \in \R^d, \\ 
\ [a,b]\times\R^d\times L^2(\Omega,\cA,\P;\R^d)
\ni (t,x,\xi) &\mapsto& 
\partial_{\mu} V(t,x,[\xi])(\xi) \in L^2(\Omega,\cA,\P;\R^d),\\
\ [a,b] \times \R^d
\times L^2(\Omega,\cA,\P;\R^d)
\ni (t,x,\xi) &\mapsto& 
\partial_{x}^2 V(t,x,[\xi]) \in \R^d,\\ 
\ [a,b] \times \R^d
\times L^2(\Omega,\cA,\P;\R^d)
\ni (t,x,\xi) &\mapsto& 
\partial_{x} [ \partial_{\mu} V(t,x,[\xi])](\xi)\in L^2(\Omega,\cA,\P;\R^d)\\
\ [a,b] \times \R^d
\times L^2(\Omega,\cA,\P;\R^d)
\ni (t,x,\xi) &\mapsto& 
\partial_{v} [
\partial_{\mu} V(t,x,[\xi])](\xi) \in L^2(\Omega,\cA,\P;\R^d) 
\end{eqnarray*}
are continuous. 
\end{Definition}
For the reader's convenience, when $[a,b]=[0,T]$, we will simply use the notation $\cD_\beta$
for $\cD_{\beta}([0,T])$.

\vspace{2mm}
The set $\bigcup_{\beta \geq 0}
\cD_{\beta}$ is the space we use below for investigating existence and uniqueness
of a ``classical'' solution to \eqref{eq:master:PDE}. 
For short time horizons, our main result
takes the following form 
(see Theorem 
\ref{prop:partial:C^2:u}):

\begin{Theorem}
\label{main:thm:short:time}
Under Assumption \HYP{2}, there exists a constant 
$c=c(L)$ ($c$ not depending upon $\tilde{L}$ nor $\alpha$) such that, 
for $T \leq c$, the function 
$U$ defined in \eqref{eq:decoupling:field} is in $\cD_{\alpha+1}$
and satisfies the PDE \eqref{eq:master:PDE}.
\end{Theorem}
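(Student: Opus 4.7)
The proof plan splits naturally into four stages that mirror the structure of the coefficients assumptions \HYP{0}(i), \HYP{1}, \HYP{2} and the target regularity class $\cD_{\alpha+1}$. Throughout I will exploit the crucial observation made after \eqref{eq X-Y t,x,mu}: once $\mu = [\xi]$ is fixed, the McKean--Vlasov system \eqref{eq X-Y t,x,mu} is just the classical (time-inhomogeneous) FBSDE \eqref{eq reec mkvfbsde} whose coefficients depend on the frozen flow $([\theta^{t,\xi,(0)}_r])_{r\in[t,T]}$. This lets me import Delarue's short-time results from \cite{del02} at every stage, at the cost of tracking how the data in the Wasserstein direction propagate through the construction.

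\textbf{Stage 1: short-time well-posedness and smoothness in $x$.} Under \HYP{0}(i) alone, a standard contraction argument on $\SP{2}\times\SP{2}\times\HP{2}$ produces $c=c(L)>0$ such that, for $T\le c$, both \eqref{eq X-Y t,xi} and \eqref{eq X-Y t,x,mu} are uniquely solvable; this is the input to Proposition \ref{pr thedecouplingproperty}, so $U(t,x,\mu)=\hat U_{t,\mu}(t,x)$ is well defined. Under the additional bounds of \HYP{2} on $x,y,z$-derivatives, the classical theory of \cite{del02} applied to \eqref{eq reec mkvfbsde} yields that $\hat U_{t,\mu}\in C^{1,2}([t,T]\times \R^d)$ with uniform bounds on $\partial_x\hat U_{t,\mu}$, $\partial^2_{xx}\hat U_{t,\mu}$, $\partial_s\hat U_{t,\mu}$, depending only on $L$ and $\tilde L$. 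Setting $s=t$ gives the $x$-smoothness parts of $\cD_{\alpha+1}$ and the PDE \eqref{eq PDE hat u} for the frozen-measure problem.

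\textbf{Stage 2: Fréchet differentiability in the initial condition (first-order $\mu$-derivatives).} I would lift the FBSDE to $L^2(\Omega,\cF_t,\P;\R^d)$ and show that the map $\xi\mapsto (X^{t,\xi},Y^{t,\xi},Z^{t,\xi})$ is Fréchet differentiable in $\SP{2}\times\SP{2}\times\HP{2}$. For $\eta\in L^2(\Omega,\cF_t,\P;\R^d)$, the Gâteaux derivative $(\dot X,\dot Y,\dot Z)$ should satisfy a linear McKean--Vlasov FBSDE obtained by formally differentiating \eqref{eq X-Y t,xi}: the drift/diffusion/driver pick up both a ``pointwise'' term (derivatives in $w=(x,y,z)$ contracted with $(\dot X,\dot Y,\dot Z)$) and a ``distributional'' term of the form $\hat\E[\partial_\mu h(\cdot,[\theta^{(0)}_r])(\widehat\theta^{(0)}_r)\,\widehat{\dot\theta}^{(0)}_r]$, where the hat denotes an independent copy on $(\hat\Omega,\hat\cA,\hat\P)$. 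The short-time contraction gives unique solvability of this linear system on $[t,T]$ and a bound linear in $\|\eta\|_2$. A now-standard remainder estimate---Taylor-expanding the coefficients using \HYP{1} and absorbing the error via Grönwall---promotes Gâteaux to Fréchet differentiability. Setting the forward starting point to be deterministic $x$ and evaluating at $s=t$ gives a bounded linear functional $\eta\mapsto \dot Y^{t,x,\xi}_t$ on $L^2$, which by Riesz identifies with $\partial_\mu U(t,x,[\xi])(\xi)$. The pointwise representative $v\mapsto \partial_\mu U(t,x,\mu)(v)$ and its continuity at points $(v,\mu)$ with $v\in\mathrm{Supp}(\mu)$ are obtained by combining this Riesz representation with Proposition \ref{prop:lipschitz:lifted}, using the uniform integrability hypothesis in \HYP{1}. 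The linear growth in $v$ costs one power of $\alpha$ in the quadratic integrability, which is exactly the source of the shift from $\alpha$ to $\alpha+1$ in the conclusion.

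\textbf{Stage 3: second-order derivatives and the class $\cD_{\alpha+1}$.} Differentiating the linear system for $(\dot X,\dot Y,\dot Z)$ once more in $\xi$ (to get $\partial_\mu U\partial_\mu U$-type terms) and in $x$ (to get $\partial_x \partial_\mu U$), and differentiating Stage 1's equations in $v$ via the flow $X^{t,v,\mu}$ perturbed in its spatial starting point (to get $\partial_v\partial_\mu U$), gives linear FBSDEs whose short-time contraction analysis produces the remaining derivatives in the definition of $\cD_{\alpha+1}$. The main book-keeping is the Lipschitz/continuity estimate with the functional $\Phi_\alpha$: since the random variables entering these linear equations pick up at most a linear multiplicative factor in $|\xi|$, the output satisfies the modulus of continuity of \HYP{2} with $\alpha$ replaced by $\alpha+1$, which is precisely the exponent in $\cD_{\alpha+1}$. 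Verifying the continuity of each of the five maps listed in Definition \ref{eq:sol:admissible}(ii) amounts to a stability estimate for the associated linear FBSDE in $L^2$ along perturbations of $(t,x,\xi)$; the non-trivial point is joint continuity in $t$, which follows from the $t$-continuity of $[X^{t,\xi}_s]$ together with the already-proven $\mu$-continuity.

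\textbf{Stage 4: deriving the PDE.} Once $U\in\cD_{\alpha+1}$ is established, I apply the chain rule of Section \ref{se chain rule} to the map $s\mapsto U(s+h,x,[X^{t,\xi}_s])$ through the decomposition \eqref{eq:expansion:master:PDE} and pass to the limit $h\downarrow0$; the continuity of the $\mu$-derivatives from Stage 3 legitimates the limit. Combining \eqref{eq:time:derivative:U} with the frozen PDE \eqref{eq PDE hat u} at $s=t$, and using the transfer theorem to rewrite the $\hat\E$-expectations as $\mu$-integrals, yields \eqref{eq:master:PDE}. \textbf{The principal obstacle} is Stage 3: one must simultaneously close a contraction argument for several coupled linear FBSDEs and verify that the resulting derivatives satisfy the precise continuity statements required by $\cD_{\alpha+1}$, tracking the interplay between the functional $\Phi_\alpha$, the uniform square-integrability of $D\mathcal H(w,\chi)$, and the passage $\alpha\to\alpha+1$. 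The key technical trick is that the smallness of $T$ depends only on $L$ (not on $\tilde L$ or $\alpha$), because the bound $L$ controls the operator norms of the linearizations while $\tilde L$ and $\Phi_\alpha$ only enter the stability moduli.
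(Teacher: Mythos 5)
Your Stages 1, 2 and 4 follow the paper's route faithfully: short-time solvability and $x$-regularity from \cite{del02} applied to the frozen system \eqref{eq reec mkvfbsde}, first-order $\mu$-differentiability via the linearized McKean--Vlasov FBSDE and Riesz representation (G\^ateaux promoted to Fr\'echet), and the derivation of \eqref{eq:master:PDE} from \eqref{eq:expansion:master:PDE} and the chain rule. Your remark that the smallness of $T$ depends only on $L$, with $\tilde L$ and $\Phi_\alpha$ entering only the stability moduli, is also exactly the mechanism used in the paper, as is the origin of the shift $\alpha\to\alpha+1$ in the polynomial growth of $v\mapsto\partial_\mu h(w,\mu)(v)$.

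Stage 3, however, contains a genuine gap. ``Differentiating the linear system for $(\dot X,\dot Y,\dot Z)$ once more in $\xi$'' in an arbitrary $L^2$ direction would produce, in the coefficients of the twice-linearized system, terms of the type $\partial^2_{\mu\mu}h$ --- second-order derivatives of $b,\sigma,f,g$ in the measure direction --- and these are \emph{not} assumed to exist under \HYP{2} (this is precisely the point emphasized in Remark \ref{rem:comparaison:buck}). Moreover, one does not need them: the chain rule of Theorem \ref{thm:partial:C2} only requires $\partial_v[\partial_\mu U]$, i.e.\ \emph{partial} $\cC^2$ regularity. The missing idea is that the second differentiation must be restricted to families $(\xi^\lambda)_{\lambda}$ that are continuously differentiable in $L^2$ \emph{and identically distributed in $\lambda$}; then $[\theta^{\lambda,(0)}]$ is constant in $\lambda$, every $\partial^2_{\mu\mu}$ and $\partial_\mu[\partial_w]$ term drops out of the linearization (see the discussion after \eqref{eq:coeff:H2}), and the derivative $[\ud/\ud\lambda]_{\lambda=0}\,\partial_\chi Y_t^{t,x,[\xi^\lambda]}$ isolates exactly the $v$-derivative of $\partial_\mu U(t,x,\mu)(\cdot)$. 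Constructing such law-preserving differentiable families is non-trivial and is done in the paper by the Gaussian rotation/arcsine coupling in the proof of Theorem \ref{thm:ito:frechet}, which also supplies the abstract criterion identifying the resulting bilinear form with $\E[\mathrm{Tr}(\partial_v[\partial_\mu U(\mu)](\xi)(\chi\otimes e))]$. Finally, your suggestion to obtain $\partial_v[\partial_\mu U]$ by ``perturbing the spatial starting point of the flow $X^{t,v,\mu}$'' computes the wrong object: the variable $v$ in $\partial_\mu U(t,x,\mu)(v)$ is the representation variable of the Riesz kernel in $L^2(\R^d,\mu)$, not an initial condition of the forward flow, so differentiating $X^{t,v,\mu}$ in $v$ yields information about $\partial_x U$ rather than about $\partial_v[\partial_\mu U(t,x,\mu)](\cdot)$.
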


Uniqueness holds in the class $\bigcup_{\beta \geq 0} \cD_{\beta}$:
\begin{Theorem}
\label{main:thm:uniqueness}
Under \textcolor{black}{ \HYP{0}{\rm (i)}} and \HYP{\sigma}, there exists at most one solution 
to the PDE \eqref{eq:master:PDE} in the class $\bigcup_{\beta \geq 0} \cD_{\beta}$
(regardless of how large the time horizon $T$ is). 
\end{Theorem}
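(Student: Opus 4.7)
The plan is a verification argument: from any classical solution $U \in \cD_\beta$ of \eqref{eq:master:PDE} with terminal condition $g$, I will produce, via the ansatz $Y_s = U(s, X_s, [X_s])$, a solution of the McKean--Vlasov FBSDE \eqref{eq intro XYZ}, and then invoke short-time pathwise uniqueness of such systems together with a backward induction along a suitably chosen mesh on $[0,T]$.

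More precisely, fix $t_0 \in [0,T)$ and $\xi \in L^2(\Omega, \cF_{t_0}, \P; \R^d)$ with law $\mu$. Given $U \in \cD_\beta$, the regularity of $U$ and of its spatial gradient encoded in $\cD_\beta$, combined with \HYP{0}(i) and \HYP{\sigma}, ensure that the coefficients $b(x, U(s,x,\cdot), \partial_x U(s,x,\cdot)\,\sigma(x, U(s,x,\cdot),\cdot), \cdot)$ and $\sigma(x, U(s,x,\cdot), \cdot)$ are Lipschitz in $(x,\mu)$. Hence the associated McKean--Vlasov forward SDE starting from $\xi$ at $t_0$ is well-posed; call its solution $(X_s)_{s \in [t_0,T]}$. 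The analogous ``frozen'' SDE \eqref{eq reec mkvfbsde} with a deterministic initial datum $x_0$ but still driven by the measure flow $[X_s]$ is likewise well-posed and yields a process $(X_s^{x_0})$. Applying the chain rule of Section \ref{se chain rule} to $s \mapsto U(s, X_s^{x_0}, [X_s])$, i.e. combining the usual Itô formula in the Euclidean variables with \eqref{eq:chain:rule:PDE} for the $\mu$-direction, and using that $U$ satisfies \eqref{eq:master:PDE}, the bounded-variation part telescopes into $-f(X_s^{x_0}, Y_s^{x_0}, Z_s^{x_0}, \nu_s)\,\ud s$, where $Y_s^{x_0} := U(s, X_s^{x_0}, [X_s])$ and $Z_s^{x_0} := \partial_x U(s, X_s^{x_0}, [X_s])\,\sigma(X_s^{x_0}, Y_s^{x_0}, \nu_s)$. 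Thus $(X^{x_0}, Y^{x_0}, Z^{x_0})$ solves \eqref{eq reec mkvfbsde} with terminal datum $g(\cdot, [X_T])$; setting $x_0 = \xi$ gives a genuine solution of \eqref{eq X-Y t,xi}.

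Now let $U^1 \in \cD_{\beta_1}$ and $U^2 \in \cD_{\beta_2}$ be two classical solutions of \eqref{eq:master:PDE} sharing the terminal condition $g$. Pick a mesh $0 = s_0 < s_1 < \cdots < s_N = T$ with $s_{k} - s_{k-1} \le c(L)$, where $c(L)$ is the short-time well-posedness threshold for the McKean--Vlasov FBSDE \eqref{eq X-Y t,xi} under \HYP{0}(i) and \HYP{\sigma}, obtained by transposing the contraction argument of \cite{del02} to the McKean--Vlasov setting as in \cite{carmona:delarue:ecp}; crucially, \HYP{\sigma} ensures that $c(L)$ depends only on the Lipschitz data of $(b,f,\sigma,g)$ and not on the individual $U^i$. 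On the last subinterval $[s_{N-1}, T]$, for every $\xi$ with $[\xi]=\mu$, the two triples produced above for $U^1$ and $U^2$ solve the same FBSDE \eqref{eq X-Y t,xi} with the same terminal data; pathwise uniqueness forces their equality, and inserting a deterministic $x_0$ in place of $\xi$ in the frozen FBSDE gives $U^1(s_{N-1}, x_0, \mu) = U^2(s_{N-1}, x_0, \mu)$ for all $(x_0, \mu)$. Iterating backward on the mesh, using at each step the common value $U^1(s_k, \cdot, \cdot) = U^2(s_k, \cdot, \cdot)$ as a new terminal condition on $[s_{k-1}, s_k]$, propagates equality down to $s_0 = 0$.

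The main obstacle is the rigorous justification of the chain rule applied to $U(s, X_s^{x_0}, [X_s])$ along the McKean--Vlasov flow: one must verify that the version of $\partial_\mu U$ provided by the partial $\cC^2$ regularity in $\cD_\beta$ is the one that represents the $L^2$-Fr\'echet derivative used in \eqref{eq:chain:rule:PDE}, and that the resulting $Z$-component lies in $\HP{2}$ so that the triple is a legitimate solution of \eqref{eq X-Y t,xi} to which the short-time uniqueness of \cite{carmona:delarue:ecp} can be applied. A secondary technical point is that the short-time threshold $c(L)$ must be uniform over the two solutions, which is precisely why uniqueness can be stated under only \HYP{0}(i) and \HYP{\sigma}, independently of $\beta$, $\tilde L$ and $\alpha$.
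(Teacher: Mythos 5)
Your overall architecture (turn a classical solution into a solution of the McKean--Vlasov system via the ansatz $Y_s=U(s,X_s,[X_s])$, then compare) is the right one, but there is a genuine gap at the very first step: the well-posedness of the forward McKean--Vlasov SDE. You assert that the regularity encoded in $\cD_\beta$ makes the composed coefficients $b\bigl(x,U(s,x,\mu),\partial_x U(s,x,\mu)\sigma(\cdots),\cdots\bigr)$ Lipschitz in $(x,\mu)$, so that the forward equation is well-posed by a contraction (Sznitman-type) argument. This is false in general: the definition of $\cD_\beta$ only controls the increments of $\partial_x U$ in the measure direction through the functional $\Phi_{\beta}$, which allows the local Lipschitz constant to grow polynomially (exponent $\beta$) in the moments of the underlying random variables. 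Since the drift of \eqref{eq:MKV:SDE} involves $\partial^\sigma_x U=\partial_x U\,\sigma$, it is \emph{not} globally Lipschitz in $\mu$, and the fixed-point/contraction route is unavailable. This is precisely the obstruction the paper flags explicitly; it circumvents it by proving only \emph{existence} of a solution of \eqref{eq:MKV:SDE} via Schauder's theorem, after exhibiting a convex compact set of measure flows (built from uniform second-moment and uniform-integrability bounds) that is left invariant by the solution map. Without this (or an equivalent compactness argument), the process $(X_s)$ on which your whole verification rests is not known to exist.

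Assuming existence of the forward process is repaired, your second half also diverges from the paper's and carries a secondary issue. The paper does \emph{not} iterate over a mesh: it fixes one forward process $X$ built from $U$, applies the chain rule to both $U(s,X_s,[X_s])$ and $U'(s,X_s,[X_s])$ along this \emph{same} $X$, and closes a global BSDE stability estimate (Gronwall, using $\|\partial_x U'\|_\infty<\infty$ and \HYP{\sigma}) on all of $[t,T]$ at once; this needs neither uniqueness of the forward SDE nor short-time FBSDE solvability. Your mesh induction would additionally require the short-time threshold on the interior subintervals to be computed with respect to the Lipschitz constant of the intermediate terminal data $U^i(s_k,\cdot,\cdot)$, which is the constant $C$ of $\cD_{\beta_i}$ and not $L$; so the uniform mesh size is $c(\max(L,C_1,C_2))$ rather than $c(L)$. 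That is fixable for a uniqueness statement (the $C_i$ are finite for the given pair of solutions), but as written the claim that $c(L)$ suffices is not justified. The paper's single-pass comparison avoids both complications.
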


\new{The} extension to \new{arbitrarily large time horizons} will be discussed in Section \ref{se app}. 
The principle for extending the result from small to long horizons has been already \new{covered} in 
several papers, including \cite{del02,ma:wu:zhang:zhang}. Basically, the principle is to prove that, \new{following the recursive step}, 
the decoupling field remains in a space of admissible boundary conditions 
for which the length of the interval of solvability can be bounded from below. 
Generally speaking, this requires, first, to 
\new{identify} a class of functions on $\R^d \times \cP_{2}(\R^d)$ \new{left invariant by the recursive step} 
and, second, to control the Lipschitz constant
of the decoupling field, uniformly along the \new{recursion.} In the current framework, the Lipschitz constant means the Lipschitz constant in 
both the space variable and the measure argument. 

As
suggested by 
Theorem
\ref{main:thm:short:time},
we are not able to prove that \new{the space $\cD_{\beta}$, for a fixed 
$\beta \geq 0$ is left invariant by the recursion step}. 
\new{In particular, for} the case $\beta=0$, this means that, even in short time, 
we \new{cannot} prove that
the decoupling field has Lipschitz derivatives in the direction of the measure 
when the terminal condition $g$ has Lipschitz derivatives.
This \new{difficulty}  motivates the specific form of 
the local Lipschitz assumption in \HYP{1} and 
\HYP{2}. Indeed, Theorem
\ref{main:thm:short:time} shows that
 the set $\bigcup_{\beta \geq 0} \cD_{\beta}$ is preserved by the dynamics
of the master PDE under \HYP{2} although 
none of the sets $\cD_{\beta}$ has been shown to be stable. 
\new{More precisely,} 
we allow the 
exponent $\alpha$ in \HYP{1} and \HYP{2} to increase by $1$ at each step \new{of the recursion}, 
Theorem
\ref{main:thm:short:time}
guaranteeing that
the set $\bigcup_{\beta \geq 0} \cD_{\beta}$ is indeed stable along the induction.


In Section \ref{se app}, we give three examples when the Lipschitz constant of the decoupling field can be indeed controlled. 
First, we consider the forward-backward system deriving from the tailor-made version of the 
stochastic Pontryagin principle 
for mean-field games. Then, we establish a Lipschitz estimate of $U$, 
in the case when the extended Hamiltonian of the control problem 
is convex in both the state and control variables
 and when the Lasry-Lions monotonicity 
condition that guarantees uniqueness of the equilibrium is satisfied (see 
\cite{cardaliaguet}). 
We then interpret $U$ as the gradient in space of the solution of 
the master equation that arises in the theory of mean-field games
and,  
as a byproduct, we get that, in this framework, the master equation for mean-field games
is solvable. 
Second, we propose another approach to handle 
the master equation for mean-field games when the extended Hamiltonian is not convex in $x$. 
We directly express the solution of the master equation as 
the decoupling field of a forward-backward system of the McKean-Vlasov type. 
We then prove the required Lipschitz estimate of $U$ 
 when the cost functionals are bounded in $x$ and are linear-quadratic in $\alpha$, 
 the volatility is non-degenerate 
 and the Lasry-Lions condition is in force. 
Third, we consider the forward-backward system deriving from the stochastic 
Pontryagin principle, when 
applied to the control of McKean-Vlasov diffusion processes.  Then, we establish 
a similar estimate
for the Lipschitz control of $U$, but under a
stronger convexity assumption of the extended Hamiltonian --namely, convexity must
hold in the state and control variables and also 
in the direction of the measure-- (in which case there is no need of the Lasry-Lions condition).
Again, this permits \new{us} to deduce that the master equation associated to 
the control problem \new{has a global classical solution}.

We may summarize with the following statement 
(again, we refer to Section \ref{se app} for a complete account):

\begin{Theorem}
\label{main:thm:3}
We can find general examples taken from large population stochastic control such that,
for a given $T>0$, 
\eqref{eq X-Y t,xi}
and \eqref{eq X-Y t,x,mu} have a unique solution and the decoupling field 
$U$ belongs to $\bigcup_{\beta \geq 0} \cD_{\beta}$ and 
satisfies the PDE \eqref{eq:master:PDE}. In particular, the corresponding forward equation \eqref{form0} has a unique classical solution on $[0,\infty)$.
\end{Theorem}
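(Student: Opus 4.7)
The plan is to extend the short-time result of Theorem \ref{main:thm:short:time} to the full interval $[0,T]$ by a backward induction on sub-intervals of length $c=c(L)$, and then to check that the required a priori Lipschitz estimate on $U$ holds in each of the three announced classes of examples. First I would apply Theorem \ref{main:thm:short:time} on $[T-c,T]$ to obtain $U \in \cD_{\alpha+1}([T-c,T])$ solving \eqref{eq:master:PDE}. I would then use $U(T-c,\cdot,\cdot)$ as a new terminal condition on $[T-2c,T-c]$ and re-apply Theorem \ref{main:thm:short:time}, iterating backwards until $t=0$ is reached. At each restart the admissibility exponent may jump from $\beta$ to $\beta+1$, but this is precisely why we work in the union $\bigcup_{\beta \geq 0}\cD_\beta$ rather than in a single $\cD_\beta$; the remark following Theorem \ref{main:thm:short:time} already stresses that this union is the natural stable class.

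The main obstacle is controlling the Lipschitz constant of $U(t,\cdot,\cdot)$, both in $x$ and in $\mu$ for $W_2$, \emph{uniformly} in $t \in [0,T]$. The time-step $c(L)$ in Theorem \ref{main:thm:short:time} only depends on the Lipschitz data $L$ of \HYP{0}{\rm (i)}, so that if the Lipschitz norm of the successive terminal conditions $U(T-kc,\cdot,\cdot)$ blew up with $k$, then $c$ would shrink to zero and the induction would not reach $t=0$. The hard part of the proof is therefore to establish an a priori Lipschitz bound on $U(t,\cdot,\cdot)$, uniform in $t$, from the structural assumptions of each example.

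For each of the three cases I would derive this bound as follows. In the first mean-field game setting, where \eqref{eq intro XYZ} arises from the stochastic Pontryagin principle, I would write the system for two different initial data $(x,\mu)$ and $(x',\mu')$, take the duality pairing between the forward difference and the backward difference, and use convexity of the extended Hamiltonian together with the Lasry--Lions monotonicity condition to kill the off-diagonal cross-terms; this yields a quantitative $L^2$--Lipschitz estimate of $U$ in the $W_2$ sense. In the second, non-convex mean-field game setting, I would instead represent $U$ directly through a McKean--Vlasov FBSDE and use non-degeneracy of $\sigma$, boundedness of the costs in $x$, and the linear-quadratic dependence in the control, again combined with Lasry--Lions, to obtain the same kind of estimate via Malliavin/BMO-type bounds on $Z$. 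In the McKean--Vlasov control example, the strengthened convexity of the Hamiltonian in state, control \emph{and} measure direction is by itself sufficient to produce the Lipschitz bound, with no Lasry--Lions hypothesis.

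With the a priori Lipschitz bound at hand, the short-time horizon $c(L)$ stays bounded away from zero along the induction, so $t=0$ is reached after finitely many steps. The resulting $U$ lies in $\bigcup_{\beta \geq 0}\cD_\beta$ and satisfies \eqref{eq:master:PDE} on $[0,T]$; setting $u(t,\cdot,\cdot)=U(T-t,\cdot,\cdot)$ gives a classical solution of \eqref{form0} on $[0,T]$, and since $T>0$ is arbitrary this extends to $[0,\infty)$. Uniqueness in $\bigcup_{\beta \geq 0}\cD_\beta$ is then provided directly by Theorem \ref{main:thm:uniqueness}, whose hypotheses \HYP{0}{\rm (i)} and \HYP{\sigma} are part of the standing assumptions of each of the three examples.
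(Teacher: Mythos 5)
Your overall architecture matches the paper's: backward induction on sub-intervals of length $c$, with the union $\bigcup_{\beta}\cD_{\beta}$ absorbing the jump $\alpha\mapsto\alpha+1$ at each restart, and an a priori Lipschitz bound on $U(t,\cdot,\cdot)$ keeping the step size from degenerating; the three example classes are also attacked with essentially the right structural hypotheses.

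There is, however, a genuine gap in the step where you claim the duality pairing ``yields a quantitative $L^2$--Lipschitz estimate of $U$ in the $W_2$ sense.'' The convexity/Lasry--Lions computation is performed on the McKean--Vlasov system \eqref{eq X-Y t,xi}, where the initial condition is a single random variable $\xi$, and what it delivers is only the \emph{diagonal} estimate
\begin{equation*}
\E \bigl[ |Y^{t,\xi}_t - Y^{t,\xi'}_t|^2 \bigr]^{1/2} \le \Lambda\, \E\bigl[|\xi-\xi'|^2\bigr]^{1/2},
\end{equation*}
i.e.\ a control of $U(t,\xi,[\xi])-U(t,\xi',[\xi'])$ in which the spatial and measure arguments vary \emph{together}. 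What the induction actually requires is a Lipschitz bound in $x$ and $\mu$ \emph{separately} (equivalently, a uniform bound on $\|\partial_{\mu}U(t,x,\mu)(\cdot)\|_{L^2(\mu)}$ for all $x$, not just $x$ distributed according to $\mu$), because the constant $c$ in the short-time theorem is calibrated on the full Lipschitz norm of the terminal condition. The decoupled system \eqref{eq X-Y t,x,mu} does not carry the convexity structure needed to run the duality argument with independent $(x,\mu)$ and $(x',\mu')$, so the off-diagonal bound does not follow from the pairing alone. The paper closes this gap with a separate lemma: it first extracts the Lipschitz bound in $x$ from the diagonal estimate via the representation of $\partial_\mu$ on the lifted space, and then bounds $\sup_{x,\mu}\|\partial_{\mu}U(t,x,\mu)(\cdot)\|_{L^2(\mu)}$ by iterating the linearized systems over small time intervals, the bound propagating as $C^{T/\tilde c}$ --- finite for each fixed $T$ but not obtainable in one stroke from the monotonicity argument. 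Without this upgrade your induction cannot restart. A secondary, minor point: in the second (non-degenerate, quadratic-in-$z$) example, \HYP{0}{\rm(i)} fails literally for the driver, so invoking Theorem \ref{main:thm:uniqueness} requires first truncating the quadratic term, which is legitimate only after one knows that any solution in $\bigcup_{\beta}\cD_{\beta}$ has a bounded gradient.
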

 
\subsection{Frequently used notations.}
For two random variables $X$ and $X'$, 
the relationship $X \sim X'$ means that $X$ and $X'$ have the same distribution. 
The conditional expectation given ${\mathcal F}_{t}$ is denoted by $\E_{t}$. 
Let $t\in [0,T)$. For a progressively-measurable process $(X_{s})_{s \in [t,T]}$ with values in 
$\R^{l}$, for some integer $l \leq 1$, we let
\begin{equation}
\label{eq:norms:NHS}
\begin{split}
 \NHt{p}{X} &:= \E_{t}\biggl[ \left(\int_t^T \vert X_s \vert^2 \ud s\right)^{p/2} \biggr]^{1/p},\quad \NSt{p}{X} := \E_{t} \Bigl[ 
 \sup_{s \in [t,T]}|X_s|^p \Bigr]^{1/p},
\\
\NH{p}{X} &:= \E \biggl[ \left(\int_t^T \vert X_s \vert^2\ud s\right)^{p/2} \biggr]^{1/p},\quad \NS{p}{X} := \E \Bigl[ \sup_{s \in [t,T]}|X_s|^p\Bigr]^{1/p}.
\end{split}
\end{equation}
In particular, we denote by $\cS^{p}([t,T];\R^l)$ the space of continuous and adapted
 random processes from $[t,T]$ to $\R^l$
 with a finite \new{norm}  $\NS{p}{\cdot}$  and by $\cH^{p}([t,T];\R^l)$ the space 
 of progressively-measurable processes from 
 $[t,T]$ to $\R^l$
 with a finite norm $\NH{p}{\cdot}$.
 
In the sequel, the generic letter $C$ is used for denoting constants the value of which 
may often vary from line to line. Constants
whose precise values have a fundamental role in the analysis will 
be denoted by letters \new{distinct from $C$}.


\def \di{\dot{\i}}
\def \bi{\bar{\i}}
\def \X{\mathbb X}
\def \B{\bar{B}}
\def \V{\mathbb V}
\def \Z{\mathbb Z}
\def \R{\mathbb R}
\def \E{\mathbb E}
\def \N{\mathbb N}
\def \P{\mathbb P}
\def \mfz{\mathfrak z}
\def \mfv{\mathfrak v}
\def \mfx{\mathfrak x}

\providecommand{\U}[1]{\protect\rule{.1in}{.1in}}
\newcommand{\bmf}[1]{\boldsymbol{#1}}

\section{Chain rule -- application to the proof of Theorem \ref{main:thm:uniqueness}}
\label{se chain rule}
In this section, we 
discuss the chain rule used in \eqref{eq:chain:rule:PDE}
and \new{apply it to prove} Theorem \ref{main:thm:uniqueness}. Namely, we provide a chain rule for $(U(\mu_{t}))_{t \geq 0}$ where 
$U$ is an $\R$-valued smooth functional defined on the space 
${\mathcal P}_{2}(\R^d)$  and 
$(\mu_{t})_{t \geq 0}$ is the flow of marginal measures of an $\R^d$-valued
It\^o process $(X_{t})_{t \geq 0}$.  

There are two strategies to expand $(U(\mu_{t}))_{t \geq 0}$.  
The first one consists{, for a given $t >0$,} in 
dividing the interval $[0,t]$ into \new{sub-intervals} of length $h = t/N$, 
for some integer $N \geq 1$, and then in splitting
the difference $U(\mu_{t})-U(\mu_{0})$ accordingly:
\begin{equation*}
U(\mu_{t}) - U(\mu_{0}) = \sum_{i=0}^{N-1} \bigl[ U(\mu_{i h}) - U(\mu_{(i-1)h}) \bigr].
\end{equation*}
\new{The differences $U(\mu_{i h}) - U(\mu_{(i-1)h})$ are expanded} by applying 
Taylor's formula at order 2. Since the \new{order of the} remaining terms in the Taylor expansion are expected to be
smaller than the step size $h$, we can derive the chain rule by 
letting $h$ tend to $0$. 
This strategy fits the original proof of It\^o's differential calculus 
and is presented in details in \cite[Section 6]{buck:li:rain:peng}
and in \cite[Section 6]{carmona:delarue:lyons}. 

\new{An alternative}  strategy consists in approximating the dynamics differently. Instead of 
discretizing in time as in the previous \new{strategy}, it is conceivable to reduce the 
space dimension by approximating the flow $(\mu_{t})_{t \geq 0}$  \new{with} the flow of 
empirical measures 
\begin{equation*}
\biggl( \bar{\mu}_{t}^N = \frac{1}{N} \sum_{\ell=1}^N \delta_{X_{t}^\ell} \biggr)_{t \geq 0},  \ \  N\ge 1,
\end{equation*}
where $(X^1_{t})_{t \geq 0},\dots, (X^N_{t})_{t \geq 0}$ stand for $N$ independent copies of $(X_{t})_{t \geq 0}$. Letting 
\begin{equation}
\label{eq:23:10:4}
\forall x^1,\dots,x^N \in \R^d, \quad u^N(x^1,\dots,x^N) 
= U \biggl( \frac{1}{N} \sum_{\ell=1}^N \delta_{x^\ell} \biggr),
\end{equation}
\new{we} expand $(u^N(X_{t}^1,\dots,X_{t}^N))_{t \geq 0}$ by standard It\^o's formula. 
Letting $N$ tend to the infinity, we then expect to recover the same chain rule as the one obtained by the first method. 
Here $u^N$ \new{is interpreted as a}  finite dimensional projection of $U$.

The first strategy  mimics the proof of the 
standard chain rule. The second one gives \new{an insight into the significance} 
of the differential 
calculus on the space of probability measures
introduced by Lions in \cite{cardaliaguet}. Both strategies require some 
smoothness conditions on $U$: Clearly, $U$ must be twice differentiable in some 
suitable sense. \new{From this viewpoint,} the strategy by 
particle approximation  \new{is} advantageous: Taking benefit of the finite 
dimensional framework,  \new{by using a }  standard 
mollification argument \new{it works under weaker} 
smoothness conditions required on the coefficients. 
In particular, 
differently from \cite{buck:li:rain:peng,carmona:delarue:lyons},
we do not require the existence of $\partial^2_{\mu\mu} U$ to prove
the chain rule, see
Theorem \ref{thm:partial:C2}.

\subsection{Full ${\mathcal C}^2$ regularity} 
\label{subse:ito:full}
We first remind the reader of the \new{notion of} 
lifted version of $U$. On $L^2(\Omega,\cA,\P;\R^d)$
(the $\sigma$-field $\cA$ being prescribed), we let 
\begin{equation*}
{\mathcal U}(X) = U\bigl( \law{X} \bigr), \quad X \in L^2(\Omega,\cA,\P;\R^d). 
\end{equation*}
Instead of $(\Omega,\cA,\P)$, we could use $(\hat{\Omega},\hat{\cA},\hat{\P})$, but since
no confusion is possible with the ``physical'' random variables that appear in 
\eqref{eq X-Y t,xi} and \eqref{eq X-Y t,x,mu}, we \new{continue}  to work on 
$(\Omega,\cA,\P)$. 

\new{
Following Lions' approach (see \cite[Section 6]{cardaliaguet}), the mapping $U$ is said to be differentiable on the 
Wasserstein space if the lift ${\mathcal U}$ is differentiable in the sense of Fr\'echet on $L^2(\Omega,\cA,\P)$. 
By Riesz' theorem, the Fr\'echet derivative 
$D {\mathcal U}(X)$, seen} 
as an element of $L^2(\Omega,\cA,\P;\R^d)$, can be represented as 
\begin{equation*}
D {\mathcal U}(X) = \partial_{\mu} U \bigl( \law{X} \bigr)(X),
\end{equation*}
where $\partial_{\mu} U(\law{X}) : \R^d \ni v \mapsto \partial_{\mu} U(\law{X})(v) \in \R^d$ is in $L^2(\R^d,\mu;\R^d)${, see 
\cite[Section 6]{cardaliaguet}.}
Recall that, as a gradient, $\partial_{\mu} U(\law{X})(v)$ will be seen as a row vector. 

A natural question to investigate is the joint regularity of the function $\partial_{\mu} U$ with respect to the variables
$\mu$ and $v$. This requires a preliminary analysis for choosing a `canonical version' of the mapping 
$\partial_{\mu} U(\mu) : \R^d \ni v \mapsto \partial_{\mu} U(\mu)(v) \in \R^d$, which is \textit{a priori} defined just as an element of 
$L^2(\R^d,\mu;\R^d)$. In this perspective, a reasonable strategy consists in choosing a continuous version of the derivative 
if such a version exists. 
For instance, whenever $D {\mathcal U}$ is Lipschitz continuous, we know from \cite{carmona:delarue:aop} that, for any $\mu \in {\mathcal P}_{2}(\R^d)$, there exists a Lipschitz continuous version of the function 
$\R^d \ni v \mapsto \partial_{\mu} U(\mu)(v)$, with 
a Lipschitz constant independent of $\mu$. This result is made precise in Proposition \ref{prop:lipschitz:lifted} below.

{The choice of a continuous version is especially meaningful when the support of $\mu$ is the entire 
$\R^d$, 
in which case the continuous version is uniquely defined. 
Whenever the support of $\mu$ is strictly 
included in $\R^d$, 
some precaution is however needed, as 
the continuous version may be arbitrarily defined outside the support 
of $\mu$. 
To circumvent this difficulty, 
one might be tempted to look for a version of 
$\partial_{\mu} U(\mu) : \R^d \ni v \mapsto \partial_{\mu} U(\mu)(v) \in \R^d$,
for each $\mu \in {\mathcal P}_{2}(\R^d)$, such that 
the global mapping
\begin{equation}
\label{eq:ito:def:partialU:liminf:2}
{\mathcal P}_{2}(\R^d) \times \R^d \ni
(\mu,v) \mapsto \partial_{\mu} U(\mu)(v) \in \R^d
\end{equation}
is continuous. Noticing, by means of a convolution argument, that the set
$\{ \mu' \in {\mathcal P}_{2}(\R^d) : \textrm{\rm supp}(\mu')
=\R^d\}$
is dense
in 
${\mathcal P}_{2}(\R^d)$, this would indeed permit 
to uniquely determine the value
of 
$\partial_{\mu} U(\mu)(v)$
for $v$
outside the support of $\mu$ (when it is strictly included in $\R^d$).} 

{
Unfortunately, in the practical cases handled below, 
the best we can do is to find a version 
of 
$\partial_{\mu} U(\mu) : \R^d \ni v \mapsto \partial_{\mu} U(\mu)(v) \in \R^d$,
for each $\mu \in {\mathcal P}_{2}(\R^d)$, such that 
the global mapping
\eqref{eq:ito:def:partialU:liminf:2}
is 
continuous at the points $(\mu,v)$ such that $v \in \textrm{\rm supp}(\mu)$.  
}

{The fact that 
$\partial_{\mu} U(\mu) : \R^d \ni v \mapsto \partial_{\mu} U(\mu)(v) \in \R^d$ is not uniquely determined outside the
support of $\mu$ is not a problem 
for investigating the differentiability of $\partial_{\mu} U(\mu)$ in $v$. 
It is an issue only 
for investigating the differentiability in $\mu$.} 
We thus say that the chosen version 
of $\R^d \ni v \mapsto \partial_{\mu} U(\mu)(v)$ is differentiable in 
$v$, 
{
for a given $\mu \in {\mathcal P}_{2}(\R^d)$,} 
if the mapping $\R^d \ni v \mapsto \partial_{\mu} U(\mu)(v)$ is differentiable in the standard sense, the derivative being denoted by $\R^d \ni v \mapsto \partial_{v} [\partial_{\mu} U(\mu)](v)$
(which belongs to $\R^{d \times d}$). Note that
{\it there is no Schwarz' theorem for exchanging the derivatives as $U$} does 
not depend on $v$.

{Now, if we can find a jointly continuous version of the global mapping 
\eqref{eq:ito:def:partialU:liminf:2}, 
$\partial_{\mu} U$ is said to be differentiable in $\mu$,
\textcolor{black}{at $v \in \R^d$}, if the lifted 
mapping $L^2(\Omega,\cA,\P;\R^d) \ni X \mapsto 
\partial_{\mu} U(\law{X})(v) \in \R^d$ is differentiable in the Fr\'echet 
sense. Then, according to the previous discussion, the derivative 
can be interpreted as a mapping $\R^d \ni v' \mapsto \partial_{\mu} 
[\partial_{\mu} U(\law{X})(v)](v') \in \R^{d \times d}$ in $L^2(\R^d,\mu;\R^{d 
\times d})$, which we will denote
by $\R^d \ni v' \mapsto \partial_{\mu}^2 U(\law{X})(v,v')$.
In a first step, we will prove  
It\^o's formula
when this additional assumption on the smoothness 
of $\partial_{\mu} U$
in $\mu$ is in force.
More precisely, we will} say that $U$ is \emph{fully} ${\mathcal C}^2$ 
if the global mapping 
$\partial_{\mu} U$ in
 \eqref{eq:ito:def:partialU:liminf:2}
 is continuous and the mappings 
\begin{equation*}
\begin{split}
&{\mathcal P}_{2}(\R^d) \times \R^d \ni (\mu,v) \mapsto \partial_{\mu} U(\mu)(v),\\
&{\mathcal P}_{2}(\R^d) \times \R^d \ni (\mu,v) \mapsto \partial_{v} [ \partial_{\mu} U(\mu)](v),\\ 
&{\mathcal P}_{2}(\R^d) \times \R^{d} \times \R^d \ni (\mu,v,v') \mapsto \partial_{\mu}^2 U(\mu)(v,v'),
\end{split}
\end{equation*}
 are continuous for the product topologies, the space ${\mathcal P}_{2}(\R^d)$ being endowed with the 2-Wasserstein distance. 
 
Under suitable assumptions, it can be checked that 
\emph{full ${\mathcal C}^2$} regularity implies twice Fr\'echet differentiability of the 
lifting ${\mathcal U}$. As we won't make use of such a result, we refrain from 
providing its proof in the paper. 
We will be 
  much more interested in a possible converse: Can we expect to recover that 
  $U$ is ${\mathcal C}^2$ regular (with respect to $v$ and
$\mu$), given the fact that ${\mathcal U}$ has some Fr\'echet or G\^ateaux differentiability properties at the second-order? We answer this (more challenging) question
in Subsection \ref{sec:ito:frechet} below. 

To clarify the significance of the notion of \emph{full ${\mathcal C}^2$} regularity, 
we now make the connection between the derivatives of $u^N$ and those of $U$:

\begin{Proposition}
\label{prop:derivatives}
Assume that $U$ is ${\mathcal C}^1$. Then, for any $N \geq 1$, the function $u^N$ is 
differentiable on 
$\R^N$ and, for all $x^1,\dots,x^N \in \R^d$, the mapping 
\begin{eqnarray*}
\R^d\ni x^i&\mapsto&\partial_{x^{i}} u^N(x^1,\dots,x^N)\in\R^d
\end{eqnarray*}
reads
\begin{equation*}
\partial_{x^{i}} u^N(x^1,\dots,x^N) = \frac{1}{N} \partial_{\mu} U
\biggl( \frac{1}{N} \sum_{\ell=1}^N \delta_{x^{\ell}} \biggr)(x^i).
\end{equation*}
If, moreover, $U$ is fully ${\mathcal C}^2$, then, for any $N \geq 1$, the function $u^N$ is ${\mathcal C}^2$ on 
$\R^N$ 
and, for all $x^1,\dots,x^N \in \R^d$,  the mapping 
\begin{eqnarray*}
\R^{d}\times \R^{d}\ni( x^i,x^j)&\mapsto&\partial^2_{x^i x^j} u^N(x^1,\dots,x^N)\in\R^{d\times d}
\end{eqnarray*}
 satisfies 
\begin{equation*}
\begin{split}
&\partial^2_{x^i x^j} u^N(x^1,\dots,x^N) 
= \frac{1}{N} \partial_{v} \biggl[ \partial_{\mu} U\biggl( \frac{1}{N} \sum_{\ell=1}^N \delta_{x^{\ell}} \biggr) \biggr] (x^i) \delta_{i,j}
+ \frac{1}{N^2} \partial_{\mu}^2 U \biggl( \frac{1}{N} \sum_{\ell=1}^N \delta_{x^{\ell}} \biggr)(x^i,x^j). 
\end{split}
\end{equation*}
\end{Proposition}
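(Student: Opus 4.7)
The plan is to realise the empirical measure as the law of a suitably chosen simple random variable and then read off the partial derivatives of $u^N$ directly from the Fr\'echet derivative of the lifting $\mathcal{U}$. More precisely, I would fix once and for all a measurable partition $A_{1},\dots,A_{N}$ of $\Omega$ with $\P(A_{\ell}) = 1/N$, and for $(x^{1},\dots,x^{N}) \in (\R^d)^N$ set $X = \sum_{\ell=1}^N x^{\ell} \mathbf{1}_{A_{\ell}}$. Then $\law{X} = \frac{1}{N}\sum_{\ell} \delta_{x^{\ell}} =: \mu_{N}$ and $u^N(x^{1},\dots,x^{N}) = \mathcal{U}(X)$, so the task reduces to a Fr\'echet differentiation along very particular finite-dimensional directions in $L^2$.

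For the first assertion, perturbing only $x^{i}$ by $h \in \R^d$ corresponds to the variation $X_{h} = X + h \mathbf{1}_{A_{i}}$. Fr\'echet differentiability of $\mathcal{U}$, combined with the Riesz representation $D\mathcal{U}(X) = \partial_{\mu} U(\law{X})(X)$, yields
\begin{equation*}
\mathcal{U}(X_{h}) - \mathcal{U}(X) = \E \bigl[ \partial_{\mu} U(\mu_{N})(X) \cdot h \mathbf{1}_{A_{i}} \bigr] + o(|h|).
\end{equation*}
Since $X = x^{i}$ on $A_{i}$ and $\P(A_{i}) = 1/N$, the expectation equals $\frac{1}{N} \partial_{\mu} U(\mu_{N})(x^{i}) \cdot h$, which gives the desired formula for $\partial_{x^{i}} u^N$. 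Joint continuity of this partial derivative in $(x^{1},\dots,x^{N})$ follows from the assumed continuity of $(\mu,v) \mapsto \partial_{\mu} U(\mu)(v)$ together with the $W_{2}$-continuity of $(x^{1},\dots,x^{N}) \mapsto \mu_{N}$.

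For the second assertion, I would differentiate the identity $\partial_{x^{i}} u^N(x^{1},\dots,x^{N}) = \frac{1}{N} \partial_{\mu} U(\mu_{N})(x^{i})$ with respect to $x^{j}$. The right-hand side depends on $x^{j}$ through two distinct mechanisms. The first is direct, through the evaluation point $v = x^{i}$: this contributes only when $j = i$, and the full ${\mathcal C}^2$ hypothesis, which guarantees a ${\mathcal C}^1$ version of $v \mapsto \partial_{\mu} U(\mu_{N})(v)$ with continuous derivative $\partial_{v}[\partial_{\mu} U(\mu_{N})](v)$, produces the term $\frac{1}{N} \partial_{v}[\partial_{\mu} U(\mu_{N})](x^{i}) \delta_{i,j}$. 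The second mechanism is through the empirical measure $\mu_{N}$ itself; here I would re-apply the first part of the proposition, but to the coordinate-wise ${\mathcal P}_{2}(\R^d)$-valued functional $\mu \mapsto \partial_{\mu} U(\mu)(x^{i})$, whose intrinsic Wasserstein derivative at $\mu$ is precisely $\partial_{\mu}^2 U(\mu)(x^{i},\cdot)$ by the definition of $\partial_{\mu}^2 U$. This second contribution therefore equals $\frac{1}{N^2} \partial_{\mu}^2 U(\mu_{N})(x^{i},x^{j})$. Adding the two yields the stated formula, and continuity of the second derivatives is then inherited from the jointly continuous versions of $\partial_{v}[\partial_{\mu} U]$ and $\partial_{\mu}^2 U$ assumed in the definition of full ${\mathcal C}^2$ regularity.

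The main delicate point is the chain-rule step in the second mechanism: one must verify that $\mu \mapsto \partial_{\mu} U(\mu)(x^{i})$ genuinely falls into the scope of the first part of the proposition, so that its restriction to the slice of empirical measures is differentiable with the claimed representation. This is exactly what the full ${\mathcal C}^2$ assumption delivers, through the existence of a jointly continuous version of $\partial_{\mu}^2 U$ on ${\mathcal P}_{2}(\R^d) \times \R^d \times \R^d$, so no further regularisation argument is needed.
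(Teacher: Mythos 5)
Your argument is correct and essentially reproduces the paper's approach. The first-order identity obtained by realizing the empirical measure as the law of a simple random variable on a measurable partition is exactly the argument the paper delegates to the cited Carmona--Delarue reference, and your diagonal/off-diagonal decomposition for the second-order derivatives fills in the details of the paper's brief sketch, which likewise handles the diagonal case by invoking joint continuity of $\partial_v[\partial_\mu U]$ and $\partial_\mu^2 U$.
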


\begin{proof} The formula for the first order derivative has been already proved in \cite{carmona:delarue:aop}.  
It remains to deduce the formula for the second order derivative. When $i \not =j$, it is a direct 
consequence of the first order formula. When $i=j$, the computations require some precaution as differentiability is
simultaneously investigated 
in the directions of $\mu$ and $v$ in $\partial_{\mu} U(\mu)(v)$, but, by the joint continuity of the second-order derivatives
$\partial_{\mu}^2 U(\mu)(v)$ and $\partial_{v} \partial_{\mu} U(\mu)(v,v')$, they are easily handled.  
\qed
\end{proof}
\begin{Remark}
\label{rem:symmetry:derivatives}
Assume that $U$ is fully ${\mathcal C}^2$. Then,
for any $X \in L^2(\Omega,{\mathcal A},\P;\R^d)$
and
$Y,Z \in L^\infty(\Omega,{\mathcal A},\P;\R^d)$, 
the mapping
\begin{equation*}
\begin{split}
\vartheta : \R^2 \ni (h,k) \mapsto 
U\bigl( \bigl[X+ h Y + k Z \bigr] 
\bigr) \in \R
\end{split}
\end{equation*}
is of class ${\mathcal C}^2$ on $\R^2$, with 
\begin{equation*}
\begin{split}
\frac{\ud}{\ud h}
\bigl[ \frac{\ud \vartheta}{\ud k}
\bigr](h,k)
&= \frac{\ud}{\ud h} {\mathbb E} \bigl[ \partial_{\mu} U\bigl( \bigl[ 
X+hY + kZ \bigr] \bigr)(X+hY+kZ) Z \bigr]
 \\ 
&= {\mathbb E} \bigl[ 
{\rm Tr} \bigl(
\partial_{v}
\partial_{\mu} U\bigl( \bigl[ 
X+hY + kZ \bigr] \bigr)(X+hY + kZ) Z \otimes Y \bigr) \bigr]
\\
&\hspace{15pt} + 
{\mathbb E} \hat{\mathbb E} \bigl[ 
{\rm Tr} \bigl(
\partial_{\mu}^2 U\bigl( \bigl[ 
X +hY + kZ\bigr] \bigr)(X+hY + kZ,\hat X+h \hat Y + k \hat Z) Z \otimes \hat{Y} \bigr)\bigr],
\end{split}
\end{equation*}
the triplet $(\hat{X},\hat{Y},\hat{Z})$ denoting 
a copy
of $(X,Y,Z)$ on $(\hat{\Omega},\hat{\mathcal A},\hat{\P})$
and the tensorial product operating on $\R^d$.

By Schwarz' Theorem, the roles of $Z$ and $Y$ can be exchanged, 
which means (choosing $h=k=0$) that 
\begin{equation}
\label{eq:ito:symmetry:2}
\begin{split}
&{\mathbb E} \bigl[ {\rm Tr} \bigl(\partial_{v}
\partial_{\mu} U\bigl( [ 
X ] \bigr)(X) Z \otimes Y \bigr) \bigr]
+ 
{\mathbb E} \hat{\mathbb E} \bigl[
{\rm Tr} \bigl( 
\partial_{\mu}^2 U\bigl( [ 
X ] \bigr)(X,\hat X) Z \otimes \hat{Y} \bigr)\bigr]
\\
&\hspace{15pt}
={\mathbb E} \bigl[ {\rm Tr} \bigl( \partial_{v}
\partial_{\mu} U\bigl([ 
X] \bigr)(X) Y \otimes Z \bigr) \bigr]
+ 
{\mathbb E} \hat{\mathbb E} \bigl[ 
{\rm Tr} \bigl(
\partial_{\mu}^2 U\bigl([ 
X] \bigr)(X,\hat X) Y \otimes \hat{Z} \bigr) \bigr].
\end{split}
\end{equation}
Choosing $Y$ of the form $\varepsilon \varphi(X)$
and $Z$ of the form $\varepsilon \psi(X)$, with $\P(\varepsilon=1)=
 \P(\varepsilon=-1)=1/2$ and $\varepsilon$ independent of 
 $X$,
 and considering two bounded 
 Borel measurable functions 
 $\varphi$ and $\psi : \R^d \rightarrow \R^d$, we deduce that 
\begin{equation}
\label{eq:ito:symmetry:1}
\begin{split}
&{\mathbb E} \bigl[
{\rm Tr} \bigl(
 \partial_{v}
\partial_{\mu} U\bigl( [ 
X] \bigr)(X) \varphi(X) \otimes \psi(X) \bigr) \bigr]
={\mathbb E} \bigl[
{\rm Tr} \bigl( \partial_{v}
\partial_{\mu} U\bigl([ 
X ] \bigr)(X) \psi(X) \otimes \varphi(X) \bigr) \bigr],
\end{split}
\end{equation}
from which we deduce that 
 $\partial_{v}
\partial_{\mu} U([ 
X])(X)$ takes values in the set of symmetric 
matrices of size $d$. By continuity, it means that 
 $\partial_{v} \partial_{\mu} U( \mu)(v)$
 is a symmetric matrix for any $v \in \R^d$ when 
 $\mu$ has the entire $\R^d$ as support. 
 By continuity in $\mu$, we deduce that 
 $\partial_{v} \partial_{\mu} U( \mu)(v)$
 is a symmetric matrix for any $v \in \R^d$ and 
 any $\mu \in {\mathcal P}_{2}(\R^d)$. 

Now, choosing $Y$ and $Z$ of the form $\varphi(X)$
and $\psi(X)$ respectively and plugging 
\eqref{eq:ito:symmetry:1} into 
\eqref{eq:ito:symmetry:2}, 
we deduce that 
\begin{equation*}
\begin{split}
{\mathbb E} \hat{\mathbb E} \bigl[
{\rm Tr} \bigl( 
\partial_{\mu}^2 U\bigl( [ 
X ] \bigr)(X,\hat X) \varphi(X) \otimes \psi(\hat{X}) \bigr) \bigr]
&=
{\mathbb E} \hat{\mathbb E} \bigl[ 
{\rm Tr} \bigl(
\partial_{\mu}^2 U\bigl( [ 
X ] \bigr)(X,\hat X) \psi(X) \otimes \varphi(\hat{X}) \bigr) \bigr]
\\
&=
{\mathbb E} \hat{\mathbb E} \bigl[ 
{\rm Tr} \bigl(
\partial_{\mu}^2 U\bigl( [ 
X ] \bigr)(\hat X,X) \psi(\hat X) \otimes \varphi(X) \bigr) \bigr]
\\
&=
{\mathbb E} \hat{\mathbb E} \bigl[ 
{\rm Tr} \bigl( \bigl[
\partial_{\mu}^2 U\bigl( [ 
X ] \bigr)(\hat X,X) \bigr]^{\dagger}  \varphi(X) \otimes 
\psi(\hat X)  \bigr) \bigr],
\end{split}
\end{equation*}
from which we deduce that
 $\partial^2_{\mu} U([ 
X])(X,\hat{X}) = 
(\partial^2_{\mu} U([ 
X])(\hat{X},X))^\dagger
$. By the same argument as above, 
we finally deduce that 
 $\partial_{\mu}^2 U( \mu)(v,v')
  =
 (\partial_{\mu}^2 U( \mu)(v',v))^\dagger 
 $, for any $v,v' \in \R^d$ and 
 any $\mu \in {\mathcal P}_{2}(\R^d)$. 
\end{Remark}
\color{black}

\subsection{The chain rule for $U$ fully ${\mathcal C}^2$}
We consider an $\R^d$-valued It\^o process
\begin{equation*}
\ud X_{t} = b_{t} \ud t + \sigma_{t} \ud W_{t}, \quad X_{0} \in L^2(\Omega,\cA,\P),
\end{equation*}
where $(b_{t})_{t \geq 0}$ and $(\sigma_{t})_{t \geq 0}$ are progressively-measurable processes 
with values in $\R^d$ and, respectively, $\R^{d \times d}$ respectively
with respect to the (augmented) filtration generated by $W$, such that 
\begin{equation}
\label{eq:23:10:2}
\forall T >0, \quad \E \biggl[ \int_{0}^T \bigl( \vert b_{t} \vert^2 + \vert \sigma_{t} \vert^4 \bigr) \ud t \biggr] < + \infty.
\end{equation}
The following is the main result of this section
\begin{Theorem}
\label{thm:full}
Assume that $U$ is fully ${\mathcal C}^2$ and that, for any compact subset 
${\mathcal K} \subset {\mathcal P}_{2}(\R^d)$, 
\begin{equation}
\label{eq:compact:bound}
\sup_{\mu \in {\mathcal K}}
\biggl[ 
\int_{\R^d} \bigl\vert  \partial_{\mu} U(\mu)(v) \bigr\vert^2 \ud \mu(v) 
+
\int_{\R^d} \Bigl\vert \partial_{v} \bigl[ \partial_{\mu} U(\mu) \bigr](v) \Bigr\vert^2 \ud \mu(v) 
\biggr]
< + \infty, 
\end{equation}
Then, letting $\mu_{t} := [X_{t}]$ and $a_{t} := \sigma_{t} (\sigma_{t})^{\dagger}$, for any $t \geq 0$,
\begin{equation}
\label{eq:thm:chain:rule}
\begin{split}
U (\mu_{t}) &= U  ( \mu_{0}) + 
\int_{0}^t {\mathbb E} \bigl[ \partial_{\mu} U  ( \mu_{s} )(X_{s}) b_{s}  \bigr] \ud s 
+ 
\frac{1}{2} \int_{0}^t {\mathbb E} \bigl[ {\rm Tr} \bigl( 
\partial_{v} \bigl( \partial_{\mu} U(\mu_{s}) \bigr)(X_{s})
a_{s}\bigr) 
\bigr] \ud s. 
\end{split}
\end{equation}
\end{Theorem}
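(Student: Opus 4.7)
The plan is to follow the particle approximation strategy sketched just before the theorem. Concretely, let $(X^1_t)_{t\ge 0},\dots,(X^N_t)_{t\ge 0}$ be i.i.d.\ copies of $(X_t)_{t \geq 0}$, driven by independent Brownian motions $W^1,\dots,W^N$ with the drifts $b^1_t,\dots,b^N_t$ and volatilities $\sigma^1_t,\dots,\sigma^N_t$ being independent copies of $b_t$ and $\sigma_t$. With $u^N$ as in \eqref{eq:23:10:4}, the empirical measure $\bar\mu^N_t = \frac{1}{N}\sum_\ell \delta_{X^\ell_t}$ satisfies $u^N(X^1_t,\dots,X^N_t) = U(\bar\mu^N_t)$. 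The function $u^N$ is genuinely $\mathcal{C}^2$ on $(\R^d)^N$ by Proposition \ref{prop:derivatives}, so the classical finite-dimensional It\^o formula applies directly.

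First, I would write the standard It\^o expansion for $u^N(X^1_t,\dots,X^N_t)$ and take expectations. Because $W^i$ and $W^j$ are independent for $i\neq j$, only the diagonal second-order terms survive. Substituting the expressions from Proposition \ref{prop:derivatives},
\begin{equation*}
\partial_{x^i} u^N = \tfrac{1}{N}\,\partial_\mu U(\bar\mu^N)(X^i),\qquad
\partial^2_{x^i x^i} u^N = \tfrac{1}{N}\,\partial_v\bigl[\partial_\mu U(\bar\mu^N)\bigr](X^i) + \tfrac{1}{N^2}\,\partial^2_\mu U(\bar\mu^N)(X^i,X^i),
\end{equation*}
one gets
\begin{equation*}
\begin{split}
\E\bigl[U(\bar\mu^N_t)\bigr] &= \E\bigl[U(\bar\mu^N_0)\bigr] + \int_0^t \frac{1}{N}\sum_{i=1}^N \E\bigl[\partial_\mu U(\bar\mu^N_s)(X^i_s)\,b^i_s\bigr]\,\ud s\\
&\quad + \frac{1}{2}\int_0^t \frac{1}{N}\sum_{i=1}^N \E\Bigl[\mathrm{Tr}\bigl(\partial_v[\partial_\mu U(\bar\mu^N_s)](X^i_s)\,a^i_s\bigr)\Bigr]\,\ud s + R_N(t),
\end{split}
\end{equation*}
where $R_N(t) = \frac{1}{2N^2}\int_0^t \sum_i \E[\mathrm{Tr}(\partial^2_\mu U(\bar\mu^N_s)(X^i_s,X^i_s)\,a^i_s)]\,\ud s$. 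The exchangeability of the copies collapses each average over $i$ into $\E[\cdot]$ evaluated on a single copy, and $R_N(t) = O(1/N)$ since the continuous function $\partial^2_\mu U$ is bounded on the $\mathcal{P}_2$-compact range of $(\bar\mu^N_s)_{s\in[0,t]}$ (at least uniformly in $N$, up to the integrability of $(a_s)_s$). Thus the discrete identity reduces, at finite $N$, to exactly the desired form but with $\mu_s$ replaced by $\bar\mu^N_s$.

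The second step is to let $N\to\infty$. By the Glivenko--Cantelli theorem in Wasserstein distance (applied pathwise, using \eqref{eq:23:10:2} to ensure the uniform $W_2$-integrability of $(X^i_t)_i$), $W_2(\bar\mu^N_s,\mu_s)\to 0$ almost surely for every $s$. Continuity of $U$ (from full $\mathcal{C}^2$ regularity) combined with dominated convergence then gives $\E[U(\bar\mu^N_t)]\to U(\mu_t)$ and similarly $\E[U(\bar\mu^N_0)]\to U(\mu_0)$. For the drift and diffusion integrals, the joint continuity of $(\mu,v)\mapsto \partial_\mu U(\mu)(v)$ and $(\mu,v)\mapsto \partial_v[\partial_\mu U(\mu)](v)$ at $v\in\mathrm{Supp}(\mu)$ delivers pointwise convergence of the integrands, and hypothesis \eqref{eq:compact:bound} combined with \eqref{eq:23:10:2} provides the uniform $L^1$ domination needed to pass to the limit (via Cauchy--Schwarz on the products with $b^i_s$ and $a^i_s$).

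The main obstacle will be this last passage to the limit, in particular justifying uniform integrability. The control \eqref{eq:compact:bound} is exactly the right hypothesis on a compact set of measures, but one has to confirm that the random flow $(\bar\mu^N_s)_{0\le s\le t,N\ge 1}$ takes values in a fixed $W_2$-compact set with high probability, and then combine this with the quartic integrability \eqref{eq:23:10:2} of $\sigma$ (needed to dominate $a^i_s$) and the quadratic integrability of $b$. A standard truncation argument — replacing the integrands by their restriction to the event $\{\int_0^t(|b_s|^2+|\sigma_s|^4)\ud s \le K\}$, passing to the limit there, and then letting $K\to\infty$ — should close the argument cleanly.
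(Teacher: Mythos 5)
Your skeleton (i.i.d.\ particles, finite-dimensional It\^o via Proposition \ref{prop:derivatives}, exchangeability, $N\to\infty$) is exactly the paper's strategy, but there is a genuine gap in how you justify the expectations and the passage to the limit: you have skipped the preliminary reduction that the paper performs in Proposition \ref{prop:mollif}, and without it your integrability claims do not go through. The hypothesis \eqref{eq:compact:bound} controls $\int|\partial_\mu U(\mu)(v)|^2\,\ud\mu(v)$ and $\int|\partial_v[\partial_\mu U(\mu)](v)|^2\,\ud\mu(v)$ only as $\mu$ ranges over a \emph{deterministic} compact subset of ${\mathcal P}_2(\R^d)$, whereas in your argument the measure argument is the \emph{random} empirical measure $\bar\mu^N_s$. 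This random flow is not confined to any fixed $W_2$-compact set; it only stays in one with high probability, and on the complementary event you have no bound whatsoever on the derivatives of $U$. Worse, \eqref{eq:compact:bound} says nothing at all about $\partial^2_\mu U$, so you cannot even assert that the remainder $R_N(t)$ is finite, let alone $O(1/N)$: continuity of $\partial^2_\mu U$ gives boundedness on compact sets, but $(\bar\mu^N_s(\omega),X^i_s(\omega))$ escapes every compact set on an event of positive probability. Your proposed truncation on $\{\int_0^t(|b_s|^2+|\sigma_s|^4)\,\ud s\le K\}$ tames the coefficients but does nothing to confine the measure argument of $U$'s derivatives, so it does not close this hole.

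The missing idea is to first replace $U$ by $U\star\varphi$ where $(U\star\varphi)(\mu)=U(\varphi\sharp\mu)$ for a compactly supported smooth $\varphi$ (and, separately, to truncate $b$ and $\sigma$). Because $\varphi$ is compactly supported, the range of $\mu\mapsto\varphi\sharp\mu$ is relatively compact in ${\mathcal P}_2(\R^d)$, so $U\star\varphi$ and \emph{all} of its first and second order derivatives (including $\partial^2_\mu$) become globally bounded and uniformly continuous; your particle argument then works verbatim, with the stochastic integral a true martingale, $R_N=O(1/N)$ trivially, and the limit $N\to\infty$ handled by uniform continuity together with $\E[\sup_{s\le t}W_2^2(\bar\mu^N_s,\mu_s)]\to 0$. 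One then removes the mollification by letting $\varphi=\varphi_n\to\mathrm{id}$; it is only at \emph{this} stage, where the measures involved are the deterministic family $(\varphi_n\sharp\mu_s)_{n,s\le t}$ (which is genuinely relatively compact), that \eqref{eq:compact:bound} is invoked to obtain the uniform integrability needed to pass to the limit in $n$. In short: the particle computation is right, but the order of operations matters — mollify first, approximate by particles second, and use \eqref{eq:compact:bound} only against deterministic compact families of measures.
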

\noindent The proof relies on a mollification argument captured in the proof of the following result:
\begin{Proposition}
\label{prop:mollif}
Assume that the chain rule \eqref{eq:thm:chain:rule} holds for any function $U$ that is fully ${\mathcal C}^2$ with 
first and second order derivatives that are bounded and uniformly continuous (with respect to both the space and measure variables). Then Theorem \ref{thm:full} holds, in other words, the chain rule \eqref{eq:thm:chain:rule} is valid for any fully ${\mathcal C}^2$ function $U$ satisfying \eqref{eq:compact:bound}. 
\end{Proposition}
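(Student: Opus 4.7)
The strategy is to approximate $U$ by a sequence $(U^n)_{n \ge 1}$ of fully $\cC^2$ functionals whose first and second order derivatives are bounded and uniformly continuous, apply the assumed chain rule to each $U^n$, and pass to the limit as $n \to \infty$. The construction and the limiting procedure must be coordinated so that the integrands on the right-hand side of \eqref{eq:thm:chain:rule} converge for $U^n$ to those for $U$.

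The mollification I would use combines a smooth truncation in the second moment with a Gaussian convolution realized at the level of the Hilbertian lift. Given a smooth cut-off $\chi_{n} : \R_{+} \to [0,1]$ that is equal to $1$ on $[0,n]$ and vanishes outside $[0,2n]$, and letting $G$ be a standard $\R^d$-Gaussian on an auxiliary probability space $(\hat{\Omega},\hat{\cA},\hat{\P})$ independent of the original one, set
\begin{equation*}
\cU^n(X) := \chi_{n}\bigl(\E[|X|^2]\bigr) \, \hat{\E}\bigl[\cU(X + G/n)\bigr], \qquad X \in L^2(\Omega,\cA,\P;\R^d),
\end{equation*}
and let $U^n(\mu)$ be the common value of $\cU^n(X)$ for any $X$ with $[X]=\mu$ (well-defined since $[X+G/n]$ depends only on $[X]$). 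The Gaussian perturbation regularises the derivatives in the measure direction, while the cut-off on the second moment makes all derivatives vanish on the complement of a bounded subset of $\cP_{2}(\R^d)$. Differentiating under the expectation and using the fully $\cC^2$ hypothesis on $U$, together with the symmetry properties of Remark~\ref{rem:symmetry:derivatives}, one verifies that $U^n$ is fully $\cC^2$ with bounded and uniformly continuous first and second order derivatives, that $U^n(\mu) \to U(\mu)$ for every $\mu \in \cP_{2}(\R^d)$, and that the derivatives $\partial_{\mu} U^n(\mu)(v)$ and $\partial_{v}[\partial_{\mu} U^n(\mu)](v)$ converge to those of $U$ at every $(\mu,v)$ with $v \in \mathrm{supp}(\mu)$, with uniform $L^2(\mu)$-control on every compact subset of $\cP_{2}(\R^d)$.

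For the passage to the limit, note that \eqref{eq:23:10:2} makes $s \mapsto X_{s}$ continuous in $L^2(\Omega,\cA,\P;\R^d)$, hence $s \mapsto \mu_{s}$ continuous in $\cP_{2}(\R^d)$, so $\cK := \{\mu_{s} : s \in [0,t]\}$ is a compact subset of $\cP_{2}(\R^d)$. The left-hand side of \eqref{eq:thm:chain:rule} applied to $U^n$ converges to $U(\mu_{t})-U(\mu_{0})$. For the two integral terms on the right, Cauchy--Schwarz yields
\begin{equation*}
\bigl| \E\bigl[\partial_{\mu} U^n(\mu_{s})(X_{s}) b_{s}\bigr] \bigr| \leq \Bigl( \int_{\R^d} |\partial_{\mu} U^n(\mu_{s})(v)|^2 \ud \mu_{s}(v) \Bigr)^{1/2} \E[|b_{s}|^2]^{1/2},
\end{equation*}
and a similar bound for the trace term with $\partial_{v}[\partial_{\mu} U^n(\mu_{s})]$ in place of $\partial_{\mu} U^n(\mu_{s})$ and $\E[|\sigma_{s}|^4]^{1/2}$ in place of $\E[|b_{s}|^2]^{1/2}$. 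The $L^2(\mu_{s})$-norms of these derivatives are uniformly bounded in $n$ and $s$ thanks to \eqref{eq:compact:bound} applied to $U$ on $\cK$ and the preservation of this bound under the mollification, while \eqref{eq:23:10:2} makes $s \mapsto \E[|b_{s}|^2]^{1/2}$ and $s \mapsto \E[|\sigma_{s}|^4]^{1/2}$ square-integrable on $[0,t]$. Dominated convergence then transfers \eqref{eq:thm:chain:rule} from $U^n$ to $U$.

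The main obstacle is the first step: verifying that $U^n$ genuinely belongs to the class of fully $\cC^2$ functionals with bounded and uniformly continuous derivatives in the sense of Subsection~\ref{subse:ito:full}, and that its derivatives converge to those of $U$ uniformly in $L^2(\mu)$ over compact subsets of $\cP_{2}(\R^d)$. This requires careful book-keeping of the interplay between Fr\'echet differentiation of the lift, the identification of $\partial_{\mu} U$ as a function on $\R^d$ (uniquely determined only on the support of $\mu$), and the joint continuity properties encoded in the fully $\cC^2$ assumption. Once this construction is in hand, the rest is a routine application of Cauchy--Schwarz and dominated convergence.
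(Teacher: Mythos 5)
Your limiting argument at the end (Cauchy--Schwarz plus the uniform $L^2(\mu_s)$-bounds from \eqref{eq:compact:bound} on the compact set $\{\mu_s : s\in[0,t]\}$) is essentially the paper's, but your mollification does not land in the class of functionals to which the proposition's hypothesis applies, and this is a genuine gap rather than routine book-keeping. Two problems. First, boundedness of the derivatives in the $v$-variable fails: differentiating $\cU^n(X)=\chi_n(\E[|X|^2])\,U([X+G/n])$ produces, by the product rule, a term $2\chi_n'(\E[|X|^2])\,U(\mu\star\cN(0,n^{-2}I))\,v^{\dagger}$ coming from $\partial_{\mu}\bigl[\int|x|^2\ud\mu\bigr](v)=2v$, which is unbounded in $v$; and even the main term $\chi_n(\cdot)\,(\partial_{\mu}U(\mu\star\rho_n)\star\rho_n)(v)$ inherits whatever growth $\partial_{\mu}U(\nu)(w)$ has in $w$ (the paper's example $\partial_{\mu}\|\cdot\|_2([\xi])(v)=v/\|\xi\|_2$ shows linear growth is generic; Gaussian convolution smooths but does not truncate it). Second, the cut-off $\{\mu:\int|x|^2\ud\mu\le 2n\}$ is a bounded but \emph{not} compact subset of $\cP_2(\R^d)$ (no $W_2$-compactness from a second-moment bound alone), so uniform continuity of $U^n$ and its derivatives in the measure variable does not follow from their continuity. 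Since the assumed chain rule is only available for functionals with bounded, uniformly continuous derivatives in both variables, you cannot apply it to your $U^n$.

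The paper's mollification $U\star\varphi(\mu):=U(\varphi\sharp\mu)$, with $\varphi$ smooth and compactly supported, is designed precisely to fix both defects simultaneously: by \eqref{eq:composition} the derivatives of $U\star\varphi$ are evaluated at $\varphi(v)$ (which stays in a compact set) and multiplied by derivatives of $\varphi$ (which are bounded), and the range of $\mu\mapsto\varphi\sharp\mu$ is relatively compact in $\cP_2(\R^d)$, so continuity of $\partial_{\mu}U$ and $\partial_v[\partial_{\mu}U]$ upgrades to boundedness and uniform continuity on the nose. It also has the pleasant side effect that applying the chain rule to $U\star\varphi$ along $(X_t)$ is the same as applying it to $U$ along the bounded process $(\varphi(X_t))$. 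If you want to salvage your approach, you would have to replace the second-moment cut-off and Gaussian smoothing by a genuine compactification of both the $v$-argument and the measure argument, which is exactly what composition with a compactly supported $\varphi$ achieves.
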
  

\begin{proof}[Proof of Proposition \ref{prop:mollif}.]
Let $U$ be a \emph{fully} ${\mathcal C}^2$ function that satisfies \eqref{eq:compact:bound}. 
We `mollify' $U$ in such a way that its mollification is bounded with bounded 
first and second order derivatives. Let  $\varphi : \R^d \rightarrow \R^d$ be a
smooth function with compact support and, for arbitrary $\mu \in {\mathcal P}_{2}(\R^d)$ define 
\begin{equation*}
\forall \mu \in {\mathcal P}_{2}(\R^d), \quad \bigl( U \star \varphi \bigr)(\mu) := U \bigl( \varphi \sharp \mu
\bigr),
\end{equation*}
where $\varphi \sharp \mu$ denotes the image of $\mu$ by $\varphi$.
The lifted version of $U \star \varphi$ is nothing but 
${\mathcal U} \circ \varphi$, where (with an abuse of notation) $\varphi$ is canonically lifted as $\varphi : 
L^2(\Omega,\cA,\P;\R^d) \ni X \mapsto \varphi(X)$. It is then quite standard to check that: 
\begin{equation}
\label{eq:composition}
\begin{split}
&\partial_{\mu} \bigl[ U \star \varphi \bigr](\mu)(v) =  
{
\Bigl(
\Sum_{k=1}^d
\Bigl[ \partial_{\mu} U 
\bigl(\varphi \sharp \mu \bigr) \bigl( \varphi(v) \bigr) \Bigr]_{k} \frac{\partial \varphi_{k}}{\partial x_{i}}(v) \Bigr)_{i=1,\dots,d}},  
\\
&\partial_{\mu}^2  \bigl[ U \star \varphi \bigr](\mu)(v,{v'}) =  
{
\Bigl( 
\sum_{k,\ell=1}^d
\Bigl[ \partial_{\mu}^2 U 
\bigl(\varphi \sharp \mu \bigr) \bigl( \varphi(v),\varphi(v') \bigr) 
\Bigr]_{k,\ell}
\frac{\partial \varphi_{k}}{\partial x_{i}}(v)
\frac{\partial \varphi_{\ell}}{\partial x_{j}}(v')
\Bigr)_{i,j=1,\dots,d}},   
\\
&\partial_{v} \Bigl[ \partial_{\mu} \bigl[ U \star \varphi \bigr](\mu)(v)
\Bigr]
= 
{ \Bigl( \sum_{k=1}^d
\Bigl[ 
\partial_{\mu} U 
\bigl(\varphi \sharp \mu \bigr) \bigl( \varphi(v) \bigr) 
\Bigr]_{k} \frac{\partial^2 \varphi_{k}}{\partial x_{i} \partial x_{j}}(v)}
\\
&\hspace{110pt} 
{+ 
\sum_{k,\ell=1}^d
\Bigl[ \partial_{v} \bigl[ \partial_{\mu} U 
\bigl(\varphi \sharp \mu \bigr) \bigr] 
\bigl( \varphi(v) \bigr)
\Bigr]_{k,\ell}
\frac{\partial \varphi_{k}}{\partial x_{i}}(v) 
\frac{\partial \varphi_{\ell}}{\partial x_{j}}(v) 
\Bigr)_{i,j=1,\dots,d}}.
\end{split}
\end{equation}
Recall from 
Remark 
\ref{rem:symmetry:derivatives} that the second-order derivatives that 
appear in \eqref{eq:composition} have some symmetric structure. 
Now, since $\varphi$ is compactly supported, the mapping ${\mathcal P}_{2}(\R^d) \ni \mu \mapsto 
\varphi \sharp \mu$ has a relatively compact range\footnote{Tightness is obvious. 
By boundedness of $\varphi$, any subsequence converging in the weak sense is also convergent 
with respect to $W_{2}$.} (in ${\mathcal P}_{2}(\R^d)$). By the continuity of $U$ and its derivatives, we deduce that 
$U \star \varphi$ and its first and second order derivatives are bounded and uniformly continuous on the whole space. 

Assume now that the chain rule has been proved for any bounded and uniformly continuous $U$ with bounded and uniformly continuous derivatives of order 1 and 2. Then, for some $U$ just satisfying the assumption of Theorem 
\ref{thm:full}, we can apply the chain rule to $U \star \varphi$, for any $\varphi$ as above. In particular, we can apply 
the chain rule to $U \star \varphi_{n}$ for any $n \geq 1$, where $(\varphi_{n})_{n \geq 1}$ is a sequence of 
compactly supported smooth functions such that 
$(\varphi_{n},\partial_{x} \varphi_{n},\partial^2_{xx}[\varphi_{n}]_1,\dots,\partial^2_{xx}[\varphi_{n}]_d)(v) \rightarrow (v,{I_{d}},0,\dots,0)$ 
uniformly on compact sets as  $n \rightarrow  \infty$, 
{
$I_{d}$ denoting the identity matrix of 
size $d$.} 
In order to pass to the limit in the chain rule \eqref{eq:thm:chain:rule}, the only thing is to verify some almost sure (or pointwise) convergence in the underlying expectations and to check the corresponding uniform integrability argument. 

Without any loss of generality, we can assume that there exists a constant $C$ such that 
\new{
\begin{align} \label{eq bound varphin}
\vert \varphi_{n}(v) \vert \leq C \vert v \vert,\quad \vert \partial_{x} \varphi_{n}(v) \vert \leq C \quad
\text{and} \quad \vert \partial_{xx}^2 [\varphi_{n}(v)]_k \vert \leq C\;, \; 1 \le k \le d\,, 
\end{align}
}
for any $n \geq 1$ and $v \in \R^d$ and that $\varphi_{n}(v)=v$ 
for any $n \geq 1$ and any $v$ with $\vert v \vert \leq n$. Then, for any $\mu \in {\mathcal P}_{2}(\R^d)$ and any random variable $X$ with $\mu$ as distribution,
it holds
\begin{equation*}
W_{2}^2 \bigl( \varphi_{n} \sharp \mu,\mu \bigr) \leq {\mathbb E} \bigl[ \vert \varphi_{n}(X) - X \vert^2 {\mathbf 1}_{\{\vert X \vert \geq n\}} \bigr]
\leq C {\mathbb E} \bigl[ \vert X \vert^2 {\mathbf 1}_{\{\vert X \vert \geq n\}} \bigr],
\end{equation*}
which tends to $0$ as $n \rightarrow \infty$. By continuity of $U$ and its partial derivatives and by 
\eqref{eq:composition}, it is easy to deduce that, a.s.,  
\begin{equation}
\label{eq:24:10:2}
\begin{split}
&U \star \varphi_{n}(\mu) \rightarrow U (\mu),
\quad \partial_{\mu} \bigl[ U \star \varphi_{n}\bigr](\mu)(X) \rightarrow 
\partial_{\mu} U (\mu )(X),
\\
&\partial_{v} \bigl[ \partial_{\mu} \bigl( U \star \varphi_{n}\bigr) \bigr](\mu)(X) \rightarrow 
\partial_{v} \bigl[ \partial_{\mu} U (\mu)\bigr](X).
\end{split}
\end{equation}
Moreover, 
we notice that
\begin{equation}
\label{eq:24:10:1}
\sup_{n \geq 1} {\mathbb E} \Bigl[ 
\bigl\vert \partial_{\mu} \bigl[ U \star \varphi_{n}\bigr](\mu )(X)
\bigr\vert^2 + 
\bigl\vert \partial_{v} \bigl[ \partial_{\mu} \bigl( U \star \varphi_{n}\bigr) (\mu ) \bigr](X)
\bigr\vert^2 \Bigr] < \infty.
\end{equation}
Indeed, by \eqref{eq:composition} \new{ and \eqref{eq bound varphin}}, it is enough to check that 
\begin{equation*}
\begin{split}
&\sup_{n \geq 1} \biggl[ \int_{\R^d} 
\Bigl\vert \partial_{\mu} U \bigl( \varphi_{n} \sharp \mu \bigr)(v) \Bigr\vert^2 \ud 
\bigl( \varphi_{n} \sharp
\mu \bigr)(v)
+ \int_{\R^d} \Bigl\vert \partial_{v} \bigl[ \partial_{\mu}  U \bigl(  \varphi_{n} \sharp \mu \bigr)
\bigr](v) \Bigr\vert^2 
\ud \bigl( \varphi_{n} \sharp \mu \bigr)(v)
\biggr] 
 < \infty,
\end{split}
\end{equation*}
which follows directly from \eqref{eq:compact:bound}, noticing that the 
sequence $(\varphi_{n} \sharp \mu)_{n \geq 1}$ lives in a compact subset 
of ${\mathcal P}_{2}(\R^d)$ as it is convergent.

%
By \eqref{eq:24:10:2} and \eqref{eq:24:10:1} and by a standard uniform integrability argument, we deduce that, for any 
$t \geq 0$ and any $s \in [0,t]$ such that ${\mathbb E}[
\vert b_{s} \vert^2 + \vert \sigma_{s} \vert^4] < \infty$,
\begin{equation*}
\begin{split}
&\lim_{n \rightarrow + \infty} 
 {\mathbb E} \bigl[  \partial_{\mu} (U \star \varphi_{n})  ( \law{X} )(X) b_{s} \bigr] 
 = {\mathbb E} \bigl[  \partial_{\mu} U  ( \law{X} )(X) b_{s}  \bigr],
\\
&\lim_{n \rightarrow + \infty} 
{\mathbb E} \bigl\{ \text{Tr} \bigl[ 
 \partial_{v} \bigl( \partial_{\mu} (U \star \varphi_{n})
(\law{X}) \bigr)(X) a_{s}\bigr] \bigr\}  = 
{\mathbb E} \bigl\{ \text{Tr} \bigl[
  \partial_{v} \bigl( \partial_{\mu} U
(\law{X}) \bigr)(X) a_{s}
\bigr] \bigr\}.
\end{split}
\end{equation*}
Recall that the above is true for any $\mu \in {\mathcal P}_{2}(\R^d)$
and any $X \in L^2(\Omega,{\mathcal A},\P;\R^d)$ with $\mu$ as distribution. 
In particular, we can choose $\mu=\mu_{s}$ and $X=X_{s}$ in the above limits.
As the bound ${\mathbb E}[ \vert b_{s} \vert^2 + 
\vert \sigma_{s} \vert^4] < \infty$ is satisfied for almost every $s \in [0,t]$, this permits 
to pass to the limit inside the integrals appearing in the chain rule applied to each of the $(U \star \varphi_{n})_{n \geq 1}$.  
In order to pass to the limit in the chain rule itself, we must exchange the pathwise limit that holds for almost every 
$s \in [0,t]$ and the integral with respect to  the time variable $s$. The argument is the same as in \eqref{eq:24:10:1}. 
Indeed, since the flow of measures 
$(\law{X_{s}})_{0 \leq s \leq t}$ is continuous for the $2$-Wasserstein distance,
{the family of measures 
$((\law{\varphi_{n}(X_{s})})_{0 \leq s \leq t})_{n \geq 1}$
is relatively compact and thus
\begin{equation*}
\sup_{n \geq 1}
\sup_{s \in [0,t]} {\mathbb E} \Bigl[ 
\bigl\vert \partial_{\mu}  U \bigl(\law{\varphi_{n}(X_{s})}\bigr)\bigl(\varphi_{n}(X_{s})
\bigr)
\bigr\vert^2 + 
\bigl\vert \partial_{v} \bigl[ \partial_{\mu} U \bigl(\law{\varphi_{n}(X_{s})} \bigr) \bigr]\bigl(\varphi_{n}(X_{s})
\bigr)
\bigr\vert^2 \Bigr] < \infty,
\end{equation*}
which is enough to prove that the functions
\begin{equation*}
\Bigl( 
[0,t] \ni s \mapsto {\mathbb E} \bigl[  \partial_{\mu} (U \star \varphi_{n})  ( \law{X_s} )(X_s) b_{s} \bigr] +
{\mathbb E} \bigl\{ \text{Tr} \bigl[ 
 \partial_{v} \bigl( \partial_{\mu} (U \star \varphi_{n})
(\law{X_s}) \bigr)(X_s) a_{s}\bigr] \bigr\}
\Bigr)_{n \geq 1}
\end{equation*}
are uniformly integrable on $[0,t]$.} \qed
\end{proof}
\vspace{5pt}

We now turn to the proof of Theorem \ref{thm:full}. We give just a sketch of the proof, as a refined
version of Theorem \ref{thm:full} is given later, see Theorem \ref{thm:partial:C2} in the next subsection. 
\vspace{5pt}

\begin{proof}[Proof of Theorem \ref{thm:full}.]
By Proposition \ref{prop:mollif}, 
we can replace $U$ by $U \star \varphi$, for some compactly supported smooth function $\varphi$. Equivalently, we can replace $(X_{t})_{t \geq 0}$ by $(\varphi(X_{t}))_{t \geq 0}$. 
In other words, we can assume that $U$ and its first and second order derivatives are bounded and uniformly 
continuous and that 
$(X_{t})_{t \geq 0}$ is a bounded It\^o process. 

Finally by the same argument as  in the proof of 
Proposition
\ref{prop:mollif}, we can also assume that $(b_{t})_{t \geq 0}$ and $(\sigma_{t})_{t \geq 0}$ are bounded. 
Indeed, it suffices to prove the chain rule when 
$(X_{t})_{t \geq 0}$ is driven by truncated processes and then to pass to the limit along a sequence of truncations that converges to $(X_{t})_{t \geq 0}$. 

Let  $((X^\ell_{t})_{t \geq 0})_{\ell \geq 1}$ a sequence of i.i.d. copies of 
$(X_{t})_{t \geq 0}$. That is, for any $\ell \geq 1$,
\begin{equation*}
\ud X^\ell_{t} = b^\ell_{t} \ud t + \sigma^\ell_{t} \ud W^\ell_{t}, \quad t \geq 0,
\end{equation*}
where $((b^\ell_{t},\sigma^\ell_{t},W^\ell_{t})_{t \geq 0},X_{0}^\ell)_{\ell \geq 1}$ are i.i.d copies of $((b_{t},\sigma_{t},W_{t})_{t \geq 0},X_{0})$. 

Recalling the definition of the flow of marginal empirical measures:
\begin{equation*}
\bar\mu^{N}_{t} = \frac{1}{N} \sum_{\ell=1}^N \delta_{X^\ell_{t}},
\end{equation*} 
the standard It\^o's formula yields together with Proposition \ref{prop:derivatives}, $\P$-a.s., for any $t \geq 0$
\begin{align}
&u^N\bigl({X}^1_{t},\dots,{X}^N_{t}\bigr) = 
u^N\bigl({X}^1_{0},\dots,{X}^N_{0}\bigr) \nonumber
\\
&\hspace{5pt} + 
\frac{1}{N} \sum_{\ell=1}^N \int_{0}^t 
\partial_{\mu} U\bigl(\bar{\mu}_{s}^N \bigr)({X}^\ell_{s}) b_{s}^\ell \ud s 
 +  \frac{1}{N} \sum_{\ell=1}^N \int_{0}^t 
\partial_{\mu} U\bigl(\bar{\mu}_{s}^N \bigr)({X}^\ell_{s}) \sigma_{s}^\ell \ud 
W_{s}^\ell  \label{eq:chain:rule:discrete}
\\
&\hspace{5pt} +
\frac{1}{2N} \sum_{\ell=1}^N \int_{0}^t 
\text{Tr} \bigl\{
\partial_{v} \bigl[ \partial_{\mu} U\bigl(\bar{\mu}_{s}^N \bigr) \bigr] 
({X}^\ell_{s}) a_{s}^\ell \bigr\} \ud s 
+ 
\frac{1}{2N^2} \sum_{\ell=1}^N \int_{0}^t
\text{Tr} \bigl\{
 \partial_{\mu}^2 U\bigl(\bar{\mu}_{s}^N \bigr)({X}^\ell_{s},{X}^\ell_{s}) 
 a_{s}^\ell \bigr\} \ud s, \nonumber
\end{align}
with $a_{s}^\ell:= \sigma_{s}^\ell (\sigma_{s}^\ell)^\dagger$.

We take expectation on both sides of the previous equality and obtain (the stochastic integral has zero expectation
due to the boundedness of the coefficients), recalling 
\eqref{eq:23:10:4}, 
\begin{align*}
\esp{U\bigl(\bar{\mu}_{t}^N \bigr)} &= 
\esp{U\bigl(\bar{\mu}_{0}^N \bigr)} + 
\frac{1}{N} \sum_{\ell=1}^N \esp{\int_{0}^t 
\partial_{\mu} U\bigl(\bar{\mu}_{s}^N \bigr)({X}^\ell_{s}) b_{s}^\ell \ud s }
\\
&  \hspace{15pt}+
\frac{1}{2N} \sum_{\ell=1}^N \esp{\int_{0}^t 
\text{Tr} \bigl\{
\partial_{v} \bigl[ \partial_{\mu} U\bigl(\bar{\mu}_{s}^N \bigr) \bigr] 
({X}^\ell_{s}) a_{s}^\ell \bigr\} \ud s} 
\\
& \hspace{15pt}
+ \frac{1}{2N^2} \sum_{\ell=1}^N \esp{\int_{0}^t
\text{Tr} \bigl\{
 \partial_{\mu}^2 U\bigl(\bar{\mu}_{s}^N \bigr)({X}^\ell_{s},{X}^\ell_{s}) 
 a_{s}^\ell \bigr\} \ud s}. 
\end{align*}
All the above expectations are finite, due to the boundedness of the coefficients. 
Using the fact that the processes $((a^\ell_{s},b^\ell_{s},X^\ell_{s})_{s \in [0,t]})_{\ell \in \{1,\dots,N\}}$ are i.i.d., we deduce that 
\begin{align}
\esp{U\bigl(\bar{\mu}_{t}^N \bigr)} &= 
\esp{U\bigl(\bar{\mu}_{0}^N \bigr)} + 
 \esp{\int_{0}^t 
\partial_{\mu} U\bigl(\bar{\mu}_{s}^N \bigr)({X}^1_{s}) b_{s}^1 \ud s } \label{eq:esp:term1}
\\
&\hspace{15pt} +
\frac{1}{2}  \esp{\int_{0}^t 
\text{Tr} \bigl\{
\partial_{v} \bigl[ \partial_{\mu} U\bigl(\bar{\mu}_{s}^N \bigr) \bigr] 
({X}^1_{s}) a_{s}^1 \bigr\} \ud s} \label{eq:esp:term2}
\\
&\hspace{15pt} + 
\frac{1}{2N} \esp{\int_{0}^t
\text{Tr} \bigl\{
 \partial_{\mu}^2 U\bigl(\bar{\mu}_{s}^N \bigr)({X}^1_{s},{X}^1_{s}) 
 a_{s}^1 \bigr\} \ud s}, \label{eq:esp:term3}
\end{align}
In particular, because of the additional $1/N$, the term in \eqref{eq:esp:term3} converges to $0$.
Moreover,
the coefficients $(a_{s})_{s \in [0,t]}$
and $(b_{s})_{s \in [0,t]}$ being bounded, 
we know from \cite[Theorem 10.2.7]{RachevRuschendorf}:
\begin{equation}
\label{eq:09:01:1}
\lim_{N \rightarrow + \infty} \E \bigl[ \sup_{0 \leq s \leq t} W_{2}^2(\bar{\mu}^N_{s},{\mu}_{s}) \bigr] = 0.
\end{equation}
This implies together with the uniform continuity of $U$ with respect to the distance $W_{2}$,  that
$\esp{U\bigl(\bar{\mu}_{t}^N \bigr)}$ (resp. 
$\esp{U\bigl(\bar{\mu}_{0}^N \bigr)}$) converges  to 
$U(\mu_{t})$ (resp. $U(\mu_{0})$). Combining the uniform continuity of $\partial_{\mu} U$ on ${\mathcal P}_{2}(\R^d) \times \R^d$
with \eqref{eq:09:01:1}, the second term in the right-hand side 
of \eqref{eq:esp:term1} converges. Similar arguments lead to the convergence of the term in \eqref{eq:esp:term2}.
\qed
\end{proof}

\vspace{5pt}
The notion of differentiation as defined by Lions plays an essential role 
in the chain rule formula. It is the right differentiation procedure to give the 
natural extension from the chain rule for empirical distribution processes 
to the chain rule for measure valued processes. 

\subsection{The chain rule for $U$ partially ${\mathcal C}^2$}
\label{subse:partial:C2}
We observe that, in the formula for chain rule \eqref{eq:thm:chain:rule}, the second order derivative $\partial^2_{\mu} U$ does not appear. 
It is thus a quite natural question to study its validity when 
$\partial^2_{\mu} U$ does not exist. This is what we refer to as `partial ${\mathcal C}^2$ regularity'.
More precisely, we will say that $U$ is partially ${\mathcal C}^2$ (in $v$) if 
{the lift 
${\mathcal U}$ is Fr\'echet differentiable
and, for any $\mu \in {\mathcal P}_{2}(\R^d)$, 
we can find a continuous version of the mapping 
$\R^d \ni v \mapsto 
\partial_{\mu} U(\mu)(v)$ such that}:

$\bullet$ the mapping
${\mathcal P}_{2}(\R^d) \times \R^d \ni (\mu,v) \mapsto \partial_{\mu} U(\mu)(v)$
{
is jointly continuous at any 
$(\mu,v)$ such that $v \in \textrm{Supp}(\mu)$,}

$\bullet$ for any $\mu \in {\mathcal P}_{2}(\R^d)$, the mapping 
$\R^d \ni v \mapsto \partial_{\mu} U(\mu)(v) \in \R^d$ is continuously differentiable and its derivative
is jointly continuous with respect to $\mu$ and $v$
{at any 
point $(\mu,v)$ such that $v \in \textrm{Supp}(\mu)$, the derivative 
being denoted by $\R^d \ni v \mapsto 
\partial_{v}[\partial_{\mu} U(\mu)](v) \in \R^{d \times d}$.} 

{Recall from the discussion in Subsection 
\ref{subse:ito:full} that, for each $\mu \in \R^d$, 
the mapping 
$\partial_{\mu} U(\mu) : v \mapsto \partial_{\mu} U(\mu)(v)$
is uniquely defined on the support of $\mu$.}
\vspace{5pt}

The following is the chain rule for is partially ${\mathcal C}^2$:
\begin{Theorem}
\label{thm:partial:C2}
Assume that $U$ is partially ${\mathcal C}^2$ and 
that, for any compact subset 
${\mathcal K} \subset {\mathcal P}_{2}(\R^d)$, 
\eqref{eq:compact:bound} holds true. Then, the chain rule holds
for an It\^o process satisfying \eqref{eq:23:10:2}.
\end{Theorem}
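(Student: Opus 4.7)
The plan is to reduce Theorem~\ref{thm:partial:C2} to Theorem~\ref{thm:full} by a Gaussian mollification of $U$ in the measure direction. The starting point is the observation that the formula \eqref{eq:thm:chain:rule} involves only $\partial_{\mu}U$ and the cross-derivative $\partial_{v}[\partial_{\mu}U]$, but not the pure second-order measure-derivative $\partial^{2}_{\mu\mu}U$. Hence, if one can construct fully $\cC^{2}$ approximations $U_{\delta}$ of $U$ for which the relevant derivatives converge (and the ``useless'' $\partial^{2}_{\mu\mu}U_{\delta}$ may well blow up as $\delta\to 0$), then applying Theorem~\ref{thm:full} to $U_{\delta}$ and passing to the limit delivers the chain rule for $U$. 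I would begin with the truncation reductions from the proof of Proposition~\ref{prop:mollif}, so that, without loss of generality, $(X_{t})$, $(b_{t})$, $(\sigma_{t})$ are bounded and $U$, $\partial_{\mu}U$, $\partial_{v}[\partial_{\mu}U]$ are bounded and uniformly continuous on the relevant compact subsets --- an input guaranteed by \eqref{eq:compact:bound} and the $W_{2}$-continuity of $(\mu_{s})_{s\in[0,t]}$.

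Writing $\gamma_{\delta}$ for the density of $\cN(0,\delta I_{d})$, the candidate mollification is
\begin{equation*}
U_{\delta}(\mu) := U(\mu * \gamma_{\delta}), \qquad \delta>0,
\end{equation*}
whose $L^{2}$-lift reads $\cU_{\delta}(X) = \cU(X+\sqrt{\delta}\tilde Z)$ for an independent standard Gaussian $\tilde Z$ on $\R^{d}$. Using the Lions representation together with a Cameron-Martin change of variable in $\tilde Z$ to shift the $\mu$-perturbation onto the Gaussian density, one obtains
\begin{equation*}
\partial_{\mu}U_{\delta}(\mu)(v) = \int_{\R^{d}} \partial_{\mu}U(\mu*\gamma_{\delta})(v+\sqrt{\delta} z)\,\gamma(z)\,\ud z
\end{equation*}
(with $\gamma$ the standard Gaussian density on $\R^{d}$), an analogous formula for $\partial_{v}[\partial_{\mu}U_{\delta}]$, and a closed-form expression for $\partial^{2}_{\mu\mu}U_{\delta}$ that involves only $\partial_{\mu}U$, $\partial_{v}[\partial_{\mu}U]$ and the Gaussian score (with a $z/\sqrt{\delta}$ weight). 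This renders $U_{\delta}$ \emph{fully} $\cC^{2}$ in the sense of Subsection~\ref{subse:ito:full}; note that $\partial^{2}_{\mu\mu}U_{\delta}$ may blow up as $\delta\to 0$, but this plays no role. Standard convolution arguments (exploiting the joint continuity of $\partial_{\mu}U$ and $\partial_{v}[\partial_{\mu}U]$ at points with $v\in\textrm{Supp}(\mu)$) then yield local uniform convergences $\partial_{\mu}U_{\delta}\to \partial_{\mu}U$ and $\partial_{v}[\partial_{\mu}U_{\delta}]\to\partial_{v}[\partial_{\mu}U]$ on the compact subset of $\cP_{2}(\R^{d})\times\R^{d}$ visited by $(\mu_{s},X_{s})_{s\in[0,t]}$.

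Applying Theorem~\ref{thm:full} to $U_{\delta}$ yields
\begin{equation*}
U_{\delta}(\mu_{t}) = U_{\delta}(\mu_{0}) + \int_{0}^{t}\E\bigl[\partial_{\mu}U_{\delta}(\mu_{s})(X_{s})\,b_{s}\bigr]\,\ud s + \tfrac{1}{2}\int_{0}^{t}\E\bigl[\textrm{Tr}\bigl(\partial_{v}[\partial_{\mu}U_{\delta}(\mu_{s})](X_{s})\,a_{s}\bigr)\bigr]\,\ud s.
\end{equation*}
Since every term on the right-hand side involves only $\partial_{\mu}U_{\delta}$ or $\partial_{v}[\partial_{\mu}U_{\delta}]$, one may send $\delta\to 0$ by dominated convergence, using the local uniform convergence of the derivatives along the path and the uniform bounds transferred from \eqref{eq:compact:bound} to $U_{\delta}$. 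The uniform integrability argument of the last paragraph in the proof of Proposition~\ref{prop:mollif} directly adapts to the present setting.

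The hardest step is the Cameron-Martin computation showing that $U_{\delta}$ is \emph{fully} $\cC^{2}$: the Gaussian smoothing creates the missing Fréchet differentiability of $\partial_{\mu}U_{\delta}$ in the measure argument via integration by parts against the Gaussian score, but one must carefully track dependencies on the lifted variable $X$ and on $v$ simultaneously, and exploit the fact that the continuous versions of $\partial_{\mu}U(\mu*\gamma_{\delta})(\cdot)$ and of its $v$-derivative are uniquely determined on $\R^{d}$ (as $\mu*\gamma_{\delta}$ has full support), so that the transfer of regularity from $U$ to $U_{\delta}$ is well-defined.
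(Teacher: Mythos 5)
There is a genuine gap, and it sits exactly at the step you flag as the hardest one: the claim that $U_{\delta}(\mu):=U(\mu*\gamma_{\delta})$ is \emph{fully} $\cC^{2}$. Gaussian convolution of the measure regularizes only the $v$-dependence of $\partial_{\mu}U$ (and guarantees full support, hence uniqueness of the continuous versions); it does \emph{not} create a second-order derivative in the measure direction. At the level of the lift, $\cU_{\delta}(X)=\cU(X+\sqrt{\delta}\tilde Z)$ is the composition of $\cU$ with an affine map of $L^{2}$, so $\cU_{\delta}$ is twice Fr\'echet differentiable at $X$ if and only if $\cU$ is twice Fr\'echet differentiable at $X+\sqrt{\delta}\tilde Z$ --- no smoothing occurs. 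Equivalently, differentiating $\partial_{\mu}U_{\delta}(\mu)(v)=\int\partial_{\mu}U(\mu*\gamma_{\delta})(v+\sqrt{\delta}z)\gamma(z)\,\ud z$ in $\mu$ still requires $\partial^{2}_{\mu\mu}U$, because the $\mu$-dependence enters through the nonlinear functional $\partial_{\mu}U(\cdot)(w)$ evaluated at $\mu*\gamma_{\delta}$, and no integration by parts against the Gaussian score can transfer a perturbation of $\mu$ onto the kernel (that trick works only when the functional is an integral of a fixed kernel against the measure). A concrete counterexample: take $U(\mu)=\psi(\int\phi\,\ud\mu)$ with $\phi$ smooth and bounded and $\psi$ merely $\cC^{1}$; then $U$ is partially $\cC^{2}$, but $\partial_{\mu}U_{\delta}(\mu)(v)=\psi'(\int\phi*\gamma_{\delta}\,\ud\mu)\,\nabla(\phi*\gamma_{\delta})(v)$ is not differentiable in $\mu$ for any $\delta>0$, since that would require $\psi''$. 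So Theorem \ref{thm:full} cannot be applied to $U_{\delta}$, and the reduction collapses.

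This is precisely why the paper does not argue by reduction to the fully $\cC^{2}$ case. It does use the measure-convolution $\mu\mapsto\mu\star\rho$ in its first step, but only to upgrade continuity of $\partial_{\mu}U$ and $\partial_{v}[\partial_{\mu}U]$ from the support of $\mu$ to the whole space. The actual chain rule is then obtained by mollifying the finite-dimensional projections $u^{N}$ on $(\R^{d})^{N}$ (where ordinary convolution genuinely produces all second derivatives), applying the classical It\^o formula to $u^{N}_{n}(X^{1},\dots,X^{N})$, and showing that the second-derivative terms either converge to the $\partial_{v}[\partial_{\mu}U]$ contribution via a finite-dimensional integration by parts or vanish, with a careful ordering of limits ($N\to\infty$ before $n\to\infty$) to control the error $\varepsilon_{n}+(1+n)\varepsilon_{N}$. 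If you want to salvage a mollification-in-measure strategy, the smoothing must be performed where second derivatives can actually be manufactured --- i.e., on the finite-dimensional projections --- which is the route the paper takes.
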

{Notice that,
in the chain rule, 
the mapping 
$\partial_{\mu} U : {\mathcal P}_{2}(\R^d)
\times \R^d \ni (\mu,v) \mapsto \partial_{\mu} U(\mu)(v)$
is always evaluated at points $(\mu,v)$ 
such that $v$ belongs
to the support of $\mu$
and thus for which $\partial_{\mu} U(\mu)(v)$ is uniquely defined.}
\vspace{5pt}

\begin{proof} 
\textit{First step.} We start with the same mollification procedure as in the proof 
of Theorem \ref{thm:full}, see 
\eqref{eq:composition}. 

Repeating the computations,  
$U \star \varphi$ and its first and partial second order derivatives are 
bounded. 
Nevertheless, 
contrary to the argument in the proof of
Theorem \ref{thm:full},
we cannot claim here
that
$\partial_{\mu} (U \star \varphi)$ and 
$\partial_{v} [\partial_{\mu} (U \star \varphi)]$
are continuous on the whole space
since
$\partial_{\mu} U$ and 
$\partial_{v} [ \partial_{\mu} U]$ are only continuous at points $(\mu,v)$ such that 
$v$ is in the support of $\mu$. 
In order to circumvent this difficulty, we first
notice, from 
\eqref{eq:composition},
 that 
$\partial_{\mu} (U \star \varphi)$ and 
$\partial_{v} [\partial_{\mu} (U \star \varphi)]$
are also 
continuous at points $(\mu,v)$ such that 
$v$ is in the support of $\mu$, the reason being that 
$v \in \textrm{Supp}(\mu)$ implies $\varphi(v) 
\in \textrm{Supp}(\varphi \sharp \mu)$.
We then
 change 
${\mathcal P}_{2}(\R^d) \ni \mu \mapsto 
(U \star \varphi)(\mu)$ 
into ${\mathcal P}_{2}(\R^d) \ni \mu \mapsto (U \star \varphi)
(\mu \star \rho)$
where $\rho$ is a smooth convolution kernel, with the entire 
$\R^d$ as support and with exponential decay at infinity,
and $\mu \star \rho$ stands for the probability measure with density
given by
\begin{equation*}
\R^d \ni x \mapsto \int_{\R^d} \rho(x-y) \ud\mu(y). 
\end{equation*} 
We then observe that 
\begin{equation*}
\begin{split}
&\partial_{\mu} \bigl[ \bigl( U \star \varphi\bigr) ( \mu \star \rho) \bigr](v )
= \int_{\R^d} \partial_{\mu} \bigl( U \star \varphi\bigr)(\mu \star \rho)(v-v') \rho(v') \ud v', 
\\
&\partial_{v} \bigl[ \partial_{\mu} \bigl[ \bigl(U \star \varphi \bigr) ( \mu \star \rho) \bigr] \bigr](v)
= \int_{\R^d} \partial_{v} \bigl[ \partial_{\mu} \bigl( U \star \varphi\bigr)(\mu \star \rho) \bigr](v-v') \rho(v') \ud v'.
\end{split}
\end{equation*}
Since the support of $\rho$ is the whole $\R^d$, the measure 
$\mu \star \rho$ also has $\R^d$ as support, so that, 
for any $v \in \R^d$, $(\mu \star \rho,v)$ 
is a continuity point of both 
$\partial_{\mu}( U \star \varphi)$ and 
$\partial_{v}[\partial_{\mu}( U \star \varphi)]$. 
Since 
$\partial_{\mu}( U \star \varphi)$ and 
$\partial_{v}[\partial_{\mu}( U \star \varphi)]$ are bounded, 
we deduce from Lebesgue's theorem that 
the maps 
$(\mu,v) \mapsto 
\partial_{\mu} ( U \star \varphi) ( \mu \star \rho)(v)$
and 
$(\mu,v) \mapsto 
\partial_{v} [\partial_{\mu} ( U \star \varphi) ( \mu \star \rho)](v)$
are continuous on the whole ${\mathcal P}_{2}(\R^d) \times \R^d$.

Moreover, whenever $\rho$ is chosen along a sequence that converges to the Dirac mass at $0$ (for the $W_{2}$ distance),
it is also easy to check that, for any $\mu \in {\mathcal P}_{2}(\R^d)$
and any $v \in \textrm{Supp}(\mu)$, $\partial_{\mu} 
(U \star \varphi)(\mu \star \rho)(v)$ and 
$\partial_{v}[ \partial_{\mu} 
(U \star \varphi)](\mu \star \rho)(v)$
converge to 
$\partial_{\mu} 
(U \star \varphi)(\mu)(v)$ and 
$\partial_{v}[ \partial_{\mu} 
(U \star \varphi)](\mu)(v)$. In particular, if It\^o's formula holds true for functionals
of the type ${\mathcal P}_{2}(\R^d) \ni \mu \mapsto 
(U \star \varphi)(\mu \star \rho)$, it also holds true for
functionals 
of the type ${\mathcal P}_{2}(\R^d) \ni \mu \mapsto 
(U \star \varphi)(\mu)$
and then for 
functionals 
of the type ${\mathcal P}_{2}(\R^d) \ni \mu \mapsto 
U(\mu)$ by the same approximation argument as in 
the proof of Theorem \ref{thm:full}.

Therefore, without any loss of generality, we can assume that $U$ and its 
first and partial second order derivatives are bounded 
 and uniformly continuous on the whole space. As in the proof of 
 Theorem \ref{thm:full},
 we can also assume
 that $(X_{t})_{t \geq 0}$ 
is a bounded It\^o process. 
\color{black}
\vspace{5pt}

\textit{Second step.} The proof requires another mollification argument. 
{Taking now $\rho$ as a smooth compactly supported density on $\R^d$ and
using the same notations as above}, we define the convolution 
$u_{n}^N$ of $u^N$: 
\begin{equation}
\label{eq:24:10:6:ter}
\begin{split}
u_{n}^N(x^1,\dots,x^N) &= n^{Nd}  \int_{(\R^d)^N} u^N(x^1-y^1,\dots,x^N-y^N) \prod_{\ell=1}^N \rho \bigl(n y^\ell \bigr) 
\prod_{\ell=1}^N \ud y^{\ell}
\\
&= {\mathbb E} \biggl[ U \biggl( \frac{1}{N} \sum_{i=1}^N \delta_{x^i - Y^i/n} \biggr) \biggr],
\end{split}
\end{equation}
where $Y^1,\dots,Y^N$ are $N$ i.i.d. random variables with density $\rho$. Recalling that 
\begin{equation*}
W_{2}^2
\biggl( \frac{1}{N} \sum_{i=1}^N \delta_{x^i - Y^i/n},  
\frac{1}{N} \sum_{i=1}^N \delta_{x^i}
\biggr) \leq \frac{1}{N} \sum_{i=1}^N \bigl( \frac{Y^i}{n} \bigr)^2,
\end{equation*}
we notice that 
\begin{equation}
\label{eq:24:10:6:bis}
\E \biggl[
W_{2}^2
\biggl( \frac{1}{N} \sum_{i=1}^N \delta_{x^i - Y^i/n},  
\frac{1}{N} \sum_{i=1}^N \delta_{x^i}
\biggr)
\biggr] \leq \frac{C}{n^2}, 
\end{equation}
as $\rho$ has compact support. Above and in the rest of the proof, the constant
$C$ is a general constant that is allowed to increase from line to line. Importantly,
it does not depend on $n$ nor $N$.

Observe now that, for two random variables $X,X' \in L^2(\Omega,\cA,\P;\R^d)$, 
we can find $t \in [0,1]$ such that 
\begin{equation*}
\begin{split}
\vert U([X]) - U([X']) \vert &= \bigl\vert {\mathbb E} \bigl[ \partial_{\mu} U \bigl( \bigl[t  X + (1-t) X' \bigr] \bigr)
\bigl( t  X + (1-t) X' \bigr) (X-X') \bigr] \bigr\vert 
\\
&\leq \big\| \partial_{\mu} U\bigl( \bigl[ tX + (1-t) X' \bigr] \bigr)\bigl( t  X + (1-t) X' \bigr) \big\|_{2} \|X-X' \|_{2}
\\
&\leq C \| X-X' \|_{2},
\end{split}
\end{equation*}
the last line following from the fact that the function $\cP_{2}(\R^d) \times \R^d \ni 
(\mu,v) \mapsto \partial_{\mu} U(\mu)(v)$ is bounded. Therefore, we deduce from 
\eqref{eq:24:10:6:ter} and
\eqref{eq:24:10:6:bis} that 
\begin{equation}
\label{eq:24:10:6}
\begin{split}
\bigl\vert u_{n}^N(x^1,\dots,x^N) - u^N(x^1,\dots,x^N) \bigr\vert 
&= 
\biggl\vert 
{\mathbb E} \biggl[ U \biggl( \frac{1}{N} \sum_{i=1}^N \delta_{x^i - Y^i/n} \biggr) 
- U \biggl( \frac{1}{N} \sum_{i=1}^N \delta_{x^i} \biggr)
\biggr]
\biggr\vert
\\
&\leq 
Cn^{-1}.
\end{split}
\end{equation}

Given a bounded random variable $X$ with law $\mu$, we know 
from \cite[Theorem 10.2.1]{RachevRuschendorf} 
that the quantity 
${\mathbb E}[W_{2}^2(\mu,\bar{\mu}^N)]$ tends to $0$ as $N$ tends to the infinity, 
$\bar{\mu}^N$ denoting the empirical measure of a sample of size $N$ of the same law as $X$. Moreover, the rate of convergence of 
$({\mathbb E}[W_{2}^2(\mu,\bar{\mu}^N)])_{N \geq 1}$ towards $0$ only depends upon the bounds for the 
moments of $X$.
 Together with \eqref{eq:24:10:6}, this says that we can find a sequence $(\varepsilon_{\ell})_{\ell \geq 1}$ converging to $0$ as 
$\ell$ tends to $\infty$ such that, for any $n,N \geq 1$ and for any $t \geq 0$,
\begin{equation}
\label{eq:24:10:7}
\begin{split}
&\E \bigl[ \bigl\vert u_{n}^N(X^1_{t},\dots,X^N_{t}) - U \bigl( \mu_{t} \bigr) \bigr\vert \bigr] 
\\
&\leq \E \bigl[ \bigl\vert u_{n}^N(X^1_{t},\dots,X^N_{t}) - u^N(X^1_{t},\dots,X^N_{t}) \bigr\vert \bigr]  
+ \E \bigl[ \bigl\vert U\bigl( \bar\mu_{t}^N \bigr) - U \bigl( \mu_{t} \bigr) \bigr\vert \bigr] 
\\
&\leq \varepsilon_{n} + \varepsilon_{N}. 
\end{split}
\end{equation}
(It is worth mentioning that the sequence $(\varepsilon_{\ell})_{\ell \geq 1}$ may be assumed to be independent of $t$.)
By boundedness of $U$, we deduce that, for any $p \geq 1$ and any $t  \geq 0$,
\begin{equation}
\label{eq:24:10:7:b}
\E \bigl[ \bigl\vert u_{n}^N(X^1_{t},\dots,X^N_{t}) - U \bigl( \mu_{t} \bigr) \bigr\vert^p \bigr]^{1/p} \leq \varepsilon_{n}^{(p)} + \varepsilon_{N}^{(p)}, 
\end{equation}
for a sequence $(\varepsilon_{\ell}^{(p)})_{\ell \geq 1}$ that tends to $0$ as $\ell$ tends to $\infty$ 
(and the terms of which are allowed to increase from line to line). 

Now, by the first part in Proposition \ref{prop:derivatives}, we compute
\begin{equation*}
\begin{split}
\partial_{x_{i}} u_{n}^{N}(x^1,\dots,x^N) 
&= n^{Nd} \int_{(\R^d)^N}
\partial_{x_{i}} u^N(x^1-y^1,\dots,x^N-y^N) \prod_{\ell=1}^N \rho(n y^\ell) 
\prod_{\ell=1}^N \ud y^\ell
\\
&= \frac{n^{Nd}}{N} \int_{(\R^{d})^N}
\partial_{\mu} U\biggl( \frac{1}{N} \sum_{\ell=1}^N \delta_{x^\ell - y^\ell}\biggr)(x^i - y^i) \prod_{\ell=1}^N 
\rho(n y^\ell) \prod_{\ell=1}^N \ud y^\ell
\\
&= \frac{1}{N} {\mathbb E} \biggl[ \partial_{\mu} U
\biggl( \frac{1}{N} \sum_{\ell=1}^N \delta_{x^\ell - Y^\ell/n}\biggr)(x^i - Y^i/n)
\biggr].
\end{split}
\end{equation*}
Using the uniform continuity of 
$\partial_{\mu} U$ on the whole space and 
following the proof of \eqref{eq:24:10:7}, we deduce that, for any $t \geq 0$,
\begin{equation}
\label{eq:24:10:8}
\E \bigl[ 
\bigl\vert N \partial_{x_{i}} u_{n}^N(X^1_{t},\dots,X^N_{t}) - \partial_{\mu} U ( \mu_{t})(X_{t}^i) \bigr\vert \bigr] \leq \varepsilon_{n} + \varepsilon_{N}. 
\end{equation}
Again, by boundedness of $\partial_{\mu} U$, we deduce that, for any $p \geq 1$ and any $t \geq 0$, 
\begin{equation}
\label{eq:24:10:8:b}
\E \bigl[ 
\bigl\vert N \partial_{x_{i}} u_{n}^N(X^1_{t},\dots,X^N_{t}) - \partial_{\mu} U ( \mu_{t})(X_{t}^i) \bigr\vert^p \bigr]^{1/p} \leq \varepsilon_{n}^{(p)} + \varepsilon_{N}^{(p)}. 
\end{equation}
Now, we differentiate once more in $x_{i}$:
\begin{equation*}
\begin{split}
&\partial_{x_{i}x_{i}}^2 u_{n}^N(x^1,\dots,x^N) 
\\
&= \frac{n^{Nd+1}}{N}\int_{(\R^d)^N}
\biggl\{
\partial_{\mu} U\biggl( \frac{1}{N} \sum_{\ell=1}^N \delta_{x^\ell - y^\ell}\biggr)(x^i - y^i) 
\biggr\}
\otimes
\nabla \rho(n y^i)
\prod_{\ell \not = i} \rho( n y^\ell) \prod_{\ell=1}^N \ud y^\ell,
\end{split}
\end{equation*}
the tensorial product operating on elements of $\R^d$.
We then split the derivative into two pieces:
\begin{equation*}
N \partial_{x_{i} x_{i}}^2 u_{n}^N(x^1,\dots,x^N) = T_{n,i}^{1,N}(x^1,\dots,x^N) + T_{n,i}^{2,N}(x^1,\dots,x^N),
\end{equation*}
with 
\begin{equation*}
\begin{split}
&T_{n,i}^{1,N}(x^1,\dots,x^N) 
\\
&\hspace{5pt}= n^{Nd+1} \int_{(\R^d)^N}
\biggl\{
\partial_{\mu} U\biggl( \frac{1}{N} \sum_{\ell \not = i} \delta_{x^\ell - y^\ell} + \frac{1}{N} \delta_{x^i} \biggr)
(x^i - y^i) \biggr\} 
\otimes \nabla \rho(n y^i)
\prod_{\ell \not = i} \rho(n y^\ell) \prod_{\ell=1}^N \ud y^\ell
\\
&T_{n,i}^{2,N}(x^1,\dots,x^N) 
\\
&\hspace{5pt}= n^{Nd+1} \int_{(\R^d)^N}
\biggl\{ \biggl[
\biggl( \partial_{\mu} U\biggl( \frac{1}{N} \sum_{\ell=1}^N \delta_{x^\ell - y^\ell}  \biggr)
\\
&\hspace{50pt}
- \partial_{\mu} U\biggl( \frac{1}{N} \sum_{\ell \not = i} \delta_{x^\ell - y^\ell} + \frac{1}{N} \delta_{x^i} \biggr)
\biggr] 
(x^i - y^i) \biggr\} 
\otimes \nabla \rho(n y^i)
\prod_{\ell \not = i} \rho(n y^\ell) \prod_{\ell=1}^N \ud y^\ell.
\end{split}
\end{equation*}
By integration by parts (recall that $\R^d \ni v \mapsto 
\partial_{\mu} U(\mu)(v)$ is differentiable), we can split $T^{1,N}_{n,i}$ into 
$$T^{1,N}_{n,i}(x^1,\dots,x^N)=T^{11,N}_{n,i}(x^1,\dots,x^N) + 
T^{12,N}_{n,i}(x^1,\dots,x^N),$$ 
with
\begin{equation*}
\begin{split}
&T^{11,N}_{n,i}(x^1,\dots,x^N) 
= n^{Nd} \int_{(\R^{d})^N}
\biggl\{ \partial_{v} \biggl[
\partial_{\mu} U\biggl( \frac{1}{N} \sum_{\ell=1}^N \delta_{x^\ell - y^\ell} \biggr) \biggr]  (x^i - y^i)\biggr\} 
\prod_{\ell = 1}^N \rho(n y^\ell) \prod_{\ell=1}^N \ud y^\ell
\\
&T^{12,N}_{n,i}(x^1,\dots,x^N) 
= n^{Nd}
\int_{(\R^d)^N}
\biggl\{
\partial_{v} \biggl[
\partial_{\mu} U\biggl( \frac{1}{N} \sum_{\ell \not = i} \delta_{x^\ell - y^\ell} + \frac{1}{N} \delta_{x^i} \biggr)
\\
&\hspace{100pt} - \partial_{\mu} U\biggl( \frac{1}{N} \sum_{\ell = 1}^N \delta_{x^\ell - y^\ell}  \biggr)
 \biggr] (x^i - y^i) \biggr\} 
\prod_{\ell = 1}^N \rho(n y^\ell) \prod_{\ell = 1}^N \ud y^\ell.
\end{split}
\end{equation*}
The first term is treated as per \eqref{eq:24:10:7} and \eqref{eq:24:10:8}. Namely, we have, for any $t \geq 0$, 
\begin{equation}
\label{eq:24:10:10}
\E \bigl[ 
\bigl\vert T^{11,N}_{n,i}(X^1_{t},\dots,X^N_{t}) - \partial_{v} \bigl[ \partial_{\mu} U ( \mu_{t}) \bigr] (X_{t}^i) \bigr\vert \bigr] \leq \varepsilon_{n} + \varepsilon_{N}. 
\end{equation}
Then, by boundedness of $\partial_v[\partial_\mu U]$ for any $p \geq 1$ and any $t \geq 0$, 
\begin{equation}
\label{eq:24:10:10:b}
\E \bigl[ 
\bigl\vert T^{11,N}_{n,i}(X^1_{t},\dots,X^N_{t}) - \partial_{v} \bigl[ \partial_{\mu} U ( \mu_{t}) \bigr] (X_{t}^i) \bigr\vert^p \bigr]^{1/p} \leq \varepsilon_{n}^{(p)} + \varepsilon_{N}^{(p)}. 
\end{equation}
To handle the second term, we use uniform  continuity of $\partial_{v} [\partial_{\mu} U]$. 
Indeed, we have
$\vert T^{12,N}_{n, i}(x^1,\dots,x^N)\vert \leq \varepsilon_{N}$ as 
\begin{equation*}
W_{2}^2 \biggl( \frac{1}{N} \sum_{\ell \not = i} \delta_{x^\ell - y^\ell} + \frac{1}{N} \delta_{x^i},
 \frac{1}{N} \sum_{\ell = 1}^N \delta_{x^\ell - y^\ell} \biggr) \leq \frac{1}{N} \vert y^i \vert^2 \leq \frac{C}{N},
\end{equation*}
since, in $T^{12,N}_{n,i}(x^1,\dots,x^N)$, $n y^i$ belongs to the (compact) support of $\rho$. This says that, for any $t \geq 0$,
\begin{equation}
\label{eq:24:10:11}
\E \bigl[ 
\bigl\vert T^{12,N}_{n,i}(X^1_{t},\dots,X^N_{t})  \bigr\vert \bigr] \leq  \varepsilon_{N}. 
\end{equation}
And, then, for any $p \geq 1$ and any $t \geq 0$, 
\begin{equation}
\label{eq:24:10:11:b}
\E \bigl[ 
\bigl\vert T^{12,N}_{n, i}(X^1_{t},\dots,X^N_{t})  \bigr\vert^p \bigr]^{1/p} \leq  \varepsilon_{N}^{(p)}.
\end{equation}

We finally handle $T^{2,N}_{n, i}$. Following the proof of \eqref{eq:24:10:11:b}, we have,
for any $p \geq 1$ and any $t \geq 0$,
\begin{equation}
\label{eq:24:10:12:b}
\E \bigl[ 
\bigl\vert T^{2,N}_{n,i}(X^1_{t},\dots,X^N_{t})  \bigr\vert^p \bigr]^{1/p} \leq n \varepsilon_{N}^{(p)}, 
\end{equation}
the additional $n$ coming from the differentiation of the regularization kernel.  
\vspace{5pt}

\textit{Third step.}
In order to complete the proof, we apply It\^o's formula to $(u^N_{n}(X^1_{t},\dots,X^N_{t}))_{t \geq 0}$ for given values of $n$ and $N$. We obtain
\begin{equation*}
\label{eq:24:10:15}
\begin{split}
0 &= u^N_{n}\bigl({X}^1_{t},\dots,{X}^N_{t}\bigr) -
u^N_{n}\bigl({X}^1_{0},\dots,{X}^N_{0}\bigr)
 -
\sum_{\ell=1}^N \int_{0}^t \partial_{x^{\ell}} u^N_{n}\bigl({X}^1_{s},\dots,{X}^N_{s}\bigr) b_{s}^\ell \ud s 
\\
&\hspace{5pt}  -
\sum_{\ell=1}^N \int_{0}^t \partial_{x^{\ell}} u^N_{n}\bigl({X}^1_{s},\dots,{X}^N_{s}\bigr) 
\sigma_{s}^\ell \ud W_{s}^\ell 
-
\frac{1}{2} \sum_{\ell=1}^N \int_{0}^t 
{\rm Tr} \bigl\{
\partial_{x_{\ell}}^2  u^N_{n}\bigl({X}^1_{s},\dots,{X}^N_{s}  \bigr) a_{s}^\ell
  \bigr\} \ud s, 
\end{split}
\end{equation*}
with $a_{s}^\ell := \sigma_{s}^\ell ( \sigma_{s}^\ell)^{\dagger}$.
To compare with the expected result, we take the difference with
\begin{equation}
\label{eq:24:10:16}
\begin{split}
\Delta_{t}^N  &= U(\mu_{t}) - U(\mu_{0})
- \frac{1}{N} \sum_{\ell=1}^N \int_{0}^t \partial_{\mu} U(\mu_{s})(X_{s}^{\ell}) b_{s}^{\ell} \ud s
\\
&\hspace{5pt} - \frac{1}{N} \sum_{\ell=1}^N \int_{0}^t \partial_{\mu} U(\mu_{s})(X_{s}^{\ell}) \sigma_{s}^{\ell} \ud
W_{s}^{\ell} 
- \frac{1}{2N} \sum_{\ell=1}^N \int_{0}^t 
{\rm Tr} \bigl\{
\partial_{v} \bigl[ \partial_{\mu} U(\mu_{s})\bigr](X_{s}^{\ell}) 
 a_{s}^{\ell} 
 \bigr\}
\ud s.
\end{split}
\end{equation}
From \eqref{eq:24:10:7:b}, 
\eqref{eq:24:10:8:b},
\eqref{eq:24:10:10:b}, 
\eqref{eq:24:10:11:b} and \eqref{eq:24:10:12:b},
we obtain, for any $T>0$,
\begin{equation*}
\sup_{0 \leq t \leq T} {\mathbb E} \bigl[ \vert \Delta_{t}^N \vert \bigr]
\leq \varepsilon_{n} + (1+n) \varepsilon_{N}, 
\end{equation*}
the sequence $(\varepsilon_{\ell})_{\ell \geq 1}$ now depending on $T$. 
Letting $N$ tend to $\infty$, we deduce from Fatou's lemma and the law of large numbers that 
\begin{equation}
\label{eq:24:10:17}
\sup_{0 \leq t \leq T} \vert \Delta_{t} \vert 
\leq \varepsilon_{n}, 
\end{equation}
where 
\begin{equation*}
\begin{split}
&\Delta_{t}  = U(\mu_{t}) - U(\mu_{0})
- \int_{0}^t \E \bigl[ \partial_{\mu} U(\mu_{s})(X_{s}) b_{s} \bigr] \ud s
 - \frac{1}{2} \int_{0}^t \E \Bigl[ {\rm Tr} \bigl\{
   \partial_{v} \bigl[ \partial_{\mu} U(\mu_{s})\bigr](X_{s})
   a_{s}   
\bigr\} \Bigr] \ud s.
\end{split}
\end{equation*}
Letting $n$ tend $\infty$ in \eqref{eq:24:10:17}, we deduce that $\Delta \equiv 0$, which completes the proof. 
\end{proof}

\subsection{A sufficient condition for partial 
${\mathcal C}^2$ regularity}
\label{sec:ito:frechet}

The following is a sufficient criterion for partial ${\mathcal C}^2$ regularity used in the next section:

\begin{Theorem}
\label{thm:ito:frechet}
Let $U : {\mathcal P}_{2}(\R^d) \rightarrow \R$
be a function such that its lifted version ${\mathcal U} : L^2(\Omega,\cA,\P;\R^d) \ni \xi \mapsto U([\xi]) \in \R$ is once continuously Fr\'echet differentiable. Assume also that for any continuously differentiable
map $\R \ni \lambda \mapsto X^{\lambda} \in L^2(\Omega,{\mathcal A},\P;\R^d)$,
with the property that all the $(X^{\lambda})_{\lambda \in \R}$
have the same distribution and that $\vert [\ud/\ud \lambda]X^{\lambda} \vert \leq 1$
(in $L^{\infty}$), 
  the mapping
\begin{equation}
\label{eq:ito:frechet:assumption}
\R \ni \lambda \mapsto D {\mathcal U}(X^{\lambda}) \cdot \chi =
{\mathbb E} \bigl[ \partial_{\mu} U([X^\lambda])(X^\lambda) \chi
\bigr]
\in \R
\end{equation}
is continuously differentiable 
for any $\chi \in L^2(\Omega,{\mathcal A},\P;\R^d)$. Moreover assume that the derivative of the mapping $\R \ni \lambda \mapsto D {\mathcal U}(X^{\lambda}) \cdot \chi$ at $\lambda =0$ depends on the
family $(X^\lambda)_{\lambda \in \R}$
only through the value of $X^0$ and of 
$[\ud / \ud \lambda]_{\vert \lambda =0} X^{\lambda}$ 
{(see footnote\footnote{
{This means that 
for two 
families 
$(X^\lambda)_{\lambda \in \R}$
and 
$(X^{\lambda,\prime})_{\lambda \in \R}$
with $X^{0}=X^{0,\prime}$ and 
$[\ud / \ud \lambda]_{\vert \lambda =0} X^{\lambda}
= [\ud / \ud \lambda]_{\vert \lambda =0} X^{\lambda,\prime}$, 
the derivatives
$[\ud/\ud \lambda]_{\vert \lambda =0}
[ D {\mathcal U}(X^{\lambda}) \cdot \chi ]$
and 
$[\ud/\ud \lambda]_{\vert \lambda =0}
[ D {\mathcal U}(X^{\lambda,\prime}) \cdot \chi ]$ are the same (the variable 
$\chi$ being given).}
\label{footnote:second:order:derivative}
} below for more details)}, so that 
we can denote
\begin{equation*}
\partial^2_{\zeta,\chi} {\mathcal U}(X) := \frac{\ud}{\ud \lambda}_{\vert \lambda =0}
\bigl[ D {\mathcal U}(X^{\lambda}) \cdot \chi \bigr], 
\end{equation*}
whenever $ X := X^0 $ and $\zeta := [\ud/\ud \lambda]_{\vert \lambda =0} X^{\lambda}$. Finally, assume that there exist a constant $C$ and 
an exponent $\alpha \geq 0$ 
such that, for any $X$, $\chi$ and 
$\zeta$ in $L^2(\Omega,\cA,\P;\R^d)$, with $\vert \zeta\vert \leq 1$
(in $L^{\infty}$), it holds (with $\Phi_{\alpha}$ as in 
\HYP{1} and in particular satisfying 
\eqref{phialpha}):
\begin{equation*}
\begin{split}
&(i) \quad \ \vert D {\mathcal U}(X) \cdot \chi \vert 
+
\vert \partial^2_{\zeta,\chi} U(X) \vert
\leq C  \| \chi \|_{2},
\\
&(ii) \quad \vert D {\mathcal U}(X) \cdot \chi - D {\mathcal U}(X') \cdot \chi \vert
+
\vert \partial^2_{\zeta,\chi} {\mathcal U}(X) 
- \partial_{\zeta,\chi}^2 {\mathcal U}(X')  \vert \leq  C \Phi_{\alpha}(X,X') \| \chi \|_{2}.
\end{split}
\end{equation*}
Then $U$ is partially ${\mathcal C}^2$ and 
satisfies for any compact subset ${\mathcal K} \subset {\mathcal P}_{2}(\R^d)$:
\begin{equation*}
\sup_{ \mu \in {\mathcal K}}
\biggl[
\int_{\R^d} \bigl\vert  \partial_{\mu} U(\mu)(v) \bigr\vert^2 \ud\mu(v) 
+
\int_{\R^d} \bigl\vert \partial_{v} \bigl[ \partial_{\mu} U(\mu) \bigr](v) \bigr\vert^2 \ud\mu(v) 
\biggr]
< \infty, 
\end{equation*}
so that the chain rule applies to an It\^o process satisfying \eqref{eq:23:10:2}. 
\end{Theorem}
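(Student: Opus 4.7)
The plan is to build, one level at a time, continuous versions of $\partial_\mu U(\mu)(v)$ and $\partial_v[\partial_\mu U(\mu)](v)$ via two applications of Proposition \ref{prop:lipschitz:lifted}, to verify the integrability bound \eqref{eq:compact:bound}, and then to invoke Theorem \ref{thm:partial:C2} to obtain the chain rule. First, I would observe that assumption (ii) with $\chi$ held fixed asserts the Lipschitzianity of $\xi \mapsto D\mathcal{U}(\xi) \cdot \chi$ on $L^2$. Combined with (i), this puts the Riesz representation $\partial_\mu U([X])(X)$ exactly in the framework of Proposition \ref{prop:lipschitz:lifted} and yields, for every $\mu$, a Lipschitz-continuous version of $\R^d \ni v \mapsto \partial_\mu U(\mu)(v)$, jointly continuous in $(\mu,v)$ at any point with $v \in \textrm{Supp}(\mu)$ and with Lipschitz constant uniform in $\mu$.

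The main work is in producing $\partial_v[\partial_\mu U(\mu)](v)$. I would fix $\mu$, take $X \sim \mu$, and for bounded Borel $\psi,\varphi : \R^d \to \R^d$ realize $\zeta := \psi(X)$ as the tangent at $\lambda=0$ of a smooth distribution-preserving curve $(X^\lambda)$ built on a suitably enlarged probability space. By the hypothesis that $\partial^2_{\zeta,\chi}\mathcal{U}(X)$ depends on the curve only through $(X,\zeta)$, the quantity
\begin{equation*}
\Psi_\mu(\psi,\varphi) := \partial^2_{\psi(X),\varphi(X)}\mathcal{U}(X) = \frac{\ud}{\ud\lambda}\bigg|_{\lambda=0} \E\bigl[\partial_\mu U(\mu)(X^\lambda)\,\varphi(X)\bigr]
\end{equation*}
is well defined, bilinear in $(\psi,\varphi)$, and, by (i), satisfies $|\Psi_\mu(\psi,\varphi)| \le C\|\varphi(X)\|_2$. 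Using the Lipschitz representative of the first step to push the $\lambda$-derivative inside the expectation, I would identify a $\mu$-square integrable matrix field $K_\mu$ with $\Psi_\mu(\psi,\varphi) = \int_{\R^d} \textrm{Tr}(K_\mu(v)\,\psi(v)\otimes\varphi(v))\,\ud\mu(v)$ and check that $K_\mu(v) = \partial_v[\partial_\mu U(\mu)](v)$ for $\mu$-almost every $v$.

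The final step is a second application of Proposition \ref{prop:lipschitz:lifted}, in which the Lipschitz bound in (ii) on $X \mapsto \partial^2_{\zeta,\chi}\mathcal{U}(X)$ plays the role of Lipschitzianity of the lifted first-order derivative and upgrades $K_\mu$ to a representative that is jointly continuous on the set $\{(\mu,v) : v \in \textrm{Supp}(\mu)\}$. The bound \eqref{eq:compact:bound} is then immediate: taking the supremum over $\|\chi\|_2 = 1$ in (i) gives $\int |\partial_\mu U(\mu)(v)|^2\,\ud\mu(v) \le C^2$, while specialising $\zeta,\chi$ to coordinate directions in the second-order bound of (i) yields $\int |\partial_v[\partial_\mu U(\mu)](v)|^2\,\ud\mu(v) \le C'$, both uniform in $\mu$, and Theorem \ref{thm:partial:C2} closes the argument. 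The main obstacle is the second step, namely extracting a pointwise-in-$v$ Jacobian from the purely $L^2$-valued object $\partial^2_{\zeta,\chi}\mathcal{U}(X)$: this requires a flexible supply of distribution-preserving curves with prescribed $\sigma(X)$-measurable tangent (constructed on an enriched probability space) together with the $\Phi_\alpha$-Lipschitz estimate of (ii) to guarantee that a continuous representative of $K_\mu$ can be selected on $\textrm{Supp}(\mu)$ uniformly in $\mu$ on compact sets.
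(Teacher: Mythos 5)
Your overall architecture (two passes of Proposition \ref{prop:lipschitz:lifted}, then Theorem \ref{thm:partial:C2}) matches the paper's, and your first step and your final integrability bounds are fine. The proof breaks down, however, at the point you yourself flag as ``the main obstacle'', and the obstacle is not merely technical: for a general bounded Borel $\psi$ there is \emph{no} continuously differentiable distribution-preserving curve $(X^\lambda)_{\lambda}$ with $X^0=X$ and $[\ud/\ud\lambda]_{\vert\lambda=0}X^\lambda=\psi(X)$. A tangent to the set of random variables with law $\mu$ must be infinitesimally measure-preserving for $\mu$, which rules out arbitrary $\psi$; enlarging the probability space does not help if you insist the tangent be exactly the $\sigma(X)$-measurable variable $\psi(X)$. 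Consequently the bilinear form $\Psi_\mu(\psi,\varphi)=\partial^2_{\psi(X),\varphi(X)}\mathcal{U}(X)$ is simply not defined for the test functions you need, and the kernel $K_\mu$ cannot be extracted this way. The paper's construction only produces admissible tangents of the very special form $\delta\,\mathrm{sign}(Z')\,e$ with $e$ a \emph{fixed deterministic direction} and $Z'$ an auxiliary Gaussian independent of $\xi$ (the rotation trick with $Y^{\lambda}=\arcsin(Z^{\lambda}/\sqrt{Z^2+(Z')^2})$); it then cancels the random sign by replacing $\chi$ with $\mathrm{sign}(Z')\chi$, and recovers the directional derivative in $e$ by convolving $\mu$ with the uniform law on the segment $[-(\delta\pi/2)e,(\delta\pi/2)e]$ and letting $\delta\to0$. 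This detour is the heart of the proof and cannot be replaced by ``a flexible supply of distribution-preserving curves with prescribed tangent''.

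A second, related gap: to push the $\lambda$-derivative inside the expectation in $\E[\partial_\mu U(\mu)(X^\lambda)\varphi(X)]$ you need $v\mapsto\partial_\mu U(\mu)(v)$ to be differentiable, but at that stage you only have the locally Lipschitz version from your first step --- differentiability in $v$ is precisely the conclusion you are after, so the identification $K_\mu=\partial_v[\partial_\mu U(\mu)]$ is circular. The paper avoids this by first convolving $\partial_\mu U(\mu^n)(\cdot)$ with a Gaussian kernel (the maps $\mathcal{V}^n$ in \eqref{eq:29:11:2}), writing the duality formula \eqref{eq:29:11:5} for the genuinely differentiable mollified object, and only then passing to the limit in $\delta$ and $n$ via the compactness supplied by the $\Phi_\alpha$-Lipschitz estimates of assumption \textit{(ii)}. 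You would need to insert both the mollification and the fixed-direction perturbation scheme to make your second step sound.
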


\begin{Remark}
The thrust of Theorem \ref{thm:ito:frechet}
is to study the smoothness of the mapping $v \mapsto \partial_{\mu} U(\mu)(v)$ independently 
of the smoothness in the direction $\mu$
by restricting the `test' random variables $(X^\lambda)_{\lambda \in \R}$ to an identically distributed family. One of the issue in the proof is precisely to construct such a family of test random
variables. 
\end{Remark}
\begin{proof} 
In the proof, we use quite often the following result, which is a refinement 
of \cite[Lemma 3.3]{carmona:delarue:aop} (see the adaptation of the proof in 
Subsection \ref{subse:appendix:1} in Appendix):
\begin{Proposition}
\label{prop:lipschitz:lifted}
Consider a collection $(V(\mu) : \R^d \ni v \mapsto V(\mu)(v))_{\mu}$ of Borel functions from $\R^d$ into $\R^d$ 
indexed by elements $\mu \in {\mathcal P}_{2}(\R^d)$ such that, for any $\mu \in 
{\mathcal P}_{2}(\R^d)$, the mapping 
$\R^d \ni v \mapsto V(\mu)(v) \in \R^d$ belongs to $L^2(\mu,\R^d;\R^d)$. Assume also that
there exist a constant $C$ and an exponent $\alpha$ such that,
for any $\mu \in {\mathcal P}_{2}(\R^d)$ and 
any $\xi,\xi' \in L^2(\Omega,\cA,\P;\R^d)$, such that $\xi$ and $\xi'$ have distribution $\mu$, and
\begin{equation}
\label{eq:assumption:prop:lipschitz:lifted}
{\mathbb E} \bigl[ \vert V(\mu)(\xi) - V(\mu)(\xi') \vert^2 \bigr]^{1/2} 
\leq C 
 {\mathbb E} \bigl[ \bigl( 1+ \vert \xi \vert^{2\alpha} + \vert \xi' \vert^{2\alpha} + \| \xi \|_{2}^{2\alpha} \bigr)
\vert \xi - \xi' \vert^2  \bigr]^{1/2}.
\end{equation}
Then, for any $\mu \in {\mathcal P}_{2}(\R^d)$, the mapping $v \mapsto V(\mu)(v)$ admits a locally Lipschitz continuous version, that satisfies 
$$\vert V(\mu)(v) - V(\mu)(v') \vert \leq 
C \biggl[ 1+
2 \max \bigl( \vert v \vert^{2\alpha} , \vert v' \vert^{2\alpha} \bigr) + 
 \biggl( \int_{\R^d} \vert x \vert^2 \ud\mu(x) \biggr)^{\alpha}\biggr]^{1/2}  \vert v-v' \vert.$$   
\end{Proposition}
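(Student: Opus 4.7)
The plan is to turn the $L^2$-type bound \eqref{eq:assumption:prop:lipschitz:lifted} into a pointwise Lipschitz estimate by feeding it test pairs $(\xi,\xi')$ that are identically distributed but differ only on a small ``swap'' set localized near $v$ and $v'$, and then to pass from the resulting local $L^2$ control to a pointwise bound via Lebesgue differentiation against $\mu$.

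\textbf{Step 1 (Swap construction and localized bound).} Fix $\mu \in \cP_{2}(\R^d)$ and $v,v' \in \mathrm{supp}(\mu)$. For any small $r>0$ with $\mu(B(v,r))\wedge\mu(B(v',r))>0$, the Borel isomorphism theorem (with possible atoms matched directly) yields Borel sets $A_r\subset B(v,r)$ and $B_r\subset B(v',r)$ with $\mu(A_r)=\mu(B_r)>0$ and a Borel bijection $\tau_r:A_r\to B_r$ satisfying $\tau_r\sharp(\mu|_{A_r})=\mu|_{B_r}$. Setting
\[
G_r(x) := \tau_r(x)\mathbf{1}_{A_r}(x) + \tau_r^{-1}(x)\mathbf{1}_{B_r}(x) + x\,\mathbf{1}_{\R^d\setminus(A_r\cup B_r)}(x),
\]
one obtains a Borel involution with $G_r\sharp\mu=\mu$. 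For $\xi$ of law $\mu$ and $\xi' := G_r(\xi)$, the pair is identically distributed, coincides off $\{\xi\in A_r\cup B_r\}$, and on this set $|\xi|\vee|\xi'|\leq\max(|v|,|v'|)+r$ and $|\xi-\xi'|\leq|v-v'|+2r$. Plugging into \eqref{eq:assumption:prop:lipschitz:lifted} and keeping only the $A_r$-half of the integral (the $B_r$-half is symmetric) gives
\[
\frac{1}{\mu(A_r)}\int_{A_r}\bigl|V(\mu)(x)-V(\mu)(\tau_r(x))\bigr|^2\,\ud\mu(x) \leq C^2\bigl(1+2(\max(|v|,|v'|)+r)^{2\alpha}+\|\mu\|_{2}^{2\alpha}\bigr)(|v-v'|+2r)^2.
\]

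\textbf{Step 2 (Pointwise extraction).} Choose the representative of $V(\mu)$ defined at $\mu$-a.e.\ $w$ by Lebesgue differentiation with respect to $\mu$, i.e.\ $\mu(B(w,r))^{-1}\int_{B(w,r)}|V(\mu)(\cdot)-V(\mu)(w)|^2\,\ud\mu \to 0$. Along a sequence of radii for which $\mu(A_r)$ is a uniformly positive fraction of both $\mu(B(v,r))$ and $\mu(B(v',r))$ (available for $\mu\otimes\mu$-a.e.\ pair $(v,v')$), the triangle inequality
\[
|V(\mu)(v)-V(\mu)(v')|\leq|V(\mu)(v)-V(\mu)(x)|+|V(\mu)(x)-V(\mu)(\tau_r(x))|+|V(\mu)(\tau_r(x))-V(\mu)(v')|,
\]
squared, averaged against $\mu|_{A_r}/\mu(A_r)$, and combined with $\tau_r\sharp(\mu|_{A_r}/\mu(A_r))=\mu|_{B_r}/\mu(B_r)$ for the last term, lets the two outer averages vanish as $r\downarrow 0$ at Lebesgue points, while the middle term is controlled by Step~1. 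Passing to the limit yields
\[
|V(\mu)(v)-V(\mu)(v')|^2\leq C^2\bigl(1+2\max(|v|^{2\alpha},|v'|^{2\alpha})+\|\mu\|_{2}^{2\alpha}\bigr)|v-v'|^2
\]
for $\mu\otimes\mu$-a.e.\ $(v,v')\in\mathrm{supp}(\mu)^2$.

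\textbf{Step 3 (Conclusion and main obstacle).} The set of ``good'' pairs $(v,v')$ is dense in $\mathrm{supp}(\mu)^2$, so the pointwise estimate extends by continuity to a locally Lipschitz function on $\mathrm{supp}(\mu)$ with the announced modulus; a local McShane--Whitney extension then produces the desired modification on all of $\R^d$ (any such choice off $\mathrm{supp}(\mu)$ gives a version of $V(\mu)$, which is only prescribed in $L^2(\mu)$). The \textbf{main obstacle} is the interplay between Step~1 and Step~2: one must construct the measure-preserving Borel bijection $\tau_r$ between two pieces of equal $\mu$-mass (standard, but delicate in presence of atoms), and verify that the ratios $\mu(A_r)/\mu(B(v,r))$ and $\mu(B_r)/\mu(B(v',r))$ can be kept bounded below along some sequence $r\downarrow 0$, for $\mu\otimes\mu$-a.e.\ $(v,v')$, so that the Lebesgue-differentiation averages indeed collapse to point values; this is where the probabilistic pair hypothesis is genuinely converted into the announced pointwise regularity.
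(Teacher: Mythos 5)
Your approach is genuinely different from the paper's: the paper first reduces to the case where $\mu$ has a smooth positive density, then passes to Knothe--Rosenblatt quantile coordinates on $(0,1)^d$, where the relevant swap is a reflection preserving Lebesgue measure and the differentiation step is performed against Lebesgue measure; you instead perform the swap directly in $x$-space against $\mu$. That is a reasonable route, but the obstacle you flag at the end is not a technicality --- it is fatal in the form stated. The claim that, for $\mu\otimes\mu$-a.e.\ pair $(v,v')$, the ratios $\mu(A_r)/\mu(B(v,r))$ and $\mu(B_r)/\mu(B(v',r))$ can be kept bounded below along some sequence $r\downarrow 0$ is false. Take $\mu$ to restrict to Lebesgue measure on $[0,1]$ (with half the mass) plus a Cantor measure $\nu$ of dimension $s<1$ on $[2,3]$ (with the other half). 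For Lebesgue-a.e.\ $v\in[0,1]$ and $\nu$-a.e.\ $v'$, one has $\mu(B(v,r))\asymp r$ while $\mu(B(v',r))\asymp r^{s}$, so $\mu(B(v,r))/\mu(B(v',r))\to 0$ along every $r\downarrow 0$; since $\mu(A_r)=\mu(B_r)\leq\mu(B(v,r))\wedge\mu(B(v',r))$, the ratio on the $v'$-side degenerates and the average against $\mu|_{B_r}/\mu(B_r)$ does not collapse to $V(\mu)(v')$.

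The fix is to decouple the radii. Given $r\downarrow 0$, choose $r'=r'(r)$ with $\mu(B(v',r'))=\mu(B(v,r))$ (achievable for $r$ outside a countable exceptional set, since $\rho\mapsto\mu(B(v',\rho))$ is nondecreasing with limit $0$ at $0$), and swap the whole balls $B(v,r)\leftrightarrow B(v',r')$. Both mass ratios are then $1$, the control $|\xi-\xi'|\leq|v-v'|+r+r'$ still holds, and $r'\to 0$ because $v'\in\textrm{Supp}(\mu)$. With this adaptive-radius modification, together with a direct treatment of atomic pairs (where the mass-matching through balls can stall at the atom's mass), your swap argument closes and is arguably more elementary than the paper's smooth-density plus quantile-transport reduction.
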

As a warm-up, we discuss what 
Proposition \ref{prop:lipschitz:lifted} says in the framework of Theorem 
\ref{thm:ito:frechet}. Representing $D {\mathcal U}(X) \cdot \chi$ as ${\mathbb E}[ \partial_{\mu} U([X])(X) \chi]$, 
we can write (choosing $X=\xi$ and $X'=\xi'$, with $[\xi]=[\xi']=\mu$, in  part
\textit{(ii)} of the statement of Theorem \ref{thm:ito:frechet})
\begin{equation*}
\begin{split}
&\bigl\vert {\mathbb E} \bigl[ \bigl( \partial_{\mu}U(\mu)(\xi') - \partial_{\mu} U(\mu)(\xi) \bigr) \chi \bigr]
\bigr\vert
\\
&\hspace{15pt}\leq C  {\mathbb E} \bigl[ 
\bigl( 1 + \vert \xi \vert^{2\alpha} + \vert \xi' \vert^{2\alpha} 
+\| \xi \|_{2}^{\alpha}\bigr)
 \vert \xi - \xi' \vert^2  \bigr]^{1/2}
{\mathbb E} \bigl[ \vert \chi \vert^2 \bigr]^{1/2}.
\end{split}
\end{equation*}
This says that, for any $\mu \in {\mathcal P}_{2}(\R^d)$, we can find a locally Lipschitz continuous version of the 
mapping $\R^d \ni v \mapsto \partial_{\mu} U(\mu)(v)$, the local Lipschitz constant 
being at most of $\alpha$-polynomial growth, uniformly with respect to $\mu$ in $W_{2}$-balls. In addition, Proposition \ref{prop:lipschitz:lifted} gives us a bit more. Consider a sequence
$(\mu_{n})_{n \geq 0}$ with values in ${\mathcal P}_{2}(\R^d)$ such that $\mu_{n} \rightarrow \mu$ in the
$2$-Wasserstein distance. Then, the functions $(\R^d \ni v \mapsto \partial_{\mu} U(\mu_{n})(v))_{n \geq 0}$ are uniformly  continuous on compact sets. 
Moreover, we notice, by Markov inequality that $\P(\vert \xi_{n} \vert \geq 2 \| \xi_{n} \|_{2}) 
\leq 1/4$, so that 
\begin{equation}
\label{eq:trick:markov}
\begin{split}
\frac{3}{4
}\inf_{\vert v \vert \leq 2 \| \xi_{n} \|_{2}}
\vert \partial_{\mu} U(\mu_{n})(v) \vert 
&\leq {\mathbb E} \bigl[ {\mathbf 1}_{\{ \vert \xi_{n} \vert \leq 2\| \xi_{n} \|_{2}\}} \vert \partial_{\mu} U(\mu_{n})(\xi_{n}) \vert^2 \bigr]^{1/2}
\\
&\leq 
{\mathbb E} \bigl[ \vert \partial_{\mu} U(\mu_{n})(\xi_{n}) \vert^2 \bigr]^{1/2}
\leq C,
\end{split}
\end{equation}
where $\xi_{n}$ has distribution $\mu_{n}$, the last inequality following from \textit{(i)} in the statement of Theorem 
\ref{thm:ito:frechet}. This says that the family $(
\inf_{\vert v \vert \leq 2 \| \xi_{n} \|_{2}}
\vert \partial_{\mu} U(\mu_{n})(v) \vert)_{n \geq 0}$ is bounded. 
As the sequence $(\| \xi_{n}\|_{2})_{n \geq 0}$ is bounded and the mappings 
$(\R^d \ni v \mapsto \partial_{\mu} U(\mu_{n})(v))_{n \geq 0}$
are uniformly locally Lipschitz continuous, the sequence 
$(\vert \partial_{\mu} U(\mu_{n})(0) \vert)_{n \geq 0}$ is also bounded. 
Therefore, the family $(\R^d \ni v \mapsto \partial_{\mu} U(\mu_{n})(v))_{n \geq 0}$ is relatively compact for the topology of uniform convergence on compact subsets. Passing to the limit \footnote{
 {From \cite[Theorem 6.9]{villani}, $\mu_n$ converges weakly to $\mu$. Using the Skorokhod representation
 theorem, we can find a sequence $(\xi_n)$ converging almost surely to $\xi$.
 The convergence holds also in $L^2$ since this sequence is uniformly square integrable, recall \cite[Definition 6.8(iii)]{villani}.
 }
 }
(up to a subsequence) into the relationship
\begin{equation*}
D{\mathcal U}(\xi_n) \cdot \chi = 
{\mathbb E} \bigl[ 
\partial_{\mu} U(\mu_{n})(\xi_{n}) \chi
\bigr],
\end{equation*}
{we deduce, by identification, that the limit of $\partial_{\mu} U(\mu_{n})$ must coincide with $\partial_{\mu} U(\mu)$ on the support of $\mu$.
This says that the function ${\mathcal P}_{2}(\R^d) \times \R^d \ni (\mu,v) \mapsto 
\partial_{\mu} U(\mu)(v)$ is (jointly) continuous
at any point $(\mu,v)$ such that $v \in \textrm{Supp}(\mu)$. 
Moreover,
by point \textit{(i)} in the statement of Theorem 
\ref{thm:ito:frechet}, 
we have 
$\int_{\R^d} \vert \partial_{\mu} U(\mu)(v) \vert^2 d\mu(v)
\leq C$, 
for a constant $C$ independent of $\mu$,
which is the first part in the condition \eqref{eq:compact:bound}
for applying the chain rule to partially $\cC^2$ functions.}
\vspace{5pt}

To complete the proof we have two main steps. The first one uses a new mollification argument. 
The second consists in a coupling lemma, which permits to choose relevant versions of the random variables along which the differentiation is performed. 
\vspace{5pt}

\textit{First step.} Given a distribution $\mu$ and a random variable $\xi$ with distribution $\mu$, we introduce the convoluted version 
$\mu^n$ of $\mu$:
\begin{equation*}
\mu^n = \mu \star {\mathcal N}_{d}(0,\tfrac1n I_{d}),
\end{equation*}
$n$ denoting an integer larger than $1$
and ${\mathcal N}_{d}(0,(1/n)I_{d})$ denoting the $d$-dimensional 
Gaussian distribution with covariance matrix $(1/n) I_{d}$, where 
$I_{d}$ is the identity matrix of dimension $d$. Then, we can define the mapping 
\begin{equation}
\label{eq:29:11:2}
{\mathcal V}^n(\mu,v) =  \int_{\R^d} \partial_{\mu} U \bigl( \mu^n \bigr) (v - u) n^{d/2} \rho \bigl( n^{1/2}u) \ud u, 
\end{equation}
where $\rho$ stands for the standard $d$-dimensional Gaussian kernel. The mapping ${\mathcal V}^n$
is the convolution of $\partial_{\mu} U(\mu^n)(\cdot)$ with the measure ${\mathcal N}_{d}(0,
(1/n)I_{d})$. 
{
By the warm-up, 
the sequence $(\partial_{\mu} U(\mu^n)(0))_{n \geq 1}$
is bounded and the functions $(\partial_{\mu} U(\mu^n) : \R^d 
\ni v \mapsto \partial_{\mu} U(\mu^n)(v) \in \R^d)_{n \geq 1}$
are locally Lipschitz, the Lipschitz constant being 
at most of $\alpha$-polynomial growth, 
uniformly in $n \geq 1$. 
In particular, the sequence of functions $({\mathcal V}^n(\mu,\cdot))_{n \geq 0}$ 
is relatively compact for the topology of uniform convergence on compact subsets. 
Any limit must coincide with $\partial_{\mu} U(\mu)$ at points
$v$ in the support of $\mu$ or, put it differently, 
any limit provides a version of $\partial_{\mu} U(\mu)$
which is locally Lipschitz continuous, 
the Lipschitz constant being at most of $\alpha$-polynomial growth, uniformly in 
$\mu$ in bounded subsets of ${\mathcal P}_{2}(\R^d)$. 
When $\mu$ has full support, 
the sequence $({\mathcal V}^n(\mu,\cdot))_{n \geq 0}$ 
converges to 
the unique continuous version of 
$\partial_{\mu} U(\mu)$, the convergence being uniform on compact subsets.} 

Letting $\xi^n = \xi + n^{-1/2} G$, where $G$ is an ${\mathcal N}_{d}(0,I_{d})$ Gaussian variable independent of $\xi$, we then observe that,
for any $\R^d$-valued square integrable random variable $\chi$ such that the pair $(\xi,\chi)$ is independent of $G$, 
\begin{equation}
\label{eq:28:11:1}
\begin{split}
D {\mathcal U}(\xi^n) \cdot \chi = {\mathbb E} \bigl[ \partial_{\mu} U \bigl( \mu^n \bigr) \bigl(\xi^n \bigr) \chi \bigr] 
&= {\mathbb E} \biggl[ \biggl(
\int_{\R^d}  \partial_{\mu} U \bigl( \mu^n)(\xi - u) n^{d/2} \rho(n^{1/2}u) \ud u \biggr)\chi \biggr] 
\\
&= {\mathbb E} \bigl[ {\mathcal V}^n ( \mu,\xi ) \chi \bigr],
\end{split}
\end{equation}
where $\cV^n(\mu,\xi)$ is viewed as a row vector. 
We note that the mapping $\R^d \ni v \mapsto 
{\mathcal V}^n(\mu,v)$ is differentiable with respect to $v$ (this was not the case for the original mapping  
$\R^d \ni v \mapsto \partial_{\mu} U(\mu)(v)$ at this stage of the proof). 
 \vspace{5pt}
 

\textit{Second step.}
We construct now, 
{independently of the measure $\mu$
considered above, a family $(Y^{\lambda})_{\lambda \in \R}$ 
that is differentiable with respect to $\lambda$ in $L^2(\Omega,\cA,\P;\R)$ but which is, at the same time, invariant in law, all the $Y^{\lambda}$, for $\lambda \in \R$, being uniformly 
distributed on $[-\pi/2,\pi/2]$}. 
The strategy consists in starting with the uniform distribution:

Given two independent  ${\mathcal N}(0,1)$ random variables $Z$ and $Z'$, we let, for any $\lambda \in \R$, 
\begin{equation*}
Z^{\lambda} = \cos(\lambda) Z + \sin(\lambda) Z', \quad Z^{\prime,\lambda}
= - \sin(\lambda) Z + \cos(\lambda) Z', 
\end{equation*}
so that $(Z^{\lambda},Z^{\prime,\lambda})$ has the same law as $(Z,Z')$ (because of the invariance of the Gaussian distribution by rotation). We then let
\begin{equation*}
Y^{\lambda} = \arcsin \bigl( \frac{Z^{\lambda}}{\sqrt{(Z^{\lambda})^2 + (Z^{\prime,\lambda})^2}} \bigr)
= \arcsin \bigl( \frac{Z^{\lambda}}{\sqrt{Z^2 + (Z')^2}} \bigr).
\end{equation*}
It is easy to check that $Y^{\lambda}$ has a uniform distribution on $[-\pi/2,\pi/2]$ for any 
$\lambda \in \R$. Pathwise,  the mapping $\R \ni \lambda \mapsto Y^{\lambda}$ is differentiable 
at any $\lambda$ such that $Z^{\prime,\lambda} \not = 0$. 
Noticing that $[\ud/\ud\lambda] Z^\lambda = Z^{\prime,\lambda}$ (pathwise), we get: 
\begin{equation*}
\frac{\ud}{\ud \lambda} Y^{\lambda} = \frac{Z^{\prime,\lambda}}{
\sqrt{Z^2 + (Z')^2}} \Bigl( 1 - \frac{(Z^{\lambda})^2}{(Z^{\lambda})^2+ (Z^{\prime,\lambda})^2}
\Bigr)^{-1/2} = \textrm{sign}\bigl(Z^{\prime,\lambda}\bigr). 
\end{equation*}
On the event $\{Z^{\prime,0} \not = 0\} = \{Z' \not = 0\}$, which is of probability 1, the set of $\lambda$'s such that 
$Z^{\prime,\lambda} = 0$ is locally finite. The above derivative being bounded by $1$, this says that, pathwise, the mapping $\R \ni \lambda \mapsto Y^{\lambda}$ is $1$-Lipschitz continuous. Therefore, the random variables 
$(Y^{\lambda}- Y^0)/\lambda$, $\lambda \not =0$, are bounded by $1$. Moreover, still 
on the event $\{Z' \not =0\}$, the above computation shows that 
\begin{equation}
\label{eq:derivee:en:0}
\lim_{\lambda \rightarrow 0} \frac{Y^{\lambda}- Y^0}{\lambda} = \textrm{sign}\bigl(Z^{\prime}\bigr).
\end{equation}
Therefore, by Lebesgue's dominated convergence theorem, 
the mapping $\R \ni \lambda \mapsto Y^{\lambda} \in L^2(\Omega,\cA,\P;\R)$ is differentiable 
at $\lambda = 0$ with $\textrm{sign}(Z^{\prime})$ as its derivative. 
%
In the sequel, we will denote $Y^0$ by $Y$. 

Actually, by a rotation argument, differentiability holds at any $\lambda \in \R$, 
with $[\ud/\ud \lambda] Y^\lambda = \textrm{sign}(Z^{\prime,\lambda})$. 
It is then clear that $\R \ni \lambda \mapsto \textrm{sign}(Z^{\prime,\lambda})
\in L^2(\Omega,{\mathcal A},\P;\R^d)$ is continuous. Indeed, the path 
$\R \ni \lambda \mapsto Z^{\prime,\lambda}$ is continuous. Composition by the function \textit{sign} preserves continuity since, for any 
$\lambda \in \R$, the set of zero points of $Z^{\prime,\lambda}$ is of zero probability. 
\vspace{5pt}

\textit{Third step.} Assume now that $\mu$ denotes a given distribution as in the first step. We then choose a random variable
$\xi$ with $\mu$ as distribution, $\xi$ being independent of the pair $(Z,Z')$. 
Given the same $(Y^{\lambda})_{\lambda \in \R}$ as above and some parameter $\delta >0$, we let  
\begin{equation*}
\forall \lambda \in \R, \quad \xi^{\lambda} = (\delta \times Y^{\lambda}) e + \xi, 
\end{equation*}
where $e$ is an arbitrary deterministic unitary vector in $\R^d$. (We omit the dependence upon $\delta$ in the notation $\xi^{\lambda}$.) 
Then, we know that the mapping $\R \ni \lambda \mapsto \xi^{\lambda}$ is continuously
differentiable in $L^2(\Omega,\cA,\P;\R^d)$, with  
\begin{equation*}
\frac{\ud}{\ud \lambda}_{\vert \lambda =0} \xi^{\lambda} = (\delta \times  \textrm{sign}(Z')) e. 
\end{equation*}
Going back to \eqref{eq:28:11:1}, we get for another random variable $\chi \in L^2(\Omega,\cA,\P;\R^d)$, 
with $(\xi,\chi,Z,Z')$ independent of $G$,
\begin{equation*}
D {\mathcal U}\bigl( \xi^{\lambda}+ %
{\tfrac{1}{\sqrt{n}}} G
\bigr) \cdot \chi = {\mathbb E} \bigl[ {\mathcal V}^n \bigl(\law{\xi^{\lambda}},\xi^{\lambda}\bigr) \chi \bigr],
\end{equation*}
where ${\mathcal V}^n(\mu,v)$ is seen as a row vector. 
As the mapping $\R \ni \lambda \mapsto \xi^{\lambda}$ is continuously differentiable in $L^2(\Omega,\cA,\P;\R^d)$ 
and since all the random variables $(\xi^{\lambda})_{\lambda \in \R}$ have the same distribution, we deduce that (for $(\xi,\chi,Z,Z')$ independent of $G$)  
\begin{equation*}
\begin{split}
\partial^2_{ \textrm{sign}(Z') e,\chi}
{\mathcal U}\bigl( \xi + \delta Y e +   {\tfrac{1}{\sqrt{n}}} G
\bigr) 
&=\frac{\ud}{\ud \lambda}_{\vert \lambda =0} \bigl[ D {\mathcal U}\bigl( \xi^{\lambda/\delta}+   {\tfrac{1}{\sqrt{n}}} G
\bigr) \cdot \chi \bigr]
\\
 &= \frac{1}{\delta} \frac{\ud}{\ud \lambda}_{\vert \lambda =0} \bigl[ D {\mathcal U}\bigl( \xi^{\lambda}+   {\tfrac{1}{\sqrt{n}}} G
\bigr) \cdot \chi \bigr]
\\
&= {\mathbb E} \bigl[ \text{Tr} \bigl\{ \partial_{v}{\mathcal V}^n \bigl( [ \xi + \delta Y e], \xi + \delta Y e\bigr)   
\bigl( ( \textrm{sign}(Z') \chi ) \otimes e \bigr) \bigr\} \bigr].
\end{split}
\end{equation*}
Noticing that the random variable $\vert \textrm{sign}(Z') \vert$ is equal to $1$ almost surely, 
we can replace $\chi$ by $\textrm{sign}(Z') \chi$ (recall that 
$\vert \chi\vert$ must be less than $1$) with $(\xi,\chi)$ independent of $(Z,Z')$, so that 
\begin{equation*}
\partial^2_{\textrm{sign}(Z') e,\textrm{sign}(Z')  \chi}
{\mathcal U}\bigl( \xi + \delta Y e +   {\tfrac{1}{\sqrt{n}}} G
\bigr)  = 
 {\mathbb E} \bigl[\text{Tr} \bigl\{
  \partial_{v}{\mathcal V}^n \bigl(
[ \xi + \delta Y e], \xi + \delta Y e\bigr)   
 \bigl( \chi \otimes e \bigr) \bigr\}
 \bigr].
\end{equation*}
Finally, we let 
\begin{equation}
\label{eq:1:12:1}
{\mathcal W}^{n,\delta}(\mu,v)  = \int_{\R} \partial_{v}{\mathcal V}^n \bigl(
\mu \star p^\delta, v + \delta u e \bigr) p(u ) \ud u,
\end{equation}
where $p$ is the uniform density on $[-\pi/2,\pi/2]$ and
 $p^{\delta}(\cdot)=p(\cdot/\delta)/\delta$ is the uniform density on 
 $[-\delta \pi/2,\delta \pi/2]$. Moreover
 $\mu \star p^{\delta}$ is an abbreviated notation for denoting 
the convolution of $\mu$ with the uniform distribution on the segment 
$[-(\delta \pi/2)e,(\delta \pi/2)e]$. 
Since the pair $(\xi,\chi)$ is independent of $(Z,Z')$, we end up with the duality formula:
\begin{equation}
\label{eq:29:11:5}
\partial^2_{\textrm{sign}(Z') e,\textrm{sign}(Z')  \chi}
{\mathcal U}\bigl( \xi + \delta Ye +   {\tfrac{1}{\sqrt{n}}} G
\bigr)
 = 
 {\mathbb E} \bigl[ \text{Tr} \bigl\{ 
 {\mathcal W}^{n,\delta}(\mu,\xi) \bigl( \chi \otimes e \bigr) \bigr\}\bigr].
\end{equation}
By the smoothness assumption on $\partial^2_{\zeta,\chi} {\mathcal U}$ (see \textit{(ii)} in 
the statement of Theorem \ref{thm:ito:frechet}), we deduce that, for another $\xi'$, 
with distribution $\mu$ as well, such that
the triple $(\xi,\xi',\chi)$ is independent of $(Z,Z')$ and the $5$-tuple
$(\xi,\xi',\chi,Z,Z')$ is independent of $G$,
\begin{align}
&\bigl\vert 
{\mathbb E} \bigl[ {\rm Tr} \bigl\{  \bigl( {\mathcal W}^{n,\delta}(\mu,\xi) -
 {\mathcal W}^{n,\delta}(\mu,\xi^{\prime}) \bigr) \bigl( \chi \otimes e \bigr) \bigr\}\bigr] \bigr\vert \nonumber
 \\
 &\leq 
C  {\mathbb E} 
 \Bigl[
\bigl( 1 + \vert \xi \vert^{2\alpha}+ \vert \xi' \vert^{2\alpha} 
+ \vert  \delta Y \vert^{2\alpha} + \vert   {\tfrac1{\sqrt{n}}} G \vert^{2\alpha} 
+ \| \xi \|_{2}^{2\alpha}
\bigr)
\vert \xi - \xi'
 \vert^2 
 \Bigr]^{1/2}
  {\mathbb E} \bigl[ \vert \chi \vert^2 \bigr]^{1/2} \nonumber
  \\
  &\leq C  {\mathbb E} 
 \bigl[
\bigl( 1+ \vert \xi \vert^{2\alpha} + \vert \xi' \vert^{2\alpha}  + \| \xi \|_{2}^{2\alpha}\bigr)
 \vert \xi - \xi'
 \vert^2
 \bigr]^{1/2}
  {\mathbb E} \bigl[ \vert \chi \vert^2 \bigr]^{1/2}, \label{eq:lip:regul}
\end{align}
where we used the independence of $(\xi,\xi')$ and $(Z,Z')$ to pass from the second to the third line, the value of $C$ 
varying from the second to the third line (but remaining 
independent of $\delta$ and $n$, when $\delta$ is taken in a bounded set). 
The above is true for any $\sigma(\xi,\xi')$-measurable $\chi \in L^2(\Omega,\cA,\P)$. We deduce that, 
for any other $e' \in \R^d$ with $\vert e' \vert =1$,  
\begin{equation*}
\begin{split}
&{\mathbb E} \bigl[ \bigl\vert 
{\rm Tr} \bigl\{ \bigl( {\mathcal W}^{n,\delta}(\mu,\xi) -
 {\mathcal W}^{n,\delta}(\mu,\xi^{\prime}) \bigr) \bigl( e' \otimes e \bigr)
 \bigr\}
  \bigr\vert^2 \bigr]  
 \\
&\hspace{15pt} \leq C    {\mathbb E} 
 \bigl[
\bigl( 1+ \vert \xi \vert^{2\alpha} + \vert \xi' \vert^{2\alpha} 
+ \| \xi \|_{2}^{2\alpha}
 \bigr) 
 \vert \xi - \xi'
 \vert^2
 \bigr]^{1/2}.
 \end{split}
\end{equation*}
By Proposition \ref{prop:lipschitz:lifted}, this says that $\R^d \ni v \mapsto 
{\rm Tr} \{ (
{\mathcal W}^{n,\delta} (\mu,v))(e' \otimes e)\}$ has a locally Lipschitz version, 
the local Lipschitz constant on a ball of center $0$ and radius $\gamma$
is less than $C(1+\gamma^\alpha)$, the constant $C$ being uniform with respect
to $\xi$ in $L^2$ balls.  
\vspace{5pt}

\textit{Fourth step.}
From \eqref{eq:29:11:2} and \eqref{eq:1:12:1}, we know that 
\begin{equation*}
\begin{split}
{\mathcal W}^{n,\delta}(\mu,v) 
&= \int_{\R}
\partial_{v}{\mathcal V}^n \bigl(
\mu \star p^\delta, v + \delta u e \bigr) p (u ) \ud u
\\
&= n^{(d+1)/2} \int_{\R \times \R^d}  \partial_{\mu} U \bigl( \mu \star p^{\delta} \star {\mathcal N}_{d}(0,
\tfrac1n I_{d}),
w + \delta u e \bigr) p (u ) \rho' (n^{1/2} (v-w)) \ud u \ud w.
\end{split}
\end{equation*}
  {Since $\mu \star {\mathcal N}_{d}(0,(1/n)I_{d}$ has full support, 
we know from} 
the warm-up that
 $\partial_{\mu} U( \mu \star p^{\delta} \star {\mathcal N}_{d}(0,(1/n)I_{d}),\cdot)$ 
converges towards 
$\partial_{\mu} U( \mu  \star {\mathcal N}_{d}(0,(1/n)I_{d}),\cdot)$ as $\delta$ tends to $0$, uniformly on compact subsets. We deduce that, as $\delta$ tends to $0$, ${\mathcal W}^{n,\delta}(\mu,v)$
converges to
\begin{equation*}
\begin{split}
{\mathcal W}^{n}(\mu,v) &= n^{(d+1)/2} 
\int_{\R^d} \partial_{\mu} U \bigl( \mu  \star {\mathcal N}(0,\tfrac1n I_{d}),
 w \bigr) \rho' \bigl(n^{1/2} (v-w)\bigr) \ud w
\\
&= \partial_{v} \biggl( n^{d/2} 
\int_{\R} \partial_{\mu} U \bigl( \mu  \star {\mathcal N}(0,\tfrac1n I_{d}),
 w \bigr) \rho (n^{1/2} (v-w)) \ud w \biggr)
 = \partial_{v} {\mathcal V}^n(\mu,v). 
\end{split}
\end{equation*} 
Therefore, we deduce that the mappings $(\R^d \ni v \mapsto 
\text{Tr} \{
(\partial_{v} {\mathcal V}^n(\mu,v)) (e' \otimes e)
\}
)_{n \geq 1}$
are locally Lipschitz continuous, uniformly in $\mu$ (the local Lipschitz constant 
being at most of $\alpha$-polynomial growth).
Since $\partial_{v} {\mathcal V}^n(\mu,v)$ is 
independent of $e$,
this says that  the mappings $(\R^d \ni v \mapsto 
\partial_{v} {\mathcal V}^n(\mu,v))_{n \geq 1}$
are locally Lipschitz continuous, uniformly with respect to $\mu$
in sets of probability measures with uniformly bounded second-order moments. 

By \eqref{eq:29:11:5}
and \new{\textit{(i)}} in the statement of Theorem \ref{thm:ito:frechet},
\begin{equation}
\label{eq:proof:C2partial:unif:integrability}
\sup_{n \geq 1, \delta \in [0,1]}
{\mathbb E} \bigl[ \bigl\vert
\text{Tr}\bigl\{
 \bigl( {\mathcal W}^{n,\delta}(\mu,\xi) \bigr) (e' \otimes e) 
 \bigr\} \bigr\vert^2
\bigr] \leq C,
\end{equation}
for a possibly new value of $C$. Letting $\delta$ tend to $0$, we deduce from Fatou's lemma that 
$\sup_{n \geq 1}
{\mathbb E} [ \vert \text{Tr}\{
( \partial_{v} {\mathcal V}^{n}(\mu,\xi) )(e' \otimes e) \}\vert^2] \leq C$
and thus that
$\sup_{n \geq 1}
{\mathbb E} [ \vert  \partial_{v} {\mathcal V}^{n}(\mu,\xi) \vert^2] \leq C$,
which implies that, by local Lipschitz property of $\partial_{v}{\mathcal V}^n(\mu,\cdot)$
(the local Lipschitz constant being at most of $\alpha$-polynomial growth),
\begin{equation}
\label{eq:29:11:10}
\forall n \geq 1, \quad 
\inf_{\vert v \vert \leq 2 \vert \xi \vert_{2}}
 \vert \partial_{v}{\mathcal V}^{n}(\mu,v) \vert \leq C,
\end{equation}
where we used the same argument as in
\eqref{eq:trick:markov}. 
This says that the sequence of mappings $(\R^d \ni v \mapsto \partial_{v} {\mathcal V}^n(\mu,v))_{n \geq 1}$
is relatively compact for the topology of uniform convergence.
  {
By the warm-up, the sequence of 
functions
$(\R^d \ni v \mapsto ({\mathcal V}^n(\mu,v),\partial_{v} {\mathcal V}^n(\mu,v)))_{n \geq 1}$
is relatively compact for the topology of uniform convergence.
As any limit of the sequence 
$(\R^d \ni v \mapsto {\mathcal V}^n(\mu,v))_{n \geq 1}$
provides a version of $\partial_{\mu} U(\mu)$, we deduce that there 
exists a version of $\partial_{\mu} U(\mu) : \R^d \ni v 
\mapsto \partial_{\mu} U(\mu) (v)$ which is continuously differentiable 
with respect to $v$.
By \eqref{eq:proof:C2partial:unif:integrability}, we 
deduce that, for any $\mu \in {\mathcal P}_{2}(\R^d)$
and any $\xi \in L^2(\Omega,\cA,\P;\R^d)$ with $\mu$ as distribution,
${\mathbb E}
[ \vert 
\partial_{v} [\partial_{\mu} U(\mu)](\xi) \vert^2] \leq C$,
for a constant $C$ independent of $\mu$. Moreover,
passing to the limit in \eqref{eq:29:11:5} (first on $\delta$ and then on $n$),
we get}
\begin{equation}
\label{eq:29:11:50}
\partial^2_{\text{sign}(Z')e,\text{sign}(Z')\chi} {\mathcal U}(\xi)
 = 
 {\mathbb E} \bigl[ \text{Tr} \bigl\{ \bigl(
  \partial_{v} [\partial_{\mu} U(\mu)](\xi) \bigr) \bigl( \chi \otimes e \bigr) \bigr\}\bigr].
\end{equation}
Combining the above identity and 
point \new{\textit{(i)}} in the statement of Theorem 
\ref{thm:ito:frechet}, 
we recover the fact that 
$\int_{\R^d} \vert \partial_{v} [\partial_{\mu} U(\mu)](v) \vert^2 d\mu(v)
\leq C$, for a constant $C$ independent of $\mu$, 
which is a required condition for applying the chain rule. 
\vspace{5pt}

\textit{Last step.} {We have just found, for any 
$\mu \in {\mathcal P}_{2}(\R^d)$, 
a version 
of the mapping $\partial_{\mu} U(\mu)$ 
that
is differentiable in the variable $v$, with  
$\partial_{v}[\partial_{\mu} U(\mu)]$ denoting its derivative. 
In order to complete the proof, it remains to prove that 
the resulting mapping ${\mathcal P}_{2}(\R^d) \times \R^d \ni (\mu,v) \mapsto 
\partial_{v} [\partial_{\mu} U(\mu)](v)
$ is continuous in the joint variable $(\mu,v)$ at any point 
$v \in \textrm{Supp}(\mu)$.} We already know that it is locally Lipschitz continuous with respect to $v$, the local Lipschitz constant being at most of $\alpha$-polynomial growth, uniformly in $\mu$ in sets of probability measures with uniformly bounded second-order moments. 
  {
For a sequence $(\mu^n)_{n \geq 1}$
in ${\mathcal P}_{2}(\R^d)$
converging for the $2$-Wasserstein distance to some $\mu$, 
we deduce from the local Lipschitz property and by the same argument 
as in \eqref{eq:29:11:10} that the sequence of functions
$(\R \ni v \mapsto \partial_{v}[\partial_{\mu} U(\mu^n)](v))_{n \geq 1}$ is relatively compact for the topology of uniform convergence on compact subsets.}  
By means of the bound  $\sup_{n \geq 1}
{\mathbb E} [ \vert 
\partial_{v} [\partial_{\mu} U(\mu^n)](\xi^n) \vert^2] \leq C$, with 
$\xi^n \sim \mu^n$, it is quite easy to pass to the limit in the right-hand side of 
\eqref{eq:29:11:50}. By \textit{(ii)}
in the statement of the theorem, we can also pass to the limit in the left-hand side. 
Equation \eqref{eq:29:11:50}
then permits to identify any limit with $\partial_{v}[\partial_{\mu} U(\mu)]$
  {on the support of $\mu$.
Since the mappings $(\partial_{v}[\partial_{\mu} U(\mu^n)])_{n \geq 1}$
are uniformly continuous on compact subsets, we deduce that,
for an additional sequence 
$(v^n)_{n \geq 1}$, with values in $\R^d$, 
that converges to some $v \in \textrm{Supp}(\mu)$, 
the sequence $(\partial_{v}[\partial_{\mu} U(\mu^n)](v^n))_{n \geq 1}$
converges, up to a subsequence, to 
$\partial_{v}[\partial_{\mu} U(\mu)](v)$. 
Now, by relative compactness 
of the sequence $(\R \ni v \mapsto \partial_{v}[\partial_{\mu} U(\mu^n)](v))_{n \geq 1}$,
 the sequence $(\partial_{v}[\partial_{\mu} U(\mu^n)](v^n))_{n \geq 1}$ is bounded. 
 By a standard compactness argument,  
the sequence $(\partial_{v}[\partial_{\mu} U(\mu^n)](v^n))_{n \geq 1}$
must be convergent with 
$\partial_{v}[\partial_{\mu} U(\mu)](v)$ as limit.}
\qed
\end{proof}

\subsection{Proof of Theorem \ref{main:thm:uniqueness}}
In order to prove Theorem \ref{main:thm:uniqueness},
we first need an extension of the chain rule to functions that depend on 
time, space and measure: 

\begin{Proposition}
\label{prop:ito:general}
Consider an It\^o process 
$(X_{t})_{t \in [0,T]}$
driven by $(b_{t},\sigma_{t})_{t \in [0,T]}$
satisfying \eqref{eq:23:10:2} and a function 
$V : [0,T] \times \R^d \times \cP_{2}(\R^d) \rightarrow \R^m$
belonging to  $\bigcup_{\beta \geq 0} \cD_{\beta}$, see Definition 
\ref{eq:sol:admissible}. Then, $\P$ almost surely, for any $t \in [0,T]$,
\begin{equation*}
\begin{split}
&V\bigl(t,X_{t},\law{X_t}\bigr) - V\bigl(0,X_{0},\law{X_{0}}\bigr) 
\\
&= 
\int_{0}^t \Bigl( \partial_{t} V(r,X_{r},\law{X_r})+ \partial_{x} V(r,X_{r},\law{X_r})
b_r + \hesp{\partial_\mu V\bigl(r,X_{r},\law{X_r}\bigr)
\bigl(
\cc{X_r}\bigr)  
\langle b_r \rangle} \Bigr) \ud r 
 \\
 &\hspace{15pt} + \frac12 \int_0^t \Bigl(
 {\rm Tr} \bigl[ \partial^2_{xx} V \bigl(r,X_{r},\law{X_{r}} \bigr) 
 \bigl( \sigma_{r} \sigma_{r}^\dagger \bigr)
 \bigr] 
 \\
&\hspace{60pt} 
+ \hesp{
 \textrm{\rm Tr} \bigl[
 \partial_v \bigl[\partial_\mu V\bigr]\bigl(r,X_{r},\law{X_r}\bigr)\bigl(\cc{X_r}\bigr) \langle \sigma_{r}
  (\sigma_{r})^{\dagger} \rangle \bigr)
\bigr]
} \Bigr) \ud r
\\
&\hspace{15pt} + \int_{0}^t \partial_{x} V\bigl(r,X_{r},[X_{r}] 
\bigr)
\sigma_{r} \ud W_{r}. 
\end{split}
\end{equation*}
\end{Proposition}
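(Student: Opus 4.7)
\textbf{Proof proposal for Proposition \ref{prop:ito:general}.}

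The plan is to combine the classical It\^o formula (governing the dependence of $V$ on the space–time variables $(t,x)$) with the measure chain rule of Theorem \ref{thm:partial:C2} (governing the dependence of $V$ on $\mu$). To keep these two mechanisms decoupled, I would discretize time on a partition $0=t_0<t_1<\dots<t_N=t$ of mesh $h=t/N$ and write
\begin{align*}
V(t,X_t,[X_t])-V(0,X_0,[X_0]) = \sum_{i=0}^{N-1}\bigl(A_i+B_i\bigr),
\end{align*}
where
\begin{align*}
A_i &= V\bigl(t_{i+1},X_{t_{i+1}},[X_{t_i}]\bigr)-V\bigl(t_i,X_{t_i},[X_{t_i}]\bigr), \\
B_i &= V\bigl(t_{i+1},X_{t_{i+1}},[X_{t_{i+1}}]\bigr)-V\bigl(t_{i+1},X_{t_{i+1}},[X_{t_i}]\bigr).
\end{align*}

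For the term $A_i$ the measure argument $[X_{t_i}]$ is \emph{frozen}, so the function $(s,x)\mapsto V(s,x,[X_{t_i}])$ is $\mathcal{C}^{1,2}$ with derivatives continuous in $(s,x)$ by Definition \ref{eq:sol:admissible}. The standard It\^o formula applied to the It\^o process $(X_s)_{s\in[t_i,t_{i+1}]}$ then produces exactly the terms $\partial_tV$, $\partial_xV\,b_r$, $\tfrac12\operatorname{Tr}(\partial^2_{xx}V\,\sigma_r\sigma_r^{\dagger})$ and the stochastic integral $\partial_xV\,\sigma_r\,\ud W_r$, all evaluated at $(s,X_s,[X_{t_i}])$. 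For the term $B_i$, the space–time variables are frozen: for each $\omega$, with $x=X_{t_{i+1}}(\omega)$ viewed as a deterministic parameter, I would apply Theorem \ref{thm:partial:C2} to the partially $\mathcal{C}^2$ function $\mu\mapsto V(t_{i+1},x,\mu)$ along the flow of marginals $([X_s])_{s\in[t_i,t_{i+1}]}$. This yields, pointwise in $\omega$,
\begin{align*}
B_i = \int_{t_i}^{t_{i+1}}\!\!\hat{\mathbb E}\Bigl[\partial_\mu V\bigl(t_{i+1},X_{t_{i+1}},[X_s]\bigr)(\hat X_s)\,\hat b_s + \tfrac12\operatorname{Tr}\bigl(\partial_v[\partial_\mu V]\bigl(t_{i+1},X_{t_{i+1}},[X_s]\bigr)(\hat X_s)\,\hat\sigma_s\hat\sigma_s^{\dagger}\bigr)\Bigr]\,\ud s,
\end{align*}
where $(\hat X,\hat b,\hat\sigma)$ denotes an independent copy on $(\hat\Omega,\hat{\mathcal A},\hat{\mathbb P})$, as in \eqref{eq:chain:rule:PDE}.

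The last step is to pass to the limit $N\to\infty$. Summing the $A_i$'s produces Riemann-type sums that approach the $\partial_t,\partial_x$ and $\partial_{xx}^2$ integrals in the statement, while summing the $B_i$'s produces the two $\partial_\mu$ integrals (after observing that the parameter $X_{t_{i+1}}$ converges to $X_s$ as $h\to 0$ for $s\in[t_i,t_{i+1}]$). The joint continuity of $\partial_tV,\partial_xV,\partial_{xx}^2V$ in $(t,x,\mu)$ and of the lifted maps $(t,x,\xi)\mapsto\partial_\mu V(t,x,[\xi])(\xi)$ and $(t,x,\xi)\mapsto\partial_v[\partial_\mu V(t,x,[\xi])](\xi)$ guaranteed by Definition \ref{eq:sol:admissible}, combined with the continuity of the flow $([X_s])_{s\in[0,T]}$ in $W_2$, ensures the required pointwise convergence of integrands. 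The stochastic integral $\int\partial_xV(s,X_s,[X_{t_i}])\sigma_s\ud W_s$ likewise converges in $L^2$ to $\int\partial_xV(s,X_s,[X_s])\sigma_s\ud W_s$ by It\^o's isometry.

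The main obstacle is uniform integrability, needed to justify swapping limits with expectations and with time integrals, given only the weak integrability hypothesis \eqref{eq:23:10:2} on $(b,\sigma)$. I would handle it by invoking the polynomial-growth bounds on the derivatives of $V$ encoded in $\cD_\beta$ (via \HYP{1}–\HYP{2} with $\alpha$ replaced by $\beta$): combined with $L^p$-moments of $X_s$ and of $b_s,\sigma_s$, these give $L^1$ domination of the integrands on compact time intervals. A secondary technical point is that in $B_i$ the parameter $X_{t_{i+1}}$ is $\mathcal{F}_{t_{i+1}}$-measurable while the chain rule of Theorem \ref{thm:partial:C2} was stated for a deterministic frozen parameter; this is handled by first applying it $\omega$ by $\omega$ (which is licit since the identity is pointwise) and then taking expectations, using Fubini to exchange the $\hat{\mathbb E}$ and time integrals.
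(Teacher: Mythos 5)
Your proposal is correct and follows essentially the same approach as the paper's proof: the paper uses the identical decomposition (equation \eqref{eq:ito:general:1}), applies Theorem \ref{thm:ito:frechet} pointwise in $\omega$ with the space–time slot frozen (noting that almost surely $\mu\mapsto V(s+h,X_{s+h},\mu)$ satisfies the required assumptions), uses the standard It\^o formula for the remaining piece, truncates $(b_t,\sigma_t)$ for integrability, and passes to the limit along a partition. The one detail the paper emphasizes, and you could make more explicit, is that the error term on each subinterval must be $o(h)$ \emph{uniformly} in the position of the subinterval, which is secured by the uniform-in-$t$ modulus of continuity of $\partial_\mu V$ and $\partial_v[\partial_\mu V]$ coming from \HYP{1}, \HYP{2} and Definition \ref{eq:sol:admissible}.
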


\begin{Remark}
  {
In comparison with 
Theorem 
\ref{thm:full}, the 
formula is stated here in terms of the 
expectation $\hat{\E}$ on the auxiliary probability space
$(\hat{\Omega},\hat{\mathcal A},\hat{\P})$. 
The goal is to distinguish the random variables 
$X_r$, $b_{r}$ and $\sigma_{r} \sigma_{r}^{\dagger}$, observed on the ``physical space''
$(\Omega,{\mathcal A},\P)$, from 
the random variables $\langle {X}_{r} \rangle$, $\langle 
{b}_{r} \rangle$ and $\langle \sigma_{r} \sigma_{r}^{\dagger} \rangle$
that are used to express the derivatives in the direction $\mu$.} 
\end{Remark}

\begin{proof}
The proof is similar to that outlined in 
Subsection
\ref{subse:solution:PDE}. 
As in the proof of Theorem \ref{thm:full}, we can assume that 
the processes $(b_{t})_{t \in [0,T]}$
and $(\sigma_{t})_{t \in [0,T]}$ are bounded. 
Given $s \in [0,T]$ and $h>0$ such that $s+h \in [0,T]$, we then expand 
\begin{equation}
\label{eq:ito:general:1}
\begin{split}
&V\bigl(s+h,X_{s+h},[X_{s+h}]\bigr) - V\bigl(s,X_{s},[X_{s}]\bigr)
\\
&\hspace{15pt}= 
V\bigl(s+h,X_{s+h},[X_{s+h}] \bigr) - V\bigl(s+h,X_{s+h},[X_{s}]\bigr) 
\\
&\hspace{30pt}
+ 
V\bigl(s+h,X_{s+h},[X_{s}] \bigr)
- V\bigl(s,X_{s},[X_{s}] \bigr). 
\end{split}
\end{equation}
  {
Thanks to the regularity assumptions in \HYP{1} and \HYP{2},
we notice that, almost surely, the map $\cP_{2}(\R^d) \ni \mu \mapsto V(s+h,X_{s+h},\mu)$
satisfies the assumption of Theorem \ref{thm:ito:frechet}.} Therefore, we can 
write
\begin{equation*}
\begin{split}
&V\bigl(s+h,X_{s+h},[X_{s+h}] \bigr) - V\bigl(s+h,X_{s+h},[X_{s}]\bigr) 
\\
&= \int_{s}^{s+h}
\hat{\mathbb E} \bigl[ \partial_{\mu} V  (s+h,X_{s+h}, [X_{r}] )(\langle X_{r} \rangle) \langle b_{r} \rangle  \bigr] \ud r 
\\
&\hspace{15pt}+ 
\frac{1}{2} \int_{s}^{s+h} \hat{\mathbb E} \bigl[ {\rm Tr} \bigl( 
\partial_{v} \bigl[ \partial_{\mu} V(s+h,X_{s+h},[X_{r}]) \bigr](\langle X_{r} \rangle)
 \langle \sigma_{r} \sigma_{r}^{\dagger} \rangle \bigr) 
\bigr] \ud r. 
\end{split}
\end{equation*}
  {Recall that any versions of 
$\R^d \ni v \mapsto 
\partial_{\mu} V(s+h,X_{s+h},\mu)(v)$ 
and 
$\R^d \ni v \mapsto 
\partial_{v}[\partial_{\mu} V(s+h,X_{s+h},\mu)](v)$
may be used in the writing of the above formula. 
In particular, we can choose the versions of 
$\partial_{\mu}V$ and $\partial_{v}[\partial_{\mu} V]$
that satisfy \HYP{1} and 
\HYP{2}.} By the assumption we have on the regularity of 
$\partial_{\mu}V$ and $\partial_{v}[\partial_{\mu} V]$ in the variable $x$, see \HYP{1} and 
\HYP{2}, and in the variable $t$, see \textit{(ii)} in Definition 
\eqref{eq:sol:admissible}, we deduce that there exists a sequence 
of non-negative random variables $(\varepsilon_{h})_{h >0}$ that tends to $0$ in probability
with $h$,  such that 
\begin{equation*}
\begin{split}
&\biggl\vert V\bigl(s+h,X_{s+h},[X_{s+h}] \bigr) - V\bigl(s+h,X_{s+h},[X_{s}]\bigr) 
\\
&- \int_{s}^{s+h}
\hat{\mathbb E} \bigl[ \partial_{\mu} V  (r,X_{r}, [X_{r}] )(\langle X_{r} \rangle) \langle b_{r} \rangle  \bigr] \ud r 
\\
&\hspace{15pt}- 
\frac{1}{2} \int_{s}^{s+h} \hat{\mathbb E} \bigl[ {\rm Tr} \bigl( 
\partial_{v} \bigl[ \partial_{\mu} V(r,X_{r},[X_{r}]) \bigr](\langle X_{r} \rangle)
 \langle \sigma_{r} \sigma_{r}^{\dagger} \rangle \bigr) 
\bigr] \ud r \biggr\vert \leq h \varepsilon_{h}.
\end{split}
\end{equation*}
  {It must be noticed that the family 
$(\varepsilon_{h})_{h >0}$ may be chosen independently of $s \in [0,T]$. The reason is that, 
thanks to
 \HYP{1} and 
\HYP{2}, for any continuous $\R^d$-valued 
path $(x_{t})_{t \in [0,T]}$,
\begin{equation*}
\lim_{\delta \rightarrow 0}
\sup_{r,s \in [0,T] : \vert r-s \vert \leq \delta}
\hat{\mathbb E}
\Bigl[ 
\bigl\vert 
\partial_{\mu} V\bigl(s,x_{s},[X_{r}] \bigr)
\bigl( \langle X_{r} \rangle \bigr) 
- 
\partial_{\mu} V\bigl(r,x_{r},[X_{r}] \bigr)
\bigl( \langle X_{r} \rangle \bigr) 
\bigr\vert
\Bigr] = 0,
\end{equation*}
with a similar 
result when 
$\partial_{\mu} V$ is replaced by 
$\partial_{v}[\partial_{\mu} V]$.}
By means of the standard It\^o formula, the second difference on the right hand side of   \eqref{eq:ito:general:1}
can be handled in a similar way, yielding a similar bound (for the relevant expansion)
on an interval of length $h$. We then easily complete the proof
by dividing any interval $[0,t] \subset [0,T]$ into pieces of length less than $h$, applying  
the above bound on each piece of the subdivision 
and then by letting $h$ tend to $0$.
 \qed
\end{proof}
\vspace{5pt}

\noindent We now turn to \\[2mm]
\begin{proof}[Proof of Theorem \ref{main:thm:uniqueness}.]
The proof is a variant of the four-step-scheme used in 
\cite{ma:protter:yong}. We divide it into two steps. 

\textit{First step.}
Given a solution $U$ 
to \eqref{eq:master:PDE} in the class $\bigcup_{\beta \geq 0} \cD_{\beta}$ and given  
$t \in [0,T]$ and 
$\xi \in L^2(\Omega,\cF_{t},\P;\R^d)$,
we build a solution to
\eqref{eq X-Y t,xi}.

Letting 
\[
\partial^\sigma_{x} U(t,x,\mu) 
= \partial_{x} U(t,x,\mu) \sigma\bigl(x,U(t,x,\mu),[(\xi,U(t,\xi,\mu))]\bigr),
\] with $\xi \sim \mu$, we indeed claim that the 
McKean-Vlasov SDE 
\begin{equation}
\label{eq:MKV:SDE}
\begin{split}
dX_{s} &= b\bigl(X_{s},U(s,X_{s},[X_{s}]),\partial^{\sigma}_{x} U(s,X_{s},[X_{s}]), \bigl[X_{s},U(s,X_{s},[X_{s}]) \bigr] \bigr)
\ud s 
\\
&\hspace{15pt}+ \sigma\bigl( X_{s},U(s,X_{s},[X_{s}]), \bigl[X_{s},U(s,X_{s},[X_{s}]) \bigr] \bigr) \ud W_{s}, 
\quad X_{t}= \xi, 
\end{split}
\end{equation}
has a solution (the idea that we shall exploit in the proof being that the triplet process $(X_{s},U(s,X_{s},[X_{s}]),\partial_{x}^\sigma U(s,X_{s},[X_{s}]))_{s \in [t,T]}$
solves the system \eqref{eq X-Y t,xi}).
The proof is not completely straightforward as $\partial_{x}^\sigma U$ is not 
Lipschitz continuous in the direction of the measure (see \HYP{1}). In particular, 
we cannot apply Sznitman's result in \cite{Sznitman}, which relies on a contraction argument. Instead, we make use of Schauder's theorem, applying the same 
strategy as in 
\cite{carmona:delarue:ecp}. 

The argument is as follows. Let $\cC([t,T],\cP_{2}(\R^d))$ be the family of marginal measures
$(\mu_{r})_{r \in [t,T]}$ with finite second-order moments such that the mapping $[t,T] \ni r \mapsto \mu_{r} \in \cP_{2}(\R^d)$
is continuous. For $(\mu_{r})_{r \in [t,T]}\in \cC([t,T],\cP_{2}(\R^d)), $ we may solve  
\begin{equation*}
\begin{split}
dX_{s} &= b\bigl(X_{s},U(s,X_{s},\mu_{s}),\partial^{\sigma}_{x} U(s,X_{s},\mu_{s}),\bigl[X_{s},U(s,X_{s},\mu_{s})
\bigr] 
\bigr)
\ud s 
\\
&\hspace{15pt}+ \sigma\bigl( X_{s},U(s,X_{s},\mu_{s}) , \bigl[X_{s},U(s,X_{s},\mu_{s}) \bigr] \bigr) \ud W_{s}, \quad X_{t}= \xi.
\end{split}
\end{equation*}
By Sznitman's result, 
the above equation admits a unique solution, which we will denote by $(X_{s}^{(\mu_{r})_{r \in [t,T]}})_{s \in [t,T]}$. 
We then consider the mapping 
$$\Phi :  
\bigl(\mu_{r}\bigr)_{r \in [t,T]} \mapsto \bigl(\bigl[X_{s}^{(\mu_{r})_{r \in [t,T]}}\bigr]\bigr)_{s \in [t,T]},$$ 
which maps 
$\cC([t,T],\cP_{2}(\R^d))$ into itself. By standard stability arguments, it is quite clear that the mapping 
$\Phi$ is continuous, $\cC([t,T],\cP_{2}(\R^d))$ being endowed with the supremum distance
$d((\mu_{r})_{r \in [t,T]},(\tilde{\mu}_{r})_{r \in [t,T]})
:= \sup_{r \in [t,T]} W_{2}(\mu_{r},\tilde{\mu}_{r})$. 
Moreover, by boundedness of $\partial_{x} U$ and $\sigma$ and by the Lipschitz property of 
$U$, 
we can find a constant $C$ (independent of the input $(\mu_{r})_{r \in [t,T]}$) such that, for any 
$S \in [t,T]$
\begin{equation*}
{\mathbb E}_{t} \bigl[
\sup_{s \in [t,S]} \vert X_{s}^{(\mu_{r})_{r \in [t,T]}} \vert^4 \bigr]^{1/2} \leq C \biggl( 1 + \vert \xi \vert^2
+ \int_{t}^S \int_{\R^d} \vert x \vert^2 d\mu_{s}(x) \ud s \biggr).
\end{equation*}
This proves that, when 
\begin{equation}
\label{eq:schauder:1}
\forall s \in [t,T], \quad 
\int_{\R^d} \vert x \vert^2 d\mu_{s}(x)  \leq C(1+ \| \xi \|_{2}^2) \exp\bigl(C(s-t)\bigr),
\end{equation}
the same holds for $\E[\vert X_{s}^{(\mu_{r})_{r \in [t,T]}}\vert^2]$ for all 
$s \in [t,T]$. In such a case, we also have
\begin{equation*}
{\mathbb E}_{t} \bigl[
\sup_{s \in [t,S]} \vert X_{s}^{(\mu_{r})_{r \in [t,T]}} \vert^4 \bigr]^{1/2} \leq C \bigl(1 + \vert \xi \vert^2
\bigr)
+ C \bigl(1+ \| \xi \|_{2}^2\bigr) \exp(CT). \label{eq:XmuisUI}
\end{equation*}
so that, for any event $A \in \cA$ and any real $R>0$, Cauchy-Schwarz inequality yields  
\begin{equation*}
\begin{split}
\E \bigl[ {\mathbf 1}_{A}
\sup_{s \in [t,S]} \vert X_{s}^{(\mu_{r})_{r \in [t,T]}} \vert^2 \bigr]
&\leq 
C \E \Bigl[ \E_{t}[{\mathbf 1}_{A}]^{1/2} 
\Bigl( \bigl(1 + \vert \xi \vert^2
\bigr)
+  \bigl(1+ \| \xi \|_{2}^2\bigr) \exp(CT)
\Bigr)
 \Bigr],
\\
&\leq C   
\Bigl( \bigl(1 + R^2
\bigr)
+  \bigl(1+ \| \xi \|_{2}^2\bigr) \exp(CT)
\Bigr)
\P(A)^{1/2} 
\\
&\hspace{15pt} + C \E \Bigl[ {\mathbf 1}_{\{ \vert \xi \vert \geq R\}} 
\Bigl( \bigl(1 + \vert \xi \vert^2
\bigr)
+  \bigl(1+ \| \xi \|_{2}^2\bigr) \exp(CT)
\Bigr)
 \Bigr].
\end{split}
\end{equation*}
In particular, choosing $A = \{\vert X_{s}^{(\mu_{r})_{r \in [t,T]}} \vert > R^4\}$
for some $s \in [t,T]$, 
applying Markov inequality and using the fact that
\eqref{eq:schauder:1} is also satisfied by $\E[\vert X_{s}^{(\mu_{r})_{r \in [t,T]}}\vert^2]$, 
we get that 
\begin{equation}
\label{eq:schauder:2}
\begin{split}
&\sup_{s \in [t,S]}\E \bigl[ {\mathbf 1}_{ \{\vert X_{s}^{(\mu_{r})_{r \in [t,T]}} \vert > R^4\}}
 \vert X_{s}^{(\mu_{r})_{r \in [t,T]}} \vert^2 \bigr]
\\
&\leq C^{3/2}  R^{-4}
\Bigl( \bigl(1 + R^2
\bigr)
+  \bigl(1+ \| \xi \|_{2}^2\bigr) \exp(CT)
\Bigr) \Bigl( \bigl(1+ \| \xi \|_{2}^2\bigr) \exp(CT) \Bigr)^{1/2}
\\
&\hspace{15pt} + C \E \Bigl[ {\mathbf 1}_{\{ \vert \xi \vert \geq R\}} 
\Bigl( \bigl(1 + \vert \xi \vert^2
\bigr)
+  \bigl(1+ \| \xi \|_{2}^2\bigr) \exp(CT)
\Bigr)
 \Bigr].
\end{split}
\end{equation}
For a given $s \in [t,T]$, 
we now denote by $\cK_{s}$ the subset of $\cP_{2}(\R^d)$
made of probability measures such that 
$\int_{\R^d} \vert x \vert^2 \ud \mu(x)$ is less than 
the right-hand side in \eqref{eq:schauder:1}
and $\int_{\{\vert x \vert > R^4\}} \vert x \vert^2 \ud \mu(x)$ is
less than the right-hand side in \eqref{eq:schauder:2} for any $R >0$. 
It is easy to checked that $\cK_{s}$ is a compact subset of $\cP_{2}(\R^d)$. 
Indeed, 
any sequence in $\cK_{s}$ is tight and admits a subsequence that 
converges in the weak sense. 
Using \eqref{eq:schauder:2}, the subsequence is square-uniformly integrable. 
Using Skorohod's representation theorem, we deduce that 
the sequence converges in the $W_{2}$-Wasserstein sense. 
By Fatou's lemma, $\cK_{s}$ is closed. Below, we let 
$\cK = \{ (\mu_{r})_{r \in [t,T]} 
\in \cC([t,T],\cP_{2}(\R^d)) : \forall r \in [t,T], \ \mu_{r} \in \cK_{r}\}$. 

Notice now that, under \eqref{eq:schauder:1}, we have, for all $s,s' \in [t,T]$, 
\begin{equation*}
\E \bigl[ \vert X_{s'}^{(\mu_{r})_{r \in [t,T]}}
- X_{s}^{(\mu_{r})_{r \in [t,T]}} \vert^2 
\bigr] \leq C' \vert s'-s\vert,
\end{equation*}
for a constant $C'$ depending upon $C$, $\| \xi \|_{2}$ and $T$.  
This says that
the map is $[t,T] \ni s \mapsto X_{s}^{(\mu_{r})_{r \in [t,T]}}
\in L^2(\Omega,{\mathcal A},\P;\R^d)$, 
is continuous, uniformly in $(\mu_{r})_{r \in [t,T]} \in \cK$. 
Using the Arzel\`a-Ascoli theorem, we deduce that the restriction of $\Phi$ 
to $\cK$ has a relatively compact range. Since $\cK$ is closed 
and convex, Schauder's theorem applies and \eqref{eq:MKV:SDE} has a solution. 

\textit{Second step.}
 We consider 
another solution $U'$ to \eqref{eq:master:PDE} in the class $\bigcup_{\beta \geq 0}
\cD_{\beta}$. 
With $X$ a solution of 
\eqref{eq:MKV:SDE}, we can apply the chain rule to 
both $(Y_{s}=U(s,X_{s},[X_{s}]))_{s \in [t,T]}$
and $(Y_{s}'=U'(s,X_{s},[X_{s}]))_{s \in [t,T]}$, the drift of $X$ being 
square-integrable and $\sigma$ being bounded.
Letting $(Z_{s} = \partial_{x}^\sigma U(s,X_s,[X_{s}]))_{s \in [t,T]}$,
$(Z_{s}' = \partial_{x} U'(s,X_{s},[X_{s}]) \sigma(X_{s},Y_{s}',[X_{s},Y_{s}']))_{s \in [t,T]}$, 
$(\theta_{s}=(X_{s},Y_{s},Z_{s}))_{s \in [t,T]}$, 
$(\theta_{s}'=(X_{s},Y_{s}',Z_{s}'))_{s \in [t,T]}$,
$(\theta_{s}^{(0)}=(X_{s},Y_{s}))_{s \in [t,T]}$
and $(\theta_{s}^{(0)\prime}=(X_{s},Y_{s}'))_{s \in [t,T]}$,
we deduce from the master PDE \eqref{eq:master:PDE} that
\begin{equation*}
\begin{split}
Y_{s}-Y_{s}' &= \int_{s}^T
\bigl( f(\theta_{r},[\theta_{r}^{0)}]) - f(\theta_{r},[\theta_r^{(0)\prime}]) \bigr) \ud r
\\
&\hspace{15pt} + \int_{s}^T \Bigl\{
\partial_{x} U'(r,X_{r},[X_{r}])
\Bigl( 
 b\bigl(\theta_{r},[\theta_{r}^{(0)}]\bigr)
 -
 b\bigl(\theta_{r}',[\theta_{r}^{(0)\prime}]\bigr)
\Bigr) 
\\
&\hspace{30pt} +  \hat{\E} \Bigl[
\partial_{\mu} U'(r,X_{r},[X_{r}])(\langle X_{r} \rangle)
\Bigl(
 b\bigl(\langle \theta_{r} \rangle,[\theta_{r}^{(0)}]\bigr)
 -
  b\bigl( \langle \theta_{r}'\rangle ,[\theta_{r}^{(0)\prime}]\bigr)
\Bigr) \Bigr] \Bigr\} \ud r
\\
 &\hspace{15pt} + \frac12 \int_s^T \Bigl\{
 {\rm Tr} \Bigl[ \partial^2_{xx} U' \bigl(r,X_{r},\law{X_{r}} \bigr) 
 \Bigl(
   \bigl( \sigma \sigma^\dagger \bigr)
 \bigl(\theta_{r},\law{\theta^{(0)}_{r}} \bigr)
 -
 \bigl( 
 \sigma \sigma^\dagger \bigr)
 \bigl( \theta_{r}',\law{\theta^{(0)\prime}_{r}} \bigr)
 \Bigr)
  \Bigr] 
 \\
&\hspace{30pt} + \hat{\E} \Bigl[
 \textrm{\rm Tr} \Bigl[
 \partial_v \bigl[\partial_\mu U'\bigr]\bigl(r,X_{r},\law{X_r}\bigr)\bigl(\cc{X_r}\bigr)
\\
&\hspace{100pt}
 \times 
 \Bigl(
\bigl( \sigma
  \sigma^{\dagger}\bigr)\bigl(  
\cc{\theta^{(0)}_r},\law{\theta^{(0)}_r}\bigr)
-
 \bigl( \sigma
  \sigma^{\dagger}\bigr)\bigl(  
\cc{\theta^{(0)\prime}_r},\law{\theta^{(0)\prime}_r}\bigr) 
\Bigr)
\Bigr]
\Bigr] \Bigr\} \ud r
\\
&\hspace{15pt} - \int_{s}^T \bigl(Z_{r} - \partial_{x} U'(r,X_{r},[X_{r}])
\sigma(\theta_{r}^{(0)\prime},[\theta_{r}^{(0)\prime}]) \bigr) \ud W_{r}.
\end{split}
\end{equation*}
By using Assumptions   {\HYP{0}(i)} and \HYP{\sigma} on the coefficients and 
Assumptions \HYP{1} and \HYP{2} that enter in the definition of $\cD_{\beta}$, we deduce 
from stability estimates for BSDEs, in the spirit of \cite{pardoux:peng},
that 
\begin{equation*}
\begin{split}
&\E \bigl[ \vert Y_{s} - Y_{s}' \vert^2 \bigr] + \E \int_{s}^T 
\vert Z_{r} - \partial_{x} U'(r,X_{r},[X_{r}])
\sigma(\theta_{r}^{(0)\prime},[\theta_{r}^{(0)\prime}])
 \vert^2 \ud r
\\
&\leq C \E \int_{s}^T  \vert Y_{r} - Y_{r}' \vert^2 \ud r + 
\frac12 \E \int_{s}^T 
\vert Z_{r} - Z_{r}' \vert^2  \ud r,
\end{split}
\end{equation*}
from which we get, by the boundedness of $\partial_{x} U'$, that 
\begin{equation*}
\begin{split}
&\E \bigl[ \vert Y_{s} - Y_{s}' \vert^2 \bigr] + \E \int_{s}^T 
\vert Z_{r} - Z_{r}'
 \vert^2 \ud r
\leq C \E \int_{s}^T \vert Y_{r} - Y_{r}' \vert^2 \ud r +
\frac12 \E \int_{s}^T 
\vert Z_{r} - Z_{r}' \vert^2  \ud r.
\end{split}
\end{equation*}
We deduce that $Y_{s}=Y_{s}'$ for any $s \in [t,T]$, that 
is $U(t,\xi,[\xi]) = U'(t,\xi,[\xi])$ almost surely. 
When $[\xi]$ has full support over $\R^d$, continuity of 
$U$ and $U$' yield $U(t,x,[\xi]) = U'(t,x,[\xi])$ for all $x \in \R^d$. 
When the support of $[\xi]$ is strictly included in $\R^d$, 
we can approximate $\xi$  
by a sequence $(\xi_{n})_{n \geq 1}$
that converges to $\xi$ in $L^2$ such that, for each $n \geq 1$, 
$\xi_{n}$ has full support over $\R^d$. Passing to the limit in 
the relationship $U(t,x,[\xi_{n}]) = U'(t,x,[\xi_{n}])$, we complete the proof.  
\qed
\end{proof}

\section{Smoothness for small time horizons -- proof of Theorem \ref{main:thm:short:time}}
\label{se smoothness}
The purpose of this section is to prove that the mapping $U$ given in
Definition \ref{le dec field} satisfies the required smoothness property for applying 
the chain rule. Generally speaking, this is proved by showing the smoothness  of the corresponding stochastic flows defined in \eqref{eq X-Y t,xi} and \eqref{eq X-Y t,x,mu}. More precisely, 
we prove that the stochastic flows defined in \eqref{eq X-Y t,xi} and \eqref{eq X-Y t,x,mu} are differentiable with respect to $\xi$, $x$ and $\mu$ in the sense discussed in Section 
\ref{se chain rule}. This is not a straightforward generalization of the method used by Pardoux and Peng 
in \cite{pardoux:peng} in order to prove the smoothness of the flow generated by the solution of a classical backward stochastic differential equation as we are facing here two additional difficulties: First, the initial conditions live in non-Euclidean spaces, which requires some special care; second, the backward equation is fully coupled to the forward equation. 
In order to handle the full coupling between the forward and backward components, we shall assume that $T$ is small. In particular, throughout the whole section, $T$ is less than $1$. In the following section, we shall give sufficient conditions for extending the results from small to arbitrary 
large time horizons. 

Below, Assumption \HYP{2} is in force. 
We shall use quite intensively the following lemma, which is an adaptation 
of the stability estimates in \cite{del02}: 
\begin{Lemma}
\label{le app cont}
For any $p \geq 1$, 
there exist two constants
$c_{p}:=c_p(L)>0$ and $C_{p} 
\geq 0$ such that, for $T \leq c_{p}$,
for any $t \in [0,T]$, $x \in \R^d$ and $\xi \in L^2(\Omega,\cF_{t},\P;\R^d)$,
\begin{equation} 
\label{eq app bound 1}
\begin{split}
 &\NSt{p}{X^{t,\xi}} + \NSt{p}{Y^{t,\xi}}
+ \NHt{p}{Z^{t,\xi}}
 \le
  C_p \bigl(1+|\xi|+\NL{2}{\xi'} \bigr),
 \\
 &\NS{p}{X^{t,x,\law{\xi}}} + \NS{p}{Y^{t,x,\law{\xi}}}
+ \NH{p}{Z^{t,x,\law{\xi}}}
  \le
  C_p \bigl(1+|x|+\NL{2}{\xi}\bigr),
 \end{split}
 \end{equation}
 and, for any $x' \in \R^d$ and $\xi' \in L^2(\Omega,{\mathcal F}_{t},\P;\R^d)$,
\begin{equation} 
\label{eq app bound 2}
\begin{split}
 &\NSt{p}{X^{t,\xi}-X^{t,\xi'}} + \NSt{p}{Y^{t,\xi} - Y^{t,\xi'}}
+ \NHt{p}{Z^{t,\xi} - Z^{t,\xi'}}
\\
 &\hspace{15pt} \le
  C_p \bigl[|\xi - \xi'|+W_{2}\bigl([\xi],[\xi'] \bigr)\bigr],
 \\
 &\NS{p}{X^{t,x,\law{\xi}}-X^{t,x',\law{\xi'}}} + \NS{p}{Y^{t,x,\law{\xi}} - Y^{t,x',\law{\xi'}}}
+ \NH{p}{Z^{t,x,\xi} - Z^{t,x',\law{\xi'}}}
  \\
 &\hspace{15pt} \le
  C_p \bigl[|x-x'|+W_{2}\bigl([\xi],[\xi']\bigr)\bigr].
 \end{split}
 \end{equation}
\end{Lemma}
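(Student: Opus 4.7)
The plan is to adapt the small-time stability analysis for coupled FBSDEs developed in \cite{del02} to the McKean-Vlasov setting. Under \HYP{0}(i) the coefficients are $L$-Lipschitz in all variables (including in the measure direction via $W_{2}$), and unique solvability of \eqref{eq X-Y t,xi} and \eqref{eq X-Y t,x,mu} for $T \leq c(L)$ was already established earlier in the excerpt; the lemma thus concerns quantitative $L^p$-bounds for these solutions. The case $p=2$ is essentially standard, so the real content is (i) upgrading to general $p \geq 1$ via the Burkholder--Davis--Gundy (BDG) inequality, and (ii) obtaining the Lipschitz dependence on the data.

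First I would establish \eqref{eq app bound 1} for the $\xi$-flow. Fix $p \geq 2$ (the case $p \in [1,2)$ follows by Jensen). Applying BDG to the forward SDE in \eqref{eq X-Y t,xi} and the standard $\cH^p$-theory for the BSDE component yields, with constants $C_{p}$ depending only on $L$ and $p$,
\begin{equation*}
\NSt{p}{X^{t,\xi}} + \NSt{p}{Y^{t,\xi}} + \NHt{p}{Z^{t,\xi}} \leq C_{p}\bigl(1 + |\xi| + \|\xi\|_{2}\bigr) + C_{p} T^{1/2}\bigl( \NSt{p}{X^{t,\xi}} + \NSt{p}{Y^{t,\xi}} + \NHt{p}{Z^{t,\xi}}\bigr),
\end{equation*}
where the measure arguments are controlled by $W_{2}(\law{\theta^{t,\xi,(0)}_{r}},\delta_{0}) \leq \|\theta^{t,\xi,(0)}_{r}\|_{2}$ and the factor $T^{1/2}$ comes from integrating the bounded Lipschitz constants over $[t,T]$. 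Choosing $c_{p} \leq (2 C_{p})^{-2}$ lets us absorb the right-hand term and produces the first inequality. The same argument applied to \eqref{eq X-Y t,x,mu}, with $\law{\theta^{t,\xi,(0)}_{r}}$ treated as a given input and $\|\cdot\|_{2}$ replaced by the previously-obtained bound, yields the second inequality.

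Next I would prove \eqref{eq app bound 2}. Setting $\delta X := X^{t,\xi} - X^{t,\xi'}$, $\delta Y := Y^{t,\xi} - Y^{t,\xi'}$, $\delta Z := Z^{t,\xi} - Z^{t,\xi'}$, the Lipschitz property of $b,\sigma,f,g$ (together with $W_{2}(\law{\theta^{t,\xi,(0)}_{r}},\law{\theta^{t,\xi',(0)}_{r}}) \leq \|\theta^{t,\xi,(0)}_{r} - \theta^{t,\xi',(0)}_{r}\|_{2}$) gives that $\delta X$ satisfies an SDE with Lipschitz-controlled coefficients and $(\delta Y,\delta Z)$ satisfies a BSDE whose driver is linear in $(\delta X, \delta Y, \delta Z)$ plus a measure-term. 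Again combining BDG on the forward part with the $\cH^p$-estimate on the backward part yields
\begin{equation*}
\NSt{p}{\delta X} + \NSt{p}{\delta Y} + \NHt{p}{\delta Z} \leq C_{p}\bigl( |\xi-\xi'| + W_{2}([\xi],[\xi'])\bigr) + C_{p} T^{1/2} \bigl(\NSt{p}{\delta X} + \NSt{p}{\delta Y} + \NHt{p}{\delta Z}\bigr),
\end{equation*}
and the right-hand term is absorbed by further shrinking $c_{p}$. The analogous estimate for \eqref{eq X-Y t,x,mu} is identical, with the measure term depending only on the explicit input $[\xi]$ versus $[\xi']$.

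The main technical subtlety is the self-consistent coupling: in \eqref{eq X-Y t,xi} the law $\law{\theta^{t,\xi,(0)}_{r}}$ is itself determined by $(X^{t,\xi},Y^{t,\xi})$, and the Wasserstein terms appearing in the difference equations involve global $L^2$-norms $\|\delta X_{r}\|_{2} + \|\delta Y_{r}\|_{2}$, whereas the left-hand sides in \eqref{eq app bound 2} are conditional on $\cF_{t}$. One resolves this by first running the argument \emph{unconditionally} (with $\NS{p}{\cdot}$ and $\NH{p}{\cdot}$ throughout), which yields control of the measure-terms by the unconditional norms of the differences, and then re-running it pathwise given $\cF_{t}$: since $\xi,\xi' \in L^2(\Omega,\cF_{t},\P;\R^d)$ the measure-arguments $\law{\theta^{t,\xi,(0)}_{r}}$ are already fixed deterministic objects from the viewpoint of the conditional estimate, so the Wasserstein terms contribute only the bound $W_{2}([\xi],[\xi'])$ to the right-hand side, as required.
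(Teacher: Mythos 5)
Your sketch is essentially correct and matches what the paper intends, namely an adaptation of Delarue's small-time stability estimates \cite{del02}: first bound the (deterministic) flow of marginal laws $(\law{\theta^{t,\xi,(0)}_{r}})_{r}$ unconditionally in $L^2$, thereby turning \eqref{eq X-Y t,xi} into a classical FBSDE of the form \eqref{eq reec mkvfbsde} with fixed time-dependent coefficients, and then apply conditional BDG and $\cH^p$-estimates with the small-time absorption. The paper itself gives no proof beyond the reference to \cite{del02}, so your write-up fleshes out precisely the intended argument.

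One step you leave implicit is the passage from the unconditional $L^2$-stability bound $\sup_{r}\|\theta^{t,\xi,(0)}_{r}-\theta^{t,\xi',(0)}_{r}\|_{2}\le C\|\xi-\xi'\|_{2}$ to the sharper control $W_{2}\bigl(\law{\theta^{t,\xi,(0)}_{r}},\law{\theta^{t,\xi',(0)}_{r}}\bigr)\le C\,W_{2}\bigl([\xi],[\xi']\bigr)$ that actually appears on the right-hand side of \eqref{eq app bound 2}. This uses the Yamada--Watanabe/uniqueness-in-law observation made just before Definition \ref{le dec field}: the marginal laws of the solution depend on $\xi$ only through $[\xi]$, so one may replace $(\xi,\xi')$ by an optimally coupled pair $(\tilde\xi,\tilde\xi')\in L^2(\Omega,\cF_{t},\P;\R^d)^2$ with $\|\tilde\xi-\tilde\xi'\|_{2}=W_{2}([\xi],[\xi'])$ before invoking the $L^2$-stability. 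Also, your single-line absorption with the factor $T^{1/2}$ slightly compresses Delarue's mechanism: the boundary condition $g(X_T,\cdot)$ contributes a term $L\,\NSt{p}{\delta X}$ \emph{without} a small prefactor, so the closure actually proceeds by substituting the forward estimate into the backward one and then absorbing the resulting $T$- and $T^{1/2}$-weighted terms, rather than by a one-shot Gr\"onwall. Neither point is a gap in substance, but they deserve to be spelled out.
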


In the statement above, the notation $c_p := c_p(L)$ emphasizes the fact that 
$c_p$ only depends on the Lipschitz constant $L$ introduced in 
$\HYP{0}-\HYP{1}$. The constant $C_p$  is allowed to depend on 
the other parameters appearing in  $\HYP{0}-\HYP{2}$, but there is no need to keep track of 
them for our purpose. 

\subsection{Stability estimate for McKean-Vlasov linear FBSDEs}
\label{subse:stab:1}
The strategy for investigating the derivatives of the solutions to 
\eqref{eq X-Y t,xi} and \eqref{eq X-Y t,x,mu} is standard. We  identify the derivatives with the solutions of  linearized systems, obtained by formal differentiation of the coefficients. For that reason, the analysis of the differentiability relies on some 
preliminary 
stability estimates for linear FBSDEs. Unfortunately, because of the 
McKean-Vlasov structure of the coefficients, we cannot borrow any estimate from the literature. We thus have to use a tailor-made version, which is the precise purpose of this subsection. 

\subsubsection{General set-up}
Generally speaking, we are dealing with a linear FBSDE of the form 
\begin{equation}
\label{eq:sys:linear:mkv}
\begin{split}
&{\mathcal X}_{s} = \eta + \int_{t}^s B\bigl(r,\theta_{r},\langle \hat\theta_{r} \rangle)
\bigl(\vartheta_{r},\langle 
\hat\vartheta_{r}^{(0)} \rangle\bigr) \ud r 
+ \int_{t}^s \Sigma\bigl(r,\theta_{r}^{(0)},\langle \hat\theta_{r}^{0)} \rangle \bigr) \bigl(\vartheta_{r}^{(0)},\langle \hat\vartheta_{r}^{0)} \rangle \bigr) \ud W_{r},
\\
&{\mathcal Y}_{s} = G\bigl(X_{T},\langle \hat X_{T} \rangle)\bigl({\mathcal X}_{T},\langle \new{ \hat {\mathcal X}_{T}} \rangle \bigr) + \int_{s}^T 
F\bigl(r,\theta_{r}, \langle \hat \theta_{r}^{(0)} \rangle \bigr)\bigl(\vartheta_{r},\langle \hat \vartheta_{r}^{(0)} \rangle \bigr) \ud r - \int_{s}^T {\mathcal Z}_{r} \ud W_{r},
\end{split}
\end{equation}
where $\eta$ is an initial condition in $L^2(\Omega,\cF_{t},\P;\R^d)$, 
$\theta = (X,Y,Z)$
and $\hat \theta = (\hat X,\hat Y,\hat Z)$
 are solutions of \eqref{eq X-Y t,xi} or \eqref{eq X-Y t,x,mu}, 
 $\vartheta = ({\mathcal X},{\mathcal Y},{\mathcal Z})$
 denotes the unknowns in the above equation
and $\hat \vartheta = (\hat {\mathcal X},\hat {\mathcal Y},\hat {\mathcal Z})$
is an 
auxiliary process, which may be $\vartheta$ itself (in which case it is unknown). 
The exponent $(0)$ denotes the restriction of the processes to the two first coordinates, 
as in \eqref{eq X-Y t,xi}
and \eqref{eq X-Y t,x,mu}. 
The processes $X$, $\hat{X}$, $\cX$ and 
$\hat{\cX}$ have the same dimension, the same being true
for the processes $Y$, $\hat{Y}$, $\cY$ and $\hat \cY$ and
for the processes
$Z$, $\hat{Z}$, $\cZ$ and $\hat \cZ$.
In particular, the mappings 
$B$, $\Sigma$, $F$ and $G$ take values in Euclidean spaces of according dimensions. 
The symbol $\langle \cdot \rangle$ is used to denote the copy of the underlying random variable onto the probability 
space $(\hat{\Omega},\hat{\cA},\hat{\P})$. Although the role of the copy is rather vague at this stage of the paper, it 
indicates that the coefficients may depend in a non-Markovian way of the various stochastic processes involved. Here is an example:

\begin{Example} \label{ex appli}
As a typical example for the coefficients 
$B$, $\Sigma$, $F$ and $G$, we 
may think of the derivatives, with respect to some parameter 
$\lambda$, of the original coefficients 
$b$, $f$, $\sigma$ and $g$ when computed
along some triplet $(\theta^{\lambda}=(X^{\lambda},Y^{\lambda},Z^{\lambda}))$
solving \eqref{eq intro XYZ}. 
As a typical example for the parametrization by $\lambda$, we may think of the parametrization with respect to the initial condition 
which is applied to the entire system. 

The shape of the coefficients $B$, $\Sigma$, $F$ and $G$ can then be derived by 
replacing $b$, $f$, $\sigma$ and $g$ by a generic continuously differentiable Lipschitz function 
$h : (\R^d \times \R^m \times \R^{m \times d}) \times {\mathcal P}_{2}(\R^d \times 
\R^m) \rightarrow \R$. Given such a generic $h$, we can indeed consider a process of the form 
\begin{equation*}
\Bigl( 
h\bigl(\theta_{r}^{\lambda},[\theta_r^{\lambda,(0)}] \bigr)\Bigr)_{r \in [t,T]} 
\end{equation*}
where $\R \ni \lambda \mapsto 
(\theta_{r}^{\lambda})_{r \in [t,T]}
\in {\mathcal S}^2([t,T];\R^d) \times {\mathcal S}^2([t,T];\R^m)
\times {\mathcal H}^2([t,T];\R^{m \times d})$
is differentiable 
with respect to $\lambda$, with (derivatives being taken in the aforementioned space)
\begin{equation*}
\theta_{r}^{\lambda}{}_{\vert\lambda=0} = \theta_{r}, \quad \frac{\ud}{\ud\lambda}_{\vert \lambda=0}
\theta_{r}^{\lambda} = \vartheta_{r}, \quad r \in [t,T],
\end{equation*}
the process $(\vartheta_{r})_{r \in [t,T]}$ taking its values in $\R^d \times \R^m \times \R^{m \times d}$ (and, for the moment, having nothing to do with the solution of
\eqref{eq:sys:linear:mkv}). 
Then, it is easy to check that the mapping $\R \ni \lambda \mapsto 
(h(\theta_{r}^{\lambda},\law{\theta_{r}^{\lambda,(0)}})_{r \in [t,T]} \in 
{\mathcal H}^2([t,T];\R)$ is differentiable and that the
derivative reads as follows
  \begin{equation}
\label{eq:ex:mkv:stability}
H^{(1)}(r,\theta_{r},\langle \theta_{r}^{(0)} \rangle)(\vartheta_{r},\langle \vartheta_{r}^{(0)} \rangle) := 
\partial_{w} h (\theta_{r},[\theta_{r}^{(0)}]) \vartheta_{r} +  \hat{\mathbb E}
\bigl[ \partial_{\mu} h(\theta_{r},[\theta_{r}^{(0)}])(\langle \theta_{r}^{(0)}\rangle) 
\langle \vartheta_{r}^{(0)} \rangle \bigr]. 
\end{equation}
Of course, if $h$ only acts on $((\theta^{(0)}_{r},[\theta_{r}^{(0)}]))_{r \in [t,T]}$
instead of $((\theta_{r},[\theta_{r}^{(0)}]))_{r \in [t,T]}$, then differentiability 
holds in ${\mathcal S}^2([t,T];\R)$. 
\end{Example}


In Example \ref{ex appli}, the coefficients $B$, $\Sigma$, $F$ and 
$G$ are obtained by replacing $h$ by $b$, $\sigma$, $f$ and $g$
and by computing $B^{(1)}$, $\Sigma^{(1)}$, $F^{(1)}$ and 
$G^{(1)}$ accordingly. Leaving Example \ref{ex appli} and going back to the general case, 
we apply the same procedure: In order to specify the shape of $B$, $\Sigma$, $F$ and $G$ (together with the assumptions they satisfy), it suffices to make explicit the generic form of a function $H$ that may be $B$, $\Sigma$, $F$ or $G$ and 
to detail the assumptions it satisfies. 
Given square-integrable 
processes $(V_{r})_{r \in 
[t,T]}$ and $(\hat V_{r})_{r \in [t,T]}$, 
$(V_{r})_{r \in 
[t,T]}$ 
possibly matching 
$(X_{r})_{r \in [t,T]}$,  
 $(\theta_{r}^{(0)})_{r \in [t,T]}$ or $(\theta_{r})_{r \in [t,T]}$, and similarly for 
 $(\hat V_{r})_{r \in [t,T]}$, together with
other square-integrable  
processes $({\mathcal V}_{r})_{r \in 
[t,T]}$ and $(\hat{\mathcal V}_{r})_{r \in 
[t,T]}$, $({\mathcal V}_{r})_{r \in 
[t,T]}$ possibly matching
 $(\cX_{r})_{r \in [t,T]}$, $(\vartheta_{r}^{(0)})_{r \in [t,T]}$ or $(\vartheta_{r})_{r \in [t,T]}$,
and similarly for $(\hat{\mathcal V}_{r})_{r \in 
[t,T]}$, 
we thus assume that $H(r,V_{r},\langle \hat V^{(0)}_{r} \rangle)$ acts on 
$({\mathcal V}_{r},\hat {\mathcal V}_{r}^{(0)})$ in the following
way:
\begin{equation}
\label{splitting:H}
H(r,V_{r},\langle \hat V^{(0)}_{r} \rangle)({\mathcal V}_{r},\hat {\mathcal V}_{r}^{(0)}) = H_{\ell}(V_{r},\langle \hat V^{(0)}_{r} \rangle)({\mathcal V}_{r},\hat {\mathcal V}_{r}^{(0)}) + 
H_{a}(r),
\end{equation}
where $H_{a}(r)$ is square-integrable
and $H_{\ell}(V_{r},\langle \hat V^{(0)}_{r} \rangle)$ acts linearly
on $({\mathcal V}_{r},\hat {\mathcal V}_{r}^{(0)})$
 in the following sense
\begin{equation}
\label{eq:mkv:stability:decomposition}
H_{\ell}(V_{r},\langle \hat V^{(0)}_{r} \rangle)({\mathcal V}_{r},\hat {\mathcal V}_{r}^{(0)}) = 
h_{\ell}(V_{r},\langle \hat V^{(0)}_{r} \rangle) {\mathcal V}_{r} + \hat{\mathbb E}
\bigl[ \hat{H}_{\ell}(V_{r},\langle \hat V^{(0)}_{r} \rangle) \langle \hat {\mathcal 
V}^{(0)}_{r} \rangle \bigr].
\end{equation}
Here $h_{\ell}$ and $\hat{H}_{\ell}$ are maps from $\R^k 
\times L^2(\hat\Omega,\hat{\mathcal A},\hat\P;\R^{l})$
into $\R^{l'}$ and
from $\R^{k} \times L^2(\hat{\Omega},\hat{{\mathcal A}},\hat{\P};
\R^{l})$ into $L^2(\hat{\Omega},\hat{{\mathcal A}},\hat{\P};\R^{l''})$ respectively, 
for suitable $k$, $l$, $l'$ and $l''$. Moreover,  there exist 
three constants $C,K,\alpha \geq 0$ and a function $\Phi_{\alpha} : 
[ L^2(\Omega,{{\mathcal A}},{\P};
\R^{l})]^2 \rightarrow \R_{+}$, continuous at any point of the diagonal, such that,
for $w,w' \in \R^k$ and $ \hat{V}^{(0)}, \hat{V}^{(0)\prime} \in L^2(\Omega,\cA,\P;\R^d)$,
\begin{align}
&\bigl\vert h_{\ell}(w,\langle \hat{V}^{(0)} \rangle) \bigr\vert + \hat{\mathbb E} \bigl[ \vert 
\hat{H}_{\ell}(w,\langle \hat V^{(0)} \rangle)
\vert^2 \bigr]^{1/2} \leq K,
\label{assumption:mkv:1}
\\
&\vert \hat{H}_{\ell}(w,\langle \hat V^{(0)} \rangle) \vert \leq C \bigl( 1+ 
\vert \langle 
\hat V^{(0)} \rangle \vert^{\alpha +1} 
+ \| \hat V^{(0)} \|_{2}^{\alpha +1} \bigr),
\label{assumption:mkv:2}
\\
&\bigl\vert h_{\ell}(w,\langle \hat V^{(0)} \rangle) - h_{\ell}
(w',\langle \hat V^{(0)\prime} \rangle) 
\bigr\vert^2
+
\hat{\mathbb E}
\bigl[ \vert \hat{H}_{\ell}(w,\langle \hat V^{(0)} \rangle) - 
\hat{H}_{\ell}(w',\langle 
\hat {{V}}^{(0)\prime} \rangle) 
\vert^2 \bigr] \nonumber
\\
&\hspace{5pt}
\leq C
\Bigl\{
\vert w - w' \vert^2 + 
\Phi^2_{\alpha}(\hat V^{(0)} ,\hat V^{(0) \prime}) \Bigr\}, 
\label{assumption:mkv:3}
\end{align}
with the condition that,  when $\hat V^{(0)} \sim \hat V^{(0) \prime}$, \begin{equation}
\Phi_{\alpha}(\hat V^{(0)} ,\hat V^{(0) \prime}) \leq 
C  
 {\mathbb E} \bigl[ \bigl(1+ \vert \hat V^{(0)}  \vert^{2\alpha} +
  \vert \hat V^{(0) \prime} \vert^{2\alpha} + \| \hat V^{(0)}  \|_{2}^{2\alpha} \bigr)
   \vert \hat V^{(0)}  - \hat V^{(0) \prime} \vert^2 
  \bigr]^{1/2}. 
  \label{assumption:mkv:4}
\end{equation}
We shall also require the additional assumption:
\begin{equation}
\label{assumption:mkv:5}
\textrm{For any} \ \hat V^{(0)}, \ 
\text{the \ family \ } \bigl( \vert \hat{H}_{\ell}(w,\langle \hat V^{(0)}
\rangle) \vert^2 \bigr)_{w \in \R^k}\text{is uniformly integrable.}
\end{equation}
Conditions \eqref{assumption:mkv:1}, \eqref{assumption:mkv:3}, \eqref{assumption:mkv:4}
and \eqref{assumption:mkv:5}
must be compared with \HYP{1}, the constant $K$ in \eqref{assumption:mkv:1} playing the 
role of $L$ in \HYP{1}. It is worth mentioning that the constant $K$ has a major role in 
the sequel as it dictates the size of the time interval on which all the
estimates derived in this section hold true.

The comparison between \eqref{assumption:mkv:1}--\eqref{assumption:mkv:2}--\eqref{assumption:mkv:3}--\eqref{assumption:mkv:4}--\eqref{assumption:mkv:5}
and \HYP{1} may be made more explicit within the framework of  Example \ref{ex appli}: 
%
\vspace{5pt}
\begin{Example}\label{ex appli suite}{(Continuing Example 
\ref{ex appli})}\\
Assumptions \eqref{assumption:mkv:1}, 
\eqref{assumption:mkv:2}, \eqref{assumption:mkv:3} and 
\eqref{assumption:mkv:5} read in the 
following way
  {
when, 
in the decomposition \eqref{eq:ex:mkv:stability}, $h_{\ell}(V_{r},\langle \hat{V}_{r}^{(0)} \rangle)
\equiv \partial_{w} h(\theta_{r},[\theta_{r}^{(0)}])$
and $\hat{H}_{\ell}(V_r,\langle \hat{V}_{r}^{(0)} \rangle)
\equiv \partial_{\mu}h(\theta_{r},[\theta_{r}^{(0)}])$}:
\begin{enumerate}
\item   
Equation \eqref{assumption:mkv:1} expresses the fact that $h$ is Lipschitz 
continuous with respect 
to $(w,\mu)$, so that the derivatives are bounded, in $L^{\infty}$ in the 
direction 
$w$ and in $L^2$ in the direction $\mu$. Importantly (and as already suggested), the 
constant $K$ corresponds to $L$ in \HYP{1}.
\item Equation \eqref{assumption:mkv:2} expresses the fact that, 
  {
for any $(w,\mu)$,
$v \mapsto \partial_{\mu}h(w,\mu)(v)$}
admits a version that is at most of polynomial growth (in $v$) under \HYP{1} (see the proof right below).
\item Equation \eqref{assumption:mkv:3} says that the
derivatives in the direction $w$ and in the direction of the measure are continuous
(in a suitable sense).
Except when $\alpha=0$, derivatives may not be Lipschitz continuous in the direction of the measure, which 
is a crucial relaxation for our purpose. 
\item Condition \eqref{assumption:mkv:5} expresses the fact that,
for $\chi \in L^2$, the family $(\partial_{\mu} h(w,\mu)(\chi))_{w \in \R^k}$
must be uniformly \new{square-}integrable. 
\end{enumerate}
The existence of a version 
of $v \mapsto \partial_{\mu}h(w,\mu)(v)$
that is at most of polynomial growth can be proved as follows. 
When $h$ is understood as one of the coefficients $b$, $\sigma$, $f$ or 
$g$, we know that, under \HYP{1}, $\partial_{\mu}h$ (which 
might be identified with a
Fr\'echet derivative) satisfies, 
for two random variables $\chi$ and $\chi'$, with the same distribution $\mu$,  
\begin{equation}
\label{eq:reg:dmuh}
\begin{split}
&{\mathbb E} \bigl[ \bigl\vert \partial_{\mu} h(w,\mu)(\chi) 
- \partial_{\mu} h(w,\mu)(\chi')
\bigr\vert^2 \bigr]^{1/2}
\\
&\hspace{15pt}
\leq C {\mathbb E}
\bigl[ \bigl( 1 + \vert \chi \vert^{2\alpha} + \vert \chi' \vert^{2\alpha}
+ \| \chi\|_{2}^{2\alpha}
\bigr) \vert \chi - \chi' \vert^2 \bigr]^{1/2},
\end{split}
\end{equation}
which implies that the mapping $v \mapsto \partial_{\mu} h(w,\mu)(v)$
is locally Lipschitz continuous, see Proposition \ref{prop:lipschitz:lifted}. More precisely, 
for a random variable $\chi$ with $\mu$ as distribution,
\begin{equation*}
\bigl\vert \partial_{\mu} h(w,\mu)(v) 
- \partial_{\mu} h(w,\mu)(v') 
 \bigr\vert \leq C 
  \bigl( 1+ \vert v \vert^{\alpha} + \vert v' \vert^\alpha
 +  \| \chi \|_{2}^\alpha
 \bigr) \vert v - v' \vert. 
\end{equation*}
Now, we know that, 
\begin{equation}
\label{eq:L2bound:dmuh}
{\mathbb E} \bigl[ \vert \partial_{\mu} h(w,\mu)(\chi) \vert^2 \bigr]^{1/2} \leq C. 
\end{equation}
Therefore, by the same method as in \eqref{eq:trick:markov},
we deduce that
\[
\inf_{\vert v \vert \leq 2 \| \chi \|_{2}}
\vert \partial_{\mu} h(w,\mu)(v) \vert \leq C,
\]
which, together with local Lipschitz property, says that, for any $w \in \R^k$,
\begin{equation}
\label{eq:lineargrowth}
\begin{split}
\bigl\vert \partial_{\mu} h(w,\mu)(v) 
\bigr\vert &\leq C + C \bigl( 1+ \vert v \vert^{\alpha} + \| \chi \|_{2}^\alpha
 \bigr) \bigl( \vert v  \vert +  \| \chi \|_{2} \bigr) 
 \\
 &\leq C \bigl( 1+ \vert v \vert^{\alpha+1}
  + \| \chi \|_{2}^{\alpha+1}
  \bigr),
 \end{split}
\end{equation}
which completes the proof of the polynomial growth property. 
\end{Example}

\begin{Remark}
The reader may wonder about the sharpness of the bound \eqref{eq:lineargrowth}. Indeed,  
when specialized to the case $\alpha=0$ and $h$ independent of $w$, \eqref{eq:lineargrowth} provides just a linear growth bound for the 
derivative $\R^l \ni v \mapsto \partial_{\mu} h(\mu)(v)$ of a Lipschitz-continuous function 
$h : \cP_{2}(\R^l) \ni \mu \mapsto h(\mu)$ (the Lipschitz continuity of $h$ follows from 
\eqref{eq:L2bound:dmuh}). This might seem rather weak and 
it might be tempting to expect an $L^\infty$ bound instead of a linear growth bound. 
As shown by the example in Remark \ref{rem:comparaison:buck},
there is no way of guaranteeing that the derivative 
$\partial_{\mu} h$ of the Lipschitz-continuous function $h$ is bounded in $L^\infty$, even when $\alpha=0$ (which is the strongest case) .
Boundedness of the derivative 
only holds in $L^2$, as is written in \eqref{eq:L2bound:dmuh}.

This important feature explains why the space of boundary conditions we consider
in the paper is not limited to functions with derivatives that are globally Lipschitz with respect to the measure argument. 
Because of the gap in the growth of the derivatives, 
we would fail to prove that the derivatives of the solution of the master equation (or equivalently of the decoupling 
field of the FBSDEs \eqref{eq X-Y t,xi}) are also globally Lipschitz with respect to the measure argument. 
Due to this lack of stability, we would not be able to extend the results from short to long time horizons. 
 
\end{Remark}
\subsubsection{Estimate of the solution}
\label{subsub:estimate}
Part of our analysis relies on stability estimates for systems of a more general form than 
\eqref{eq:sys:linear:mkv}, namely 
\begin{align}
&{\mathcal X}_{s} = \eta + \int_{t}^s 
B\bigl(r,\bar{\theta}_{r},\langle \check {\theta}_{r}^{(0)} \rangle \bigr)\bigl(\bar{\vartheta}_{r},\langle \check {\vartheta}_{r}^{(0)} \rangle\bigr) \ud r 
+ \int_{t}^s \Sigma\bigl(r,\bar{\theta}_{r}^{(0)},\langle \check{\theta}_{r}^{(0)} \rangle \bigr)
\bigl(\bar{\vartheta}_{r}^{(0)},\langle  \check{\vartheta}_{r}^{(0)} \rangle
\bigr) \ud W_{r}, \nonumber
\\
&{\mathcal Y}_{s} = G\bigl(X_{T},\langle \hat X_{T} \rangle\bigr)\bigl({\mathcal X}_{T},\langle \hat \cX_{T} \rangle \bigr) + \int_{s}^T 
F\bigl(r,\bar{\theta}_{r},\langle \check{\theta}_{r} \rangle \bigr)\bigl(\bar{\vartheta}_{r},\langle \check{\vartheta}_{r}^{(0)} \rangle \bigr) \ud r - \int_{s}^T {\mathcal Z}_{r} \ud W_{r}, \label{eq:sys:linear:mkv:2}
\end{align}
the difference between \eqref{eq:sys:linear:mkv:2} and \eqref{eq:sys:linear:mkv} being that the coefficients 
(except the terminal boundary condition) may depend on other 
triplets $\bar{\theta}$, $\check{\theta}$, $\bar{\vartheta}$ and $\check{\vartheta}$.
 We shall make use of the following definition, directly inspired from \eqref{assumption:mkv:1}:
 
\begin{Definition}
\label{def:admissible}
Given triplets $(\theta_{r}=(X_{r},Y_{r},Z_{r}))_{r \in [t,T]}$ 
and $(\hat \theta_{r}=(\hat X_{r},\hat Y_{r},\hat Z_{r}))_{r \in [t,T]}$
of the same form as above, we say that 
a subset $\cJ$ of $L^2(\Omega \times \hat{\Omega},\cA \otimes \hat{\cA},\P \otimes \hat{\P};\R_{+})$
is admissible for $(\theta,\hat{\theta})$ if 

(i)
for any $r \in [t,T]$, for $H$ matching $B$, $\Sigma$, $F$ or $G$
and $(V_{r},\hat V_{r}^{(0)})$ matching $(X_{r},\hat{X}_{r})$, 
$(\theta_{r},\hat{\theta}_{r}^{(0)})$
or $(\theta_{r}^{(0)},\hat\theta_{r}^{(0)})$, there exists $\Lambda \in \cJ$ such that 
$\hat{\mathbb E}[\vert \hat{H}_{\ell}(r,V_{r}, \langle \hat V_{r}^{(0)}
\rangle ) \vert^2]^{1/2} \leq \Lambda$; 

(ii) any $\Lambda$ in $\cJ$ satisfies $\P( \hat{\E}(\Lambda^2)^{1/2} \leq K)=1$. 
\end{Definition}

\noindent {\bf Notations.}
Throughout \S \ref{subsub:estimate}, $\cJ$
is an admissible class for both $(\theta, \hat \theta)$ and $(\bar{\theta},\check \theta)$. For 
a real $\gamma \geq T$,
an integer $p \geq 1$, a real $C>0$,  
a triplet $\vartheta=(\cX_{s},\cY_{s},\cZ_{s})_{s \in [t,T]}$
with values in ${\mathcal S}^2([t,T];\R^d) \times 
{\mathcal S}^2([t,T];\R^m) \times {\mathcal H}^2([t,T];\R^{m \times d})$ and a pair  of random variables $(X,\chi)$ with values in a Euclidean space,
we let
\begin{align} 
&{\mathcal M}_{{\mathbb M}}^{p}(\vartheta) := {\mathbb M} \biggl[ 
\sup_{s \in [t,T]} \bigl(
\vert \cX_{s} \vert^{p} + \gamma^{1/2} \vert \cY_{s} \vert^{p}\bigr)
+ \gamma^{1/2}\biggl( \int_{t}^T \vert \cZ_{s} \vert^2 \ud s \biggr)^{p/2} \biggr],
\nonumber
\\
&{\mathcal N}_{{\mathbb M}}^{p,C}(X,\chi) 
\label{stab:notations:1}
\\
&\hspace{15pt}:=
 \new{\esssup}_{\Lambda \in \cJ} {\mathbb M}
 \biggl[ 
\hat{\mathbb E} \biggl[ \Bigl\{
 \Lambda
 \wedge
 \Bigl[ C \bigl( 1+ \hat{\E}_{t} \bigl[|\langle X \rangle|^{2\alpha+2} \bigr]^{1/2} +
 \|X \|_{2}^{\alpha+1} 
 \bigr) \Bigr] \Bigr\}
\hat{\E}_{t} \bigl[ |\langle \chi \rangle|^2 \bigr]^{1/2} \biggr]^p \biggr],
\nonumber
\end{align} 
with the convention that ${\mathbb M}$ can be  ${\mathbb E}_{t}$ or ${\mathbb E}$ 
\new{(in the latter case $\esssup$ is just a $\sup$)}.
Note that ${\mathcal M}_{{\mathbb M}}^{p}(\vartheta)$ depends on $\gamma$, $t$ and $T$. 
We shall omit this dependence in the notation 
${\mathcal M}_{{\mathbb M}}^{p}(\vartheta).$
With these notations, we shall write ${\mathcal M}_{{\mathbb M}}^{p}(\vartheta^{(0)})$
for ${\mathcal M}_{{\mathbb M}}^{p}(\cX,\cY,0)$.
Similarly, we shall not specify the dependence upon $t$ 
in the notation 
${\mathcal N}_{{\mathbb M}}^{p,C}(X,\chi)$.
Regarding the structure of the coefficients, $B$, $\Sigma$, $F$ and $G$, we also let
\begin{equation}
\label{stab:notations:2}
{\mathcal R}_{a}^{p} :=
\E_{t} \biggl[ \gamma^{1/2} \big\vert G_{a}(T)  \big\vert^{p}
+ \biggl(
\int_{t}^T \bigl\vert  
(B_{a},F_{a})(s) \bigr\vert   \ud s \biggr)^{p}
+ \biggl(
\int_{t}^T \bigl\vert  
\Sigma_{a}(s) \bigr\vert^2   \ud s \biggr)^{p/2} \biggr].
\end{equation}
From Cauchy-Schwarz' inequality and \textit{(ii)} in Definition \ref{def:admissible}, we get that: 
\begin{Lemma}
\label{lem:cauchy}
For any pair $(X,\chi)$ and any $p \geq 1$, ${\mathcal N}^{p,C}_{{\mathbb M}}(X,\chi)
\leq K^p \| \chi \|_{2}^{p}$. 
\end{Lemma}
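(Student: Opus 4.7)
The plan is to unpack the definition of ${\mathcal N}^{p,C}_{{\mathbb M}}(X,\chi)$ and immediately throw away the truncation by observing $\Lambda \wedge [C(\cdots)] \leq \Lambda$, since we only seek an upper bound depending on $K$ rather than on $C$ or $\alpha$. This reduces the task to bounding
\begin{equation*}
{\mathbb M}\Bigl[ \bigl( \hat{\mathbb E} \bigl[ \Lambda \, \hat{\mathbb E}_{t}\bigl[|\langle \chi \rangle|^2\bigr]^{1/2} \bigr] \bigr)^p \Bigr]
\end{equation*}
uniformly over $\Lambda \in \cJ$.

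Next I would apply Cauchy--Schwarz to the inner $\hat{\mathbb E}$ integral, splitting the product into $\Lambda$ and the conditional $L^2$-norm of $\langle \chi \rangle$. This gives
\begin{equation*}
\hat{\mathbb E}\bigl[ \Lambda \, \hat{\mathbb E}_{t}[|\langle \chi \rangle|^2]^{1/2} \bigr]
\leq \hat{\mathbb E}[\Lambda^2]^{1/2} \cdot \hat{\mathbb E}\bigl[ \hat{\mathbb E}_{t}[|\langle \chi \rangle|^2] \bigr]^{1/2}.
\end{equation*}
By the tower property on $(\hat\Omega,\hat\cA,\hat\P)$ together with the fact that $\langle \chi \rangle$ is a copy of $\chi$, the second factor equals $\|\chi\|_{2}$.

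For the first factor, I invoke directly item \textit{(ii)} of Definition \ref{def:admissible}, which guarantees $\hat{\mathbb E}[\Lambda^2]^{1/2} \leq K$ almost surely on $\Omega$. Chaining the two bounds, the integrand inside ${\mathbb M}[\,\cdot\,]$ is dominated by the deterministic quantity $(K\|\chi\|_{2})^p$, so ${\mathbb M}[(\cdots)^p] \leq K^p \|\chi\|_{2}^p$; taking the essential supremum over $\Lambda \in \cJ$ yields the claim. There is no real obstacle here: the only subtle point is noticing that the combined use of Cauchy--Schwarz in $\hat{\mathbb E}$ and the uniform bound from Definition \ref{def:admissible}\textit{(ii)} fully bypasses the complicated truncation term and the dependence on $X$, which is why the estimate is stated with the clean constant $K$.
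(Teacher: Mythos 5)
Your argument is correct and is precisely the paper's proof, which consists of exactly the two ingredients you use: Cauchy--Schwarz on $\hat{\mathbb E}$ (noting $\hat{\mathbb E}[\hat{\mathbb E}_{t}[|\langle\chi\rangle|^2]]=\|\chi\|_{2}^2$) together with the almost-sure bound $\hat{\mathbb E}[\Lambda^2]^{1/2}\leq K$ from Definition \ref{def:admissible}\textit{(ii)}. Discarding the truncation term at the outset is exactly the right move, since the bound is meant to be uniform in $\Lambda\in\cJ$ and independent of $C$ and $\alpha$.
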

We deduce that 
\begin{Lemma}
\label{lem:apriori}
For any $p \geq 1$, there exist two constants $\Gamma_{p}:=\Gamma_{p}(K) \geq 1$ 
and $C  >0$ ($C$ independent of $p$), such that, 
for $T \leq \gamma \leq 1/\Gamma_{p}$ and for any solution $\vartheta$ to a system
of the same type as
 \eqref{eq:sys:linear:mkv:2}, it holds 
\begin{equation} 
\label{eq:lem:apriori:1}
\begin{split}
{\mathcal M}_{{\mathbb E}_{t}}^{2p}(\vartheta)
&\leq  \Gamma_{p}
 \biggl[ \vert \eta \vert^{2p}
+ \gamma^{1/2}
{\mathcal M}_{{\mathbb E}_{t}}^{2p}(\bar \vartheta)
+ 
{\mathcal R}_{a}^{2p} 
\\
&\hspace{15pt} 
+ \gamma^{1/2} \Bigl\{  {\mathcal N}_{\E_{t}}^{2p,C}\bigl(\hat X_{T},\hat \cX_{T}\bigr)  
+
 \sup_{s \in [t,T]}  {\mathcal N}_{\E_{t}}^{2p,C}\Bigl(\check {\theta}_{s}^{(0)},
 \bigl({\mathcal M}_{{\mathbb E}_{t}}^2(\check \vartheta^{(0)}) \bigr)^{1/2}
 \Bigr)  \Bigr\}
 \biggr].
\end{split}
\end{equation}
In particular (redefining the value of $\Gamma_{p}$ if necessary), 
\begin{equation}
\label{eq:lem:apriori:2}
\begin{split}
{\mathcal M}_{{\mathbb E}_{t}}^{2p}(\vartheta) 
&\leq  \Gamma_{p} \Bigl[
\bigl(\vert \eta \vert + \NL{2}{\eta}\bigr)^{2p}
+{\mathcal R}_{a}^{2p} 
+ \esp{{\mathcal R}_{a}^2}^{p} 
\\
&\hspace{15pt}
 +
  \gamma^{1/2}
\Bigl( {\mathcal M}_{{\mathbb E}_{t}}^{2p}(\bar\vartheta)
+ \bigl[{\mathcal M}_{{\mathbb E}}^2(\hat \vartheta^{(0)}) \bigr]^{p}
+ \bigl[{\mathcal M}_{{\mathbb E}}^2(\check \vartheta^{(0)}) \bigr]^{p}
\Bigr)
\Bigr].
\end{split}
\end{equation}
\end{Lemma}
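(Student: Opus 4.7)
The strategy is to adapt the standard a priori estimate for fully coupled FBSDE systems, in the spirit of \cite{del02}, to the McKean--Vlasov setting encoded by the decomposition \eqref{splitting:H}--\eqref{eq:mkv:stability:decomposition}. I would first apply It\^o's formula to $|\cY_s|^{2p}$ together with the Burkholder--Davis--Gundy inequality on the backward part of \eqref{eq:sys:linear:mkv:2}; in parallel, the forward SDE is treated by the classical moment estimates. This produces an upper bound on $\cM^{2p}_{\E_t}(\vartheta)$ by $|\eta|^{2p}$, by the affine residues which reassemble into $\cR_a^{2p}$, and by integrals of $|H(r,\cdot)(\bar\vartheta_r,\langle\check\vartheta^{(0)}_r\rangle)|$ with $H$ matching $B,\Sigma,F,G$. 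Each such integrand is then split via \eqref{splitting:H}--\eqref{eq:mkv:stability:decomposition} into three pieces: the linear-in-$\vartheta$ part $h_\ell\,\bar\vartheta_r$, bounded pointwise by $K|\bar\vartheta_r|$ thanks to \eqref{assumption:mkv:1}; the McKean--Vlasov piece $\hat\E[\hat H_\ell\,\langle\check\vartheta^{(0)}_r\rangle]$; and the affine residue already accounted for in $\cR_a^{2p}$.

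The crux is the McKean--Vlasov piece, and I would bound it by two complementary Cauchy--Schwarz arguments performed on $\hat\Omega$. On the one hand, Definition \ref{def:admissible} provides some $\Lambda\in\cJ$ with $\hat\E[|\hat H_\ell|^2]^{1/2}\le\Lambda$, giving a first estimate $\Lambda\cdot\hat\E_t[|\langle\check\vartheta^{(0)}\rangle|^2]^{1/2}$. On the other hand, the polynomial growth \eqref{assumption:mkv:2} yields an alternative bound by $C(1+\hat\E_t[|\langle V\rangle|^{2\alpha+2}]^{1/2}+\|V\|_2^{\alpha+1})\hat\E_t[|\langle\check\vartheta^{(0)}\rangle|^2]^{1/2}$. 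Taking the minimum of these two estimates reproduces exactly the integrand defining $\cN^{p,C}_{\E_t}$ in \eqref{stab:notations:1}. The terminal contribution coming from $G$ then becomes $\cN^{2p,C}_{\E_t}(\hat X_T,\hat\cX_T)$, while the running contributions coming from $B,\Sigma,F$, estimated pathwise by pulling out a supremum over $s\in[t,T]$ and using $T\leq \gamma$, become $\gamma^{1/2}\sup_s\cN^{2p,C}_{\E_t}(\check\theta^{(0)}_s,\cdot)$ with argument $[\cM^2_{\E_t}(\check\vartheta^{(0)})]^{1/2}$, matching \eqref{eq:lem:apriori:1}.

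To close the estimate, the self-referential contributions generated by the linear-in-$\vartheta$ pieces integrate to $K\gamma\sup_r|\bar\vartheta_r|$-type quantities; raised to the $2p$-th power and using $T\le\gamma$, they are $\le C_p K^{2p}\gamma^{1/2}\cM^{2p}_{\E_t}(\bar\vartheta)$. The particular $\gamma^{1/2}$-weighting in the definition of $\cM^{2p}_{\E_t}$ is precisely what allows, when $\bar\vartheta=\vartheta$ and $\gamma\le 1/\Gamma_p$ is chosen with $\Gamma_p$ large enough depending only on $K$ and $p$, the resulting term to be absorbed into the left-hand side, yielding \eqref{eq:lem:apriori:1}. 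Estimate \eqref{eq:lem:apriori:2} will then follow by invoking Lemma \ref{lem:cauchy} to majorize the $\cN^{2p,C}_{\E_t}$ quantities by $K^{2p}$ times plain $L^2$-norms of the second argument, taking $\E$, using Jensen's inequality to convert $\E[\sup|\hat\cX|^{2p}]$ into $[\cM^2_\E(\hat\vartheta^{(0)})]^p$ (and similarly for $\check\vartheta^{(0)}$), and finally invoking Lemma \ref{le app cont} to bound $\|\hat X_T\|_2$ and $\|X\|_2^{\alpha+1}$ by polynomial functions of $\|\eta\|_2$. The main technical obstacle lies in step two: tracking the minimum of the two Cauchy--Schwarz bounds so that it matches the $\cN^{p,C}_{\E_t}$ integrand exactly, while keeping the distinction between $\E_t$ and $\hat\E_t$ careful at each step and ensuring that the constant $\Gamma_p$ governing the smallness of $\gamma$ depends only on $K$ (the bound arising from \eqref{assumption:mkv:1}) and not on the finer constants $C,\alpha$ of \eqref{assumption:mkv:2}--\eqref{assumption:mkv:3}.
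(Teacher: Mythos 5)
Your overall strategy matches the paper's: reduce to a standard FBSDE moment estimate, split the coefficients via \eqref{splitting:H}--\eqref{eq:mkv:stability:decomposition}, bound the McKean--Vlasov piece by taking the minimum of the two Cauchy--Schwarz bounds coming from \eqref{assumption:mkv:1} (admissibility of $\cJ$) and \eqref{assumption:mkv:2} (polynomial growth), and recognize that this minimum is exactly the ${\mathcal N}^{p,C}_{\E_t}$ integrand. The paper does not re-derive the base stability estimate via It\^o + BDG as you propose; it instead absorbs all the $H_\ell$ pieces into the $H_a$-type remainders and invokes the stability result for standard linear FBSDEs of \cite{del02} to obtain $\Gamma_p[\vert\eta\vert^{2p}+\cR_a^{2p}]$ directly. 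These two routes are essentially equivalent, so this is just a difference in presentation.

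However, your description of the closing step is off in a way worth flagging. You attribute the need for $\gamma\leq 1/\Gamma_p$ to the absorption of $\gamma^{1/2}{\mathcal M}^{2p}_{\E_t}(\bar\vartheta)$ ``when $\bar\vartheta=\vartheta$''. But \eqref{eq:lem:apriori:1} holds without any such identification, and the term $\gamma^{1/2}{\mathcal M}^{2p}_{\E_t}(\bar\vartheta)$ remains on the right-hand side; the absorption of that term in the special case $\bar\vartheta=\vartheta$ is what happens later in Corollary \ref{co:bound:sys:lin:mkv:easy}, not here. What actually forces $\gamma$ to be small in the proof of \eqref{eq:lem:apriori:1} is the \emph{terminal-condition} coupling $g_\ell(X_T,\langle\hat X_T\rangle)\cX_T$: this feeds $\cX_T$ (from the $\vartheta$ system itself, never from $\bar\vartheta$) into $\cY$ and $\cZ$, and since the latter carry the weight $\gamma^{1/2}$ in ${\mathcal M}^{2p}_{\E_t}(\vartheta)$ while $\cX$ carries weight $1$, the resulting contribution $\gamma^{1/2}K^{2p}{\mathcal M}^{2p}_{\E_t}(\vartheta)$ is absorbed once $\gamma$ is small. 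This is precisely the step encapsulated in the paper's appeal to \cite{del02}. Finally, the mention of Jensen's inequality and of Lemma \ref{le app cont} in your derivation of \eqref{eq:lem:apriori:2} is unnecessary: Lemma \ref{lem:cauchy} bounds each ${\mathcal N}^{2p,C}_{\E_t}$ term by $K^{2p}$ times the $L^2$-norm of its second argument, so the conditioning on $\hat X_T$ and $\check\theta^{(0)}$ drops out and no a priori moment estimate on the inputs is required; the extra $\|\eta\|_2^{2p}$ and $\esp{\cR_a^2}^p$ in \eqref{eq:lem:apriori:2} are simply included for convenience (they dominate nothing new at this stage) in view of their later use.
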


\begin{proof}
We make use of standard results for solutions
of an FBSDE. 
We can indeed start with the trivial case when the coefficients 
$B_{\ell}$, $\Sigma_{\ell}$ and $F_{\ell}$ are null and 
$\hat{G}_{\ell}$ is also null (see 
\eqref{splitting:H}
 and \eqref{eq:mkv:stability:decomposition} for the notations).
Then, \eqref{eq:sys:linear:mkv:2} reads  
as a system driven by the linear part $g_{\ell}$ -- that
appears in the decomposition 
\eqref{eq:mkv:stability:decomposition} of $G$ -- plus a 
remainder involving $B_{a}$, $\Sigma_{a}$, $F_{a}$ and $G_{a}$.
Without any McKean-Vlasov interaction, 
\eqref{eq:lem:apriori:1} follows from stability estimates 
for standard linear FBSDEs.
For instance, following
Delarue \cite{del02}, we get that, for any 
$p \geq 1$, we can find $\Gamma_{p}:=\Gamma_{p}(K) >0$ (the value of which is allowed to increase from line to line), such that for $\gamma \leq 1/\Gamma_{p}$,
\eqref{eq:lem:apriori:1} holds, but with a simpler right-hand side
just consisting of $\Gamma_{p}[ \vert \eta \vert^{2p} + \cR_{a}^{2p}]$.  

In the case when 
$B_{\ell}$, $\Sigma_{\ell}$, $F_{\ell}$  are non-zero, we view them, when taken along 
the values of 
$(\bar{\theta},\check{\theta}^{(0)},\bar{\vartheta},\check{\vartheta}^{(0)})$
as parts of $B_{a}$, $\Sigma_{a}$ and $F_{a}$.
Similarly, we can see 
$\hat{G}_{\ell}$, when taken along the values of $(X_{T},\langle \hat{X}_{T} \rangle)$, 
as a part of $G_{a}$. We are thus led back to the previous case, but with
 a generalized version of the remainder term ${\mathcal R}_{a}$.
In order to complete the proof, it suffices to bound this remainder in $L^{2p}$. 
The analysis of the remainder may be split into three pieces: One first term involves
$b_{\ell}$, $\sigma_{\ell}$ and $f_{\ell}$; another one involves 
$\hat{B}_{\ell}$, $\hat{\Sigma}_{\ell}$, $\hat{F}_{\ell}$
and
$\hat{G}_{\ell}$; the last one involves $B_{a}$, $\Sigma_{a}$, $F_{a}$
and $G_{a}$ and corresponds to the original ${\mathcal R}_{a}$. As a final bound, we get
\begin{align}
&{\mathcal M}_{{\mathbb E}_{t}}^{2p}(\vartheta)
\leq  \Gamma_{p}
\vert \eta \vert^{2p}
\\
&\hspace{5pt}
+ \Gamma_{p} \E_{t}\biggl[ \biggl( 
\int_{t}^T
\bigl\vert  
(b_{\ell},f_{\ell})(\bar{\theta}_{s},\langle \check{\theta}_{s}^{(0)} \rangle)
 \bar{\vartheta}_{s}  \bigr\vert \ud s
\biggr)^{2p} + 
\biggl( 
\int_{t}^T
\bigl\vert  
\sigma_{\ell}(\bar{\theta}_{s}^{(0)},\langle \check{\theta}_{s}^{(0)}\rangle)
 \bar {\vartheta}_{s}^0  \bigr\vert^2 \ud s
\biggr)^{p} \biggr]
\label{eq:apriori:mkv:1}
\\
&\hspace{5pt} +
\Gamma_{p} \gamma^{1/2} \biggl[
{\mathbb E}_{t} \Bigl[ \bigl\vert 
\hat{\mathbb E} \bigl[
\hat{G}_{\ell}(X_{T},\langle \hat X_{T}\rangle)\langle \hat {\cX}_{T} \rangle
\bigr]
\bigr\vert^{2p} \Bigr] \nonumber
\\
&\hspace{30pt} 
 + \gamma^{p/2} \esssup_{s \in [t,T]}
 {\mathbb E}_{t} \Bigl[
\bigl\vert 
\hat{\mathbb E} \bigl[
(\hat{B}_{\ell},\hat{F}_{\ell},\hat{\Sigma}_{\ell})(\bar{\theta}_{s},\langle 
\check{\theta}_{s}^{(0)} \rangle)\langle 
\check {\vartheta}^{(0)}_{s} \rangle
\bigr] 
\bigr\vert^{2p} \Bigr]   \biggr]
\label{eq:apriori:mkv:2}
  \\
&\hspace{5pt} 
 + \Gamma_{p}
{\mathbb E}_{t} \biggl[
\gamma^{1/2} \big\vert G_{a}(T)  \big\vert^{2p}
+ \biggl(
\int_{t}^T \bigl\vert  
(B_{a},F_{a})(s) \bigr\vert   \ud s \biggr)^{2p} 
+ \biggl(
\int_{t}^T \bigl\vert  
\Sigma_{a}(s) \bigr\vert^2   \ud s
\biggr)^{p}
\biggr].
\nonumber
\end{align}
Observe that, in
\eqref{eq:apriori:mkv:2}, we used the supremum to get $T^{p}$, which we bounded by 
$\gamma^{1/2}$ times $\gamma^{p/2}$. 

Making use of \eqref{assumption:mkv:1}, we easily handle 
the term \eqref{eq:apriori:mkv:1}. In \eqref{eq:lem:apriori:1}, it gives the contribution of the form $\gamma^{1/2}
{\mathcal M}_{{\mathbb E}_{t}}^{2p}(\bar\vartheta)$, the $\gamma^{1/2}$
in front of ${\mathcal M}_{{\mathbb E}_{t}}$ and the $\gamma^{1/2}$ in the definition 
of ${\mathcal M}_{{\mathbb E}_{t}}^{2p}(\bar\vartheta)$ arising as follows. 
When handling $(b_{\ell},f_{\ell})$, we can let a power 2 enter 
inside the time integral. This introduces the ${\mathcal H}^2$-norm of $\bar\cZ$
times an additional $T$ less than $\gamma$, which can be split into $\gamma^{1/2}$ and $\gamma^{1/2}$. 

Next we discuss the second term in \eqref{eq:apriori:mkv:2}.
For this we use
\eqref{assumption:mkv:1} and
\eqref{assumption:mkv:2}.
With the shortened notation $H=(B,F,\Sigma)$, 
we can indeed either say that
$\hat{H}_{\ell}(\bar{\theta}_{s},\langle \check{\theta}_{s}^{(0)}\rangle)$
is bounded in $L^2$ or use the polynomial growth assumption. We get 
\begin{equation*}
\Bigl\vert 
\hat{\mathbb E} \Bigl[
\hat{H}_{\ell}(\bar{\theta}_{s},\langle \check{\theta}_{s}^{(0)}\rangle)\langle 
\check {\vartheta}_{s}^{(0)} \rangle
 \Bigr] \Bigr\vert
\leq  
\hat{\mathbb E} \Bigl[ \Bigl\{ {\Lambda}_{s}  
\wedge \Bigl( C+ C \vert \langle \check{\theta}_{s}^{(0)} \rangle \vert^{\alpha+1}
+ C \| \check{\theta}_{s}^{(0)} \|_{2}^{\alpha+1}  \Bigr) \Bigr\} 
 \bigl\vert 
\langle \check{\vartheta}_{s}^{(0)} 
\rangle
 \bigr\vert 
\Bigr],
\end{equation*}
where $\Lambda_{s}$ is a 
shortened notation for 
$\vert \hat{H}_{\ell}(\bar{\theta}_{s},\langle \check{\theta}_{s}^{(0)}\rangle)\vert$. 
Now, using the conditional Cauchy-Schwarz inequality
and the obvious bound ${\mathbb E}_{t} [ S_{1} \wedge S_{2}
] \leq \E_{t}[S_{1}] \wedge \E_{t}[ S_{2}]$ for two 
non-negative random variables $S_{1}$ and $S_{2}$, we obtain:
\begin{equation*}
\begin{split}
&\Bigl\vert 
\hat{\mathbb E} \Bigl[
\hat{H}_{\ell}(\bar{\theta}_{s},\langle \check{\theta}_{s}^{(0)}\rangle)\langle 
\check {\vartheta}_{s}^{(0)}
 \rangle
 \Bigr] \Bigr\vert
\\
&\hspace{15pt} \leq  
\hat{\mathbb E} \Bigl[ \Bigl\{ \hat{\mathbb E}_{t}\bigl[ {\Lambda}_{s}^2 
\bigr]^{1/2} 
\wedge \Bigl( C+ C \hat{\mathbb E}_{t } \bigl[ 
\vert \langle \check{\theta}_{s}^{(0)} \rangle \vert^{2\alpha+2} \bigr]^{1/2} 
+ C \| \check{\theta}_{s}^{(0)} \|_{2}^{\alpha+1} \Bigr) \Bigr\} 
\hat{\mathbb E}_{t}\Bigl[
 \bigl\vert 
\langle \check{\vartheta}_{s}^{(0)} \rangle 
 \bigr\vert^2 \Bigr]^{1/2} \Bigr].
\end{split}
\end{equation*}
Taking the power $2p$ and the conditional expectation $\E_{t}$, we get 
a term which is less than ${\mathcal N}^{2p,C}_{\E_{t}}(\check{\theta}_{s}^{(0)},
\check{\vartheta}_{s}^{(0)})$. Multiplying by 
$\gamma^{p/2}$ (see the prefactor in \eqref{eq:apriori:mkv:2}), we get that it is less than
${\mathcal N}^{2p,C}_{\E_{t}}(\check{\theta}_{s}^{(0)},
[ \vert 
\check{\cX}_{s} 
\vert^2 
+  \gamma^{1/2} \vert 
\check{\cY}_{s} 
\vert^2
]^{1/2} 
)$
which is less than 
${\mathcal N}^{2p,C}_{\E_{t}}(\check{\theta}_{s}^{(0)},
({\mathcal M}_{{\mathbb E}_{t}}^2(\check \vartheta^{(0)}) )^{1/2})$.
Of course, we can use the same kind of argument for the first term in 
\eqref{eq:apriori:mkv:2} and get 
${\mathcal N}^{2p,C}_{\E_{t}}(\hat X_{T},\hat \cX_{T})$
as resulting bound. 

The second claim follows from Lemma \ref{lem:cauchy}. \qed
\end{proof}
\vspace{5pt}

In particular, we have the following useful result for systems of the form 
\eqref{eq:sys:linear:mkv} obtained by considering 
$\vartheta \equiv \bar{\vartheta}$ and 
$\hat{\vartheta} \equiv \check{\vartheta}$
in \eqref{eq:lem:apriori:2} 
 and setting $\gamma$ 
small enough.

\begin{Corollary}
 \label{co:bound:sys:lin:mkv:easy}
For any $p \geq 1$, there exists a constant $\Gamma_{p}:=\Gamma_{p}(K) \geq 
1$ such that, 
for $T \leq \gamma \leq 1/\Gamma_{p}$ and for any solution $\vartheta$ to a system
of the same type as
 \eqref{eq:sys:linear:mkv}, it holds
 \begin{equation}
\label{eq:co:bound:sys:lin:mkv:easy}
\begin{split}
&{\mathcal M}_{{\mathbb E}_{t}}^{2p}( \vartheta )
\leq \Gamma_{p} \Bigl[ \bigl( \eta + \|\eta\|_{2} \bigr)^{2p}
+
{\mathcal R}_{a}^{2p} 
+ \esp{{\mathcal R}_{a}^2}^p + \gamma^{1/2} \bigl[{\mathcal M}_{{\mathbb E}}^2(\hat \vartheta^{(0)}) \bigr]^{p}
\Bigr].
\end{split}
\end{equation}
When $\vartheta \equiv \hat \vartheta$, we have (modifying the constant 
$\Gamma_{p}$ if necessary):
 \begin{equation}
\label{eq:co:bound:sys:lin:mkv:easy:2}
\begin{split}
&{\mathcal M}_{{\mathbb E}_{t}}^{2p}( \vartheta )
\leq \Gamma_{p} \Bigl[ \bigl( \eta + \|\eta\|_{2} \bigr)^{2p}
+
{\mathcal R}_{a}^{2p} 
+ \esp{{\mathcal R}_{a}^2}^p  
\Bigr].
\end{split}
\end{equation}
\end{Corollary}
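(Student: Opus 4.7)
Plan. To establish \eqref{eq:co:bound:sys:lin:mkv:easy}, I would first invoke the general bound \eqref{eq:lem:apriori:2} of Lemma \ref{lem:apriori} in the particular situation $\bar\vartheta=\vartheta$, $\check\vartheta=\hat\vartheta$ prescribed by the system \eqref{eq:sys:linear:mkv}. The only new term relative to the right-hand side of \eqref{eq:co:bound:sys:lin:mkv:easy} is a contribution of the form $\Gamma_p\,\gamma^{1/2}\,{\mathcal M}_{\E_t}^{2p}(\vartheta)$, which can be absorbed into the left-hand side after requiring $\Gamma_p\,\gamma^{1/2}\le 1/2$ (i.e.\ further restricting $\gamma\le 1/\Gamma_p$ with a possibly larger $\Gamma_p$). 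Redefining $\Gamma_p$ at the end yields \eqref{eq:co:bound:sys:lin:mkv:easy}.

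The second claim \eqref{eq:co:bound:sys:lin:mkv:easy:2} deals with the fully coupled case $\vartheta=\hat\vartheta$ and requires eliminating the remaining $\gamma^{1/2}[{\mathcal M}_\E^2(\vartheta^{(0)})]^p$ term on the right of \eqref{eq:co:bound:sys:lin:mkv:easy}. The plan is a bootstrap via the index $p$. First, apply the already established \eqref{eq:co:bound:sys:lin:mkv:easy} for $p=1$; taking the full expectation $\E$ on both sides, using $\E\bigl[{\mathcal M}_{\E_t}^2(\vartheta)\bigr]={\mathcal M}_\E^2(\vartheta)$ and the trivial domination ${\mathcal M}_\E^2(\vartheta^{(0)})\le {\mathcal M}_\E^2(\vartheta)$, the coupling term can be absorbed once again for $\gamma$ small enough. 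The crucial feature of choosing $p=1$ here is the elementary bound $\E(|\eta|+\|\eta\|_2)^2\le 4\|\eta\|_2^2$, so that no higher moment of $\eta$ is introduced along the way. This produces the clean a priori estimate
\[
{\mathcal M}_\E^2(\vartheta)\;\le\;C\bigl(\|\eta\|_2^2+\E[\cR_a^2]\bigr).
\]

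Raising this to the $p$-th power and using $\|\eta\|_2^{2p}\le (|\eta|+\|\eta\|_2)^{2p}$ pathwise together with $(a+b)^p\le 2^{p-1}(a^p+b^p)$, one gets
\[
\bigl[{\mathcal M}_\E^2(\vartheta^{(0)})\bigr]^p\;\le\;C'\bigl((|\eta|+\|\eta\|_2)^{2p}+\E[\cR_a^2]^p\bigr).
\]
Reinjecting this pathwise control into \eqref{eq:co:bound:sys:lin:mkv:easy} and absorbing all $\gamma^{1/2}$-prefactors into a final constant $\Gamma_p$ gives \eqref{eq:co:bound:sys:lin:mkv:easy:2}. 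The only subtlety to keep track of, rather than a genuine obstacle, is precisely this use of the $p=1$ case as an intermediate step: attempting to apply $\E$ directly at the level of general $p$ would bring in $\E(|\eta|+\|\eta\|_2)^{2p}$ and $\E\cR_a^{2p}$, which are strictly stronger than the pathwise quantities actually appearing in the statement of \eqref{eq:co:bound:sys:lin:mkv:easy:2}.
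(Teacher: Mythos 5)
Your proposal is correct and follows essentially the same route as the paper: the first bound comes from \eqref{eq:lem:apriori:2} with $\bar\vartheta\equiv\vartheta$, $\check\vartheta\equiv\hat\vartheta$ and absorption of the $\gamma^{1/2}{\mathcal M}_{\E_t}^{2p}(\vartheta)$ term, while the second is obtained exactly as in the paper by specializing to $p=1$, taking the expectation to get ${\mathcal M}_{\E}^{2}(\vartheta)\leq C(\|\eta\|_{2}^{2}+\E[\cR_{a}^{2}])$, and plugging that back into \eqref{eq:co:bound:sys:lin:mkv:easy}. Your remark on why the $p=1$ intermediate step is needed (to avoid introducing $\E[(|\eta|+\|\eta\|_2)^{2p}]$) is an accurate reading of the point the paper leaves implicit.
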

\begin{proof}
Inequality \eqref{eq:co:bound:sys:lin:mkv:easy} directly follows from 
\eqref{eq:lem:apriori:2}. 
To get \eqref{eq:co:bound:sys:lin:mkv:easy:2}, we choose 
$p=1$ and then take the expectation. 
For $\gamma$ small enough, we obtain
${\mathcal M}_{{\mathbb E}}^{2}( \vartheta )
\leq \Gamma_{1} ( \|\eta\|_{2}^{2} + \E[\cR_{a}^2])$ (up to a new value for 
$\Gamma_{1}$). Plugging the bound into 
\eqref{eq:co:bound:sys:lin:mkv:easy}, we deduce that
\eqref{eq:co:bound:sys:lin:mkv:easy:2} holds. \qed
\end{proof}

\subsubsection{Stability estimates} 
\label{subsub:stability}
The next step is to compare two solutions 
of \eqref{eq:sys:linear:mkv:2}
$\vartheta$ and $\vartheta'$ driven by 
two different sets of inputs $(\hat{\theta},\bar{\theta},\check{\theta},\hat{\vartheta},\bar{\vartheta},\check{\vartheta})$ and
$(\hat{\theta}',\bar{\theta}',\check{\theta}',\hat{\vartheta}',\bar{\vartheta}',\check{\vartheta}')$
 but with the same starting point $\eta$.
Throughout \S \ref{subsub:stability}, $\cJ$
is an admissible class for $(\theta,\hat{\theta)}$ and $(\bar{\theta},\check{\theta})$.

Given an integer $p \geq 1$, 
define similar notations 
to \eqref{assumption:mkv:4} and
\eqref{stab:notations:1} (but without $\gamma^{1/2}$ in front of 
the terms in ${\mathcal Y}$):
\begin{equation}
\label{eq:mkv:stability:Psi}
\begin{split}
 &\Phi_{\alpha}\bigl(\hat \vartheta^{(0)},\hat \vartheta^{(0)\prime}\bigr) :=  \sup_{s \in [t,T]}\bigl\{
\Phi_{\alpha} \bigl(\hat \vartheta^{(0)}_{s},\hat \vartheta^{(0)\prime}_{s} \bigr) \bigr\} 
\\
&\bar{\mathcal M}^{2p}(\vartheta,\hat{\vartheta})
:=
\sup_{s \in [t,T]}
\Bigl\{
{\mathbb E}_{t} \bigl[ 
| \cX_{s}
|^{2p} + | \cY_{s}
|^{2p} \bigr]
+
\| \hat \cX_{s}
 \|_{2}^{2p} + \|\hat \cY_{s}\|_{2}^{2p}
 \Bigr\}
+  \NHt{2p}{\cZ}^{2p},
 \\
&\bar{\mathcal M}^{2p}\bigl((\vartheta,\hat{\vartheta}),(\vartheta',\hat{\vartheta}') \bigr)
:= \bar{\mathcal M}^{2p}\bigl(\vartheta - \vartheta',\hat{\vartheta}- \hat{\vartheta}' \bigr)
+ 
\Phi_{\alpha}^{2p}\bigl(\hat \vartheta^{(0)},\hat \vartheta^{(0)\prime}\bigr), 
\\[6pt]
&\bar{\mathcal M}^{2p}
\llbracket \vartheta \rrbracket :=
\bar{\mathcal M}^{2p}\bigl(\vartheta,{\vartheta}\bigr), \quad 
\bar{\mathcal M}^{2p}
\llbracket \vartheta,\vartheta' \rrbracket :=
\bar{\mathcal M}^{2p}\bigl((\vartheta,{\vartheta}),(\vartheta',{\vartheta}') \bigr),
\end{split}
\end{equation}
and denote by $\Delta {\mathcal R}_{a}^{2p}$ the quantity (recall 
\eqref{stab:notations:2} for
the definition of ${\mathcal R}_{a}^{2p}$):
\begin{equation}
\label{eq:mkv:stability:R}
\begin{split}
&{\Delta {\mathcal R}_{a}^{2p}} :=
{\mathbb E}_{t} \biggl[
\gamma^{1/2} \big\vert G_{a}(T) - G_{a}'(T) \big\vert^{2p}
\\
&\hspace{15pt} + \biggl(
\int_{t}^T \bigl\vert  
(B_{a}-B_{a}',F_{a}-F_{a}')(s) \bigr\vert   \ud s \biggr)^{2p} 
+ 
\biggl(
\int_{t}^T \bigl\vert (\Sigma_{a} - \Sigma_{a}')(s) \bigr\vert^2 \ud s
\biggr)^p
\biggr].
\end{split}
\end{equation}
(The notations $B_{a}'$, $F_{a}'$, $\Sigma_{a}'$
and $G_{a}'$ refer to the fact, along the processes labelled with 
a `prime', the remainders in the decomposition of the coefficients 
may be different.)
Then, we have

\begin{Lemma} 
\label{lem:stability}
For any $p \geq 1$,
there exist three constants $C$ (independent of $p$), 
$\Gamma_{p}:=\Gamma_{p}(K) \geq 1$ and $C_{p} >0$, such that, for $T \leq \gamma \leq 1/\Gamma_{p}$, 
\begin{align}
{\mathcal M}_{{\mathbb E}_{t}}^{2p}
\bigl( \vartheta - \vartheta'\bigr)
&\leq  \Gamma_{p} \gamma^{1/2} \Bigl\{
{\mathcal M}_{{\mathbb E}_{t}}^{2p}
\bigl( \bar\vartheta -\bar\vartheta' \bigr) +
 {\mathcal N}_{\E_{t}}^{2p,C}\bigl(\hat X_{T},\hat \cX_{T}- \hat \cX_{T}'\bigr) \nonumber
\\
&\hspace{30pt}+ \sup_{s \in [t,T]} 
 {\mathcal N}_{\E_{t}}^{2p,C}\Bigl(
 \check{\theta}_{s}^{(0)}
 , \bigl({\mathcal M}_{{\mathbb E}_{t}}^2(\check \vartheta^{(0)} - \check {\vartheta}^{(0)\prime}) \bigr)^{1/2} \Bigr)
\Bigr\} \label{eq:lem:stability:1}
\\
&\hspace{15pt} + C_{p}\Bigl[
\Bigl( \bar{\mathcal M}^{4p}(\vartheta',\hat{\vartheta}') + 
\bar{\mathcal M}^{4p}(\bar\vartheta',\check{\vartheta}') \Bigr)^{1/2} \nonumber
\\
&\hspace{30pt} \times \Bigl\{ 1 \wedge 
 \Bigl(
\bar{\mathcal M}^{4p}\bigl((\theta,\hat \theta),(\theta', \hat \theta')\bigr) + 
\bar{\mathcal M}^{4p}\bigl( (\bar\theta,\check \theta),(\bar \theta',\check \theta') \bigr)  
\Bigr) \Bigr\}^{1/2} 
+ \Delta {\mathcal R}_{a}^{2p}
 \Bigr]. \nonumber
\end{align}
 In particular, choosing $p=1$ and taking expectation, we have, for some constant $\Gamma':=\Gamma'(K)$ such that 
 $T \le \gamma \leq 1/\Gamma'$ and for some $C'>0$,
 \begin{align} 
{\mathcal M}_{{\mathbb E}}^{2}
\bigl( \vartheta -\vartheta'\bigr)
&\leq  \Gamma' \gamma^{1/2} 
\Bigl\{ 
{\mathcal M}_{{\mathbb E}}^{2}
\bigl( \bar\vartheta -\bar\vartheta' \bigr)
+ {\mathcal M}_{{\mathbb E}}^{2}
\bigl( \hat \vartheta^{(0)} -\hat \vartheta^{(0)\prime} \bigr)
+ {\mathcal M}_{{\mathbb E}}^{2}
\bigl( \check \vartheta^{(0)} -\check \vartheta^{(0)\prime} \bigr) \Bigr\} \nonumber
\\
&\hspace{15pt} + C' \E \Bigl[
\Bigl( \bar{\mathcal M}^{4}(\vartheta',\hat{\vartheta}') + 
\bar{\mathcal M}^{4}(\bar\vartheta',\check{\vartheta}') \Bigr)^{1/2}  \label{eq:lem:stability:3esp}
\\
&\hspace{30pt} \times \Bigl\{ 1 \wedge 
 \Bigl(
\bar{\mathcal M}^{4}\bigl((\theta,\hat \theta),(\theta', \hat \theta')\bigr) + 
\bar{\mathcal M}^{4}\bigl( (\bar\theta,\check \theta),(\bar \theta',\check \theta') \bigr)  
\Bigr) \Bigr\}^{1/2} 
+  \Delta {\mathcal R}_{a}^{2}  \nonumber
 \Bigr].
 \end{align}
\end{Lemma}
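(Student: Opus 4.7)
The strategy is to rewrite the system satisfied by $\vartheta-\vartheta'$ as a linear McKean-Vlasov FBSDE of the same form as \eqref{eq:sys:linear:mkv:2} but with zero initial condition, and then to invoke the a priori bound of Lemma \ref{lem:apriori}. For each coefficient $H$ matching $B$, $\Sigma$, $F$ or $G$, using the decomposition \eqref{splitting:H}--\eqref{eq:mkv:stability:decomposition}, I will split
\begin{align*}
&H(r,\bar\theta_r,\langle\check\theta_r^{(0)}\rangle)(\bar\vartheta_r,\langle\check\vartheta_r^{(0)}\rangle) - H(r,\bar\theta_r',\langle\check\theta_r^{(0)\prime}\rangle)(\bar\vartheta_r',\langle\check\vartheta_r^{(0)\prime}\rangle) \\
&\quad = H_\ell(\bar\theta_r,\langle\check\theta_r^{(0)}\rangle)(\bar\vartheta_r-\bar\vartheta_r',\langle\check\vartheta_r^{(0)}-\check\vartheta_r^{(0)\prime}\rangle) + \Delta H_a(r),
\end{align*}
where $\Delta H_a(r):=[H_a(r)-H_a'(r)] + \bigl[H_\ell(\bar\theta_r,\langle\check\theta_r^{(0)}\rangle)-H_\ell(\bar\theta_r',\langle\check\theta_r^{(0)\prime}\rangle)\bigr]\bigl(\bar\vartheta_r',\langle\check\vartheta_r^{(0)\prime}\rangle\bigr)$. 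Then $\vartheta-\vartheta'$ solves a linear system of the type \eqref{eq:sys:linear:mkv:2} with linear part coefficients frozen at $(\bar\theta,\check\theta)$, unknowns $(\vartheta-\vartheta',\hat\vartheta-\hat\vartheta')$, ``frozen inputs'' $(\bar\vartheta-\bar\vartheta',\check\vartheta-\check\vartheta')$, and source terms $\Delta B_a,\Delta\Sigma_a,\Delta F_a,\Delta G_a$.

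Applying estimate \eqref{eq:lem:apriori:1} of Lemma \ref{lem:apriori} to this difference system produces directly the first bracket on the right-hand side of \eqref{eq:lem:stability:1}, namely $\gamma^{1/2}\mathcal{M}_{\E_t}^{2p}(\bar\vartheta-\bar\vartheta')$ together with the two $\mathcal{N}_{\E_t}^{2p,C}$-terms, since the role of $\hat\vartheta^{(0)}$ in the lemma is now played by $\hat\vartheta^{(0)}-\hat\vartheta^{(0)\prime}$ (and analogously for $\check\vartheta^{(0)}$). The contribution of $H_a-H_a'$ to the new remainder gives exactly the $\Delta\mathcal{R}_a^{2p}$ of \eqref{eq:mkv:stability:R}. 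What remains, and what produces the $\bar{\mathcal M}^{4p}$-products in \eqref{eq:lem:stability:1}, is the cross term $[H_\ell-H_\ell'](\bar\vartheta_r',\langle\check\vartheta_r^{(0)\prime}\rangle)$.

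To control this cross term, I will use \eqref{assumption:mkv:3} to bound
$\bigl|H_\ell(\bar\theta_r,\langle\check\theta_r^{(0)}\rangle)-H_\ell(\bar\theta_r',\langle\check\theta_r^{(0)\prime}\rangle)\bigr|\leq C\bigl(|\bar\theta_r-\bar\theta_r'|+\Phi_\alpha(\check\theta_r^{(0)},\check\theta_r^{(0)\prime})\bigr)$ (with the analogous inequality for the non-local part involving $\hat H_\ell$, estimated after taking $\hat\E$). Multiplying by $|\bar\vartheta_r'|$ or $|\check\vartheta_r^{(0)\prime}|$, integrating in time, taking $\E_t$-moments and applying Cauchy--Schwarz produces a product of the form $\bigl[\bar{\mathcal M}^{4p}(\vartheta',\hat\vartheta')+\bar{\mathcal M}^{4p}(\bar\vartheta',\check\vartheta')\bigr]^{1/2}\cdot\bigl[\bar{\mathcal M}^{4p}\llbracket\theta,\theta'\rrbracket+\bar{\mathcal M}^{4p}\llbracket\bar\theta,\bar\theta'\rrbracket\bigr]^{1/2}$, the $\Phi_\alpha^{2p}$-contribution being absorbed into $\bar{\mathcal M}^{4p}\llbracket\cdot\rrbracket$ by definition \eqref{eq:mkv:stability:Psi}. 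The $1\wedge(\cdots)$ truncation in the statement comes from the fact that \eqref{assumption:mkv:1} also gives the uniform bound $2K$ on the same Lipschitz difference of $H_\ell$, so the minimum of the two estimates is what one actually uses.

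Finally, \eqref{eq:lem:stability:3esp} is obtained by specialising to $p=1$, taking expectations in \eqref{eq:lem:stability:1}, and using Lemma \ref{lem:cauchy} to bound $\E\bigl[\mathcal{N}_{\E_t}^{2,C}(\chi,\xi)\bigr]\leq K^2\|\xi\|_2^2$, so that the $\mathcal{N}$-terms collapse into $\mathcal{M}_\E^2(\hat\vartheta^{(0)}-\hat\vartheta^{(0)\prime})$ and $\mathcal{M}_\E^2(\check\vartheta^{(0)}-\check\vartheta^{(0)\prime})$. The hardest point in the argument is the cross term estimate above: the polynomial growth condition \eqref{assumption:mkv:2} on $\hat H_\ell$ rules out a direct Lipschitz bound in the measure direction, and this is precisely what forces (i) the introduction of the admissible class $\mathcal{J}$ and the $\wedge$-truncation in the definition of $\mathcal{N}_{\E_t}^{p,C}$, (ii) the appearance of a \emph{product} of $\bar{\mathcal M}^{4p}$-factors (one of which carries $\Phi_\alpha^{2p}$) rather than a single linear contraction term, and (iii) the need for the uniform integrability assumption \eqref{assumption:mkv:5} when one eventually passes to the limit in applications. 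Keeping track of these three features along the splitting of $\Delta H_a$ is where the careful bookkeeping lies.
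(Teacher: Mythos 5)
Your proposal is correct and follows essentially the same route as the paper's proof: the difference system is cast as a linear FBSDE of type \eqref{eq:sys:linear:mkv:2} with the remainders $\Delta H_a$ collecting both $H_a-H_a'$ and the cross terms $[H_\ell-H_\ell'](\bar\vartheta',\langle\check\vartheta^{(0)\prime}\rangle)$, Lemma \ref{lem:apriori} is applied to produce the $\gamma^{1/2}$-contraction and $\mathcal{N}^{2p,C}_{\E_t}$ terms, the cross terms are handled via \eqref{assumption:mkv:3} plus conditional Cauchy--Schwarz (with \eqref{assumption:mkv:1} supplying the $1\wedge(\cdots)$ truncation), and the $p=1$ estimate follows from Lemma \ref{lem:cauchy}. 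This matches the paper's argument, including the identification of the $\Phi_\alpha$-contribution inside $\bar{\mathcal M}^{4p}$.
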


\begin{Remark}
\label{rem:uniqueness}
Specialized to the case when 
$\theta \equiv \bar \theta \equiv \theta' \equiv \bar \theta'$,
$\hat \theta \equiv \check \theta \equiv \hat \theta' \equiv \check \theta'$,
$\vartheta \equiv \bar\vartheta \equiv \hat \vartheta \equiv \check \vartheta$,
$\vartheta' \equiv \bar\vartheta' \equiv \hat \vartheta' \equiv \check \vartheta'$ and $\Delta \cR_{a}^2 \equiv 0$,
Lemma \ref{eq:lem:stability:3esp}
reads as a uniqueness result to
\eqref{eq:sys:linear:mkv} in short time when $\vartheta \equiv \hat{\vartheta}$ therein. 
\end{Remark}

\begin{proof}
We start with the proof of \eqref{eq:lem:stability:1}.
We take benefit of the linearity to make the difference
of the two systems of the form 
\eqref{eq:sys:linear:mkv:2}
 satisfied by 
$\vartheta$ and $\vartheta'$.
The resulting system is linear in $\Delta \vartheta := \vartheta-\vartheta'$,
 $\Delta \hat{\vartheta} := \hat{\vartheta}-\hat{\vartheta}'$,
 $\Delta \bar{\vartheta} := \bar{\vartheta}-\bar{\vartheta}'$
 and  $\Delta \check{\vartheta} := \check{\vartheta}-\check{\vartheta}'$, but contains some remainders. 
We denote these remainders by 
$\Delta B_{a}$, $\Delta F_{a}$, 
$\Delta \Sigma_{a}$ and $\Delta G_{a}$. 
Using the notations introduced in \eqref{splitting:H}
and \eqref{eq:mkv:stability:decomposition}, they may be expanded as:
\begin{equation}
\label{eq:delta:H:a}
\begin{split}
&\Delta H_{a}(s) = 
\bigl( h_{\ell}(\bar{\theta}_{s},\langle \check{\theta}_{s}^{(0)} \rangle)
- h_{\ell}(\bar{\theta}_{s}',\langle \check{\theta}_{s}^{(0)\prime} \rangle) \bigr) \bar{\vartheta}_{s}'
\\
&\hspace{60pt}
+
\hat{\E} \bigl[
\bigl( 
\hat{H}_{\ell}(\bar{\theta}_{s},\langle \check{\theta}_{s}^{(0)} \rangle)
-  
\hat{H}_{\ell}(\bar{\theta}_{s}^{\prime},\langle \check{\theta}_{s}^{(0)\prime} 
\rangle)
\bigr)
\langle 
\check{\vartheta}_{s}^{(0) \prime} \rangle \bigr]
 + H_{a}(s) - H_{a}'(s),
\\
&\Delta G_{a}(T) = 
\bigl( g_{\ell}(X_{T},\langle \hat X_{T} \rangle)-g_{\ell}(X_{T}',\langle \hat X_{T}'\rangle) \bigr) \cX_{T}' 
\\
&\hspace{60pt} 
+ 
\hat{\mathbb E} \bigl[ \bigl( \hat{G}_{\ell}(X_{T},\langle \hat X_{T}\rangle)
- \hat{G}_{\ell}(X_{T}',\langle \hat X_{T}'\rangle) \bigr) \langle 
\hat {\cX}_{T}'  \rangle \bigr]
+ G_{a}(T) - G_{a}'(T),
\end{split}
\end{equation}
where $H$ may stand for $B$, $F$ or $\Sigma$,
with a corresponding meaning for 
$h_{\ell}$, $\hat{H}_{\ell}$ and $H_{a}$:
$h_{\ell}$ may be $b_{\ell}$, $f_{\ell}$, $\sigma_{\ell}$;
$\hat{H}_{\ell}$ may be $\hat{B}_{\ell}$, $\hat{F}_{\ell}$, or 
$\hat{\Sigma}_{\ell}$; $H_{a}$ may be $B_{a}$, $F_{a}$
or $\Sigma_{a}$; and $H_{a}'$ may be $B_{a}'$, $F_{a}'$
or $\Sigma_{a}'$. 
With these notations in hand, the terms $\Delta H_{a}(s)$
and $\Delta G_{a}(T)$ come
from (recall \eqref{splitting:H}):
\begin{equation}
\label{eq:lem:description:coef:1}
\begin{split}
&H\bigl(r,\bar{\theta}_r,\langle \check{\theta}_{r}^{(0)} \rangle\bigr)\bigl(\bar{\vartheta}_r,\langle 
\check{\vartheta}_r^{(0)}
\rangle\bigr) - H\bigl(r,\bar{\theta}_r',\langle \check{\theta}_{r}^{(0)\prime} \rangle\bigr)
\bigl(\bar{\vartheta}_r',\langle 
\check{\vartheta}_r^{(0)\prime}
\rangle\bigr)
\\
&\hspace{15pt}=
H_\ell\bigl(\bar{\theta}_r,\langle \check{\theta}_{r}^{(0)} \rangle\bigr)
\bigl(\Delta \bar{\vartheta}_r,\langle \Delta \check{\vartheta}_r^{(0)} \rangle \bigr) + \Delta H_a(r)\,,
\\
&G\bigl(X_T,\langle \hat X_{T} \rangle\bigr)\bigl(\cX_T,\langle \hat \cX_{T} \rangle \bigr) - 
G\bigl(X'_T,\langle \hat X_{T}' \rangle\bigr) \bigl(\cX'_T,\langle \hat \cX_{T}'\rangle \bigr) 
\\
&\hspace{15pt}
= G_\ell\bigl(X_T,\langle \hat X_{T}\rangle \bigr)\bigl(\Delta  \cX_T,\langle \Delta \hat \cX_T \rangle \bigr) + \Delta G_a(T)\,. 
\end{split}
\end{equation}

We will apply 
Lemma \ref{lem:apriori}. 
In the statement of the Lemma, we see from \eqref{eq:lem:description:coef:1}
that 
$\vartheta$ must be understood 
as $\Delta \vartheta$,
$\bar \vartheta$
as $\Delta \bar{\vartheta}$
and similarly for the processes labelled with `hat' and `check'.  
Moreover, the remainder $(B_{a},F_{a},\Sigma_{a},G_{a})$
in the statement must be understood as
$(\Delta B_{a},\Delta F_{a},\Delta \Sigma_{a},\Delta G_{a})$. 

We estimate the remainder terms in 
\eqref{eq:lem:apriori:1}, recalling
\eqref{stab:notations:2}
for the meaning we give to the remainder in the stability estimate.  
By \eqref{eq:delta:H:a}, the remainder can be split into three pieces according to 
$h_{\ell}$, $\hat{H}_{\ell}$ and $H_{a}$. 
\vspace{5pt}

\textit{First step. Upper bound for the terms involving $(b_{\ell},f_{\ell})$, $\sigma_{\ell}$ and $g_{\ell}$.} 
We make use of the assumption 
\eqref{assumption:mkv:3} and of the 
conditional Cauchy-Schwarz inequality. 
Getting rid of the constant $\gamma^{1/2}$ in 
front of $\vert G_{a}(T)\vert^{2p}$ in \eqref{stab:notations:2}, we let
\begin{equation*}
\begin{split}
\Delta r_{\ell}^{2p} &:=
{\mathbb E}_{t} \biggl[  \bigl\vert \bigl( 
g_{\ell}(X_{T},\langle \hat X_{T}\rangle)-g_{\ell}(X_{T}',\langle \hat X_{T}'\rangle) \bigr) \cX_{T}' \bigr\vert^{2p} 
\\
&\hspace{15pt}
+
\biggl(
 \int_{t}^T
\bigl\vert \bigl( (b_{\ell},f_{\ell})(\bar{\theta}_{s},\langle \check{\theta}_{s}^{(0)} \rangle)
-(b_{\ell},f_{\ell})(\bar{\theta}_{s}',\langle \check{\theta}_{s}^{(0)\prime}\rangle) \bigr) \bar{\vartheta}_{s}' \bigr\vert \ud s
\biggr)^{2p} 
\\
&\hspace{15pt} + 
\biggl(
\int_{t}^T
\bigl\vert   \bigl(
\sigma_{\ell}(\bar{\theta}_{s}^{(0)},\langle \check{\theta}_{s}^{(0)} \rangle)
- \sigma_{\ell}(\bar{\theta}_{s}^{(0)\prime},\langle \check{\theta}_{s}^{(0)\prime} \rangle) \bigr) \bar{\vartheta}_{s}^{(0) \prime} \bigr\vert^2 \ud s \biggr)^p
\biggr].
\end{split}
\end{equation*}
Recalling the Lipschitz property \eqref{assumption:mkv:3}, we know that, for a generic function $h_{\ell}$, which may be $b_{\ell}$, $f_{\ell}$ or $\sigma_{\ell}$,
\begin{equation}
\label{eq:first_step:1}
\begin{split}
&\bigl\vert 
\bigl( 
h_{\ell}(\bar{\theta}_{s},\langle \check{\theta}_{s}^{(0)}\rangle)
- h_{\ell}(\bar{\theta}_{s}^{\prime},\langle 
\check{\theta}_{s}^{(0)\prime} \rangle)
\bigr)
\bar{\vartheta}_{s}'
\bigr\vert^2
\leq  C
\bigl( 
\vert \bar{\theta}_{s} - \bar{\theta}_{s}^{\prime}\vert^2
+ \Phi_{\alpha}^2( \check{\theta}^{(0)},  \check{\theta}^{(0)\prime})
 \bigr) 
 | \bar\vartheta_{s}' |^2.
 \end{split}
\end{equation}
Therefore, we get 
(for a constant $C'$ possibly depending on $p$ and varying from line to line)
\begin{equation*}
\begin{split}
&\biggl(
 \int_{t}^T
\bigl\vert \bigl( (b_{\ell},f_{\ell})(\bar{\theta}_{s},\langle \check{\theta}^{0)}_{s} \rangle)
-(b_{\ell},f_{\ell})(\bar{\theta}_{s}',\langle \check{\theta}_{s}^{(0)\prime}\rangle) \bigr) \bar{\vartheta}_{s}' \bigr\vert \ud s
\biggr)^{2p} 
\\
&\leq C' 
\biggl[
\biggl(
 \int_{t}^T \vert \bar{\theta}_{s} - \bar{\theta}_{s}^\prime \vert^2
 \ud s 
\biggr)^{p} 
+ \Phi_{\alpha}^{2p}\bigl(\check{\theta}^{(0)},\check{\theta}^{(0)\prime}\bigr)
\biggr]
\biggl(
 \int_{t}^T \vert \bar{\vartheta}_{s}'\vert^2
 \ud s 
\biggr)^{p},
\end{split}
\end{equation*}
and by conditional Cauchy-Schwarz inequality, we deduce that (with the notation introduced in 
\eqref{eq:mkv:stability:Psi}):
\begin{equation*}
\begin{split}
&{\mathbb E}_{t} \biggl[ \biggl(
 \int_{t}^T
\bigl\vert \bigl( (b_{\ell},f_{\ell})(\bar{\theta}_{s},\langle \check{\theta}_{s}^{(0)} \rangle)
-(b_{\ell},f_{\ell})(\bar{\theta}_{s}',\langle \check{\theta}_{s}^{(0)\prime}\rangle) \bigr) \bar{\vartheta}_{s}' \bigr\vert \ud s
\biggr)^{2p} \biggr]
\\
&\hspace{15pt} \leq C' 
\bigl\{ \bar{\mathcal M}^{4p}\bigl((\bar\theta,\check \theta),(\bar\theta',\check \theta') \bigr)
\bigr\}^{1/2} 
\bigl\{ \bar{\mathcal M}^{4p}\bigl( \bar{\vartheta}',\check{\vartheta}' \bigr) \bigr\}^{1/2}.
\end{split}
\end{equation*}
It is pretty clear that we can get a similar bound when replacing $(b_{\ell},f_{\ell})$
by $\sigma_{\ell}$ (using the supremum norm to handle the fact that there is already a 
square inside the integral). 

Finally, the term involving $g_{\ell}$ can be also handled in a similar way, paying attention that the  `bar' process has to be replaced by the `non-bar' process and the `check' process by the `hat' process. 
We thus get 
\begin{equation*}
\begin{split}
&\Delta r_{\ell}^{2p}
\\
&\hspace{5pt} \leq C' 
\bigl\{ \bar{\mathcal M}^{4p}\bigl((\theta,\hat{\theta}),(\theta',\hat \theta') \bigr) + 
\bar{\mathcal M}^{4p}\bigl((\bar  \theta,\check \theta),(\bar\theta',\check \theta') \bigr) \bigr\}^{1/2}
\bigl\{ {\mathcal M}^{4p}\bigl(\vartheta',\hat \vartheta' \bigr) + 
\bar{\mathcal M}^{4p}\bigl(\bar\vartheta' , \check \vartheta' \bigr) \bigr\}^{1/2}.
\end{split}
\end{equation*} 

Using \eqref{assumption:mkv:1}, we get another bound for the same quantity, just 
by taking advantage of the fact that $(b_{\ell},f_{\ell})$, $\sigma_{\ell}$ and $g_{\ell}$ are bounded:
\begin{equation*}
\begin{split}
\Delta r_{\ell}^{2p} \leq C' \biggl\{ {\mathbb E}_{t} \bigl[ \bigl\vert
\cX_{T}' \vert^{2p} \bigr] + 
\sup_{s \in [t,T]} 
{\mathbb E}_{t} \bigl[
\bigl\vert \bar{\vartheta}_{s}^{(0) \prime}
\bigr\vert^{2p} \bigr]
+
{\mathbb E}_{t} \biggl[  \biggl( \int_{t}^T 
\vert \bar{\vartheta}_{s}' \vert^2 \ud s 
\biggr)^p \biggr] \biggr\},
\end{split}
\end{equation*}
so that 
\begin{equation*}
\begin{split}
\Delta r_{\ell}^{2p}
 &\leq C' \Bigl[ 1 \wedge 
\bigl\{ \bar{\mathcal M}^{4p}\bigl((\theta,\hat{\theta}),(\theta',\hat \theta') \bigr) + 
\bar{\mathcal M}^{4p}\bigl((\bar  \theta,\check \theta),(\bar\theta',\check \theta') \bigr) \bigr\}^{1/2} \Bigr]
\\
&\hspace{15pt} \times
\bigl\{ {\mathcal M}^{4p}\bigl(\vartheta',\hat \vartheta' \bigr) + 
\bar{\mathcal M}^{4p}\bigl(\bar\vartheta',\check \vartheta' \bigr) \bigr\}^{1/2}.
\end{split}
\end{equation*} 
\textit{Second step. 
Upper bound for the terms involving $\hat{B}_{\ell}$, 
$\hat{F}_{\ell}$, $\hat{\Sigma}_{\ell}$ or $\hat{G}_{\ell}$.}
We can make use of the Lipschitz property  
\eqref{assumption:mkv:3} or of the $L^2$ bound
\eqref{assumption:mkv:1}. 
For a generic function $\hat{H}_{\ell}$, which may be 
$\hat{B}_{\ell}$, $\hat{F}_{\ell}$ or 
$\hat{\Sigma}_{\ell}$, we get
\begin{equation}
\label{eq:second_step:1}
\begin{split}
&\bigl\vert 
\hat{\mathbb E} \bigl[
\bigl( 
\hat{H}_{\ell}(\bar{\theta}_{s},\langle \check{\theta}_{s}^{(0)}\rangle)
- \hat{H}_{\ell}(\bar{\theta}_{s}^{\prime},\langle 
\check{\theta}_{s}^{(0)\prime}) \rangle)
\bigr)
\langle \check{\vartheta}_{s}^{(0) \prime}
\rangle \bigr] 
\bigr\vert^2
\\
&\hspace{15pt}\leq  C
\bigl[ 1 
\wedge \bigl( 
\vert \bar{\theta}_{s} - \bar{\theta}_{s}^{\prime}\vert^2
+ \Phi_{\alpha}^2( \check{\theta}^{(0)},  \check{\theta}^{(0)\prime}
 \bigr) \bigr]
 \| \check{\vartheta}_{s}^{(0) \prime} \|_{2}^2.
 \end{split}
\end{equation}
Therefore, recalling the bound $\int (1 \wedge h) \ud \nu
\leq 1 \wedge \int h \ud \nu$ that holds for a general measure $\nu$ with mass less than 1
and a general measurable nonnegative function $h$, we get
\begin{equation}
\label{eq:second_step:2}
\begin{split}
&{\mathbb E}_{t} \biggl[ 
\biggl( \int_{t}^T
\bigl\vert 
\hat{\mathbb E} \bigl[
\bigl(
\hat{H}_{\ell}(\bar{\theta}_{s},\langle \check{\theta}_{s}^{(0)} \rangle)
- \hat{H}_{\ell}(\bar{\theta}_{s}^{\prime},\langle \check{\theta}_{s}^{(0)\prime} \rangle) \bigr) 
\langle \check{\vartheta}_{s}^{(0) \prime} \rangle \bigr]
\bigr\vert   \ud s \biggr)^{2p} \biggr]
\\
&\leq {\mathbb E}_{t} \biggl[ \biggl( \int_{t}^T
\bigl\vert 
\hat{\mathbb E} \bigl[
\bigl(
\hat{H}_{\ell}(\bar{\theta}_{s},\langle \check{\theta}_{s}^{(0)} \rangle)
- \hat{H}_{\ell}(\bar{\theta}_{s}^{\prime},\langle \check{\theta}_{s}^{(0)\prime} \rangle) \bigr) 
\langle \check{\vartheta}_{s}^{(0) \prime} \rangle \bigr]
\bigr\vert^2   \ud s \biggr)^p \biggr]
\\
&\leq C' 
\sup_{s \in [t,T]} \| \check{\vartheta}_{s}^{(0) \prime} 
\|_{2}^{2p}  \biggl\{ 1 \wedge
\biggl( 
{\mathbb E}_{t} \biggl[
\biggl(
 \int_{0}^T 
  \vert \bar{\theta}_{s}
 - \bar{\theta}_{s}^{\prime}
 \vert^2  \ud s
  \biggr)^{p} \biggr]
  + \Phi_{\alpha}^{2p}\bigl(\check{\theta}^{(0)},\check{\theta}^{(0)\prime}
 \bigr) 
 \biggr) \biggr\},
\end{split}
\end{equation}
which satisfies the same bound as $\Delta r_{\ell}^{2p}$.
Above the passage from the first to the third line 
may be applied with $H$ equal to $F$ or $B$ and the passage from 
the second to the third line may be applied with $H$ equal to $\Sigma$.
We have a similar bound for the term involving $\hat{G}_{\ell}$:
\begin{equation}
\label{eq:second_step:3}
\begin{split}
&{\mathbb E}_{t} \Bigl[
\bigl\vert 
\hat{\mathbb E} \bigl[
\bigl(
\hat{G}_{\ell}(X_{T},\langle \hat X_{T}\rangle)
- \hat{G}_{\ell}(X_{T}',\langle \hat X_{T}'\rangle) \bigr) \langle 
\hat {\cX}_{T}'  \rangle
\bigr]
\bigr\vert^{2p}  \Bigr]
\\
&\leq  C' 
\sup_{s \in [t,T]} \| \hat {\vartheta}_{s}^{(0) \prime} 
\|_{2}^{2p}  \Bigl[ 1 \wedge 
\Bigl( \sup_{s \in [t,T]} {\mathbb E}_{t} \bigl[ \vert {\theta}_{s}^{(0)}
 - {\theta}_{s}^{(0),\prime}
 \vert^{2p} \bigr] 
+ \Phi_{\alpha}^{2p}\bigl(\hat{\theta}^{(0)},\hat{\theta}^{(0)\prime}
 \bigr) \Bigr)   \Bigr].
\end{split}
\end{equation}
\textit{Conclusion.} In order to complete the proof of the first part, notice that the terms labelled by $a$ directly give the remainder
$\Delta {\mathcal R}_{a}^{2p}$ in 
\eqref{eq:lem:stability:1}. The second part of the statement easily follows
from Lemma \ref{lem:cauchy}.
\end{proof} 
\vspace{5pt}

\begin{Remark}
\label{rem:comparaison}
As the reader may guess,  terms 
of the form $\bar\cM^{4p}(\vartheta,\hat{\vartheta})$ and $\bar\cM^{4p}(\bar \vartheta,\check{\vartheta})$
in \eqref{eq:lem:stability:1}
will be handled by means of Corollary \ref{co:bound:sys:lin:mkv:easy}. 
However, we note that, in comparison with $\bar{\cM}^{4p}$, the `conditional' norm
$\cM^{4p}$ that is used in Corollary \ref{co:bound:sys:lin:mkv:easy} incorporates an additional pre-factor 
$\gamma^{1/2}$, see \eqref{stab:notations:1}. 
Roughly speaking, 
$\bar{\cM}^{4p}(\vartheta,\hat \vartheta)$ and
$\cM_{\E_{t}}^{4p}(\vartheta) + (\cM_{\E}^2(\hat \vartheta^{(0)})^{2p}$ are `equivalent' provided $\gamma$ is not too small. 
In the sequel, we often choose $\gamma$ exactly equal to 
$1/\Gamma_{p}$, so that 
$\bar{\cM}^{4p}(\vartheta,\hat \vartheta)$ and
$\cM_{\E_{t}}^{4p}(\vartheta) + (\cM_{\E}^2(\hat \vartheta^{(0)}))^{2p}$ can be indeed compared. 
\end{Remark}

\begin{Corollary}\label{cor:stab:hard}
Consider a family of 
progressively-measurable random paths $((\theta^\xi,\hat{\theta}^{\xi}) :  [t,T]
\ni s \mapsto (\theta^\xi_{s},\hat{\theta}^\xi_{s}))_{\xi}$
parametrized by $\xi
 \in L^2(\Omega,\cF_{t},\P;\R^d)$. Assume that, for any $p \geq 1$, 
there exists a constant $C_{p}$ such that, for all $\xi$ and $\xi'$ (with the same notation 
as in \eqref{eq:mkv:stability:Psi} but with $\Phi_{\alpha}$
defined on $[L^2(\Omega,\cF_{t},\P;\R^d)]^2$
instead of 
$[L^2(\Omega,\cA,\P;\R^d)]^2$):
\begin{equation}
\label{eq:lem:stability:hyp}
\begin{split}
&\bigl(\bar{\mathcal M}^{2p}(\theta^\xi,\hat{\theta}^\xi) \bigr)^{1/2p}
\leq C_{p} 
\bigl[ 1 + \vert \xi  \vert + \| \xi  \|_{2} \bigr],
\\
&\bigl(\bar{\mathcal M}^{2p}\bigl((\theta^\xi ,\hat{\theta}^\xi),(\theta^{\xi'},\hat{\theta}^{\xi'})\bigr) \bigr)^{1/2p}
 \leq C_{p} 
\bigl[ \vert \xi - \xi' \vert + \Phi_{\alpha}\bigl(\xi,\xi' \bigr) \bigr],
\end{split}
\end{equation}
Assume also that we can find a 
Borel subset $\cO$ of a Euclidean space, a 
continuous functional $\Psi$ 
from $\cO \times 
L^2(\hat{\Omega},\hat{\cA},\hat{\P};\R^d)$ into 
$L^2(\Omega,\cA,\P;\R_+)$
and, for any $\xi \in L^2(\Omega,\cF_{t},\P;\R^d)$, an admissible class $\cJ^\xi$ for 
$(\theta^\xi,\hat{\theta}^\xi)$
such that, for any $\Lambda$ in 
$\cJ^\xi$, there exists a 
random variable $\lambda : (\Omega,\cA,\P) \rightarrow \cO$
satisfying
$\Lambda(\omega,\cdot) \leq \Psi( \lambda(\omega), \langle \xi \rangle)$,
where $\Lambda(\omega,\cdot)$ denotes the random
variable $\hat{\Omega} \ni \hat{\omega} \mapsto \Lambda(\omega,\hat{\omega})$
on $(\hat\Omega,\hat{\cA},\hat\P)$.

With $C$ as in Lemma \ref{lem:stability}, we then let, for 
$\varsigma \in \cO$ and $\xi \in L^2(\Omega,\cF_{t},\P;\R^d)$,
\begin{equation}
\label{eq:bar psi}
\bar{\Psi}(\varsigma,\xi)(\omega) = 
\bigl( \Psi(\varsigma,\xi)(\omega) \bigr) \wedge \Bigl\{ C \bigl( 1 + \vert \xi(\omega) \vert^{ \alpha+1} 
+ \| \xi \|_{2}^{\alpha+1} \bigr) \Bigr\}, \quad \omega \in \Omega, 
\end{equation}
where $\Psi(\varsigma,\xi)$ is an abuse of notation for denoting the 
copy of the variable $\Psi(\varsigma,\langle \xi \rangle)$
on the space $\Omega$
instead of $\hat{\Omega}$. (We may indeed assume that 
$L^2(\hat{\Omega},\hat{\cA},\hat\P;\R^d)$ is 
a copy of $L^2(\Omega,\cA,\P;\R^d)$, in which case we can transfer (canonically) $\Psi(\varsigma,\cdot)$
from one space to another.)


Then, for any $p \geq 1$, there exist two constants $\Gamma_{p} := \Gamma_{p}(K) \geq 1$
and $C_{p}' >0$, such that, for $T \leq \gamma \leq 1/\Gamma_{p}$, 
choosing  $(\bar{\theta},\bar{\vartheta}) \equiv 
(\theta,\vartheta)$, 
$(\check{\theta},\check{\vartheta}) \equiv 
(\hat \theta,\hat \vartheta)$,
$(\bar{\theta}',\bar{\vartheta}') \equiv 
(\theta',\vartheta')$
and 
$(\check{\theta}',\check{\vartheta}') \equiv 
(\hat \theta',\hat \vartheta')$
 in Lemma
\ref{lem:stability}, with
$(\theta,\hat \theta):\equiv(\theta^\xi,\hat \theta^\xi)$ and 
$(\theta',\hat \theta') :\equiv (\theta^{\xi'},\hat{\theta}^{\xi'})$, it holds that:
\begin{equation}
\label{eq:stab:1ere eq}
\begin{split}
&\bigl[ {\mathcal M}_{{\mathbb E}_{t}}^{2p}
\bigl( \vartheta - \vartheta' \bigr) \bigr]^{1/2p}
\\
&\leq C_{p}' 
\biggl\{ \bigl[1 \wedge
 \bigl( \vert \xi - \xi' \vert + \Phi_{\alpha}(\xi,\xi') \bigr) \bigr] 
 \\
 &\hspace{50pt} \times
\Bigl(
\vert \eta \vert
+ \| \eta \|_{2} + \bigl( {\mathcal R}_{a}^{4p} \bigr)^{1/4p} + 
\E \bigl( {\mathcal R}_{a}^2 \bigr)^{1/2} + \bigl( {\mathcal M}_{{\mathbb E}}^{2}( \hat \vartheta^{(0)\prime})
\bigr)^{1/2}
\Bigr) 
\\
&\hspace{30pt} +   \bigl( \Delta {\mathcal R}_{a}^{2p} \bigr)^{1/2p}
+
\sup_{\varsigma \in \cO}
  \sup_{\|\Lambda_{0}\|_{2} \leq K} \biggl\{ 
 {\mathbb E} \Bigl[ \big( \Lambda_{0} 
 \wedge 
\bar{\Psi}(\varsigma, \xi  ) 
\bigr) 
 \bigl( {\mathcal M}^2_{{\mathbb E}_{t}}(\hat \vartheta^{(0)} - \hat \vartheta^{(0)\prime})
 \bigr)^{1/2}
\Bigr] \biggr\}.  
\end{split}
\end{equation}
When $\vartheta \equiv \hat{\vartheta}$ and $\vartheta' \equiv \hat{\vartheta}'$, we have
(modifying the value of $\Gamma_{p}$ if necessary):
\begin{align}
&\bigl[ {\mathcal M}_{{\mathbb E}_{t}}^{2p}\bigl( \vartheta- \vartheta' \bigr)
\bigr]^{1/2p}
\nonumber
\\
&\leq C_{p}' \biggl\{
\bigl[
1 \wedge
 \bigl( \vert \xi - \xi' \vert + \Phi_{\alpha}(\xi,\xi') \bigr) \bigr] 
\Bigl(
\vert \eta \vert
+ \| \eta \|_{2} + \bigl( {\mathcal R}_{a}^{4p}\bigr)^{1/4p} + 
\E \bigl[ {\mathcal R}_{a}^{2}  \bigr]^{1/2}
\Bigr)   + \bigl( \Delta \cR_{a}^{2p} \bigr)^{1/2p} \biggr\}
 \nonumber
\\
&\hspace{15pt} +
C_{p}' 
\biggl\{ \sup_{\varsigma \in \cO}   \sup_{\|\Lambda_{0}\|_{2} \leq K} {\mathbb E} \Bigl[ 
\bigl( \Lambda_{0} \wedge \bar{\Psi}(\varsigma, \xi) \bigr)  \bigl[1 \wedge
 \bigl( \vert \xi - \xi' \vert + \Phi_{\alpha}(\xi,\xi') \bigr) \bigr] \label{eq:stab:main estimate}
\\
&\hspace{150pt} \times
\Bigl(
\vert \eta \vert
+ \| \eta \|_{2} + \bigl( {\mathcal R}_{a}^{4} \bigr)^{1/4} + 
\E \bigl( {\mathcal R}_{a}^2 \bigr)^{1/2}
\Bigr)  \Bigr] \biggr\} \nonumber
\\
&\hspace{15pt} +  C_{p}' 
\biggl\{ \sup_{\varsigma \in \cO}   \sup_{\|\Lambda_{0}\|_{2} \leq K} {\mathbb E} \Bigl[ 
\bigl( \Lambda_{0} \wedge \bar{\Psi}(\varsigma, \xi) \bigr)  
\bigl( \Delta {\mathcal R}_{a}^{2} \bigr)^{1/2} \Bigr] \biggr\}, \nonumber
 \end{align} 
 the variable $\Lambda_{0}$ in the supremum  being in $L^2(\Omega,\cA,\P;\R_{+})$
 and the function $\Phi_{\alpha}$ differing from the original 
 one in \eqref{assumption:mkv:3} and \eqref{assumption:mkv:4} but satisfying the same 
 properties
on 
 $[L^2(\Omega,\cF_{t},\P;\R^d)]^2$
instead of 
 $[L^2(\Omega,\cA,\P;\R^d)]^2$. 
\end{Corollary}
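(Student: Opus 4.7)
The strategy is to start from Lemma \ref{lem:stability} and re-express each term on the right-hand side by exploiting the parametric structure of $(\theta^\xi,\hat\theta^\xi)$ together with the admissibility class $\cJ^\xi$. With the identifications $(\bar\theta,\bar\vartheta) \equiv (\theta,\vartheta)$, $(\check\theta,\check\vartheta) \equiv (\hat\theta,\hat\vartheta)$ and the corresponding primes, \eqref{eq:lem:stability:1} reduces to a single copy of ${\mathcal M}_{\E_t}^{2p}(\vartheta-\vartheta')$ on the left and a combination of ${\mathcal N}_{\E_t}^{2p,C}$-terms, of $\bar{\mathcal M}^{4p}$-terms and of $\Delta{\mathcal R}_a^{2p}$ on the right. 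Taking $\gamma$ small enough allows the first $\gamma^{1/2}{\mathcal M}_{\E_t}^{2p}(\bar\vartheta-\bar\vartheta')$ to be absorbed into the left-hand side.

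First, I would control the absolute-size $\bar{\mathcal M}^{4p}$-terms by means of Corollary \ref{co:bound:sys:lin:mkv:easy}: the hypothesis \eqref{eq:lem:stability:hyp} takes care of the $(\theta^\xi,\hat\theta^\xi)$-contribution, while the linear systems satisfied by $\vartheta$ and $\vartheta'$ give the desired bound in terms of $\vert\eta\vert + \|\eta\|_2$, $({\mathcal R}_a^{4p})^{1/4p}$ and $\E[{\mathcal R}_a^2]^{1/2}$. I would also use the second line of \eqref{eq:lem:stability:hyp} to control the difference $\bar{\mathcal M}^{4p}\llbracket(\theta^\xi,\hat\theta^\xi),(\theta^{\xi'},\hat\theta^{\xi'})\rrbracket$ by $\vert\xi-\xi'\vert + \Phi_\alpha(\xi,\xi')$. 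Combining the absolute and differential bounds through the elementary inequality $(ab)^{1/2} \leq \sqrt{a}\cdot(1\wedge b)$ (applied after possibly replacing one of the two factors by its obvious a priori bound of order $1$) produces the factor $1 \wedge (\vert\xi-\xi'\vert + \Phi_\alpha(\xi,\xi'))$ that appears throughout the statement.

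The main obstacle is the treatment of the ${\mathcal N}_{\E_t}^{2p,C}$-terms, which encode the contribution of the McKean-Vlasov derivatives $\hat H_\ell$ to the stability estimate. Recalling \eqref{stab:notations:1}, ${\mathcal N}_{\E_t}^{2p,C}(\check\theta^{(0)}_s,\cdot)$ is defined as an essential supremum over $\Lambda \in \cJ^\xi$; the admissibility hypothesis tells us that every such $\Lambda$ is dominated pointwise by $\Psi(\lambda(\omega),\langle\xi\rangle)$ for some $\cO$-valued random variable $\lambda$. The plan is to commute this essential supremum with the supremum over the parameter $\varsigma\in\cO$ by a measurable-selection argument relying on the continuity of $\Psi$, and then to merge the $\Lambda$-bound with the polynomial growth bound \eqref{assumption:mkv:2} using the truncation $\bar\Psi$ defined in \eqref{eq:bar psi}. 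This reduces the ${\mathcal N}$-terms to expressions of the form $\E[(\Lambda_0\wedge\bar\Psi(\varsigma,\xi))\cdot({\mathcal M}_{\E_t}^2(\hat\vartheta^{(0)}-\hat\vartheta^{(0)\prime}))^{1/2}]$, where $\Lambda_0$ records the $L^2$-bound $K$ dictated by Definition \ref{def:admissible}. Combined with the previous bounds on the other terms, this yields \eqref{eq:stab:1ere eq}.

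Finally, to deduce \eqref{eq:stab:main estimate} under the self-coupled assumption $\vartheta\equiv\hat\vartheta$ and $\vartheta'\equiv\hat\vartheta'$, I would iterate the estimate: the right-hand side of \eqref{eq:stab:1ere eq} now contains ${\mathcal M}_{\E}^2(\vartheta^{(0)}-\vartheta^{(0)\prime})$ inside an $\E[(\Lambda_0\wedge\bar\Psi(\varsigma,\xi))(\cdot)^{1/2}]$ factor. Applying \eqref{eq:lem:stability:3esp} pointwise inside this expectation (with $p=1$ and unconditional $\E$) to replace ${\mathcal M}_{\E}^2(\vartheta^{(0)}-\vartheta^{(0)\prime})$ by a linear combination of $(\vert\eta\vert+\|\eta\|_2)^2$, ${\mathcal R}_a^4$, $\E[{\mathcal R}_a^2]$ and the previously treated $1\wedge(\vert\xi-\xi'\vert+\Phi_\alpha(\xi,\xi'))^2$-type terms, and then distributing $\Lambda_0\wedge\bar\Psi(\varsigma,\xi)$ across each contribution, produces exactly the three lines on the right-hand side of \eqref{eq:stab:main estimate}. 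The most delicate technical point will be the commutation of $\esssup_{\Lambda\in\cJ^\xi}$ with $\sup_{\varsigma\in\cO}$ in a form compatible with the expectation on the outside; this relies on the continuity of $\Psi$ and on the freedom to approximate $\lambda$ by finitely-valued selectors before passing to the limit.
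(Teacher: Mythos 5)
Your treatment of \eqref{eq:stab:1ere eq} follows the paper's route: absorb the $\gamma^{1/2}\,\cM_{\E_t}^{2p}(\bar\vartheta-\bar\vartheta')$ term into the left by taking $\gamma$ small, invoke Corollary \ref{co:bound:sys:lin:mkv:easy} for the absolute-size $\bar\cM^{4p}$-terms and the hypothesis \eqref{eq:lem:stability:hyp} for the difference terms, and reduce the $\cN^{2p,C}_{\E_t}$-terms to expressions of the form $\sup_{\varsigma,\Lambda_0}\E[(\Lambda_0\wedge\bar{\Psi}(\varsigma,\xi))(\cM^2_{\E_t}(\hat\vartheta^{(0)}-\hat\vartheta^{(0)\prime}))^{1/2}]$ via the admissibility structure. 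Note, though, that the commutation of $\esssup_{\Lambda\in\cJ^\xi}$ with $\sup_{\varsigma\in\cO}$ does not require any measurable-selection or approximation argument: since each $\Lambda\in\cJ^\xi$ is dominated $\omega$-wise by $\Psi(\lambda(\omega),\langle\xi\rangle)$, and $\hat\E$ integrates only over $\hat\omega$, one may simply replace $\lambda(\omega)$ by the supremum over $\varsigma\in\cO$ pointwise in $\omega$ — this is exactly Remark \ref{rem:apropos de N} and \eqref{eq:bound:west coast}; the continuity of $\Psi$ plays no role at this step.

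The iteration step for \eqref{eq:stab:main estimate}, however, contains a mis-citation that hides a real problem. You propose to apply \eqref{eq:lem:stability:3esp} ``pointwise'' to replace what you call $\cM_\E^2(\vartheta^{(0)}-\vartheta^{(0)\prime})$ inside the expectation and then ``distribute $\Lambda_0\wedge\bar{\Psi}$ across each contribution.'' But the quantity inside the $\E[(\Lambda_0\wedge\bar{\Psi}(\varsigma,\xi))(\cdot)^{1/2}]$ factor is the \emph{conditional} norm $\cM_{\E_t}^2(\hat\vartheta^{(0)}-\hat\vartheta^{(0)\prime})$, not the scalar $\cM_\E^2$; and \eqref{eq:lem:stability:3esp} is an estimate on the unconditional scalar, so it cannot be applied pointwise. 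If you actually used the scalar estimate, the factor $(\cM_\E^2)^{1/2}$ would pull out of the expectation, the supremum over $\Lambda_0$ would collapse to $\sup_{\Lambda_0}\E[\Lambda_0\wedge\bar{\Psi}]\leq K\cdot\text{const}$, and you would lose the $\E[(\Lambda_0\wedge\bar{\Psi})(\cdot)]$ structure of the three displayed lines in \eqref{eq:stab:main estimate}. What should be iterated is the conditional $p=1$ version of \eqref{eq:stab:1ere eq} itself: multiply this $\omega$-wise inequality (after removing the $\cM_{\E}^2(\hat\vartheta^{(0)\prime})$ term by \eqref{eq:co:bound:sys:lin:mkv:easy:2}) by $\Lambda_0\wedge\bar{\Psi}(\varsigma,\xi)$, take $\E$, take $\sup$ over $(\varsigma,\Lambda_0)$, and absorb the $(\gamma')^{1/4p}$-weighted self-referential term into the left for $\gamma'$ small. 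That is what produces the refined right-hand side with the $\bar{\Psi}$-weighted expectations intact, and it is the argument the paper uses.
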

\begin{Remark}
\label{rem:apropos de N}
Before we proceed with the proof of Corollary \ref{cor:stab:hard}, we discuss
what the assumptions we made on the structure of $\cJ^\xi$ permit to 
say on the term 
${\mathcal N}^{2p,C}_{\E_{t}}(X,\chi)$
in \eqref{stab:notations:1}. Recall indeed that  
\begin{equation*}
\begin{split}
&{\mathcal N}^{p,C}_{\E_{t}}(X,\chi) 
= \sup_{\Lambda \in \cJ^\xi} {\mathbb E}_{t}
 \biggl[ 
\hat{\mathbb E} \biggl[ \Bigl\{
 \Lambda
 \wedge
 \Bigl[ C \bigl( 1+ \hat{\E}_{t} \bigl[|\langle X \rangle|^{2\alpha+2} \bigr]^{1/2} +
 \|X \|_{2}^{\alpha+1} 
 \bigr) \Bigr] \Bigr\}
\hat{\E}_{t} \bigl[ |\langle \cX \rangle|^2 \bigr]^{1/2} \biggr]^{p} \biggr].
\end{split}
\end{equation*}
Simplifying the notations, the term inside the conditional expectation 
may be rewritten as 
$\hat{\mathbb E} [ ( \Lambda \wedge \langle W \rangle ) \langle \cW \rangle ]$,
for some random variables $\langle W \rangle$ and $\langle
\cW \rangle$ in $L^2(\hat\Omega,\hat\cA,\hat\P;\R_{+})$
and for $\Lambda \in \cJ^\xi$. 
Allowing the constant $C_{p}$ in the assumption to increase from line to line,
the following bound is proved right below:
\begin{equation}
\label{eq:bound:west coast}
\begin{split}
\E_{t} \Bigl[
\hat{\mathbb E} \Bigl[ \bigl( \Lambda \wedge \langle W \rangle \bigr) \langle \cW \rangle \Bigr]^p
\Bigr]^{1/p} 
&\leq \sup_{\varsigma \in \cO} \sup_{\|\Lambda_{0}\|_{2} \leq K} \Bigl\{ 
 {\mathbb E} \Bigl[ \bigl(  \Lambda_{0} 
 \wedge 
 \Psi(\varsigma, \xi) 
 \wedge
W \bigr) \cW \Bigr] \Bigr\}.
\end{split}
\end{equation}
where, in the above expectation, $\Lambda_{0} \in L^2(\Omega,\cA,\P;\R_{+})$,
$W$ and $\cW$ are the copies of $\langle W\rangle$ and $\langle \cW \rangle$ on the space
$\Omega$ instead of $\Omega'$. 
\end{Remark}

We first prove the remark:

\proof[Remark \ref{rem:apropos de N}.] By assumption on the structure of $\cJ^\xi$,  we can find
$\lambda$ such that
\begin{equation}
\label{eq:westcoast}
\begin{split}
\hat{\mathbb E} \Bigl[ \bigl( \Lambda \wedge \langle W \rangle \bigr) \langle \cW \rangle \Bigr]
&=
\hat{\mathbb E} \Bigl[ \bigl( \Lambda \wedge \Psi(\lambda, \langle \xi \rangle) \wedge \langle W \rangle \bigr) \langle \cW \rangle \Bigr]
\\
&\leq \sup_{\varsigma \in \cO}
\hat{\mathbb E} \Bigl[ \bigl( \Lambda \wedge \Psi(\varsigma, \langle \xi \rangle) \wedge \langle W \rangle \bigr) \langle \cW \rangle \Bigr]. 
 \end{split} 
\end{equation}
%
Recalling that $\Lambda$ is a random 
 variable $\Lambda : 
 \Omega \times \hat{\Omega} \ni (\omega,\hat{\omega}) \mapsto \Lambda(\omega,\hat{\omega})$
 on the product space $(\Omega \times \hat{\Omega},\cA \otimes \hat{\cA},\P \otimes \hat{\P})$
 such that, for almost every $\omega \in \Omega$, $\Lambda(\omega,\cdot) \in L^2(\hat{\Omega},\hat{\cA},\hat{\P};\R_{+})$
 with $\hat{\E}[\Lambda^2(\omega,\cdot)] \leq K^2$, we can bound 
 the above right-hand side by 
 \begin{equation*}
 \begin{split}
&\hat{\mathbb E} \Bigl[ \bigl( \Lambda \wedge  \Psi(\varsigma, \langle \xi \rangle) 
 \wedge \langle W \rangle \bigr) \langle \cW \rangle \Bigr]
 \\
&\leq \sup \Bigl\{ 
 \hat{\mathbb E} \Bigl[ \bigl( \langle \Lambda_{0} \rangle
 \wedge 
 \Psi(\varsigma,\langle \xi \rangle ) 
 \wedge
 \langle W \rangle \bigr) \langle \cW \rangle \Bigr] ; \
\langle \Lambda_{0} \rangle \in L^2(\hat{\Omega},\hat{\cA},\hat{\P};\R_{+}) : 
\hat{\E} \bigl[ \langle \Lambda_{0} \rangle^2 \bigr]^{1/2} \leq K \Bigr\}.
 \end{split}
 \end{equation*}
Transferring the expectation appearing in the supremum into an expectation on $\Omega$, we get
\eqref{eq:bound:west coast}.
\eproof

We now turn to:

\begin{proof}[Corollary \ref{cor:stab:hard}.]
The strategy is to make use of Lemma \ref{lem:stability} and to estimate the various terms in 
\eqref{eq:lem:stability:1}.
We use two values for the parameter 
$\gamma$ in the definition \eqref{stab:notations:1}
of ${\mathcal M}_{{\mathbb M}}^{p}$.
As suggested in Remark \ref{rem:comparaison}, 
we first use $\gamma = 1/\Gamma_{p}$. 
Since we consider the case $(\bar{\theta}',\bar{\vartheta}') \equiv 
(\theta',\vartheta')$ and $(\check{\theta}',\check{\vartheta}') \equiv 
(\hat\theta',\hat\vartheta')$, 
we deduce 
from 
\eqref{eq:co:bound:sys:lin:mkv:easy} in
Corollary \ref{co:bound:sys:lin:mkv:easy} that there exists 
a constant $C_{p}'$ such that 
\begin{equation}
\label{eq:stab:demo:1}
\begin{split}
\bigl( {\mathcal M}_{{\mathbb E}_{t}}^{2p}( \vartheta')
\bigr)^{1/2p}
 \leq 
C_{p}' \Bigl[ \vert \eta \vert + \| \eta \|_{2} + 
\bigl({\mathcal R}_{a}^{2p}\bigr)^{1/2p} + \E \bigl({\mathcal R}_{a}^2 \bigr)^{1/2}
+ \bigl( {\mathcal M}_{{\mathbb E}}^{2}( \hat \vartheta^{(0)\prime})
\bigr)^{1/2}
\Bigr].
\end{split}
\end{equation}
Recalling again 
Remark \ref{rem:comparaison}
to compare $\cM_{\E_{t}}^{4p}$ and $\bar{\cM}^{4p}$
and using in addition
\eqref{eq:lem:stability:hyp}, 
 the last term in 
\eqref{eq:lem:stability:1}, when put to the power $1/2p$, gives the contribution: 
\begin{equation*}
\begin{split}
&C_{p}' 
\Bigl[ \bigl[1 \wedge
 \bigl( \vert \xi - \xi' \vert + \Phi_{\alpha}(\xi,\xi') \bigr) \bigr] 
\Bigl(
\vert \eta \vert
+ \| \eta \|_{2} + \bigl( {\mathcal R}_{a}^{4p} \bigr)^{1/4p} + 
\E \bigl( {\mathcal R}_{a}^2 \bigr)^{1/2}
+ \bigl( {\mathcal M}_{{\mathbb E}}^{2}( \hat \vartheta^{(0)\prime})
\bigr)^{1/2}
\Bigr) 
\\
&\hspace{15pt}+   \bigl( \Delta {\mathcal R}_{a}^{2p} \bigr)^{1/2p}\Bigr].
\end{split}
\end{equation*}

We now discuss the other terms in \eqref{eq:lem:stability:1}. 
In this perspective, we use another value for $\gamma$, namely $\gamma' \leq 1/\Gamma_{p}$. Note that there is no conflict with the previous choice
for $\gamma$, which just permitted to handle the terms of the form $\bar{\cM}$ in \eqref{eq:lem:stability:1}. 
We thus turn to the two terms ${\mathcal N}^{2p,C}_{\E_{t}}$
in \eqref{eq:lem:stability:1}. Taking them to the power $1/2p$ and making use 
of the first line in \eqref{eq:lem:stability:hyp}, 
this brings us with a term of the same form as in the left-hand side of 
\eqref{eq:bound:west coast}, 
with $W=C(1+ \vert \xi \vert^{2\alpha+1} + \| \xi \|_{2}^{2\alpha+1})$
and $\cW=[ {\mathcal M}^2_{{\mathbb E}_{t}}(\hat \vartheta^{(0)} - \hat \vartheta^{(0)\prime})
 ]^{1/2}$. By \eqref{eq:bound:west coast}, we get 
 the following contribution:
 \begin{equation*}
\sup_{\varsigma \in \cO}\sup_{\|\Lambda_{0}\|_{2} \leq K} \Bigl\{ 
 {\mathbb E} \Bigl[ \bigl(  \Lambda_{0} 
 \wedge 
 \bar{\Psi}(\varsigma, \xi  ) 
\bigr)  \bigl( {\mathcal M}^2_{{\mathbb E}_{t}}(\hat \vartheta^{(0)} - \hat \vartheta^{(0)\prime})
 \bigr)^{1/2} \Bigr]  \Bigr\}.
\end{equation*}
We obtain (modifying the constant $\Gamma_{p}$ in \eqref{eq:lem:stability:1}
in order to take into account the additional exponent $1/2p$):
\begin{equation}
\label{eq:1ere eq}
\begin{split}
&\bigl[ {\mathcal M}_{{\mathbb E}_{t}}^{2p}
\bigl( \vartheta - \vartheta' \bigr) \bigr]^{1/2p}
\\
&\leq C_{p}' 
\biggl\{ \bigl[1 \wedge
 \bigl( \vert \xi - \xi' \vert + \Phi_{\alpha}(\xi,\xi') \bigr) \bigr] 
 \\
 &\hspace{30pt} \times
\Bigl(
\vert \eta \vert
+ \| \eta \|_{2} + \bigl( {\mathcal R}_{a}^{4p} \bigr)^{1/4p} + 
\E \bigl( {\mathcal R}_{a}^2 \bigr)^{1/2} + \bigl( {\mathcal M}_{{\mathbb E}}^{2}( \hat \vartheta^{(0)\prime})
\bigr)^{1/2}
\Bigr)  +   \bigl( \Delta {\mathcal R}_{a}^{2p} \bigr)^{1/2p}\biggr\}
\\
&\hspace{15pt}+  \Gamma_{p} (\gamma')^{1/4p}
\sup_{\varsigma \in \cO}
  \sup_{\|\Lambda_{0}\|_{2} \leq K} \biggl\{ 
 {\mathbb E} \Bigl[ \big( \Lambda_{0} 
 \wedge 
\bar{\Psi}(\varsigma, \xi  ) 
\bigr) 
 \bigl( {\mathcal M}^2_{{\mathbb E}_{t}}(\hat \vartheta^{(0)} - \hat \vartheta^{(0)\prime})
 \bigr)^{1/2}
\Bigr] \biggr\},   
\end{split}
\end{equation}
which gives \eqref{eq:stab:1ere eq}.

We now prove \eqref{eq:stab:main estimate} when $\vartheta \equiv \hat{\vartheta}$
and $\vartheta' \equiv \hat{\vartheta}'$. 
We go back to \eqref{eq:stab:demo:1}.  
Applying 
\eqref{eq:co:bound:sys:lin:mkv:easy:2}
in
Corollary 
\ref{co:bound:sys:lin:mkv:easy}
with $p=1$ and taking expectation, we get, for $\gamma$ small
enough, 
\begin{equation*}
\bigl( {\mathcal M}_{{\mathbb E}_{t}}^{2p}( \vartheta')
\bigr)^{1/2p}
 \leq 
C_{p}' \Bigl[ \vert \eta \vert + \| \eta \|_{2} + 
\bigl({\mathcal R}_{a}^{2p}\bigr)^{1/2p} + \E \bigl({\mathcal R}_{a}^2 \bigr)^{1/2}
\Bigr],
\end{equation*}
which means that, in \eqref{eq:1ere eq}, we can get rid of the term 
${\mathcal M}_{{\mathbb E}_{t}}^{2p}( \hat\vartheta^{(0)\prime})$ in the right-hand side. 

Let now $p=1$ in \eqref{eq:1ere eq}. Multiply both sides by 
$\Lambda_{0} \wedge \bar{\Psi}(\varsigma,\xi)$ for an $\R_{+}$-valued 
random variable $\Lambda_{0}$
 such that $\|\Lambda_{0} \|_{2} \leq K$ and take the expectation and then the supremum
 over $\Lambda_{0}$ and $\varsigma$.  For $\gamma'$ small enough, we get that
\begin{align}
&\sup_{\varsigma \in \cO}
  \sup_{\|\Lambda_{0}\|_{2} \leq K} \Bigl\{ 
 {\mathbb E} \Bigl[ \big( \Lambda_{0} 
 \wedge 
\bar{\Psi}(\varsigma, \xi  ) 
\bigr) 
 \bigl( {\mathcal M}^2_{{\mathbb E}_{t}}(\vartheta^{(0)} - \vartheta^{(0)\prime})
 \bigr)^{1/2}
\Bigr] \Bigr\}
\nonumber
\\
&\leq C' 
\biggl\{ \sup_{\varsigma \in \cO}   \sup_{\|\Lambda_{0}\|_{2} \leq K} {\mathbb E} \Bigl[ 
\bigl( \Lambda_{0} \wedge \bar{\Psi}(\varsigma, \xi) \bigr)  \bigl[1 \wedge
 \bigl( \vert \xi - \xi' \vert + \Phi_{\alpha}(\xi,\xi') \bigr) \bigr] \nonumber
\\
&\hspace{150pt} \times
\Bigl(
\vert \eta \vert
+ \| \eta \|_{2} + \bigl( {\mathcal R}_{a}^{4} \bigr)^{1/4} + 
\E \bigl( {\mathcal R}_{a}^2 \bigr)^{1/2}
\Bigr)  \Bigr] \biggr\} \nonumber
\\
&\hspace{15pt} + C' 
\biggl\{ \sup_{\varsigma \in \cO}   \sup_{\|\Lambda_{0}\|_{2} \leq K} {\mathbb E} \Bigl[ 
\bigl( \Lambda_{0} \wedge \bar{\Psi}(\varsigma, \xi) \bigr)  
\bigl( \Delta {\mathcal R}_{a}^{2} \bigr)^{1/2} \Bigr] \biggr\} \nonumber. 
\end{align}
 Plugging the above estimate into \eqref{eq:1ere eq}, we complete the proof. \qed
\end{proof}
\vspace{5pt}

Here is a very useful condition to check \eqref{eq:lem:stability:hyp}:
\begin{Lemma}
\label{lem:function:phi:alpha}
Consider a family of 
progressively-measurable random paths $((\theta^{\xi},\hat\theta^{\xi}) :  [t,T]
\ni s \mapsto (\theta^{\xi}_{s},\hat \theta_{s}^\xi))_{\xi}$
parametrized by $\xi
 \in L^2(\Omega,\cF_{t},\P;\R^d)$, with the property that 
the paths $(\hat \theta^{\xi,(0)} :  [t,T]
\ni s \mapsto \hat \theta^{\xi,(0)}_{s})_{\xi}$
are continuous, and that
$(\hat \theta_{s}^{\xi,(0)})_{s \in [t,T]}$ and 
$(\hat \theta_{s}^{\xi',(0)})_{s \in [t,T]}$ have the same distribution when 
$\xi \sim \xi'$. 

Assume that, for any $p \geq 1$, 
there exists a constant $C_{p}$ such that, for all $\xi$ and $\xi'$,
\begin{equation}
\label{eq:lem:phialpha} 
\begin{split}
 &\NSt{p}{\theta^{\xi,(0)}}
+ \NSt{p}{\hat\theta^{\xi,(0)}} + \NHt{p}{\theta^{\xi}}
 \le
  C_p \bigl(1+|\xi|+\NL{2}{\xi} \bigr),
 \\
 &\NSt{p}{\theta^{\xi,(0)}-\theta^{\xi',(0)}} 
 + \NSt{p}{\hat\theta^{\xi,(0)}-\hat\theta^{\xi',(0)}} 
 + \NHt{p}{\theta^{\xi} - \theta^{\xi'}}
\\ 
&\hspace{160pt} \le
  C_p \bigl[|\xi - \xi'|+W_{2}\bigl([\xi],[\xi'] \bigr)\bigr],
\end{split}
\end{equation}
then, we can find constants $C_{p}'$ such that, for all $\xi$ and $\xi'$
 (with the notation  \eqref{eq:mkv:stability:Psi}),
\begin{equation*}
\begin{split}
&\bigl(\bar{\mathcal M}^{2p}(\theta^\xi,\hat \theta^\xi) \bigr)^{1/2p}
\leq C_{p}' 
\bigl[ 1 + \vert \xi  \vert + \| \xi  \|_{2} \bigr],
\\
&\bigl(\bar{\mathcal M}^{2p}\bigl((\theta^\xi ,\hat {\theta}^\xi), (\theta^{\xi'},\hat{\theta}^{\xi'}) \bigr) \bigr)^{1/2p}
 \leq C_{p}' 
\bigl[ \vert \xi - \xi' \vert + \tilde{\Phi}_{\alpha}(\xi,\xi') \bigr],
\end{split}
\end{equation*}
where
\begin{equation}
\label{eq:tilde:phi:alpha}
\tilde{\Phi}_{\alpha}(\xi,\xi')= 
\E \bigl[ \vert \xi - \xi' \vert^2 \bigr]^{1/2} +
\sup_{s \in [t,T]} \Phi_{\alpha}(\hat \theta^{\xi,(0)}_{s}, \hat \theta^{\xi',(0)}_{s}), 
\quad \xi,\xi' \in L^2(\Omega,\cF_{t},\P;\R^d).  
\end{equation}
The functional $\tilde{\Phi}_{\alpha}$ is continuous at any point of the diagonal of
$[L^2(\Omega,\cF_{t},\P;\R^d)]^2$ and satisfies \eqref{assumption:mkv:4}
(up to a modification of the constant $C$ therein).  
\end{Lemma}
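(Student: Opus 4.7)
The task splits into four subclaims: (i) the moment bound for $\bar{\mathcal M}^{2p}(\theta^\xi,\hat\theta^\xi)$, (ii) the two-input Lipschitz estimate, (iii) the continuity of $\tilde\Phi_\alpha$ at every diagonal point, (iv) the growth estimate \eqref{assumption:mkv:4} for $\tilde\Phi_\alpha$. The first two are essentially book-keeping from \eqref{eq:lem:phialpha} and the definition of $\bar{\mathcal M}^{2p}$: the conditional moment pieces $\sup_s \E_t[|X^\xi_s|^{2p}+|Y^\xi_s|^{2p}]$ and the $\mathcal H^{2p}$-piece $\NHt{2p}{Z^\xi}^{2p}$ are directly controlled pointwise by $(C_{2p}(1+|\xi|+\|\xi\|_2))^{2p}$, while the pathwise $L^2$-pieces $\|\hat\theta^{\xi,(0)}_s\|_2^{2p}$ follow by taking expectations of the same inequality. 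For (ii), the difference term $\bar{\mathcal M}^{2p}(\theta^\xi-\theta^{\xi'},\hat\theta^\xi-\hat\theta^{\xi'})$ is handled identically using the Lipschitz part of \eqref{eq:lem:phialpha} together with $W_2([\xi],[\xi'])\leq \E[|\xi-\xi'|^2]^{1/2}$, and the remaining summand $\Phi_\alpha^{2p}(\hat\theta^{\xi,(0)},\hat\theta^{\xi',(0)})^{1/(2p)}$ is by construction exactly the second summand of $\tilde\Phi_\alpha(\xi,\xi')$.

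For (iii), the first summand in $\tilde\Phi_\alpha(\xi_n,\xi_n')$ trivially vanishes as $(\xi_n,\xi_n')\to(\xi,\xi)$ in $L^2$. For the second summand, the Lipschitz estimate applied to $\xi_n\to\xi$ and $\xi_n'\to\xi$ (together with $W_2([\xi_n],[\xi])\leq \|\xi_n-\xi\|_2$) yields $\sup_s \|\hat\theta^{\xi_n,(0)}_s-\hat\theta^{\xi,(0)}_s\|_2+\sup_s \|\hat\theta^{\xi_n',(0)}_s-\hat\theta^{\xi,(0)}_s\|_2\to 0$; combined with the assumed path continuity, this shows that $(\hat\theta^{\xi_n,(0)}_s,\hat\theta^{\xi_n',(0)}_s)_{s\in[t,T]}$ converges uniformly in $s$ to the compact arc $K=\{(\hat\theta^{\xi,(0)}_s,\hat\theta^{\xi,(0)}_s):s\in[t,T]\}$ of the diagonal in $[L^2(\Omega,\mathcal A,\P;\R^d)]^2$. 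A finite-subcover argument based on the diagonal continuity of $\Phi_\alpha$ (which forces $\Phi_\alpha=0$ on $K$) then produces, for each $\epsilon>0$, a neighborhood of $K$ on which $\Phi_\alpha<\epsilon$, whence $\sup_s \Phi_\alpha(\hat\theta^{\xi_n,(0)}_s,\hat\theta^{\xi_n',(0)}_s)\to 0$.

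The main obstacle is (iv). For $\xi\sim\xi'$ the distributional assumption gives $\hat\theta^{\xi,(0)}_s \sim \hat\theta^{\xi',(0)}_s$ for every $s\in[t,T]$, so \eqref{assumption:mkv:4} applies to each such pair and it remains to transfer the randomness inside the resulting expectation from the flow back to the initial condition $\xi$. Apply the tower property $\E[\cdot]=\E[\E_t[\cdot]]$ and conditional Cauchy--Schwarz to factor
\[
\E_t\bigl[(1+|\hat\theta^{\xi,(0)}_s|^{2\alpha}+|\hat\theta^{\xi',(0)}_s|^{2\alpha}+\|\hat\theta^{\xi,(0)}_s\|_2^{2\alpha})|\hat\theta^{\xi,(0)}_s-\hat\theta^{\xi',(0)}_s|^2\bigr]
\]
as a product of two conditional $L^2$-factors. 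The first is bounded by $C(1+|\xi|^{2\alpha}+|\xi'|^{2\alpha}+\|\xi\|_2^{2\alpha})$ using $\NSt{4\alpha}{\hat\theta^{\xi,(0)}}\leq C(1+|\xi|+\|\xi\|_2)$ and $\|\xi'\|_2=\|\xi\|_2$; the second is bounded by $C|\xi-\xi'|^2$ via $\NSt{4}{\hat\theta^{\xi,(0)}-\hat\theta^{\xi',(0)}}\leq C_4(|\xi-\xi'|+W_2([\xi],[\xi']))$ with $W_2=0$. Taking unconditional expectations and the supremum over $s$ (legitimate since the resulting bound is $s$-independent) yields the required estimate on $\sup_s \Phi_\alpha$; the additional summand $\E[|\xi-\xi'|^2]^{1/2}$ in $\tilde\Phi_\alpha$ is absorbed trivially. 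The delicate point is that \eqref{eq:lem:phialpha} furnishes a pathwise-in-$\omega$ control of $\NSt{4}{\hat\theta^{\xi,(0)}-\hat\theta^{\xi',(0)}}$ by $|\xi-\xi'|$, and it is precisely this pointwise (rather than merely integrated) Lipschitz property that lets the final upper bound take the shape $\E[(1+\cdots)|\xi-\xi'|^2]$, as demanded by \eqref{assumption:mkv:4}, instead of the weaker product $\E[1+\cdots]\cdot\E[|\xi-\xi'|^2]$.
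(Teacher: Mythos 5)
Your proof is correct and follows essentially the same route as the paper's: the moment and Lipschitz bounds are read off directly from \eqref{eq:lem:phialpha}, diagonal continuity of $\tilde{\Phi}_{\alpha}$ is obtained via relative compactness of the family $(\hat\theta_{s}^{\xi_{n},(0)})_{s,n}$ combined with the uniform smallness of $\Phi_{\alpha}$ near the diagonal on compact sets, and the verification of \eqref{assumption:mkv:4} uses the tower property with conditional Cauchy--Schwarz, exploiting $\xi\sim\xi'$ to kill the $W_{2}$ term and the pointwise-in-$\omega$ form of \eqref{eq:lem:phialpha} to keep the bound in the product form $\E[(1+\cdots)\vert\xi-\xi'\vert^2]^{1/2}$. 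Nothing to add.
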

\proof
The bound for 
$(\bar{\mathcal M}^{2p}(\theta^\xi, \hat \theta^\xi))^{1/2p}$
is a straightforward consequence of 
the first line in \eqref{eq:lem:phialpha}. 
The bound for 
$(\bar{\mathcal M}^{2p}((\theta^\xi,\hat \theta^\xi),(\theta^{\xi'},\hat \theta^{\xi'}))^{1/2p}$
follows from the second line 
in \eqref{eq:lem:phialpha} and from the definition of 
$\bar{\cM}^{2p}$ 
in \eqref{eq:mkv:stability:Psi}.

The main issue is to check that $\tilde{\Phi}_{\alpha}$
satisfies the same condition as $\Phi_{\alpha}$. 
By \eqref{eq:lem:phialpha}, the map 
$L^2(\Omega,\cA,\P;\R^d) \ni \xi \mapsto 
(\hat \theta_{s}^{\xi,(0)})_{s \in [t,T]} \in \cS^2([t,T];\R^{l})$
(with the appropriate $l$) is continuous
and, for any $\xi \in L^2(\Omega,\cA,\P;\R^d)$, the map 
$[t,T] \ni s \mapsto \hat \theta_{s}^{\xi,(0)} \in L^2(\Omega,\cA,\P;\R^l)$
is also continuous, 
proving that, for any sequence $(\xi_{n})_{n \geq 1}$ converging to $\xi$ in $L^2$, 
the family of random variables $(\hat \theta_{s}^{\xi_{n},(0)})_{s \in [t,T],n \geq 1}$
is relatively compact. Since, for any compact subset $\cK \subset L^2(\Omega,\cA,\P;\R^d)$,
$\sup \{ \Phi_{\alpha}(\chi,\chi'), \chi, \chi' \in \cK, \| \chi - \chi' \|_{2} \leq \delta \}$
tends to $0$ with $\delta$, continuity of 
$\tilde{\Phi}_{\alpha}$ at any point of the diagonal easily follows. 

Now, we check that $\tilde{\Phi}_{\alpha}$
satisfies \eqref{assumption:mkv:4} when $\xi$ and $\xi'$ have the same distribution.
Since $\hat\theta_{s}^{\xi,(0)}$ and $\hat\theta_{s}^{\xi',(0)}$ have the same distribution, 
we deduce from \eqref{eq:lem:phialpha} that
\begin{equation*}
\begin{split}
&\E \bigl[ \bigl( 1 + \vert \hat\theta_{s}^{\xi,(0)} \vert^{2\alpha} + \vert \hat\theta_{s}^{\xi',(0)}
\vert^{2\alpha}
+ \| \hat\theta_{s}^{\xi,(0)} \|_{2}^{2\alpha}
 \bigr) \vert \hat\theta_{s}^{\xi,(0)} - \hat\theta_{s}^{\xi',(0)} \vert^2 \bigr]^{1/2}
\\
&\leq \E \Bigl[ {\mathbb E}_{t}
\bigl[
\bigl( 1 + \vert \hat \theta_{s}^{\xi,(0)} \vert^{4\alpha} + \vert \hat \theta_{s}^{\xi',(0)}
\vert^{4\alpha}
+ \| \hat \theta_{s}^{\xi,(0)} \|_{2}^{4\alpha}
 \bigr) \bigr]^{1/2} 
\E_{t} \bigl[
\vert \hat \theta_{s}^{\xi,(0)} - \hat \theta_{s}^{\xi',(0)} \vert^4 \bigr]^{1/2} \Bigr]^{1/2}
\\
&\leq C \E \bigl[ \bigl( 1+ \vert \xi \vert^{2\alpha} + 
\vert \xi' \vert^{2\alpha}
+\| \xi \vert^{2\alpha}_{2}
\bigr)  \vert \xi - \xi' \vert^2  \bigr]^{1/2}.  
\end{split}
\end{equation*}
\eproof

\begin{Example}
\label{example:UI}
We illustrate the meaning of \eqref{eq:stab:main estimate} in the simplest (but crucial) case
when $\cR_{a}\equiv \Delta \cR_{a} \equiv 0$. Clearly, the most challenging term is 
\begin{equation*}
\sup_{\varsigma \in \cO}   \sup_{\|\Lambda_{0}\|_{2} \leq K} {\mathbb E} \Bigl[ 
\bigl( \Lambda_{0} \wedge \bar{\Psi}(\varsigma, \xi) \bigr)  \bigl[1 \wedge
 \bigl( \vert \xi - \xi' \vert + \Phi_{\alpha}(\xi,\xi') \bigr) \bigr] 
\bigl(
\vert \eta \vert
+ \| \eta \|_{2} \bigr)  \Bigr],
\end{equation*}
which is less than
\begin{equation*}
\sup_{\varsigma \in \cO}   \sup_{\|\Lambda_{0}\|_{2} \leq K} {\mathbb E} \Bigl[ 
\bigl( \Lambda_{0} \wedge \bar{\Psi}(\varsigma, \xi) \bigr)^2  \bigl[1 \wedge
 \bigl( \vert \xi - \xi' \vert + \Phi_{\alpha}(\xi,\xi') \bigr) \bigr]^2  \Bigr]^{1/2} \|\eta\|_{2}
\leq  \bar{\Phi}(\xi,\xi') \| \eta \|_{2},
\end{equation*}
with
\begin{equation}
\label{eq:bar phi}
\bar{\Phi}(\xi,\xi') = 
\sup_{\varsigma \in \cO}   \sup_{\|\Lambda_{0}\|_{2} \leq K} {\mathbb E} \Bigl[ 
\bigl( \Lambda_{0} \wedge \bar{\Psi}(\varsigma, \xi) \bigr)^2  \bigl[1 \wedge
 \vert \xi - \xi' \vert^2  \bigr] \Bigr]^{1/2} + K \Phi_{\alpha}(\xi,\xi').
 \end{equation}
Recalling the bound 
$0 \leq \bar{\Psi}(\varsigma,\xi) \leq C( 1 + \vert \xi(\omega) \vert^{ \alpha+1} 
+ \| \xi \|_{2}^{\alpha+1})$,
there exists a constant $C'$ such that, whenever $\xi$ and $\xi'$ 
have the same distribution,
\begin{equation*} 
\bar\Phi(\xi,\xi') \leq 
C' {\mathbb E} \bigl[ \bigl(1+ \vert \xi \vert^{2\alpha+2} +
  \vert \xi' \vert^{2\alpha+2} \bigr)
   \vert \xi - \xi' \vert^2 
  \bigr]^{1/2},
 \end{equation*} 
which fits \eqref{assumption:mkv:4},
with $\alpha+1$ instead of $\alpha$, up to another multiplicative constant. 
The functional $\bar{\Phi}$ is thus a candidate for being a function 
of the same type as $\Phi_{\alpha+1}$, according to the notation used in 
the assumptions \eqref{assumption:mkv:1}--\eqref{assumption:mkv:5}. 
Still, in order to guarantee that $\bar{\Phi}$ indeed
satisfies the same assumptions as $\Phi_{\alpha+1}$, it is necessary 
to prove that it is continuous at any point of the diagonal. We claim that it is the case under 
the two additional conditions (the proof is given right below):

(i) for each $\xi \in L^2(\Omega,\cF_{t},\P;\R^d)$, the family 
$(\Psi^2(\varsigma,\xi))_{\varsigma \in \cO}$ is uniformly integrable,

(ii)
the mappings $(L^2(\Omega,\cF_{t},\P;\R^d) \ni \xi \mapsto \Psi(\varsigma,\xi) \in L^2(\Omega,\cA,\P;\R^d))_{\varsigma \in \cO}$
are equicontinuous.

As an example of a
family $(\theta^{\xi},\hat{\theta}^\xi)_{\xi}$
and a
 functional $\Psi : \cO \times L^2(\Omega,\cF_{t},\P;\R^d) \ni (\varsigma,\xi) 
\mapsto \Psi(\varsigma,\xi)$ that satisfy the prescription in 
Corollary \ref{cor:stab:hard} together with
$(i)$ and $(ii)$, we can consider (again, the proof is given right below)
$(\theta^\xi :\equiv \theta^{t,\xi},\hat\theta^\xi :\equiv \hat \theta^{t,\xi})_{\xi}$ 
or $(\theta^\xi :\equiv \theta^{t,x,[\xi]},\hat\theta^\xi :\equiv \hat \theta^{t,\xi})_{\xi}$
and 
\begin{equation}
\label{eq:sncf:1}
\Psi\bigl(\varsigma=(w,s),\xi \bigr) = \sup_{H=B,\Sigma,F,G}  
{\E}_{t} \Bigl[ \bigl\vert \hat{H}_{\ell}\bigl(w,
\hat\theta_{s}^{t,\xi,(0)}  \bigr) \bigr\vert^2 \Bigr]^{1/2},
\end{equation} 
for 
$w \in \R^d \times \R^m \times \R^{m \times d}$ and $s \in [t,T]$.
(The definition of $\Psi(\varsigma,\xi)$ for 
a random variable 
$\xi$ that is not $\cF_{t}$-measurable is useless here, 
{since $\xi$ is exclusively thought as an initial condition 
of the system 
 \eqref{eq X-Y t,xi}
at time $t$}.)  
\end{Example}

\proof
\textit{First step.}
 We first check that, under \textit{(i)} and \textit{(ii)}, 
$\bar{\Phi}$ is continuous at any point of the diagonal. 
Given two sequences $(\xi_{n})_{n \geq 0}$
and $(\xi_{n}')_{n \geq 0}$ converging in $L^2(\Omega,\cF_{t},\P;\R^d)$ towards some $\xi$, we already know that 
$(\Phi_{\alpha}(\xi_{n},\xi_{n}'))_{n \geq 0}$
converges to $0$. Therefore, it suffices to focus on the first term 
in the right-hand side of \eqref{eq:bar phi}. We have
\begin{equation}
\label{eq:17:09:14:1}
\begin{split}
&\Bigl\vert \sup_{\varsigma \in \cO} \sup_{\| \Lambda_{0} \|_{2} \leq K}
\E \Bigl[ 
\bigl( \Lambda_{0} \wedge \bar{\Psi}(\varsigma,\xi_{n}) \bigr)^2
\bigl[
1 \wedge 
 \vert \xi_{n} - \xi_{n}' \vert^2 
\bigr]
\Bigr] 
\\
&\hspace{15pt}-
\sup_{\varsigma \in \cO} \sup_{\| \Lambda_{0} \|_{2} \leq K}
\E \Bigl[ 
\bigl( \Lambda_{0} \wedge \bar{\Psi}(\varsigma,\xi) \bigr)^2
\bigl[
1 \wedge 
 \vert \xi - \xi' \vert^2 
\bigr]
\Bigr] \Bigr\vert
\\
&\leq 
\sup_{\varsigma \in \cO} \sup_{\| \Lambda_{0} \|_{2} \leq K}
\E \Bigl[ 
\bigl( \Lambda_{0} \wedge \bar{\Psi}(\varsigma,\xi) \bigr)^2
\bigl\vert
1 \wedge 
 \vert \xi - \xi' \vert^2 
- 1 \wedge 
 \vert \xi_{n} - \xi_{n}' \vert^2 
\bigr\vert
\Bigr]
\\
&\hspace{15pt} + 
\sup_{\varsigma \in \cO} \sup_{\| \Lambda_{0} \|_{2} \leq K}
\E \Bigl[ \Bigl\vert
\bigl( \Lambda_{0} \wedge \bar{\Psi}(\varsigma,\xi) \bigr)^2
- \bigl( \Lambda_{0} \wedge \bar{\Psi}(\varsigma,\xi_{n}) \bigr)^2
\Bigr\vert \Bigr].
\end{split}
\end{equation}
Recalling the bound $\bar{\Psi}(\varsigma,\xi) \leq \Psi(\varsigma,\xi)$, the first term in the right-hand side 
is less than 
\begin{equation*}
\sup_{\varsigma \in \cO} 
\E \Bigl[ 
\Psi^2(\varsigma,\xi) 
\bigl\vert
1 \wedge 
 \vert \xi - \xi' \vert^2 
- 1 \wedge 
 \vert \xi_{n} - \xi_{n}' \vert^2 
\bigr\vert
\Bigr],
\end{equation*}
which tends to $0$ by uniform integrability of the family $(\Psi(\varsigma,\xi))_{\varsigma \in \cO}$.

Consider now the second term in the right-hand side of 
\eqref{eq:17:09:14:1}. We have
\begin{equation*}
\begin{split}
&\sup_{\varsigma \in \cO} \sup_{\| \Lambda_{0} \|_{2} \leq K}
\E \Bigl[ \Bigl\vert
\bigl( \Lambda_{0} \wedge \bar{\Psi}(\varsigma,\xi) \bigr)^2
- \bigl( \Lambda_{0} \wedge \bar{\Psi}(\varsigma,\xi_{n}) \bigr)^2
\Bigr\vert \Bigr]
\\
&\leq 2K \sup_{\varsigma \in \cO} \sup_{\| \Lambda_{0} \|_{2} \leq K}
\E \Bigl[ \Bigl\vert
 \Lambda_{0} \wedge \bar{\Psi}(\varsigma,\xi) 
- \Lambda_{0} \wedge \bar{\Psi}(\varsigma,\xi_{n}) \Bigr\vert^2 \Bigr]^{1/2}. 
\end{split}
\end{equation*}
Recalling from \eqref{eq:bar psi}
that $\bar{\Psi}(\varsigma,\xi) = \Psi(\varsigma,\xi) \wedge\varphi(\xi)$, 
with $\varphi(\xi) = [C(1+ \vert \xi \vert^{\alpha+1} + \| \xi \|_{2}^{\alpha+1}]$,
writing $\vert 
\Lambda_{0} \wedge \bar{\Psi}(\varsigma,\xi) 
- \Lambda_{0} \wedge \bar{\Psi}(\varsigma,\xi_{n}) 
\vert \leq 
\vert 
 \Lambda_{0} \wedge \Psi(\varsigma,\xi) \wedge \varphi(\xi)
- \Lambda_{0} \wedge \Psi(\varsigma,\xi) \wedge \varphi(\xi_{n}) 
\vert
+ 
\vert 
\Lambda_{0} \wedge \Psi(\varsigma,\xi) \wedge \varphi(\xi_{n}) 
- \Lambda_{0} \wedge \Psi(\varsigma,\xi_{n}) \wedge \varphi(\xi_{n}) \vert$
and using the Lipschitz property of the
map $\R \ni x \mapsto a \wedge x$, for any $a \in \R$, 
we deduce that
\begin{equation*}
\begin{split}
&\sup_{\varsigma \in \cO} \sup_{\| \Lambda_{0} \|_{2} \leq K}
\E \Bigl[ \Bigl\vert
\bigl( \Lambda_{0} \wedge \bar{\Psi}(\varsigma,\xi) \bigr)^2
- \bigl( \Lambda_{0} \wedge \bar{\Psi}(\varsigma,\xi_{n}) \bigr)^2
\Bigr\vert \Bigr]
\\
&\leq 2K \biggl[ \sup_{\varsigma \in \cO} 
\E \Bigl[ \Bigl\vert
{\Psi}(\varsigma,\xi) 
-  {\Psi}(\varsigma,\xi_{n}) \Bigr\vert^2 \Bigr]^{1/2} 
+ \sup_{\varsigma \in \cO} 
\E \Bigl[ \Bigl\vert \Psi(\varsigma,\xi) \wedge \varphi(\xi) - 
\Psi(\varsigma,\xi) \wedge \varphi(\xi_{n})
\Bigr\vert^2 \Bigr] \biggr]^{1/2}. 
\end{split}
\end{equation*}
By uniform continuity of the mappings $(\Psi(\varsigma,\cdot))_{\varsigma \in \cO}$, the first term in the right-hand side tends to $0$. By uniform integrability of the family $(\Psi^2(\varsigma,\xi))_{\varsigma \in \cO}$, 
the second one also tends to $0$. 

\textit{Second step.}
We now check 
the example.
By Lemmas \ref{le app cont} and 
\ref{lem:function:phi:alpha},
\eqref{eq:lem:stability:hyp}
is satisfied with $(\theta^\xi,\hat \theta^{\xi}) :\equiv (\theta^{t,\xi},\hat\theta^{t,\xi})$
or $(\theta^\xi,\hat \theta^{\xi}) :\equiv(\theta^{t,x,[\xi]},\hat\theta^{t,\xi})$.
We prove that  $\Psi$ in \eqref{eq:sncf:1}
satisfies \textit{(i)} and \textit{(ii)}. 
We check first the uniform integrability property \textit{(i)}.
It suffices to check it for $H$ equal to $B$, $\Sigma$, $F$ or $G$
(if uniform integrability holds for $H$ equal to $B$, $\Sigma$, $F$ or $G$, then 
the supremum over $H$ equal to $B$, $\Sigma$, $F$ or $G$
also satisfies \textit{(i)}).
 Given $\xi \in L^2(\Omega,\cF_{t},\P;\R^d)$, 
it suffices to prove that the family $(\sup_{r \in [t,T]} \vert \hat{H}_{\ell}(w, 
\hat\theta_{r}^{t,\xi,(0)}) \vert^2)_{w \in \R^k}$ (for the appropriate $k$) is uniformly integrable. 
Consider a positive constant  $\varepsilon >0$. 
Since the path $[t,T] \ni r \mapsto \hat\theta_{r}^{t,\xi,(0)}$ is continuous
and $\Phi_{\alpha}$ is continuous at any point of the diagonal, we can find
a constant $\delta >0$ such that 
\begin{equation}
\label{eq:19:09:14:2}
\sup_{(r,s) \in [t,T]^2 : \vert s-r \vert \leq \delta} \Phi_{\alpha}\bigl(\hat\theta_{r}^{t,\xi,(0)},\hat\theta_{s}^{t,\xi,(0)}\bigr)
\leq \varepsilon.
\end{equation}
Then, for $(r,s) \in [t,T]^2$, Cauchy Schwarz' inequality yields
\begin{equation*}
\begin{split}
\Bigl\vert \E \bigl[ \vert \hat{H}_{\ell}(w,\hat\theta_{s}^{t,\xi,(0)}) \vert^2
\bigr] -
 \E \bigl[ \vert \hat{H}_{\ell}(w,\hat\theta_{r}^{t,\xi,(0)}) \vert^2
\bigr]  \Bigr\vert &\leq 
2K 
 \E \Bigl[ \bigl\vert \hat{H}_{\ell}(w,\hat\theta_{s}^{t,\xi,(0)}) -  \hat{H}_{\ell}(w,\hat\theta_{r}^{t,\xi,(0)}) \bigr\vert^2
\Bigr]^{1/2} 
\\
&\leq 2K \varepsilon^{1/2}. 
\end{split}
\end{equation*}
Therefore, 
denoting by $(t=s_{0}<s_{1}<\dots<s_{N}=T)$ a subdivision of $[t,T]$ 
with stepsize less than $\delta$, we deduce that, 
for any event $A \in \cA$, 
\begin{equation*}
\begin{split}
\sup_{w \in \R^k}
\sup_{t \leq r \leq T}
\E \Bigl[
 \bigl\vert \hat{H}_{\ell}\bigl(w,
\hat\theta_{r}^{t,\xi,(0)}  \bigr) \bigr\vert^2 {\mathbf 1}_{A} \Bigr]
&\leq \sup_{w \in \R^k}\sup_{i=0,\dots,N}  \E \Bigl[
 \bigl\vert \hat{H}_{\ell}\bigl(w,
\hat\theta_{s_{i}}^{t,\xi,(0)}  \bigr) \bigr\vert^2 {\mathbf 1}_{A} \Bigr]
+ 2K \varepsilon^{1/2}. 
\end{split}
\end{equation*}
By the uniform integrability of each of the family 
$(\vert \hat{H}_{\ell}(w,\hat\theta_{s_{i}}^{t,\xi,(0)})  \vert^2)_{w \in \R^k}$, 
for $i=0,\dots,N$, see \HYP{1}, we deduce that the left-hand side is indeed less
than $4K \varepsilon^{1/2}$ for $\delta$ small enough. 

We check uniform continuity of the mappings
$(\xi \mapsto \hat{H}_{\ell}(w,\hat\theta_{s}^{t,\xi,(0)}))_{w \in \R^k,s \in [t,T]}$: 
\begin{equation*}
\sup_{w \in \R^k} \sup_{s \in [t,T]}
\E \Bigl[
\bigl\vert
\hat{H}_{\ell}\bigl(w,\hat\theta_{s}^{t,\xi,(0)}\bigr)
- \hat{H}_{\ell}\bigl(w,\hat\theta_{s}^{t,\xi',(0)}\bigr) \bigr\vert^2 \Bigr]^{1/2}
\leq C \sup_{s \in [t,T]} 
\Phi_{\alpha}\bigl( \hat\theta_{s}^{t,\xi,(0)},\hat\theta_{s}^{t,\xi',(0)} \bigr),
\end{equation*}
which tends to $0$ as $\xi'- \xi$ tends to $0$, by the same argument 
as in Lemma \ref{lem:function:phi:alpha}.
\eproof
\vspace{5pt}

We complete the subsection with a very important observation:

\begin{Remark}
\label{rem:unif:phi:alpha}
Example \ref{example:UI} ensures that
Corollary \ref{cor:stab:hard} may be applied 
with $(\theta^\xi,\hat{\theta}^\xi):\equiv (\theta^{t,\xi},\theta^{t,\xi})$
or $(\theta^\xi,\hat{\theta}^\xi):\equiv (\theta^{t,x,[\xi]},\theta^{t,\xi})$,
in which case \eqref{eq:lem:stability:hyp} holds for 
a suitable function $\Phi_{\alpha}$ (defined on $[L^2(\cA,\cF_{t},\P;\R^d)]^2$) 
and the 
the second term in the right-hand side of \eqref{eq:stab:main estimate}
may be bounded by a function of 
the type $\Phi_{\alpha+1}$ (also defined on $[L^2(\cA,\cF_{t},\P;\R^d)]^2$). 

It is worth mentioning that, with the construction that is suggested, 
both $\Phi_{\alpha}$ and $\Phi_{\alpha+1}$ may depend on 
$t$, which is clear from \eqref{eq:tilde:phi:alpha}
and \eqref{eq:sncf:1}. 

Below, we want to use versions of both that are independent of 
$t$. This requires first to restrict the domain of definition of both functionals to 
$[L^2(\cA,\cF_{0},\P;\R^d)]^2$. Second, this requires a suitable
adaptation of \eqref{eq:tilde:phi:alpha} and \eqref{eq:sncf:1}. 

When $\xi \in L^2(\Omega,\cF_{0},\P;\R^d)$, we may extend $(\theta^{t,\xi}_{s})_{s \in [t,T]}$
to the interval $[0,T]$ by letting 
$X^{t,\xi}_{s}=\xi$, 
$Y^{t,\xi}_{s} = Y^{t,\xi}_{t}$ 
and $Z^{t,\xi}_{s} = 0$
for $s \in [0,t]$. 
Then, 
for $\xi,\xi' \in L^2(\Omega,\cF_{0},\P;\R^d)$,
instead of \eqref{eq:tilde:phi:alpha}, we may let
\begin{equation*}
\tilde{\Phi}_{\alpha}(\xi,\xi')= 
\E \bigl[ \vert \xi - \xi' \vert^2 \bigr]^{1/2} +
\sup_{t \in [0,T]} \sup_{s \in [0,T]} \Phi_{\alpha}(\hat \theta^{t,\xi,(0)}_{s}, \hat \theta^{t,\xi',(0)}_{s}), 
\end{equation*}
(that is we also take the supremum in $t$),
and, instead of \eqref{eq:sncf:1}, we may let
\begin{equation*}
\Psi\bigl(\varsigma=(w,t,s),\xi \bigr) = \sup_{H=B,\Sigma,F,G}  
{\E}_{t} \Bigl[ \bigl\vert \hat{H}_{\ell}\bigl(w,
 \hat\theta_{s}^{t,\xi,(0)} \bigr) \bigr\vert^2 \Bigr]^{1/2},
\end{equation*} 
(that is we include $t$ in the variable $\varsigma$). 

Then, the resulting new functionals $\Phi_{\alpha}$ and ${\Phi}_{\alpha+1}$
are independent of $t$, are 
continuous at any point of the diagonal of 
$[L^2(\Omega,\cF_{0},\P;\R^d)]^2$
 and satisfy \eqref{assumption:mkv:4}
 with respect to $\alpha$ and $\alpha+1$. 
The 
proof works exactly as in Lemma
 \ref{lem:function:phi:alpha}
and in Example
\ref{example:UI}, noticing that 
that the mapping $L^2(\Omega,\cF_{0},\P;\R^d) \times [0,T] \times
[0,T] \ni (\xi,s,t) \mapsto \theta_{s}^{t,\xi} \in L^2(\Omega,\cA,\P;\R^d)$
is continuous (which is the main ingredient to make the argument work). 
\end{Remark}

 \subsection{Analysis of the first-order derivatives} 
  \subsubsection{First-order derivatives
 of the McKean-Vlasov system} 
As we already explained in Examples \ref{ex appli} and \ref{ex appli suite}, the 
shape of the system 
\eqref{eq:sys:linear:mkv} has been specifically designed in order to 
investigate the derivative of the system of the original FBSDE in the direction 
of the measure. Thus, we shall make use of the results from 
Subsection \ref{subse:stab:1}, the constant $L$
 in 
\HYP{0}(i)-\HYP{1} now playing the role of the constant $K$ in the above statements. 
In order to stress the fact that this subsection is devoted to the application of the general results 
proved above to the specific question of the differentiability of the flow, we shall use constants 
$c(L)$ or $c_{p}(L)$ instead of $1/\Gamma(K)$ or $1/\Gamma_{p}(K)$ for quantifying
small time constraints of the type 
$T \leq c(L)$ or $T \leq c_{p}(L)$.

To make things clear, we also recall the identification of $h_{\ell}$, 
$\hat{H}_{\ell}$ and $H_{a}$
in \eqref{eq:ex:mkv:stability}:
\begin{equation}
\label{eq:identification:derivatives}
h_{\ell}(w,\langle \hat V^{(0)} \rangle) = \partial_{w} h(w,\law{\hat V^{(0)}}), \ 
\hat{H}_{\ell}(w,\langle \hat V^{(0)} \rangle) = 
\partial_{\mu} h(w,\law{\hat V^{(0)}})(\langle \hat V^{(0)} \rangle), \ 
H_{a} \equiv 0.
\end{equation}

The next results state the first order differentiability of the McKean-Vlasov 
system.

\begin{Lemma}
\label{lem:1storder:diff}
Given a continuously differentiable path of initial conditions 
$\R \ni \lambda \mapsto \xi^{\lambda}
\in L^2(\Omega,{\mathcal F}_{t},\P;\R^d)$, $t$ standing for the initial time in 
$[0,T]$, we can find a constant $c:=c(L) >0$
such that, for $T \leq c$,  the path 
$\R \ni \lambda \mapsto \theta^{\lambda} = 
(X^{\lambda},Y^{\lambda},Z^{\lambda}) := \theta^{t,\xi^\lambda} \in {\mathcal S}^2([t,T];\R^d) 
\times 
{\mathcal S}^2([t,T],\R^m) \times {\mathcal H}^2([t,T];\R^{m \times d})$
is continuously differentiable. 
\end{Lemma}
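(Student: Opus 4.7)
The plan is to identify the candidate derivative $\vartheta^{\lambda} := (d/d\lambda)\theta^{\lambda}$ as the solution of the linearized McKean-Vlasov FBSDE obtained by formally differentiating \eqref{eq X-Y t,xi} in $\lambda$, and then to verify, via the tailor-made stability estimates of Subsection \ref{subse:stab:1}, that it is indeed the Fr\'echet derivative. Setting $\eta^{\lambda} := (d/d\lambda)\xi^{\lambda}$, the candidate system reads as a linear FBSDE of the form \eqref{eq:sys:linear:mkv} with $\hat{\theta} \equiv \theta^{\lambda}$, $\hat{\vartheta} \equiv \vartheta^{\lambda}$, initial condition $\eta^{\lambda}$, and with coefficients $B,\Sigma,F,G$ built from $\partial_{w} b, \partial_{w}\sigma, \partial_{w} f, \partial_{w} g$ and from the Lions derivatives $\partial_{\mu} b, \partial_{\mu}\sigma, \partial_{\mu} f, \partial_{\mu} g$ evaluated along $\theta^{\lambda}$, exactly as prescribed by \eqref{eq:ex:mkv:stability} and \eqref{eq:identification:derivatives}. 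By \HYP{1} and the discussion of Example \ref{ex appli suite}, these coefficients satisfy the structural assumptions \eqref{assumption:mkv:1}--\eqref{assumption:mkv:5} with constant $K = L$.

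The first step is to establish well-posedness of this linearized FBSDE in small time. Uniqueness is immediate from Lemma \ref{lem:stability} applied with $\Delta\mathcal{R}_{a} \equiv 0$, as noted in Remark \ref{rem:uniqueness}. For existence I would run a Picard iteration in the auxiliary input $\hat{\vartheta}$: given a candidate $\hat{\vartheta}^{(k)}$, the equation \eqref{eq:sys:linear:mkv} becomes a standard linear (time-inhomogeneous) FBSDE in $\vartheta^{(k+1)}$ with frozen McKean-Vlasov terms, which is uniquely solvable for $T \leq c(L)$ by \cite{del02}, with the a priori bound \eqref{eq:co:bound:sys:lin:mkv:easy} of Corollary \ref{co:bound:sys:lin:mkv:easy} providing uniform control on $\mathcal{M}^{2}_{\mathbb{E}}(\vartheta^{(k+1)})$. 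The stability estimate \eqref{eq:lem:stability:3esp} (with $\Delta\mathcal{R}_{a} \equiv 0$) then yields a strict contraction in $\bar{\mathcal{M}}^{2}$ for $T$ small enough, so the scheme converges to a fixed point $\vartheta^{\lambda}$.

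Next I would show that $\vartheta^{\lambda}$ is the Fr\'echet derivative. For $h \neq 0$, define the difference quotient $\Delta^{\lambda,h} := h^{-1}(\theta^{\lambda+h} - \theta^{\lambda})$. Using the fundamental theorem of calculus in the Euclidean variables together with Lions' chain rule on $\mathcal{P}_{2}(\mathbb{R}^d)$ in the measure argument (see \eqref{eq:ex:mkv:stability}), the process $\Delta^{\lambda,h} - \vartheta^{\lambda}$ solves a linear system of the form \eqref{eq:sys:linear:mkv:2} with zero initial condition, with inputs $(\theta,\hat{\theta}) \equiv (\bar{\theta},\check{\theta}) \equiv (\theta^{\lambda},\theta^{\lambda})$, and with remainders $(B_{a},\Sigma_{a},F_{a},G_{a})$ of the schematic form
\[
\int_{0}^{1}\bigl[\partial_{w} h(\theta_{r}^{\lambda+\tau h},[\theta_{r}^{\lambda+\tau h,(0)}]) - \partial_{w} h(\theta_{r}^{\lambda},[\theta_{r}^{\lambda,(0)}])\bigr]\,d\tau \cdot \Delta^{\lambda,h}_{r} + (\mu\text{-analog}),
\]
where the $\mu$-analog is the corresponding integrated difference of $\partial_{\mu} h$-terms acting on the hatted copy of $\Delta^{\lambda,h}$. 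Lemma \ref{le app cont} gives continuity of $\lambda \mapsto \theta^{\lambda}$ in every $\mathcal{S}^{p} \times \mathcal{S}^{p} \times \mathcal{H}^{p}$, so $\theta^{\lambda+\tau h} \to \theta^{\lambda}$ uniformly in $\tau$ as $h \to 0$; combining this with the joint continuity of $\partial_{w} h$ and $\partial_{\mu} h$ afforded by \HYP{1}, and with the a priori bound on $\Delta^{\lambda,h}$ from Lemma \ref{le app cont}, dominated convergence yields $\mathbb{E}[\mathcal{R}_{a}^{2}] \to 0$ as $h \to 0$. The estimate \eqref{eq:co:bound:sys:lin:mkv:easy:2} of Corollary \ref{co:bound:sys:lin:mkv:easy} then forces $\mathcal{M}^{2}_{\mathbb{E}}(\Delta^{\lambda,h} - \vartheta^{\lambda}) \to 0$, proving differentiability at $\lambda$.

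The main obstacle is controlling the McKean-Vlasov contribution to $\mathcal{R}_{a}$, namely the term
$\hat{\mathbb{E}}[(\partial_{\mu} h(\theta^{\lambda+\tau h}_{r},[\theta^{\lambda+\tau h,(0)}_{r}])(\langle \theta^{\lambda+\tau h,(0)}_{r}\rangle) - \partial_{\mu} h(\theta^{\lambda}_{r},[\theta^{\lambda,(0)}_{r}])(\langle \theta^{\lambda,(0)}_{r}\rangle))\langle \Delta^{\lambda,h,(0)}_{r}\rangle]$.
Here the map $v \mapsto \partial_{\mu} h(\cdot)(v)$ is only locally Lipschitz with polynomial growth, quantified by the modulus $\Phi_{\alpha}$, rather than globally Lipschitz. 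However, $\Phi_{\alpha}$ is continuous on the diagonal (see \HYP{1}), the uniform integrability \eqref{assumption:mkv:5} is in force, and the $\mathcal{S}^{2}$-continuity of $\lambda \mapsto \theta^{\lambda}$ combined with the moment bounds of Lemma \ref{le app cont} ensures that these finite-difference kernels vanish in $L^{2}$ as $h \to 0$; this is exactly the regime the estimates of Subsection \ref{subse:stab:1} were designed for. Finally, continuity of $\lambda \mapsto \vartheta^{\lambda}$ follows from one further application of Lemma \ref{lem:stability}, comparing the linear FBSDEs satisfied by $\vartheta^{\lambda}$ and $\vartheta^{\lambda'}$: their inputs and coefficients depend continuously on $\lambda$ in $L^{2}$ by the same reasoning, and the remainder terms are controlled by $\Phi_{\alpha}(\xi^{\lambda},\xi^{\lambda'})$ together with $\|\eta^{\lambda} - \eta^{\lambda'}\|_{2}$, both of which tend to $0$ as $\lambda' \to \lambda$ by assumption on the input path.
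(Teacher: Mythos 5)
Your argument is correct in outline, but it follows a genuinely different route from the paper's. The paper never performs a difference-quotient argument on the fully coupled McKean--Vlasov system: it differentiates each Picard iterate $\theta^{n,\lambda}$ of the original equation (where the McKean--Vlasov input at step $n+1$ is the already-differentiable iterate at step $n$, so only standard, decoupled It\^o/BSDE differentiation is needed), obtains the system \eqref{eq:syst:derive:picard} for the derivatives $\vartheta^{n,\lambda}$, and then uses Lemma \ref{lem:apriori} and the stability estimate \eqref{eq:lem:stability:3esp} to show that $(\vartheta^{n,\lambda})_{n}$ is Cauchy, uniformly in $\lambda$ on compacts; continuous differentiability of the limit then follows from the classical ``uniform convergence of derivatives'' theorem. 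You instead construct the candidate derivative directly as the solution of the linearized system \eqref{eq:sys:linear:mkv} (supplying your own fixed-point existence proof, which the paper gets for free as the limit of the $\vartheta^{n,\lambda}$ and only complements with the uniqueness observation of Remark \ref{rem:uniqueness}), and then verify it is the derivative by estimating $\Delta^{\lambda,h}-\vartheta^{\lambda}$ through Corollary \ref{co:bound:sys:lin:mkv:easy}. Your scheme is the more classical ``identify, then verify'' argument and avoids having to track remainders along the whole Picard hierarchy; the price is that all the Taylor remainders must be controlled on the genuinely coupled system in one shot.

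The one step you should not wave through as ``dominated convergence'' is $\E[\cR_a^2]\to 0$: there is no $h$-independent dominating function, because the path $\lambda\mapsto\xi^{\lambda}$ is only $\cC^1$ in $L^2$, so the difference quotients $\Delta^{\lambda,h}$ are bounded uniformly in $h$ only in $L^2$-type norms, not in $L^{2+\varepsilon}$. What saves the argument is that $h^{-1}(\xi^{\lambda+h}-\xi^{\lambda})$ converges in $L^2$, hence the family $\bigl(h^{-2}\vert\xi^{\lambda+h}-\xi^{\lambda}\vert^2\bigr)_{h}$ is uniformly integrable; combined with the conditional bounds of Lemma \ref{le app cont} (which dominate $\E_{t}[\sup_{s}\vert\Delta^{\lambda,h}_{s}\vert^2]$ by exactly such quantities) and the boundedness of the derivative kernels by $2L$, a conditional Cauchy--Schwarz plus uniform-integrability argument — the same one the paper deploys for the term $\esp{(\vert\chi^{\lambda}\vert^2+\|\chi^{\lambda}\|_2^2)\psi_{n}(\lambda)}$ in its own proof, and again in the proof of \eqref{eq:UI:U} — closes the estimate. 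Since you explicitly flag this as the main obstacle and invoke \eqref{assumption:mkv:5} and the diagonal continuity of $\Phi_{\alpha}$, the proof is complete once that step is written out at this level of care.
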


\proof
Under 
\HYP{0}(i), existence and uniqueness of a solution 
to \eqref{eq intro XYZ} may be proved for a small time horizon $T$ by a contraction argument. 
As in \cite{del02}, for $T$ small enough, we can approximate 
$(X^{\lambda},Y^{\lambda},Z^{\lambda})$ as the limit of 
a Picard sequence $\theta^{n,\lambda} := (X^{n,\lambda},Y^{n,\lambda},Z^{n,\lambda})$,
defined by 
\begin{equation*}
\begin{split}
&X^{n+1,\lambda}_{s}
= \xi^\lambda + \int_{t}^s b(\theta^{n,\lambda}_{r},[
\theta^{n,\lambda,(0)}_{r}]) \ud r + \int_{t}^s 
\sigma(\theta^{n,\lambda,(0)}_{r},[
\theta^{n,\lambda,(0)}_{r}]) \ud W_{r}
\\
&Y^{n+1,\lambda}_{s} = 
g(X_{T}^{n+1,\lambda},[
X_{T}^{n+1,\lambda}]) + \int_{s}^T
f(\theta^{n,\lambda}_{r},[
\theta^{n,\lambda,(0)}_{r}]) \ud r - \int_{s}^T Z_{r}^{n+1,\lambda} \ud W_{r},
\end{split}
\end{equation*}
where we have used the notation $\theta^{n,\lambda,(0)}_{s} = 
(X_{s}^{n,\lambda},Y_{s}^{n,\lambda})$, with the initialization 
$\theta^{0,\lambda} \equiv 0$. By the standard theory of It\^o processes and 
backward equations 
(see in particular \cite{pardoux:peng}), we can prove by induction that, for any 
$n \geq 0$, the mapping  
$\R \ni \lambda \mapsto \theta^{\lambda} = 
(X^{n,\lambda},Y^{n,\lambda},Z^{n,\lambda}) \in {\mathcal S}^2([t,T];\R^d) 
\times 
{\mathcal S}^2([t,T],\R^m) \times {\mathcal H}^2([t,T];\R^{m \times d})$
is continuously differentiable. We give just a sketch of proof. For the forward component, 
this follows from the fact that given a continuously differentiable path $\R \ni \lambda \mapsto 
h^\lambda \in {\mathcal H}^2([t,T],\R)$, the paths 
$\R \ni \lambda \mapsto (\int_{t}^{s} h_{r}^{\lambda} \ud r)_{s \in [t,T]}$
and $\R \ni \lambda \mapsto (\int_{t}^s h_{r}^{\lambda} \ud W_{r})_{s \in [t,T]}$, 
with values in ${\mathcal S}^2([t,T],\R^l)$ for a suitable dimension $l$,
are continuously differentiable, which is obviously true. To handle the backward component, it suffices to prove first that the path $\R \ni \lambda \mapsto ({\mathbb E}_{s}[h_{T}^{\lambda}])_{s \in [t,T]}$, with values in ${\mathcal S}^2([t,T],\R)$, is continuously differentiable, which is straightforward by means of Doob's inequality.
This is enough to handle the terminal condition and also the driver since we can split the integral from $s$ to $T$ into an integral from $t$ to $s$ (to which we can apply the result used for the forward component) and an integral from $t$ to $T$ (which can be seen as a new $h_{T}$).
In this way, we can prove that 
$\R \ni \lambda \mapsto Y^{n+1,\lambda}$ is continuously differentiable
from $\R$ to ${\mathcal S}^2([t,T],\R^m)$. This shows that 
$\R \ni \lambda \mapsto (\int_{t}^s Z_{r}^{n+1,\lambda} \ud W_{r})_{s \in [t,T]}$
is also continuously differentiable from $\R$ to ${\mathcal S}^2([t,T],\R^m)$.
By It\^o's isometry, this finally proves that 
$\R \ni \lambda \mapsto (Z_{s}^{n+1,\lambda})_{s \in [t,T]}$
is continuously differentiable from $\R$ to ${\mathcal H}^2([t,T],\R^{m \times d})$, the derivative of 
$Z^{n+1,\lambda}$ writing as the martingale representation term of 
the derivative of $\int_{t}^T Z_{r}^{n+1,\lambda} \ud W_{r} $.

The derivatives, denoted by 
$(\cX^{n,\lambda},\cY^{n,\lambda},\cZ^{n,\lambda})$,
satisfy the system
\begin{equation}
\label{eq:syst:derive:picard}
\begin{split}
&\cX^{n+1,\lambda}_{s}
= \chi^\lambda + \int_{t}^s 
B^{(1)}\bigl(r,\theta^{n,\lambda}_{r},\langle \theta_{r}^{n,\lambda,(0)} \rangle\bigr)\bigl(\vartheta^{n,\lambda}_{r},
\langle \vartheta^{n,\lambda,(0)}_{r} \rangle
\bigr)
 \ud r 
 \\
 &\hspace{50pt}
 + \int_{t}^s 
\Sigma^{(1)}(r,\theta^{n,\lambda,(0)}_{r},\langle \theta_{r}^{n,\lambda,(0)} \rangle)
\bigl(\vartheta^{n,\lambda,(0)}_{r},\langle 
\vartheta^{n,\lambda,(0)}_{r}
\rangle
\bigr)
 \ud 
W_{r}
\\
&\cY^{n+1,\lambda}_{s} = 
G^{(1)}(X_{T}^{n+1,\lambda},\langle X_{T}^{n+1,\lambda} \rangle)\bigl(\cX_{T}^{n+1,\lambda},\langle 
\cX_{T}^{n+1,\lambda} \rangle \bigr)
\\
&\hspace{50pt} + \int_{s}^T
F^{(1)}\bigl(r,\theta^{n,\lambda}_{r},\langle \theta_{r}^{n,\lambda,(0)} \rangle\bigr)
\bigl(\vartheta^{n,\lambda}_{r},\langle \vartheta^{n,\lambda,(0)}_{r} \rangle \bigr)
 \ud r - \int_{s}^T \cZ_{r}^{n+1,\lambda} \ud W_{r},
\end{split}
\end{equation}
where we have used the notations 
$\chi^\lambda = [\ud/\ud \lambda]\xi^{\lambda}$, 
$\vartheta^{n,\lambda} = (\cX^{n,\lambda},\cY^{n,\lambda},\cZ^{n,\lambda})$
and $\vartheta^{n,\lambda,(0)} = (\cX^{n,\lambda},\cY^{n,\lambda})$
 and where
$B$, $\Sigma$, $F$ and $G$ 
are defined according to 
\eqref{eq:identification:derivatives}
and are denoted by $B^{(1)}$, 
$\Sigma^{(1)}$, $F^{(1)}$ and $G^{(1)}$
as in
\eqref{eq:ex:mkv:stability}, the superscript $(1)$ stressing the fact that we are dealing 
with \emph{first-order} derivatives. 
We thus obtain a system of the form 
\eqref{eq:sys:linear:mkv:2} with 
${\theta} \equiv \hat{\theta} \equiv \theta^{n+1,\lambda}$,
$\bar{\theta} \equiv \check \theta \equiv \theta^{n,\lambda}$, 
$\vartheta \equiv \hat{\vartheta} \equiv \vartheta^{n+1,\lambda}$
and
$\bar\vartheta \equiv \check \vartheta \equiv 
\vartheta^{n, \lambda}$ and $\chi^\lambda$ playing the role of $\eta$.
We now apply Lemma \ref{lem:apriori}, {noticing that the 
remainder ${\mathcal R}_{a}$ therein is zero, see \eqref{eq:identification:derivatives}.} 

First, we set  $p=1$ in \eqref{eq:lem:apriori:2} and choose $\gamma = 1/\Gamma_{1}(L)$
in \eqref{stab:notations:1},
in agreement with Remark
\ref{rem:comparaison}. We then take expectation on both sides. 
We get that, for $T$ small, the sequence 
$({\mathcal M}_{{\mathbb E}}^{2}(\vartheta^{n,\lambda}))_{n \geq 1}$ is at most of arithmetico-geometric type, with a geometric rate strictly less than 1.
By induction,
we deduce that there exist two constants $c:=c(L)>0$ and $C \geq 0$ (the values of which are allowed to increase from one line to another), such that, for $T \leq c$,
$\sup_{n \geq 0} 
{\mathcal M}_{{\mathbb E}}^2( \vartheta^{n,\lambda}) \leq C \| \chi^\lambda \|_2^2.$
Inserting this estimate into \eqref{eq:lem:apriori:2}
(with $\gamma=1/\Gamma_{2}(L)$ therein), we can prove, in the same way, that, for possibly new values of $c$ and $C$, 
\begin{equation}
\label{eq:Picard:bound:1st:order}
\sup_{n \geq 1}
\Bigl[ {\mathcal M}_{{\mathbb E}_{t}}^4 \bigl( \vartheta^{n,\lambda}
\bigr) \Bigr]^{1/2}
\leq C \bigl[ \vert \chi^{\lambda} \vert^2 + \|\chi^\lambda\|_{2}^2 \bigr]. 
\end{equation}
Exploiting Remark \ref{rem:comparaison},
we deduce that 
$[\bar{\mathcal M}^4\llbracket \vartheta^{n,\lambda}\rrbracket]^{1/2}$
and $[\bar{\mathcal M}^4\llbracket \vartheta^{n+1,\lambda}\rrbracket ]^{1/2}$
 in \eqref{eq:mkv:stability:Psi}
are less than $C (\vert \chi^{\lambda}\vert^2 +\| \chi^{\lambda} \|_{2}^2)$.

We now make use of  \eqref{eq:lem:stability:3esp} in Lemma \ref{lem:stability}, with $p=1$, in order to compare 
$\vartheta^{n,\lambda}$ and $\vartheta^{n+1,\lambda}$.
Clearly, the remainder $\Delta {\mathcal R}_{a}^2$ in \eqref{eq:mkv:stability:R} is zero
since the $\cR_a$ terms are here equal to zero, recall 
\eqref{eq:identification:derivatives}.
By the above argument, 
$[\bar{\mathcal M}^4\llbracket \vartheta^{n,\lambda}\rrbracket]^{1/2}$
and $[\bar{\mathcal M}^4\llbracket \vartheta^{n+1,\lambda} \rrbracket]^{1/2}$
 in \eqref{eq:mkv:stability:Psi}
are less than $C (\vert \chi^{\lambda}\vert^2 +\| \chi^{\lambda} \|_{2}^2)$. 
In order to apply Lemma \ref{lem:stability}, we also have to estimate 
$[\bar{\mathcal M}^4\llbracket \theta^{n+1,\lambda},\theta^{n,\lambda}\rrbracket]^{1/2}$.
Since $T$ is small enough, the Picard scheme for solving \eqref{eq X-Y t,xi}
is geometrically convergent
in $L^2$ and in any $L^p$, $p \geq 2$, conditional on ${\mathcal F}_{t}$, the geometric rate being 
independent of the initial conditions.
To be precise, 
there exist $\rho \in (0,1)$ and $C' \geq 0$ such that, almost surely, 
$[\bar{\mathcal M}^4\llbracket \theta^{n+1,\lambda} - \theta^{n,\lambda} \rrbracket
]^{1/2} \leq C' ( 1 + \vert \xi^\lambda \vert^2 + \| \xi^\lambda \|_{2}^2) \rho^n$. 
By continuity of the map $\R \ni \lambda \mapsto \xi^{\lambda}
\in L^2(\Omega,{\mathcal F}_{t},\P;\R^d)$,
this shows that  $\E( [\bar{\mathcal M}^4\llbracket \theta^{n+1,\lambda} - \theta^{n,\lambda}\rrbracket]^{1/2})$
converges to $0$, uniformly
in $\lambda$ in compact subsets.
Now, following the proof of Lemma 
\ref{lem:function:phi:alpha}
and using the fact that the map $\R \ni \lambda \mapsto \xi^{\lambda}
\in L^2(\Omega,{\mathcal F}_{t},\P;\R^d)$ is continuous, the family of random 
variables $(\theta^{n,(0),\lambda}_{s})_{n \geq 0, s \in [t,T],\lambda \in \cK}$
is relatively compact in $L^2(\Omega,\cA,\P;\R^d \times \R^m)$ for any compact subset $\cK \subset \R$.
Therefore, $\Phi_{\alpha}(\theta^{n+1,(0),\lambda},\theta^{n,(0),\lambda})$
converges to $0$, uniformly in $\lambda$ in compact subsets. 
We deduce that  $\E( [\bar{\mathcal M}^4\llbracket \theta^{n+1,\lambda},\theta^{n,\lambda}\rrbracket]^{1/2})$
converges to $0$, uniformly
in $\lambda$ in compact subsets.

By \eqref{eq:lem:stability:3esp}
with $\gamma^{1/2} \Gamma'(L)=1/4$, 
we deduce that, for $T \leq c$ (allowing the value of $c$ to decrease from line to line), 
\begin{equation}
\label{eq:derivatives:1st:order:cv}
\begin{split}
&{\mathcal M}_{\mathbb E}^2 \bigl( \vartheta^{n+1,\lambda} - \vartheta^{n,\lambda} \bigr)
\leq \tfrac12  {\mathcal M}_{\mathbb E}^2 \bigl( \vartheta^{n,\lambda} - \vartheta^{n-1,\lambda} \bigr)
 + C 
\esp{(\vert \chi^{\lambda}\vert^2 +\| \chi^{\lambda} \|_{2}^2)
 \psi_{n}(\lambda)
 \bigr]
\bigr)
},
\end{split}
\end{equation}
where $(\psi_{n}(\lambda))_{n \geq 0}$ is a sequence of random variables
that are bounded by $1$ and 
that converges in probability to $0$  as $n$ tends to $\infty$, uniformly in 
$\lambda$ in compact subsets. 
By a standard uniform integrability argument, we deduce from
the bound $\psi_{n}(\lambda) \leq 1$
and from the 
continuity property of the mapping 
$\R \ni \lambda \mapsto \chi^{\lambda} \in L^2$
that 
${\mathbb E}[ \bigl(\vert \chi^{\lambda}\vert^2 +\| \chi^{\lambda} \|_{2}^2
\bigr) \psi_{n}(\lambda)]$
tends to $0$ as $n$ tends to $\infty$, uniformly
in $\lambda$ in compact subsets.
Therefore, the left-hand side 
in \eqref{eq:derivatives:1st:order:cv} converges to $0$, 
the convergence being geometric, uniformly in $\lambda$ in compact subsets. 
By a Cauchy argument, 
the proof is completed. 
\eproof
\vspace{5pt}

We emphasize 
that the derivative process
$[\ud /\ud \lambda]_{\vert \lambda =0}
\theta^{\lambda}$
given by Lemma 
\ref{lem:1storder:diff}
satisfies \eqref{eq:sys:linear:mkv} with 
$\eta:=\chi$, with $\theta \equiv \hat{\theta} :\equiv \theta^0$ and 
$\vartheta \equiv \hat{\vartheta} :\equiv [\ud /\ud \lambda]_{\vert \lambda=0}\theta^\lambda$
and with the coefficients given in 
\eqref{eq:identification:derivatives}.
In particular, for $T$ small enough, the uniqueness of the solution to 
\eqref{eq:sys:linear:mkv} (see Remark \ref{rem:uniqueness}) ensures that 
the derivative process at $\lambda =0$
depends only on the family $(\xi^\lambda)_{\lambda \in \R}$
through $\xi^0$ and
$[\ud /\ud \lambda]_{\vert \lambda =0} \xi^\lambda$. Thus,
when $\xi^0:=\xi$
and $[\ud /\ud \lambda]_{\vert \lambda =0} \xi^\lambda := \chi$, we may denote
by $\partial_{\chi}\theta^{t,\xi}=
(\partial_{\chi}X^{t,\xi},\partial_{\chi}Y^{t,\xi},\partial_{\chi}Z^{t,\xi})$ the tangent process at $\xi$
in the direction $\chi$. By linearity of 
\eqref{eq:sys:linear:mkv},  
$\partial_{\chi} \theta^{t,\xi}$ is linear in $\chi$. 
By a direct application of Corollary \ref{co:bound:sys:lin:mkv:easy} 
-- recall $H_a \equiv 0$ in the current case --, we have 
\begin{Lemma}
\label{lem:apriori:derivatives}
For any $p \geq 1$, 
there exist two constants
$c_{p}:=c_{p}({L})>0$ and $C_{p}$, such that, for $T \leq c_{p}$
and with $\gamma=c_{p}$ in \eqref{stab:notations:1}, 
\begin{equation*}
\begin{split}
\bigl[{\mathcal M}^{2p}_{{\mathbb E}_{t}}
\bigl( \partial_{\chi} \theta^{t,\xi} \bigr) \bigr]^{1/(2p)} 
 \leq C_{p} 
\bigl( \vert \chi \vert +
\| \chi\|_{2} \bigr).
\end{split}
\end{equation*}
\end{Lemma}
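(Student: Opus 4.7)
The plan is to recognize that this statement is essentially an immediate corollary of \eqref{eq:co:bound:sys:lin:mkv:easy:2} in Corollary \ref{co:bound:sys:lin:mkv:easy}, once we cast the tangent process into the linear McKean-Vlasov framework of Subsection \ref{subse:stab:1}. Lemma \ref{lem:1storder:diff} already established differentiability of $\lambda \mapsto \theta^{t,\xi^\lambda}$, and the ensuing remark identified that, when $\xi^0 = \xi$ and $[\ud/\ud\lambda]_{\vert \lambda=0}\xi^\lambda = \chi$, the derivative $\partial_\chi \theta^{t,\xi}$ solves the linear system \eqref{eq:sys:linear:mkv} with $\eta = \chi$, with $\theta \equiv \hat\theta :\equiv \theta^{t,\xi}$ fixed, with $\vartheta \equiv \hat\vartheta :\equiv \partial_\chi \theta^{t,\xi}$, and with coefficients $B^{(1)}, \Sigma^{(1)}, F^{(1)}, G^{(1)}$ given by the formal differentiation rule \eqref{eq:ex:mkv:stability}, identified in \eqref{eq:identification:derivatives}.

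The first step is to verify that these coefficients satisfy the structural conditions \eqref{assumption:mkv:1}--\eqref{assumption:mkv:5} with $K = L$ and with the admissible class built from the random variables $\hat{H}_\ell(w,\langle \hat V^{(0)}\rangle) = \partial_\mu h(w,[\hat V^{(0)}])(\langle \hat V^{(0)}\rangle)$ for $h \in \{b,\sigma,f,g\}$. This is a direct reading of \HYP{1}: the $L^\infty$ bound on $\partial_w h$ and the $L^2$ bound on $\partial_\mu h$ give \eqref{assumption:mkv:1} with constant $L$; the continuity of the Fréchet derivative established in \HYP{1} furnishes \eqref{assumption:mkv:3}--\eqref{assumption:mkv:4}; the polynomial growth \eqref{assumption:mkv:2} is the content of \eqref{eq:lineargrowth} in Example \ref{ex appli suite}; and the uniform square integrability \eqref{assumption:mkv:5} is precisely the last sentence of \HYP{1}. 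Crucially, \eqref{eq:identification:derivatives} also shows that the additive remainder terms $B_a, \Sigma_a, F_a, G_a$ all vanish identically, so the quantity $\mathcal{R}_a$ defined in \eqref{stab:notations:2} is zero.

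The second and final step is to apply \eqref{eq:co:bound:sys:lin:mkv:easy:2} of Corollary \ref{co:bound:sys:lin:mkv:easy} directly: for every $p \geq 1$ there is $\Gamma_p := \Gamma_p(L) \geq 1$ such that, choosing $T \leq \gamma \leq 1/\Gamma_p$ (in particular picking $c_p = 1/\Gamma_p$ and $\gamma = c_p$ in the definition \eqref{stab:notations:1} of $\mathcal{M}^{2p}_{\mathbb{E}_t}$),
\begin{equation*}
\mathcal{M}^{2p}_{\mathbb{E}_t}\bigl(\partial_\chi \theta^{t,\xi}\bigr) \leq \Gamma_p \bigl(|\chi| + \|\chi\|_2\bigr)^{2p}.
\end{equation*}
Taking the $1/(2p)$-th power yields the announced bound with $C_p := \Gamma_p^{1/(2p)}$.

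There is genuinely no obstacle here beyond the bookkeeping of matching the linearized system to the abstract template of \eqref{eq:sys:linear:mkv}; all the heavy lifting (namely the \textit{a priori} estimate uniform in the small time horizon and in the McKean-Vlasov coupling) was done in Lemma \ref{lem:apriori} and its corollary. The only point to be mindful of is that Corollary \ref{co:bound:sys:lin:mkv:easy} is stated under the assumption that the admissible class $\cJ$ is admissible for both $(\theta,\hat\theta)$ and $(\bar\theta,\check\theta)$; in the present setting $(\bar\theta,\check\theta) \equiv (\theta,\hat\theta) \equiv (\theta^{t,\xi},\theta^{t,\xi})$, so a single admissible family suffices.
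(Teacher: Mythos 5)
Your proof matches the paper's argument essentially verbatim: identify $\partial_\chi\theta^{t,\xi}$ as the solution of the linear system \eqref{eq:sys:linear:mkv} with $\eta=\chi$, $\theta\equiv\hat\theta:\equiv\theta^{t,\xi}$, $\vartheta\equiv\hat\vartheta:\equiv\partial_\chi\theta^{t,\xi}$, and coefficients given by \eqref{eq:identification:derivatives} (so that $H_a\equiv 0$), then apply \eqref{eq:co:bound:sys:lin:mkv:easy:2} of Corollary \ref{co:bound:sys:lin:mkv:easy} with $\mathcal{R}_a\equiv 0$. The verification that the linearized coefficients satisfy \eqref{assumption:mkv:1}--\eqref{assumption:mkv:5} with $K=L$, which you spell out, is exactly what the paper delegates to Example \ref{ex appli suite}, so there is no gap.
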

Choosing $p=1$ and taking the expectation, we get that the 
mapping $L^2(\Omega,\cF_{t},\P;\R^d) \ni \chi \mapsto 
\partial_{\chi} \theta^{t,\xi} \in \cS^2([t,T];\R^d)  \times
 \cS^2([t,T];\R^m) \times \cH^2([t,T];\R^{m \times d})$ is continuous, which 
 proves that 
$L^2(\Omega,{\mathcal F}_{t},\P;\R^d) \ni 
\xi \mapsto \theta^{t,\xi}
\in {\mathcal S}^2([t,T];\R^d) \times 
{\mathcal S}^2([t,T];\R^m) \times {\mathcal H}^2([t,T];\R^{m \times d})$
is G\^ateaux differentiable. 
 The next lemma shows that the G\^ateaux derivative is continuous:
\begin{Lemma}
\label{lem:cont:derivatives}
For any $p \geq 1$, 
there exist two constants $c_{p}:=c_{p}(L)>0$ and $C_{p}$, 
such 
that, for $T \leq c_{p}$ and with $\gamma=c_{p}$ in \eqref{stab:notations:1},  
\begin{equation*}
\begin{split}
&\Bigl[ {\mathcal M}^{2p}_{{\mathbb E}_{t}} 
\bigl(  \partial_{\chi} \theta^{t,\xi} 
- \partial_{\chi} \theta^{t,\xi'} \bigr)
\Bigr]^{1/2p} 
 \leq C_{p} \Bigl( 
1 \wedge \bigl\{ \vert \xi - \xi' \vert + 
 \Phi_{\alpha+1}(t,\xi,\xi') \bigr\}
\Bigr)
\bigl( \vert \chi \vert +
\| \chi\|_{2} \bigr),
\end{split}
\end{equation*}
where $\Phi_{\alpha+1}(t,\cdot) : [ L^2(\Omega,\cF_{t},\P;
\R^{d})]^2 \rightarrow \R_{+}$ 
is continuous at any point of the diagonal, does not depend on $p$ and
satisfies \eqref{assumption:mkv:4} with $\alpha$ replaced by 
$\alpha+1$. 
The restriction of $\Phi_{\alpha+1}(t,\cdot,\cdot)$ to $[L^2(\Omega,\cF_{0},\P;\R^d)]^2$ may be assumed to be independent of $t \in [0,T]$. 
\end{Lemma}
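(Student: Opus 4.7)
The plan is to cast $\partial_{\chi} \theta^{t,\xi}$ and $\partial_{\chi} \theta^{t,\xi'}$ as solutions of two linear systems of the form \eqref{eq:sys:linear:mkv} and then apply Corollary \ref{cor:stab:hard} to their difference. As noted after Lemma \ref{lem:1storder:diff}, the tangent process $\partial_{\chi} \theta^{t,\xi}$ solves \eqref{eq:sys:linear:mkv} with the identifications $\theta \equiv \hat{\theta} :\equiv \theta^{t,\xi}$, $\vartheta \equiv \hat{\vartheta} :\equiv \partial_{\chi}\theta^{t,\xi}$, initial condition $\eta = \chi$, coefficients defined by the decomposition \eqref{eq:identification:derivatives}, and vanishing remainders $B_{a}, \Sigma_{a}, F_{a}, G_{a} \equiv 0$; the analogous statement holds with $\xi'$ in place of $\xi$. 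Assumption \HYP{2} ensures that the coefficients satisfy \eqref{assumption:mkv:1}--\eqref{assumption:mkv:5} with $K=L$.

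First I would verify the hypotheses of Corollary \ref{cor:stab:hard} for the family $(\theta^{\xi}, \hat{\theta}^{\xi}) \equiv (\theta^{t,\xi}, \theta^{t,\xi})$. Lemma \ref{le app cont} gives the a priori bounds \eqref{eq:lem:phialpha}, and Lemma \ref{lem:function:phi:alpha} then provides \eqref{eq:lem:stability:hyp} with the functional $\tilde{\Phi}_{\alpha}$ of \eqref{eq:tilde:phi:alpha} playing the role of $\Phi_{\alpha}$. The admissible class $\cJ^{\xi}$ and the controlling functional $\Psi : \cO \times L^{2}(\Omega,\cF_{t},\P;\R^{d}) \to L^{2}(\Omega,\cA,\P;\R_{+})$ may be chosen as in \eqref{eq:sncf:1}; Example \ref{example:UI} shows that this choice satisfies both the uniform integrability condition and the equicontinuity condition required there, thanks to \eqref{assumption:mkv:5} in \HYP{1} combined with the continuity of $\xi \mapsto \hat{\theta}^{t,\xi,(0)}$ in $\cS^{2}$.

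Next, since we are in the specialised situation $\vartheta \equiv \hat{\vartheta}$ and $\vartheta' \equiv \hat{\vartheta}'$, and since both remainders $\cR_{a}$ and $\Delta \cR_{a}$ vanish identically, estimate \eqref{eq:stab:main estimate} of Corollary \ref{cor:stab:hard} collapses to
\[
\bigl[\cM^{2p}_{\E_{t}}(\partial_{\chi}\theta^{t,\xi} - \partial_{\chi}\theta^{t,\xi'})\bigr]^{1/2p} \leq C_{p}'\Bigl\{ \bigl[1 \wedge (|\xi-\xi'| + \tilde{\Phi}_{\alpha}(\xi,\xi'))\bigr](|\chi| + \|\chi\|_{2}) + \bar{\Phi}(\xi,\xi')\,\|\chi\|_{2}\Bigr\},
\]
where $\bar{\Phi}$ is the functional from \eqref{eq:bar phi} applied to this choice of $\Psi$. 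Setting
\[
\Phi_{\alpha+1}(t,\xi,\xi') := \tilde{\Phi}_{\alpha}(\xi,\xi') + \bar{\Phi}(\xi,\xi')
\]
then yields the claimed bound. That $\Phi_{\alpha+1}$ is continuous at any point of the diagonal of $[L^{2}(\Omega,\cF_{t},\P;\R^{d})]^{2}$ and satisfies \eqref{assumption:mkv:4} with exponent $\alpha+1$ is exactly the content of Example \ref{example:UI}, the key ingredient being the pointwise bound $\bar{\Psi}(\varsigma,\xi) \leq C(1+|\xi|^{\alpha+1} + \|\xi\|_{2}^{\alpha+1})$ furnished by \eqref{eq:lineargrowth}. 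The $t$-independence of the restriction to $L^{2}(\Omega,\cF_{0},\P;\R^{d})$ follows by the supremum-in-$t$ reformulation described in Remark \ref{rem:unif:phi:alpha}.

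The main obstacle will be the interface with Example \ref{example:UI}: verifying that the polynomial exponent produced by the $\bar{\Phi}$-term is exactly $\alpha+1$ rather than $\alpha$. This gain of one polynomial power is intrinsic and unavoidable — it is precisely what forces the stability estimate to leave the class indexed by $\alpha$ and explains why the long-horizon recursion of Section \ref{subse:main:results} must operate on $\bigcup_{\beta\geq 0} \cD_{\beta}$ rather than on any fixed $\cD_{\beta}$.
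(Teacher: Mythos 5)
Your proposal is correct and follows the same route as the paper: the paper's proof is precisely an appeal to Corollary \ref{cor:stab:hard} (in its specialised form \eqref{eq:stab:main estimate} with $\vartheta\equiv\hat\vartheta$ and $\cR_{a}\equiv\Delta\cR_{a}\equiv 0$), with the hypotheses checked via Example \ref{example:UI} and \eqref{eq:sncf:1}, the functional $\Phi_{\alpha+1}$ extracted from \eqref{eq:bar phi}, and the $t$-independence from Remark \ref{rem:unif:phi:alpha}. You have simply written out in full the verifications that the paper leaves as pointers.
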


\proof The proof is a consequence of Corollary \ref{cor:stab:hard},
with ${\mathcal R}_{a}^{2p} \equiv \Delta {\mathcal R}_{a}^{2p} \equiv 0$. 
Example \ref{example:UI} (see in particular \eqref{eq:sncf:1}) guarantees 
that the conditions of 
Corollary \ref{cor:stab:hard} are satisfied. 
We then deduce that \eqref{eq:stab:main estimate} holds true.
Existence of a function $\Phi_{\alpha+1}(t,\cdot,\cdot)$ satisfying the prescription described in the statement then follows from 
Example \ref{example:UI}.
By Remark \ref{rem:unif:phi:alpha}, we can assume that the restriction to 
$[L^2(\Omega,\cF_{0},\P;\R^d)]^2$ is independent of $t$. 
\eproof

\begin{Remark}
\label{rem:frechet}
It is easy to derive from Lemma \ref{lem:cont:derivatives} that
\begin{equation*}
\begin{split}
&\NS{1}{\partial_{\chi} X^{t,\xi}
- \partial_{\chi} X_{s}^{t,\xi'}} + 
 \NS{1}{\partial_{\chi} Y^{t,\xi}
- \partial_{\chi} Y_{s}^{t,\xi'}}
+ \NH{1}{
\partial_{\chi} Z_{s}^{t,\xi}
- \partial_{\chi} Z_{s}^{t,\xi'}} 
\\
&\hspace{15pt}\leq C \Bigl( {\mathbb E} \bigl[ \bigl( 
1 \wedge \vert \xi - \xi' \vert^2 \bigr) \bigr]^{1/2} + 
 \Phi_{\alpha+1}(\xi,\xi') 
\Bigr)
\| \chi\|_{2}.
\end{split}
\end{equation*}
By \cite[Proposition A.3]{andrews:hopper}, 
the map $L^2(\Omega,\cF_{t},\P;\R^d) \ni 
\xi \mapsto (X^{t,\xi},Y^{t,\xi},Z^{t,\xi}) \in
{\mathcal S}^1([t,T];\R^d) \times 
{\mathcal S}^1([t,T];\R^m) \times {\mathcal H}^1([t,T];\R^{m \times d})$ 
 is continuously Fr\'echet differentiable. 
\end{Remark}

 \subsubsection{First-order derivatives
 of the non McKean-Vlasov system with respect to the measure argument}
We reproduce the same analysis as above, but with the process 
$\theta^{t,x,[\xi]}$ instead of $\theta^{t,\xi}$ by taking advantage of the fact that the dependence of the coefficients of the system  \eqref{eq X-Y t,x,mu} upon 
the law is already known to be smooth. This permits to discuss the 
differentiability of $\theta^{t,x,[\xi]}$ in a straightforward manner. 

We mimic the strategy of the previous subsection. Considering a 
continuously differentiable 
mapping $\lambda \mapsto \xi^{\lambda} \in L^2(\Omega,{\mathcal F}_{t},\P)$, we are to prove that 
$\lambda \mapsto \theta^{t,x,[\xi^{\lambda}]}$ is continuously differentiable. 
{
The specific feature 
is that, for any $\lambda$, the coefficients 
of the FBSDE \eqref{eq X-Y t,x,mu}
satisfied by 
$\theta^{t,x,[\xi^{\lambda}]}$ 
depend in a smooth way upon the solution $\theta^{t,\xi^\lambda}$
of the FBSDE \eqref{eq X-Y t,xi}. Since we have
already established the 
continuous differentiability of the mapping 
$\lambda \mapsto \theta^{t,\xi\lambda}$, 
it suffices now to prove that the solution of a standard FBSDE depending on 
a parameter $\lambda$ in a continuously differentiable way is also 
continuously differentiable with respect to $\lambda$.} 
We shall not perform the proof, as it consists of a simple adaptation of the 
proof used to prove the differentiability of the flow of a standard FBSDE, see \cite{del02}.
When 
$\xi^0 = \xi$ and
$[\ud/\ud \lambda]_{\lambda =0} \xi^\lambda = \chi$,
we shall denote the directional derivative at $\xi$ along $\chi$ by 
\begin{equation*}
\bigl( \partial_{\chi} X_{s}^{t,x,[\xi]},\partial_{\chi} Y_{s}^{t,x,[\xi]},
\partial_{\chi} Z_{s}^{t,x,[\xi]}\bigr)_{s \in [t,T]},
\end{equation*}
seen as an element of 
the space 
${\mathcal S}^2([t,T];\R^d) 
 \times {\mathcal S}^2([t,T];\R^m) \times
 {\mathcal H}^2([t,T];\R^{m \times d})$.
  By the same argument as above, 
 it only depends on the family $(\xi^\lambda)_{\lambda \in \R}$
 through the values of 
 $\xi$ and $\chi$,
$( \partial_{\chi} X^{t,x,[\xi]},\partial_{\chi} Y^{t,x,[\xi]},
\partial_{\chi} Z^{t,x,[\xi]})$
 satisfying a `differentiated' system, 
of the type \eqref{eq:sys:linear:mkv}, 
for which uniqueness holds. 
In \eqref{eq:sys:linear:mkv},
$\eta = 0$ (since $[\ud/\ud \lambda] X^{t,x,[\xi]} =0$), $\theta \equiv \theta^{t,x,[\xi]}$, 
$\hat{\theta} \equiv \theta^{t,[\xi]}$, 
$\vartheta \equiv \partial_{\chi} \theta^{t,x,[\xi]}$
and  $\hat \vartheta \equiv \partial_{\chi} \theta^{t,\xi}$,
the tangent process $\partial_{\chi} \theta^{t,\xi}$ being 
given by Lemma \ref{lem:1storder:diff}. The coefficients 
are of the general shape \eqref{splitting:H}
and \eqref{eq:mkv:stability:decomposition}. 
When $h$ stands for one of the functions 
$b$, $f$, $\sigma$ or $g$
and $V$ for $\theta^{t,x,[\xi]}$, 
$\theta^{t,x,[\xi],(0)}$ or 
$X^{t,x,[\xi]}$
and $\hat{V}$ for
$\theta^{t,\xi}$, 
$\theta^{t,\xi,(0)}$ or 
$X^{t,\xi}$, according to the cases, it holds, as in 
\eqref{eq:identification:derivatives},
\begin{equation}
\label{eq:x,xi:H1}
\begin{split}
h_{\ell}(V,\langle \hat V^{(0)} \rangle) = \partial_{x} 
h(V, \law{\hat V^{(0)}}), 
\ \hat{H}_{\ell}(V,\langle \hat V^{(0)}\rangle)=
\partial_{\mu} h(V,\law{\hat V^{(0)}})(\langle 
\hat V^{(0)} \rangle),
\ H_{a} \equiv  0.
\end{split}
\end{equation}

\begin{Lemma}
\label{lem:cont:derivatives:x,xi}
For any $p \geq 1$, there exist two constants $c_{p}:=c_{p}(L)>0$
and $C_{p}$, 
such 
that,
for $T \leq c_{p}$ and with $\gamma=c_{p}$ in \eqref{stab:notations:1},
\begin{equation}
\label{eq:bound:first:order}
\begin{split}
\bigl[ {\mathcal M}_{{\mathbb E}}^{2p}\bigl( 
\partial_{\chi} \theta^{t,x,[\xi]} \bigr) \bigr]^{1/2p} 
\leq C_{p} 
\| \chi \|_{2},
\end{split}
\end{equation}
and
\begin{align}
\label{eq:bound:first:order:bis}
&\bigl[ 
{\mathcal M}_{\mathbb E}^{2p}\bigl( 
\partial_{\chi} \theta^{t,x,\law{\xi}}
- \partial_{\chi} \theta^{t,x,\law{\xi'}} \bigr)
 \bigr]^{1/2p} \leq 
C_p \Phi_{\alpha+1}(t,\xi,\xi')
\| \chi\|_{2}, 
\end{align}
where $\Phi_{\alpha+1}(t,\cdot,\cdot) : [ L^2(\Omega,\cF_{t},\P;
\R^{d})]^2 \rightarrow \R_{+}$ 
is continuous at any point of the diagonal, does not 
depend on $p$ and
satisfies \eqref{assumption:mkv:4}, with $\alpha$ replaced by 
$\alpha+1$. The restriction of $\Phi_{\alpha+1}(t,\cdot,\cdot)$ to $[L^2(\Omega,\cF_{0},\P;\R^d)]^2$ may be assumed to be independent of $t \in [0,T]$. 
\end{Lemma}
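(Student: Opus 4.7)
The plan is to imitate the strategy used for Lemma \ref{lem:cont:derivatives}, but adapted to the fact that the tangent process $\partial_{\chi}\theta^{t,x,[\xi]}$ is governed by a system of the form \eqref{eq:sys:linear:mkv} in which $\vartheta \not\equiv \hat{\vartheta}$: the "hat" driver is now the already-known process $\partial_{\chi}\theta^{t,\xi}$ constructed in Lemma \ref{lem:1storder:diff}, and only $\vartheta \equiv \partial_{\chi}\theta^{t,x,[\xi]}$ is unknown. The coefficients are identified in \eqref{eq:x,xi:H1} with $H_{a}\equiv 0$ and $\eta=0$, and they satisfy the Lipschitz/growth/uniform integrability conditions \eqref{assumption:mkv:1}--\eqref{assumption:mkv:5} with the same constant $L$ by \HYP{1} (as in Example \ref{ex appli suite}).

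For \eqref{eq:bound:first:order}, I would invoke Corollary \ref{co:bound:sys:lin:mkv:easy}, equation \eqref{eq:co:bound:sys:lin:mkv:easy}, with $\eta=0$ and ${\mathcal R}_{a}\equiv 0$. It gives, for $T \le \gamma \le 1/\Gamma_{p}$,
\begin{equation*}
{\mathcal M}^{2p}_{{\mathbb E}_{t}}\bigl(\partial_{\chi}\theta^{t,x,[\xi]}\bigr)
\le \Gamma_{p}\,\gamma^{1/2}\bigl[{\mathcal M}^{2}_{{\mathbb E}}\bigl(\partial_{\chi}\theta^{t,\xi,(0)}\bigr)\bigr]^{p}.
\end{equation*}
By Lemma \ref{lem:apriori:derivatives} applied with $p=1$ and taking expectation, ${\mathcal M}^{2}_{{\mathbb E}}(\partial_{\chi}\theta^{t,\xi,(0)}) \le C\,\|\chi\|_{2}^{2}$. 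Taking expectations and raising to the power $1/(2p)$ yields \eqref{eq:bound:first:order}.

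For \eqref{eq:bound:first:order:bis}, the idea is to apply Corollary \ref{cor:stab:hard} with the family $(\theta^{\xi},\hat{\theta}^{\xi}):\equiv(\theta^{t,x,[\xi]},\theta^{t,\xi})$ and with $(\bar{\theta},\bar{\vartheta})\equiv(\theta,\vartheta)$, $(\check{\theta},\check{\vartheta})\equiv(\hat{\theta},\hat{\vartheta})$. The hypothesis \eqref{eq:lem:stability:hyp} is verified by combining Lemma \ref{le app cont} with Lemma \ref{lem:function:phi:alpha}; the admissible class and functional $\Psi$ are the ones described in Example \ref{example:UI} (formula \eqref{eq:sncf:1}), and the uniform integrability and equicontinuity conditions (i)--(ii) of that example were verified there. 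Since here $\eta=0$, ${\mathcal R}_{a}\equiv \Delta{\mathcal R}_{a}\equiv 0$, and $\vartheta \not\equiv \hat{\vartheta}$, I would use the general estimate \eqref{eq:stab:1ere eq} rather than \eqref{eq:stab:main estimate}. It reduces to
\begin{equation*}
\bigl[{\mathcal M}^{2p}_{{\mathbb E}_{t}}\bigl(\partial_{\chi}\theta^{t,x,[\xi]}-\partial_{\chi}\theta^{t,x,[\xi']}\bigr)\bigr]^{1/2p}
\le C_{p}'\Bigl\{\bigl[1\wedge(|\xi-\xi'|+\Phi_{\alpha}(\xi,\xi'))\bigr]\bigl[{\mathcal M}^{2}_{{\mathbb E}}\bigl(\partial_{\chi}\theta^{t,\xi',(0)}\bigr)\bigr]^{1/2}+\mathcal{T}\Bigr\},
\end{equation*}
where $\mathcal{T}$ is the last supremum in \eqref{eq:stab:1ere eq} involving ${\mathcal M}^{2}_{{\mathbb E}_{t}}(\partial_{\chi}\theta^{t,\xi,(0)}-\partial_{\chi}\theta^{t,\xi',(0)})$. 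The first contribution is handled by Lemma \ref{lem:apriori:derivatives} (giving a factor $\|\chi\|_{2}$) together with the observation, exactly as in Example \ref{example:UI}, that the $1\wedge(\cdot)$ prefactor can be absorbed into a functional of type $\Phi_{\alpha+1}(t,\xi,\xi')$. For the second contribution $\mathcal{T}$, I would plug in the Lipschitz bound of Lemma \ref{lem:cont:derivatives} applied to $\partial_{\chi}\theta^{t,\xi}-\partial_{\chi}\theta^{t,\xi'}$, which is itself of the form $[1\wedge(|\xi-\xi'|+\Phi_{\alpha+1}(t,\xi,\xi'))]\|\chi\|_{2}$; this factor then combines with the $\bar{\Psi}(\varsigma,\xi)$-weighted supremum exactly as in \eqref{eq:bar phi} of Example \ref{example:UI}, yielding again a bound of type $\Phi_{\alpha+1}(t,\xi,\xi')\|\chi\|_{2}$ (possibly after enlarging the constant $C$ defining $\Phi_{\alpha+1}$).

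The conclusions about the properties of $\Phi_{\alpha+1}$ (continuity at the diagonal, inequality \eqref{assumption:mkv:4} with $\alpha$ replaced by $\alpha+1$, and the possibility of choosing a version on $[L^{2}(\Omega,\cF_{0},\P;\R^{d})]^{2}$ independent of $t$) follow from Example \ref{example:UI} and Remark \ref{rem:unif:phi:alpha} exactly as in the proof of Lemma \ref{lem:cont:derivatives}. The main technical obstacle is the bookkeeping that shows the $1\wedge(\cdot)$-times-supremum contribution coming from $\mathcal{T}$ indeed produces a functional of the $\Phi_{\alpha+1}$ type rather than something genuinely worse: this is the same mechanism that consumes one order of polynomial growth in each application of the stability estimates, and which justifies why $\alpha$ is allowed to grow by one at each recursion step in the global strategy of the paper.
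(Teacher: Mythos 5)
Your proposal is correct and follows essentially the same route as the paper: \eqref{eq:bound:first:order} via Corollary \ref{co:bound:sys:lin:mkv:easy} with $\eta=0$, $\cR_a\equiv 0$ and Lemma \ref{lem:apriori:derivatives} to control $\hat\vartheta^{(0)}\equiv\partial_\chi\theta^{t,\xi,(0)}$, and \eqref{eq:bound:first:order:bis} via \eqref{eq:stab:1ere eq} in Corollary \ref{cor:stab:hard}, bounding the final supremum term with Lemma \ref{lem:cont:derivatives} and converting it into a $\Phi_{\alpha+1}$-type functional through Example \ref{example:UI} and Remark \ref{rem:unif:phi:alpha}. Nothing essential is missing.
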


\begin{Remark}
\label{rem:CI}
Note that there is no 
conditional expectation 
on 
${\mathcal F}_{t}$ in the above bounds as the initial condition of $\partial_{\chi}
X^{t,x,[\xi]}$ is zero, which means that the filtration that is used for solving the linear 
equation can be assumed to be almost-surely trivial at time $t$. For that reason, the 
right hand side reduces to $\| \chi \|_{2}$. We stress the fact that it is not 
$\| \chi \|_{2p}$ but $\| \chi \|_{2}$, as the dependence upon $\chi$ comes 
through the McKean-Vlasov interaction terms, which is estimated in $L^2$ 
norm. 
\end{Remark}

\proof 
Equation 
\eqref{eq:bound:first:order}
is a direct consequence of 
\eqref{eq:co:bound:sys:lin:mkv:easy}
in
Corollary  \ref{co:bound:sys:lin:mkv:easy}, 
with $\eta =0$, $\cR_{a} \equiv 0$ and $\hat{\vartheta} = \partial_{\chi} \theta^{t,\xi}$,
combined with Lemma 
\ref{lem:apriori:derivatives} (to control the term $\hat{\vartheta}^{(0)} \equiv 
\partial_{\chi} \theta^{t,\xi,(0)}$). 

We now turn to \eqref{eq:bound:first:order:bis}.
It follows from \eqref{eq:stab:1ere eq}
 in Corollary 
\ref{cor:stab:hard}, with
$\eta=0$,
 $\cR_{a} \equiv \Delta \cR_{a} \equiv 0$,
 $\theta^\xi \equiv \theta^{t,x,[\xi]}$,
 $\vartheta^\xi \equiv \theta^{t,x,[\xi]}$, 
$\hat{\theta}^{\xi} \equiv \theta^{t,\xi}$
and  $\hat{\vartheta}^{\xi} \equiv \vartheta^{t,\xi}$
(and the same with $\xi'$ instead of $\xi$). By Lemma
\ref{lem:cont:derivatives}, we can indeed bound
the last term in \eqref{eq:stab:1ere eq} by 
\begin{equation*}
\sup_{\varsigma \in \cO}
  \sup_{\|\Lambda_{0}\|_{2} \leq K} \biggl\{ 
 {\mathbb E} \Bigl[ \big( \Lambda_{0} 
 \wedge 
\bar{\Psi}(\varsigma, \xi  ) 
\bigr) 
\Bigl( 
1 \wedge \bigl\{ \vert \xi - \xi' \vert + 
 \Phi_{\alpha+1}(t,\xi,\xi') \bigr\}
\Bigr)
\bigl( \vert \chi \vert +
\| \chi\|_{2} \Bigr) 
\Bigr] \biggr\}.  
\end{equation*}
with\new{ $\bar \Psi$ defined in 
\eqref{eq:bar psi}
and $\Psi$ given in this definition by \eqref{eq:sncf:1}}. 
Following Example \ref{example:UI} (see in particular 
\eqref{eq:bar phi}), we deduce that 
\eqref{eq:bound:first:order:bis}
holds true (modifying $\Phi_{\alpha+1}(t,\cdot,\cdot)$ if necessary).
 By Remark \ref{rem:unif:phi:alpha}, we can assume that the restriction
 of $\Phi_{\alpha+1}(t,\cdot,\cdot)$
  to 
$[L^2(\Omega,\cF_{0},\P;\R^d)]^2$ is independent of $t$. 
\eproof
\vspace{5pt}

Following Remark \ref{rem:frechet}
to pass from G\^ateaux to Fr\'echet, we deduce:
\begin{Lemma}
\label{lem:cont:derivatives:x,xi:b}
For $T \leq c$ with $c:=c(L) >0$, 
$t \in [0,T]$ and $x \in \R^d$, the function $L^2(\Omega,{\mathcal F}_{t},\P;\R^d) \ni \xi 
\mapsto {\mathcal U}(t,x,\xi) = Y_{t}^{t,x,[\xi]}$ is Fr\'echet continuously differentiable. In particular, the function ${\mathcal P}^2(\R^d) \ni \mu \mapsto 
U(t,x,\mu)$ is differentiable in Lions' sense. Moreover,
for all $x \in \R^d$, for all $\xi,\xi' \in
L^{2}(\Omega,\cA,\P;\R^d)$, we have,
with $\mu = [\xi]$ and $\mu' = [\xi']$, 
\begin{equation}
\label{eq:cont:derivatives:x,xi:b}
\| \partial_{\mu} U(t,x,\mu)(\xi)  \|_{2} \leq C,
\quad 
\| \partial_{\mu} U(t,x,\mu)(\xi) -
\partial_{\mu} U(t,x,\mu')(\xi') \|_{2} \leq 
C \Phi_{\alpha+1}(\xi,\xi'),
\end{equation}
where
 $\Phi_{\alpha+1} : [ L^2(\Omega,\cA,\P;
\R^{d})]^2 \rightarrow \R_{+}$ is continuous at any point of the diagonal and
satisfies \eqref{assumption:mkv:4}, with $\alpha$ replaced by 
$\alpha+1$.
\end{Lemma}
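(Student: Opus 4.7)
The whole lemma is essentially a packaging of the estimates already established in Lemma~\ref{lem:cont:derivatives:x,xi}, evaluated at the single time $s=t$. The plan is to (i) upgrade G\^ateaux to Fr\'echet differentiability of the lift $\mathcal{U}(t,x,\cdot)$, (ii) translate this into Lions-differentiability of $U(t,x,\cdot)$, and (iii) read off the two bounds from the $\chi$-dependence of $\partial_{\chi}Y_{t}^{t,x,[\xi]}$.

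First I would observe that, by the remark immediately preceding Lemma \ref{lem:cont:derivatives:x,xi:b}, the map $\lambda\mapsto \theta^{t,x,[\xi^{\lambda}]}$ is continuously differentiable, and the derivative at $\lambda=0$ depends on $(\xi^{\lambda})_{\lambda}$ only through $(\xi,\chi)$. Consequently the G\^ateaux derivative of $\mathcal{U}(t,x,\cdot)$ at $\xi$ in direction $\chi$ exists and is given by the (deterministic) value $\partial_{\chi}Y_{t}^{t,x,[\xi]}$, with linearity in $\chi$ inherited from the linearity of the system \eqref{eq:sys:linear:mkv} governing $\partial_{\chi}\theta^{t,x,[\xi]}$. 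Specializing \eqref{eq:bound:first:order} at $s=t$ with $p=1$ yields the operator bound
\[
\bigl|D\mathcal{U}(t,x,\xi)\cdot\chi\bigr| = \bigl|\partial_{\chi}Y_{t}^{t,x,[\xi]}\bigr|\le C\|\chi\|_{2},
\]
and specializing \eqref{eq:bound:first:order:bis} at $s=t$ yields the continuity estimate
\[
\bigl|\bigl(D\mathcal{U}(t,x,\xi)-D\mathcal{U}(t,x,\xi')\bigr)\cdot\chi\bigr|\le C\,\Phi_{\alpha+1}(t,\xi,\xi')\|\chi\|_{2}.
\]
Because $\Phi_{\alpha+1}(t,\cdot,\cdot)$ is continuous at any point of the diagonal, the map $\xi\mapsto D\mathcal{U}(t,x,\xi)$ is continuous from $L^{2}(\Omega,\mathcal{F}_{t},\P;\R^{d})$ to its dual, and the upgrade from G\^ateaux to Fr\'echet differentiability follows from \cite[Proposition A.3]{andrews:hopper}, exactly as in Remark~\ref{rem:frechet}.

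Next, since $\mathcal{U}(t,x,\xi)$ depends on $\xi$ only through its law, the continuous Fr\'echet differentiability just obtained is, by definition, the Lions-differentiability of $\mu\mapsto U(t,x,\mu)$ on $\cP_{2}(\R^{d})$; the Riesz identification then represents $D\mathcal{U}(t,x,\xi)$, viewed as an element of $L^{2}(\Omega,\mathcal{F}_{t},\P;\R^{d})$, in the form $\partial_{\mu}U(t,x,[\xi])(\xi)$. As discussed after the definition of $\partial_\mu U$ in Section~\ref{se chain rule}, the $v$-dependent map $v\mapsto\partial_{\mu}U(t,x,\mu)(v)$ is thus well-defined up to a $\mu$-null set, which makes the pairing $\partial_{\mu}U(t,x,\mu)(\xi)$ meaningful for any $\xi\in L^{2}(\Omega,\mathcal{A},\P;\R^{d})$ with $[\xi]=\mu$ (possibly via a richer enlargement of the probability space, as permitted by the standing assumption that $(\Omega,\mathcal{F}_{0},\P)$ is atomless).

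Finally, the two bounds in \eqref{eq:cont:derivatives:x,xi:b} follow by duality from the two displayed inequalities of the first step: taking the supremum over $\chi$ with $\|\chi\|_{2}=1$ yields
\[
\|\partial_{\mu}U(t,x,\mu)(\xi)\|_{2}\le C,\qquad \|\partial_{\mu}U(t,x,\mu)(\xi)-\partial_{\mu}U(t,x,\mu')(\xi')\|_{2}\le C\,\Phi_{\alpha+1}(\xi,\xi'),
\]
where the functional $\Phi_{\alpha+1}$ is the one supplied by Lemma~\ref{lem:cont:derivatives:x,xi} together with Remark~\ref{rem:unif:phi:alpha}, whose restriction to $[L^{2}(\Omega,\mathcal{F}_{0},\P;\R^{d})]^{2}$ can be taken $t$-independent and satisfies \eqref{assumption:mkv:4} with $\alpha$ replaced by $\alpha+1$. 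There is no real obstacle in this proof: all the analytic work went into the stability estimate \eqref{eq:bound:first:order:bis}, and the only care needed here is the transfer of the conclusion to arbitrary $\xi,\xi'\in L^{2}(\Omega,\mathcal{A},\P;\R^{d})$, which is handled by the richness hypothesis on $(\Omega,\mathcal{F}_{0},\P)$.
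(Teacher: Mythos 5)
Your proposal reproduces the paper's main argument correctly in its broad outline: Fr\'echet differentiability is obtained by combining the continuity estimates of Lemma~\ref{lem:cont:derivatives:x,xi} with the G\^ateaux-to-Fr\'echet upgrade of Remark~\ref{rem:frechet}, the Lions derivative is identified via Riesz, and the two bounds in~\eqref{eq:cont:derivatives:x,xi:b} follow by taking suprema over $\|\chi\|_2\le 1$. All of that matches the paper.

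The gap is in the final sentence. You write that the extension from $\xi,\xi'\in L^2(\Omega,\cF_t,\P;\R^d)$ (where Lemma~\ref{lem:cont:derivatives:x,xi} lives) to arbitrary $\xi,\xi'\in L^2(\Omega,\cA,\P;\R^d)$ ``is handled by the richness hypothesis on $(\Omega,\cF_0,\P)$,'' as if the only issue were producing $\cF_0$-measurable copies of $(\xi,\xi')$. But the statement of Lemma~\ref{lem:cont:derivatives:x,xi:b} requires producing a single functional $\Phi_{\alpha+1}$ defined on \emph{all} of $[L^2(\Omega,\cA,\P;\R^d)]^2$ that simultaneously (a) dominates the left-hand side, (b) satisfies the bound~\eqref{assumption:mkv:4} with $\alpha$ replaced by $\alpha+1$, and---crucially---(c) is continuous at every point of the diagonal of $[L^2(\Omega,\cA,\P;\R^d)]^2$. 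The paper's proof does this by first noting that $\|\partial_\mu U(t,x,[\xi])(\xi)-\partial_\mu U(t,x,[\xi'])(\xi')\|_2$ depends only on the joint law of $(\xi,\xi')$, and then setting
\[
\tilde\Phi_{\alpha+1}(\xi,\xi'):=\inf\bigl\{\Phi_{\alpha+1}(\tilde\xi,\tilde\xi'):\ (\tilde\xi,\tilde\xi')\in[L^2(\Omega,\cF_0,\P;\R^d)]^2,\ (\tilde\xi,\tilde\xi')\sim(\xi,\xi')\bigr\}.
\]
Verifying (a) and (b) for this infimum is routine, but verifying (c) requires a non-trivial coupling argument: given $(\xi_n,\xi_n')\to(\xi,\xi)$, one must construct $\cF_0$-measurable $(\tilde\xi_n,\tilde\xi_n',\tilde\xi,\tilde\xi)$ with the same four-fold joint law as $(\xi_n,\xi_n',\xi,\xi)$ (using the conditional law of $(\xi_n,\xi_n')$ given $(\xi,\xi)$) so that $(\tilde\xi_n,\tilde\xi_n')\to(\tilde\xi,\tilde\xi)$ in $L^2$ and the continuity of $\Phi_{\alpha+1}$ on the diagonal of $[L^2(\Omega,\cF_0,\P;\R^d)]^2$ can be invoked. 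This step---which the paper itself labels ``the main issue''---is absent from your write-up, so as it stands the proof does not deliver the stated functional $\Phi_{\alpha+1}$ on the full space.
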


\proof
Fr\'echet differentiability is a consequence of 
the continuity 
Lemma \ref{lem:cont:derivatives:x,xi}
that permits to pass from G\^ateaux to Fr\'echet 
on the model of 
Remark \ref{rem:frechet}.
We then have $\partial_{\chi} Y_{t}^{t,x,[\xi]}
= \E[\partial_{\mu} U(t,x,[\xi])(\xi) \chi]$. Combined with Lemma 
\ref{lem:cont:derivatives:x,xi}, this gives \eqref{eq:cont:derivatives:x,xi:b}, 
but with 
$\Phi_{\alpha+1}$
defined on $[L^2(\Omega,\cF_{0},\P;\R^d)]^2$,
which requires that $\xi$ and $\xi'$ belong to 
$L^2(\Omega,\cF_{0},\P;\R^d)$.
The main issue is to prove that $\Phi_{\alpha+1}$
may be defined on the whole $[L^2(\Omega,\cA,\P;\R^d)]^2$. 
It is then worth mentioning 
that
$\| \partial_{\mu} U(t,x,\mu)(\xi) -
\partial_{\mu} U(t,x,\mu')(\xi') \|_{2}$ only depends on the law of $(\xi,\xi')$. 
Given $(\xi,\xi') \in 
[L^2(\Omega,\cA,\P;\R^d)]^2$, we 
can always find a pair $(\tilde{\xi},\tilde{\xi'}) \in 
[L^2(\Omega,\cF_{0},\P;\R^d)]^2$ with the same distribution
(provided that $(\Omega,\cF_{0},\P)$ is rich enough). 
This says that, with the $\Phi_{\alpha+1}$ given by 
Lemma \ref{lem:cont:derivatives:x,xi}, for all $\xi,\xi' \in L^2(\Omega,\cA,\P;\R^d)$,
\begin{equation*}
\begin{split}
&\bigl\| \partial_{\mu} U(t,x,\mu)(\xi) -
\partial_{\mu} U(t,x,\mu')(\xi') \bigr\|_{2} \leq \tilde{\Phi}_{\alpha+1}(\xi,\xi'),
\\
&\hspace{15pt} \text{with} \ \tilde{\Phi}_{\alpha+1}(\xi,\xi') := \inf 
\bigl\{
\Phi_{\alpha+1}(\tilde{\xi},\tilde{\xi}'), \quad (\tilde{\xi},\tilde{\xi}')
\in L^2(\Omega,\cF_{0},\P;\R^d), \quad 
(\tilde{\xi},\tilde{\xi'}) \sim (\xi,\xi') \bigr\}.
\end{split}
\end{equation*}
Clearly, $\tilde{\Phi}_{\alpha+1}$ 
is defined on the whole $[L^2(\Omega,\cA,\P;\R^d)]^2$.
It 
satisfies \eqref{assumption:mkv:4}.
Continuity at any point of the diagonal may be proved as follows. Given 
a sequence $(\xi_{n},\xi_{n}')_{n \geq 1}$ that converges to some $(\xi,\xi)$
in $[L^2(\Omega,\cA,\P;\R^d)]^2$, we may find
a pair $(\tilde \xi,\tilde \xi)
\in [L^2(\Omega,\cF_{0},\P;\R^d)]^2$ with the same law as $(\xi,\xi)$.
Now, for any $n \geq 1$, we can construct $(\tilde{\xi}_n,\tilde{\xi}_{n}')$
in $[L^2(\Omega,\cF_{0},\P;\R^d)]^2$
such that the 4-tuple $(\tilde{\xi}_{n},\tilde{\xi}_{n}',\tilde{\xi},\tilde{\xi})$
has the same law as 
$(\xi_{n},\xi_{n}',\xi,\xi)$ (it suffices to use the conditional law of 
$(\xi_{n},\xi_{n}')$ given $(\xi,\xi)$).
Then,
$(\tilde \xi_{n},\tilde \xi_{n}')_{n \geq 1}$ converges  
to $(\tilde \xi,\tilde \xi)$ in $L^2$. 
From the inequality $\tilde{\Phi}_{\alpha+1}(\xi_{n},\xi_{n}') 
\leq  {\Phi}_{\alpha+1}(\tilde \xi_{n},\tilde \xi_{n}')$,  
$\tilde{\Phi}_{\alpha+1}(\xi_{n},\xi_{n}')$ tends to $0$. 
\eproof
\vspace{5pt}

We now discuss the Lipschitz property in $x$ of $\partial_{\mu} U(t,x,\mu)$:
\begin{Lemma}
For $T \leq c$, with $c:=c(L)>0$, we can find a constant 
$C$ such that, 
for $\xi$ with $\mu$ as distribution, 
\begin{equation*}
\forall x,x' \in \R^d, \quad \| \partial_{\mu} U(t,x,\mu)(\xi) - 
\partial_{\mu} U(t,x',\mu)(\xi) \|_{2} \leq C | x - x' |.
\end{equation*}
\end{Lemma}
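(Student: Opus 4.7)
The plan is to exploit the representation $\partial_{\chi} Y_t^{t,x,[\xi]} = \E[\partial_{\mu} U(t,x,\mu)(\xi)\chi]$ provided by Lemma \ref{lem:cont:derivatives:x,xi:b} and to reduce the statement to an $x$-Lipschitz estimate on the tangent process. Concretely, if I can show
\[
\bigl|\partial_{\chi} Y_t^{t,x,[\xi]} - \partial_{\chi} Y_t^{t,x',[\xi]}\bigr| \leq C\,|x-x'|\,\|\chi\|_{2}
\]
for every $\chi \in L^2(\Omega,\cF_{t},\P;\R^d)$, then testing against a $\chi$ proportional to the $\cF_{t}$-measurable random variable $\partial_{\mu} U(t,x,\mu)(\xi) - \partial_{\mu} U(t,x',\mu)(\xi)$ and invoking Riesz duality yields the claim.

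First I would observe that $\delta\vartheta := \partial_{\chi}\theta^{t,x,[\xi]} - \partial_{\chi}\theta^{t,x',[\xi]}$ solves a linear McKean--Vlasov FBSDE of the type \eqref{eq:sys:linear:mkv} with zero initial condition. Both parent systems share the common ``hat'' processes $\hat\theta \equiv \theta^{t,\xi}$ and $\hat\vartheta \equiv \partial_{\chi}\theta^{t,\xi}$---the law $[\xi]$ is not altered when $x$ varies---so only the coefficients $\partial_{w}h$ and $\partial_{\mu}h$, for $h$ matching any of $b$, $\sigma$, $f$, $g$, evaluated along the distinct forward trajectories $\theta^{t,x,[\xi]}$ and $\theta^{t,x',[\xi]}$, differ. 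These discrepancies feed into the remainder $\Delta\cR_{a}$ of Lemma \ref{lem:stability} and are, by the $w$-Lipschitz property of $\partial_{w}h$ and of the lifted $\partial_{\mu}h$ granted by \HYP{2}, pointwise dominated by
\[
C\,\bigl|\theta^{t,x,[\xi]}_{r} - \theta^{t,x',[\xi]}_{r}\bigr|\,\Bigl( \bigl|\partial_{\chi} \theta^{t,x',[\xi]}_{r}\bigr| + \bigl\|\partial_{\chi} \theta^{t,\xi,(0)}_{r}\bigr\|_{2} \Bigr).
\]

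Next I would apply Cauchy--Schwarz together with the $L^{4}$ stability bound $\|\theta^{t,x,[\xi]} - \theta^{t,x',[\xi]}\|_{\cS^{4}\times\cS^{4}\times\cH^{4}} \leq C|x-x'|$ from Lemma \ref{le app cont} and with the $L^{4}$ a priori bounds $[\cM_{\E}^{4}(\partial_{\chi}\theta^{t,x',[\xi]})]^{1/4} \leq C\|\chi\|_{2}$ (Lemma \ref{lem:cont:derivatives:x,xi}) and $[\cM_{\E}^{4}(\partial_{\chi}\theta^{t,\xi})]^{1/4} \leq C\|\chi\|_{2}$ (Lemma \ref{lem:apriori:derivatives}) to deduce $\E[\Delta\cR_{a}^{2}] \leq C|x-x'|^{2}\|\chi\|_{2}^{2}$. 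Applying Lemma \ref{lem:stability} with $p=1$ after taking expectation, and noting that the ``hat'' and ``check'' difference contributions vanish while the residual ``bar''-contribution in \eqref{eq:lem:stability:3esp} can be absorbed into the left-hand side for $T \leq c(L)$ sufficiently small, I obtain $\cM_{\E}^{2}(\delta\vartheta) \leq C|x-x'|^{2}\|\chi\|_{2}^{2}$. Since $U(t,x,[\xi])$ is deterministic, so is $\delta\cY_{t}$; evaluating the $\sup_{s}\E[|\delta\cY_{s}|^{2}]$ component at $s=t$ yields $|\delta\cY_{t}|^{2} \leq C|x-x'|^{2}\|\chi\|_{2}^{2}$, after which duality concludes.

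The main technical nuisance I expect will be the careful bookkeeping for the $\partial_{\mu}h$-type contributions to $\Delta\cR_{a}$: these are non-local, involve an integration over the copy space $(\hat\Omega,\hat\cA,\hat\P)$, and must be compared along trajectories that differ only in their physical $w$-coordinate. Because the measure argument $\nu_{r}=[\theta^{t,\xi,(0)}_{r}]$ is unaffected by $x$, the comparison ultimately reduces to the $w$-Lipschitz estimate on the lifted $\partial_{\mu}h$ imposed in \HYP{2}; the uniform square-integrability condition \eqref{assumption:mkv:5} then ensures that this bound transfers cleanly to the $L^{2}$-norm required on the copy space, so no new estimate is needed.
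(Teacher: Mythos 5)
Your proposal is correct and follows essentially the same route as the paper: reduce the claim via the duality $\partial_{\chi} Y_t^{t,x,[\xi]} = \E[\partial_{\mu} U(t,x,\mu)(\xi)\chi]$ to an $x$-Lipschitz estimate on the tangent process, then apply the stability machinery of Lemma \ref{lem:stability}/Corollary \ref{cor:stab:hard} with the ``hat''/``check'' inputs frozen at $(\theta^{t,\xi},\partial_{\chi}\theta^{t,\xi})$, so that $|\xi-\xi'|$ and $\Phi_{\alpha}$ are replaced by $|x-x'|$ and $0$. The only cosmetic difference is bookkeeping: the coefficient increments you place in $\Delta\cR_a$ are, in the paper's formulation, already absorbed into the $\bar{\cM}^{4p}((\theta,\hat\theta),(\theta',\hat\theta'))$ term of \eqref{eq:lem:stability:1} (with $\Delta\cR_a\equiv 0$ since the first-order linearized system has no affine remainder), but your direct Cauchy--Schwarz estimate of those terms yields the same bound.
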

\begin{proof}
Thanks to the relationship 
$\partial_{\chi} Y_{t}^{t,x,[\xi]} = 
{\mathbb E}[ \partial_{\mu} U(t,x,[\xi])(\xi) \chi ]$,
it suffices to discuss the Lipschitz property (in $x$)  
of the tangent process $(\partial_{\chi} X_{s}^{t,x,\xi},\partial_{\chi} Y_{s}^{t,x,\xi},\partial_{\chi} Z_{s}^{t,x,\xi})_{s \in [t,T]}$, seen as an element of 
${\mathcal S}^2([t,T];\R^d) \times {\mathcal S}^2([t,T];\R^m) \times
{\mathcal H}^2([t,T];\R^{m \times d})$, $\xi$ and $\chi$ denoting elements of 
$L^2(\Omega,{\mathcal F}_{t},\P;\R^d)$. 

Basically, the strategy is the same as in the proofs of Lemmas 
\ref{lem:cont:derivatives} and \ref{lem:cont:derivatives:x,xi}.
It is based on a tailored-made version of Corollary 
\ref{cor:stab:hard},
obtained by applying 
Lemma \ref{le app cont} and
Lemma 
\ref{lem:stability}
with $\theta \equiv \bar\theta :\equiv \theta^{t,x,[\xi]}$,
$\theta' \equiv \bar\theta' :\equiv \theta^{t,x',[\xi]}$,
$\hat \theta \equiv \hat{\theta}' \equiv \check \theta \equiv \check
\theta' : \equiv \theta^{t,\xi}$
and $\hat \vartheta  \equiv \hat{\vartheta}' \equiv \check \vartheta \equiv \check
\vartheta' : \equiv \partial_{\chi} \theta^{t,\xi}$.
Informally, it consists in choosing $\eta = 0$ and in replacing $\vert \xi - \xi'\vert$ by $\vert x-x' \vert$ 
and $\Phi_{\alpha}(\xi,\xi')$ by $0$
in the statement of 
Corollary \ref{cor:stab:hard}.
%
%
 We end up with
$\vert \partial_{\chi} Y_{t}^{t,x,[\xi]}
- \partial_{\chi} Y_{t}^{t,x',[\xi]}
\vert \leq C \vert x'-x \vert \| \chi \|_{2}.  
$\qed

\end{proof}

\subsubsection{Derivatives with respect to the space argument}
 We now discuss the derivatives of $U$ with respect to the variable $x$. 
 Since the 
process  
$\theta^{t,x,[\xi]} = (X^{t,x,\law{\xi}},Y^{t,x,\law{\xi}},Z^{t,x,\law{\xi}})$ may be seen
as the solution of a standard FBSDE parametrized by the law of $\xi$, 
we can apply the results in \cite{del02} on the smoothness of the flow of a classical FBSDE in short time. 
Given $t \in [0,T]$ and $\xi \in L^2(\Omega,\cF_{t},\P;\R^d)$,
we deduce that the function $\R^d \ni x \mapsto \theta^{t,x,[\xi]}=
(X^{t,x,\law{\xi}},Y^{t,x,\law{\xi}},Z^{t,x,\law{\xi}}) \in 
{\mathcal S}^2([t,T];\R^d) \times 
{\mathcal S}^2([t,T];\R^m) \times {\mathcal H}^2([t,T];\R^{m \times d})$
is continuously differentiable, the derivative process 
at point $x \in \R^d$
being denoted by $\partial_{x}\theta^{t,x,[\xi]}=
(\partial_{x} X^{t,x,\law{\xi}},\partial_{x}
Y^{t,x,\law{\xi}},\partial_{x} Z^{t,x,\law{\xi}})$. 
 To be self-contained, notice that the same result could be obtained by 
 applying the results of Subsection \ref{subse:stab:1}, with the following version of $H(r,\cdot)$:
%
\begin{equation}
\label{eq:dx:x,t,mu:Hcoef}
\begin{split}
H(r,V_{r}^{t,x,\law{\xi}})(\cV^{t,x,\law{\xi}}_r) = \partial_{x} 
h(V_{r}^{t,x,[\xi]},[ V_{r}^{t,\xi}])\cV^{t,x,\law{\xi}}_r.
\end{split}
\end{equation}
%
As a consequence, we easily get, for $T \leq c_{p}$, $c_p := 
c_p(L)$ and with $\gamma=c_{p}$ in \eqref{stab:notations:1},
$[ {\mathcal M}_{{\mathbb E}}^{2p}( \partial_{x} \theta^{t,x,\law{\xi}}) 
]^{1/2p}
\le
C_p$.
{
Recalling the identity $U(t,x,[\xi])=\theta_{t}^{t,x,[\xi]}$, 
we recover the fact 
that $\R^d \ni x \mapsto U(t,x,[\xi])$ is continuously 
differentiable and
that $\| \partial_x U \|_\infty \le C$, see also 
\cite{del02}.  
On the same model (for instance by 
adapting 
Lemmas 
\ref{lem:cont:derivatives}
or 
\ref{lem:cont:derivatives:x,xi}
to investigate
the difference $\partial_{x} \theta^{t,x,[\xi]} - \partial_{x} \theta^{t,x',[\xi]}$
for two different $x,x' \in \R^d$
or by
taking benefit from the results proved in
\cite{del02}), it can be checked that, for any $t \in [0,T]$, any 
$\xi \in L^2(\Omega,{\mathcal F}_{t},\P;\R^d)$, the mapping 
$\R^d \ni x \mapsto \partial_{x} U(t,x,\law{\xi})$ is $C$-Lipschitz continuous.} 
Intuitively, such a bound is much simpler to get than the bound for the continuity 
of $\partial_{\mu} U$ because 
of the very simple structure of 
$H(r,\cdot)$ in
\eqref{eq:dx:x,t,mu:Hcoef}, the function $\partial_{x} h$
being Lipschitz-continuous with respect to the first argument. 

To get
the smoothness of $\partial_{x} U$ in the direction $\mu$,
we may investigate the difference 
$\partial_{x} \theta^{t,x,[\xi]} - \partial_{x} \theta^{t,x,[\xi']}$
for two different $\xi,\xi' \in L^2(\Omega,\cF_{t},\P;\R^d)$. 
Reapplying
 Corollary 
\ref{cor:stab:hard}, exactly in the same way as in the proof of 
Lemma \ref{lem:cont:derivatives:x,xi},
we deduce
\begin{equation} 
\label{eq:duhatmu:lip}
\begin{split}
&\forall x \in \R^d, \ \xi,\xi' \in L^2(\Omega,\cA,\P;\R^d),\ 
\bigl|\partial_x U(t,x,\law{\xi}) 
- \partial_{x} U(t,x,\law{\xi'})\bigr| \le  \Phi_{\alpha+1}(\xi,\xi').
\end{split}
\end{equation}
{Actually,
the above bound 
could be improved. Indeed, it
also holds with
$\Phi_{\alpha+1}(\xi,\xi')$ replaced by 
$\Phi_{\alpha}(\xi,\xi')$. 
The reason is that, in the analysis of 
$\partial_{x} \theta^{t,x,[\xi]} - \partial_{x} \theta^{t,x,[\xi']}$, 
there are no derivatives in the direction of the measure, 
whereas these are precisely these terms that make 
$\Phi_{\alpha+1}(\xi,\xi')$ appear in the proof of
Lemma 
\ref{lem:cont:derivatives:x,xi:b}
(or equivalently of Lemma 
\ref{lem:cont:derivatives:x,xi}).
In order
to keep some homogeneity between the various estimates 
we have on the derivatives of $U$, we feel it is more convenient to keep 
$\Phi_{\alpha+1}(\xi,\xi')$ in the above right-hand side.}

\subsubsection{Final statement}
The following is the complete statement about the first-order differentiability:
\begin{Theorem}
\label{thm:4:1}
For $T \leq c$, with $c:=c(L)>0$ and
$t \in [0,T]$, the function $\R^d \times L^2(\Omega,\cA,\P;\R^d) \ni \xi 
\mapsto U(t,x,[\xi]) = {\mathcal U}(t,x,\xi)$ is continuously differentiable and there exists a constant $C \geq 0$, such that
for all $x,x' \in \R^d$, for all $\xi,\xi' \in L^2
(\Omega,\cA,\P;\R^d)$,
\begin{equation}
\label{eq:thm:4:1:statement}
\begin{split}
&\vert U(t,x,\mu) - U(t,x',\mu') \vert \leq 
C \bigl( \vert x- x' \vert + W_{2}(\mu,\mu') \bigr)
\\
&\vert \partial_{x} U(t,x,\mu) - \partial_{x} U(t,x',\mu') \vert 
+
\| \partial_{\mu} U(t,x,\mu)(\xi) -
\partial_{\mu} U(t,x',\mu')(\xi') \|_{2}
\\
&\hspace{15pt} \leq 
C \bigl( \vert x - x' \vert + \Phi_{\alpha+1}(\xi,\xi') \bigr),
\end{split}
\end{equation}
where
 $\Phi_{\alpha+1} : [ L^2(\Omega,\cA,\P;
\R^{d})]^2 \rightarrow \R_{+}$ is continuous at any point of the diagonal and
satisfies \eqref{assumption:mkv:4}, with $\alpha$ replaced by 
$\alpha+1$.
In particular, for any $x \in \R^d$ and $\mu \in {\mathcal P}_{2}(\R^d)$, 
we can find a locally Lipschitz continuous version of the 
mapping $\R^d \ni v \mapsto \partial_{\mu} U(t,x,\mu)(v)$.

{
Moreover, 
the functions
$[0,T]
\times \R^d \times L^2(\Omega,\cA,\P;\R^d)
 \ni (t,x,\xi) 
 \mapsto 
\partial_{x} U(t,x,[\xi]) \in \R^d$
and $[0,T] 
\times \R^d \times L^2(\Omega,\cA,\P;\R^d)
 \ni (t,x,\xi) \mapsto
\partial_{\mu} U(t,x,[\xi])(\xi) \in L^2(\Omega,\cA,\P;\R^d)$ are continuous.}

Finally, for any $t \in [0,T]$, $\xi \in L^2(\Omega,\cF_{t},\P;\R^d)$ and $C' \geq 0$,
\begin{equation}
\label{eq:UI:U}
\lim_{\P(A) \rightarrow 0, A \in \cA}
\sup_{\new{(t,x)\in [0,T]\times\R^d}} \sup_{\Lambda \in L^2(\Omega,\cA,\P;\R^d) : \| \Lambda \|_{2} \leq C'}
\bigl|\E \bigl[ 
\partial_{\mu} U(t,x,[\xi])(\xi)  \Lambda {\mathbf 1}_{A} \bigr] \bigr\vert
= 0,
\end{equation}
which is the analogue of the uniform integrability property described in \HYP{1} for the original coefficients
$b$, $\sigma$, $f$ and  $g$. 
\end{Theorem}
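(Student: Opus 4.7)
The strategy is to assemble the statements from the preceding lemmas into the unified final statement, and then address the three genuinely new items: the locally Lipschitz version in $v$, joint continuity in $(t,x,\xi)$, and the uniform integrability property \eqref{eq:UI:U}. The Fr\'echet differentiability in $\xi$ follows from Lemma \ref{lem:cont:derivatives:x,xi:b} via the passage from G\^ateaux to Fr\'echet used in Remark \ref{rem:frechet}, while the continuous differentiability in $x$ and the corresponding Lipschitz bound on $\partial_x U$ are inherited from the classical FBSDE theory of \cite{del02} applied to \eqref{eq reec mkvfbsde}. The Lipschitz estimate on $U$ in \eqref{eq:thm:4:1:statement} is then an immediate consequence of the boundedness of $\partial_x U$ together with \eqref{eq:cont:derivatives:x,xi:b}, applied along the straight segment $(x,\mu) \to (x',\mu) \to (x',\mu')$.

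For the second line of \eqref{eq:thm:4:1:statement}, the plan is to split the difference by the triangle inequality into an $x$-variation (at fixed $\mu$) and a $\mu$-variation (at fixed $x'$). The $x$-variation is handled by the Lipschitz property of $\partial_x U$ in $x$ mentioned above and, for $\partial_\mu U$, by the adaptation of Corollary \ref{cor:stab:hard} applied to $\partial_\chi \theta^{t,x,[\xi]} - \partial_\chi \theta^{t,x',[\xi]}$ (analogous to Lemma \ref{lem:cont:derivatives:x,xi}, as sketched in the discussion surrounding \eqref{eq:duhatmu:lip}). The $\mu$-variation is handled by \eqref{eq:duhatmu:lip} for $\partial_x U$ and by \eqref{eq:bound:first:order:bis} for $\partial_\mu U$, both of which already carry the functional $\Phi_{\alpha+1}$. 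Up to redefining $\Phi_{\alpha+1}$ as the sum of the functionals produced in the various lemmas (which remains continuous at the diagonal and satisfies \eqref{assumption:mkv:4} with $\alpha$ replaced by $\alpha+1$), the required bound follows. The locally Lipschitz version of $v \mapsto \partial_{\mu} U(t,x,\mu)(v)$ is then a direct consequence of Proposition \ref{prop:lipschitz:lifted} applied to $V(\mu)(\cdot) = \partial_{\mu} U(t,x,\mu)(\cdot)$, since \eqref{assumption:mkv:4} for $\Phi_{\alpha+1}$ gives precisely the bound \eqref{eq:assumption:prop:lipschitz:lifted} with exponent $\alpha+1$.

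The joint continuity in $(t,x,\xi)$ will be established by combining the already proven Lipschitz estimates in $x$ and $\xi$ with the continuity in $t$. The continuity in $t$ is obtained from standard stability results for the FBSDE \eqref{eq reec mkvfbsde}, by applying Lemma \ref{le app cont} and the analogous smoothness results of \cite{del02} to compare $\partial_x \hat U_{t,\mu}(s,x)$ and $\partial_x \hat U_{t,\mu}(s',x)$ as $s' \to s$, and similarly for the tangent processes that represent $\partial_\mu U(t,x,[\xi])(\xi)$. Concretely, one writes $\partial_\mu U(t,x,[\xi])(\xi) = \partial_\chi Y^{t,x,[\xi]}_t$ interpreted via the Riesz representation $\partial_\chi Y^{t,x,[\xi]}_t = \E[\partial_\mu U(t,x,[\xi])(\xi)\chi]$, and uses the continuity (in $t,x,\xi$) of the linearized flow established in Lemmas \ref{lem:cont:derivatives}--\ref{lem:cont:derivatives:x,xi:b}, together with the continuity in the initial time $t$ recorded in Remark \ref{rem:unif:phi:alpha}.

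The hardest part will be \eqref{eq:UI:U}. The strategy is to represent $\partial_\mu U(t,x,[\xi])(\xi)$ through the linearized FBSDE \eqref{eq:sys:linear:mkv} whose coefficients are given by \eqref{eq:x,xi:H1}, and to propagate the uniform integrability assumption \eqref{assumption:mkv:5} from the data $b,\sigma,f,g$ (built into \HYP{1}) through the short-time solvability. More precisely, given $\Lambda \in L^2$ with $\|\Lambda\|_2 \le C'$, one expresses $\E[\partial_\mu U(t,x,[\xi])(\xi) \Lambda \mathbf{1}_A]$ as $\partial_\chi Y_t^{t,x,[\xi]}$ with $\chi = \Lambda \mathbf{1}_A$, and then uses the a priori bound \eqref{eq:bound:first:order} together with a decomposition of the remainder in Corollary \ref{co:bound:sys:lin:mkv:easy} that exhibits the dependence on $A$ only through the terms $\hat H_\ell(\cdot)\langle \chi \rangle$. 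The key point is that, by \eqref{assumption:mkv:5}, the family $\hat H_\ell(\theta^{t,x,[\xi]}_r,\langle \theta^{t,\xi,(0)}_r\rangle)$ is uniformly square integrable in the parameters, and this uniform integrability is preserved when one integrates against $\chi = \Lambda\mathbf{1}_A$ uniformly in $\|\Lambda\|_2 \le C'$. Combined with the equicontinuity along $(t,x)$ already used in Example \ref{example:UI}, sending $\P(A)\to 0$ then gives \eqref{eq:UI:U}, uniformly over $(t,x)\in[0,T]\times \R^d$ and over $\|\Lambda\|_2 \le C'$. The main delicate point is to make the uniformity in $t$ rigorous, which is handled exactly as in Remark \ref{rem:unif:phi:alpha}.
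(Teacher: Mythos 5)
Your assembly of the first half of the theorem --- the Lipschitz bounds in \eqref{eq:thm:4:1:statement} via the triangle-inequality splitting, the passage from G\^ateaux to Fr\'echet differentiability, and the locally Lipschitz version of $v \mapsto \partial_{\mu} U(t,x,\mu)(v)$ via Proposition \ref{prop:lipschitz:lifted} --- matches the paper's proof. The two remaining items, however, each contain a genuine gap.

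First, time continuity. Lemmas \ref{lem:cont:derivatives}--\ref{lem:cont:derivatives:x,xi:b} give continuity of the tangent processes in $(x,\xi)$ at a \emph{fixed} initial time $t$; they say nothing about the dependence on $t$, and Remark \ref{rem:unif:phi:alpha} only makes $\Phi_{\alpha+1}$ independent of $t$, it does not provide continuity in $t$. A direct ``stability in the initial time'' comparison of the FBSDEs posed on $[t,T]$ and $[s,T]$ does not work either, because $\partial_{\mu} U(t,x,[\xi])(\xi)$ and $\partial_{\mu} U(s,x,[\xi])(\xi)$ are evaluated at the same spatial and measure arguments but the second is the derivative of a decoupling field started from a later time. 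The argument that closes this (see \eqref{eq:time reg}) decomposes the increment using the flow property: one term compares $\partial_{\mu} U(s,X_{s}^{t,x,[\xi]},[X_{s}^{t,\xi}])(\langle X_{s}^{t,\xi}\rangle)$ with $\partial_{\mu} U(s,x,[\xi])(\langle \xi \rangle)$ and is controlled by the already-established $(x,\mu)$-regularity at time $s$ together with $(s-t)^{1/2}$ estimates on the forward flow; the other term is obtained by differentiating the identities $U(t,x,[\xi])=Y_{t}^{t,x,[\xi]}$ and $\E[U(t,X_{s}^{t,x,[\xi]},[X_{s}^{t,\xi}])]=\E[Y_{s}^{t,x,[\xi]}]$ in the direction $\chi$, producing $\E[\partial_{\chi} Y_{t}^{t,x,[\xi]}-\partial_{\chi} Y_{s}^{t,x,[\xi]}]$ plus terms in $\partial_{\chi} X_{s}^{t,x,[\xi]}$ and $\partial_{\chi} X_{s}^{t,\xi}-\chi$, each of order $(s-t)^{1/2}\|\chi\|_{2}$. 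This decomposition is the missing ingredient; your proposal asserts the conclusion by appeal to lemmas that do not cover the $t$-variable.

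Second, \eqref{eq:UI:U}. Your claim that the dependence on $A$ enters only through the terms $\hat{H}_{\ell}(\cdot)\langle \chi \rangle$ is not correct: the McKean--Vlasov tangent process $\partial_{\chi}\theta^{t,\xi}$, which feeds the system for $\partial_{\chi}\theta^{t,x,[\xi]}$ through its measure terms, has $\eta=\chi=\Lambda{\mathbf 1}_{A}$ as its \emph{initial condition}, and the only a priori bound available, $[{\mathcal M}^{2}_{\E}(\partial_{\chi}\theta^{t,\xi,(0)})]^{1/2}\leq C\|\Lambda{\mathbf 1}_{A}\|_{2}$, does not tend to zero uniformly over $\|\Lambda\|_{2}\leq C'$ as $\P(A)\to 0$ (take $\Lambda$ concentrated on $A$). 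Uniform square integrability of $(\Psi^2(\varsigma,\xi))_{\varsigma}$ alone therefore does not close the argument. The resolution is the self-referential inequality \eqref{eq:UI:proof:5}: the conditional norm ${\mathcal M}^{2}_{\E_{t}}(\partial_{\chi}\theta^{t,\xi,(0)})$ is bounded by $C\Lambda^{2}{\mathbf 1}_{A}$ plus a term in which it reappears inside a square of an expectation truncated by $\Lambda_{0}\wedge\Psi$; multiplying by ${\mathbf 1}_{A^{\complement}}$, taking expectations and applying Cauchy--Schwarz twice lets one absorb the recursive term and conclude that $\E[{\mathbf 1}_{A^{\complement}}{\mathcal M}^{2}_{\E_{t}}(\partial_{\chi}\theta^{t,\xi,(0)})]\to 0$ with $\P(A)$, after which the $\varepsilon$-threshold splitting in \eqref{eq:UI:proof:4}, combined with the uniform integrability of $\Psi$, yields \eqref{eq:UI:U}. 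Without this step your plan for the uniform integrability property does not go through.
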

\proof
The Lipschitz property of $U$ is a direct consequence of
the bounds we have for $\partial_{x} U$ and $\partial_{\mu} U$ (or equivalently of 
Lemma \ref{le app cont}). 
The joint continuous differentiability is a consequence of 
the partial continuous differentiability and of the joint continuity properties
of the derivatives. The extension of $\Phi_{\alpha+1}$
to the whole $[L^2(\Omega,\cA,\P;\R^d)]^2$ is achieved as in the proof 
of Lemma \ref{lem:cont:derivatives:x,xi:b}. 

The local Lipschitz property of $\R^d \ni v \mapsto \partial_{\mu} U(t,x,\mu)(v)$ follows from 
Proposition \ref{prop:lipschitz:lifted}. 

We now discuss the continuity of 
$[0,T] \ni t \mapsto 
\partial_{\mu} U(t,x,[\xi])(\xi) \in L^2(\Omega,\cA,\P;\R^d)$.
Clearly, there is no loss of generality in assuming that 
$\xi \in L^2(\Omega,\cF_{0},\P;\R^d)$. 
Given $\xi,\chi \in L^2(\Omega,{\mathcal F}_{0},\P;\R^d)$
and $0 \leq t \leq s \leq T$, it suffices to 
bound the time increment 
$\E [ \bigl( \partial_{\mu} U(t,x,[\xi])(\xi) - \partial_{\mu} U(s,x,[\xi])(\xi) \bigr)
 \chi ]$ by $C(t,s) \| \chi\|_{2}$, the constant $C(t,s)$ being independent of 
 $\chi$ and converging to $0$ as $s-t$ tends to $0$. 
We have
\begin{equation}
\label{eq:time reg}
\begin{split}
&\E \bigl[ \bigl( \partial_{\mu} U(t,x,[\xi])(\xi) - \partial_{\mu} U(s,x,[\xi])(\xi) \bigr)
 \chi \bigr]
\\
&= \hat{\E} \bigl[ \bigl( \partial_{\mu} U(t,x,[\xi])(\langle \xi \rangle) - \partial_{\mu} U(s,x,[\xi])(\langle \xi\rangle) \bigr)
\langle \chi \rangle \bigr]
\\
&= \E \hat\E \bigl[ \bigl( \partial_{\mu} U\bigl(s,X_{s}^{t,x,[\xi]},\bigl[ X_{s}^{t,\xi}\bigr] \bigr)\bigl(\big\langle 
X_{s}^{t,\xi} \big\rangle \bigr) - \partial_{\mu} U(s,x,[\xi])\bigl( \big\langle\xi \big\rangle \bigr) \bigr)
 \langle \chi \rangle \bigr]
\\
&\hspace{15pt} + \E \hat\E \bigl[ \bigl( 
 \partial_{\mu} U(t,x,[\xi])\bigl(\langle \xi \rangle \bigr)
-
\partial_{\mu} U\bigl(s,X_{s}^{t,x,[\xi]},\bigl[ X_{s}^{t,\xi}\bigr] \bigr)\bigl( \langle X_{s}^{t,\xi} \rangle \bigr) 
\bigr)
\langle \chi \rangle \bigr].
\end{split}
\end{equation}

By the smoothness property of $\partial_{\mu} U(s,\cdot)$, the first term in the right-hand side is bounded by
$C (\E [ \vert X_{s}^{t,x,[\xi]} - x \vert^2]^{1/2} +\Phi_{\alpha+1}(X_{s}^{t,\xi},\xi)) \| \chi \|_{2} $, the constant 
$C$ being allowed to increase from line to line.  
The coefficients  of 
 \eqref{eq X-Y t,xi}
and 
\eqref{eq X-Y t,x,mu}
being at most of linear growth, we deduce from \eqref{eq app bound 1} that 
$\E [ \vert X_{s}^{t,\xi} - \xi \vert^2]^{1/2}$
and 
$\E [ \vert X_{s}^{t,x,[\xi]} - x \vert^2]^{1/2}$
are less than 
$C(1 + \| \xi \|_{2})(s-t)^{1/2}$ and
$C(1+ \vert x \vert + \| \xi \|_{2})(s-t)^{1/2}$ respectively.
Therefore, the first term in the last line of \eqref{eq:time reg} is bounded by
\begin{equation}
\label{eq:time reg 2}
C \Bigl[ \bigl(1+ \vert x \vert + \| \xi \|_{2} \bigr) (s-t)^{1/2} + \sup_{ \xi ' : \| \xi' - \xi \|_{2} \leq C(1 + \| \xi \|_{2})
(s-t)^{1/2}}\Phi_{\alpha+1}(\xi',\xi) \Bigr] \| \chi \|_{2}.
\end{equation}
Clearly, the term in brackets goes to $0$ with $s-t$.

We now handle the second term in the last line of \eqref{eq:time reg}. 
Differentiating (with respect to $\xi$ in the direction $\chi$) 
the relationships \new{$U(t,x,[\xi]) = Y_{t}^{t,x,\law{\xi}}$ and 
$\E[U(t,X_{s}^{t,x,\law{\xi}},[X_{s}^{t,\xi}])] = \E[Y_{s}^{t,x,\law{\xi}}]$}, 
we obtain 
\begin{equation*}
\begin{split}
&\E \hat\E \bigl[ \bigl( 
 \partial_{\mu} U(t,x,[\xi])\bigl(\langle \xi \rangle \bigr)
-
\partial_{\mu} U\bigl(s,X_{s}^{t,x,[\xi]},\bigl[ X_{s}^{t,\xi}\bigr] \bigr)\bigl( \langle X_{s}^{t,\xi} \rangle \bigr) 
\bigr)
\langle \chi \rangle \bigr]
\\
&=\E \bigl[\partial_{\chi} Y_{t}^{t,x,[\xi]} - \partial_{\chi} Y_{s}^{t,x,[\xi]} \bigr]
+ \E \bigl[ \partial_{x} U \bigl(s,X_{s}^{t,x,[\xi]},[X_{s}^{t,\xi}] \bigr) \partial_{\chi} X_{s}^{t,x,[\xi]} \bigr]
\\
&\new{+\esp{\hesp{\partial_{\mu} U\bigl(s,X_{s}^{t,x,[\xi]},\bigl[ X_{s}^{t,\xi}\bigr] \bigr)\bigl( \langle X_{s}^{t,\xi} \rangle \bigr) 
\langle \partial_{\chi} X_{s}^{t,\xi}  - \chi\rangle}} }.
\end{split}
\end{equation*}
The first term
is equal to 
$\E \int_{t}^s F^{(1)}(r,\theta_{r}^{t,x,[\xi]},\langle \theta_{r}^{t,\xi,(0)} \rangle)
(\partial_{\chi}
\theta_{r}^{t,x,[\xi]},
\langle \partial_{\chi}
\theta_{r}^{t,\xi,(0)} \rangle) \ud r$
(with the notations of 
\eqref{eq:ex:mkv:stability}).
By \HYP{1} and Lemmas 
\ref{lem:apriori:derivatives}
and \ref{lem:cont:derivatives:x,xi} (with $p=1$),
it is bounded by $C (s-t)^{1/2} \| \chi \|_{2}$. 
Since $\partial_{x} U$ is bounded, 
the second term is less than $C \E [ \vert \partial_{\chi} X_{s}^{t,x,[\xi]} \vert]
= C \E [ \vert \partial_{\chi} X_{s}^{t,x,[\xi]} - \partial_{\chi} X_{t}^{t,x,[\xi]}\vert]$.
By \HYP{1} and Lemmas 
\ref{lem:apriori:derivatives}
and \ref{lem:cont:derivatives:x,xi} again,
it is less than $C(s-t)^{1/2} \| \chi\|_{2}$. 
\new{For the third term, we first apply Cauchy-Schwarz inequality to
get that it is less than $C\E [ |\partial_\chi X^{t,\xi}_s-\chi|^2]^{1/2}$, 
recall \eqref{eq:cont:derivatives:x,xi:b}. Then, by \HYP{1} and Lemma 
\ref{lem:apriori:derivatives},
it is bounded by $C (s-t)^{1/2} \| \chi \|_{2}$. }
Continuity of $[0,T] \ni t \mapsto 
\partial_{\mu} U(t,x,[\xi])(\xi) \in L^2(\Omega,\cA,\P;\R^d)$ easily follows. 
Continuity of $[0,T] \ni t \mapsto 
\partial_{x} U(t,x,[\xi]) \in \R^d$ may be proved in the same way. 
{
Together with the uniform continuity estimates
\eqref{eq:thm:4:1:statement}, 
we deduce that 
the functions
$[0,T]
\times \R^d \times L^2(\Omega,\cA,\P;\R^d)
 \ni (t,x,\xi) 
 \mapsto 
\partial_{x} U(t,x,[\xi]) \in \R^d$
and $[0,T] 
\times \R^d \times L^2(\Omega,\cA,\P;\R^d)
 \ni (t,x,\xi) 
 \mapsto
\partial_{\mu} U(t,x,[\xi])(\xi) \in L^2(\Omega,\cA,\P;\R^d)$ are continuous. 
}

We now prove \eqref{eq:UI:U}. For $A \in \cA$ and 
$\Lambda \in L^2(\Omega,\cA,\P;\R^d)$ with $\| \Lambda \|_{2} \leq C'$, 
we have
\begin{equation}
\label{eq:UI:proof:2}
\bigl|\E \bigl[ 
\partial_{\mu} U(t,x,[\xi])(\xi) \Lambda {\mathbf 1}_{A}  \bigr] \bigr\vert
= \partial_{\chi} Y_{t}^{t,x,[\xi]}, \quad \text{with} \ \chi = \Lambda {\mathbf 1}_{A}.
\end{equation}
We now apply 
 \eqref{eq:lem:apriori:1}
in Lemma
\ref{lem:apriori} with $\theta \equiv \bar \theta
   :\equiv
\theta^{t,x,[\xi]}$, $\vartheta \equiv \bar \vartheta :\equiv \partial_{\chi} \theta^{t,x,[\xi]}$, 
$\hat{\theta} \equiv \check{\theta} :\equiv \theta^{t,\xi}$, $\hat{\vartheta} \equiv \check{\vartheta} :\equiv \partial_{\chi}
\theta^{t,\xi}$
and $\eta = 0$.
The coefficients driving 
\eqref{eq:sys:linear:mkv}
are given by 
\eqref{eq:x,xi:H1}. 
By \eqref{eq:lem:apriori:1}, 
we get that, for $T \leq \gamma$ with  
$\gamma$ in \eqref{stab:notations:1} given by
$\gamma^{1/2} \Gamma_{2}(L) = \min[(1/8L^2) ,1/2]$, 
\begin{equation} 
\label{eq:UI:proof:3}
[\partial_{\chi} Y_{t}^{t,x,[\xi]}]^2 \leq \frac1{4L^2} \sup_{s \in [t,T]}  {\mathcal N}_{\E_{t}}^{2,C}\Bigl(
{\theta}_{s}^{t,\xi,(0)},
 \bigl({\mathcal M}_{{\mathbb E}_{t}}^2(\partial_{\chi} \theta^{t,\xi,(0)}) \bigr)^{1/2} \Bigr).
\end{equation}
We use  \eqref{eq:bound:west coast}
(with $\Psi$ given by 
\eqref{eq:sncf:1})
to bound the above term. For any $\varepsilon >0$, 
\begin{align}
  & \sup_{x \in \R^d, s \in [t,T]}  {\mathcal N}_{\E_{t}}^{2,C}\Bigl({\theta}_{s}^{t,\xi,(0)},
 \bigl({\mathcal M}_{{\mathbb E}_{t}}^2(\partial_{\chi} \theta^{t,\xi,(0)}) \bigr)^{1/2} \Bigr)
 \nonumber
  \\
  &\leq \sup_{(w,s) \in \R^k \times [t,T]} \sup_{\|\Lambda_{0}\|_{2} \leq L} \Bigl\{ 
 {\mathbb E} \Bigl[ \bigl(  \Lambda_{0} 
 \wedge 
 \Psi((w,s), \xi) 
 \bigr) 
\bigl( {\mathcal M}_{{\mathbb E}_{t}}^2(\partial_{\chi} \theta^{t,\xi,(0)}) \bigr)^{1/2} \Bigr]^2 \Bigr\}
\label{eq:UI:proof:4}
\\
  &\leq L^2 \varepsilon \nonumber
  \\
  &\hspace{15pt}+ \sup_{(w,s) \in \R^k \times [t,T]} \sup_{\|\Lambda_{0}\|_{2} \leq L} \Bigl\{ 
 {\mathbb E} \Bigl[ \bigl(  \Lambda_{0} 
 \wedge 
 \Psi((w,s), \xi) 
 \bigr)^2 {\mathbf 1}_{\{
 {\mathcal M}_{{\mathbb E}_{t}}^2(\partial_{\chi} \theta^{t,\xi,(0)})  \geq \varepsilon\}}
 \Bigr]^2
{\mathcal M}_{{\mathbb E}}^2(\partial_{\chi} \theta^{t,\xi,(0)})  \Bigr\}, \nonumber
  \end{align}
where we denoted $\R^d \times \R^m \times \R^{m \times d}$ by $\R^k$ and we 
used Cauchy-Schwarz' inequality in the last line. 
Recall that in the above suprema, $\Lambda_{0}$ takes values in $\R_{+}$.

By Lemma \ref{lem:apriori:derivatives}, 
$[{\mathcal M}_{{\mathbb E}}^2(\partial_{\chi} \theta^{t,\xi,(0)})]^{1/2} \leq C \| \chi\|_{2} \leq C C'$.
By uniform integrability of the
family $(\Psi^2((w,s),\xi))_{w \in \R^k, s \in [t,T]}$,
it thus suffices to prove that 
\begin{equation}
\label{eq:cv:proba}
\lim_{\P(A) \rightarrow 0} \P \bigl(
{\mathcal M}_{{\mathbb E}_{t}}^2(\partial_{\chi} \theta^{t,\xi,(0)})  \geq \varepsilon \bigr) = 0
\end{equation}
in order to prove \eqref{eq:UI:U} (recall that 
the above probability depends on $A$ through $\chi = \Lambda {\mathbf 1}_{A}$). 
We reapply 
 \eqref{eq:lem:apriori:1}
in Lemma
\ref{lem:apriori}, but with $\theta \equiv \bar{\theta} \equiv \hat{\theta} \equiv \check{\theta} : \equiv \theta^{t,\xi}$, $\vartheta \equiv \bar{\vartheta} \equiv \hat{\vartheta} \equiv \check{\vartheta} :\equiv \partial_{\chi} \theta^{t,\xi}$ 
and $\eta = \Lambda {\mathbf 1}_{A}$.
Following \eqref{eq:UI:proof:3} and \eqref{eq:UI:proof:4}, we get 
\begin{align}
\label{eq:UI:proof:5}
&{\mathcal M}_{{\mathbb E}_{t}}^2(\partial_{\chi} \theta^{t,\xi,(0)}) 
\\
\nonumber
&\hspace{5pt} \leq C \Lambda^2 {\mathbf 1}_{A}
+ \frac1{4L^2} \sup_{(w,s) \in \R^k \times [t,T]} \sup_{\|\Lambda_{0}\|_{2} \leq L} \Bigl\{ 
 {\mathbb E} \Bigl[ \bigl(  \Lambda_{0} 
 \wedge 
 \Psi((w,s), \xi) 
 \bigr) 
\bigl( {\mathcal M}_{{\mathbb E}_{t}}^2(\partial_{\chi} \theta^{t,\xi,(0)}) \bigr)^{1/2} \Bigr]^2 \Bigr\}.
\end{align}
Multiplying by ${\mathbf 1}_{A^{\complement}}$ and 
taking the expectation, we deduce that
\begin{equation*}
\begin{split}
&{\mathbb E} \bigl[
{\mathbf 1}_{A^{\complement}}
 {\mathcal M}_{{\mathbb E}_{t}}^2(\partial_{\chi} \theta^{t,\xi,(0)}) \bigr]
\\
&\leq 
\frac{1}{2L^2}\sup_{(w,s) \in \R^k \times [t,T]} \sup_{\|\Lambda_{0}\|_{2} \leq L} \Bigl\{ 
 {\mathbb E} \Bigl[ \bigl(  \Lambda_{0} 
 \wedge 
 \Psi((w,s), \xi) 
 \bigr) 
\bigl( {\mathcal M}_{{\mathbb E}_{t}}^2(\partial_{\chi} \theta^{t,\xi,(0)}) \bigr)^{1/2}
{\mathbf 1}_{A^\complement}
 \Bigr]^2 \Bigr\}
\\
&\hspace{15pt}
+
\frac{1}{2L^2}\sup_{(w,s) \in \R^k \times [t,T]} \sup_{\|\Lambda_{0}\|_{2} \leq L} \Bigl\{ 
 {\mathbb E} \Bigl[ \bigl(  \Lambda_{0} 
 \wedge 
 \Psi((w,s), \xi) 
 \bigr) 
\bigl( {\mathcal M}_{{\mathbb E}_{t}}^2(\partial_{\chi} \theta^{t,\xi,(0)}) \bigr)^{1/2}
{\mathbf 1}_{A}
 \Bigr]^2 \Bigr\}
 \\ 
&\leq \frac12 {\mathbb E} \bigl[
{\mathbf 1}_{A^{\complement}}
 {\mathcal M}_{{\mathbb E}_{t}}^2(\partial_{\chi} \theta^{t,\xi,(0)}) \bigr]
 + C \sup_{(w,s) \in \R^d \times [t,T]} \sup_{\|\Lambda_{0}\|_{2} \leq L} \Bigl\{ 
 {\mathbb E} \Bigl[ \bigl(  \Lambda_{0} 
 \wedge 
 \Psi((w,s), \xi) 
 \bigr)^2 
{\mathbf 1}_{A}
 \Bigr] \Bigr\},
\end{split}
\end{equation*}
where we used Cauchy-Schwarz' inequality twice to get the last line. 
By uniform integrability of the family $(\Psi^2((w,s),\xi))_{x \in \R^k, s \in [t,T]}$,
the second term in the last line tends to $0$ with $\P(A)$. Therefore, 
${\mathbb E}[
{\mathbf 1}_{A^{\complement}}
 {\mathcal M}_{{\mathbb E}_{t}}^2(\partial_{\chi} \theta^{t,\xi,(0)})]$
 also tends to $0$ with $\P(A)$. 
 
 Going back to \eqref{eq:UI:proof:5}, taking the root and then the expectation
 and splitting the expectation in the right-hand side according to 
 the indicator functions of $A^\complement$ and $A$, we get 
 in the same way
 \begin{equation*}
\begin{split}
\E \bigl[ \bigl( {\mathcal M}_{{\mathbb E}_{t}}^2(\partial_{\chi} \theta^{t,\xi,(0)}) \bigr)^{1/2 }\bigr] 
&\leq C \bigl( \P(A) \bigr)^{1/2}
+
C{\mathbb E} \bigl[
{\mathbf 1}_{A^{\complement}}
 {\mathcal M}_{{\mathbb E}_{t}}^2(\partial_{\chi} \theta^{t,\xi,(0)}) \bigr]^{1/2}
 \\
 &\hspace{15pt}+ C \sup_{(x,s) \in \R^d \times [t,T]} \sup_{\|\Lambda_{0}\|_{2} \leq L} \Bigl\{ 
 {\mathbb E} \Bigl[ \bigl(  \Lambda_{0} 
 \wedge 
 \Psi((x,s), \xi) 
 \bigr)^2 
{\mathbf 1}_{A}
 \Bigr]^{1/2} \Bigr\}.
 \end{split}
\end{equation*}
The right-hand side tends to $0$ with $\P(A)$, which proves 
\eqref{eq:cv:proba}.
\eproof

\subsection{Study of the second-order differentiability.}

The goal is now to discuss the second-order differentiability of $U$.

%

\subsubsection{Path property of $Z^{t,x,\law{\xi}}$ in 
${\mathcal S}^2([t,T];\R^{m \times d})$}

We start with the following remark. In the previous subsection, we proved that the function $\partial_{x} U$ was Lipschitz continuous with respect to the variables $x$ and $\mu$. Recalling the standard representation formula
\begin{equation}
\label{eq representation formula Z t,x,xi}
Z_{s}^{t,x,\law{\xi}} = \partial_{x} U\bigl(s,X_{s}^{t,x,\law{\xi}},\law{X_{s}^{t,\xi}}\bigr)
\sigma\bigl(X_{s}^{t,x,\law{\xi}},Y_{s}^{t,x,\law{\xi}},\law{X_{s}^{t,\xi},Y_{s}^{t,\xi}}\bigr),
\quad s \in [t,T],
\end{equation} 
see \eqref{eq de hvmu}, we may derive
bounds for $Z^{t,x,\law{\xi}}$ in the space ${\mathcal S}^2([t,T],\R^{m \times d})$
instead of ${\mathcal H}^2([t,T],\R^{m \times d})$ (and similarly for 
$Z^{t,\xi}$ by replacing $x$ by $\xi$ in the above formula). Under assumption \HYP{2}, which contains
\HYP{\sigma}, $\sigma$ is known to be bounded, so that 
$Z^{t,x,\law{\xi}}$ and $Z^{t,\xi,\law{\xi}}$ are indeed bounded 
(in $L^{\infty}$), independently of $\xi$. Moreover, for any $p \geq 1$, 
for $T \leq c_{p}$ with $c_p:=c_{p}(L)$, we can find $C_{p} 
\geq 0$ such that, for $\xi,\xi' \in L^2(\Omega,\cF_{t},\P;\R^d)$,
\begin{equation}
\label{eq sup Z xi}
\begin{split}
&{\mathbb E}_{t} \Bigl[ 
\sup_{r \in [t,T]} \bigl|Z^{t,x,\law{\xi}}_r - Z^{t,x',\law{\xi'}}_r
\bigr|^{2p} 
\Bigr]^{1/2p} 
 \le C_{p} \Bigl( 
1 \wedge \bigl\{ \vert x - x' \vert  + 
\Phi_{\alpha+1}(t,\xi,\xi') \bigr\}
\Bigr),
\\
&{\mathbb E}_{t} \Bigl[ 
 \sup_{r \in [t,T]} \bigl\vert Z^{t,\xi}_{r} - Z^{t,\xi'}_{r} \bigr\vert^{2p}
\Bigr]^{1/2p} 
\le C_{p} \Bigl( 
1 \wedge \bigl\{  \vert \xi - \xi' \vert + 
\Phi_{\alpha+1}(t,\xi,\xi') \bigr\}
\Bigr).
\end{split}
\end{equation}
Note that the term $\Phi_{\alpha+1}(t,\xi,\xi')$ comes from the fact that, 
when handling the difference $\partial_{x} U(s,X_{s}^{t,x,\law{\xi}},[X_{s}^{t,\xi}])
- \partial_{x} U(s,X_{s}^{t,x',\law{\xi'}},[X_{s}^{t,\xi'}])$, we get 
$C [ \vert X_{s}^{t,x,\law{\xi}} - X_{s}^{t,x',\law{\xi'}} \vert 
+ \Phi_{\alpha+1}(X_{s}^{t,\xi},X_{s}^{t,\xi'}) ]$ as bound. 
We then apply 
\eqref{eq:tilde:phi:alpha}
in 
Lemma \ref{lem:function:phi:alpha}
(with $\alpha+1$ instead of $\alpha$)
to handle $\Phi_{\alpha+1}(X_{s}^{t,\xi},X_{s}^{t,\xi'})$. 
{The 
part involving $\sigma$ in the definition of $Z_{s}^{t,x,[\xi]}$
can be treated by means of Lemma \ref{le app cont} using the fact that 
$\sigma$ is Lipschitz continuous.
Following Remark \ref{rem:unif:phi:alpha}}
and as in the statement of Lemma \ref{lem:cont:derivatives},
the restriction of $\Phi_{\alpha+1}(t,\cdot,\cdot)$ to the space $[L^2(\Omega,\cF_{0},\P;\R^d)]^2$ may be assumed to be independent of $t \in [0,T]$.

\subsubsection{Partial smoothness of $\partial_\mu U$. Overview.} 
\label{subsubse:overview}
By making use of \eqref{eq sup Z xi}, we first discuss the existence and the smoothness of the second-order derivatives
of $U$ in the measure argument.
The first remark is that we only need to discuss partial ${\mathcal C}^2$ 
differentiability in order to prove the chain rule. This says that, when 
investigating the second-order derivatives, there is no need to prove that the 
function $U$ has a twice Fr\'echet differentiable lifted version. Roughly 
speaking, the only thing we need is the 
differentiability of the mapping $\R^d \times 
\R^d \ni (x,v) \mapsto \partial_{\mu} U(t,x,\mu)(v)$ 
{(at least when $v$ is restricted 
to the support of $\mu$)}, together
with the continuity (in $(t,x,\mu,v)$) of the derivatives
{(again, at least when $v$ is restricted 
to the support of $\mu$)}.  
In order to differentiate in the direction $v$ without differentiating in the direction $\mu$, 
we shall make use of Theorem \ref{thm:ito:frechet}, which has been specifically designed for that purpose. 
Basically, we are to differentiate 
the lifted version
of $\partial_{\mu} U(t,x,\mu)$ along trajectories 
$(\xi^{\lambda})_{\lambda \in \R}$ that are continuously differentiable in $L^2(\Omega,{\mathcal F}_{t},\P;\R^d)$,
with the constraint that all the $(\xi^{\lambda})_{\lambda \in \R}$ have 
the same distribution and the assumption that 
\begin{equation}
\label{eq:bound:linfini:derivative:xi}
\forall \lambda \in \R, \quad
\bigl\| \frac{\ud}{\ud \lambda} \xi^{\lambda} \bigr\|_{\infty} \leq 1
,\quad \textrm{with the additional notation \ }
\zeta := \frac{\ud}{\ud \lambda}_{\vert \lambda = 0}
\xi^{\lambda}.
\end{equation}
\color{black}
In this framework, we will make use of the following technical lemma:
\begin{Lemma}
\label{lem:lemme:final???}
Consider a function 
$h : \R^k \times \cP_{2}(\R^l) \rightarrow \R$ as in \HYP{2},
a 
continuously differentiable 
mapping $\R \ni \lambda \mapsto  \chi^{\lambda} \in 
L^2(\Omega,\cA,\P;\R^l)$ with the property that all
the $\chi^{\lambda}$ have the same distribution,
and a random variable $\varpi \in L^2(\Omega,\cA,\P;\R^l)$
such that, for any bounded interval $[a,b] \subset \R$, the family
\begin{equation*}
\Bigl( \frac{\ud \chi^{\lambda}}{\ud \lambda} \otimes \varpi \Bigr)_{\lambda \in [a,b]}
\end{equation*}
is uniformly square integrable (the tensorial product acting on $\R^l$).
Then, the function
\begin{equation*}
\label{eq:lemme:final???:4} 
\R^k \times \R \ni (w,\lambda) \mapsto 
{\mathbb E}
\bigl[
\partial_{\mu} h (w,[\chi^{\lambda}])
(\chi^{\lambda}) \varpi
\bigr]
= \E \bigl[ \partial_{\mu} h (w,[\chi^{0}])
(\chi^{\lambda}) \varpi
\bigr]
\end{equation*}
is continuously differentiable, with 
\begin{equation*}
\begin{split}
&\R^k \times \R \ni (w,\lambda) \mapsto 
\E \Bigl[
 \partial_{w} \bigl[ \partial_{\mu} h (w,[\chi^{0}])
\bigr] (\chi^\lambda)
 \varpi
\Bigr]
\\
&\R^k \times \R \ni (w,\lambda) \mapsto 
\E \Bigl[ 
\partial_{v} \bigl[ \partial_{\mu} h (w,[\chi^{0}])\bigr] 
(\chi^{\lambda})
 \frac{\ud \chi^{\lambda}}{\ud \lambda}
 \otimes  \varpi\Bigr]
 \end{split}
\end{equation*}
as respective partial derivatives in $w$ and $\lambda$.
\end{Lemma}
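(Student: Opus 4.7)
The plan is to exploit the fact that $[\chi^\lambda] = [\chi^0]$ for all $\lambda$ (since all the $\chi^\lambda$ have the same law), which immediately yields the identity $F(w,\lambda) := \E[\partial_\mu h(w,[\chi^\lambda])(\chi^\lambda) \varpi] = \E[\partial_\mu h(w,[\chi^0])(\chi^\lambda) \varpi]$ asserted in the statement. Differentiating $F$ then reduces to differentiating a functional that depends on $(w,\lambda)$ only through the Euclidean variable $w$ and the $L^2$-path $\lambda \mapsto \chi^\lambda$, with the measure argument frozen at $[\chi^0]$. Throughout I would work with the $C^1$ version of $v \mapsto \partial_\mu h(w,[\chi^0])(v)$ on all of $\R^l$ provided by \HYP{2}.

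The partial $w$-derivative is handled by differentiating under the expectation. By \HYP{2}, $w \mapsto \partial_\mu h(w,[\chi^0])(v)$ is differentiable with derivative $\partial_w[\partial_\mu h(w,[\chi^0])](v)$ satisfying $\E[|\partial_w[\partial_\mu h(w,[\chi^0])](\chi^0)|^2]^{1/2} \leq \tilde L$. Combined with $\varpi \in L^2$, Cauchy--Schwarz and dominated convergence applied to the finite difference $[\partial_\mu h(w+\delta e_i,[\chi^0])(\chi^\lambda) - \partial_\mu h(w,[\chi^0])(\chi^\lambda)]/\delta$ (uniformly in $\lambda$, since $\chi^\lambda \sim \chi^0$) yields the announced expression. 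Joint continuity in $(w,\lambda)$ then follows from continuity of $\partial_w[\partial_\mu h(w,\mu)](v)$ at support points together with the uniform square integrability coming from the $L^2$-bound above and the continuity of $\lambda \mapsto \chi^\lambda$ in $L^2$.

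For the $\lambda$-derivative, I would apply the fundamental theorem of calculus to the $C^1$ function $v \mapsto \partial_\mu h(w,[\chi^0])(v)$, pointwise in $\omega$:
\begin{equation*}
\partial_\mu h(w,[\chi^0])(\chi^{\lambda+\epsilon}) - \partial_\mu h(w,[\chi^0])(\chi^\lambda) = \int_0^1 \partial_v[\partial_\mu h(w,[\chi^0])]\bigl(\chi^\lambda + t(\chi^{\lambda+\epsilon} - \chi^\lambda)\bigr)\bigl(\chi^{\lambda+\epsilon} - \chi^\lambda\bigr) \, \ud t.
\end{equation*}
Tensoring with $\varpi$, taking expectation and dividing by $\epsilon$, the target is to pass to the limit $\epsilon \to 0$ inside. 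Along a subsequence $\epsilon_n \to 0$ one has $(\chi^{\lambda+\epsilon_n} - \chi^\lambda)/\epsilon_n \to \ud\chi^\lambda/\ud\lambda$ almost surely, so by continuity of $\partial_v[\partial_\mu h(w,[\chi^0])]$ on $\R^l$ the integrand converges pointwise a.s.\ to $\partial_v[\partial_\mu h(w,[\chi^0])](\chi^\lambda) (\ud\chi^\lambda/\ud\lambda) \otimes \varpi$. Uniform square integrability of the difference quotients $(\chi^{\lambda+\epsilon} - \chi^\lambda)/\epsilon \otimes \varpi$ over small $\epsilon$ follows from the hypothesis on $(\ud\chi^s/\ud s \otimes \varpi)_{s \in [a,b]}$ and Jensen's inequality applied to the Bochner identity $(\chi^{\lambda+\epsilon} - \chi^\lambda)/\epsilon = \epsilon^{-1}\int_\lambda^{\lambda+\epsilon} (\ud\chi^s/\ud s) \, \ud s$; Vitali's theorem then yields the claimed expression for $\partial_\lambda F(w,\lambda)$.

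The main obstacle is controlling the Taylor integrand uniformly in $\epsilon$, since the point $\chi^\lambda + t(\chi^{\lambda+\epsilon} - \chi^\lambda)$ need not lie in $\mathrm{Supp}([\chi^0])$, so the $L^2$-bound on $\partial_v[\partial_\mu h(w,[\chi^0])]$ from \HYP{2} does not apply directly. The strategy is to use only the continuity (hence local boundedness) of $\partial_v[\partial_\mu h(w,[\chi^0])]$ on $\R^l$ together with a truncation argument: split the expectation according to whether $|\chi^\lambda| + |(\chi^{\lambda+\epsilon} - \chi^\lambda)/\epsilon| \leq R$ or not; on the `bounded' event, local boundedness of the derivative combines with uniform square integrability of the difference quotients, while on the complementary event one uses Markov's inequality together with the uniform square integrability of $\chi^{\lambda+\epsilon} \sim \chi^0 \in L^2$ and of the difference quotients to make the contribution uniformly small as $R \to \infty$. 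Joint continuity of $\partial_\lambda F$ in $(w,\lambda)$ then follows by the same argument applied to the continuity of $\lambda \mapsto (\chi^\lambda, \ud\chi^\lambda/\ud\lambda) \in L^2(\Omega;\R^l)^2$.
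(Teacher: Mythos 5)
Your overall route is the same as the paper's: freeze the measure at $[\chi^0]$ by equidistribution, apply the fundamental theorem of calculus in $w$ and in $v$ (the paper's display \eqref{eq:lemme:final???} is exactly your Taylor identity, with the segment $s\chi^{\lambda'}+(1-s)\chi^{\lambda}$), and pass to the limit using the $L^2$ bounds of \HYP{2} together with the uniform square integrability of the difference quotients tensored with $\varpi$. Your treatment of the $w$-derivative is fine, since there the evaluation point $\chi^{\lambda}$ lies a.s.\ in $\textrm{Supp}([\chi^0])$ and the bounds of \HYP{2} apply directly.

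For the $\lambda$-derivative, you correctly isolate the real difficulty — the segment $\chi^{\lambda}+t(\chi^{\lambda+\epsilon}-\chi^{\lambda})$ may leave $\textrm{Supp}([\chi^0])$, so the $L^2$ bound on $\partial_{v}[\partial_{\mu}h]$ is not directly usable — and here you are more explicit than the paper, which simply asserts the $L^2$ convergence \eqref{eq:lemme:final???:1}. However, your proposed remedy does not close the gap as written. On the complementary event $\{|\chi^{\lambda}|+|(\chi^{\lambda+\epsilon}-\chi^{\lambda})/\epsilon|>R\}$ you invoke only Markov's inequality and uniform square integrability; but mere continuity of $v\mapsto \partial_{v}[\partial_{\mu}h(w,[\chi^0])](v)$ gives no bound whatsoever on that unbounded event, and a small probability multiplied by an uncontrolled integrand yields nothing. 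What is actually needed is a \emph{quantitative global growth bound} on the continuous version of the derivative, namely $\vert \partial_{v}[\partial_{\mu}h(w,\mu)](v)\vert \leq C(1+\vert v\vert^{\alpha+1}+\|\chi\|_{2}^{\alpha+1})$ for \emph{all} $v\in\R^l$. This is available in the paper: apply Proposition \ref{prop:lipschitz:lifted} to the second display of \HYP{2} to get local Lipschitz continuity of $v\mapsto\partial_{v}[\partial_{\mu}h(w,\mu)](v)$ with an $\alpha$-polynomially growing constant, and combine it with the Markov trick \eqref{eq:trick:markov} exactly as in the derivation of \eqref{eq:lineargrowth}. With that bound in hand, your truncation can be carried out (splitting instead on $\vert\chi^{\lambda}\vert\vee\vert\chi^{\lambda+\epsilon}\vert\leq R$, so that the derivative is bounded by $C(1+R^{\alpha+1})$ on the good event, and the polynomial bound plus square integrability of $\chi^0$ controls the bad event), and the limit is then assembled as you indicate: $\E[\vert(A_{\epsilon}-A)B_{\epsilon}\vert]\leq\|A_{\epsilon}-A\|_{2}\|B_{\epsilon}\|_{2}$ and $\E[\vert A(B_{\epsilon}-B)\vert]\leq\|A\|_{2}\|B_{\epsilon}-B\|_{2}$, the second factor being handled by the assumed uniform square integrability of $(\ud\chi^{\lambda}/\ud\lambda)\otimes\varpi$.
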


\proof 
For $w,w' \in
\R^k$ and $\lambda,\lambda' \in \R$, we
write (thanks to \HYP{2}):
\begin{align}
&\partial_{\mu} h (w',[\chi^{0}])(\chi^{\lambda'})
-
\partial_{\mu} h (w,[\chi^{0}])(\chi^{\lambda})
\nonumber
\\
&=
\partial_{\mu} h (w',[\chi^{0}])(\chi^{\lambda'})
-
\partial_{\mu} h (w',[\chi^{0}])(\chi^{\lambda})
+
\partial_{\mu} h (w',[\chi^{0}])(\chi^{\lambda})
-
\partial_{\mu} h (w,[\chi^{0}])(\chi^{\lambda}) \nonumber
\\
&= \biggl( \int_{0}^1 
\partial_{v}
\bigl[ \partial_{\mu} h (w',[\chi^{0}]) \bigr]\bigl(
s
\chi^{\lambda'}
+ (1-s) \chi^{\lambda}
\bigr) 
\ud s 
\biggr)
 \bigl( \chi^{\lambda'} - \chi^{\lambda} \bigr)
 \label{eq:lemme:final???} 
\\
&\hspace{15pt} + 
\biggl( 
\int_{0}^1 
\partial_{w}
\bigl[ \partial_{\mu} h \bigl(s w' + (1-s) w,[\chi^{0}] \bigr) \bigr]
\bigl(
\chi^{\lambda}
\bigr)\ud s
\biggr) (w'-w). 
\nonumber
\end{align}
Thanks to the $L^2$ bounds on $\partial_{w}[\partial_{\mu} h]$
and $\partial_{v}[\partial_{\mu} h]$ in \HYP{2}, we deduce that, 
as $(w',\lambda') \rightarrow (w,\lambda)$,
\begin{equation}
\label{eq:lemme:final???:1} 
\begin{split}
&\E \biggl[
\biggl\vert \int_{0}^1 
\partial_{v}
\bigl[ \partial_{\mu} h (w',[\chi^{0}]) \bigr]\bigl(
s
\chi^{\lambda'}
+ (1-s) \chi^{\lambda}
\bigr) 
\ud s 
-
\partial_{v}
\bigl[ \partial_{\mu} h (w,[\chi^{0}]) \bigr]\bigl(
\chi^{\lambda}
\bigr) 
\biggr\vert^2 
\biggr] \rightarrow 0,
\\
&\E \biggl[
\biggl\vert
\int_{0}^1 
\partial_{w}
\bigl[ \partial_{\mu} h \bigl(s w' + (1-s) w,[\chi^{0}] \bigr) \bigr]
\bigl(
\chi^{\lambda}
\bigr)\ud s
-
\partial_{w}
\bigl[ \partial_{\mu} h (w,[\chi^{0}]) \bigr]\bigl(
\chi^{\lambda}
\bigr) 
\biggr\vert^2 
\biggr] \rightarrow 0.
\end{split}
\end{equation}
Notice now from the uniform integrability property in the assumption that,
as $\lambda' \rightarrow \lambda$ (with $\lambda' \not = \lambda$), 
\begin{equation}
\label{eq:lemme:final???:2} 
\E \Bigl[ \bigl\vert 
\bigl( \frac{\chi^{\lambda'} - \chi^{\lambda}}{\lambda'-\lambda}
- \frac{\ud \chi^{\lambda}}{\ud \lambda}
\bigr) \otimes \varpi
\bigr\vert^2 \Bigr] \rightarrow 0.
\end{equation}
Plugging 
\eqref{eq:lemme:final???:1} 
and
\eqref{eq:lemme:final???:2}
into 
\eqref{eq:lemme:final???}, we easily deduce that 
the mapping 
\eqref{eq:lemme:final???:4}
is differentiable. Continuity of the partial derivatives is proved 
in the same way.  
\eproof
\color{black}

\subsubsection{Partial smoothness of $\partial_\mu U$. Strategy.} 
Generally speaking, the strategy is the same as for proving the 
first-order continuous differentiability and consists in discussing 
the continuous differentiability of the derivative processes 
$\partial_{\chi} \theta^{t,\xi^{\lambda}} = (\partial_{\chi} X^{t,\xi^\lambda},
\partial_{\chi} Y^{t,\xi^\lambda},\partial_{\chi} Z^{t,\xi^\lambda})$
and $\partial_{\chi} \theta^{t,x,\law{\xi^{\lambda}}} = (\partial_{\chi} X^{t,x,\law{\xi^\lambda}},
\partial_{\chi} Y^{t,x,\law{\xi^\lambda}},\partial_{\chi} Z^{t,x,\law{\xi^\lambda}})$ 
with respect to $\lambda$ when the family $(\xi^{\lambda})_{\lambda \in \R}$
satisfies the aforementioned prescriptions and $\chi$ is in $L^2(\Omega,{\mathcal F}_{t},\P;\R^d)$. 
Together with the relationship $\partial_{\chi} Y_{t}^{t,x,[\xi^\lambda]} = D {\mathcal U}(t,x,\xi^\lambda)
\cdot \chi$, this will permit  
to apply Theorem
\ref{thm:ito:frechet} (compare in particular with 
\eqref{eq:ito:frechet:assumption}).

Intuitively, one has in mind to consider first the partial second order tangent 
process of the McKean-Vlasov FBSDE \eqref{eq X-Y t,xi} in the direction $\chi$ and $\zeta$, which we shall denote by
$\dzeki \theta^{t,\xi} = (\dzeki X^{t,\xi}, \dzeki Y^{t,\xi}, \dzeki Z^{t,\xi}) := [\ud/\ud \lambda]_{\vert \lambda=0} 
\partial_\chi \theta^{t,\xi^\lambda}$. Informally,  
this process should satisfy a 
system of the form \eqref{eq:sys:linear:mkv}, 
with coefficients of the generic form 
\eqref{splitting:H}. 
Precisely, 
the coefficients $H_{\ell}$ should have the same decomposition as in the first order case, see 
\eqref{eq:identification:derivatives}, $V$ and $\hat{V}$ also standing for $\theta$, 
$\theta^{(0)}$ or $X$ but $\cV$ and $\hat{\cV}$ now standing for $\dzeki \theta$, $\dzeki 
\theta^{(0)}$ or $\dzeki X$ (with the usual convention 
that the symbol $(0)$ in
$V^{(0)}$ and 
$\cV^{(0)}$ indicates the restriction to the two first coordinates).
Terms $B_{a}$, $\Sigma_{a}$, $F_{a}$ and $G_{a}$
in \eqref{splitting:H}
 should not be zero anymore 
and should be defined as follows
for a generic coefficient $h$ that may be $b$, $\sigma$, $f$ or $g$: 
\begin{equation}
\label{eq:coeff:H2}
\begin{split}
H_a(r) 
&= \partial^2_{ww} h 
\bigl(\theta^{t,\xi}_{r},[\theta^{t,\xi,(0)}_{r}] \bigr) 
\partial_{\chi} \theta^{t,\xi}_{r} \otimes 
\partial_{\zeta} \theta^{t,\xi}_{r} 
\\
&\hspace{15pt} + \hat{\E}
\bigl[ \partial_{w} \bigl[ \partial_{\mu} 
h 
\bigl(\theta^{t,\xi}_{r},[\theta^{t,\xi,(0)}_{r}] \bigr) \bigr] 
\bigl(\langle \theta^{t,\xi,(0)}_{r} \rangle \bigr)
\langle 
\partial_{\chi} \theta^{t,\xi,(0)}_{r} \rangle
\otimes \partial_{\zeta} \theta^{t,\xi}_{r} \bigr] 
\\
&\hspace{15pt}
+  \hat{\E}
\bigl[ \partial_{v} \bigl[ \partial_{\mu} 
h 
\bigl(\theta^{t,\xi}_{r},[\theta^{t,\xi,(0)}_{r}] \bigr) 
\bigr] \bigl(\langle \theta^{t,\xi,(0)}_{r} \rangle \bigr)
\langle 
\partial_{\chi} \theta^{t,\xi,(0)}_{r} \rangle
\otimes \langle \partial_{\zeta} \theta^{t,\xi,(0)}_{r} \rangle 
\bigr]
\\
&=: H_{a}^{(2)}\bigl(\theta^{t,\xi}_{r},\langle \theta^{t,\xi,(0)}_{r} \rangle,
\partial_{\chi} \theta^{t,\xi}_{r},\partial_{\zeta} \theta^{t,\xi}_{r},
\langle \partial_{\chi} \theta^{t,\xi,(0)}_{r} \rangle, \langle \partial_{\zeta} \theta^{t,\xi,(0)}_{r}
\rangle \bigr)
\\
&=:H_{a}^{ww}(r)
+H_{a}^{w\mu}(r)
+H_{a}^{v\mu}(r),
\end{split}
\end{equation}
where $H_{a}^{(2)}$ could be expressed (in an obvious way) as a function of general arguments 
$\theta_{r}$, $\langle \hat{\theta}_{r}^{(0)} \rangle$, $\vartheta_{r}^{1}$, $\vartheta_{r}^{2}$, 
$\langle \hat \vartheta_{r}^{1,(0)} \rangle$ and 
$\langle \hat \vartheta_{r}^{2,(0)} \rangle$ instead of 
$\theta^{t,\xi}_{r}$, $\langle \theta^{t,\xi,(0)}_{r} \rangle$, 
$\partial_{\chi} \theta^{t,\xi}_{r}$, $\partial_{\zeta} \theta^{t,\xi}_{r}$,
$\langle \partial_{\chi} \theta^{t,\xi,(0)}_{r} \rangle$ and $\langle \partial_{\zeta} \theta^{t,\xi,(0)}_{r}
\rangle$. By analogy with \eqref{eq:ex:mkv:stability}, we can let
\begin{equation}
\label{coeff:H2}
\begin{split}
&H^{(2)}\bigl(r,\theta_{r},
\langle \hat \theta_{r}^{(0)} \rangle,
\vartheta_r^{1},\vartheta_{r}^2,
\langle \hat \vartheta_{r}^{1,(0)} \rangle,
\langle \hat \vartheta_{r}^{2,(0)} \rangle
\bigr)\bigl(\vartheta_{r},\langle \hat \vartheta_{r}^{(0)} \rangle \bigr) 
\\
&:= 
\partial_{w} h (\theta_{r},[\hat \theta_{r}^{(0)}]) \vartheta_{r} +  \hat{\mathbb E}
\bigl[ \partial_{\mu} h(\theta_{r},[\hat \theta_{r}^{(0)}])(\langle \hat \theta_{r}^{(0)}\rangle) 
\langle \hat \vartheta_{r}^{(0)} \rangle \bigr]
\\
&\hspace{15pt} + 
 H_{a}^{(2)}\bigl(\theta_{r},\langle \hat \theta^{(0)}_{r} \rangle,
\vartheta_{r}^{1},\vartheta_{r}^{2},
\langle \hat \vartheta^{1,(0)}_{r} \rangle, \langle \hat \vartheta^{2,(0)}_{r}
\rangle \bigr), \quad r \in [t,T]. 
\end{split}
\end{equation}
Pay attention that there is no `second-order 
derivatives' in the direction of the measure (\textit{i.e.} `$\partial_{\mu\mu}^2 h$')  in 
\eqref{eq:coeff:H2}. Indeed, the fact that  
the initial conditions $(\xi^{\lambda})_{\lambda}$
have the same distribution forces the 
solutions $(\theta^\lambda)_{\lambda}$ to be identically distributed as well.
For the same reason, there is no crossed derivative of the form `$\partial_{\mu} [\partial_{w}h]$'. 
On the opposite, notice that $(\partial_\chi \theta^\lambda)$ (resp. $(\partial_\zeta 
\theta^\lambda)$) are not identically distributed since the coupling between
$\chi$ (resp. $\zeta$) and $\xi^\lambda$ may vary.
In particular, when differentiating with respect to $\lambda$
an expression of the form $\hat{\E}[\partial_{\mu} h(\theta^{\lambda},[\theta^{\lambda,(0)}])(\langle
\theta^{\lambda,(0)} \rangle) \langle \partial_{\chi} \theta^{\lambda,(0)} \rangle]$
for a function $h$ as above, the input $[\theta^{\lambda,(0)}]$ has a zero derivative
as it is constant in $\lambda$, but the two last inputs, namely 
$\langle
\theta^{\lambda,(0)} \rangle$ and $\langle \partial_{\chi} \theta^{\lambda,(0)} \rangle$,
may
give a non-trivial contribution.

On the model of \eqref{assumption:mkv:1}, \eqref{assumption:mkv:2}
and \eqref{assumption:mkv:3}, we shall use the following assumptions on the 
coefficients (compare also with \HYP{2}):
\begin{equation}
 \label{eq:cond:coef:second:1}
 \begin{split}
 & |\partial_{ww}^2h(w,\law{\hat V^{(0)}})| 
+  \hesp{ \bigl|\partial_w  \bigl[\partial_\mu h(w,\law{\hat V^{(0)}}) \bigr](\cc{\hat V^{(0)}})\bigr|^2}^{1/2} 
\\
&\hspace{15pt}+  \hesp{\bigl|\partial_v \bigl[ \partial_\mu h(w,\law{\hat V^{(0)}})\bigr](\cc{\hat V^{(0)}})\bigr|^2}^{1/2}
\le C, 
\end{split}
 \end{equation}
and
\begin{equation}
\label{eq:cond:coef:second:2}
\begin{split}
&|\partial_{ww}^2h(w,\law{\hat V^{(0)}})-\partial_{ww}^2h(w',\law{\hat V^{(0)\prime}})|
\\
&\hspace{15pt}
 +\hesp{ \bigl|\partial_w \bigl[ \partial_\mu h(w,\law{\hat V^{(0)}}) \bigr](\cc{\hat V^{(0)}})
-\partial_w \bigl[ \partial_\mu h(w',\law{\hat V^{(0)\prime}}) \bigr] (\cc{\hat V^{(0)\prime}})\bigr|^2}^\frac12 
\\
&\hspace{15pt}
 +\hesp{\bigl|\partial_v \bigl[ \partial_\mu h(w,\law{\hat V^{(0)}}) \bigr](\cc{\hat V^{(0)}})
-\partial_v \bigl[ \partial_\mu h(w',\law{\hat V^{(0)\prime}}) \bigr] (\cc{\hat V^{(0)\prime}}) \bigr|^2}^\frac12  \\
&\le C\bigl(|w-w'|+ \Phi_{\alpha}(\hat V^{(0)},\hat V^{(0)\prime} )\bigr)\;.
\end{split}
\end{equation}
\subsubsection{Preliminary estimates}
We start with the following bound of the remainder term
$H^{(2)}_{a}$ in \eqref{coeff:H2}:
\begin{Lemma}
\label{lem:bound:H2:1}
Given generic processes $\theta$, $\hat \theta$, 
$\vartheta^1$, $\vartheta^2$, $\hat \vartheta^{1}$
and $\hat  \vartheta^{2}$, denote 
by $H_{a}^{(2)}(r)$ the term
$H_{a}^{(2)}(\theta_{r},\langle \hat \theta^{(0)}_{r} \rangle,
\vartheta_{r}^{1}, \vartheta^{2}_{r},
\langle \hat \vartheta^{1,(0)}_{r} \rangle, \langle \hat \vartheta^{2,(0)}_{r} \rangle
)$
in \eqref{coeff:H2}, $H$ matching $B$, $\Sigma$, $F$ or $G$. For any $p \geq 1$, we can find
a constant $C_{p}$ (independent of 
the processes) such that (using the notation $\bar{\cM}$
from
\eqref{eq:mkv:stability:Psi})
\begin{align}
 &\E_{t} \biggl[
  \vert G_{a}^{(2)}(T) \vert^{2p} +
  \biggl(
 \int_t^T
 \bigl(
 \vert B_{a}^{(2)}(s) \vert + \vert F_{a}^{(2)}(s) \vert \bigr) \ud s \biggr)^{2p}  
  + \biggl(
 \int_t^T
 \vert \Sigma_{a}^{(2)}(s)|^2\ud s \biggr)^{p} \biggr]^{1/2p}
 \label{eq:sncf:retour:5}
 \\
 &\leq C_{p} 
 \biggl[
 \Bigl( \bar\cM^{4p}\bigl(\vartheta^1,\hat{\vartheta}^{1}\bigr) \Bigr)^{1/4p}
 \Bigl( \bar\cM^{4p}\bigl(\vartheta^2,\hat{\vartheta}^{2}\bigr) \Bigr)^{1/4p}
+
\E \Bigl[ 
 \NSt{4}{\hat\vartheta^{1,(0)}}^2\NSt{4}{\hat 
\vartheta^{2,(0)}}^2 \Bigr]^{1/2}
\biggr]. \nonumber
\end{align}
\end{Lemma}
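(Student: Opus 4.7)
The natural approach is to split $H_a^{(2)}(r)$ into the three summands $H_a^{ww}(r)+H_a^{w\mu}(r)+H_a^{v\mu}(r)$ appearing in \eqref{eq:coeff:H2}, estimate each pointwise in $\omega$ using the structural bounds \eqref{eq:cond:coef:second:1} on the second derivatives of $h$, and then combine time-integrals with Cauchy-Schwarz and Hölder. The uniform $L^\infty$ bound on $\partial^2_{ww}h$ gives
\begin{equation*}
|H_a^{ww}(r)|\le C\,|\vartheta^1_r|\,|\vartheta^2_r|,
\end{equation*}
while the uniform $L^2(\hat\P)$ bounds on $\partial_w[\partial_\mu h]$ and $\partial_v[\partial_\mu h]$, together with Cauchy-Schwarz applied inside $\hat{\mathbb E}$, give
\begin{equation*}
|H_a^{w\mu}(r)|\le C\,\|\hat\vartheta^{1,(0)}_r\|_2\,|\vartheta^2_r|,\qquad
|H_a^{v\mu}(r)|\le C\,\hat{\mathbb E}\bigl[|\hat\vartheta^{1,(0)}_r|^2|\hat\vartheta^{2,(0)}_r|^2\bigr]^{1/2},
\end{equation*}
the last bound being deterministic in $\omega$ since the $\hat{\mathbb E}$ removes the randomness on $\hat\Omega$ and the bound on $\partial_v[\partial_\mu h]$ is uniform in $w=\theta_r(\omega)$.

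The first two contributions assemble into the $\bar{\mathcal M}^{4p}$-part of the right-hand side. For $H_a^{ww}$, Cauchy-Schwarz in time yields $\int_t^T|H_a^{ww}(s)|\,ds\le C\,\NHt{4p}{\vartheta^1}\NHt{4p}{\vartheta^2}$ after taking $\E_t[\cdot]^{1/2p}$; the observation (via Jensen on the $\cX,\cY$ components and the definition of $\cH^{4p}$ on the $\cZ$ component) that $\NHt{4p}{\vartheta^i}\le C\,(\bar{\mathcal M}^{4p}(\vartheta^i,\hat\vartheta^i))^{1/4p}$ then closes this piece. For $H_a^{w\mu}$, pulling out $\sup_r\|\hat\vartheta^{1,(0)}_r\|_2$ as a deterministic factor and using $\sup_r\|\hat\vartheta^{1,(0)}_r\|_2\le (\bar{\mathcal M}^{4p}(\vartheta^1,\hat\vartheta^1))^{1/4p}$ together with $\E_t[(\int_t^T|\vartheta^2_s|^2ds)^p]^{1/2p}\le C\,T^{1/2}(\bar{\mathcal M}^{4p}(\vartheta^2,\hat\vartheta^2))^{1/4p}$ gives the required bound after absorbing $T\le 1$ in $C_p$. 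The terminal $G_a^{ww}$ and $G_a^{w\mu}$ terms and the $\Sigma_a^{ww},\Sigma_a^{w\mu}$ pieces (integrated with power $2$) are treated identically, with the extra time integral handled by a $T^{1/2-1}\int$-Jensen trick.

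The main obstacle is the $H_a^{v\mu}$ piece, which cannot be controlled by $\bar{\mathcal M}^{4p}$: both arguments of the bilinear form $\partial_v[\partial_\mu h]$ live on the auxiliary space $\hat\Omega$, so after applying $\hat{\mathbb E}$ we retain no $\omega$-dependence and, in particular, no conditional-expectation structure to exchange with the $\E_t$ on the outside. The remedy is to majorize the deterministic bound by $C\,\hat{\mathbb E}\bigl[\sup_r|\hat\vartheta^{1,(0)}_r|^2\sup_r|\hat\vartheta^{2,(0)}_r|^2\bigr]^{1/2}$ (using $\hat\E[AB]^{1/2}$ with $A,B$ the supremum quantities), rewrite the $\hat{\mathbb E}$ as an $\E$ via the copy identification, and then apply conditional Cauchy-Schwarz in the form
\begin{equation*}
\E\bigl[\sup|\hat\vartheta^{1,(0)}|^2\sup|\hat\vartheta^{2,(0)}|^2\bigr]
=\E\bigl[\E_t[\sup|\hat\vartheta^{1,(0)}|^2\sup|\hat\vartheta^{2,(0)}|^2]\bigr]
\le \E\bigl[\NSt{4}{\hat\vartheta^{1,(0)}}^2\NSt{4}{\hat\vartheta^{2,(0)}}^2\bigr],
\end{equation*}
which is precisely the second term on the right-hand side of \eqref{eq:sncf:retour:5}. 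The same mechanism handles $G_a^{v\mu}(T)$ (at $r=T$, no time integral) and $\Sigma_a^{v\mu}$ (with the integral in time squared). Summing the three pieces with the triangle inequality yields the claimed estimate.
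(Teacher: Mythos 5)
Your proposal is correct and follows essentially the same route as the paper: the same three-way split of $H_a^{(2)}$ into the $ww$, $w\mu$ and $v\mu$ pieces, the same pointwise bounds via \eqref{eq:cond:coef:second:1} with Cauchy--Schwarz under $\hat{\mathbb E}$, and the same treatment of the $v\mu$ cross term through the deterministic bound leading to $\E[\NSt{4}{\hat\vartheta^{1,(0)}}^2\NSt{4}{\hat\vartheta^{2,(0)}}^2]^{1/2}$. No gaps.
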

\proof
We start with the case $H=B$ (resp. $F$), or equivalently $h=b$ (resp. $f$).  
We use a decomposition of the same type as \eqref{eq:coeff:H2} (with the same notations).
By conditional Cauchy-Schwartz 
inequality and \eqref{eq:cond:coef:second:1}, we can find a constant $C_{p}$
such that 
\begin{align}
\label{eq:sncf:retour:1}
 \E_{t} \biggl[ \biggl(
 \int_t^T
 \vert H_{a}^{ww}(s)|\ud s \biggr)^{2p} \biggr]
&\leq C_{p} 
 \NHt{4p}{\vartheta^1 }^{2p}
 \NHt{4p}{\vartheta^2 }^{2p}. 
 \end{align}
We now aim to obtain similar upper bound for the other terms in \eqref{eq:coeff:H2}.
We therefore observe, using \eqref{eq:cond:coef:second:1}, 
$ |H^{w\mu}_a(s)|
 \le C\NL{2}{\hat \vartheta_s^{1,(0)}}| \vartheta_s^2|$,
so that 
\begin{align}
\label{eq:sncf:retour:2}
 \E_{t} \biggl[ \biggl(
 \int_t^T
 \vert H_{a}^{w\mu}(s)|\ud s \biggr)^{2p} \biggr]
&\leq C_{p} 
 \NHt{2p}{\vartheta^2 }^{2p}
 \NS{2}{\hat \vartheta^{1,(0)} }^{2p}. 
 \end{align}
We now handle $H^{v\mu}_a$. By conditional H\"older inequality, we observe that, under 
condition \eqref{eq:cond:coef:second:1},
$ |H^{v\mu}_a(s)| \le C
\E [ \NSt{4}{\hat \vartheta^{1,(0)}}^2 
\NSt{4}{\hat 
\vartheta^{2,(0)}}^2 ]^{1/2}$,
from which we get
\begin{align}
\label{eq:sncf:retour:3}
 \E_{t} \biggl[ \biggl(
 \int_t^T
 \vert H_{a}^{v\mu}(s)|\ud s \biggr)^{2p} \biggr]
 \le C_{p}
\E \Bigl[ 
 \NSt{4}{\hat \vartheta^{1,(0)}}^2 \NSt{4}{
 \hat
\vartheta^{2,(0)}}^2 \Bigr]^{p}.
\end{align}
By \eqref{eq:sncf:retour:1}, \eqref{eq:sncf:retour:2} and \eqref{eq:sncf:retour:3}
and with the notation 
\eqref{eq:mkv:stability:Psi}, we 
get \eqref{eq:sncf:retour:5}.
The cases when $H=\Sigma$ or $G$ may be handled in the same way. 
\eproof

\begin{Lemma}
\label{lem:bound:H2:2}
Given processes $\theta$, $\theta'$, $\hat\theta$, $\hat \theta^{\prime}$, 
$\vartheta^1$, $\vartheta^{1\prime}$, $\vartheta^2$, 
$\vartheta^{2\prime}$, 
$\hat \vartheta^{1}$, $\hat \vartheta^{1\prime}$,
$\hat \vartheta^{2}$ and $\hat \vartheta^{2\prime}$, denote 
the terms
$H_{a}^{(2)}(\theta_{r},\langle \hat\theta^{(0)}_{r} \rangle,
\vartheta^1_{r},\vartheta^2_{r},
\langle \hat \vartheta^{1,(0)}_{r} \rangle, \langle \hat \vartheta^{2,(0)}_{r}
\rangle)$ in \eqref{coeff:H2}
by $H_{a}^{(2)}(r)$
and use a similar definition for $H_{a}^{(2)\prime}(r)$. For any $p \geq 1$, we can find
a constant $C_{p}$ (independent of 
the processes) such that, for any random variable $\varepsilon$
with values in $\R_{+}$ (with the notation 
$\bar{\cM}$ from 
\eqref{eq:mkv:stability:Psi}),
\begin{align}
&\E_{t} \biggl[ 
 \vert G_{a}^{(2)}(T) - G_{a}^{(2)\prime}(T) \vert^{2p}  +
\biggl(
 \int_t^T
 \bigl(
 \vert B_{a}^{(2)}(s) - B_a^{(2)\prime}(s) \vert + \vert F_{a}^{(2)}(s) - F_{a}^{(2)\prime}(s) \vert \bigr) \ud s \biggr)^{2p}  \nonumber
\\
&\hspace{50pt} + \biggl(
 \int_t^T
 \vert \Sigma_{a}^{(2)}(s) - \Sigma_{a}^{(2)\prime}(s)|^2\ud s \biggr)^{p} \biggr]^{1/2p}  \nonumber
\\
&\leq C_{p}\Bigl\{ \Bigl(1\wedge \Bigl[ \E_{t}\bigl(\varepsilon^{4p} \bigr)^{1/4p} +
\Bigl( \bar{\mathcal M}^{4p}\bigl(\theta - \theta',\hat\theta - \hat\theta'\bigr) \Bigr)^{1/4p}
 + \Phi_{\alpha}\bigl(\hat\theta^{(0)},\hat\theta^{(0) \prime}\bigr)
 \Bigr] 
\Bigr) \nonumber
\\
&\hspace{50pt} \times \Bigl[ 
\Bigl( \bar{\mathcal M}^{8p}\bigl(\vartheta^1,\hat \vartheta^1\bigr) \Bigr)^{1/8p}
\Bigl( \bar{\mathcal M}^{8p}\bigl(\vartheta^2,\hat \vartheta^2\bigr) \Bigr)^{1/8p}
+ \E \Bigl[
\NSt{4}{\hat \vartheta^{1,(0)}}^2
\NSt{4}{\hat \vartheta^{2,(0)}}^2 \Bigr]^{1/2}
\Bigr] \Bigr\}  \nonumber
 \\
 &\hspace{15pt} + 
 C_{p} \Bigl\{ \Bigl( \bar\cM^{4p}
\bigl(\vartheta^1 -\vartheta^{1\prime},\hat\vartheta^1 -\hat\vartheta^{1\prime}
  \bigr) \Bigr)^{1/4p}
\Bigl( \bar{\mathcal M}^{4p}\bigl(\vartheta^2,\hat \vartheta^2) \Bigr)^{1/4p} \label{eq:sncf:retour:5:b}
\\
&\hspace{50pt} + 
 \Bigl( \bar\cM^{4p}
\bigl(\vartheta^{1\prime},\hat \vartheta^{1\prime} \bigr) \Bigr)^{1/4p}
\Bigl( \bar{\mathcal M}^{4p}\bigl(\vartheta^2 - \vartheta^{2\prime},
\hat\vartheta^2 - \hat\vartheta^{2\prime}
\bigr) \Bigr)^{1/4p} \Bigr\}  \nonumber
\\
&\hspace{15pt}+ C_{p}  \Bigl\{
\E \Bigl[
\NSt{4}{\hat \vartheta^{1,(0)\prime} }^2
 \NSt{4}{\hat \vartheta^{2,(0)}
- \hat \vartheta^{2,(0) \prime}
}^2
 \Bigr]^{1/2} + 
\E \Bigl[
 \NSt{4}{\hat \vartheta^{1,(0)}
- \hat \vartheta^{1,(0) \prime}
}^2
\NSt{4}{\hat \vartheta^{2,(0)} }^2
\Bigr]^{1/2} \Bigr\} \nonumber
\\
&\hspace{15pt} + \E_{t} \biggl[
\left( \int_t^T 
{\mathbf 1}_{\{
\vert \theta_{s} - \theta_{s}' \vert > \varepsilon
 \}}
\vert \vartheta_{s}^1 \vert \, \vert \vartheta^2_{s}
\vert
\ud s \right)^{2p} 
\biggr]^{1/2p}. \nonumber
\end{align}
\end{Lemma}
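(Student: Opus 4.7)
The plan is to mimic the proof of Lemma \ref{lem:bound:H2:1}, but to track now how the individual factors depend on the various inputs. I would first decompose
\begin{equation*}
H_a^{(2)}(r) - H_a^{(2)\prime}(r) = \bigl[H_a^{ww}(r) - H_a^{ww\prime}(r)\bigr] + \bigl[H_a^{w\mu}(r) - H_a^{w\mu\prime}(r)\bigr] + \bigl[H_a^{v\mu}(r) - H_a^{v\mu\prime}(r)\bigr]
\end{equation*}
following the three-term splitting in \eqref{eq:coeff:H2}, and treat each piece separately. For each piece, the standard trick is to add and subtract: replace first $(\theta,\hat\theta)$ by $(\theta',\hat\theta')$ inside the second-order derivative of $h$ (keeping $\vartheta^1,\vartheta^2,\hat\vartheta^1,\hat\vartheta^2$ unchanged), then swap $\vartheta^1$ for $\vartheta^{1\prime}$ (keeping $\vartheta^{2}$ unchanged), then swap $\vartheta^2$ for $\vartheta^{2\prime}$. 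This produces three contributions per piece, whose bounds give rise to the three blocks appearing in \eqref{eq:sncf:retour:5:b}.

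For the terms where only the $\vartheta^i$'s change, one mimics the derivation of \eqref{eq:sncf:retour:1}--\eqref{eq:sncf:retour:3}: the second-order derivatives of $h$ are uniformly bounded by \eqref{eq:cond:coef:second:1}, and Cauchy--Schwarz and conditional H\"older give exactly the block on the third line of \eqref{eq:sncf:retour:5:b} together with the $\hat\vartheta^{i,(0)}$-difference terms on the fourth line. The $H_a^{v\mu}$ piece is the one that generates the $\NSt{4}{\cdot}^2$ products, exactly as in the proof of Lemma \ref{lem:bound:H2:1}.

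The delicate block is the first one, where the coefficient $\partial^2_{ww}h$, $\partial_w[\partial_\mu h]$ or $\partial_v[\partial_\mu h]$ is evaluated at two different arguments. Here one has two bounds available: \eqref{eq:cond:coef:second:1} gives a uniform $L^\infty$ (or $L^2$ under $\hat\E$) bound, and \eqref{eq:cond:coef:second:2} gives a Lipschitz bound of size $|\theta-\theta'| + \Phi_\alpha(\hat\theta^{(0)},\hat\theta^{(0)\prime})$. To interpolate between them and obtain the factor $1\wedge[\cdots]$, I would introduce the auxiliary random variable $\varepsilon$ and split the integrand according to whether $|\theta_s - \theta'_s| \le \varepsilon$ or not. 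On the event $\{|\theta_s-\theta'_s|\le \varepsilon\}$, \eqref{eq:cond:coef:second:2} yields a Lipschitz-type contribution of order $\varepsilon + \Phi_\alpha + (\bar{\cM}^{4p}(\theta-\theta',\hat\theta-\hat\theta'))^{1/4p}$ (the $\bar{\cM}$ term coming from handling the time-integrated part, where the $L^2$ difference is controlled directly rather than pointwise), multiplied by the product of $\vartheta^1$ and $\vartheta^2$ norms, giving the bilinear $\bar{\cM}^{8p}$ bound via Cauchy--Schwarz. On the complementary event, one uses the $L^\infty$ bound of \eqref{eq:cond:coef:second:1}, producing precisely the last residual term
\begin{equation*}
\E_t\Bigl[\Bigl(\int_t^T {\mathbf 1}_{\{|\theta_s-\theta_s'|>\varepsilon\}}|\vartheta^1_s|\,|\vartheta^2_s|\,\ud s\Bigr)^{2p}\Bigr]^{1/2p}
\end{equation*}
in the right-hand side of \eqref{eq:sncf:retour:5:b}. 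Taking the minimum of this splitting with the crude $L^\infty$ bound (which is bounded by the product of $\bar{\cM}$ terms) produces the $1\wedge[\cdots]$ factor.

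The main obstacle will be the bookkeeping in the $H_a^{v\mu}$ piece: the second-order derivative $\partial_v[\partial_\mu h]$ is evaluated at $(\hat\theta^{(0)},\langle\hat\theta^{(0)}\rangle)$ and tested against the tensor product $\langle \hat\vartheta^{1,(0)}\rangle \otimes \langle \hat\vartheta^{2,(0)}\rangle$ under $\hat\E$, so the relevant Lipschitz estimate \eqref{eq:cond:coef:second:2} must be applied after conditional Cauchy--Schwarz, and the $\Phi_\alpha$-term naturally reappears — which is consistent with the presence of $\Phi_\alpha(\hat\theta^{(0)},\hat\theta^{(0)\prime})$ in the statement. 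The remaining pieces reduce to routine applications of Cauchy--Schwarz and the first-order bounds.
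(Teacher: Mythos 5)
Your proposal is correct and follows essentially the same route as the paper: the three-way splitting of $H_a^{(2)}$ according to \eqref{eq:coeff:H2}, the telescoping first in the coefficient arguments and then successively in $\vartheta^1$ and $\vartheta^2$, and the $\varepsilon$-splitting that interpolates between the Lipschitz bound \eqref{eq:cond:coef:second:2} and the uniform bound \eqref{eq:cond:coef:second:1} to produce both the $1\wedge[\cdots]$ factor and the residual indicator term. The only detail you gloss over is the case $H=\Sigma,G$, where the paper exploits the fact that $\sigma$ and $g$ do not depend on $Z$ to choose $\varepsilon=\sup_{s}\vert\theta^{(0)}_s-\theta^{(0)\prime}_s\vert$, absorbing the indicator term into $(\bar{\cM}^{4p}(\theta-\theta',\hat\theta-\hat\theta'))^{1/4p}$ — but this is a refinement, not a gap, since the stated estimate holds for arbitrary $\varepsilon$.
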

\proof
We start with the case when $H=B,F$. 
As in the proof of Lemma \ref{lem:bound:H2:1},   
we make use of the decomposition \eqref{eq:coeff:H2}.
Denoting by $H_2^{ww}$ and $H_{2}^{ww\prime}$ the related terms in 
\eqref{eq:coeff:H2}, we compute:
\begin{equation}
\label{eq:sncf:retour:10}
\begin{split}
& \E_{t} \biggl[ \biggl(
 \int_t^T
 \vert H_{a}^{ww}(s) - {H_{a}^{ww}}'(s) |\ud s \biggr)^{2p} \biggr]^{1/2p}
\\
&\leq  \E_{t} \biggl[ \left( \int_t^T 
\left| \left\{
\partial^2_{ww} h \bigl(\theta_{s},[\hat \theta^{(0)}_{s}] \bigr) 
- \partial^2_{ww} h \bigl(\theta_{s}',[\hat \theta^{(0)\prime}_{s}] \bigr)
\right\}
\vartheta^1_{s} \otimes \vartheta^2_{s}
\right|
\ud s \right)^{2p} \biggr]^{1/2p} 
\\
&\hspace{15pt} +  \E_{t} \biggl[ \left( \int_t^T
\left| \partial^2_{ww} h \bigl(\theta_{s}',[\hat\theta^{(0) \prime}_{s}] \bigr)
\left\{
\vartheta^1_{s} - \vartheta_{s}^{1\prime}
\right\}
\otimes \vartheta^2_{s}
\right|
\ud s \right)^{2p} \biggr]^{1/2p} 
\\
&\hspace{15pt} + \E_{t} \biggl[ \biggl( \int_t^T
\left|
 \partial^2_{ww} h \bigl(\theta_{s}',[\hat \theta^{(0)\prime}_{s}] \bigr)
\vartheta_{s}^{1\prime}
\otimes 
\left\{
\vartheta^{2}_{s}-\vartheta_{s}^{2\prime}
\right\}
\right|
\ud s \biggr)^{2p} \biggr]^{1/2p}
\\
&:= A_{1} + A_{2} + A_{3}. 
\end{split}
\end{equation}
Bounding the difference of the terms in 
$\partial^2_{ww} h$ by a constant or by the increment of the underlying 
variables, 
we thus obtain, for any  
random variable 
$\varepsilon$ with values in $\R_{+}$,
\begin{align*}
 A_1 & \le C \E_{t} \biggl[
\Bigl\{ 1 \wedge \Bigl( \varepsilon + \sup_{s \in [t,T]} 
\Phi_{\alpha}\bigl(\hat \theta^{(0)}_s,\hat \theta^{(0) \prime}_s\bigr)
\Bigr)^{2p} \Bigr\}
\biggl( \int_t^T 
\vert \vartheta^1_{s} \vert \, \vert \vartheta^2_{s}
\vert
\ud s \biggr)^{2p} 
\biggr]^{1/2p}
\\
&\hspace{15pt} + \E_{t} \biggl[
\left( \int_t^T 
{\mathbf 1}_{\{
\vert \theta_{s} - \theta_{s}' \vert > \varepsilon
 \}}
\vert \vartheta^1_{s} \vert \, \vert \vartheta^2_{s}
\vert
\ud s \right)^{2p} 
\biggr]^{1/2p}
\\
& \le C_{p} \Bigl(1\wedge \Bigl[ \E_{t}\bigl(\varepsilon^{4p} \bigr)^{1/4p}+ 
 \sup_{s\in[t,T]}\Phi_{\alpha}\bigl(\hat \theta^{(0)}_s,\hat \theta^{(0) \prime}_s\bigr)
 \Bigr] 
\Bigr)
 \NHt{8p}{\vartheta^1}
\NHt{8p}{\vartheta^2}
\\
&\hspace{15pt} + \E_{t} \biggl[
\left( \int_t^T 
{\mathbf 1}_{\{
\vert \theta_{s} - \theta_{s}' \vert > \varepsilon
 \}}
\vert \vartheta_{s}^1 \vert \, \vert \vartheta^2_{s}
\vert
\ud s \right)^{2p} 
\biggr].
 \end{align*}
We also have
\begin{align*}
 A_2 + A_{3}&\le C_p \Bigl( 
 \NHt{4p}{\vartheta^1 -\vartheta^{1 \prime}}
 \NHt{4p}{\vartheta^2}
 + 
 \NHt{4p}{\vartheta^{1\prime}}
 \NHt{4p}{\vartheta^2 -\vartheta^{2\prime}}
 \Bigr).
\end{align*}
 Next, 
 using a similar decomposition to \eqref{eq:sncf:retour:10},
 we compute
\begin{align*}
 & \E \biggl[ \biggl(
 \int_t^T
 \vert H_{a}^{w\mu}(s) - {H_{a}^{w\mu}}'(s) |\ud s \biggr)^{2p} \biggr]^{1/2p}
 \\
 &\le C_{p} \Bigl\{ \Bigl(1\wedge \bigl\{
 \NHt{4p}{\theta -\theta'}+ 
\sup_{s\in [t,T]}\Phi_{\alpha}(\hat \theta^{(0)}_s,\hat \theta_s^{(0)\prime})
\bigr\} \Bigr)
\NS{2}{\hat \vartheta^{1,(0)}} \NHt{4p}{\vartheta^2}
 \\
 &\hspace{15pt} + 
\NS{2}{\hat \vartheta^{1,(0)}-\hat \vartheta^{1,(0)\prime}}
\NHt{2p}{\vartheta^2}
 + 
\NS{2}{\hat \vartheta^{1,(0)\prime}}
\NHt{2p}{ \vartheta^{2} - \vartheta^{2\prime}}\Bigr\}.
\end{align*}
We also get
\begin{align*}
&\E_{t} \biggl[ \biggl(
 \int_t^T
 \vert H_{a}^{v\mu}(s) - {H_{a}^{v\mu}}'(s) |\ud s \biggr)^{2p} \biggr]^{1/2p}
\\
 &\le C_{p} \Bigl\{ \Bigl(1\wedge \bigl\{ 
 \NHt{2p}{\theta -\theta'}+ 
\sup_{s\in[t,T]}\Phi_{\alpha}(\hat \theta^{(0)}_s,\hat \theta^{(0)\prime}_s) 
\bigr\}
\Bigr) \E \Bigl[
\NSt{4}{\hat \vartheta^{1,(0)}}^2 
\NSt{4}{\hat \vartheta^{2,(0)}}^2
\Bigr]^{1/2}
\\
&\hspace{5pt}+ 
\E \Bigl[ 
\NSt{4}{\hat 
\vartheta^{1,(0)\prime}}^2
\NSt{4}{\hat 
\vartheta^{2,(0)}-\hat 
\vartheta^{2,(0) \prime}}^2
 \Bigr]^{1/2}
+ 
\E \Bigl[
\NSt{4}{\hat 
\vartheta^{1,(0)}-\hat 
\vartheta^{1,(0) \prime}}^2
\NSt{4}{\hat 
\vartheta^{2,(0)}}^2
\Bigr]^{1/2} \Bigr\}.
\end{align*}
Collecting the various inequalities, we get \eqref{eq:sncf:retour:5:b}.

The proof is quite similar when $H=\Sigma$ or $G$,
{but there are two main differences. 
The first one is that, in the analysis of 
$\Sigma_{a}^{(2)}$ and $G_{a}^{(2)}$, processes are estimated 
with ${\mathcal S}$ instead of ${\mathcal H}$ norms. Obviously, this does not affect
\eqref{eq:sncf:retour:5:b} since $\Sigma_{a}^{(2)}$ and $G_{a}^{(2)}$ only 
involve the two first coordinates of $\theta$, $\vartheta^1$
and $\vartheta^2$.}  
 The second main difference comes 
from $A_{1}$. Since neither $\sigma$ nor $g$ depend on 
the component $Z$, we can replace $\vert \theta_{s} - \theta_{s}' \vert$
by $\vert \theta_{s}^{(0)} - \theta_{s}^{(0)\prime} \vert$ in the analysis of the term corresponding to $A_{1}$. Choosing
$\varepsilon = \sup_{s \in [t,T]}
\vert \theta_{s}^{(0)} - \theta_{s}^{(0)\prime}\vert$, we get rid of the remaining 
term containing the indicator function of the event $\{
\vert \theta_{s}^{(0)} - \theta_{s}^{(0)\prime} \vert > \varepsilon
 \}$. Then, $\E_{t}[ \varepsilon^{4p}]^{1/4p}$ is exactly 
 equal to $\NSt{4p}{\theta^{(0)} - \theta^{(0)\prime}}$, 
 which is less than $(\bar{\cM}^{4p}(\theta - \theta',\hat \theta - \hat \theta'))^{1/4p}$. 
\eproof

\subsubsection{Proof of the differentiability of the McKean-Vlasov system}
We claim:
\begin{Lemma}
\label{lem:2ndorder:diff}
There exists $c:=c(L)>0$ such that, for $T \leq c$, for 
$\chi$ and $(\xi^{\lambda})_{\lambda}$ as in 
\S \ref{subsubse:overview}, the mapping
\begin{equation*}
\R \ni \lambda \mapsto \partial_{\chi} \theta^{t,\xi^\lambda}
= \bigl( \partial_{\chi} X^{t,\xi^{\lambda}},\partial_{\chi}
Y^{t,\xi^{\lambda}},\partial_{\chi} Z^{t,\xi^{\lambda}} \bigr),
\end{equation*}
with values in ${\mathcal S}^{2}([t,T];\R^d) \times {\mathcal S}^2([t,T];\R^m)
\times {\mathcal H}^2([t,T];\R^{m \times d})$,
is continuously differentiable.
The derivative at $\lambda = 0$
only depends upon the family $(\xi^\lambda)_{\lambda \in \R}$
through the value of $\xi := \xi^0$ and 
$\zeta := [\ud/\ud \lambda]_{\lambda =0}
\xi^\lambda$ (see footnote \ref{footnote:second:order:derivative}
on page \pageref{footnote:second:order:derivative} for a precise meaning). It is denoted by $\partial^2_{\zeta,\chi} \theta^{t,\xi}$.
\end{Lemma}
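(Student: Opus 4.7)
The plan mirrors the proof of Lemma \ref{lem:1storder:diff}, but is lifted one level: we differentiate the first-order tangent process with respect to the parameter $\lambda$. I would first identify the candidate derivative $\partial^2_{\zeta,\chi}\theta^{t,\xi} =: (\cX,\cY,\cZ)$ as the unique solution of a linear McKean--Vlasov system of the form \eqref{eq:sys:linear:mkv} with: zero initial condition (since the direction $\chi$ of the first-order tangent is independent of $\lambda$); linear operator $H_\ell$ of the form \eqref{eq:identification:derivatives} evaluated along $\theta^{t,\xi}$ and acting on $(\cX,\cY,\cZ)$; and source term given by the remainder $H_a^{(2)}$ of \eqref{eq:coeff:H2}, built from the first-order tangents $\partial_\chi\theta^{t,\xi}$ and $\partial_\zeta\theta^{t,\xi}$. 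Lemma \ref{lem:bound:H2:1} together with Lemma \ref{lem:apriori:derivatives} yield a bound of the form $\E_t[\cR_a^{2p}]^{1/2p} \le C_p(|\chi|+\|\chi\|_2)(|\zeta|+\|\zeta\|_2)$ for the remainder in the sense of \eqref{stab:notations:2}, so that Corollary \ref{co:bound:sys:lin:mkv:easy} gives existence, uniqueness and the expected $\cS^2 \times \cS^2 \times \cH^2$ bounds for $T \leq c(L)$. Uniqueness (in the spirit of Remark \ref{rem:uniqueness}) confirms in addition that the candidate depends on the family $(\xi^\lambda)_\lambda$ only through $(\xi,\zeta)$, which justifies the notation.

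The second step is to identify this candidate with the genuine $\lambda$-derivative of $\partial_\chi\theta^{t,\xi^\lambda}$. For each $\lambda$, the process $\partial_\chi\theta^{t,\xi^\lambda}$ satisfies the first-order linear system with initial condition $\chi$ and coefficients evaluated along $\theta^{t,\xi^\lambda}$. Setting $\Delta_\lambda := \lambda^{-1}(\partial_\chi\theta^{t,\xi^\lambda} - \partial_\chi\theta^{t,\xi})$ and dividing the difference of the two systems by $\lambda$, one obtains a linear McKean--Vlasov system for $\Delta_\lambda$ of the same shape as that defining $(\cX,\cY,\cZ)$, up to source remainders produced by the Taylor expansion of the coefficient increments. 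The standard first-order Taylor formula in the Euclidean variable handles the $\partial_w h$-terms, while the measure-direction terms are expanded by Lemma \ref{lem:lemme:final???}, which is exactly tailored to families $(\xi^\lambda)_\lambda$ that are identically distributed and satisfy $\|d\xi^\lambda/d\lambda\|_\infty \leq 1$. The stability estimate of Corollary \ref{cor:stab:hard}, applied to the difference between the system for $\Delta_\lambda$ and the system for $(\cX,\cY,\cZ)$, then bounds $\|\Delta_\lambda - (\cX,\cY,\cZ)\|$ in $\cS^2 \times \cS^2 \times \cH^2$ by the size of these source remainders.

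The main obstacle is the quantitative control of these second-order remainders, for which Lemma \ref{lem:bound:H2:2} is the decisive tool. The delicate piece is the indicator term $\E_t[(\int_t^T \mathbf{1}_{\{|\theta_s^{t,\xi^\lambda} - \theta_s^{t,\xi}| > \varepsilon\}}\, |\partial_\chi\theta^{t,\xi}_s|\,|\partial_\zeta\theta^{t,\xi}_s|\,ds)^{2p}]^{1/2p}$: I would handle it by choosing $\varepsilon = \varepsilon(\lambda)$ vanishing with $\lambda$ slowly enough that, by Markov's inequality and the $L^2$-Lipschitz dependence of the flow on the initial condition (Lemma \ref{le app cont}), the probability of the exceptional event tends to zero, while invoking the uniform square-integrability of the product $|\partial_\chi\theta^{t,\xi}|\,|\partial_\zeta\theta^{t,\xi}|$ provided by Lemma \ref{lem:apriori:derivatives} applied with $p \geq 2$. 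This forces $\Delta_\lambda \to (\cX,\cY,\cZ)$ in the appropriate norm as $\lambda \to 0$, proving differentiability. Continuous differentiability of $\lambda \mapsto \partial_\chi\theta^{t,\xi^\lambda}$ then follows by rerunning the same argument between two nearby parameter values, using the continuity of $\lambda \mapsto \theta^{t,\xi^\lambda}$ and $\lambda \mapsto (\partial_\chi\theta^{t,\xi^\lambda}, \partial_\zeta\theta^{t,\xi^\lambda})$ (Lemma \ref{lem:cont:derivatives}) to ensure that the defining data of the candidate system themselves vary continuously in $\lambda$.
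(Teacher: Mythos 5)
Your proposal is correct in substance but follows a genuinely different route from the paper. You work directly with the limit processes: you first define the candidate $\partial^2_{\zeta,\chi}\theta^{t,\xi}$ as the solution of the linearized McKean--Vlasov system with source $H_a^{(2)}$, and then show that the difference quotient $\Delta_\lambda=\lambda^{-1}(\partial_\chi\theta^{t,\xi^\lambda}-\partial_\chi\theta^{t,\xi})$ converges to it by applying the stability estimates of \S\ref{subse:stab:1} to the coupled linear system satisfied by $\Delta_\lambda$, with the Taylor remainders controlled in the spirit of Lemma \ref{lem:bound:H2:2}. The paper instead proceeds through the Picard scheme: it first proves, inductively in $n$, that each iterate $\lambda\mapsto\partial_\chi\theta^{n,\lambda}$ is continuously differentiable (this is elementary because each Picard step is a decoupled system; the only subtleties are differentiation under the expectation via Lemma \ref{lem:lemme:final???} and a uniform-integrability argument to upgrade differentiability from $\cS^1/\cH^1$ to $\cS^2/\cH^2$, since the second-order coefficients are products of two $L^2$-differentiable quantities), and then shows that the derivatives $\partial^2_{\zeta,\chi}\theta^{n,\lambda}$ converge geometrically, uniformly in $\lambda$ on compacts, using Lemma \ref{lem:stability} with the nonzero remainder $\Delta\cR_a$ estimated by Lemma \ref{lem:bound:H2:2}; continuous differentiability of the limit then follows from the standard theorem on uniform convergence of derivatives. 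What the paper's route buys is that the full coupling is never confronted at the level of a difference quotient --- it is absorbed into the convergence of the scheme; what your route buys is directness and a shorter identification of the limit equation. Both approaches obtain the dependence of the derivative on $(\xi,\zeta)$ alone in the same way, from short-time uniqueness of the linearized system (Remark \ref{rem:uniqueness}). Two points you should make explicit if you write this up: existence (not just uniqueness and a priori bounds) of the candidate solution to the linearized system must be established separately, e.g.\ by a contraction argument, since Corollary \ref{co:bound:sys:lin:mkv:easy} only provides estimates, whereas in the paper existence comes for free as the limit of the Picard derivatives; and your comparison of the difference-quotient source with $H_a^{(2)}$ is not literally the comparison performed in Lemma \ref{lem:bound:H2:2} (which compares two remainders of the same algebraic form), so the Taylor-remainder estimate has to be rewritten, although the ingredients --- the indicator-function splitting with a $\lambda$-dependent threshold $\varepsilon$, the uniform square-integrability of $|\partial_\chi\theta^{t,\xi}|\,|\partial_\zeta\theta^{t,\xi}|$ from Lemma \ref{lem:apriori:derivatives}, and the $L^2$-continuity of the flow --- are exactly the ones you name.
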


\begin{proof}
We adapt the proof of 
Lemma \ref{lem:1storder:diff}. To do so, we use the Picard sequence $((\theta^{n,\lambda},\partial_{\chi} \theta^{n,\lambda}))_{n \geq 1}$
solving
\eqref{eq:syst:derive:picard},
with $X^{n,\lambda}_{t}=\xi^\lambda$
and
$\chi^\lambda = \chi$ for any $\lambda \in \R$. 
The sequence
converges 
in $[{\mathcal S}^2([t,T];\R^d) 
\times 
{\mathcal S}^2([t,T],\R^m) \times {\mathcal H}^2([t,T];\R^{m \times d})]^2$
towards $(\theta^{t,\xi^\lambda},\partial_{\chi} \theta^{t,\xi^{\lambda}})$, uniformly in $\lambda$ in compact subsets.
{Pay attention that, in 
\eqref{eq:syst:derive:picard},
 the choice 
$\chi^\lambda=\chi$, for all $\lambda \in \R$, 
fits the framework of Theorem 
\ref{thm:ito:frechet}
in which $\chi$ is kept frozen, independently of $\lambda$.}

{Similarly, we denote by $((\theta^{n,\lambda},\partial_{\zeta} \theta^{n,\lambda}))_{n \geq 1}$ 
the Picard sequence
solving 
\eqref{eq:syst:derive:picard},
with 
$X^{n,\lambda}_{t}=\xi^\lambda$ and
$\chi^\lambda = [\ud/\ud\lambda] \xi^\lambda$ for any $\lambda \in \R$. 
The sequence
converges 
in $[{\mathcal S}^2([t,T];\R^d) 
\times 
{\mathcal S}^2([t,T],\R^m) \times {\mathcal H}^2([t,T];\R^{m \times d})]^2$
towards $(\theta^{t,\xi^\lambda},\partial_{\zeta} \theta^{t,\xi^{\lambda}})$, uniformly in $\lambda$ in compact subsets.
In \eqref{eq:syst:derive:picard},
the choice 
$\chi^\lambda=[\ud/\ud\lambda] \xi^\lambda$, for any $\lambda \in \R$, 
fits the framework of Theorem 
\ref{thm:ito:frechet}
in which $\zeta = [\ud/\ud\lambda]_{\vert \lambda = 0} X^\lambda$, 
with $X^\lambda$ therein playing  the role of $\xi^\lambda$.}

{
Notice that 
$(\theta^{n,\lambda})_{n \geq 1}$, 
which appears in each of the two Picard sequences,
denotes the same process. 
The difference between 
$(\partial_{\chi} \theta^{n,\lambda})_{n \geq 1}$ 
and
$(\partial_{\zeta} \theta^{n,\lambda})_{n \geq 1}$ 
is that 
$[\ud/\ud\lambda] \theta^{n,\lambda} = 
\partial_{\zeta} \theta^{n,\lambda}$ 
but $[\ud/\ud\lambda] \theta^{n,\lambda} \not = 
\partial_{\chi} \theta^{n,\lambda}$. The motivation 
for considering 
$\partial_{\chi} \theta^{n,\lambda}$ is that 
$\partial_{\chi} Y^{n,\lambda}_{t}$
converges to ${\mathbb E}[\partial_{\mu} U(t,\xi^{\lambda},[\xi^{\lambda}]) \chi]$, 
which is precisely the quantity that we aim at differentiating with respect to 
$\lambda$. 
}

\textit{First step.}
The first point is to prove that, for any $n \geq 0$, 
the map $\lambda \mapsto 
(\partial_{\chi} \theta^{n,\lambda}_s)_{s \in [t,T]}$ is continuously differentiable from $\R$
to ${\mathcal S}^2([t,T];\R^d) \times {\mathcal S}^2([t,T];\R^m)
\times {\mathcal H}^2([t,T];\R^{m \times d})$. 
To do so, we recall the system 
\eqref{eq:syst:derive:picard}:
\begin{equation}
\label{eq:syst:derive:2}
\begin{split}
&\partial_{\chi} X^{n+1,\lambda}_{s}
= \chi + \int_{t}^s 
B^{(1)}\bigl(r,\theta^{n,\lambda}_{r},\langle \theta_{r}^{n,\lambda,(0)} \rangle\bigr)\bigl(
\partial_{\chi}\theta^{n,\lambda}_{r},
\langle \partial_{\chi}\theta^{n,\lambda,(0)}_{r} \rangle
\bigr)
 \ud r 
 \\
 &\hspace{40pt}
 + \int_{t}^s 
\Sigma^{(1)}(r,\theta^{n,\lambda,(0)}_{r},\langle \theta_{r}^{n,\lambda,(0)} \rangle)
\bigl(\partial_{\chi}\theta^{n,\lambda,(0)}_{r},\langle 
\partial_{\chi} \theta^{n,\lambda,(0)}_{r}
\rangle
\bigr)
 \ud 
W_{r}
\\
&\partial_{\chi} Y^{n+1,\lambda}_{s} = 
G^{(1)}(X_{T}^{n+1,\lambda},\langle X_{T}^{n+1,\lambda} \rangle)\bigl(\partial_{\chi} X_{T}^{n+1,\lambda},\langle 
\partial_{\chi} X_{T}^{n+1,\lambda} \rangle \bigr)
\\
&\hspace{40pt} + \int_{s}^T
F^{(1)}\bigl(r,\theta^{n,\lambda}_{r},\langle \theta_{r}^{n,\lambda,(0)} \rangle\bigr)
\bigl(\partial_{\chi} \theta^{n,\lambda}_{r},\langle \partial_{\chi} 
\theta^{n,\lambda,(0)}_{r} \rangle \bigr)
 \ud r - \int_{s}^T \partial_{\chi} \cZ_{r}^{n+1,\lambda} \ud W_{r},
\end{split}
\end{equation}
with $\partial_{\chi} \theta^{0,\lambda} \equiv (0,0,0)$ as initialization, 
with a  similar system for $\partial_{\zeta} \theta^{n,\lambda}$, 
replacing $\chi$
by $[\ud/\ud \lambda] X^\lambda$.

Generally speaking, the proof is the same as that of Lemma \ref{lem:1storder:diff}: 
We argue by induction, assuming at each step $n \geq 1$ that 
$\lambda \mapsto 
(\partial_{\chi} \theta^{n,\lambda}_s)_{s \in [t,T]}$
is continuously differentiable (the derivative being denoted by 
$(\partial_{\zeta,\chi}^2 \theta^{n,\lambda}_s)_{s \in [t,T]}$); 
we prove first the differentiability of the forward component and then the differentiability 
of the backward one in \eqref{eq:syst:derive:2}.

{In comparison with the proof 
of Lemma \ref{lem:1storder:diff}, we must pay attention to the two following points. 
The first question is to justify the differentiability under 
the various expectation symbols that appear in the definitions of
$B^{(1)}$, 
$\Sigma^{(1)}$, $F^{(1)}$ and $G^{(1)}$.
Thanks to
\eqref{eq:Picard:bound:1st:order}
and
from the fact that
the sequence 
 $[\ud/\ud \lambda]( \xi^{\lambda})$ is bounded
 in $L^\infty$
 (see \ref{eq:bound:linfini:derivative:xi}), we know that
 \begin{equation}
 \label{eq:moments:derives:1}
\sup_{n \geq 1}
\Bigl[ {\mathcal M}_{{\mathbb E}_{t}}^{4} \bigl( 
\partial_{\chi} \theta^{n,\lambda}
\bigr) \Bigr]^{1/4}
\leq C \bigl[ \vert \chi \vert + \|\chi \|_{2} \bigr],
\quad 
\sup_{n \geq 1}
\Bigl[ {\mathcal M}_{{\mathbb E}_{t}}^{2p} \bigl( 
\partial_{\zeta} \theta^{n,\lambda}
\bigr) \Bigr]^{1/4}
\leq C, 
\end{equation}
so that Lemma 
\ref{lem:lemme:final???} applies
with $[\ud/\ud \lambda]\chi^{\lambda} = 
\partial_{\zeta} \theta^{n,\lambda}$
and $\varpi = \partial_{\chi} \theta^{n,\lambda}$
and permits to guarantee the differentiability 
of the terms driven by an expectation.}

Another problem is that the coefficients now 
involve the product of two terms that are differentiable in 
$\cH^2([t,T];\R^k)$ (or $\cS^2([t,T];\R^k)$ in some cases), for a suitable $k \geq 1$, so that the product is differentiable in 
$\cH^1([t,T];\R^k)$ (or $\cS^1([t,T];\R^k)$)
only (for another $k$). We make this clear 
for $(\int_{t}^s 
B^{(1)}(r,\theta^{n,\lambda}_{r},\langle \theta_{r}^{n,\lambda,(0)} \rangle)(
\partial_{\chi}\theta^{n,\lambda}_{r},
\langle \partial_{\chi}\theta^{n,\lambda,(0)}_{r} \rangle)
 \ud r)_{t \leq s \leq T}$, the other terms being handled in a similar fashion. 
Repeating the analysis of Lemma \ref{lem:1storder:diff}, it is differentiable 
from $\R$ to ${\mathcal S}^1([t,T];\R^d)$, the derivative process writing
\begin{equation}
\label{eq:derive:2:proof} 
\begin{split}
&\int_{t}^s
 B^{(2)} \bigl(r,\Theta^{n,\lambda}_{r}\bigr)
\bigl(\partial^2_{\zeta,\chi} \theta^{n,{\lambda}}_{r},\langle
\partial^2_{\zeta,\chi} \theta^{n,{\lambda},(0)}_{r} \rangle
  \bigr)
 \ud r,
 \\
&\hspace{50pt} \text{with} \
\Theta^{n,\lambda}_{r} = \bigl( 
\theta^{n,\lambda}_{r},\langle \theta^{n,\lambda,(0)}_{r}\rangle,
 \partial_{\chi} \theta^{n,\lambda}_{r},
\partial_{\zeta} \theta^{n,\lambda}_{r},
\langle \partial_{\chi} \theta^{n,\lambda,(0)}_{r} \rangle,
\langle \partial_{\zeta} \theta^{n,\lambda,(0)}_{r} \rangle \bigr)
\end{split}
\end{equation}
with $s \in [t,T]$.
As explained 
in \eqref{coeff:H2}
and {on the model of 
Lemma 
\ref{lem:lemme:final???}}, 
we here used the crucial assumption that all the $(\xi^{\lambda})_{\lambda}$
are identically distributed to get the shape of $B^{(2)}$. 

In order to prove differentiability  in ${\mathcal S}^2([t,T];\R^d)$, a uniform integrability argument is needed. 
Assume indeed that a path $\R \ni \lambda \mapsto \vartheta^\lambda =(\vartheta_{s}^\lambda)_{s \in [t,T]} \in {\mathcal S}^1([t,T],\R^k)$, for some $k \geq 1$,
is continuously differentiable and that, for any finite interval $I$, 
the family $(\sup_{s \in [t,T]} \vert [\ud /\ud \lambda] \vartheta_{s}^\lambda \vert^2)_{\lambda \in I}$ is uniformly integrable. 
Then, $\R \ni \lambda \mapsto \vartheta^\lambda \in {\mathcal S}^2([t,T];\R^k)$ is continuously differentiable.

In our framework, the form of $([\ud/\ud \lambda]\vartheta_{s}^{\lambda})_{s \in [t,T]}$
is explicitly given by \eqref{eq:derive:2:proof}. The coefficient $B^{(2)}$
may be expanded by means of \eqref{coeff:H2}. Clearly, it involves a linear term
in $(\partial_{\zeta,\chi}^2 \theta^{n,\lambda}_{r},\langle 
\partial_{\zeta,\chi}^2 \theta^{n,\lambda,(0)}_{r} \rangle)$
and the remainder $B^{(2)}_{a}(\Theta^{n,\lambda}_{r})$. 
By \HYP{1} and \HYP{2}, we get that
\begin{equation}
\label{eq:derive:3:proof}
\begin{split}
\sup_{s \in [t,T]}
\bigl\vert
\frac{\ud}{\ud \lambda}\vartheta_{s}^{\lambda}
\bigr\vert &\leq C \biggl[
\biggl( \int_{t}^T 
\bigl[
\vert \partial_{\zeta,\chi}^2
\theta^{n,\lambda}_{r} \vert^2 
+ \|  \partial_{\zeta,\chi}^2
\theta^{n,\lambda}_{r} \|_{2}^2 \bigr] \ud r
\biggr)^{1/2}
+ \E \int_{t}^T \vert B^{(2)}_{a}(\Theta^{n,\lambda}_{r}) \vert \ud r
\biggr]. 
\end{split}
\end{equation}
By continuity of $\R \ni \lambda \mapsto \partial_{\zeta,\chi}^2 \theta^{n,\lambda} \in \cH^2([t,T];\R^k)$,
the first term in the right-hand side is  
uniformly square integrable. 
We thus discuss the term in $B^{(2)}_{a}$. 
Recalling Lemma 
\ref{lem:apriori:derivatives}
and the similar version \eqref{eq:Picard:bound:1st:order} 
for the Picard scheme in Lemma \ref{lem:1storder:diff} 
(which is given for $p=1$
only but which could be generalized), 
we have the more general version 
of \eqref{eq:moments:derives:1}:
 \color{black}
 \begin{equation}
\sup_{n \geq 1}
\Bigl[ {\mathcal M}_{{\mathbb E}_{t}}^{2p} \bigl( \partial_{\chi} \theta^{n,\lambda}
\bigr) \Bigr]^{1/2p}
\leq C_{p} \bigl[ \vert \chi \vert + \|\chi \|_{2} \bigr],
\quad 
\sup_{n \geq 1}
\Bigl[ {\mathcal M}_{{\mathbb E}_{t}}^{2p} \bigl( \partial_{\zeta} \theta^{n,\lambda}
\bigr) \Bigr]^{1/2p}
\leq C_{p}, 
\end{equation}
 so that,
by Lemma \ref{lem:bound:H2:1}
 (with $\theta \equiv \hat\theta :\equiv \theta^{n,\lambda}$, 
 $\vartheta^1 \equiv \hat\vartheta^1 :\equiv \partial_{\chi}
 \theta^{n,\lambda}$
 and $\vartheta^2 \equiv \hat\vartheta^2 :\equiv \partial_{\zeta}
 \theta^{n,\lambda}$,  and, as usual, for $T \leq c$) 
 \color{black}
\begin{equation}
\label{eq:bound:H2:1}
\begin{split}
&{\mathbb E}_{t} \biggl[ \biggl( \int_{t}^T \vert B^{(2)}_{a}(\Theta^{n,\lambda}_r) \ud r \biggr)^{2p}
\biggr]^{1/2p} \leq C_{p} 
\bigl(
| \chi| +  \| \chi\|_{2} \bigr).
\end{split}
\end{equation}
Now, choosing $p=2$, we get that, for any event $A \in \cA$,
\new{
\begin{equation*}
\begin{split}
&{\mathbb E} \biggl[ {\mathbf 1}_{A}
\biggl(
\int_{t}^T \vert B^{(2)}_{a}(\Theta^{n,\lambda}_r) \ud r 
 \biggr)^2
  \biggr]
  \\
&\hspace{15pt}\leq C {\mathbb E} \bigl[ \EFp{t}{\1_A}^{\frac12} 
\bigl( \vert \chi \vert + \| \chi \|_{2} \bigr)^2 \bigr]
=C\esp{\EFp{t}{\1_A( \vert \chi \vert + \| \chi \|_{2} )^2}^\frac12
( \vert \chi \vert + \| \chi \|_{2} )}
\\
&\hspace{15pt}\leq  C \| \chi \|_{2} 
{\mathbb E} \bigl[ {\mathbf 1}_{A}
\bigl( \vert \chi \vert + \| \chi \|_{2} \bigr)^2 \bigr]^\frac12,
\end{split}
\end{equation*}
where we have used the fact that $\chi$ is $\cF_{t}$ measurable.}
The above bound permits to establish the required uniform integrability argument,
a similar argument holding true for the terms driven by $F_{a}^{(2)}$,
$\Sigma_{a}^{(2)}$ and $G^{(2)}_{a}$.  
Inductively, this permits to prove that the map 
$\lambda \mapsto \partial_{\chi} \theta^{n,\lambda}$
is continuously differentiable
from $\R$ to ${\mathcal S}^2([t,T],\R^d) \times 
{\mathcal S}^2([t,T],\R^m) \times 
{\mathcal H}^2([t,T],\R^{m \times d})$. 
%
With the same notation as in 
\eqref{eq:derive:2:proof}, we have,  
for any $n \geq 0$,
\begin{equation}
\label{eq:sys:1:2}
\begin{split}
&\partial_{\zeta,\chi}^2 X^{n+1,\lambda}_{s}
= \int_{t}^s
 B^{(2)}\bigl(r,\Theta^{n,\lambda}_{r}\bigr)
 \bigl(
 \partial^2_{\zeta,\chi} \theta^{n,\lambda}_{r}, 
 \langle
 \partial^2_{\zeta,\chi} \theta^{n,\lambda,(0)}_{r} \rangle
 \bigr)
 \ud r
 \\
 &\hspace{60pt}
 +\int_{t}^s
 \Sigma^{(2)}\bigl(r,\Theta^{n,\lambda,(0)}_{r}\bigr)
 \bigl(
 \partial^2_{\zeta,\chi} \theta^{n,\lambda,(0)}_{r}, 
 \langle
 \partial^2_{\zeta,\chi} \theta^{n,\lambda,(0)}_{r} \rangle
 \bigr)
\ud W_{r},
\end{split}
\end{equation}
and
\begin{equation}
\label{eq:sys:2:2}
\begin{split}
&\partial_{\zeta,\chi}^2 Y^{n+1,\lambda}_{s} = 
G^{(2)}(\Xi^{n+1,\lambda}_{T})
\bigl(\partial^2_{\zeta,\chi} X_{T}^{n+1,\lambda},\langle
\partial^2_{\zeta,\chi} X_{T}^{n+1,\lambda}
\rangle\bigr) 
\\
&\hspace{60pt} 
+ \int_{s}^T
 F^{(2)}\bigl(r,\Theta^{n,\lambda}_{r}\bigr)
 \bigl(
 \partial^2_{\zeta,\chi} \theta^{n,\lambda}_{r}, 
 \langle
 \partial^2_{\zeta,\chi} \theta^{n,\lambda,(0)}_{r} \rangle
 \bigr)
 \ud r
 - \int_{s}^T \partial^2_{\zeta,\chi} Z_{r}^{n+1,\lambda} \ud W_{r},
\end{split}
\end{equation}
where we have let:
\begin{equation*}
\begin{split}
&\Theta^{n,\lambda,(0)}_{r} = \bigl( 
\theta^{n,\lambda,(0)}_{r},\langle \theta^{n,\lambda,(0)}_{r}\rangle,
 \partial_{\chi} \theta^{n,\lambda,(0)}_{r},
\partial_{\zeta} \theta^{n,\lambda,(0)}_{r},
\langle \partial_{\chi} \theta^{n,\lambda,(0)}_{r} \rangle,
\langle \partial_{\zeta} \theta^{n,\lambda,(0)}_{r} \rangle \bigr),
\\
&\Xi^{n,\lambda}_{T} = \bigl( X^{n,\lambda}_{T},\langle X^{n,\lambda}_{T}\rangle,
 \partial_{\chi} X^{n,\lambda}_{T},
\partial_{\zeta} X^{n,\lambda}_{T},
\langle \partial_{\chi} X^{n,\lambda}_{T} \rangle,
\langle \partial_{\zeta} X^{n,\lambda}_{T} \rangle \bigr).
\end{split}
\end{equation*}

\textit{Second step.}
Convergence of the sequence $(\partial^2_{\zeta,\chi} \theta^{n,\lambda})_{n \geq 0}$
in the space ${\mathcal S}^2([t,T],\R^d) \times 
{\mathcal S}^2([t,T],\R^m) \times 
{\mathcal H}^2([t,T],\R^{m \times d})$
is then shown as in the proof of Lemma \ref{lem:1storder:diff}. 
Generally speaking, the point is to compare approximations at steps 
$n$ and $n+1$ and then to prove that the norm of the difference decays 
geometrically fast as $n$ tends to $\infty$.  
As in the first step,
some precaution is needed as
the system differs 
from the one involved in the proof of Lemma \ref{lem:1storder:diff}, the difference coming from the remainder term $H_{a}^{(2)}$ in
\eqref{eq:coeff:H2}. 
Precisely, 
the proof of Lemma \ref{lem:1storder:diff} relies on 
Lemmas \ref{lem:apriori} and \ref{lem:stability}, with  
$0$ as remainder term ${\mathcal R}_{a}$, but, 
in the current framework, the remainder term is equal to 
$(H_{a}^{(2)}(\Theta^{n,\lambda}_{s}))_{s \in [t,T]}$ 
when $H=B$, $\Sigma$ or $F$
and 
$G_{a}^{(2)}(\Xi^{n+1,\lambda}_{T})$
when $H=G$, 
and is thus non-zero. 
The analysis thus imitates the proof of Lemma \ref{lem:1storder:diff}, but with 
 a non-zero remainder term $\Delta \cR_{a}$ in  
 \eqref{eq:lem:stability:3esp} that corresponds to the difference of the remainders 
 $\cR_{a}$ at steps $n$ and $n+1$. In short, it is enough to 
 prove that $\E[ \Delta \cR_{a}^2 ]$ tends to $0$ 
 as $n$ tends to $\infty$ (to simplify, we omit to specify the index $n$ in
 $\Delta \cR_{a}^2$).
 By convergence of $(\theta^{n,\lambda}_{s},\partial_{\chi} \theta^{n,\lambda}_{s},\partial_{\zeta} \theta^{n,\lambda}_{s})_{s \in [t,T]}$
to $(\theta^{t,\xi^{\lambda}}_{s},\partial_{\chi} \theta^{\xi\lambda}_{s},\partial_{\zeta}\theta^{t,\xi^{\lambda}}_{s})_{s \in [t,T]}$, 
we can deduce from Lemma \ref{lem:bound:H2:2}
(with $\theta \equiv \hat\theta :\equiv \theta^{n,\lambda}$, 
 $\vartheta^1 \equiv \hat\vartheta^1 :\equiv \partial_{\chi}
 \theta^{n,\lambda}$, $\vartheta^2 \equiv \hat\vartheta^2 :\equiv \partial_{\zeta}
 \theta^{n,\lambda}$ and
 $\theta' \equiv \hat\theta' :\equiv \theta^{n+1,\lambda}$, 
 $\vartheta^{1,\prime} \equiv \hat\vartheta^{1,\prime} :\equiv \partial_{\chi}
 \theta^{n+1,\lambda}$, $\vartheta^{2,\prime} \equiv \hat\vartheta^{2,\prime} :\equiv \partial_{\zeta}
 \theta^{n+1,\lambda}$)
that $\Delta \cR_{a}^2$
tends to $0$ in probability as $n$ tends to $\infty$, the convergence being uniform with respect to 
$\lambda$ in compact subsets: In 
\eqref{eq:sncf:retour:5:b}, we can 
check that all the terms not containing the variable 
$\varepsilon$ tend $0$; 
choosing $\varepsilon$ as a small deterministic real, 
it is standard to prove that the expectation of the last term in \eqref{eq:sncf:retour:5:b} tends 
to $0$. The latter property follows from the following fact: 
For any compact $I \subset \R$, 
the sequence 
$(\partial_{\chi} \theta^{n,\lambda})_{n \geq 1}$
and $(\partial_{\zeta} \theta^{n,\lambda})_{n \geq 1}$
are convergent in the $L^2$ sense on $\Omega \times [t,T]$, 
so that 
the families $(\partial_{\chi} \theta^{n,\lambda})_{n \geq 1,\lambda \in I}$
and $(\partial_{\zeta} \theta^{n,\lambda})_{n \geq 1,\lambda \in I}$
are uniformly square integrable on $\Omega \times [t,T]$. 

The convergence 
of $\Delta \cR_{a}^2$ to $0$
actually holds in the $L^1$ sense on $\Omega$, since the bound 
\eqref{eq:bound:H2:1} 
(with similar bounds for $F$, $\Sigma$ and $G$)
allows to apply another argument of uniform integrability. The convergence is uniform with respect 
to $\lambda$ in compact sets.
This proves the continuous differentiability of 
$\R \ni \lambda \mapsto \partial_{\chi} \theta^{t,\xi^\lambda}
\in \cS^2([t,T];\R^d) \times \cS^2([t,T];\R^m)
\times \cH^2([t,T];\R^{m\times d})$. The derivative at $\lambda=0$
satisfies a system of the form \eqref{eq:sys:linear:mkv}
(obtained by an obvious adaptation of \eqref{eq:sys:1:2}
and \eqref{eq:sys:2:2}), which is uniquely solvable in short time. 
This proves that the derivative at $\lambda=0$ only depends on the family $(X^\lambda)_{\lambda \in \R}$
through $X^0$ and $\zeta$.  
\vspace{5pt}

We complete the analysis as in the proof 
of Lemma \ref{lem:1storder:diff}. 
 \qed
\end{proof}

\subsubsection{Estimates of the directional derivatives of the McKean-Vlasov 
system}

We claim:
\begin{Lemma}
\label{lem:apriori:bound:2nd:derivatives:t,xi}
Recall the notations 
\eqref{stab:notations:1}.
For any $p \geq 1$, 
there exist two constants
$c:=c_{p}(L)>0$ and $C_{p}$, such that, for $T \leq c_{p}$
(and with $\gamma=c_{p}$ in 
\eqref{stab:notations:1}), 
\begin{equation*}
\begin{split}
&\bigl[ {\mathcal M}_{{\mathbb E}_{t}}^{2p}
\bigl(  \dzeki \theta^{t,\xi} \bigr) \bigr]^{1/(2p)} 
 \leq C_{p} 
\bigl( \vert \chi \vert +
\| \chi\|_{2} \bigr).
\end{split}
\end{equation*}
\end{Lemma}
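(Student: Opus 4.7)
The plan is to apply the a priori estimate of Corollary \ref{co:bound:sys:lin:mkv:easy} to the linear McKean--Vlasov FBSDE satisfied by the second-order directional derivative $\partial^2_{\zeta,\chi} \theta^{t,\xi}$, and to control the (non-zero) affine part of that system by means of Lemma \ref{lem:bound:H2:1} combined with the first-order bound of Lemma \ref{lem:apriori:derivatives}.

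First, I would identify the linear system driving $\partial^2_{\zeta,\chi} \theta^{t,\xi}$ by passing to the limit in $n$ in the Picard equations \eqref{eq:sys:1:2}--\eqref{eq:sys:2:2}. The outcome is a system of the shape \eqref{eq:sys:linear:mkv} with $\theta \equiv \hat\theta :\equiv \theta^{t,\xi}$, $\vartheta \equiv \hat\vartheta :\equiv \partial^2_{\zeta,\chi} \theta^{t,\xi}$, with the linear parts $H_{\ell}$ given by the \emph{first-order} derivatives of $b,\sigma,f,g$ as in \eqref{eq:identification:derivatives}, and with the affine remainders $H_a$ given by the expressions $H_a^{(2)}$ of \eqref{eq:coeff:H2} evaluated along the first-order tangent processes $\partial_\chi \theta^{t,\xi}$ and $\partial_\zeta \theta^{t,\xi}$. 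Crucially, the initial condition $\eta$ is zero, since in the definition of $\partial_\chi\theta^{t,\xi^\lambda}$ the forward initial datum $\chi$ does not depend on $\lambda$. The admissibility condition of Definition \ref{def:admissible} for this system is identical to the one used in the first-order analysis and follows from \HYP{1}.

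With this in hand I would apply inequality \eqref{eq:co:bound:sys:lin:mkv:easy:2} of Corollary \ref{co:bound:sys:lin:mkv:easy} (which is the case $\vartheta \equiv \hat\vartheta$) with $\gamma = c_p$. For $T \leq c_p$ with $c_p = c_p(L)$ small enough, this yields
\begin{equation*}
\cM^{2p}_{\E_t}\bigl( \partial^2_{\zeta,\chi}\theta^{t,\xi}\bigr)
\leq \Gamma_p \bigl[ \cR_a^{2p} + \E[\cR_a^{2}]^{p} \bigr],
\end{equation*}
so the task reduces to estimating $\cR_a$ in the norms appearing above. For this I would apply Lemma \ref{lem:bound:H2:1} with $\vartheta^1 \equiv \hat\vartheta^1 :\equiv \partial_\chi\theta^{t,\xi}$ and $\vartheta^2 \equiv \hat\vartheta^2 :\equiv \partial_\zeta\theta^{t,\xi}$. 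By Lemma \ref{lem:apriori:derivatives} combined with Remark \ref{rem:comparaison}, one has $(\bar{\cM}^{4p}(\partial_\chi\theta^{t,\xi}))^{1/4p} \leq C_p(|\chi|+\|\chi\|_2)$, while the same estimate for $\partial_\zeta \theta^{t,\xi}$ reduces to $C_p$ thanks to the $L^\infty$ bound $|\zeta|\leq 1$ imposed in \eqref{eq:bound:linfini:derivative:xi}. The two remaining $\cS^4$ moments are controlled in the same way: $\|\partial_\chi\theta^{t,\xi,(0)}\|_{\cS^4,t}^2 \leq C(|\chi|+\|\chi\|_2)^2$ and $\|\partial_\zeta\theta^{t,\xi,(0)}\|_{\cS^4,t}^2 \leq C$, so that a Cauchy--Schwarz inequality gives
\begin{equation*}
\E\Bigl[\|\partial_\chi\theta^{t,\xi,(0)}\|_{\cS^4,t}^2\,\|\partial_\zeta\theta^{t,\xi,(0)}\|_{\cS^4,t}^2\Bigr]^{1/2}
\leq C\,\E\bigl[(|\chi|+\|\chi\|_2)^2\bigr]^{1/2}
\leq C\|\chi\|_2.
\end{equation*}

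Putting these inputs into Lemma \ref{lem:bound:H2:1} delivers $[\cR_a^{2p}]^{1/2p} \leq C_p(|\chi|+\|\chi\|_2)$; running the same argument with $p=1$ and integrating over $\Omega$ gives $\E[\cR_a^2] \leq C\|\chi\|_2^{2}$, hence $\E[\cR_a^2]^p \leq C_p \|\chi\|_2^{2p} \leq C_p(|\chi|+\|\chi\|_2)^{2p}$. Inserting these bounds into the Corollary's estimate and taking the $(2p)$-th root gives exactly the claim. I do not expect a genuine obstacle here: the main verification is the somewhat tedious bookkeeping needed to convert the conditional bounds of Lemma \ref{lem:apriori:derivatives} for the first-order tangent processes into the $\bar{\cM}$-type norms of \eqref{eq:mkv:stability:Psi} entering Lemma \ref{lem:bound:H2:1}, which is precisely the comparison recorded in Remark \ref{rem:comparaison}.
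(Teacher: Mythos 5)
Your proposal is correct and follows essentially the same route as the paper's own proof: apply Corollary \ref{co:bound:sys:lin:mkv:easy} (in its $\vartheta\equiv\hat\vartheta$ form) to the linearized system with $\eta=0$, $\theta\equiv\hat\theta:\equiv\theta^{t,\xi}$, $\vartheta\equiv\hat\vartheta:\equiv\partial^2_{\zeta,\chi}\theta^{t,\xi}$, then bound the remainder $\cR_a$ coming from $H_a^{(2)}$ via Lemma \ref{lem:bound:H2:1}, using Lemma \ref{lem:apriori:derivatives} and the constraint $\|\zeta\|_\infty\le 1$. The extra bookkeeping you supply (the $\bar{\cM}^{4p}$ bounds for the first-order tangents, the Cauchy--Schwarz step for the cross term, and the passage from conditional to unconditional norms via Remark \ref{rem:comparaison}) is exactly the content the paper leaves implicit, and it is carried out correctly.
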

%
%
%
\proof
The result follows from 
Corollary \ref{co:bound:sys:lin:mkv:easy}
with $\eta =0$, $\theta \equiv \hat\theta := \theta^{t,\xi}$, $\vartheta \equiv \hat\vartheta := \partial^2_{\zeta,\chi} \theta^{t,\xi}$, 
$H$ given by \eqref{eq:coeff:H2}, 
and, in particular, with remainders ${\mathcal R}_{a}^{2p}$
and ${\mathcal R}_{a}^{2}$ coming from $H^{(2)}_{a}$ in \eqref{eq:coeff:H2}.
Recalling Lemma \ref{lem:apriori:derivatives}
and the assumption $\|\zeta\|_\infty \le 1$, the remainders may be estimated by means of 
Lemma \ref{lem:bound:H2:1}, 
with $\theta \equiv \hat{\theta} := \theta^{t,\xi}$, 
$\vartheta^1 \equiv \hat{\vartheta}^1 := \partial_{\chi} \theta^{t,\xi}$
and 
$\vartheta^2 \equiv \hat{\vartheta}^2 := \partial_{\zeta} \theta^{t,\xi}$. 
 \eproof
\vspace{5pt}

We now discuss the continuity with respect to $\xi$. 
We claim:

\begin{Lemma}
\label{lem:apriori:cont:2nd:derivatives}
For any $p \geq 1$, 
there exist two constants
$c_{p}:=c_{p}(L)>0$ and $C_{p}$
 such that, for $T \leq 
c_{p}$ (and with $\gamma=c_{p}$ in 
\eqref{stab:notations:1}), 
\begin{equation}
\label{eq:phi2}
\begin{split}
&\Bigl[ {\mathcal M}^{2p}_{{\mathbb E}_{t}} \bigl(
  \dzeki \theta^{t,\xi} 
- \dzeki \theta^{t,\xi'} \bigr)
\Bigr]^{1/2p}
\leq C_{p} \Bigl( 
1 \wedge \bigl\{ \vert \xi - \xi' \vert + 
\Phi_{\alpha+1}(t,\xi,\xi') \bigr\}
\Bigr)
\bigl( \vert \chi \vert +
\| \chi\|_{2} \bigr),
\end{split}
\end{equation}
where $\Phi_{\alpha+1}(t,\cdot) : [ L^2(\Omega,\cF_{t},\P;
\R^{d})]^2 \rightarrow \R_{+}$ 
is continuous at any point of the diagonal, does not depend on $p$ and
satisfies \eqref{assumption:mkv:4} with $\alpha$ replaced by 
$\alpha+1$. 
The restriction of $\Phi_{\alpha+1}(t,\cdot,\cdot)$ to $[L^2(\Omega,\cF_{0},\P;\R^d)]^2$ may be assumed to be independent of $t \in [0,T]$. 
\end{Lemma}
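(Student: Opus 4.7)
The plan is to compare the two linear McKean--Vlasov systems of the form \eqref{eq:sys:linear:mkv} satisfied by $\partial^2_{\zeta,\chi}\theta^{t,\xi}$ and $\partial^2_{\zeta,\chi}\theta^{t,\xi'}$ via the stability machinery developed in Corollary \ref{cor:stab:hard}. By Lemma \ref{lem:2ndorder:diff} and the discussion around \eqref{eq:coeff:H2}--\eqref{coeff:H2}, $\partial^2_{\zeta,\chi}\theta^{t,\xi}$ solves such a system with zero initial condition ($\eta=0$), with $\theta \equiv \hat\theta :\equiv \theta^{t,\xi}$, $\vartheta \equiv \hat\vartheta :\equiv \partial^2_{\zeta,\chi}\theta^{t,\xi}$, linear part built from the first-order derivatives of $b,\sigma,f,g$ as in \eqref{eq:identification:derivatives}, and remainder $H_a^{(2)}$ built from the second-order derivatives evaluated along $(\theta^{t,\xi},\partial_\chi\theta^{t,\xi},\partial_\zeta\theta^{t,\xi})$; the analogous system holds with $\xi'$ in place of $\xi$.

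First I would verify the structural hypotheses of Corollary \ref{cor:stab:hard}: Lemma \ref{le app cont} and Lemma \ref{lem:function:phi:alpha} yield the bounds \eqref{eq:lem:stability:hyp} for the family $(\theta^{t,\xi},\theta^{t,\xi})_\xi$, and the admissible class $\cJ^\xi$ together with the functional $\Psi$ can be taken exactly as in Example \ref{example:UI} via \eqref{eq:sncf:1}. Next, I would bound the remainder term $\mathcal{R}_a^{2p}$ using Lemma \ref{lem:bound:H2:1} applied with $\theta \equiv \hat\theta :\equiv \theta^{t,\xi}$, $\vartheta^1 \equiv \hat\vartheta^1 :\equiv \partial_\chi\theta^{t,\xi}$, $\vartheta^2 \equiv \hat\vartheta^2 :\equiv \partial_\zeta\theta^{t,\xi}$. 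Combining with Lemma \ref{lem:apriori:derivatives} applied to $\partial_\chi\theta^{t,\xi}$ (yielding a factor $|\chi|+\|\chi\|_2$) and to $\partial_\zeta\theta^{t,\xi}$ (yielding a bounded contribution since $\|\zeta\|_\infty \le 1$), this gives $(\mathcal{R}_a^{2p})^{1/2p} + \E[\mathcal{R}_a^2]^{1/2} \leq C_p(|\chi|+\|\chi\|_2)$.

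The delicate step is the control of the difference remainder $\Delta\mathcal{R}_a^{2p}$ through Lemma \ref{lem:bound:H2:2}. I would invoke it with the primed and unprimed processes built from $\theta^{t,\xi'}$ and $\theta^{t,\xi}$. The terms on the right-hand side of \eqref{eq:sncf:retour:5:b} that involve $\bar{\mathcal{M}}^{4p}(\theta-\theta',\hat\theta-\hat\theta')$ are controlled by Lemma \ref{le app cont}, those involving $\bar{\mathcal{M}}^{4p}(\vartheta^i - \vartheta^{i\prime}, \hat\vartheta^i - \hat\vartheta^{i\prime})$ are controlled by Lemma \ref{lem:cont:derivatives} applied to the first-order tangent processes $\partial_\chi\theta^{t,\xi}-\partial_\chi\theta^{t,\xi'}$ and $\partial_\zeta\theta^{t,\xi}-\partial_\zeta\theta^{t,\xi'}$ (both of which already produce an $\alpha+1$-level functional), and the $\Phi_\alpha$ contribution from the ambient processes is handled as in Lemma \ref{lem:function:phi:alpha}. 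The indicator term involving $\varepsilon$ demands particular care: choosing $\varepsilon := \sup_{s\in[t,T]} |\theta_s^{t,\xi}-\theta_s^{t,\xi'}|$ eliminates the indicator, and $\E_t[\varepsilon^{4p}]^{1/4p}$ is then absorbed into $\bar{\mathcal{M}}^{4p}$. With these bounds in hand, plugging $\mathcal{R}_a$ and $\Delta\mathcal{R}_a$ into \eqref{eq:stab:main estimate} and reasoning as in Example \ref{example:UI} produces a functional $\Phi_{\alpha+1}$ satisfying \eqref{assumption:mkv:4} and continuity at the diagonal.

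The principal obstacle is bookkeeping rather than conceptual: the remainder $\Delta\mathcal{R}_a$ contains a mixture of quadratic cross-terms in first-order derivatives of the coefficients evaluated at different points and linear terms in second-order derivatives, and one must check that each such contribution produces, after application of Corollary \ref{cor:stab:hard}, a bound whose polynomial growth in $\xi$ does not exceed the $(2\alpha+2)$-integrability allowed by the target functional $\Phi_{\alpha+1}$. The worst term is the one estimated by $\E[\|\hat\vartheta^{1,(0)}\|_{\cS^4}^2 \|\hat\vartheta^{2,(0)} - \hat\vartheta^{2,(0)\prime}\|_{\cS^4}^2]^{1/2}$ in \eqref{eq:sncf:retour:5:b}, because the factor $\|\partial_\chi\theta^{t,\xi}\|_{\cS^4}$ itself grows like $|\chi|+\|\chi\|_2$ and this must be combined with the $\Phi_{\alpha+1}$ estimate from Lemma \ref{lem:cont:derivatives} for $\partial_\zeta\theta^{t,\xi}-\partial_\zeta\theta^{t,\xi'}$ without exceeding the polynomial growth budget; the conditional Cauchy--Schwarz manipulation is identical to the one carried out inside the proof of Lemma \ref{lem:apriori}. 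Finally, to obtain a $\Phi_{\alpha+1}$ whose restriction to $L^2(\Omega,\cF_0,\P;\R^d)^2$ is $t$-independent, I would invoke Remark \ref{rem:unif:phi:alpha}, extending $\Phi_{\alpha+1}$ to the whole $[L^2(\Omega,\cA,\P;\R^d)]^2$ by the distributional-coupling argument used at the end of the proof of Lemma \ref{lem:cont:derivatives:x,xi:b}.
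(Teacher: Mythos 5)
Your overall strategy matches the paper's exactly: compare the two linear systems satisfied by $\partial^2_{\zeta,\chi}\theta^{t,\xi}$ and $\partial^2_{\zeta,\chi}\theta^{t,\xi'}$ via Corollary \ref{cor:stab:hard} with $\eta=0$, bound $\cR_a^{2p}$ via Lemma \ref{lem:bound:H2:1} together with Lemma \ref{lem:apriori:derivatives}, bound $\Delta\cR_a^{2p}$ via Lemma \ref{lem:bound:H2:2}, and close the argument with Example \ref{example:UI} and Remark \ref{rem:unif:phi:alpha}.

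However, there is one genuine gap in the treatment of the indicator term. You choose $\varepsilon := \sup_{s\in[t,T]} |\theta_s^{t,\xi}-\theta_s^{t,\xi'}|$ (correctly, so that the indicator vanishes), but then assert that $\E_t[\varepsilon^{4p}]^{1/4p}$ is ``absorbed into $\bar{\mathcal{M}}^{4p}$.'' This cannot work: looking at the definition \eqref{eq:mkv:stability:Psi}, the functional $\bar{\cM}^{2p}$ controls $\cX$ and $\cY$ in $\cS$-norm but only controls $\cZ$ in $\cH$-norm, whereas your choice of $\varepsilon$ involves $\sup_s |Z_s^{t,\xi}-Z_s^{t,\xi'}|$, which requires an $\cS$-norm control of the difference of the $Z$-components. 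That control is \emph{not} available from $\bar\cM$ or Lemma \ref{le app cont} alone. The step you are missing — which the paper explicitly flags as ``a crucial fact'' — is the pathwise bound \eqref{eq sup Z xi}, which comes from the representation formula $Z^{t,\xi}_s = \partial_x U(s,X^{t,\xi}_s,[X^{t,\xi}_s])\,\sigma(\cdots)$ in \eqref{eq representation formula Z t,x,xi} together with the Lipschitz estimates on $\partial_x U$ already established at the first-order stage, and which yields $\E_t[\sup_s |Z_s^{t,\xi}-Z_s^{t,\xi'}|^{2p}]^{1/2p} \le C_p(1 \wedge \{|\xi-\xi'| + \Phi_{\alpha+1}(t,\xi,\xi')\})$. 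Without invoking \eqref{eq sup Z xi}, the $\varepsilon$-contribution in \eqref{eq:sncf:retour:5:b} remains unbounded and the argument does not close. Once that estimate is brought in, the rest of your proposal (the control of the $\bar\cM$-terms by Lemmas \ref{lem:apriori:derivatives} and \ref{lem:cont:derivatives}, the treatment of $\Phi_\alpha$ via Lemma \ref{lem:function:phi:alpha}, the final assembly in \eqref{eq:stab:main estimate} and \eqref{eq:bar phi}, and the appeal to Remark \ref{rem:unif:phi:alpha} for $t$-independence) is correct.
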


\proof
Generally speaking, the strategy is to apply 
Corollary \ref{cor:stab:hard},
with $\eta =0$, $\theta^\xi \equiv \hat\theta^\xi :\equiv \theta^{t,\xi}$, $\vartheta \equiv \hat\vartheta := \partial^2_{\zeta,\chi} \theta^{t,\xi}$ (and the same for $\xi'$),
$H$ given by \eqref{eq:coeff:H2}
and, in particular, with
 remainders ${\mathcal R}_{a}^{2p}$
and ${\mathcal R}_{a}^{2}$ coming from $H^{(2)}_{a}$ in \eqref{eq:coeff:H2}
(and the same for the remainders labelled with `prime').
As in the proof of the previous 
Lemma
\ref{lem:apriori:bound:2nd:derivatives:t,xi}, we 
can bound the remainders $(\cR_{a}^{2p} )^{1/2p}$
by $C_{p}(\vert \chi \vert + \| \chi \|_{2})$. 

In order to estimate
$(\Delta \cR_{a}^{2p} )^{1/2p}$, we apply 
Lemma \ref{lem:bound:H2:2}.
A crucial fact is that we have 
\eqref{eq sup Z xi}. This says that, instead of working 
in conditional norm $[\bar{\cM}^{2p}\llbracket \cdot \rrbracket]^{1/2p}$ for estimating the distance 
between $\theta^{t,\xi}$ and $\theta^{t,\xi'}$, 
we can directly work with the conditional norm $\NSt{2p}{\cdot} + \NS{2}{\cdot}$. As a byproduct, we can choose $\varepsilon = \sup_{s \in [t,T]} \vert \theta_{s}^{t,\xi}
- \theta_{s}^{t,\xi'} \vert$ in \eqref{eq:sncf:retour:5:b}. 
By \eqref{eq sup Z xi}, we thus get $C_{p} ( 
1 \wedge \{ \vert \xi - \xi' \vert + 
\Phi_{\alpha+1}(t,\xi,\xi') \})
( \vert \chi \vert +
\| \chi\|_{2})$
as a bound 
for the terms containing the symbol $\varepsilon$ in 
 \eqref{eq:sncf:retour:5:b} 
 ($\Phi_{\alpha+1}$ being independent of $t$ when 
$\xi$ and $\xi'$ are $\cF_{0}$-measurable).
By Lemmas 
\ref{lem:apriori:derivatives}
and \ref{lem:cont:derivatives}, 
all the terms involving an $\bar{\cM}$ may be bounded in the same way. 
By \eqref{eq:tilde:phi:alpha} in Lemma \ref{lem:function:phi:alpha}, the same is true for the term involving 
$\Phi_{\alpha}$. 
By Cauchy-Schwarz inequality and once again 
by Lemmas 
\ref{lem:apriori:derivatives}
and \ref{lem:cont:derivatives}, the
same bound holds for the
 terms integrated under $\E$. 
In the end, the whole right-hand side 
in \eqref{eq:sncf:retour:5:b} may be bounded by  $C_{p} ( 
1 \wedge \{ \vert \xi - \xi' \vert + 
\Phi_{\alpha+1}(t,\xi,\xi') \})
( \vert \chi \vert +
\| \chi\|_{2})$ (without the $t$ when 
$\xi,\xi' \in L^2(\Omega,\cF_{0},\P;\R^d)$). 
In \eqref{eq:stab:main estimate}, this brings us to the case when 
the remainders are zero, but $\Phi_{\alpha}$ is replaced by $\Phi_{\alpha+1}$. 
{
Applying \eqref{eq:bar phi} in Example \ref{example:UI}, we complete the proof
of \eqref{eq:phi2}.
The last part of the statement (choice of a version of $\Phi_{\alpha+1}$
which is independent of $t$) follows 
from 
Remark \ref{rem:unif:phi:alpha}.}
\eproof
\vspace{5pt}

\subsubsection{Study of the Non McKean-Vlasov system}
We now repeat the same analysis but for the process
$(\theta^{t,x,[\xi]},\partial_{\chi} \theta^{t,x,[\xi]})$
(instead of $(\theta^{t,\xi},\partial_{\chi} \theta^{t,\xi})$).
Considering a continuously differentiable 
path $\lambda \mapsto \xi^{\lambda}$ from 
$\R$ into $L^2(\Omega,{\mathcal F}_{t},\P;\R^d)$
such that $\vert [\ud/\ud \lambda] \xi^{\lambda} \vert \leq 1$, we are first to 
prove that the mapping 
$\R \ni \lambda \mapsto \partial_{\chi} \theta^{t,x,[\xi^{\lambda}]}
\in {\mathcal S}^2([t,T];\R^d) \times {\mathcal S}^2([t,T];\R^m)
\times {\mathcal H}^{2}([t,T];\R^{m \times d})$
is continuously differentiable. 
Before we discuss the proof, we must say a word about the notation 
itself, which is slightly ambiguous. Since the law of 
$\xi^\lambda$ is independent of $\lambda$, we could be indeed tempted to say that 
$\partial_{\chi} \theta^{t,x,[\xi^{\lambda}]}$ is independent of $\lambda$, which is obviously false. 
The reason is that, in the coefficients driving the system 
satisfied by $\partial_{\chi} \theta^{t,x,[\xi^{\lambda}]}$, there are terms of 
the form $\hat{\mathbb E}[\partial_{\mu} H(\theta^{t,x,[\xi^{\lambda}]},
[\theta^{t,\xi^{\lambda},(0)}]
)(\langle \theta^{t,\xi^\lambda,(0)} \rangle) \langle 
\partial_{\chi} \theta^{t,\xi^{\lambda},(0)} \rangle ]$, see 
\eqref{eq:x,xi:H1}, which explicitly depend upon the joint law 
of 
$\chi$ and $\xi^{\lambda}$. Clearly, there is no reason for the 
joint law to be independent of $\lambda$.  

Recalling \eqref{eq:x,xi:H1}, we know that 
$\partial_{\chi} \theta^{t,x,[\xi^{\lambda}]}$ satisfies a standard linear FBSDE
with $\hat{\mathbb E}[\partial_{\mu} H(\theta^{t,x,[\xi^{\lambda}]},
[\theta^{t,\xi^{\lambda},(0)}]
)(\langle \theta^{t,\xi^\lambda,(0)} \rangle) \langle 
\partial_{\chi} \theta^{t,\xi^{\lambda},(0)} \rangle ]$
as affine part. The coefficients of the FBSDE 
read as coefficients parametrized by $\lambda$ through 
the values of
$(
\theta^{t,x,[\xi^\lambda]},
\theta^{t,\xi^\lambda},
\partial_{\chi} \theta^{t,\xi^{\lambda}})$. Now that the 
continuous differentiability of $\R \ni \lambda \mapsto 
(
\theta^{t,x,[\xi^\lambda]},
\theta^{t,\xi^\lambda},
\partial_{\chi} \theta^{t,\xi^{\lambda}})$ has been proved, 
we can repeat the arguments used in the proof of Lemma \ref{lem:2ndorder:diff}
to show that 
$\R \ni \lambda \mapsto \partial_{\chi} \theta^{t,x,[\xi^\lambda]}
\in \cS^2([t,T];\R^d) \times \cS^2([t,T];\R^m) \times \cH^2([t,T];\R^{m \times d})$
is also continuously differentiable. (The complete proof is left to the reader.)

With the notation $\zeta := [\ud/\ud \lambda]_{\vert \lambda=0} \xi^{\lambda}$, we denote
the second-order tangent process by
$\dzeki \theta^{t,x,\law{\xi}} := [\ud/\ud \lambda]_{\vert \lambda =0}
\partial_\chi \theta^{t,x,\law{\xi^\lambda}}$. It satisfies a system of the form
\eqref{eq:sys:linear:mkv}
with $\theta \equiv \theta^{t,x,\law{\xi}}$, $\hat \theta \equiv \theta^{t,\xi}$, 
$\vartheta \equiv \dzeki \theta^{t,x,\law{\xi}}$ and $\hat \vartheta 
\equiv \dzeki \theta^{t,\xi}$ and  with generic coefficients $H$ given by
(compare if needed with \eqref{eq:x,xi:H1}):
 \begin{equation*}
h_{\ell}(V,\langle \hat V^{(0)} \rangle) = \partial_{x} 
h(V, \law{\hat V^{(0)}}), 
\ \hat{H}_{\ell}(V,\langle \hat V^{(0)}\rangle)=
\partial_{\mu} h(V,\law{\hat V^{(0)}})(\langle 
\hat V^{(0)} \rangle),
\ H_{a} \equiv
\tilde{H}_a^{(2)},
\end{equation*}
where $\tilde{H}^{(2)}_{a}(r)$ is a variant 
of $H^{(2)}_a$ in \eqref{eq:coeff:H2}
and reads:
\begin{equation}
\label{eq:t,x,mu:H:second:1}
\begin{split}
\tilde{H}_a^{(2)}(r) &:=
H_{a}^{(2)}\bigl(\theta^{t,x,[\xi]}_{r},\langle \theta^{t,\xi,(0)}_{r} \rangle,
\partial_{\chi} \theta^{t,x,[\xi]}_{r},\partial_{\zeta} \theta^{t,x,[\xi]}_{r},
\langle \partial_{\chi} \theta^{t,\xi,(0)}_{r} \rangle, \langle \partial_{\zeta} \theta^{t,\xi,(0)}_{r}
\rangle \bigr).
\end{split}
\end{equation}

On the model of Lemmas 
\ref{lem:apriori:bound:2nd:derivatives:t,xi}
and \ref{lem:apriori:cont:2nd:derivatives}, we claim
(compare with Lemma \ref{lem:cont:derivatives:x,xi}):
\begin{Lemma}
\label{lem:second:t,x,xi}
For any $p \geq 1$, 
there exist two constants
$c_{p}:=c_{p}(L)>0$ and $C_{p}$ 
such 
that,
for $T \leq c_{p}$ and with $\gamma=c_{p}$ in \eqref{stab:notations:1},
\begin{equation*}
\bigl[ {\mathcal M}_{{\mathbb E}}^{2p}
 \bigl( \dzeki \theta^{t,x,\law{\xi}}\bigr) \bigr]^{1/2p}
\leq C_{p} \NL{2}{\chi},
\end{equation*}
and
\begin{equation*}
\label{eq:second:deriv:cont:t,x,[xi]}
\begin{split}
&\Bigl[ {\mathcal M}_{{\mathbb E}}^{2p}
 \bigl( \dzeki \theta^{t,x,\law{\xi}}
 - \dzeki \theta^{t,x',\law{\xi'}}
\bigr)  \Bigr]^{1/2p}
\le 
C_{p} \bigl( \vert x-x' \vert + \Phi_{\alpha+1}(t,\xi,\xi') \bigr) \| \chi \|_{2},
\end{split}
\end{equation*} 
where $\Phi_{\alpha+1}(t,\cdot) : [ L^2(\Omega,\cF_{t},\P;
\R^{d})]^2 \rightarrow \R_{+}$ 
is continuous at any point of the diagonal, does not depend on $p$ and
satisfies \eqref{assumption:mkv:4} with $\alpha$ replaced by 
$\alpha+1$. 
The restriction of $\Phi_{\alpha+1}(t,\cdot,\cdot)$ to $[L^2(\Omega,\cF_{0},\P;\R^d)]^2$ may be assumed to be independent of $t \in [0,T]$. 
\end{Lemma}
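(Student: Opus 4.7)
The plan is to follow closely the strategy used for Lemmas \ref{lem:apriori:bound:2nd:derivatives:t,xi} and \ref{lem:apriori:cont:2nd:derivatives} but adapted to the non McKean-Vlasov system. By construction, $\partial^2_{\zeta,\chi}\theta^{t,x,[\xi]}$ satisfies a linear system of the form \eqref{eq:sys:linear:mkv} with $\eta=0$ (the forward component starts from $x$, a deterministic input), with $\theta\equiv\theta^{t,x,[\xi]}$, $\hat\theta\equiv\theta^{t,\xi}$, $\vartheta\equiv\partial^2_{\zeta,\chi}\theta^{t,x,[\xi]}$, $\hat\vartheta\equiv\partial^2_{\zeta,\chi}\theta^{t,\xi}$, with linear coefficients $h_\ell,\hat H_\ell$ given by the first-order derivatives of $b,\sigma,f,g$ exactly as in \eqref{eq:x,xi:H1}, and with remainder $H_a\equiv\tilde H^{(2)}_a$ from \eqref{eq:t,x,mu:H:second:1}.

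For the first bound, I would invoke \eqref{eq:co:bound:sys:lin:mkv:easy} from Corollary \ref{co:bound:sys:lin:mkv:easy}, which controls $\mathcal{M}^{2p}_{\mathbb{E}_t}(\vartheta)$ in terms of the remainder $\mathcal{R}_a$ and the quantity $\mathcal{M}_{\mathbb{E}}^{2}(\hat\vartheta^{(0)})=\mathcal{M}_{\mathbb{E}}^2(\partial^2_{\zeta,\chi}\theta^{t,\xi,(0)})$. The latter is bounded by $C(\vert\chi\vert+\|\chi\|_2)^2$ thanks to Lemma \ref{lem:apriori:bound:2nd:derivatives:t,xi}. The remainder term $\mathcal{R}_a$ is treated by applying Lemma \ref{lem:bound:H2:1} with $\theta\equiv\theta^{t,x,[\xi]}$, $\hat\theta\equiv\theta^{t,\xi}$, $\vartheta^1\equiv\partial_\chi\theta^{t,x,[\xi]}$, $\hat\vartheta^1\equiv\partial_\chi\theta^{t,\xi}$, $\vartheta^2\equiv\partial_\zeta\theta^{t,x,[\xi]}$, $\hat\vartheta^2\equiv\partial_\zeta\theta^{t,\xi}$; combined with Lemmas \ref{lem:apriori:derivatives}, \ref{lem:cont:derivatives:x,xi} and the bound $\|\zeta\|_\infty\le 1$, this yields $[\mathcal{R}_a^{2p}]^{1/2p}\le C_p\|\chi\|_2$. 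Then, taking expectations in the spirit of Remark \ref{rem:CI} (the initial condition is deterministic, hence the bound in $\|\chi\|_2$ rather than $\vert\chi\vert+\|\chi\|_2$), I would obtain the first assertion.

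For the stability bound, I would apply \eqref{eq:stab:1ere eq} from Corollary \ref{cor:stab:hard} with $\theta^\xi\equiv\theta^{t,x,[\xi]}$, $\hat\theta^\xi\equiv\theta^{t,\xi}$, $\vartheta^\xi\equiv\partial^2_{\zeta,\chi}\theta^{t,x,[\xi]}$, $\hat\vartheta^\xi\equiv\partial^2_{\zeta,\chi}\theta^{t,\xi}$ (and the analogues with $\xi'$ and $x'$). The real work lies in the remainder difference $\Delta\mathcal{R}_a^{2p}$ coming from $\tilde H^{(2)}_a-\tilde H^{(2)\prime}_a$, which I would estimate via Lemma \ref{lem:bound:H2:2}. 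The crucial ingredient, exactly as in the proof of Lemma \ref{lem:apriori:cont:2nd:derivatives}, is the pathwise estimate \eqref{eq sup Z xi} which permits to choose
$\varepsilon:=\sup_{s\in[t,T]}\vert\theta^{t,x,[\xi]}_s-\theta^{t,x',[\xi']}_s\vert$
in \eqref{eq:sncf:retour:5:b}, so that the indicator-function term vanishes and the remaining $\varepsilon$-term produces the bound $C_p(1\wedge\{\vert x-x'\vert+\Phi_{\alpha+1}(t,\xi,\xi')\})(\vert\chi\vert+\|\chi\|_2)$. The other contributions in \eqref{eq:sncf:retour:5:b} (involving $\bar{\mathcal M}$, the $\Phi_\alpha$ term, and the $\hat\vartheta^{1,(0)},\hat\vartheta^{2,(0)}$ differences) are handled by Lemmas \ref{lem:apriori:derivatives}, \ref{lem:cont:derivatives}, \ref{lem:cont:derivatives:x,xi} together with \eqref{eq:tilde:phi:alpha} from Lemma \ref{lem:function:phi:alpha}, giving the same type of bound. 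Plugging back into \eqref{eq:stab:1ere eq} and absorbing the residual supremum terms via \eqref{eq:bar phi} from Example \ref{example:UI} (which, for the price of upgrading $\Phi_\alpha$ to $\Phi_{\alpha+1}$, controls the $\bar\Psi$-weighted expectations), I would conclude. The final independence of $t$ for $\xi,\xi'\in L^2(\Omega,\cF_0,\P;\R^d)$ follows from Remark \ref{rem:unif:phi:alpha}.

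The main obstacle I anticipate is the careful bookkeeping of the bilinear remainder $\tilde H^{(2)}_a$, which involves products like $\partial_\chi\theta\otimes\partial_\zeta\theta$ and expectations of $\partial_v[\partial_\mu h]$ against $\partial_\chi\theta^{(0)}\otimes\partial_\zeta\theta^{(0)}$: when subtracting the corresponding terms with $\xi'$ and $x'$, one must split each factor in Leibniz fashion and ensure each piece is controlled by $C(\vert x-x'\vert+\Phi_{\alpha+1}(t,\xi,\xi'))\|\chi\|_2$. Here the asymmetry of the two factors (the $\partial_\chi$-factor scales with $\|\chi\|_2$, the $\partial_\zeta$-factor is uniformly bounded by $1$ since $\|\zeta\|_\infty\le 1$) is exactly what makes the estimate linear in $\|\chi\|_2$ rather than quadratic, and is the reason the statement matches the first-order Lemma \ref{lem:cont:derivatives:x,xi} with an unchanged power on $\chi$.
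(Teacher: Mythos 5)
Your proposal is correct and takes essentially the same route as the paper, which in fact gives only a very condensed proof (a few lines pointing back to Lemmas \ref{lem:apriori:bound:2nd:derivatives:t,xi}, \ref{lem:apriori:cont:2nd:derivatives}, the remainder formula \eqref{eq:t,x,mu:H:second:1}, Lemma \ref{lem:bound:H2:2}, and the pattern of Lemma \ref{lem:cont:derivatives:x,xi}); you have unpacked precisely those references, correctly choosing \eqref{eq:stab:1ere eq} rather than \eqref{eq:stab:main estimate} since $\vartheta \not\equiv \hat{\vartheta}$, using \eqref{eq sup Z xi} to set $\varepsilon$ in Lemma \ref{lem:bound:H2:2}, and invoking Remark \ref{rem:CI} to explain the $\|\chi\|_{2}$ scaling.
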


\proof
Loosely speaking, 
the result 
is similar to Lemmas
\ref{lem:apriori:bound:2nd:derivatives:t,xi}
and \ref{lem:apriori:cont:2nd:derivatives}, but with 
the realizations of $\xi$ and $\xi'$ therein replaced by $x$ and $x'$. 
Actually,   
the main difference with the computations made for the McKean-Vlasov system 
comes from the shape of the remainder $\cR_{a}$
that is implemented in 
the stability Corollary 
\ref{cor:stab:hard}.
In the proofs of Lemmas
\ref{lem:apriori:bound:2nd:derivatives:t,xi}
and
 \ref{lem:apriori:cont:2nd:derivatives}, 
 the definition of the remainder $\cR_{a}$ is based on 
the formula
\eqref{eq:coeff:H2}. In the current framework, it is based on the formula
\eqref{eq:t,x,mu:H:second:1},
which is slightly different. 
It can be estimated by means of Lemma \ref{lem:bound:H2:2}. 
The proof is then completed as that one of Lemma 
\ref{lem:cont:derivatives:x,xi}.
\eproof

\subsubsection{Final statement}
We finally claim:
\color{black}
\begin{Theorem}
\label{prop:partial:C^2:u}
There exists a constant $c:=c(L) >0$ such that, for $T \leq 
c$:

$\bullet$ for any $t \in [0,T]$ and $\mu \in {\mathcal P}_{2}(\R^d)$, 
the function $\R^d \ni x \mapsto U(t,x,\mu)$ is ${\mathcal C}^2$ 
and the functions 
$[0,T] \times \R^d \times \cP_{2}(\R^d) \ni (t,x,\mu)
\mapsto U(t,x,\mu)$, 
$[0,T] \times \R^d \times \cP_{2}(\R^d) \ni (t,x,\mu)
\mapsto \partial_{x}
U(t,x,\mu)$ 
and
$[0,T] \times \R^d \times \cP_{2}(\R^d) \ni (t,x,\mu)
\mapsto \partial^2_{xx}
U(t,x,\mu)$
are continuous, 

$\bullet$ for any $(t,x) \in [0,T] \times \R^d$, the function 
${\mathcal P}^2(\R) \ni \mu \mapsto U(t,x,\mu)$
is partially ${\mathcal C}^2$;
for any $(t,\mu) \in [0,T] \times \R^d 
\times \cP_{2}(\R^d)$, there exists a version
of $\R^d \ni v \mapsto 
\partial_{\mu} U(t,x,\mu)(v) \in \R^d$
such that 
$\R^d \times \R^d \ni (x,v) \mapsto 
\partial_{\mu} U(t,x,\mu)(v) \in \R^d$
is differentiable at any $(x,v)$ such that 
$v \in \textrm{\rm Supp}(\mu)$, 
the partial derivative
$\R^d \times \R^d \ni (x,v)
\mapsto 
\partial_{v} [\partial_{\mu} U(t,x,\mu)](v)$
being continuous at any $(w,v)$
such that $v \in \textrm{\rm Supp}(\mu)$
and 
the partial derivative
$\R^d \times \textrm{\rm Supp}(\mu) \ni (x,v)
\mapsto 
\partial_{x} [\partial_{\mu} U(t,x,,\mu)](v)$
being continuous in $(x,v)$.

 Moreover,
we can find a constant $C$ such that, for 
all $x \in \R^d$, for all $\xi \in L^2(\Omega,\cA,\P;\R^d)$,   
\begin{equation*}
\bigl\vert \partial_{xx}^2 U (t,x,[\xi])
\bigr\vert
+
{\mathbb E} \bigl[ \bigl\vert \partial_{x} \bigl[ \partial_{\mu} U (t,x,[\xi]) \bigr](\xi)
\bigr\vert^2 \bigr]^{1/2} 
+
{\mathbb E} \bigl[ \bigl\vert \partial_{v} \bigl[ \partial_{\mu} U (t,x,[\xi]) \bigr] (\xi)
\bigr\vert^2 \bigr]^{1/2} \leq C,
\end{equation*}
and, 
for 
all $x,x' \in \R^d$, for all $\xi,\xi' \in L^2(\Omega,\cA,\P;\R^d)$,  
\begin{equation*}
\begin{split}
&\bigl\vert \partial_{xx}^2 U (t,x,[\xi]) - \partial_{xx}^2 
U (t,x',[\xi']) \bigr\vert
\\
&\hspace{15pt}  
 + {\mathbb E} \bigl[ \bigl\vert \partial_{x} \bigl[ \partial_{\mu} U (t,x,[\xi]) \bigr](\xi)
- \partial_{x} \bigl[ \partial_{\mu} U (t,x',[\xi']) \bigr](\xi')
\bigr\vert^2 \bigr]^{1/2} 
\\
&\hspace{15pt} 
+ {\mathbb E} \bigl[ \bigl\vert \partial_{v} \bigl[\partial_{\mu} U (t,x,[\xi]) \bigr](\xi)
- \partial_{v} \bigl[ \partial_{\mu} U (t,x',[\xi']) \bigr] (\xi')
\bigr\vert^2 \bigr]^{1/2} 
\\
&\leq C \bigl\{ \vert x-x' \vert + \Phi_{\alpha+1}(\xi,\xi') \bigr\}, 
\end{split}
\end{equation*}
where
 $\Phi_{\alpha+1} : [ L^2(\Omega,\cA,\P;
\R^{d})]^2 \rightarrow \R_{+}$ 
satisfies \eqref{assumption:mkv:4}, with $\alpha$ replaced by 
$\alpha+1$.
In particular, for any $x \in \R^d$ and $\mu \in {\mathcal P}_{2}(\R^d)$, 
we can find a locally Lipschitz continuous version of the 
mappings $\R^d \ni v \mapsto \partial_{x} [\partial_{\mu} U(t,x,\mu)](v)$
and $\R^d \ni v \mapsto \partial_{v} [\partial_{\mu} U(t,x,\mu)](v)$.

The functions
$[0,T] \times \R^d \times L^2(\Omega,\cA,\P;\R^d) 
\ni (t,x,\xi)
\mapsto 
\partial_{xx}^2 U(t,x,[\xi])
\in \R^d$,
$[0,T] \times \R^d \times L^2(\Omega,\cA,\P;\R^d) 
\ni (t,x,\xi)
\mapsto 
\partial_{x} [ \partial_{\mu} U(t,x,[\xi])](\xi)\in L^2(\Omega,\cA,\P;\R^d)$
and 
$[0,T] \times \R^d \times L^2(\Omega,\cA,\P;\R^d) 
\ni (t,x,\xi)
\mapsto 
\partial_{v} [ \partial_{\mu} U(t,x,[\xi])](\xi)\in L^2(\Omega,\cA,\P;\R^d)$
are continuous. 
\end{Theorem}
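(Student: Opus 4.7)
The $\mathcal{C}^2$ regularity in $x$ with joint continuity of $U$, $\partial_x U$, and $\partial^2_{xx}U$ in $(t,x,\mu)$ follows in a straightforward way from the classical theory of coupled FBSDEs in \cite{del02}, applied to the representation $U(t,x,\mu)=\hat U_{t,\mu}(t,x)$ coming from the reformulation \eqref{eq reec mkvfbsde}. The slightly subtle point is that the coefficients of the reduced FBSDE depend on $\mu$ through the law $[\theta^{t,\xi,(0)}]$, but this law is continuous in $\mu$ by Lemma \ref{le app cont}; continuity of $\partial^2_{xx}U$ in $\mu$ then follows by the standard stability of the classical decoupling field with respect to its driver.

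The heart of the proof is the partial $\mathcal{C}^2$ property in the measure argument, for which the strategy is to apply Theorem \ref{thm:ito:frechet} to the lifted mapping $\mathcal{U}(t,x,\cdot):L^2(\Omega,\mathcal{A},\mathbb{P};\mathbb{R}^d)\ni \xi\mapsto U(t,x,[\xi])$, with $t$ and $x$ fixed. First-order Fréchet differentiability of $\mathcal{U}(t,x,\cdot)$ together with the identification $D\mathcal{U}(t,x,\xi)\cdot\chi=\partial_{\chi}Y_t^{t,x,[\xi]}$ has been established in Lemma \ref{lem:cont:derivatives:x,xi:b}. The remaining ingredient is the differentiability of $\lambda\mapsto D\mathcal{U}(t,x,\xi^\lambda)\cdot\chi$ along identically-distributed paths $(\xi^\lambda)_\lambda$ with $\|\,[\tfrac{\mathrm{d}}{\mathrm{d}\lambda}]\xi^\lambda\|_\infty\le 1$: this is exactly the content of (the $(t,x,[\xi])$-analogue of) Lemma \ref{lem:2ndorder:diff}, which provides a second-order tangent process $\partial^2_{\zeta,\chi}\theta^{t,x,[\xi]}$ such that
\begin{equation*}
\partial^2_{\zeta,\chi}\mathcal{U}(t,x,\xi)\;=\;\partial^2_{\zeta,\chi}Y_t^{t,x,[\xi]},
\end{equation*}
and such that this quantity depends on the path $(\xi^\lambda)$ only through $\xi$ and $\zeta$ (uniqueness for the associated linear system being automatic for small $T$). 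Then the $L^2$-bound and the $\Phi_\alpha$-type continuity conditions (i)--(ii) of Theorem \ref{thm:ito:frechet} are verified by specializing Lemma \ref{lem:second:t,x,xi}: taking $p=1$ and $\gamma=c_1$, together with Cauchy--Schwarz, gives
\begin{equation*}
|\partial^2_{\zeta,\chi}\mathcal{U}(t,x,\xi)|\le C\|\chi\|_2,\qquad |\partial^2_{\zeta,\chi}\mathcal{U}(t,x,\xi)-\partial^2_{\zeta,\chi}\mathcal{U}(t,x,\xi')|\le C\Phi_{\alpha+1}(\xi,\xi')\|\chi\|_2.
\end{equation*}
Theorem \ref{thm:ito:frechet} then yields partial $\mathcal{C}^2$ regularity of $\mu\mapsto U(t,x,\mu)$ together with the $L^2$-bound on $\partial_v[\partial_\mu U(t,x,\mu)]$, and it also supplies the duality identity which will be used below.

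The bound on $\partial_x[\partial_\mu U]$ is obtained by differentiating the identity $\mathbb{E}[\partial_\mu U(t,x,[\xi])(\xi)\chi]=\partial_\chi Y_t^{t,x,[\xi]}$ in $x$, which is legitimate thanks to Lemma \ref{lem:cont:derivatives:x,xi:b} and the smoothness of the tangent FBSDE in $x$, and by then invoking Lemma \ref{lem:second:t,x,xi} applied to the $x$-derivative of the first-order tangent process. The local Lipschitz modulus in $(x,\xi)$ of all second-order objects is inherited from Lemma \ref{lem:second:t,x,xi}, and local Lipschitz continuity of $v\mapsto \partial_v[\partial_\mu U](v)$ on $\mathrm{Supp}(\mu)$ follows via Proposition \ref{prop:lipschitz:lifted} (applied with exponent $\alpha+1$). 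The joint continuity in $(t,x,\xi)$ of the three second-order quantities is obtained on the model of the time-continuity argument already carried out in the proof of Theorem \ref{thm:4:1}: one writes, for $t\le s$,
\begin{equation*}
\bigl(\partial_x[\partial_\mu U(t,x,[\xi])]-\partial_x[\partial_\mu U(s,x,[\xi])]\bigr)(\xi)
\end{equation*}
as a sum of a term controlled by the uniform continuity of the already-established spatial/measure derivatives (using $\|X_s^{t,\xi}-\xi\|_2\le C(s-t)^{1/2}$ and continuity of $\Phi_{\alpha+1}$ on the diagonal) and a ``dynamic'' term that is identified by differentiating the flow relation $\mathbb{E}[\partial_\mu U(s,X_s^{t,x,[\xi]},[X_s^{t,\xi}])(X_s^{t,\xi})\chi]=\partial_\chi Y_s^{t,x,[\xi]}$ and recognizing its integrand via the linear McKean--Vlasov FBSDE solved by $\partial^2_{\zeta,\chi}\theta^{t,x,[\xi]}$; the resulting bounds are all of order $(s-t)^{1/2}$.

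The main technical obstacle is precisely this last continuity step: unlike the first-order case treated in Theorem \ref{thm:4:1}, the differentiation of the duality relation in $x$ produces cross-terms mixing $\partial_v[\partial_\mu U]$ and $\partial_x[\partial_\mu U]$ that are only controlled in $L^2$ (not $L^\infty$), so one has to combine them with the uniform integrability property \eqref{eq:UI:U} (extended to the second-order objects) in order to pass to the limit. This is where the decision to work in the class $\bigcup_{\beta\ge 0}\mathcal{D}_\beta$, allowing the exponent to increase from $\alpha$ to $\alpha+1$, becomes essential: without it, the cross-terms above would not admit a usable modulus of continuity and the chain-rule hypotheses of Theorem \ref{thm:partial:C2} could not be verified.
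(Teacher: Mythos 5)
Your plan follows the paper's own route: you invoke Theorem \ref{thm:ito:frechet} with the second-order tangent processes from Lemmas \ref{lem:2ndorder:diff} and \ref{lem:second:t,x,xi} to get partial ${\mathcal C}^2$ regularity in $\mu$, the duality identity \eqref{eq:29:11:50} to identify $\partial_v[\partial_\mu U]$, Proposition \ref{prop:lipschitz:lifted} for the locally Lipschitz versions, and the compactness/identification-on-$\mathrm{Supp}(\mu)$ argument for joint continuity in $(x,v)$; the treatment of $\partial^2_{xx}U$ and $\partial_x[\partial_\mu U]$ (via an $x$-differentiated version of Lemma \ref{lem:second:t,x,xi} and Lemma \ref{lem:appendix:2}) also matches.

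The one place where you diverge is the joint continuity in $t$ of the second-order quantities. The paper's actual difficulty there is not the $L^2$-only control of the cross-terms $\partial_v[\partial_\mu U]$, $\partial_x[\partial_\mu U]$ combined with the uniform integrability \eqref{eq:UI:U}, as you suggest; it is that the drivers of the linear backward equations for the second-order tangent processes contain terms quadratic in $\partial_\chi Z^{t,\xi}$ and $\partial_\chi Z^{t,x,[\xi]}$, which up to that point are only estimated in ${\mathcal H}$-norm. The paper closes this by differentiating the representation formula \eqref{eq representation formula Z t,x,xi}, $Z_s^{t,x,[\xi]}=\partial_x U(s,X_s^{t,x,[\xi]},[X_s^{t,\xi}])\,\sigma(\cdot)$, and feeding in the just-proved bounds on $\partial^2_{xx}U$ and $\partial_x[\partial_\mu U]$ to upgrade the control on $\partial_\chi Z$ to an ${\mathcal S}$-norm bound; the time-increment argument of Theorem \ref{thm:4:1} then goes through verbatim. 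Your uniform-integrability route is not obviously unworkable, but it is the vaguest step of your plan and you should either carry it out in detail or replace it by this ${\mathcal S}$-norm upgrade, which is the concrete device the argument actually rests on. (Your closing remark tying the difficulty to the class $\bigcup_\beta{\mathcal D}_\beta$ conflates two separate issues: the passage from $\alpha$ to $\alpha+1$ is about the stability of the space of boundary conditions under the recursion, not about verifying the hypotheses of the chain rule for a fixed short horizon.)
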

\color{black}

\proof
We first apply Theorem \ref{thm:ito:frechet}
in order to prove the $\cC^2$-partial property of $\mu \mapsto U(t,x,\mu)$. 
By 
Theorem
\ref{thm:4:1}, we already know that the lifted version $L^2(\Omega,\cA,\P;\R^d) \ni 
\xi \mapsto \cU(t,x,\xi) = U(t,x,[\xi])$ is continuously differentiable in the sense of Fr\'echet. 
Recalling the identity 
$$\partial_{\chi} Y_{t}^{t,x,[\xi]} = \E\bigl[ D \cU(t,x,[\xi])(\xi) \chi
\bigr],$$ we deduce from Lemmas \ref{lem:apriori:derivatives}
and \ref{lem:cont:derivatives}
that the gradient $D \cU(t,x,\cdot)$ satisfies \textit{(i)} and \textit{(ii)}
in the statement of Theorem \ref{thm:ito:frechet}. 
Now, using the same sequence $(\xi^\lambda)_{\lambda \in \R}$ as 
in \S \ref{subsubse:overview}, we notice that 
$$\frac{\ud}{ \ud \lambda}_{\vert \lambda = 0} 
\E\bigl[ D \cU(t,x,\xi^\lambda) \chi \bigr] = \partial^2_{\zeta,\chi} Y_{t}^{t,x,[\xi]},$$ 
which satisfies \textit{(i)} and \textit{(ii)} in the statement of 
Theorem \ref{thm:ito:frechet} thanks to Lemmas 
\ref{lem:apriori:bound:2nd:derivatives:t,xi} and 
\ref{lem:apriori:cont:2nd:derivatives} (with $\xi^\lambda$ playing the role of $X^\lambda$ in 
the statement of Theorem \ref{thm:ito:frechet}).
We deduce that, for any $(t,x) \in [0,T] \times \R^d$, the 
map $\cP_{2}(\R^d) \ni \mu \mapsto U(t,x,\mu)$ 
is partially $\cC^{2}$. 
{In particular, 
for any $(t,x,\mu) \in [0,T] \times \R^d \times 
\cP_{2}(\R^d)$, we can find a version 
of 
$\R^d \ni v 
\mapsto \partial_{\mu}U(t,x,\mu)(v)$ that
is continuously differentiable, 
such a version being uniquely 
defined on the support of $\mu$.
}
Moreover, by 
\eqref{eq:29:11:50}, we have the relationship
\begin{equation*}
\partial^2_{\text{sign}(Z')e,\text{sign}(Z')\chi} 
Y_{t}^{t,x,[\xi]}
 = 
 {\mathbb E} \bigl[ \text{Tr} \bigl\{ \bigl(
  \partial_{v} [\partial_{\mu} U(t,x,\mu)](\xi) \bigr) \bigl( \chi \otimes e \bigr) \bigr\}\bigr],
\end{equation*}
which holds true for any $e \in \R^d$ and any $\xi,\chi \in L^2(\Omega,\cF_{t},\P;\R^d)$, 
with $\xi \sim \mu$,
and for a prescribed random variable $Z'$ independent of $(\xi,\chi)$. 
From Lemma 
\ref{lem:second:t,x,xi}, we deduce that  
\begin{equation*}
\begin{split}
&{\mathbb E}[ \vert \partial_{v} [\partial_{\mu} U (t,x,[\xi])](\xi)
\vert^2]^{1/2} \leq C,
\\
&{\mathbb E}[ \vert \partial_{v} [\partial_{\mu} U (t,x,[\xi])](\xi)
- \partial_{v}[ \partial_{\mu} U (t,x',[\xi'])] (\xi')\vert^2 ]^{1/2}
\leq C [ \vert x-x' \vert + \Phi_{\alpha+1}(\xi,\xi')],  
\end{split}
\end{equation*}
the extension of $\Phi_{\alpha+1}$
to the whole $[L^2(\Omega,\cA,\P;\R^d)]^2$ being achieved as in the proof 
of Lemma \ref{lem:cont:derivatives:x,xi:b}. 

By means of Proposition \ref{prop:lipschitz:lifted}, we deduce that, 
{
for given $t \in [0,T]$ and $\mu \in \cP_{2}(\R^d)$, 
we can choose, for any $x \in \R^d$, a version of 
$\R^d \ni v \mapsto \partial_{\mu} U(t,x,\mu)(v)$
such that 
the derivative mapping $\R^d \ni v \mapsto \partial_{v}[\partial_{\mu} U(t,x,\mu)](v)$ 
is continuous
on compact subsets of $\R^d$, uniformly in 
$x \in \R^d$}. 
Using the same trick as in \eqref{eq:trick:markov}, we deduce that the family
 $(\R^d \ni v \mapsto \partial_{v}[\partial_{\mu} U(t,x,\mu)](v))_{x \in \R^d}$
 is relatively compact for the topology of uniform convergence on compact subsets. Considering a 
 sequence $(x_{n})_{n \geq 1}$ that converges to $x$, 
we already know that 
the sequence of functions $(\R^d \ni v \mapsto \partial_{v}[\partial_{\mu} U(t,x_{n},\mu)](v) \in \R^{d \times d})_{n \geq 1}$
converges in $L^2(\R^d,\mu;\R^{d \times d})$ to 
$\R^d \ni v \mapsto \partial_{v}[\partial_{\mu} U(t,x,\mu)](v) \in \R^{d \times d}$. 
{Since $\partial_{v} [\partial_{\mu} U(t,x,\mu)]$ is uniquely defined on 
the support of $\mu$,
the limit of any converging subsequence (for the topology of uniform convergence 
on compact subsets of $\R^d$) of
$(\partial_{v}[\partial_{\mu} U(t,x_{n},\mu)](\cdot))_{n \geq 1}$ 
coincides with $\partial_{v}[\partial_{\mu} U(t,x,\mu)](\cdot)$
on the support of $\mu$. We deduce that the function $\R^d \times \R^d \ni 
(x,v) \mapsto \partial_{v} [\partial_{\mu} U(t,x,\mu)](v) \in \R^{d \times d}$ 
is continuous at any $(x,v)$ such that $v \in \textrm{Supp}(\mu)$}.
 
Proving a similar version of Lemma \ref{lem:second:t,x,xi}, but 
for $\partial^2_{xx} \theta^{t,x,[\xi]}$, we 
can show in the same way that $U$ is twice differentiable in $x$ and satisfies 
\begin{equation*}
\bigl\vert \partial_{xx}^2 U (t,x,[\xi]) \bigr\vert, \quad 
\bigl\vert \partial_{xx}^2 U (t,x,[\xi]) - \partial_{xx}^2 
U (t,x',[\xi']) \bigr\vert
\leq C \bigl[ \vert x-x' \vert + \Phi_{\alpha+1}(\xi,\xi') \bigr],
\end{equation*}
We notice indeed 
that, for $\xi \sim \mu$, $\partial^2_{xx} Y_{t}^{t,x,[\xi]}$
coincides with $\partial^2_{xx} U(t,x,\mu)$. 

Similarly, we can investigate $\partial_{x} [\partial_{\chi} \theta^{t,x,[\xi]}]$. 
By means of Lemma \ref{lem:appendix:2} in Appendix, we can prove that, 
{once 
a continuous version of $\partial_{\mu} U(t,x,\mu)$
has be chosen
for any $(t,x,\mu) \in [0,T] \times \R^d \times \cP_{2}(\R^d)$,
the
function $\R^d \ni x \mapsto \partial_{\mu} U(t,x,\mu)(v)$
is differentiable at any point $(x,v)$ such that 
$v \in \textrm{Supp}(\mu)$, the derivative
function $\R^d \times \textrm{Supp}(\mu) \ni (x,v) 
\mapsto \partial_{x}[\partial_{\mu} U(t,x,\mu)](v)$
being continuous. Combining with the continuous differentiability property in $v$, 
we deduce that
the mapping 
$\R^d \times \R^d\ni (x,v) \mapsto \partial_{\mu} U(t,x,\mu)(v)$
is differentiable at any point $(x,v)$ such that $v \in \textrm{Supp}(\mu)$, 
with the aforementioned prescribed continuity properties of the 
partial derivatives.}

Then $\partial_{x} [\partial_{\chi} Y_{t}^{t,x,[\xi]}]$ identifies with 
$\E[ \partial_{x} [\partial_{\mu} U(t,x,\mu)](\xi) \chi]$.  Moreover,
\begin{equation*} 
\begin{split}
&{\mathbb E} \bigl[ \vert \partial_{x} [ \partial_{\mu} U (t,x,[\xi]) ](\xi)
\vert^2 \bigr]^{1/2} \leq C,
\\
&{\mathbb E}\bigl[ \vert \partial_{x} [ \partial_{\mu} U (t,x,[\xi])](\xi)
- \partial_{x} [ \partial_{\mu} U (t,x',[\xi']) ](\xi')
\vert^2 \bigr]^{1/2} 
\leq C \bigl[ \vert x-x' \vert + \Phi_{\alpha+1}(\xi,\xi')\bigr].
\end{split}
\end{equation*}

{
Generally speaking, 
time continuity of the derivatives can be proved as in Theorem 
\ref{thm:4:1}.  
Anyhow, some precaution is needed since the drivers of the backward equations 
that represent all the second-order derivatives involve 
quadratic terms in 
$\partial_{\chi} Z^{t,\xi}$
and
$\partial_{\chi} Z^{t,x,[\xi]}$,
see for instance 
\eqref{eq:coeff:H2}.
The \textit{a priori} difficulty is that, so far, 
we have exhibited
bounds for
$\partial_{\chi} Z^{t,\xi}$
and
$\partial_{\chi} Z^{t,x,[\xi]}$
in 
${\mathcal H}$ norm only, which 
might not suffice for investigating the time regularity. 
The key point is then to notice that all these terms may be estimated 
in ${\mathcal S}$ instead of ${\mathcal H}$ norm. 
The trick is to invoke the representation formula 
\eqref{eq representation formula Z t,x,xi}
for the process $Z^{t,x,[\xi]}$, 
to differentiate it and then to make use of the bounds 
we just proved for $\partial_{xx}^2 U$ and  
$\partial_{x} [\partial_{\mu} U]$. 
}
\eproof 
\vspace{5pt}

We now turn to
\vspace{5pt}

\begin{proof}[Proof of Theorem \ref{main:thm:short:time}]
We first prove that $U$ is  a classical solution of the PDE
\eqref{eq:master:PDE}.
The main argument follows from 
\eqref{eq:expansion:master:PDE}, 
the idea being to apply the chain rule
to $U(t+h,x,\cdot)$, which is licit thanks to Theorem 
\ref{prop:partial:C^2:u}.
Following \eqref{eq:chain:rule:PDE}, we get
\begin{align}
&U\bigl(t+h,x,\law{X_{t+h}^{t,\xi}}\bigr) - U\bigl(t+h,x,\law{\xi}\bigr) \nonumber
\\
&= \int_{t}^{t+h} \hesp{\partial_\mu U\bigl(t+h,x,\law{X^{t,\xi}_r}\bigr)
\bigl(
\cc{X^{t,\xi}_r}\bigr)  
b\bigl(\cc{\theta^{t,\xi}_r},\law{\theta^{t,\xi,(0)}_r}\bigr)} \ud r \nonumber
\\
 &\hspace{15pt} + \frac12 \int_t^{t+h} \hesp{
 \text{Trace} \bigl[
 \partial_v \bigl[\partial_\mu U\bigr]\bigl(t+h,x,\law{X^{t,\xi}_r}\bigr)\bigl(\cc{X^{t,\xi}_r}\bigr)
 \bigl( \sigma
  \sigma^{\dagger}\bigr)\bigl(  
\cc{\theta^{t,\xi,(0)}_r},\law{\theta^{t,\xi,(0)}_r}\bigr)
\bigr]
} \ud r. \nonumber
 \end{align}
Assumption \HYP{0}
and Theorems 
\ref{thm:4:1}
and \ref{prop:partial:C^2:u}
provide estimates on the smoothness of $b$, $\sigma \sigma^{\dagger}$, 
$\partial_{\mu} U$ and $\partial_{v}[\partial_{\mu} U]$. 
We deduce that we can find a non-negative functional $\Phi$ on 
$[L^2(\Omega,\cA,\P;\R^{d} \times \R^m \times \R^{m \times d})]^2$, 
continuous at any point of the diagonal, 
matching $0$ on the diagonal, such that 
\begin{align}
&\biggl\vert U\bigl(t+h,x,\law{X_{t+h}^{t,\xi}}\bigr) - U\bigl(t+h,x,\law{\xi}\bigr) \nonumber
\\
&\hspace{15pt}- h \hesp{\partial_\mu U\bigl(t+h,x,\law{\xi}\bigr)
\bigl(
\cc{\xi}\bigr)  
b\bigl(\cc{\theta^{t,\xi}_t},\law{\theta^{t,\xi,(0)}_t}\bigr)}  \nonumber 
 \\
 &\hspace{15pt} - \frac{h}2 \hesp{
 \text{Trace} \bigl[
 \partial_v \bigl[\partial_\mu U\bigr]\bigl(t+h,x,\law{\xi}\bigr)\bigl(\cc{\xi}\bigr)
 \bigl( \sigma
  \sigma^{\dagger}\bigr)\bigl(  
\cc{\theta^{t,\xi,(0)}_t},\law{\theta^{t,\xi,(0)}_t}\bigr)
\bigr]
}   \biggr\vert \nonumber
\\
&\leq h \sup_{r \in [t,t+h]} \Phi \bigl( \theta^{t,\xi}_{r},\theta^{t,\xi}_{t} \bigr). \nonumber
 \end{align}
Recalling that $\theta_{r}^{t,\xi}=(X_{r}^{t,\xi},Y_{r}^{t,\xi},\partial_{x} U(r,X_{r}^{t,\xi},
[X_{r}^{t,\xi}]) \sigma(X_{r}^{t,\xi},Y_{r}^{t,\xi}))$, we deduce from 
Theorem 
\ref{thm:4:1}
(smoothness of $\partial_{x} U$ both in time and in space) that it converges (in $L^2$) 
to $\theta^{t,\xi}_{t}$
as $r$ tends to $t$, 
proving that the supremum above tends to $0$ as $h$ tends to $0$.
Now, using the time continuity of the derivatives $\partial_{\mu} U$ and 
$\partial_{v}[\partial_{\mu} U]$ (see Theorem \ref{prop:partial:C^2:u}), we deduce that 
there exists a function $\varepsilon : \R \ni u \mapsto \varepsilon_{u} \in \R_{+}$, 
with $\lim_{u \rightarrow 0}\varepsilon_{u}=0$, such that 
\begin{align}
&\biggl\vert U\bigl(t+h,x,\law{X_{t+h}^{t,\xi}}\bigr) - U\bigl(t+h,x,\law{\xi}\bigr) \nonumber
\\
&\hspace{15pt}- h \biggl[ \hesp{\partial_\mu U\bigl(t,x,\law{\xi}\bigr)
\bigl(
\cc{\xi}\bigr)  
b\bigl(\cc{\theta^{t,\xi}_t},\law{\theta^{t,\xi,(0)}_t}\bigr)} \label{eq:chain:rule:PDE:proof}
 \\
 &\hspace{15pt} - \frac{h}2  \hesp{
 \text{Trace} \bigl[
 \partial_v \bigl[\partial_\mu U\bigr]\bigl(t,x,\law{\xi}\bigr)\bigl(\cc{\xi}\bigr)
 \bigl( \sigma
  \sigma^{\dagger}\bigr)\bigl(  
\cc{\theta^{t,\xi,(0)}_t},\law{\theta^{t,\xi,(0)}_t}\bigr)
\bigr]
}  \biggr] \biggr\vert \nonumber \leq h \varepsilon_{h}.\nonumber
 \end{align}
Now, we can plug \eqref{eq:chain:rule:PDE:proof}
into \eqref{eq:expansion:master:PDE}. Following 
\eqref{eq:time:derivative:U}, we get that 
the time increment $[U(t+h,x,[\xi]) - U(t,x,[\xi])]/h$
has a limit as $h$ tends to $0$. 
As in Subsection \ref{subse:solution:PDE},
the right derivative in time satisfies 
\eqref{eq:master:PDE} and is thus continuous in
time. Since $U$ is obviously continuous in time, 
 we deduce that the mapping
$[0,T] \ni t \mapsto U(t,x,[\xi])$ is differentiable and 
that the PDE \eqref{eq:master:PDE} holds true. \qed
\end{proof}

\section{Large population stochastic control -- proof of Theorem \ref{main:thm:3}
}
\label{se app}
In this section, we discuss two applications of our previous results to large population stochastic control. 
The first application is related to mean-field games, whilst the second one is related to the optimal control of McKean-Vlasov equations. 

\subsection{The global smoothness of the decoupling field }
So far, smoothness of the decoupling field $U$ has been discussed for small time intervals $[0,T]$; namely for $T \leq \delta_{0}$ where $\delta_{0}>0$ only depends upon the Lipschitz constants of the coefficients 
$b$, $f$, $\sigma$ and $g$, denoted by the common letter 
$L$ in condition \HYP{0}(i). A natural, though quite challenging, question concerns the possible extension of such a result to the case when $T$ is arbitrarily large. 

The principle for extending the result to an arbitrarily large time horizon is discussed 
in the earlier paper \cite{del02}. It consists of a backward recursion 
starting from the terminal time $T$. Thanks to the short time result proved in 
the previous section, the mapping $[T-\delta_{0},T] \times \R^d \times {\mathcal P}_{2}(\R^d)
\ni (t,x,\mu)\mapsto U(t,x,\mu) \in \R^m$ is rigorously defined as the 
initial value $Y_{t}^{t,x,\mu}$ of the backward component of the system \eqref{eq X-Y 
t,x,mu}, existence and uniqueness of the solution of the forward-backward system 
following from the condition $T-t \leq \delta_{0}$. By Lemma \ref{le app 
cont}, $U$ is Lipschitz continuous in $(x,\mu)$, uniformly in $t \in [T-\delta_{0},T]$. Up to a 
modification of the choice of the constant  $\delta_{0}$, $\delta_{0}$ still depending on the Lipschitz 
constants of the coefficients only, the results established in 
Section \ref{se smoothness} show that, under the assumptions detailed in 
Subsection \ref{subse:assumption},  
$U$ belongs to the class $\bigcup_{\beta \geq 0} \cD_{\beta}([T-\delta_{0},T])$. 
As in  \cite{del02}, we proceed by reapplying 
the short time existence, uniqueness and differentiability result to a new 
interval of the form $[T-(\delta_{0}+\delta_{1}),T-\delta_{0}]$, with the new 
terminal condition $U(T-\delta_{0},\cdot,\cdot)$ at time $T-\delta_{0}$ 
replacing the terminal condition   
$g$ at time $T$. 
A preliminary condition for iterating the short time solvability property is that 
$U(T-\delta_{0},\cdot,\cdot)$ is an admissible boundary condition. 
Under \HYP{2}, Theorems \ref{thm:4:1} and
 \ref{prop:partial:C^2:u} say that it is indeed the case, up to a deterioration of 
$\alpha$ into $\alpha+1$, the exponent $\alpha$
driving the local Lipschitz regularity of the derivatives of 
the coefficients
in \HYP{1} and \HYP{2}. This makes possible to reapply the existence and uniqueness result for short time
horizons with $\alpha$ be replaced by $\alpha+1$. 
Fortunately, the length  
$\delta_{1}$ of the new interval of existence and uniqueness only depends on the Lipschitz constant 
of $b$, $f$, $\sigma$ and $U(T-\delta_{0},\cdot,\cdot)$. In particular, it does not suffer from the deterioration of the exponent $\alpha$ into $\alpha+1$, which is a crucial fact.
 As a result we
are able to extend the definition of $U$ to $[T-(\delta_{0}+\delta_{1}),T-\delta_0] 
\times \R^d \times {\mathcal P}_{2}(\R^d)$. Since the new terminal condition 
$U(T-\delta_{0},\cdot,\cdot)$ has the same properties as  $g$ (but possibly with a different Lipschitz constant and a different $\alpha$), 
the extended version of $U$ is in the class 
$\cD_{\alpha+1}([T-(\delta_{0}+\delta_{1}),T]) \subset \bigcup_{\beta \geq 0} \cD_{\beta}([T-(\delta_{0}+\delta_{1}),T])$. 
The argument can be applied recursively on a sequence of small intervals of the form 
$[T-(\delta_{0}+\dots+\delta_{n+1}),T-(\delta_{0}+\dots+\delta_{n})]$, $n \geq 0$. Of course, the issue is that the lengths $(\delta_{n})_{n \geq 0}$ may be smaller and smaller
so that the sum $\sum_{n \geq 0} \delta_{n}$ may not exceed $T$. This happens if the Lipschitz constant of $U$ at times $(T-(\delta_{0}+\dots+\delta_{n}))_{n \geq 1}$ blows up
before that the sequence $(\delta_{0}+\dots+\delta_{n})_{n \geq 1}$ exceeds $T$. Put it differently, the construction of the smooth decoupling field $U$ on $[0,T] 
\times \R^d \times {\mathcal P}_{2}(\R^d)$ can be achieved by means of a backward recursion provided that the Lipschitz constant of $U(t,\cdot,\cdot)$
remain bounded as $t$ runs backward along the induction. 

The crux of the matter is thus to get such a Lipschitz estimate. 
In the following, we present two examples, derived from large population
stochastic control, for which the following assumption holds true:

\begin{Assumption}[\HYP{3}]
For any $t \in [0,T]$ and any square integrable 
${\mathcal F}_{t}$-measurable random variable $\xi$, the system \eqref{eq 
X-Y t,xi} has a unique solution $(X_{s}^{t,\xi},Y_{s}^{t,\xi},Z_{s}^{t,\xi})_{s \in [t,T]}$ and it satisfies, for all $\xi,\xi' \in L^2(\Omega,\cF_{t},\P;\R^d)$,
\begin{align}\label{eq:weak lip}
\E \bigl[ |Y^{t,\xi}_t - Y^{t,\xi'}_t|^2 \bigr]^{1/2} \le \Lambda \esp{|\xi-\xi'|^2}^{1/2}, 
\end{align}
with $\Lambda$ a positive constant that does not depend on $\xi$, $\xi'$ nor on $t$.
\end{Assumption}

We will show below that, under \HYP{3}, the decoupling field $U$ constructed along the induction must 
satisfy at any time $t$ at which it has been defined
\begin{align} \label{eq control recu U}
\forall \ \xi,\xi' \in L^2(\Omega,\cA,\P;\R^d), \quad \esp{|U(t,\xi,\law{\xi})-U(t,\xi',\law{\xi'})|^2}^{1/2} \le \Lambda 
\esp{|\xi-\xi'|^2}^{1/2}.
\end{align}
Although it is a first step in the control of the Lipschitz constant for $U$, it remains insufficient for our purposes. The reason is that 
the control is here stated along the diagonal only. 
Fortunately, the next Lemma permits to fill the gap and to 
bound the Lipschitz constant of $U$, in $x$ and $\mu$, on the entire domain:{\color{green}}

\begin{Lemma} \label{le control lip U}
 Under
\HYP{2}, assume that $U$ has been constructed on some interval  $[T_{0},T]$, for $T_{0} \in [0,T]$. Assume moreover that 
 it satisfies \eqref{eq control recu U} for any $t \in [T_{0},T]$ and that it is continuously 
differentiable in the directions $x$ and $\mu$ at any time $t \in [T_{0},T]$. 
Then, we can find a constant $\tilde{\Lambda}$, independent of $T_{0}$, 
such that for $t \in [T_{0},T]$, $x,x' \in \R^d$ and $\mu,\mu' \in \cP_{2}(\R^d)$:
\begin{align*}
 |U(t,x,\mu)- U(t,x',\mu')| \le \tilde{\Lambda}\bigl(|x-x'|+W_2(\mu,\mu')\bigr).
\end{align*}
\end{Lemma}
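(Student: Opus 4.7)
The goal is to upgrade the diagonal $L^2$-Lipschitz estimate \eqref{eq control recu U} into a joint pointwise Lipschitz estimate in $x$ and $\mu$, with a constant depending only on $\Lambda$ and on the constants of \HYP{2}, but not on $T_0$. The strategy rests on two Bernoulli-type coupling tricks that extract, from the operator-level Lipschitz bound in $L^2$, information about $U$ at a specified point $x$. The first step will be the Lipschitz in $x$: for fixed $t$, $\mu$, $x$, $x'$, I pick $\zeta \in L^2(\Omega, \cF_t, \P; \R^d)$ with $[\zeta]=\mu$, and for $\epsilon \in (0,1)$ and an independent Bernoulli $B$ of parameter $\epsilon$, set $\xi := xB + \zeta(1-B)$ and $\xi' := x'B + \zeta(1-B)$. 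Then $\|\xi-\xi'\|_2^2 = \epsilon|x-x'|^2$, while $[\xi]=\epsilon\delta_x + (1-\epsilon)\mu$ and $[\xi']=\epsilon\delta_{x'}+(1-\epsilon)\mu$; keeping only the contribution of $\{B=1\}$ in \eqref{eq control recu U} gives $|U(t,x,[\xi])-U(t,x',[\xi'])| \le \Lambda|x-x'|$, and letting $\epsilon \downarrow 0$ together with the continuity of $U$ in $\mu$ (granted by the standing differentiability) yields $|U(t,x,\mu)-U(t,x',\mu)| \le \Lambda|x-x'|$.

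For the Lipschitz in $\mu$ at a fixed point $x$, the second step is an averaged estimate obtained as follows. I pick an optimal coupling $\xi\sim\mu$, $\xi'\sim\mu'$ with $\|\xi-\xi'\|_2 = W_2(\mu,\mu')$; combining the diagonal Lipschitz $\|U(t,\xi,\mu)-U(t,\xi',\mu')\|_2 \le \Lambda W_2(\mu,\mu')$ with the $x$-Lipschitz $|U(t,\xi,\mu')-U(t,\xi',\mu')| \le \Lambda|\xi-\xi'|$ from the first step via the triangle inequality produces
\begin{equation*}
\E\bigl[|U(t,\xi,\mu)-U(t,\xi,\mu')|^2\bigr]^{1/2} \le 2\Lambda W_2(\mu,\mu').
\end{equation*}
The third step, which is the delicate one, will upgrade this $L^2(\mu)$-averaged bound to a pointwise bound at arbitrary $x \in \R^d$. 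I propose to iterate the Bernoulli construction with an atom inserted at $x$: setting $\eta := xB + \xi(1-B)$ and $\eta' := xB + \xi'(1-B)$, the measures $[\eta]$ and $[\eta']$ both have an atom of mass $\epsilon$ at $x$, and plugging into \eqref{eq control recu U} will bound the $\{B=1\}$-contribution $\epsilon|U(t,x,[\eta])-U(t,x,[\eta'])|^2$ by $\Lambda^2(1-\epsilon)W_2(\mu,\mu')^2$ minus the $\{B=0\}$-contribution; the $\{B=0\}$ residual will in turn be rewritten in diagonal form using the $x$-Lipschitz of Step~1 and controlled by the averaged bound above, thus cancelling the singular $1/\epsilon$ factor. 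Passing to the limit $\epsilon\downarrow 0$ and using the continuity of $U$ in $\mu$ then yields $|U(t,x,\mu)-U(t,x,\mu')| \le \tilde\Lambda W_2(\mu,\mu')$.

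The main obstacle I foresee is this third step: the naive bound produced by the Bernoulli coupling carries an unbounded prefactor of order $\sqrt{(1-\epsilon)/\epsilon}$, and taming it requires a careful reorganisation of the $\{B=0\}$ residual via the $x$-Lipschitz from Step~1 together with the $L^2$-averaged control from Step~2 and the continuity properties of the derivatives of $U$ supplied by \HYP{2}. Once the pointwise $\mu$-Lipschitz estimate is established, the joint Lipschitz statement of the lemma follows from the triangle inequality by splitting $U(t,x,\mu)-U(t,x',\mu')$ through the intermediate point $(x',\mu)$, with $\tilde\Lambda$ a universal multiple of $\Lambda$ depending only on the constants of \HYP{2}.
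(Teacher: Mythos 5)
Your Steps 1 and 2 are correct and the Bernoulli device in Step 1 is a clean alternative to the paper's argument (which instead specialises \eqref{eq control recu U} to $\xi\sim\xi'$ and invokes Proposition \ref{prop:lipschitz:lifted} with $\alpha=0$); Step 2 is essentially identical to the paper's Step 2b. The problem is Step 3, and the obstacle you yourself flag is fatal to the route you propose. After plugging $\eta=xB+\xi(1-B)$, $\eta'=xB+\xi'(1-B)$ into \eqref{eq control recu U} you get
\begin{equation*}
\epsilon\,|U(t,x,[\eta])-U(t,x,[\eta'])|^2 + (1-\epsilon)\,\E\bigl[|U(t,\xi,[\eta])-U(t,\xi',[\eta'])|^2\bigr] \;\le\; \Lambda^2(1-\epsilon)\,W_2^2(\mu,\mu'),
\end{equation*}
and for the $\{B=1\}$ term to survive the division by $\epsilon$, the $\{B=0\}$ residual would have to be \emph{lower}-bounded by $\Lambda^2(1-\epsilon)W_2^2 - O(\epsilon)$. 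No such lower bound is available: \eqref{eq control recu U}, the $x$-Lipschitz property and the averaged bound of Step~2 all produce only \emph{upper} bounds, and the residual can be as small as $0$ (take $U$ independent of $\mu$ on the support of $\mu,\mu'$ but varying elsewhere). The deeper obstruction is that \eqref{eq control recu U} together with $\|\partial_x U\|_\infty\le\Lambda$ only controls the $L^2(\mu)\to L^2(\mu)$ operator norm of the kernel $(v,v')\mapsto\partial_\mu U(t,v,\mu)(v')$; a pointwise-in-$x$ bound of $\int|\partial_\mu U(t,x,\mu)(v')|^2\,d\mu(v')$ is genuinely stronger and cannot be recovered from these functional-analytic facts alone.

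What the paper uses, and what your proposal never touches, is the FBSDE structure of $U$. The paper writes the tangent process $\partial_\chi\theta^{t,\xi}$ (resp.\ $\partial_\chi\theta^{t,x,[\xi]}$) on a short interval $[t,S]$ with terminal boundary condition built from $\partial_x U(S,\cdot,\cdot)$ and $\partial_\mu U(S,\cdot,\cdot)$; the diagonal bound of Step 2b and the $\partial_x U$ bound control those boundary data, and the stability estimate of Corollary \ref{co:bound:sys:lin:mkv:easy} yields $\|\partial_\mu U(t,x,\mu)(\xi)\|_2 \le C\bigl(1+\sup_{x,\xi}\|\partial_\mu U(S,x,[\xi])(\xi)\|_2\bigr)$. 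Iterating this backward over intervals of length $\tilde c(\Lambda\vee L)$ down from $T$, where $\partial_\mu g$ satisfies the pointwise bound by \HYP{1}, gives \eqref{eq control lip 0} with a constant $L\,C^{T/\tilde c +1}$ depending on $T$ but not on $T_0$. You would need to import this bootstrap; the pure coupling argument cannot substitute for it.
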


\proof

\emph{Step 1.} Applying Proposition \ref{prop:lipschitz:lifted} (with $\alpha=0$) we get that 
$U$ is $\Lambda$-Lipschitz continuous in $x$, or equivalently that
$\| \partial_x U(t,\cdot,\cdot)\|_\infty 
\le \Lambda$ for $t \in [T_{0},T]$.

\emph{Step 2a.} Now, for  $t \in [T_{0},T]$, $x \in \R^d$ and $\xi,\xi' \in L^2(\Omega,\cA,\P;\R^d)$, we have
\begin{equation*}
\begin{split}
&\vert U(t,x,[\xi]) - U(t,x,[\xi']) \vert 
\\
&\hspace{15pt} = \biggl\vert \int_{0}^1 \E \bigl[ 
 \partial_{\mu} U(t,x,[(1-\lambda)\xi + \lambda \xi'])\bigl( (1-\lambda) \xi +\lambda \xi' \bigr)
 \bigl( \xi - \xi' \bigr) \bigr] \ud \lambda 
\biggr\vert
\\
&\hspace{15pt} \leq  {\mathbb E} \bigl[ \vert \xi' - \xi \vert^2 \bigr]^{1/2} 
\int_{0}^1 
{\mathbb E} \bigl[ \bigl\vert 
 \partial_{\mu} U(t,x,[(1-\lambda)\xi + \lambda \xi'])
\bigl((1-\lambda) \xi + \lambda \xi' \bigr) \bigr\vert^2 \bigr]^{1/2}
 \ud\lambda.  
 \end{split}
\end{equation*}
In particular, in order to complete the proof, it suffices to find a constant $C$,  independent of $T_{0}$,
such that, 
for all $(t,x,\mu) \in [T_{0},T] \times \R^d \times \cP_{2}(\R^d)$,
\begin{align} \label{eq control lip 0}
 \esp{|\partial_\mu U(t,x,\mu)(\xi)|^2}^{1/2} \le C.
 \end{align}

\emph{Step 2b.}
Combining Step 1 and \eqref{eq control recu U}, we obtain
\begin{align*}
 \esp{|U(t,\xi,\law{\xi})-U(t,\xi,\law{\xi'})|^2}^{1/2} \le 2\Lambda\esp{|\xi-\xi'|^2}^{1/2},
\end{align*}
which at the level of the gradient says (choosing $\xi'-\xi = h \chi$, letting $h$ tend to $0$ and applying Fatou's lemma)
\begin{equation}
\label{eq control lip 1}
\forall \chi \in L^2(\Omega,\cA,\P;\R^d), \quad
\E \Bigl[ \hat{\E} \bigl[ \partial_{\mu} U(t,\xi,\law{\xi})(\langle \xi \rangle) \langle \chi \rangle \bigr]^2 \Bigr]^{1/2}
\le 2 \Lambda \E [ \vert \chi \vert^2]^{1/2}.
\end{equation}
This control is weaker than \eqref{eq control lip 0}. In order to get 
\eqref{eq control lip 0},
the strategy is to apply, on some small interval $[t,S]$, the results proved in 
Section \ref{se smoothness} on the 
first-order differentiability of $U$ with respect to the measure. Assuming that $\xi$ is $\cF_{t}$ measurable, we make use of Lemma 
\ref{lem:1storder:diff}
 but on the interval 
$[t,S]$ and with $g$ replaced by $U(S,\cdot,\cdot)$, the value of $S$ being specified next. In the backward component of 
the system of the type
\eqref{eq:sys:linear:mkv} satisfied by the derivative process $(\partial_{\chi} X^{t,\xi}_{s},
\partial_{\chi} Y^{t,\xi}_{s},\partial_{\chi} Z^{t,\xi}_{s})_{s \in [t,S]}$,
 the 
boundary condition 
reads as
\begin{equation*}
\partial_{\chi} Y_{S}^{t,\xi} 
= \partial_{x} U
\bigl(S,X_{S}^{t,\xi},[X_{S}^{t,\xi}]\bigr) \partial_{\chi} X_{S}^{t,\xi}+ 
\hat{\E} \bigl[ 
\partial_{\mu} U \bigl(S,X_{S}^{t,\xi},[X_{S}^{t,\xi}]\bigr)
\bigl( \langle  X_{S}^{t,\xi} \rangle
\bigr)
\langle \partial_{\chi} X_{S}^{t,\xi}\rangle \bigr]. 
\end{equation*}
Now, by the a priori bound \eqref{eq control lip 1} and Step 1, we get that
\begin{equation}
\label{eq:lip L2}
{\mathbb E} \bigl[ \vert \partial_{\chi} Y_{S}^{t,\xi} \vert^2 \bigr]^{1/2}
\leq C{\mathbb E} \bigl[ \vert \partial_{\chi} X_{S}^{t,\xi} \vert^2 
\bigr]^{1/2}  \,.
\end{equation}
Above and in the computations below, the constant  $C$ may change from 
line to line, it depends on the parameters in assumptions and, importantly, is uniform with 
respect to $0 \leq T_{0}  \leq t \leq S \leq  T$. The bound \eqref{eq:lip L2} reads as a 
Lipschitz bound (in $L^2$), with a constant $C$.

We can make use of 
\eqref{eq:co:bound:sys:lin:mkv:easy:2}
in Corollary 
 \ref{co:bound:sys:lin:mkv:easy}, 
 with $p=1$, $\gamma\leq 1/\Gamma_{1}$, $g_{\ell} \equiv 0$, $\hat{G}_{\ell} \equiv 0$, $G_{a}(S) =  \partial_{\chi} Y_{S}^{t,\xi}$ (which is to say, in rough 
terms, that 
 we put the whole terminal condition in the remainder) and $[0,T]$ replaced by 
 $[t,S]$. 
The remainder term $\E[{\mathcal R}_{a}^2]$
is thus equal to $\gamma^{1/2}
\E[ \vert  \partial_{\chi} Y_{S}^{t,\xi} \vert^2]$, 
which is less than $C \gamma^{1/2} {\mathbb E} [ \vert \partial_{\chi} X_{S}^{t,\xi} 
\vert^2]$.
Therefore, choosing $\Gamma_{1} C \gamma^{1/2} =1/2$, we have, for 
$S-t \le c:=c(L)$,
\begin{equation}
\label{eq:lip L2:2}
\E \biggl[ \sup_{s \in [t,S]} \bigl( 
\vert 
\partial_{\chi} X_{s}^{t,\xi} \vert^2 +
\vert 
\partial_{\chi} Y_{s}^{t,\xi} \vert^2
\bigr) + \int_{t}^S \vert \partial_{\chi} Z_{s}^{t,\xi} \vert^2 \ud s
\biggr]^{1/2} \leq C \|\chi \|_{2}.
\end{equation}

Now, consider the derivative process of the non McKean-Vlasov system  \eqref{eq X-Y t,x,mu}. 
It satisfies a forward-backward system of the type \eqref{eq:sys:linear:mkv}.  The boundary 
condition in the backward component may be expressed as
\begin{equation*}
\begin{split}
\partial_{\chi} Y_{S}^{t,x,[\xi]} 
&= \partial_{x} U
\bigl(S,X_{S}^{t,x,[\xi]},[X_{S}^{t,\xi}]\bigr) \partial_{\chi} X_{S}^{t,x,\xi}
+ 
\hat{\mathbb E} \bigl[ \partial_{\mu} U\bigl( 
S,X_{S}^{t,x,[\xi]},[X_{S}^{t,\xi}]\bigr)
(\langle X_{S}^{t,\xi} \rangle)
\langle \partial_{\chi} X_{S}^{t,\xi}\rangle \bigr]. 
\end{split}
\end{equation*}
Under the notations 
\eqref{splitting:H}
and
\eqref{eq:mkv:stability:decomposition},
the above writing reads as the decomposition of the terminal condition in 
the form
$g_{\ell}(X_{S}^{t,x,[\xi]},[X_{S}^{t,\xi}]) = 
\partial_{x} U
(S,X_{S}^{t,x,[\xi]},[X_{S}^{t,\xi}])$, 
$\hat{G}_{\ell} \equiv 0$ and $G_{a}(S)=\hat{\E}[
\partial_{\mu} U( S,X_{S}^{t,x,[\xi]},[X_{S}^{t,\xi}]) \langle \partial_{\chi} X_{S}^{t,\xi} \rangle]$. 
We can apply once again 
 Corollary \ref{co:bound:sys:lin:mkv:easy}, with $p=1$,
 $\vartheta = \partial_{\chi} \theta^{t,x,[\xi]}$,
 $\hat \vartheta = \partial_{\chi} \theta^{t,\xi}$
 and $B$, $\Sigma$ and $F$ given by 
 \eqref{eq:x,xi:H1}. 
 Recalling that $\partial_{x} U$ is bounded by $\Lambda$,
we get for $S-t \le \tilde{c} := 
\tilde{c}(\Lambda \vee L)$,
\begin{equation*}
\begin{split}
\vert \partial_{\chi} Y_{t}^{t,x,[\xi]} \vert  &\leq C 
\Bigl( {\mathbb E} \bigl[
\bigl\vert  
\hat{\mathbb E} [ \partial_{\mu} U( S,X_{S}^{t,x,[\xi]},[X_{S}^{t,\xi}])
(\langle X_{S}^{t,\xi} \rangle)
\langle \partial_{\chi} X_{S}^{t,\xi}\rangle]
\bigr\vert^2 \bigr]^{1/2}
\\
&\hspace{15pt}
+ \E \bigl[ \sup_{s \in [t,S]} \bigl( 
\vert 
\partial_{\chi} X_{s}^{t,\xi} \vert^2 +
\vert 
\partial_{\chi} Y_{s}^{t,\xi} \vert^2
\bigr) 
\bigr]^{1/2 }\Bigr), 
\end{split}
\end{equation*}
the second part coming from the remainder term $H_{a}$
in \eqref{eq:x,xi:H1} when $H=B,\Sigma,F$. 

Therefore, 
from the relationship $\partial_{\chi} Y_{t}^{t,x,[\xi]}
= \hat{\E}[\partial_{\mu} U(t,x,[\xi])(\cc{\xi}) \chi]$
and
from \eqref{eq:lip L2:2}, we get
\begin{equation*}
\hat{\mathbb E} \bigl[ 
\vert \partial_{\mu} U(t,x,[\xi])(\cc{\xi}) \vert^2 \bigr]^{1/2} 
\leq 
C \Bigl( 1 + {\mathbb E} \hat{\mathbb E} \bigl[ 
\vert \partial_{\mu} U\bigl(S,X_{S}^{t,x,[\xi]},
[X_{S}^{t,\xi}]\bigr)(\cc{X_{S}^{t,\xi}}) \vert^2 \bigr]^{1/2}
\Bigr). 
\end{equation*}
We deduce 
\begin{equation*}
\begin{split}
&\sup_{x \in \R^d,\xi \in L^2(\Omega,{\mathcal F}_{t},\P;\R^d)}\hat{\mathbb E} 
\bigl[ 
\vert \partial_{\mu} U(t,x,[\xi])(\cc{\xi}) \vert^2 \bigr]^{1/2} 
\\
&\hspace{15pt}\leq 
C \Bigl( 1 + \sup_{x \in \R^d,\xi \in L^2(\Omega,{\mathcal 
A},\P;\R^d)}  \hat{\mathbb E} \bigl[ 
\vert \partial_{\mu} U\bigl(S,x,
[\xi]\bigr)(\cc{\xi}) \vert^2 \bigr]^{1/2} \Bigr).
\end{split}
\end{equation*}
Since the terms in the suprema only depend on the law of $\xi$, we can assume
that the supremum in the left-hand side is taken over $\xi \in L^2(\Omega,\cA,\P;\R^d)$. 
Assuming without any loss of generality that $C \geq 1$ and
iterating the inequality, we get 
\begin{equation*}
\begin{split}
&1 + \sup_{x \in \R^d,\xi \in L^2(\Omega,\cA,\P;\R^d)}\hat{\mathbb 
E} \bigl[ 
\vert \partial_{\mu} U(t,x,[\xi])(\cc{\xi}) \vert^2 \bigr]^{1/2} 
\\
&\hspace{4cm}\leq 
2C  \Bigl( 1 + \sup_{x \in \R^d,\xi \in L^2(\Omega,\cA,\P;\R^d)}  \hat{\mathbb E} \bigl[ 
\vert \partial_{\mu} U\bigl(S,x,
[\xi]\bigr)(\cc{\xi}) \vert^2 \bigr]^{1/2} \Bigr) 
 \\
&\hspace{4cm} \leq (2C)^n \Bigl( 1 + 
 \sup_{x \in \R^d,\xi \in L^2(\Omega,\cA,\P;\R^d)}  \hat{\mathbb E} 
\bigl[ 
\vert \partial_{\mu} g\bigl(x,
[\xi]\bigr)(\cc{\xi}) \vert^2 \bigr]^{1/2} \Bigr), 
\end{split}
\end{equation*}
with $n = \lceil (T-t)/\tilde{c} \rceil$. Recalling the notation 
$L$ in \HYP{0}(i), we deduce that  
\begin{equation*}
\sup_{x \in \R^d,\xi \in L^2(\Omega,{\mathcal A},\P;\R^d)}\hat{\mathbb E} 
\bigl[ 
\vert \partial_{\mu} U(t,x,[\xi])(\cc{\xi}) \vert^2 \bigr]^{1/2} 
\leq L C^{T/\tilde{c}+1}, 
\end{equation*}
which proves \eqref{eq control lip 0} and thus completes the proof. 
\eproof

\begin{Proposition}
\label{prop:iteration weak tentative}
Assume that $b$, $f$, $\sigma$ and $g$ satisfy 
\HYP{2} 
and that the statement \HYP{3} 
holds true. Then
 there exists a mapping $U : [0,T] \times \R^d \times {\mathcal P}_{2}(\R^d)
\ni (t,x,\mu) \mapsto U(t,x,\mu) \in \R^m
$, Lipschitz continuous in $(x,\mu)$, uniformly in $t \in [0,T]$, such that, for all 
$t \in [0,T]$ and $\xi \in L^2(\Omega,\cF_{t},\P;\R^d)$, 
\begin{equation*}
Y_{s}^{t,\xi} = U\bigl(s,X_{s}^{t,\xi},[X_{s}^{t,\xi}]\bigr).
\end{equation*}
Moreover, $U$ belongs to $\bigcup_{\beta \geq 0} \cD_{\beta}$
and satisfies the master equation \eqref{eq:master:PDE}.
\end{Proposition}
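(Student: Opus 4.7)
The plan is to implement the backward recursion sketched in the opening of the subsection, using Assumption \HYP{3} together with Lemma \ref{le control lip U} to prevent the blow-up of the Lipschitz constant along the induction. The whole point is that, under \HYP{3}, the Lipschitz constant of $U(t,\cdot,\cdot)$ can be controlled uniformly in $t$ by a constant $\tilde{\Lambda}$ that depends only on $L$, $\Lambda$ and $T$, but \emph{not} on the current depth of the recursion. Once this uniform Lipschitz bound is in hand, the length of the time-step in the short-time existence result of Theorem \ref{main:thm:short:time} can be chosen uniformly, depending only on $L \vee \tilde{\Lambda}$, and the induction closes in finitely many steps.

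More precisely, I would proceed as follows. First, applying Theorem \ref{main:thm:short:time} on $[T-\delta_0,T]$ with $\delta_0 := c(L)$ gives the decoupling field $U$ on this interval, with $U \in \cD_{\alpha+1}([T-\delta_0,T])$; in particular $U(T-\delta_0,\cdot,\cdot)$ satisfies the assumptions of Theorem \ref{main:thm:short:time} with exponent $\alpha+1$ in place of $\alpha$. At this point the key observation is that \HYP{3} provides, for any $t \in [T-\delta_0,T]$ and any $\xi,\xi'\in L^2(\Omega,\cF_t,\P;\R^d)$, the diagonal estimate
\[
\E\bigl[\,|U(t,\xi,[\xi]) - U(t,\xi',[\xi'])|^2\,\bigr]^{1/2}
 = \E\bigl[\,|Y_t^{t,\xi} - Y_t^{t,\xi'}|^2\,\bigr]^{1/2}
 \le \Lambda\,\E\bigl[\,|\xi-\xi'|^2\,\bigr]^{1/2},
\]
which is exactly \eqref{eq control recu U}. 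Invoking Lemma \ref{le control lip U}, this upgrades to the genuine Lipschitz bound
\[
|U(t,x,\mu) - U(t,x',\mu')| \le \tilde{\Lambda}\bigl(|x-x'| + W_2(\mu,\mu')\bigr),
\]
with a constant $\tilde{\Lambda}$ depending only on $L$, $\Lambda$ and $T$.

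Next, I would run the backward induction. Suppose that $U$ has been constructed on $[T_n,T]$ with $T_n = T - (\delta_0 + \cdots + \delta_n)$ and belongs to $\cD_{\alpha+n+1}([T_n,T])$, with uniform Lipschitz constant at most $\tilde{\Lambda}$ in the variables $(x,\mu)$. Taking $U(T_n,\cdot,\cdot)$ as the new terminal condition and applying Theorem \ref{main:thm:short:time} again, one obtains the extension of $U$ on an interval of length
\[
\delta_{n+1} := c\bigl(L \vee \tilde{\Lambda}\bigr),
\]
where the Lipschitz constant driving the small time constant in Theorem \ref{main:thm:short:time} is now $L \vee \tilde{\Lambda}$. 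The crucial point is that $\delta_{n+1}$ is independent of $n$; hence after finitely many iterations the whole interval $[0,T]$ is covered. On each new slice $[T_{n+1},T_n]$ the extended $U$ belongs to $\cD_{\alpha+n+2}$, so that the global $U$ lies in $\bigcup_{\beta\ge 0}\cD_\beta$. Uniqueness of the short-time decoupling field at each step gives the representation $Y_s^{t,\xi} = U(s,X_s^{t,\xi},[X_s^{t,\xi}])$, and the master equation \eqref{eq:master:PDE} is satisfied on each slice by Theorem \ref{main:thm:short:time}, hence on $[0,T]$ in its entirety by continuity at the junction points $T_n$ (using the time-continuity of the derivatives of $U$ in the class $\cD_\beta$).

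The main obstacle, and the reason the preceding sections were devoted to identifying the class $\bigcup_\beta \cD_\beta$ rather than a single $\cD_\beta$, is bookkeeping the deterioration of the exponent $\alpha$ at each step of the recursion: the local Lipschitz exponent of the derivatives in the direction of the measure grows by one every time we iterate Theorem \ref{main:thm:short:time}, but fortunately the small-time existence constant $c(L \vee \tilde{\Lambda})$ does not depend on $\alpha$. The second delicate point is making sure that \HYP{3} really does transmit to every subinterval $[T_n,T]$; but this is built into the statement of \HYP{3}, which asks for unique solvability of \eqref{eq X-Y t,xi} for \emph{every} initial time $t \in [0,T]$ with a uniform Lipschitz constant $\Lambda$ on the initial condition, so that the diagonal estimate \eqref{eq control recu U} propagates automatically, and the uniform bound $\tilde\Lambda$ furnished by Lemma \ref{le control lip U} remains valid at every stage.
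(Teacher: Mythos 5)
Your proposal is correct and follows essentially the same route as the paper: a backward induction with a uniform step size $c(L\vee\tilde{\Lambda})$, where $\tilde{\Lambda}$ is obtained once and for all from \HYP{3} via \eqref{eq control recu U} and Lemma \ref{le control lip U}, the exponent deterioration $\alpha\mapsto\alpha+1$ being absorbed by working in $\bigcup_{\beta}\cD_{\beta}$. The only (minor) point the paper spells out more explicitly is the gluing step: the identity $Y^{t,\xi}_{T-n\delta}=Y^{T-n\delta,X^{t,\xi}_{T-n\delta}}_{T-n\delta}$, deduced from the global uniqueness in \HYP{3}, is what identifies the restriction of the global solution with the short-time solution on each slice and hence justifies $U(t,\xi,[\xi])=Y^{t,\xi}_t$ before \eqref{eq:weak lip} can be converted into \eqref{eq control recu U}.
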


\proof The proposition is proved by induction. 
Given a large integer $N \geq 1$ (the value of which is fixed below), 
let $\delta = T/N$. 
The induction hypothesis reads, for $n \in \{1,\dots,N\}$:

$(\mathcal{I}_n):$ 
There exists a mapping $U : [T-n \delta,T] \times \R^d \times \cP_{2}(\R^d)
\ni (t,x,\mu) \mapsto U(t,x,\mu) \in \R^m$
that 
belongs to $\bigcup_{\beta \geq 0} \cD_{\beta}([T-n\delta,T])$ 
such that

\textit{(i)}
 for any $t \in [T-n \delta,T]$, the function 
 $U(t,\cdot,\cdot)$ satisfies
 the same assumption as $g$ in 
 \HYP{0}(i), \HYP{1} \HYP{2}, 
 but with the constant $L$ replaced by $\tilde{\Lambda}$ coming
 from Lemma \ref{le control lip U} 
($\tilde{L}$ and $\alpha$ being replaced by some 
$\tilde{L}_{n}$ and $\tilde{\alpha}_{n}$);

\textit{(ii)} $U$ satisfies the master PDE \eqref{eq:master:PDE} on $[T-n\delta,T] \times \R^d \times 
\cP_{2}(\R^d)$

\textit{(iii)} 
for all $t \in [T-n \delta,T]$ and $\xi \in L^2(\Omega,\cF_{t},\P;\R^d)$,
$Y^{t,\xi}_t = U(t,\xi,[\xi])$. 
\vspace{5pt}

 \emph{Step 1.} In this step, we first specify the value of $N$ and we prove 
that $(\cI_1)$ is satisfied.

First, notice that $\tilde{\Lambda}$ in Lemma \ref{le control lip U}
may be assumed to be larger than $L$ in \HYP{0}(i), \HYP{1}
and \HYP{2}.
We then choose $N$ as the smallest integer such that 
$\delta := T/N \leq c(\tilde{\Lambda})$, where $c$ is given by 
Theorems \ref{thm:4:1} and \ref{prop:partial:C^2:u} (or more precisely by the minimum of
the $c$'s in these two statements). 
For $T-t \le \delta$, we know that, for any 
$x \in \R^d$ and $\mu \in {\mathcal P}_{2}(\R^d)$, the system 
\eqref{eq X-Y t,x,mu} has a unique solution 
$(X^{t,x,\mu}_{s},Y^{t,x,\mu}_{s},Z^{t,x,\mu}_{s})_{s \in [t,T]}$
and, by Theorems \ref{thm:4:1} and \ref{prop:partial:C^2:u}, 
$U$ belongs to $\bigcup_{\beta \geq 0} \cD_{\beta}([T-\delta,T])$
and satisfies the master equation on $[T-\delta,T] \times \R^d \times \cP_{2}(\R^d)$. 
 
Now, by Corollary 1.5 in 
\cite{del02} (which holds true for small time horizons), we can replace $x$ by a square-integrable 
${\mathcal F}_{t}$-measurable random initial condition $\xi$ in \eqref{eq X-Y 
t,x,mu}.
With obvious notations, it must satisfy $Y_{t}^{t,\xi,\mu} = U(t,\xi,\mu)$. Choosing $\xi$ with 
distribution $\mu$, we deduce from uniqueness in small time
to the system
\eqref{eq X-Y t,xi} that  
$(X^{t,\xi,\mu}_{s},Y^{t,\xi,\mu}_{s},Z^{t,\xi,\mu}_{s})_{s \in [t,T]}$ coincides with
$(X^{t,\xi}_{s},Y^{t,\xi}_{s},Z^{t,\xi}_{s})_{s \in [t,T]}$. Indeed, 
$(X^{t,\xi}_{s},Y^{t,\xi}_{s},Z^{t,\xi}_{s})_{s \in [t,T]}$ solves 
\eqref{eq X-Y t,x,mu} with $x$ replaced by $\xi$ and the system \eqref{eq X-Y 
t,x,mu} has a unique solution.  Therefore, we deduce that, with probability 1, 
\begin{equation*}
Y^{t,\xi}_t = U(t,\xi,[\xi]), \; \text{for all} \quad t \in [T-\delta,T].
\end{equation*} 
By \HYP{3}, $U$ satisfies \eqref{eq control recu U} so that, by Lemma 
\ref{le control lip U}, $(\cI_1)$ is indeed satisfied.
\vspace{5pt}

\emph{Step 2} Assume that, for some $n \in \{1,\dots, 
N-1\}$, $(\cI_n)$ holds true. 

For any  $t \in [T-(n+1)\delta,T]$ and $\xi \in L^2(\Omega,\cF_t,\P;\R^d)$, we 
consider again the forward-backward system \eqref{eq X-Y t,xi}. 
By \HYP{3}, it admits a unique solution.
In particular, by the uniqueness property guaranteed by \HYP{3}, it must hold that 
\begin{equation}
\label{eq:induction:1}
Y_{T-n\delta}^{t,\xi}=Y_{T-n\delta}^{T-n 
\delta, X^{t,\xi}_{T-n \delta}}.
\end{equation} 
By the induction hypothesis,  
$Y_{T-n \delta}^{t,\xi}$ must have the form 
$$Y_{T-n \delta}^{t,\xi}=U\bigl(T-n\delta,X^{t,\xi}_{T-n \delta}, \law{X^{t,\xi}_{T-n 
\delta}}\bigr).$$
Therefore, we now consider \eqref{eq X-Y t,xi}
but on $[T-(n+1)\delta,T-n\delta]$, 
with 
$U(T-n\delta,\cdot,\cdot)$ as terminal boundary condition. 
By the induction hypothesis, we know that $U(T-n\delta,\cdot,\cdot)$
is $\tilde{\Lambda}$-Lipschitz continuous, so that
existence and uniqueness to \eqref{eq X-Y t,xi} with 
$U(T-n\delta,\cdot,\cdot)$ as terminal boundary condition
hold true. 
This permits to extend the definition of $U$
to $[T-(n+1)\delta,T-n \delta]$. By Theorems
\ref{thm:4:1} and \ref{prop:partial:C^2:u}, 
the extension of $U$ belongs to $\bigcup_{\beta \geq 0}
\cD_{\beta}([T-(n+1)\delta,T-n\delta])$ and 
thus to $\bigcup_{\beta \geq 0}
\cD_{\beta}([T-(n+1)\delta,T])$. Moreover, it satisfies
the master equation on $[T-(n+1)\delta,T] \times \R^d \times \cP_{2}(\R^d)$. 

Consider now 
the restriction of the global 
solution $(X^{t,\xi}_{s},Y_{s}^{t,\xi},Z^{t,\xi}_{s})_{s \in [t,T]}$
to the  small interval
$[T-(n+1)\delta,T-n \delta]$. 
By \eqref{eq:induction:1}, it must coincide with the short time solution constructed
on $[t,T-n\delta]$ with $U(T-n\delta,\cdot,\cdot)$ as terminal boundary conditions. 
By the same 
arguments as in Step 1, we thus get that 
\begin{align*}
 Y^{t,\xi}_t = U(t,\xi,\law{\xi})
\end{align*}
with probability one. 
This shows that $U$ satisfies \eqref{eq control recu U} and applying Lemma 
\ref{le control lip U}, we get that 
$(\cI_{n+1})$ is satisfied. 
\eproof

\subsection{Mean-field games}
\label{subse mfg}

\subsubsection{General set-up}
\label{subse:MFG:general:set:up}

Mean-field games were introduced simultaneously by Lasry and Lions 
\cite{MFG1,MFG2,MFG3} and by Huang, Caines and Malham\'e \cite{HuangCainesMalhame2}. Their purpose is to describe
asymptotic Nash equilibria within large population of controlled agents interacting with one another through
the empirical distribution of the system. When players are driven by similar dynamics and subject to similar cost functionals,
asymptotic equilibria are expected to obey some propagation of chaos, limiting the analysis of the whole population to the analysis of one single player and thus reducing the complexity 
in a drastic way.  

The dynamics of one single player read as
\begin{equation}
\label{eq:controlled SDE}
\ud X_{t} = b(X_{t},\mu_{t},\alpha_{t}) \ud t  + {\sigma(X_{t},\mu_{t})} \ud W_{t}, \quad t \in [0,T],
\end{equation}
for some possibly random initial condition $X_{0}$, where $(W_{t})_{t \in [0,T]}$
is an $\R^{d}$-valued Brownian motion and $b: \R^d \times {\mathcal P}_{2}(\R^d)
\times \R^k \rightarrow \R^d$ and $\sigma : \R^d \times \cP_{2}(\R^d)$
are
 Lipschitz-continuous on the model of \HYP{0}(i). 
Above, $(\alpha_{t})_{t \in [0,T]}$ 
denotes the control process. It takes values in $\R^k$ and is assumed to be progressively-measurable and to satisfy:
\begin{equation*}
{\mathbb E} \int_{0}^T \vert \alpha_{t} \vert^2 \ud t < + \infty.  
\end{equation*}
The family $(\mu_{t})_{t \in [0,T]}$ denotes an arbitrary flow of probability measures in 
${\mathcal P}_{2}(\R^d)$. It is intended to describe the statistical equilibrium of the game,
the notion of equilibrium being defined according to some cost functional
\begin{equation*}
J\bigl( (\alpha_{t})_{t \in [0,T]} \bigr) 
= {\mathbb E} \biggl[ G(X_{T},\mu_{T}) + \int_{0}^T F(X_{t},\mu_{t},\alpha_{t}) \ud t 
\biggr],
\end{equation*}
and being actually given by the solution of a fixed point problem,
the description of which is taken from \cite{carmona:delarue:sicon}:

(i) Given 
the family $(\mu_{t})_{t \in [0,T]}$, solve the optimization problem
\begin{equation*}
\inf_{(\alpha_{t})_{t \in [0,T]}} J\bigl( (\alpha_{t})_{t \in [0,T]} \bigr). 
\end{equation*}
Assume that the optimal path is uniquely defined and denote it by 
$(\hat{X}_{t}^{(\mu_{s})_{s \in [0,T]}})_{t \in [0,T]}$. \label{MFG:page}

(ii) Find $(\mu_{s})_{s \in [0,T]}$ such that  
$[\hat{X}_{t}^{(\mu_{s})_{s \in [0,T]}}]=\mu_{t}$ for all $t \in [0,T]$. 
\vspace{5pt}

{
Generally speaking, there are two ways 
to characterize the optimal paths in 
(i) by means of an FBSDE. The first one is to 
represent the value function of the optimization problem (i) as 
the decoupling field of a forward-backward system, in which 
case equilibria solving (ii) may be described
through a McKean-Vlasov FBSDE along the lines 
of 
\cite{carmona:lacker}. Another way is to make use 
of the stochastic Pontryagin principle
to represent directly the optimal path 
in (i) as the forward component of the solution of a forward-backward system, in which 
case equilibria solving 
(ii) may be described through a McKean-Vlasov FBSDE along 
the lines of \cite{carmona:delarue:sicon}.
When using the stochastic Pontryagin principle, the decoupling field of the underlying forward-backward system is then understood 
as the gradient of the value function of the optimization problem (i).}

{
Here we are willing to show that, in both cases, 
the decoupling field of the McKean-Vlasov FBSDE used to characterize equilibria of the game is indeed a classical solution 
of a master PDE of the type 
\eqref{eq:master:PDE}
and, then, to make the connection with the so-called 
\textit{master equation} presented in Lions' lectures 
at the \textit{Coll\`ege de France}. In each case, 
we exhibit sufficient conditions under which the master PDE is 
solvable for an arbitrary time horizon $T$. 
In short, the two types of representation apply under slightly different assumptions. 
The direct representation of the value function 
is well-fitted to cases when $\sigma$ is uniformly non-degenerate, 
since standard theory for uniformly parabolic semilinear PDEs then applies. 
The stochastic Pontryagin principle is more adapted to cases when the 
underlying Hamiltonian is convex in both the space and control variables, 
$\sigma$ being possible degenerate. 
In both cases, we shall implement the Lasry-Lions monotonicity condition, 
see \HYP{4}(iii) below,
in order to investigate the Lipschitz property of the
solution of the corresponding master PDE in the direction of the measure. 
}

\subsubsection{Use of the Stochastic Pontryagin Principle}
\textcolor{black}{We first explain how things work when 
using the stochastic Pontryagin principle in order to characterize the optimal paths in (i). 
Then,
following \cite{carmona:delarue:sicon},
 the 
matching problem (ii) is solved by forcing the forward component of the FBSDE derived from 
the Pontryagin principle to have $(\mu_{t})_{t \in [0,T]}$ as marginal laws.} The resulting system
becomes ($(Y_{s})_{s \in [t,T]}$ being seen as a row vector process)
\begin{equation}
\label{eq:FBSDE:pontryagin:MFG}
\begin{split}
&\ud X_{t} = b \bigl(X_{t},[X_{t}],\hat{\alpha}(X_{t},[X_{t}],Y_{t}) \bigr) \ud t + \sigma(X_{t},[X_{t}]) \ud W_{t}
\\
&\ud Y_{t} = - \partial_{x} H \bigl(X_{t},[X_{t}],Y_{t},\hat{\alpha}(X_{t},[X_{t}],Y_{t}) \bigr) \ud t + Z_{t} \ud W_{t},
\end{split}
\end{equation}
with the boundary condition $Y_{T} = \partial_{x} G(X_{T},[X_{T}])$, where $H$ denotes the so-called
\textit{extended Hamiltonian} of the system:
\begin{equation}
\label{eq:hamiltonian}
H(x,\mu,y,\alpha) = y^\dagger b(x,\mu,\alpha) + F(x,\mu,\alpha),
\quad x,y \in \R^d, \ \alpha \in \R^k, \ \mu \in {\mathcal P}_{2}(\R^d),
\end{equation}
and  
$\hat{\alpha}(x,\mu,y)$ denotes the minimizer:
\begin{equation}
\label{eq:minimizer}
\hat{\alpha}(x,\mu,y) = \textrm{argmin}_{\alpha} H(x,\mu,y,\alpha). 
\end{equation} 
We shall specify below assumptions under which the minimizer is indeed well-defined. For the moment, 
we concentrate on the regularity properties we need on the coefficients. 
As we aim at applying Proposition 
\ref{prop:iteration weak tentative}, we let:

\begin{Assumption}[\HYP{4}(i)]
The running cost $F$ may be decomposed as 
\begin{equation}
\label{eq:decomposition}
F(x,\mu,\alpha) = F_{0}(x,\mu) + F_{1}(x,\alpha), \quad x \in \R^d, \ 
\mu \in {\mathcal P}_{2}(\R^d), \ \alpha \in \R^k,
\end{equation}
the function $F_{1}$ being three times differentiable, with bounded and Lipschitz-continuous 
derivatives of order $2$ and $3$. 
The functions $F_{0}$ and $G$ 
are locally Lipschitz continuous in $x$ and $\mu$, 
the Lipschitz constant being at most of linear growth in 
$\vert x \vert$ and in $(\int_{\R^d} \vert x' \vert^2 \ud \mu(x'))^{1/2}$. 
Moreover, $F_{0}$ and $G$
are differentiable with respect to $x$
 and the coefficients $f_{0}=\partial_{x} F_{0}$ and $g=\partial_{x} G$ 
are Lipschitz in $(x,\mu)$ and satisfy
\HYP{1} and \HYP{2} with $h=f_{0},g$ and $w=x$.

In particular, there exists a constant $C$ such that,
for all $x \in \R^d$, 
$\mu \in \cP_{2}(\R^d)$,
$\alpha \in \R^k$,
\begin{equation}
\label{eq:ch:5:quadratic:growth}
\begin{split}
&\vert G(x,\mu) \vert \leq C 
\Bigl[ 1 + \vert x \vert ^2 
+ \int_{\R^d} \vert x' \vert^2 
d\mu(x') 
\Bigr], 
\\
&\vert 
F_{0}(x,\mu) 
\vert 
+ \vert F_{1}(x,\alpha) \vert \leq C 
\Bigl[ 1 + 
\vert x \vert^2 
+ \int_{\R^d}
\vert x' \vert^2 
d\mu(x') 
+ \vert \alpha \vert^2 
\Bigr]. 
\end{split}
\end{equation}
\end{Assumption}
Actually, the decomposition \eqref{eq:decomposition} is motivated by the uniqueness criterion we use below. 
We introduce it now and not later since 
it makes the exposition of the regularity assumption much simpler. 
The growth conditions on the Lipschitz constant of the derivatives are motivated by 
the typical example 
when $F$ and $G$
have a quadratic structure 
in $x$ and $\alpha$ (see \cite{carmona:delarue:lachapelle}). 

The reader may notice that nothing is said about the smoothness of 
$b$ and $\sigma$. The reason is the following. 
Generally speaking, the uniqueness of the minimizer in \eqref{eq:minimizer}
is ensured under strict convexity of the Hamiltonian in the direction 
$\alpha$, but, for our purpose, we will use more. We indeed require 
the \textit{full extended Hamiltonian}
\begin{equation*}
H'(x,\mu,y,z,\alpha) = H(x,\mu,y,\alpha) + {\rm Trace} \bigl( z \sigma(x,\mu) \bigr), 
\end{equation*}
for $x \in \R^d$, $\mu \in \cP_{2}(\R^d)$,
$y \in \R^d$, $z \in \R^{d \times d}$, $\alpha \in \R^k$,
to be convex in $(x,\alpha)$, namely  
\begin{align}
H'(x',\mu,y,z,\alpha') - H'(x,\mu,y,z,\alpha) &- \langle x'-x, \partial_{x} H'(x,\mu,y,z,\alpha)
\rangle 
 \nonumber
\\
&
- \langle \alpha' - \alpha,\partial_{\alpha} H'(x,\mu,y,z,\alpha) \rangle \geq \lambda \vert \alpha' - \alpha \vert^2,
\label{eq:convexity:H}
\end{align}
for some $\lambda >0$. In order to guarantee the convexity of $H$, we must assume
that $b(x,\mu,\alpha)$ is a linear function in $(x,\alpha)$  of the form $b_{0}(\mu) + 
b_{1} x + b_{2}\alpha$,
for some matrices $b_{1} \in \R^{d \times d}$ and $b_{2} \in \R^{d \times k}$ and 
 $b_{0} : {\mathcal P}_{2}(\R^d) \rightarrow \R^d$. Moreover, because of the uniqueness criterion we use below, we shall restrict ourselves 
to the case $b_{0} \equiv 0$ so that the drift reduces to the linear combination 
$b(x,\alpha) = b_{1} x + b_{2} \alpha$.  
\textcolor{black}{Similarly, we must assume that $\sigma(x,\mu)$ is a linear function in $x$, which 
implies that $\sigma$ is independent of $x$ as we need it to be bounded 
(see \HYP{\sigma}). Again, because of
 the uniqueness criterion we use below, we restrict ourselves to 
 the case when $\sigma$ is also independent of $\mu$, namely 
 $\sigma(x,\mu) = \sigma$ for some constant matrix $\sigma$ of dimension 
 $d \times d$. 
 Then, the convexity property 
\eqref{eq:convexity:H} holds provided $F$ satisfies it. 
In particular, 
$H'(x',\mu,y,z,\alpha') - H'(x,\mu,y,z,\alpha)
=H(x',\mu,y,\alpha') - H(x,\mu,y,\alpha)$
so that 
the analysis of the 
\textit{full extended Hamiltonian} $H'$
may be reduced to the analysis of 
the \textit{extended Hamiltonian}
$H$.} 
We thus require 

\begin{Assumption}[\HYP{4}(ii)]
\textcolor{black}{
There exist $b_{1} \in \R^{d \times d}$, $b_{2} \in \R^{d \times k}$
and $\sigma \in \R^{d \times d}$
such that $b(x,\mu,\alpha) = b_{1} x + b_{2} \alpha$ and 
$\sigma(x,\mu)=\sigma$, for any
$x \in \R^d$, $\mu \in \cP_{2}(\R^d)$ and $\alpha \in \R^k$.} Moreover, 
$F$ satisfies \eqref{eq:convexity:H} and the mapping $\R^d \ni x 
\mapsto G(x,\mu) \in \R$
is convex in the $x$-variable for any $\mu \in {\mathcal P}_{2}(\R^d)$.  
\end{Assumption}

We then notice that 
$\hat{\alpha}(x,y,\mu)$ solves the equation:
\begin{equation}
\label{eq:optimizer}
y^\dagger b_{2}  + \partial_{\alpha} F\bigl(x,\mu,\hat{\alpha}(x,\mu,y) \bigr) = 0.  
\end{equation}
Since $\partial_{\alpha} F = \partial_{\alpha} F_{1}$ does not depend upon $\mu$, we deduce
that $\hat{\alpha}(x,\mu,y)$ reduces to $\hat{\alpha}(x,y)$. It is then straightforward to prove from the
implicit function theorem that the mapping $(x,y) \mapsto \hat{\alpha}(x,y)$ is twice differentiable 
with respect to $(x,y)$ with bounded and Lipschitz-continuous derivatives. This says in particular that,
in \eqref{eq:FBSDE:pontryagin:MFG}, there is no McKean-Vlasov interaction in the forward equation.
Moreover, we deduce, by composition, that Assumption 
\HYP{2} is satisfied (and thus 
\HYP{0} and \HYP{1} as well). 
\vspace{5pt}

\noindent \textit{Existence, uniqueness and differentiability of the solution.}
In \cite{carmona:delarue:sicon}, it is proved that \eqref{eq:FBSDE:pontryagin:MFG} admits a unique solution 
provided the following assumption is in force (in addition to \HYP{4}(i) and \HYP{4}(ii)):

\begin{Assumption}[\HYP{4}(iii)]
There exists $c>0$ such that 
\begin{enumerate}
\item 
For all $x \in \R^d$, 
$\vert \partial_{\alpha} F_{1}(x,0) \vert \leq c$,
\item For all $x \in \R^d$, 
$\langle x,\partial_{x} F_{0}(0,\delta_{x}) \rangle 
\geq - c(1+ \vert x \vert)$, 
$\langle x,\partial_{x} G(0,\delta_{x}) \rangle 
\geq - c(1+ \vert x \vert)$,
\end{enumerate}
where $\delta_{x}$ is the Dirac mass at point $x$. 
Moreover, the following Lasry-Lions monotonicity condition is in force:
\begin{equation*}
\int_{\R^d} \bigl( F_{0}(x,\mu) - F_{0}(x,\mu') \bigr) \ud \bigl( \mu - \mu') (x) \geq 0,
\
\int_{\R^d} \bigl( G(x,\mu) - G(x,\mu') \bigr) \ud \bigl( \mu - \mu') (x) \geq 0.
\end{equation*}
\end{Assumption}

Actually, not only existence and uniqueness hold, 
but also the key Lipschitz estimate \eqref{eq:weak lip} is true, 
justifying \HYP{3}. The argument is the same as the one given in 
\cite[Proposition 3.7]{carmona:delarue:sicon} for proving uniqueness. The only 
difference is that initial conditions may be different. More precisely, given $t 
\in [0,T]$ and
two square-integrable ${\mathcal F}_{t}$-measurable random variables
$\xi$ and $\xi'$, the same argument as in \cite{carmona:delarue:sicon}, 
combined with (3.6) therein to take into account  the fact that the initial 
conditions are different,
 shows that 
\begin{equation}
\label{eq:lip alpha MFG}
2 \lambda {\mathbb E} \int_{t}^T \vert \hat{\alpha}(X_{s}^{t,\xi},
Y_{s}^{t,\xi}) - \hat{\alpha}(X_{s}^{t,\xi'},Y_{s}^{t,\xi'})
\vert^2 \ud s \leq {\mathbb E} \bigl[ \langle \xi - \xi',Y_{t}^{t,\xi} - Y_{t}^{t,\xi'} \rangle
\bigr].
\end{equation}
(Here $(X^{t,\xi},Y^{t,\xi},Z^{t,\xi})$ 
and $(X^{t,\xi'},Y^{t,\xi'},Z^{t,\xi'})$ satisfy 
\eqref{eq:FBSDE:pontryagin:MFG}
with $X_{t}^{t,\xi}=\xi$ and $X_{t}^{t,\xi'}=\xi'$.)
Now, it is quite straightforward to see that 
\begin{equation}
\label{eq:lip Y MFG}
\begin{split}
&{\mathbb E} \bigl[ \vert Y_{t}^{t,\xi} - Y_{t}^{t,\xi'} \vert^2 \bigr] 
\\ 
 &\hspace{15pt} \leq C \Bigl( \sup_{s \in [t,T]} 
{\mathbb E} \bigl[ \vert X_{s}^{t,\xi} - X_{s}^{t,\xi'} \vert^2 \bigr] 
+ {\mathbb E}
\int_{t}^T \vert \hat{\alpha}(X_{s}^{t,\xi},Y_{s}^{t,\xi})
- \hat{\alpha}(X_{s}^{t,\xi'},Y_{s}^{t,\xi'})
\vert^2 \ud s \Bigr), 
\end{split}
\end{equation}
and,
\begin{equation}
\label{eq:lip X MFG}
\begin{split}
&\sup_{s \in [t,T]} 
{\mathbb E} \bigl[ \vert X_{s}^{t,\xi} - X_{s}^{t,\xi'} \vert^2 \bigr]
 \\
&\hspace{15pt} \leq C \Bigl( {\mathbb E} \bigl[ \vert \xi - \xi' \vert^2 \bigr] + 
{\mathbb E}
\int_{t}^T \vert \hat{\alpha}(X_{s}^{t,\xi},Y_{s}^{t,\xi})
- \hat{\alpha}(X_{s}^{t,\xi'},Y_{s}^{t,\xi'})
\vert^2 \ud s \Bigr).
\end{split}
\end{equation}
Therefore, from \eqref{eq:lip Y MFG} and \eqref{eq:lip X MFG},
\begin{equation*}
\begin{split}
&{\mathbb E} \bigl[ \vert Y_{t}^{t,\xi} - Y_{t}^{t,\xi'} \vert^2 \bigr] 
 \leq C \Bigl( {\mathbb E} \bigl[ \vert \xi - \xi' \vert^2 \bigr] +
  {\mathbb E}
\int_{t}^T \vert \hat{\alpha}(X_{s}^{t,\xi},Y_{s}^{t,\xi})
- \hat{\alpha}(X_{s}^{t,\xi'},Y_{s}^{t,\xi'})
\vert^2 \ud s \Bigr), 
\end{split}
\end{equation*}
Plugging \eqref{eq:lip alpha MFG} into the above equation, we get \eqref{eq:weak lip}. 
\vspace{5pt}

\noindent \textit{Master equation.}
The fact that \HYP{3} holds permits us to apply Proposition 
\ref{prop:iteration weak tentative}. It follows that the decoupling field $U$ of 
the forward-backward equation \eqref{eq:FBSDE:pontryagin:MFG}
satisfies the corresponding master PDE \eqref{eq:master:PDE}. 

We emphasize that the master PDE that we derive is not the standard master equation in mean-field games theory. Loosely speaking, the master equation in mean-field games is the equation satisfied by $V$, such that $U$ is the gradient of $V$, which stands for the value function of the game, namely 
\begin{equation}
\label{eq:ch:5:value function}
\begin{split}
&V(t,x,\mu) 
\\
&\hspace{15pt} = {\mathbb E} \biggl[ G\bigl(X_{T}^{t,x,\mu},[X_{T}^{t,\xi}]\bigr)
+ \int_{t}^T F\bigl(X_{s}^{t,x,\mu},[X_{s}^{t,\xi}],\hat{\alpha}(X_{s}^{t,x,\mu},Y_{s}^{t,x,\mu}) \bigr) \ud s \biggr],
\quad \xi \sim \mu,
\end{split}
\end{equation} 
in other words
$V(t,x,\mu)$ is the optimal cost when 
the private player is initialized at $x$ 
and 
the equilibrium strategy for the population is initialized at 
$\mu$. (Here $(X^{t,x,\mu},Y^{t,x,\mu},Z^{t,x,\mu})$
solves \eqref{eq X-Y t,x,mu} with the coefficients of 
\eqref{eq:FBSDE:pontryagin:MFG}.)

Now that $U$ is known to belong to $\bigcup_{\beta \geq 0} \cD_{\beta}$, 
we can see $X^{t,\xi}$ and $X^{t,x,\mu}$ as solutions of autonomous forward SDEs
driven by smooth Lipschitz-continuous coefficients (the drift being just obtained by a composition of $b$ with $\alpha(\cdot,U(\cdot,\cdot,\cdot))$). In particular,  
$X^{t,\xi}$ and $X^{t,x,\mu}$ must have the same smoothness properties as in the various 
results of Section \ref{se smoothness}, but for arbitrary time since the backward constraint has been removed. 
Another way to understand that claim is to prove regularity inductively, 
by means of a forward induction, applying successively the results obtained in 
Section \ref{se smoothness} on $[t,T-n\delta]$, $[T-n \delta, T-(n-1)\delta]$, ..., $[T-\delta,T]$, 
for the same $\delta$ as in the proof of Proposition \ref{prop:iteration weak tentative}
and for $n$ such that $t \in [T-(n+1)\delta,T-n\delta)$. The induction is 
then based on the flow property, which says that, for $s \in [T-k \delta,T-(k-1) \delta]$, 
\begin{equation}
\label{eq:ch:5:flow}
X_{s}^{t,\xi} = X_{s}^{T-k \delta,X_{T-k\delta}^{t,\xi}} \quad \text{and}
\quad
X_{s}^{t,x,[\xi]} = X_{s}^{T-k \delta,X_{T-k \delta}^{t,x,[\xi]},[X_{T-k \delta}^{t,\xi}]},
\end{equation}
and, thus, permits the transfer from one interval to another. 

Basically, this permits us to prove that $V$ is smooth in $x$ and $\mu$ by differentiating under the expectation, provided that $G$ and $F_{0}$ are smooth enough in the direction of 
the measure. 
\color{black}
Motivated by the fact that the coefficients are required to satisfy 
the
convexity assumption 
\HYP{4}(ii),
assume for instance that 

\begin{Assumption}[\HYP{4}(iv)]
The functions 
\begin{equation}
\label{eq:quadratic:growth:linear:derivative}
\begin{split}
&\R^d \times \cP_{2}(\R^d) \ni (x,\mu) \mapsto 
\frac{F_{0}(x,\mu)}{\sqrt{1+ \vert x\vert^2 + \int_{\R^d} \vert v\vert^2
\ud \mu(v)}},
\\
&\R^d \times \cP_{2}(\R^d) \ni (x,\mu) \mapsto 
\frac{G(x,\mu)}{\sqrt{1+ \vert x\vert^2 + \int_{\R^d} \vert v\vert^2 \ud \mu(v)}},
\end{split}
\end{equation}
satisfy \HYP{0}{\rm (i)}--\HYP{1}--\HYP{2} (for some values of the 
 parameters therein). 
In particular, $F_{0}$ and $G$ satisfy the same differentiability 
property as in
\HYP{0}{\rm (i)}--\HYP{1}--\HYP{2} but the derivatives 
are locally (instead of globally) controlled.   
\end{Assumption}

Then, 
we can differentiate the representation formula for $V$ as 
we differentiated the backward components of \eqref{eq X-Y t,xi} and \eqref{eq X-Y t,x,mu}
in Section \ref{se smoothness}, up to the slight difference that 
the
derivatives of $G$ and $F(\cdot,\cdot,\hat{\alpha}(\cdot,\cdot))$ 
in $x$, $y$ and $\mu$ may be of linear growth in all the arguments.
The key point to circumvent it is to notice
from \HYP{4}(iv)
 that 
the random variable 
\begin{equation*}
\frac{G(X_{T}^{t,x,\mu},[X_{T}^{t,\xi}])}
{\sqrt{1+\vert X_{T}^{t,x,\mu} \vert^2
+ \| X_{T}^{t,\xi} \|_{2}^2}}
\end{equation*}
satisfies the same first-order and second-order differentiability properties as
$\theta_{T}^{t,x,\xi}$ in Lemmas
\ref{lem:cont:derivatives:x,xi}
 and
\ref{lem:second:t,x,xi}. 
Since all the estimates in 
Lemmas
\ref{lem:cont:derivatives:x,xi}
 and
\ref{lem:second:t,x,xi} hold in $L^2$, it is then 
pretty clear that 
the mapping
\begin{equation*}
\begin{split}
\R^d \times 
\cP_{2}(\R^d)\ni (x,\mu) &\mapsto 
\E \Bigl[
 \sqrt{1+ \vert X_{T}^{t,x,\mu} \vert^2 + \| X_{T}^{t,\xi} \|_{2}^2}
\frac{G(X_{T}^{t,x,\mu},[X_{T}^{t,\xi}])}
{\sqrt{1+ \vert X_{T}^{t,x,\mu} \vert^2 + \| X_{T}^{t,\xi} \|_{2}^2}}
\Bigr]
\\
&= \E \bigl[
G(X_{T}^{t,x,\mu},[X_{T}^{t,\xi}])
\bigr], \quad \textrm{with} \ \  \xi \sim \mu,
\end{split} 
\end{equation*}
satisfies the same assumption as $F_{0}$ and $G$ in \HYP{4}(ii).

We then may proceed in the same way with 
$F(\cdot,\cdot,\hat{\alpha}(\cdot,\cdot))$
instead of $G(\cdot,\cdot)$
(recalling that $F_{1}$
has bounded 
derivatives of order $2$ and $3$, that $\hat{\alpha}$ has bounded
derivatives of order $1$ and $2$ and that 
$U(t,\cdot,\cdot)$ satisfies 
\HYP{0}{\rm (i)}--\HYP{1}--\HYP{2}, 
the values of the parameters therein being uniform 
in $t \in [0,T]$). 

In the spirit of Theorems 
\ref{thm:4:1}
and 
\ref{prop:partial:C^2:u}, this permits to show that, for 
any $t \in [0,T]$, 
$V(t,\cdot,\cdot)$ 
satisfies the same assumption as $F_{0}$ and $G$ in \HYP{4}(ii), 
the parameters that appear 
in \HYP{0}{\rm (i)}--\HYP{1}--\HYP{2}
being uniform
in $t \in [0,T]$.

It thus remains to identify the shape of the master PDE
and, in the same time, to prove the continuity of $V$ and of its derivatives with respect to 
$t$. From the same flow 
property as in 
\eqref{eq:ch:5:flow}, we notice that, 
for any $(t,x,\mu) \in [0,T] 
\times \R^d \times \cP_{2}(\R^d)$
and any 
$s \in [t,T]$, 
\begin{equation*}
\begin{split}
V(t,x,\mu) &=
 {\mathbb E} \biggl[ G\bigl(X_{T}^{s,X_{s}^{t,x,\mu},[X_{s}^{t,\xi}]},
[X_{T}^{s,[X_{s}^{t,\xi}]}]\bigr)
\\
&\hspace{15pt} + \int_{s}^T F\bigl(X_{r}^{s,X_{s}^{t,x,\mu},[X_{s}^{t,\xi}]},
[X_{r}^{s,[X_{s}^{t,\xi}]}],\hat{\alpha}(X_{r}^{s,X_{s}^{t,x,\mu},[X_{s}^{t,\xi}]},
Y_{r}^{s,X_{s}^{t,x,\mu},[X_{s}^{t,\xi}]}) \bigr) \ud r \biggr]
\\
&\hspace{15pt} + 
{\mathbb E} \biggl[ 
\int_{t}^{s} F\bigl(X_{r}^{t,x,\mu},[X_{r}^{t,\xi}],\hat{\alpha}(X_{r}^{t,x,\mu},Y_{r}^{t,x,\mu}) \bigr) \ud r \biggr]
\\
&= 
{\mathbb E} \biggl[ V\bigl(s,X_{s}^{t,x,\mu},[X_{s}^{t,\xi}]\bigr)
+
\int_{t}^{s} F\bigl(X_{r}^{t,x,\mu},[X_{r}^{t,\xi}],\hat{\alpha}(X_{r}^{t,x,\mu},Y_{r}^{t,x,\mu}) \bigr) \ud r \biggr],
\end{split}
\end{equation*}
from
which we may repeat the arguments 
from Theorems 
\ref{thm:4:1},
\ref{prop:partial:C^2:u}
and \ref{main:thm:short:time} (see also Subsection \ref{subse:solution:PDE}). We
finally obtain
\begin{Theorem}
\label{thm:MFG:1}
Under \HYP{4}{\rm (i--iv)}, the function $V(t,\cdot,\cdot)$ 
satisfies
the same assumption as $F_{0}$ and $G$ in 
\HYP{4}{\rm (iv)},
the parameters that appear 
in \HYP{0}{\rm (i)}--\HYP{1}--\HYP{2}
being uniform
in $t \in [0,T]$. 
Moreover, for any $x \in \R^d$ and $\mu \in \cP_{2}(\R^d)$, the function 
$[0,T] \ni t \mapsto V(t,x,\mu)$ is continuously differentiable,
\textcolor{black}{the derivative being 
continuous in  $(t,x,\mu)$}.
For any $x \in \R^d$ and $\xi \in L^2(\Omega,\cA,\P;\R^d)$, the functions
$[0,T] \times \R^d \times L^2(\Omega,\cA,\P;\R^d) \ni (t,x,\xi) \mapsto 
\partial_{\mu} V(t,x,[\xi])(\xi) \in L^2(\Omega,\cA,\P;\R^d)$
and $[0,T] \times
\R^d \times L^2(\Omega,\cA,\P;\R^d) \ni (t,x,\xi)
 \mapsto 
\partial_{v} [
\partial_{\mu} V(t,x,[\xi])](\xi) \in L^2(\Omega,\cA,\P;\R^d)$ are continuous. 
\color{black}

Finally, $V$ satisfies the master equation
\begin{equation}
\label{eq:full master PDE:MFG:1}
\begin{split}
&\partial_{t} V(t,x,\mu) 
+
\partial_{x} V(t,x,\mu)
b \bigl(x,\hat{\alpha}(x,U(t,x,\mu))\bigr)
+ 
F\bigl(x,\mu,\hat{\alpha}(x,U(t,x,\mu))
\bigr)
\\
&\hspace{5pt}
  + \int_{\R^d} 
  \partial_{\mu} V(t,x,\mu)(v) b\bigl(v,\hat{\alpha}(v,U(t,v,\mu)\bigr) \, \ud \mu(v) 
\\
 &\hspace{5pt} +  \frac12{\rm Tr} \biggl[
 \biggl(  
 \partial_{xx}^2 V(t,x,\mu)
 +
 \int_{\R^d} \partial_{v} \bigl(\partial_{\mu} V(t,x,\mu)\bigr)(v) \ud \mu(v)
\biggr) \sigma \sigma^{\dagger} 
\biggr]
=0,
\end{split}
\end{equation}
with $V(T,x,\mu) = G(x,\mu)$ as terminal condition and with $U$
denoting the decoupling field of \eqref{eq:FBSDE:pontryagin:MFG}.
\end{Theorem}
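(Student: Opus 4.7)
The plan is to exploit the fact that, thanks to Proposition \ref{prop:iteration weak tentative} applied to the Pontryagin FBSDE \eqref{eq:FBSDE:pontryagin:MFG}, we already dispose of a decoupling field $U\in\bigcup_{\beta\geq 0}\cD_{\beta}$ that is globally Lipschitz in $(x,\mu)$ and smooth in the sense described there. Once $U$ is plugged in, the forward component of \eqref{eq:FBSDE:pontryagin:MFG} decouples and becomes an autonomous McKean--Vlasov SDE with drift $\tilde b(t,x,\mu):=b(x,\hat\alpha(x,U(t,x,\mu)))$ and constant non-random diffusion $\sigma$. The coefficient $\tilde b$ inherits from $U$, $\hat\alpha$, $b$ the same qualitative regularity as the one required in \HYP{2} (with a possibly larger exponent $\beta$), so Section \ref{se smoothness} applies to the flows $X^{t,x,\mu}$, $X^{t,\xi}$, $Y^{t,x,\mu}$, $Z^{t,x,\mu}$ on every small subinterval of $[0,T]$. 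The first step is then to run a \emph{forward} induction in time, on the grid $0<\delta<2\delta<\dots<T$ used in the proof of Proposition \ref{prop:iteration weak tentative}, transferring the short-time regularity estimates of Lemmas \ref{lem:cont:derivatives:x,xi}--\ref{lem:second:t,x,xi} and Theorem \ref{prop:partial:C^2:u} from one interval to the next by means of the flow identity \eqref{eq:ch:5:flow}. This yields $L^p$ and Lipschitz/partial-$\mathcal{C}^2$ bounds for $(X^{t,x,\mu},Y^{t,x,\mu},Z^{t,x,\mu})$ and their first- and second-order derivatives in $x$, $\mu$, uniformly on $[0,T]$.

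The second step is to differentiate the representation formula \eqref{eq:ch:5:value function}. Writing
\begin{equation*}
V(t,x,\mu)=\E\bigl[G(X^{t,x,\mu}_T,[X^{t,\xi}_T])\bigr]+\E\int_t^T F\bigl(X^{t,x,\mu}_s,[X^{t,\xi}_s],\hat\alpha(X^{t,x,\mu}_s,Y^{t,x,\mu}_s)\bigr)\ud s,
\end{equation*}
one would like to commute differentiation with expectation. The running and terminal costs $F_0$ and $G$ are not globally Lipschitz and their derivatives are allowed to grow linearly in $x$ and $\|\cdot\|_2$. This is precisely what \HYP{4}(iv) is designed for: factorizing $G(x,\mu)=\sqrt{1+|x|^2+\int|v|^2\ud\mu(v)}\,\tilde G(x,\mu)$ with $\tilde G$ satisfying \HYP{0}(i)--\HYP{1}--\HYP{2} in the classical bounded-derivative sense, and similarly for $F_0$, we can apply the first- and second-order differentiation machinery of Section \ref{se smoothness} to $\tilde G(X^{t,x,\mu}_T,[X^{t,\xi}_T])$ (whose derivatives behave nicely in $L^2$) and then absorb the weight factor $\sqrt{1+|X^{t,x,\mu}_T|^2+\|X^{t,\xi}_T\|_2^2}$ using the $L^p$ bounds on the flow obtained in the first step. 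The cost term $F\bigl(\cdot,\cdot,\hat\alpha(\cdot,U(s,\cdot,\cdot))\bigr)$ is treated analogously, using the quadratic splitting \eqref{eq:decomposition} together with the bounded derivatives of $F_1$ and $\hat\alpha$ (of order $2$ and $3$) and the membership $U(s,\cdot,\cdot)\in\cD_{\beta}$ for some $\beta$. This establishes that $V(t,\cdot,\cdot)$ satisfies at every $t\in[0,T]$ the same assumption as $F_0$ and $G$ in \HYP{4}(iv), with constants uniform in $t$, and that the partial derivatives $\partial_\mu V(t,x,[\xi])(\xi)$ and $\partial_v[\partial_\mu V(t,x,[\xi])](\xi)$ are jointly continuous in $(t,x,\xi)$.

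The third step is the derivation of the master equation \eqref{eq:full master PDE:MFG:1} and the time continuity of $\partial_t V$. Using once more the flow property, the dynamic programming identity
\begin{equation*}
V(t,x,\mu)=\E\Bigl[V\bigl(s,X^{t,x,\mu}_s,[X^{t,\xi}_s]\bigr)+\int_t^s F\bigl(X^{t,x,\mu}_r,[X^{t,\xi}_r],\hat\alpha(X^{t,x,\mu}_r,Y^{t,x,\mu}_r)\bigr)\ud r\Bigr],
\end{equation*}
holds for all $t\leq s\leq T$ (this is verified exactly as in the proof of Proposition \ref{pr thedecouplingproperty} combined with the flow identity \eqref{eq:ch:5:flow}). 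We then apply to the first term the chain rule of Proposition \ref{prop:ito:general}, which is licit thanks to the partial $\mathcal{C}^2$ regularity of $V(s,\cdot,\cdot)$ just proved, and repeat the short-time expansion argument of Theorem \ref{main:thm:short:time} (see in particular \eqref{eq:expansion:master:PDE}--\eqref{eq:time:derivative:U}): dividing by $s-t$, sending $s\downarrow t$, and using the continuity in $t$ of all derivatives established above, one gets both the existence of a continuous $\partial_t V$ and the master equation \eqref{eq:full master PDE:MFG:1}, with terminal condition $V(T,\cdot,\cdot)=G$.

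The hard part, as discussed above, is not the PDE derivation itself (which is a direct transposition of Section \ref{se smoothness}), but the propagation of regularity of $V$ in $(x,\mu)$ despite the fact that $F_0$ and $G$ are only locally Lipschitz with linear-growth derivatives. The normalization trick of \HYP{4}(iv) is essential: it reduces the analysis to the bounded-derivative framework of Section \ref{se smoothness} modulo a square-integrable multiplicative weight controlled by the $L^p$ estimates of Lemma \ref{le app cont}. Once this point is settled, continuity of $\partial_t V$, $\partial_x V$, $\partial_\mu V$ and of the second-order objects entering the PDE follows from the joint continuity statements in Theorems \ref{thm:4:1} and \ref{prop:partial:C^2:u} applied, via the forward induction, to the autonomous flow driven by $(\tilde b,\sigma)$.
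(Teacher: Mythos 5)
Your proposal is correct and follows essentially the same route as the paper: obtain $U\in\bigcup_{\beta\geq 0}\cD_{\beta}$ from Proposition \ref{prop:iteration weak tentative}, view the forward equation as an autonomous McKean--Vlasov SDE and propagate the Section \ref{se smoothness} regularity by forward induction via \eqref{eq:ch:5:flow}, differentiate the representation \eqref{eq:ch:5:value function} using the normalization of \HYP{4}(iv) to tame the quadratic growth of $F_{0}$ and $G$, and conclude with the dynamic-programming identity and the chain rule to obtain $\partial_{t}V$ and the master equation. You also correctly single out the weight-factorization as the crux, which is exactly where the paper places the emphasis.
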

\color{black}

\begin{Remark}
\label{rem:identification}
The identification $U(t,x,\mu) = \partial_{x} V(t,x,\mu)$ can be checked directly as:
\begin{equation*}
\begin{split}
&\partial_{x} V(t,x,\mu) = 
{\mathbb E} \biggl[ \partial_{x}
G \bigl(X_{T}^{t,x,\mu},[X_{T}^{t,\xi}]\bigr) \partial_{x} X_{T}^{t,x,\mu}
\\
&\hspace{15pt}+ \int_{t}^T \partial_{x}
F\bigl(X_{s}^{t,x,\mu},[X_{s}^{t,\xi}],\hat{\alpha}(X_{s}^{t,x,\mu},Y_{s}^{t,x,\mu}) \bigr) \partial_{x}X_{s}^{t,x,\mu}
 \ud s 
 \\
&\hspace{15pt} +  
\int_{t}^T \partial_{\alpha}
F\bigl(X_{s}^{t,x,\mu},[X_{s}^{t,\xi}],\hat{\alpha}(X_{s}^{t,x,\mu},Y_{s}^{t,x,\mu}) \bigr) \partial_{x}
\bigl( \hat{\alpha}(X_{s}^{t,x,\mu},Y_{s}^{t,x,\mu})
\bigr)
\ud s 
 \biggr], \quad \xi \sim \mu. 
\end{split}
\end{equation*}
Now,  \eqref{eq:optimizer} says that $
\partial_{\alpha}
F (X_{s}^{t,x,\mu},[X_{s}^{t,\xi}],\hat{\alpha}(X_{s}^{t,x,\mu},Y_{s}^{t,x,\mu})) = - b_{2}^{\dagger} Y_{s}^{t,x,\mu}$, so that 
\begin{equation*}
\begin{split}
\partial_{x} V(t,x,\mu) &= 
{\mathbb E} \biggl[ Y_{T}^{t,x,\mu} \partial_{x} X_{T}^{t,x,\mu}
+ \int_{t}^T \partial_{x}
F\bigl(X_{s}^{t,x,\mu},[X_{s}^{t,\xi}],\hat{\alpha}(X_{s}^{t,x,\mu},Y_{s}^{t,x,\mu}) \bigr) \partial_{x}X_{s}^{t,x,\mu}
 \ud s 
 \\
&\hspace{15pt} - b_{2}^{\dagger}  
\int_{t}^T Y_{s}^{t,x,\mu} \partial_{x}
\bigl( \hat{\alpha}(X_{s}^{t,x,\mu},Y_{s}^{t,x,\mu})
\bigr)
 \ud s 
 \biggr].
\end{split}
\end{equation*}
 Using  \eqref{eq:FBSDE:pontryagin:MFG}
and It\^o's formula
  to expand $(Y_{s}^{t,x,\mu} \partial_{x} X_{s}^{t,x,\mu})_{t \leq s \leq T}$, we get that the right-hand side is equal to $Y_{t}^{t,x,\mu}$. We omit the details of the computation here. 
\end{Remark}

\subsubsection{Direct approach}
\textcolor{black}{
Theorem \ref{thm:MFG:1} is specifically designed to handle the case when 
the coefficients may be quadratic in $x$, provided 
that the \textit{extended Hamiltonian} has a convex structure in 
$(x,\alpha)$. When the coefficients are bounded in $x$ and $\mu$
and $\sigma$ is non-degenerate, we can give 
a direct proof of the solvability of the master 
equation
\eqref{eq:full master PDE:MFG:1} under the weaker 
assumption that  the \textit{extended Hamiltonian}
is convex in $\alpha$.
The key point is to represent directly
the value function 
$V$ in 
\eqref{eq:ch:5:value function}
by means of a McKean-Vlasov FBSDE, 
and thus to avoid any further reference 
to the stochastic Pontryagin principle. In particular, contrary to the last paragraph,
we shall prove existence and uniqueness to \eqref{eq X-Y t,xi} without relying on results in \cite{carmona:delarue:sicon}.
Of course, 
as previously, we shall need to check that
the processes that enter the representation of the value function 
satisfy \HYP{3}, or equivalently, that 
the key estimate \eqref{eq control recu U} holds true.}
We shall assume: 

\color{black}
\begin{Assumption}[\HYP{5}]
The coefficient $\sigma$ 
has the form $\sigma : \R^d \ni x \mapsto \sigma(x) \in \R^{d \times d}$,  
is bounded, twice differentiable, with bounded and Lipschitz-continuous derivatives of order 1 and 2,
and, for any $x \in \R^d$, the matrix $\sigma(x)$ is invertible
with $\sup_{x \in \R^d} | \sigma^{-1}(x) | < \infty$. 

The parameter $k$ is equal to $d$ and $b$ may decomposed as
\begin{equation*}
b(x,\alpha) = b_{0}(x) + \alpha, \quad x \in \R^d,
\quad \alpha \in \R^d,
\end{equation*}
the function $b_{0}$ being bounded and twice continuously differentiable with bounded and Lipschitz-continuous derivatives of order 1 and 2.  

The running cost $F$ may be decomposed as 
\begin{equation*}
F(x,\mu,\alpha) = F_{0}(x,\mu) + F_{1}(x,\alpha), \quad x \in \R^d, \ 
\mu \in {\mathcal P}_{2}(\R^d), \ \alpha \in \R^d,
\end{equation*}  
where
\begin{itemize}
\item[$\bullet$]
the functions $F_{0}$ and $G$ 
are bounded and satisfy \HYP{0}{\rm (i)}, \HYP{1}, \HYP{2};
\item[$\bullet$]
the function $F_{1}$ is bounded in $x$ and 
at most of quadratic growth in $\alpha$, uniformly in $x$; it is
is three times differentiable in $(x,\alpha)$,
the derivatives of order 2 and 3 being bounded and Lipschitz-continuous, 
the derivative of order $1$ in $x$ being bounded and the derivative of 
order $1$ in $\alpha$ being at most of linear growth in $\alpha$, uniformly in 
$x$; 
there exists $\lambda >0$ such that it 
satisfies the convexity assumption 
 \begin{align*}
F_{1}(x,\alpha') - F_{1}(x,\alpha) 
- \langle \alpha' - \alpha,\partial_{\alpha} F_{1}(x,\alpha) \rangle \geq \lambda \vert \alpha' - \alpha \vert^2,
\end{align*}
\end{itemize}

And, the Lasry-Lions monotonicity condition 
in the last line of \HYP{4}{\rm (iii)}
holds true. 
\end{Assumption}

We here prove that

\begin{Theorem}
\label{thm:main:MFG:2}
For a given $T>0$ and under \HYP{5}, the master PDE 
 \eqref{eq:full master PDE:MFG:1}
has a unique classical solution in the space $\bigcup_{\beta} \cD_{\beta \geq 0}$
\end{Theorem}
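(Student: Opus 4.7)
The plan is to represent $V$ as the decoupling field of a McKean--Vlasov FBSDE and to apply Proposition \ref{prop:iteration weak tentative}, in analogy with Theorem \ref{thm:MFG:1} but working with the value function directly rather than with its gradient. Let $\hat\alpha(x,p):=\arg\min_{\alpha\in\R^d}\{\alpha\cdot p+F_1(x,\alpha)\}$; by the strict convexity of $F_1$ in $\alpha$ and the implicit function theorem, $\hat\alpha$ is $\cC^2$ in $(x,p)$ with bounded and Lipschitz first derivatives. I would work with the McKean--Vlasov FBSDE
\begin{equation*}
\left\{
\begin{array}{l}
dX_s = \bigl[b_0(X_s)+\hat\alpha(X_s,Z_s\sigma^{-1}(X_s))\bigr]ds+\sigma(X_s)dW_s,\quad X_t=\xi,\\[4pt]
dY_s = -\bigl[F_0(X_s,[X_s])+F_1(X_s,\hat\alpha(X_s,Z_s\sigma^{-1}(X_s)))\bigr]ds+Z_s\,dW_s,\quad Y_T=G(X_T,[X_T]).
\end{array}\right.
\end{equation*}
A chain-rule computation in the spirit of Remark \ref{rem:identification} shows that, wherever it exists and is smooth, the decoupling field $V(t,x,\mu):=Y^{t,x,\mu}_t$ coincides with the MFG value function and satisfies the master PDE \eqref{eq:full master PDE:MFG:1}.

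The driver grows quadratically in $z$ and the drift is linear in $z$, so \HYP{0}--\HYP{2} do not hold literally. To reduce to the setting of the paper I would first establish an a priori $L^\infty$ bound on $Z$, uniform in the measure input. For a frozen marginal flow $(\mu_s)$ the backward equation is nothing but the HJB equation of a classical optimal control problem with bounded data and uniformly elliptic $\sigma$, and standard parabolic gradient estimates (or a Bismut--Elworthy--Li formula) give $\|\partial_x V(t,\cdot,\mu)\|_\infty\leq M$ with $M$ depending only on $T$ and on the sup-norms and Lipschitz constants of $F_0$, $G$, $b_0$, $\sigma$, $\sigma^{-1}$. Hence $|Z_s|\leq M\|\sigma\|_\infty$. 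Replacing $p\mapsto\hat\alpha(x,p)$ by a smooth, globally Lipschitz truncation $\hat\alpha^M$ that agrees with $\hat\alpha$ on $\{|p|\leq 2M\|\sigma^{-1}\|_\infty\}$, the modified coefficients satisfy \HYP{2}, and any solution of the modified FBSDE with $|Z|\leq M\|\sigma\|_\infty$ solves the original one.

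The main step, and the main obstacle, is verifying \HYP{3} for the modified FBSDE. Given $\xi,\xi'\in L^2(\Omega,\cF_t,\P;\R^d)$ and the corresponding solutions $(X,Y,Z)$ and $(X',Y',Z')$, I would apply It\^o's formula to $\langle X_s-X'_s,Y_s-Y'_s\rangle$ on $[t,T]$ to obtain, after taking expectations, a coupling identity linking $\E\langle\xi-\xi',Y_t-Y'_t\rangle$, a terminal $G$-contribution, an $F_0$-contribution, and the Hamiltonian difference along the two trajectories. The Lasry--Lions monotonicity controls the terminal $G$-term and the $F_0$-contribution when integrated against $[X_s]-[X'_s]$, while the $\lambda$-convexity of $F_1$ in $\alpha$ supplies the positive term $\lambda\,\E\int_t^T|\hat\alpha(X_s,Z_s\sigma^{-1}(X_s))-\hat\alpha(X'_s,Z'_s\sigma^{-1}(X'_s))|^2\,ds$. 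Combining this with standard stability for $X-X'$ and $Y-Y'$, and using the boundedness of $Z,Z'$ to tame the quadratic-in-$z$ terms, one should obtain, along the lines of \cite[Proposition 3.7]{carmona:delarue:sicon} adapted to $[\xi]\neq[\xi']$, the bound $\E|Y_t-Y'_t|^2\leq\Lambda^2\,\E|\xi-\xi'|^2$ with $\Lambda$ independent of $t$. Proposition \ref{prop:iteration weak tentative} then produces a decoupling field $V\in\bigcup_{\beta\geq 0}\cD_\beta$, globally Lipschitz and solving \eqref{eq:full master PDE:MFG:1}; uniqueness follows from Theorem \ref{main:thm:uniqueness}. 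The hard part is really this Lipschitz estimate: unlike in Theorem \ref{thm:MFG:1}, joint convexity in $(x,\alpha)$ is not at our disposal, so the spatial information must be supplied entirely by the monotonicity condition together with the non-degeneracy of $\sigma$, and a careful balance between the two ingredients is needed to close the estimate.
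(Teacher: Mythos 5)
Your setup — representing $V$ as the decoupling field of the McKean--Vlasov FBSDE with quadratic driver, establishing a uniform $L^\infty$ bound on $Z$ via a Girsanov reduction and standard gradient estimates for uniformly parabolic semilinear PDEs, and truncating $\hat\alpha$ to land in the setting of \HYP{2} — matches the paper's. The gap is in your treatment of \HYP{3}, and it is a real one.

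You propose to apply It\^o's formula to $\langle X_s-X'_s,Y_s-Y'_s\rangle$, following \cite[Proposition 3.7]{carmona:delarue:sicon}. But that computation closes only because, in the Pontryagin-principle framework of Theorem \ref{thm:MFG:1}, the extended Hamiltonian is jointly convex in $(x,\alpha)$ (Assumption \HYP{4}(ii)): the terms $\langle X_s-X'_s,\,\partial_x H(X_s,\dots)-\partial_x H(X'_s,\dots)\rangle$ pick up a sign from $x$-convexity. Under \HYP{5} the Hamiltonian is convex in $\alpha$ only, and those cross terms in $X-X'$ have no definite sign; the Lasry--Lions condition controls only the measure direction. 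You acknowledge this at the end but offer no way to close the estimate. The paper's fix is a genuinely different computation: It\^o's formula is applied directly to $V(s,X_s^{t,\xi},[X_s^{t,\xi}])$ and to $V(s,X_s^{t,\xi'},[X_s^{t,\xi}])$ (same measure argument, different spatial trajectory), i.e.\ a verification-style argument at the level of the value function. Subtracting and using the $\alpha$-convexity of $F_1$ produces the positive Hamiltonian term; symmetrizing in $\xi,\xi'$ and invoking Lasry--Lions gives \eqref{eq:monotonicity}. No convexity in $x$ is needed. To pass from \eqref{eq:monotonicity} to the Lipschitz-in-$L^2$ estimate of \HYP{3} the paper then needs a uniform bound on $\partial^2_{xx} V$ — supplied by Schauder interior estimates for the uniformly parabolic HJB equation \eqref{eq:HJB} — and the Pontryagin-type representation of $\partial_x V$ from Remark \ref{rem:identification}, which is differentiated and combined with Gronwall and \eqref{eq:monotonicity} to close.

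A second, smaller gap: you conclude by citing Proposition \ref{prop:iteration weak tentative}, but that proposition presupposes \HYP{3}, which in turn asserts global existence and uniqueness for the McKean--Vlasov FBSDE — precisely what is not available a priori here, unlike in the Pontryagin setting where \cite{carmona:delarue:sicon} supplies it. The paper therefore does not invoke Proposition \ref{prop:iteration weak tentative} but re-runs the backward induction from scratch: on each step it constructs a classical solution of the truncated master PDE on a short interval via Theorem \ref{main:thm:short:time}, builds the McKean--Vlasov SDE by Schauder's fixed point as in the proof of Theorem \ref{main:thm:uniqueness}, proves uniqueness and the Lipschitz estimate via the verification argument just described, and only then patches the intervals together. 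You would need to restructure your argument along these lines rather than appeal to the proposition.
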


\proof
In comparison with the proof of Theorem
\ref{thm:MFG:1},
the difficulty here is that  
we do not have an \textit{a priori} existence and uniqueness result for 
the McKean-Vlasov FBSDE system representing  
the master PDE \eqref{eq:full master PDE:MFG:1}. 
In order to proceed, we thus revisit the proof of Proposition 
\ref{prop:iteration weak tentative} and show, by induction, that 
there exist an integer $N \geq 1$ 
and a constant $\tilde{\Lambda} \geq 0$
such that, 
with $\delta = T/N$, the following holds true for any $n \in \{1,\dots,N\}$: 

$(\mathcal{I}_n):$ 
There exists a mapping $V : [T-n \delta,T] \times \R^d \times \cP_{2}(\R^d)
\ni (t,x,\mu) \mapsto V(t,x,\mu) \in \R$
that 
belongs to $\bigcup_{\beta \geq 0} \cD_{\beta}([T-n\delta,T])$ 
such that

\textit{(i)}
 for any $t \in [T-n \delta,T]$, the function 
 $V(t,\cdot,\cdot)$ satisfies
 the same assumption as $g$ in 
 \HYP{0}{\rm (i)}, \HYP{1} \HYP{2}, 
 but with the constant $L$ replaced by $\tilde{\Lambda}$ 
and $\tilde{L}$ and $\alpha$ being replaced by some 
$\tilde{L}_{n}$ and $\tilde{\alpha}_{n}$;

\textit{(ii)}
$V$ satisfies the master PDE \eqref{eq:full master PDE:MFG:1} on $[T-n\delta,T] \times \R^d \times 
\cP_{2}(\R^d)$. 
\vspace{5pt}

\textit{First step.}
We start with the following observation. 
Equation \eqref{eq:full master PDE:MFG:1} is of the type 
\eqref{eq:master:PDE}, with $m=1$, 
$b(x,y,z,\nu)= b_{0}(x) + \hat{\alpha}(x,z \sigma^{-1}(x))$
\footnote{Pay attention that the letter $b$ is used both to denote 
the 
first-order coefficient in 
\eqref{eq:master:PDE}
and the drift in 
\eqref{eq:controlled SDE}. We feel that the reader can easily 
make the distinction between the two of them.}, $\sigma(x,\nu)=\sigma(x)$,
$f(x,y,z,\nu) = F_{0}(x,\mu) + 
F_{1}(x,\hat{\alpha}(x,z \sigma^{-1}(x)))$ and 
$g(x,\mu) = G(x,\mu)$ (recall that the product $z\sigma^{-1}(x)$ makes sense since
 $z$ reads as an element 
of $\R^{1 \times m}$, that is a row vector). 
Since $b$ does not rely on $y$ and $\nu$, we shall write 
$b(x,z)$ for $b(x,y,z,\nu)$. Similarly, since 
$f$ is independent of the variable $y$ and depends on 
the variable 
$\nu \in {\mathcal P}_{2}(\R^{d} \times \R)$
 through its first marginal $\mu \in {\mathcal P}_{2}(\R^d)$
 only 
(recall that, 
formally, $\nu$ is understood as the joint marginal law of 
the process $(X,Y)$), we shall write 
$f(x,z,\mu)$ for $f(x,y,z,\nu)$.
Now, recalling  \eqref{eq:optimizer}, we know that 
$(x,z) \mapsto \hat{\alpha}(x,z)$ is twice differentiable 
with respect to $(x,z)$ with bounded and Lipschitz-continuous derivatives of order 1 and 2.
In particular, we can find a constant $C$ such that, 
for all $x,z \in \R^d$ and $\mu \in \cP_{2}(\R^d)$,
\begin{equation}
\label{eq:ch:5:linear:growth:driver}
\vert \partial_{z} f(x,z,\mu) \vert \leq C(1+\vert z \vert),
\end{equation}
which plays an important role below. 

Of course, the assumption \HYP{0}{\rm (i)}
is not satisfied because of the quadratic growth of $f$ in the variable $z$. 
In order to apply Theorem \ref{main:thm:short:time}, we shall make use of a truncation argument. 
Considering a smooth function 
$\varphi_{R} : \R^d \rightarrow \R^d$ that matches the identity on the ball of center $0$
and of radius $R$, that is zero outside the ball of center $0$ and of radius $2R$
and that satisfies $\vert \nabla \varphi_{R} \vert \leq C$ with $C$ independent of $R$,
we let $b_{R}(x,z)=
b(x,\varphi_{R}(z))$
and
$f_{R}(x,z,\mu) = 
f(x,\varphi_{R}(z),\mu)$.

In particular,  
for any 
$(t,x) \in [0,T] \times \R^d$
and any 
flow of probability measures $(\mu_{u})_{u \in [t,T]}$
with values in $\cP_{2}(\R^d)$, 
we know 
from 
\cite{del02}
that the FBSDE system 
\begin{align}
\left\{
\begin{array}{ll}
&X_{s}
^{t,x,(\mu_{r})_{r \in [t,T]}}
\\
&\hspace{10pt}= x + \int_{t}^s 
b_{R}\bigl(X_{r}^{t,x,(\mu_{u})_{u \in [t,T]}}, 
Z_{r}^{t,x,(\mu_{u})_{u \in [t,T]}}\bigr) \ud r
+ \int_{t}^s 
\sigma\bigl(X_{r}^{t,x,(\mu_{u})_{u \in [t,T]}} \bigr) \ud W_{r},
\\
&Y_{s}^{t,x,(\mu_{r})_{r \in [t,T]}}
\\
&\hspace{10pt}= g\bigl(X_{T}^{t,x,(\mu_{u})_{u \in [t,T]}},\mu_{T})
+ \int_{s}^T 
f_{R}\bigl(X_{r}^{t,x,(\mu_{u})_{u \in [t,T]}}, 
Z_{r}^{t,x,(\mu_{u})_{u \in [t,T]}},\mu_{r}\bigr)
\ud r 
\\
&\hspace{177pt}- \int_{s}^T Z_{r}^{t,x,(\mu_{u})_{u \in [t,T]}}
\ud W_{r}, \quad s \in [t,T],
\end{array}
\right.
\label{eq:FBSDE:local}
\end{align}
admits a unique solution. 
It satisfies
$\vert Z_{s}^{t,x,(\mu_{u})_{u \in [t,T]}} \vert \leq C_{R}$ $\ud s \otimes \ud \P$ 
almost everywhere, for a constant $C_{R}$ that may depend upon $R$
(but not on $(\mu_{u})_{u \in [t,T]}$). 

We now prove that $C_{R}$ may be chosen independently of $R$. 
The proof is as follows. 
We write
\begin{equation*}
f(x,\varphi_{R}(z),\mu)
= f(x,0,\mu)
+ \biggl( \int_{0}^1 \partial_{z} f (x,\varphi_{R}(\lambda z),\mu)
\nabla \varphi_{R}(\lambda z) \ud \lambda  \biggr) z^{\dagger} .
\end{equation*}
By a standard Girsanov argument, the above decomposition of $f_{R}$ says that the FBSDE
\eqref{eq:FBSDE:local} may be written, under a new probability,
as a new FBSDE system with $f(X^{t,x,
(\mu_{u})_{u \in [t,T]}}_{s},0,\mu_{r})
$ as driver in the backward component 
and with 
\begin{equation*}
\begin{split}
&b_{R}\bigl(X^{t,x,
(\mu_{u})_{u \in [t,T]}}_{r},
Z^{t,x,
(\mu_{u})_{u \in [t,T]}}_{r}\Bigr)
\\
&\hspace{30pt}+
\biggl( \int_{0}^1 \partial_{z} f \bigl(
X^{t,x,
(\mu_{u})_{u \in [t,T]}}_{r},\varphi_{R}(\lambda 
Z^{t,x,(\mu_{u})_{u \in [t,T]}}_{r}
),\mu)
\nabla \varphi_{R}(\lambda Z^{t,x,(\mu_{u})_{u \in [t,T]}}_{r}) \ud \lambda 
 \biggr)^\dagger
 \end{split}
 \end{equation*}
as drift in the forward component: 
The driver in the backward component is bounded and, by 
\eqref{eq:ch:5:linear:growth:driver}, 
the drift in the forward component is bounded in the variable 
$x$ and at most of linear growth in the variable $z$. 
In particular, by \cite{del03}, 
there exists a constant $\Gamma$, independent of $R$
and 
$(\mu_{u})_{u \in [t,T]}$,  
such that, we indeed have $\vert Z_{s}^{t,x,(\mu_{u})_{u \in [t,T]}} \vert \leq \Gamma$. 

The coefficients $G$ and $F_{0}$ being bounded, we also have $\vert Y_{s}^{t,x,(\mu_{u})_{u \in [t,T]}} \vert \leq C$, 
for $C$ independent of $R$ and of 
$(\mu_{u})_{u \in [t,T]}$. 
\vspace{5pt}

\textit{Second step.}
We now construct $\delta >0$ such that 
the master PDE 
\eqref{eq:full master PDE:MFG:1}
admits a solution in 
$\bigcup_{\beta \geq 0} \cD_{\beta}([T-\delta,T])$
on $[T-\delta] \times \R^d \times \cP_{2}(\R^d)$. 
With the same $\Gamma$ as in the previous step, 
we indeed 
apply
Theorem \ref{main:thm:short:time}
with 
$(b_{R},\sigma,f_{R},g)$
instead of $(b,\sigma,f,g)$, for some $R >\Gamma \| \sigma^{-1} \|_{\infty}$. 
This says that, for some 
$\delta \in (0,T]$, there exists a function  
$$V : [T-\delta,T] \times \R^d \times \cP_{2}(\R^d)
\rightarrow \R$$ 
in $\bigcup_{\beta \geq 0} \cD_{\beta}([T-\delta,T])$
that solves
\eqref{eq:full master PDE:MFG:1}
with $b$ replaced by $b_{R}$ and $f$ replaced by $f_{R}$. 

Now, for any $(t,x,\mu) \in [T-\delta,T] \times \R^d 
\times \cP_{2}(\R^d)$,  
for any $s \in [t,T]$,
$$\partial_{x} V(s,X_{s}^{t,x,\mu},[X_{s}^{t,\xi}])=
Z_{s}^{t,x,\mu} \sigma^{-1}(X_{s}^{t,x,\mu}),$$ 
where $\xi \sim \mu$ and 
$(X^{t,\xi},Y^{t,\xi},Z^{t,\xi})$
and 
$(X^{t,x,\mu},Y^{t,x,\mu},Z^{t,x,\mu})$
solve 
\eqref{eq X-Y t,xi}
and 
\eqref{eq X-Y t,x,mu}
with $(b,f)$ replaced by $(b_{R},f_{R})$. 
In particular, 
$\vert \partial_{x} V(t,x,\mu) \vert \leq \Gamma
\| \sigma^{-1} \|_{\infty} < R$. 
Therefore, 
$V$ also solves 
\eqref{eq:full master PDE:MFG:1}.  
It also satisfies $\vert V \vert \leq C$, for some $C$ independent of $R$. 
Basically, this proves \textit{(ii)} in $(\cI_{1})$. 
\vspace{5pt}

\textit{Third step.}
In order to prove 
\textit{(i)} in $(\cI_{1})$ and more generally in
$({\mathcal I}_{n})$ for any $n=2,\dots,N$, we must identify
the constant $\tilde{\Lambda}$ first. 
We thus proceed as follows. 
%
%
%
%
%
We assume that there exists a time $t \in [0,T]$ such that, 
on $[t,T] \times \R^d \times \cP_{2}(\R^d)$, the master PDE 
 \eqref{eq:full master PDE:MFG:1} has a solution
 $V$ in 
 $\bigcup_{\beta \geq 0} \cD_{\beta}([t,T])$. We 
are then willing to provide a bound for $\sup_{x \in \R^d,
\xi \in L^2(\Omega,\cA,\P;\R^d)} 
\| \partial_{\mu} V(t,x,[\xi])(\xi) \|_{2}$, 
independently  of $t \in [0,T]$. 
 
 Since $V \in \bigcup_{\beta \geq 0} \cD_{\beta}([t,T])$, 
 we can find some $R>0$ such that 
 $\| \partial_{x} V(s,\cdot,\mu) \|_{\infty} \|\sigma \|_{\infty} < R$
 for any 
$s \in [t,T]$ and $\mu \in \cP_{2}(\R^d)$. In particular, 
 $V$ also solves the master PDE associated with 
 $(b_{R},\sigma,f_{R},g)$ instead of 
 $(b,\sigma,f,g)$. 
 Since $(b_{R},\sigma,f_{R},g)$ satisfies 
 \HYP{0}{(i)}--\HYP{1}--\HYP{2}, we can imitate the proof of 
Theorem \ref{main:thm:uniqueness}
and build a solution to
\eqref{eq X-Y t,xi}
for any $\xi \in L^2(\Omega,\cF_{t},\P;\R^d)$. 
The forward process
is defined as a solution of 
\eqref{eq:MKV:SDE}. 
We shall prove right below that 
it is uniquely defined, so that we can denote it by 
$(X_{s}^{t,\xi})_{s \in [t,T]}$. 

Uniqueness is a consequence of a more general result of stability, 
the proof of which is as follows.  
Given $\xi,\xi' \in L^2(\Omega,\cF_{t},\P;\R^d)$, 
we consider two solutions 
$(X_{s}^{t,\xi})_{s \in [t,T]}$
and 
$(X_{s}^{t,\xi'})_{s \in [t,T]}$
to 
the SDE 
\eqref{eq:MKV:SDE},
with $\xi$ and $\xi'$ as respective initial solutions. 
We then expand, by means of It\^o's formula 
$(V(s,X_{s}^{t,\xi},[X_{s}^{t,\xi}])-V(s,X_{s}^{t,\xi'},[X_{s}^{t,\xi}]))_{s \in [t,T]}$
(observe that, in both terms, the measure argument is driven by $\xi$).  
By Proposition \ref{prop:ito:general} (either by generalizing to the case when the process plugged in 
the spatial argument is not the same as the one plugged in the measure argument or by 
extending the dimension in order to see $(X_{s}^{t,\xi},X_{s}^{t,\xi'})_{s \in [t,T]}$ as a single process), 
we get for $s \in [t,T]$ (using the fact that 
$R>
\| \partial_{x} V \|_{\infty} \|\sigma \|_{\infty}$)
\begin{equation}
\label{eq:ito:mfg:1}
\begin{split}
\ud \bigl[ V \bigl( s,X_{s}^{t,\xi},[X_{s}^{t,\xi}] \bigr) \bigr]
&= - f\bigl(X_{s}^{t,\xi},[X_{s}^{t,\xi}],\hat{\alpha}
\bigl(X_{s}^{t,\xi},\partial_{x} V
(s,X_{s}^{t,\xi},[X_{s}^{t,\xi}])\bigr)
 \bigr) \ud s 
 \\
&\hspace{15pt}+
\partial_{x} V
(s,X_{s}^{t,\xi},[X_{s}^{t,\xi}])
\sigma(X_{s}^{t,\xi})
 \ud W_s, 
 \end{split}
\end{equation}
and
\begin{equation}
\label{eq:ito:mfg:2}
\begin{split}
&\ud \bigl[ V \bigl(s,X_{s}^{t,\xi'},[X_{s}^{t,\xi}] \bigr)
\bigr]
= \Bigl[ - f\bigl(X_{s}^{t,\xi'},[X_{s}^{t,\xi}],\hat{\alpha}
\bigl(X_{s}^{t,\xi'},
\partial_{x} V(s,X_{s}^{t,\xi'},[X_{s}^{t,\xi}])
\bigr) 
 \bigr)  
 \\
&\hspace{30pt} +  \partial_{x} V(s,X_{s}^{t,\xi'},[X_{s}^{t,\xi}])
\Bigl( 
\hat{\alpha}\bigl(X_{s}^{t,\xi'},
\partial_{x} V(s,X_{s}^{t,\xi'},[X_{s}^{t,\xi'}])
\bigr)
\\
&\hspace{150pt}
-
\hat{\alpha}\bigl(X_{s}^{t,\xi'},
\partial_{x} V(s,X_{s}^{t,\xi'},[X_{s}^{t,\xi}])
\bigr)
\Bigr)
 \Bigr] \ud s 
\\ 
&\hspace{30pt} + \partial_{x} V(s,X_{s}^{t,\xi'},[X_{s}^{t,\xi}])  
\sigma(X_{s}^{t,\xi'})
 \ud W_s.
 \end{split}
 \end{equation}

  Taking the difference between 
  \eqref{eq:ito:mfg:1}
  and \eqref{eq:ito:mfg:2}
  and using the same notation $H$ 
  for the Hamiltonian 
  as in 
  \eqref{eq:hamiltonian}, we obtain 
 \begin{equation*}
  \begin{split}
  &\ud \bigl[ V \bigl( s,X_{s}^{t,\xi},[X_{s}^{t,\xi}] \bigr) 
  - V \bigl(s,X_{s}^{t,\xi'},[X_{s}^{t,\xi}] \bigr)
  \bigr]
  \\
  &\hspace{15pt} = 
- \Bigl[  
f\bigl(X_{s}^{t,\xi},[X_{s}^{t,\xi}],\hat{\alpha}
\bigl(X_{s}^{t,\xi},\partial_{x} V
(s,X_{s}^{t,\xi},[X_{s}^{t,\xi}])\bigr)
 \bigr) 
 \\
&\hspace{45pt}  -
f\bigl(X_{s}^{t,\xi'},[X_{s}^{t,\xi}],\hat{\alpha}
\bigl(X_{s}^{t,\xi'},\partial_{x} V
(s,X_{s}^{t,\xi'},[X_{s}^{t,\xi'}])\bigr)
 \bigr) 
  \Bigr] \ud s
  \\
  &\hspace{30pt}
  - \Bigl[ H\Bigl(X_{s}^{t,\xi'},[X_{s}^{t,\xi}],
  \partial_{x} V(s,X_{s}^{t,\xi'},[X_{s}^{t,\xi}]),  
  \hat{\alpha}\bigl(X_{s}^{t,\xi'},
  \partial_{x} V(s,X_{s}^{t,\xi'},[X_{s}^{t,\xi'}])  
\bigr)
   \Bigr) 
   \\
&\hspace{45pt} - H\Bigl(X_{s}^{t,\xi'},[X_{s}^{t,\xi}],
  \partial_{x} V(s,X_{s}^{t,\xi'},[X_{s}^{t,\xi}]),  
  \hat{\alpha}\bigl(X_{s}^{t,\xi'},
  \partial_{x} V(s,X_{s}^{t,\xi'},[X_{s}^{t,\xi}])  
\bigr)
   \Bigr)  \Bigr] \ud s
   \\
&\hspace{30pt} + \Bigl[
 \partial_{x} V(s,X_{s}^{t,\xi},[X_{s}^{t,\xi}]) 
 \sigma(X_{s}^{t,\xi})
  -  \partial_{x} V(s,X_{s}^{t,\xi'},[X_{s}^{t,\xi}]) 
  \sigma(X_{s}^{t,\xi'}) \Bigr] \ud W_s.   
\end{split}
\end{equation*}
Therefore, taking the expectation and integrating in $s$ from $t$ to $T$, we get 
from the convexity of $H$ in $\alpha$ 
(that follows from the convexity of $F_{1}$ and the linear
structure of 
the drift 
in $\alpha$ in \eqref{eq:controlled SDE})
that 
\begin{equation*}
\begin{split}
&\E \bigl[ V(t,\xi,[\xi]) - V(t,\xi',[\xi]) \bigr]
\\
&\hspace{15pt} -  \E \int_{t}^T
\Bigl[ 
F_{1}\Bigl(X_{s}^{t,\xi},
\hat{\alpha}\bigl(X_{s}^{t,\xi},
  \partial_{x} V(s,X_{s}^{t,\xi},[X_{s}^{t,\xi}])  
\bigr)
\Bigr)
\\
&\hspace{150pt}-
F_{1}\Bigl(X_{s}^{t,\xi'},
\hat{\alpha}\bigl(X_{s}^{t,\xi'},
  \partial_{x} V(s,X_{s}^{t,\xi'},[X_{s}^{t,\xi'}])  
\bigr)
\Bigr)
\Bigr] \ud s
\\
&\geq \E \bigl[ G(X_{T}^{t,\xi},[X_{T}^{t,\xi}]) - G(X_{T}^{t,\xi'},[X_{T}^{t,\xi}]) \bigr]
+ \E \int_{t}^T \bigl( F_{0}(X_{s}^{t,\xi},[X_{s}^{t,\xi}]) - F_{0}(X_{s}^{t,\xi'},[X_{s}^{t,\xi}])
\bigr) \ud s
\\
&\hspace{15pt} 
+ \lambda \E \int_{t}^T 
\bigl\vert \hat{\alpha}\bigl(X_{s}^{t,\xi'},
  \partial_{x} V(s,X_{s}^{t,\xi'},[X_{s}^{t,\xi'}])  
\bigr)
-  \hat{\alpha}\bigl(X_{s}^{t,\xi'},
  \partial_{x} V(s,X_{s}^{t,\xi'},[X_{s}^{t,\xi}])  
\bigr)
 \vert^2 \ud s.
\end{split}
\end{equation*}
By exchanging the roles of $\xi$ and $\xi'$ and then by summing up, we deduce that
\begin{equation*}
\begin{split}
&\E \bigl[ V(t,\xi,[\xi]) - V(t,\xi',[\xi]) - \bigl( V(t,\xi,[\xi']) - V(t,\xi',[\xi']) \bigr) \bigr] 
\\
&\geq \E \bigl[ G(X_{T}^{t,\xi},[X_{T}^{t,\xi}]) - G(X_{T}^{t,\xi'},[X_{T}^{t,\xi}])
- \bigl( 
G(X_{T}^{t,\xi},[X_{T}^{t,\xi'}]) - G(X_{T}^{t,\xi'},[X_{T}^{t,\xi'}])
\bigr)
 \bigr]
\\
&\hspace{15pt}
+ \E \int_{t}^T 
\bigl[
\bigl( F_{0}(X_{s}^{t,\xi},[X_{s}^{t,\xi}]) - F_{0}(X_{s}^{t,\xi'},[X_{s}^{t,\xi}])
\bigr) 
\\
&\hspace{150pt}- \bigl( F_{0}(X_{s}^{t,\xi},[X_{s}^{t,\xi'}]) - F_{0}(X_{s}^{t,\xi'},[X_{s}^{t,\xi'}])
\bigr) 
\bigr]
\ud s
\\
&\hspace{15pt} +\lambda \E \int_{t}^T 
\bigl\vert \hat{\alpha}\bigl(X_{s}^{t,\xi'},
  \partial_{x} V(s,X_{s}^{t,\xi'},[X_{s}^{t,\xi'}])  
\bigr)
-  \hat{\alpha}\bigl(X_{s}^{t,\xi'},
  \partial_{x} V(s,X_{s}^{t,\xi'},[X_{s}^{t,\xi}])  
\bigr)
 \vert^2 \ud s
\\
&\hspace{15pt} +\lambda \E \int_{t}^T 
\bigl\vert \hat{\alpha}\bigl(X_{s}^{t,\xi},
  \partial_{x} V(s,X_{s}^{t,\xi},[X_{s}^{t,\xi}])  
\bigr)
-  \hat{\alpha}\bigl(X_{s}^{t,\xi},
  \partial_{x} V(s,X_{s}^{t,\xi},[X_{s}^{t,\xi'}])  
\bigr)
 \vert^2 \ud s.
\end{split} 
\end{equation*}
Finally, rearranging the terms, we deduce from the Lasry-Lions condition
that 
\begin{equation}
\label{eq:monotonicity}
\begin{split}
&\E \bigl[ V(t,\xi,[\xi])  - V(t,\xi',[\xi]) - \bigl( V(t,\xi,[\xi']) -  V(t,{\xi'},[\xi']) \bigr) \bigr]
\\
&\hspace{15pt} \geq \lambda \E \int_{t}^T 
\bigl\vert \hat{\alpha}\bigl(X_{s}^{t,\xi'},
  \partial_{x} V(s,X_{s}^{t,\xi'},[X_{s}^{t,\xi'}])  
\bigr)
-  \hat{\alpha}\bigl(X_{s}^{t,\xi'},
  \partial_{x} V(s,X_{s}^{t,\xi'},[X_{s}^{t,\xi}])  
\bigr)
 \vert^2 \ud s
\\
&\hspace{30pt} +\lambda \E \int_{t}^T 
\bigl\vert \hat{\alpha}\bigl(X_{s}^{t,\xi},
  \partial_{x} V(s,X_{s}^{t,\xi},[X_{s}^{t,\xi}])  
\bigr)
-  \hat{\alpha}\bigl(X_{s}^{t,\xi},
  \partial_{x} V(s,X_{s}^{t,\xi},[X_{s}^{t,\xi'}])  
\bigr)
 \vert^2 \ud s.
\end{split}
\end{equation}
When $\xi=\xi'$, the left-hand side is zero. Denoting 
by $X$ and $X'$ two solutions to the SDE 
\eqref{eq:MKV:SDE} 
with the same initial condition $\xi$, 
the above inequality (with the formal identification 
$X \equiv X^{t,\xi}$ and 
$X' \equiv X^{t,\xi'}$)
 says that 
$\hat{\alpha}(X_{s}',\partial_{x} V(s,X_{s}',[X_{s}']))
=
\hat{\alpha}(X_{s}',\partial_{x} V(s,X_{s}',[X_{s}]))$. 
Then, uniqueness to 
\eqref{eq:MKV:SDE} follows from the fact that, by assumption, 
$\partial_{x} V$ is Lipschitz continuous in $x$. 
\vspace{5pt}

\textit{Fourth step.}
Given the flow of probability measures 
$([X_{s}^{t,\xi}])_{s \in [t,T]}$ we just constructed, we 
know from 
\cite{del02}
that, for any $x \in \R^d$, the FBSDE 
 \eqref{eq X-Y t,x,mu}, when 
 driven by 
 $(b_{R},\sigma,f_{R},g)$ and by $\mu = [\xi]$,
 is uniquely solvable. By the first step, 
 the solution must solve 
 \eqref{eq X-Y t,x,mu}, when 
 driven by 
 $(b,\sigma,f,g)$. Moreover, it satisfies 
 $\vert Z_{s}^{t,x,\mu} \vert \leq \Gamma$
 $\ud s \otimes \ud \P$ almost everywhere. 
 Applying 
It\^o's formula to $(V(s,X_{s}^{t,x,\mu},[X_{s}^{t,\xi}]))_{s \in [t,T]}$,
we can check, in the spirit of 
 Theorem \ref{main:thm:uniqueness}, 
 that 
$Y_{s}^{t,x,\mu}=V(s,X_{s}^{t,x,\mu},[X_{s}^{t,\xi}])$
and 
 $Z_{s}^{t,x,\mu} = \partial_{x} V(s,X_{s}^{t,x,\mu},[X_{s}^{t,\xi}])
 \sigma(X_{s}^{t,x,\mu})$, $s \in [t,T]$, so that, on 
 $[t,T] \times \R^d \times \cP_{2}(\R^d)$, 
 \begin{equation*}
 \| \partial_{x} V \|_{\infty} \leq \Gamma \| \sigma^{-1} \|_{\infty}. 
 \end{equation*}

Another way to make the connection with 
 \eqref{eq X-Y t,x,mu} is to 
 see expansions \eqref{eq:ito:mfg:1} and \eqref{eq:ito:mfg:2} as standard verification arguments, as often used in stochastic control theory.
Indeed, we are just using the fact that the mapping $(s,x) \mapsto V(s,x,[X_{s}^{t,\xi}])$
is a solution of a standard HJB equation, corresponding to the optimization problem (i)
in the description of a mean-field game on page  \pageref{MFG:page}.
We can indeed differentiate in time 
$V(s,x,\mu_{s})$ for a given $x \in \R^d$, where $\mu_{s}=[X_{s}^{t,\xi}]$. Applying the chain rule 
proved in Section \ref{se chain rule}
and combining with the master PDE 
 \eqref{eq:full master PDE:MFG:1}, we then recover the HJB
equation:
\begin{equation}
\label{eq:HJB}
\begin{split}
&\partial_{s} \bigl[ V(s,x,\mu_{s}) \bigr]
+
\partial_{x} V(s,x,\mu_{s})
\bigl( b_{0}(x)
+\hat{\alpha}(x,
\partial_{x} V({s},x,\mu_{s})) \bigr)
\\
 &\hspace{5pt} 
 +  \frac12{\rm Tr} \bigl[
\sigma \sigma^\dagger(x) \partial_{xx}^2 V(s,x,\mu_{s})
\bigr]
+ F\bigl(x,\mu_{s},\hat{\alpha}(x,
\partial_{x} V({s},x,\mu_{s}) )
\bigr)
=0,
\end{split}
\end{equation}
for $s \in [t,T]$ and $x \in \R^d$,
with $V(T,x,\mu_{T}) = G(x,\mu_{T})$. 
We  know that $\partial_{x} V$ is bounded by $\Gamma \| \sigma^{-1} \|_{\infty}$. Therefore, 
\eqref{eq:HJB} reads as a standard 
semilinear uniformly parabolic equation driven by 
smooth coefficients in $x$.  
Since $f$ is Lipschitz-continuous in the direction of the measure and $[t,T] \ni s \mapsto \mu_{s}$
is $1/2$-H\"older continuous (the drift of the diffusion $X^{t,\xi}$ being bounded),
the coefficients are $1/2$-H\"older continuous in time. By Schauder's theory 
for semilinear parabolic equation (see \cite[Chapter 7]{friedman}), we can find 
a bound $\Gamma'$ 
for $\partial^2_{xx} V$ that is independent of $t \in [0,T]$.

Now, going back to \eqref{eq:MKV:SDE},
we may use the 
bound for $\partial_{xx}^2 V$ as a 
Lipschitz bound for $\partial_{x} V$ in the direction $x$. 
It is then pretty standard to deduce, from Gronwall's lemma, 
that, for any $\xi,\xi' \in L^2(\Omega,\cF_{t},\P;\R^d)$,
\begin{equation}
\label{eq:gross:preuve:1}
\begin{split}
&{\mathbb E}
\bigl[ \sup_{s \in [t,T]}
\vert X_{s}^{t,\xi} - X_{s}^{t,\xi'}
\vert^2 \bigr] \leq 
C \Bigl( {\mathbb E} \bigl[
\vert \xi - \xi' \vert ^2
\bigr]
\\
&\hspace{30pt} + \E \int_{t}^T 
\bigl\vert \hat{\alpha}\bigl(X_{s}^{t,\xi},
  \partial_{x} V(s,X_{s}^{t,\xi},[X_{s}^{t,\xi}])  
\bigr)
-  \hat{\alpha}\bigl(X_{s}^{t,\xi},
  \partial_{x} V(s,X_{s}^{t,\xi},[X_{s}^{t,\xi'}])  
\bigr)
 \vert^2 \ud s
\Bigr),
\end{split}
\end{equation} 
for a constant $C$ that is independent of $t$, $\xi$ and $\xi'$
and the value of which is allowed to increase from line to line. 
In particular, using the Lipschitz property of $\hat{\alpha}$
and once again the bound for $\partial^2_{xx} V$, we deduce that 
\begin{equation}
\label{eq:gross:preuve:2}
\begin{split}
&\E \int_{t}^T 
\bigl\vert \hat{\alpha}\bigl(X_{s}^{t,\xi},
  \partial_{x} V(s,X_{s}^{t,\xi},[X_{s}^{t,\xi}])  
\bigr)
-  \hat{\alpha}\bigl(X_{s}^{t,\xi'},
  \partial_{x} V(s,X_{s}^{t,\xi'},[X_{s}^{t,\xi'}])  
\bigr)
 \bigr\vert^2 \ud s
 \\
 &\hspace{0pt}
\leq 
 C \Bigl( {\mathbb E} \bigl[
\vert \xi - \xi' \vert ^2
\bigr]
\\
&\hspace{15pt} + \E \int_{t}^T 
\bigl\vert \hat{\alpha}\bigl(X_{s}^{t,\xi},
  \partial_{x} V(s,X_{s}^{t,\xi},[X_{s}^{t,\xi}])  
\bigr)
-  \hat{\alpha}\bigl(X_{s}^{t,\xi},
  \partial_{x} V(s,X_{s}^{t,\xi},[X_{s}^{t,\xi'}])  
\bigr)
 \vert^2 \ud s
\Bigr)
\\
&\leq 
C \Bigl( 
 {\mathbb E} \bigl[
\vert \xi - \xi' \vert ^2
\bigr]
\\
&\hspace{15pt}+
\E \bigl[ V(t,\xi,[\xi])  - V(t,\xi',[\xi]) - \bigl( V(t,\xi,[\xi']) -  V(t,{\xi'},[\xi']) \bigr) \bigr]
\Bigr),
\end{split}
\end{equation}
the last line following from 
\eqref{eq:monotonicity} (paying attention that the last term in the right-hand side 
is non-negative). 

We now make use of 
Remark \ref{rem:identification}. By differentiating 
$(Y^{t,x,\mu}_{s})_{s \in [t,T]}$
with respect to $x$
(which is licit as it reads 
$(Y^{t,x,\mu}_{s}=V(s,X_{s}^{t,x,\mu},[X_{s}^{t,\xi}]))_{s \in [t,T]}$
and $(X_{s}^{t,x,\mu})_{s \in [t,T]}$ solves a standard SDE
with smooth coefficients)
and then, by applying It\^o's formula, 
we can indeed check that
$(\partial_{x} Y^{t,x,\mu}_{s} (\partial_{x} X_{s}^{t,x,\mu})^{-1})_{s \in [t,T]}$,
solves the backward SDE in 
\eqref{eq:FBSDE:pontryagin:MFG}, so that 
\begin{equation*}
\begin{split}
&\partial_{x} V(t,x,\mu) = \E 
\Bigl[ \partial_{x} G\bigl(X_{T}^{t,x,\mu},[X_{T}^{t,\xi}]\bigr)
\\
&\hspace{15pt} + \int_{t}^T 
\partial_{x} H\bigl(X_{s}^{t,x,\mu},[X_{s}^{t,\xi}],
\partial_{x} V(s,X_{s}^{t,x,\mu},[X_{s}^{t,\xi}]),
\hat{\alpha}(X_{s}^{t,x,\mu},\partial_{x} V(s,
X_{s}^{t,x,\mu},[X_{s}^{t,\xi}])) \bigr) \ud s \Bigr],
\end{split}
\end{equation*}
and thus
\begin{equation*}
\begin{split}
&\partial_{x} V(t,\xi,[\xi]) = \E 
\Bigl[ \partial_{x} G\bigl(X_{T}^{t,\xi},[X_{T}^{t,\xi}]\bigr)
\\
&\hspace{15pt} + \int_{t}^T 
\partial_{x} H\bigl(X_{s}^{t,\xi},[X_{s}^{t,\xi}],
\partial_{x} V(s,X_{s}^{t,\xi},[X_{s}^{t,\xi}]),
\hat{\alpha}(X_{s}^{t,\xi},\partial_{x} V(s,
X_{s}^{t,\xi},[X_{s}^{t,\xi}])) \bigr) \ud s \vert \cF_{t} \Bigr].
\end{split}
\end{equation*}
Therefore, 
thanks to
\eqref{eq:gross:preuve:1}
and 
\eqref{eq:gross:preuve:2}, 
and by the Lipschitz property of 
$\partial_{x} F$ and $\partial_{x} G$ in the variables 
$x$, $\mu$ and $\alpha$, 
we get
that, for any $\xi,\xi' \in L^2(\Omega,\cF_{t},\P;\R^d)$,
\begin{equation*}
\begin{split}
&\E \bigl[ \vert \partial_{x} V(t,\xi,[\xi])
- 
\partial_{x} V(t,\xi',[\xi'])
\vert^2 \bigr] 
\\
&\hspace{15pt}
\leq C \Bigl(
 {\mathbb E} \bigl[
\vert \xi - \xi' \vert ^2
\bigr]
+
\E \bigl[ V(t,\xi,[\xi])  - V(t,\xi',[\xi]) - \bigl( V(t,\xi,[\xi']) -  V(t,{\xi'},[\xi']) \bigr) \bigr]
\Bigr). 
\end{split}
\end{equation*}
Since $V$ is smooth in $x$
and $\partial_{x} V$ is $\Gamma'$-Lipschitz in $x$, 
we can write
\begin{equation*}
\begin{split}
&\E \bigl[ \vert \partial_{x} V(t,\xi,[\xi])
- 
\partial_{x} V(t,\xi,[\xi'])
\vert^2 \bigr] 
\\
&\leq C \Bigl(
 {\mathbb E} \bigl[
\vert \xi - \xi' \vert ^2
\bigr]
\\
&\hspace{15pt}
 +
\int_{0}^1 \E \bigl[ \bigl( \partial_{x} V\bigl(t,\lambda \xi + (1-\lambda) \xi',[\xi] \bigr)
- \partial_{x} V\bigl(t, \lambda \xi + (1-\lambda) \xi',[\xi'] \bigr)
\bigr) \bigl( \xi-\xi' \bigr) \bigr] \ud \lambda 
\Bigr)
\\
&\leq C \Bigl(
 {\mathbb E} \bigl[
\vert \xi - \xi' \vert ^2
\bigr] +
\E \bigl[ \bigl(
  \partial_{x} V\bigl(t,\xi,[\xi] \bigr)
- \partial_{x} V\bigl(t, \xi,[\xi'] \bigr)
\bigr) \bigl( \xi-\xi' \bigr) 
\bigr] \Bigr). 
\end{split}
\end{equation*}
We finally get that 
\begin{equation*}
\E \bigl[ \vert \partial_{x} V(t,\xi,[\xi])
- 
\partial_{x} V(t,\xi,[\xi'])
\vert^2 \bigr] 
\leq C \| \xi' - \xi \|_{2}^2,
\end{equation*}
the constant $C$ being independent of $t$, $\xi$ and $\xi'$. 
Plugging into \eqref{eq:MKV:SDE}, 
we can deduce that 
\begin{equation}
\label{eq:regul}
\sup_{s \in [t,T]} \E \bigl[ \vert X_{s}^{t,\xi} - X_{s}^{t,\xi'} \vert^2 \bigr]
\leq C \| \xi' - \xi \|_{2}^2.
\end{equation} 
We now look at the backward equation in \eqref{eq X-Y t,xi}
(driven by $(b_{R},\sigma,f_{R},g)$). Now that we have proven 
a Lipschitz estimate for the forward component, it is standard to prove 
a similar estimate for the backward one. We deduce that 
\eqref{eq:weak lip} and thus \eqref{eq control recu U}
hold true. Applying 
Lemma \ref{le control lip U}, we get the 
required $\tilde{\Lambda}$ in 
\textit{(i)} of the induction 
property 
$(\mathcal{I}_n)$. 
\vspace{5pt}

\textit{Last step.}
From the second and fourth steps, it is clear that 
\textit{(i)} in $({\mathcal I}_{1})$ holds true, which completes the 
proof of $({\mathcal I}_{1})$. 

We then apply Theorem 
\ref{main:thm:short:time}
iteratively along the lines 
of the proof of Proposition \ref{prop:iteration weak tentative}. 
Notice that here there is no need of the assumption \textit{(iii)} in the induction 
scheme used in the proof 
of Proposition \ref{prop:iteration weak tentative}. Indeed, by the fourth step above, we have 
a direct way to establish \eqref{eq control recu U}, whereas, in 
the proof 
of Proposition \ref{prop:iteration weak tentative}, 
the bound \eqref{eq control recu U} is obtained by means of
the induction assumption \textit{(iii)}.

Uniqueness follows from Theorem 
\ref{main:thm:uniqueness}, observing that the quadratic term in the equation may be truncated 
(as any solution in the class $\bigcup_{\beta \geq 0} \cD_{\beta}$
has a bounded gradient). 
\eproof

\color{black}
\subsection{Control of McKean-Vlasov equations}
\label{subse ussr}
\subsubsection{General set-up}
Another example taken from large population stochastic control is the 
optimal control of McKean-Vlasov equations. We refer to 
\cite{carmona:delarue:aop,carmona:delarue:lachapelle} for a complete review. The idea here is to 
minimize the cost functional 
\begin{equation*}
J\bigl( (\alpha_{t})_{t \in [0,T]} \bigr) 
= {\mathbb E} \biggl[ G(X_{T},[X_{T}]) + \int_{0}^T F(X_{t},[X_{t}],\alpha_{t}) dt 
\biggr],
\end{equation*}
over controlled McKean-Vlasov diffusion processes
of the form
\begin{equation}
\label{eq:controlled MKV SDE}
dX_{t} = b(X_{t},[X_{t}],\alpha_{t}) dt  + \sigma dW_{t}, \quad t \in [0,T],
\end{equation}
for some possibly random initial condition $X_{0}$. As in \eqref{eq:controlled SDE}, 
$(W_{t})_{t \in [0,T]}$
is an $\R^{d}$-valued Brownian motion, $b: \R^d \times {\mathcal P}_{2}(\R^d)
\times \R^k \rightarrow \R^d$ is Lipschitz-continuous on the same model as in 
\textcolor{black}{\HYP{0}(i)}
and 
$(\alpha_{t})_{t \in [0, T]}$ 
denotes the progressively-measurable square-integrable control process.
Note that we shall only consider the case $\sigma$ constant. 
 
Unlike the mean-field games example, 
in which the McKean-Vlasov constraint is imposed in step (ii) only, 
see page \pageref{MFG:page}, the McKean-Vlasov prescription is here given first. In particular, the problem now consists of a true 
optimization problem. 

Below, we make use of the stochastic Pontryagin principle in order to 
characterize the optimal paths. 
Although 
the form of the Pontryagin principle is different from what it is in mean-field games, 
it imposes, in a rather similar way, 
restrictive conditions on the structure of the SDE
\eqref{eq:controlled MKV SDE}, among which 
the fact that $\sigma$ has to be constant.  
The Hamiltonian is defined in the same way as before,
see \eqref{eq:hamiltonian}, 
but 
 the FBSDE derived from the stochastic Pontryagin principle has a more complicated 
 form (see \cite{carmona:delarue:aop}):
\begin{equation}
\label{eq:FBSDE:pontryagin:MKV}
\begin{split}
&dX_{t} = b \bigl(X_{t},[X_{t}],\hat{\alpha}(X_{t},[X_{t}],Y_{t}) \bigr) dt + \sigma dW_{t}
\\
&dY_{t} = - \partial_{x} H  \bigl(X_{t},[X_{t}],Y_{t},\hat{\alpha}(X_{t},[X_{t}],Y_{t}) \bigr) dt 
\\
&\hspace{30pt} - \hat{\mathbb E} \bigl[
\partial_{\mu} H  \bigl(\cc{{X}_{t}},[X_{t}],\cc{{Y}_{t}},\hat{\alpha}(\cc{{X}_{t}},
[X_{t}],\cc{{Y}_{t}}) \bigr)(X_{t}) \bigr]
+ Z_{t} dW_{t},
\end{split}
\end{equation}
with the boundary condition $Y_{T} = \partial_{x} G(X_{T},[X_{T}])
+ \hat{\mathbb E}[\partial_{\mu} G(\cc{X_{T}},[X_{T}])(X_{T})]$.
The reason is that the state space over which the optimization is performed 
is the enlarged space $\R^d \times \cP_{2}(\R^d)$. This means that, 
in the extended Hamiltonian, the state variable is the pair $(x,\mu)$
and not $x$ itself.  The additional terms in the driver and in the boundary condition 
deriving from the stochastic Pontryagin principle thus
express the sensitivity of the Hamiltonian with respect to the measure argument. We notice that these two terms may be 
reformulated as
\begin{equation*}
\begin{split}
&\hat{\mathbb E} \bigl[
\partial_{\mu} H \bigl(\cc{{X}_{t}},[X_{t}],\cc{Y_{t}},\hat{\alpha}(\cc{{X}_{t}},
[X_{t}],\cc{{Y}_{t}}) \bigr)(X_{t}) \bigr]
= \tilde{h}\bigl(X_{t},[X_{t},Y_{t}]),
\\
&\hat{\mathbb E} \bigl[
\partial_{\mu} G \bigl(\cc{{X}_{T}},[X_{T}] \bigr)(X_{T}) \bigr]
= \tilde{g}\bigl(X_{T},[X_{T}] \bigr),
\end{split}
\end{equation*}
where
\begin{equation*}
\begin{split}
&\tilde{h}(x,\nu) = \int_{\R^d \times \R^d}
\partial_{\mu} H\bigl(v,\pi_{1} \sharp \nu,w,\hat{\alpha}(v,\pi_{1} \sharp \nu,w)\bigr)(x) \ud \nu(v,w),
\\
&\tilde{g}(x,\mu) = \int_{\R^d} \partial_{\mu} G(v,\mu)(x) \ud \mu(v), 
\end{split}
\end{equation*}
with $x \in \R^d$, $\nu \in {\mathcal P}_{2}(\R^d \times \R^d)$, 
$\mu \in {\mathcal P}_{2}(\R^d)$ and $\pi_{1} : \R^d \times \R^d
\ni (x,y) 
\mapsto x \in \R^d$. 

Existence and uniqueness of a solution to \eqref{eq:FBSDE:pontryagin:MKV}
have been established under the following assumption (see \cite{carmona:delarue:aop}):

\begin{Assumption}[\HYP{6}(i)]
The drift $b$ is of the linear form 
$b(x,\mu,\alpha) = b_{0} x + b_{1} \int_{\R^d} v \ud \mu(v) + 
b_{2} \alpha$. The cost functions 
$F$ and $G$
are locally Lipschitz continuous in 
$(x,\mu,\alpha)$, the local Lipschitz constant
being at most of linear growth 
in 
$\vert x \vert$, $(\int_{\R^d} \vert v \vert^2 \ud \mu(v))^{1/2}$
and $\vert \alpha \vert$. 
Moreover, $F$ and $G$ are also
$\cC^1$ in $(x,\mu,\alpha)$, the 
derivative in $(x,\alpha)$ being 
Lipschitz continuous in $(x,\mu,\alpha)$
and the functions $\partial_{\mu} F$ and 
$\partial_{\mu} G$ satisfying
(with $h=F$ and $w=(x,\alpha)$ or 
$h=g$ and $w=x$)
\begin{equation*}
{\mathbb E} \bigl[ \bigl\vert 
 \partial_{\mu} h (w,[\xi])(\xi)
- \partial_{\mu} h (w',[\xi']) (\xi')
\bigr\vert^2 \bigr]^{1/2}
\leq \tilde{L} \bigl\{ 
\vert w- w' \vert 
+ \E \bigl[ \vert \xi - \xi' \vert^2 \bigr]^{1/2}
 \bigr\}.
\end{equation*}
Finally, there exists $\lambda >0$ such that
\begin{equation}
\label{eq:convexity:H=MKV}
\begin{split}
&F(x',\mu',\alpha') - F(x,\mu,\alpha)
 - \langle x'-x, \partial_{x} F(x,\mu,\alpha)
\rangle 
\\
&\hspace{30pt}- \langle \alpha' - \alpha,\partial_{\alpha} F(x,\mu,\alpha) \rangle 
- {\mathbb E} \bigl[ \langle {\xi}' - {\xi},\partial_{\mu} F(x,\mu,\alpha)({\xi})
\rangle \bigr] \geq \lambda \vert \alpha' - \alpha \vert^2,
\end{split}
\end{equation}
for any pair $({\xi},{\xi}')$ with $\mu$ and $\mu'$ as marginal distributions,
where $x,x' \in \R^d$, $\mu,\mu' \in {\mathcal P}_{2}(\R^d)$ and
$\alpha,\alpha' \in \R^k$.

In a similar way, the function $(x,\mu) \mapsto G(x,\mu)$ is convex in 
the joint variable $(x,\mu)$. 
\end{Assumption}

Of course, the Hamiltonian is convex in $\alpha$ under \HYP{6}(i) so that 
the minimizer
\eqref{eq:minimizer} is well-defined. 
By \eqref{eq:optimizer}
and by a suitable version of the implicit function theorem, 
the function $\hat{\alpha}$ inherits the smoothness of 
$\partial_{\alpha} H$. For instance, assume that

\begin{Assumption}[\HYP{6}(ii)]
The function $\R^d \times \cP_{2}(\R^d) \times \R^k \ni (x,\mu,\alpha)
\mapsto \partial_{\alpha} F(x,\mu,\alpha)$ satisfy 
\HYP{2}
(and thus 
\HYP{0} and \HYP{1}
as well)
 (with $w=(x,\alpha)$ in the notations used in \HYP{1} and \HYP{2}).
\end{Assumption}

Then,
\begin{Lemma}
\label{lem:reg:alpha}
 Under \HYP{6}{\rm (i)} and \HYP{6}{\rm (ii)}, 
 the function $\R^d \times \cP_{2}(\R^d) \times \R^k \ni (x,\mu,\alpha)
\mapsto \hat{\alpha}(x,\mu,y)$ satisfies \HYP{0}, \HYP{1}
and \HYP{2} (with $w=(x,y)$ in the notations used in \HYP{1} and \HYP{2}).
\end{Lemma}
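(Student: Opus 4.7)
\proof[Proof proposal for Lemma \ref{lem:reg:alpha}]
The plan is to derive the regularity of $\hat{\alpha}$ from that of $\partial_{\alpha} F$ via the implicit function theorem. Since $b(x,\mu,\alpha) = b_{0} x + b_{1} \int v \ud\mu(v) + b_{2} \alpha$, the first order condition defining $\hat{\alpha}$ reduces to
\begin{equation*}
y^{\dagger} b_{2} + \partial_{\alpha} F\bigl(x,\mu,\hat{\alpha}(x,\mu,y)\bigr) = 0.
\end{equation*}
The convexity assumption \HYP{6}(i), applied at fixed $(x,\mu)$ and with $\xi = \xi'$, forces $\partial_{\alpha}^{2} F(x,\mu,\alpha) \geq 2\lambda I_{k}$ in the sense of symmetric matrices, uniformly in $(x,\mu,\alpha)$, so $\partial_{\alpha}^{2} F$ is invertible and its inverse is bounded by $(2\lambda)^{-1}$. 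Moreover, since $\partial_{\alpha}F$ satisfies \HYP{2} and is thus in particular $L$-Lipschitz in $(x,\alpha)$ uniformly in $\mu$ and $L$-Lipschitz in $\mu$ in $L^{2}$, the standard Banach fixed-point argument yields a unique $\hat{\alpha}(x,\mu,y)$, globally Lipschitz in the three variables, with a Lipschitz constant depending only on $L$, $\lambda$ and $|b_{2}|$; this gives \HYP{0}.

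Next I would show \HYP{1} and \HYP{2} by differentiating the defining relation. Implicit differentiation along the Euclidean directions gives formally
\begin{equation*}
\partial_{y} \hat{\alpha} = -\bigl[\partial_{\alpha}^{2} F\bigr]^{-1} b_{2}^{\dagger},
\qquad \partial_{x} \hat{\alpha} = -\bigl[\partial_{\alpha}^{2} F\bigr]^{-1} \partial_{x\alpha}^{2} F,
\end{equation*}
where the right-hand sides are evaluated at $(x,\mu,\hat{\alpha}(x,\mu,y))$; differentiation in the measure direction analogously produces
\begin{equation*}
\partial_{\mu} \hat{\alpha}(x,\mu,y)(v) = -\bigl[\partial_{\alpha}^{2} F\bigr]^{-1} \partial_{\mu}\bigl[\partial_{\alpha} F\bigr](x,\mu,\hat{\alpha}(x,\mu,y))(v).
\end{equation*}
Here the key point is that Lions' derivative in $\mu$ composes correctly with the implicit $\hat{\alpha}$: when lifting to $L^{2}$, writing $\hat{\cA}(x,\chi,y) := \hat{\alpha}(x,[\chi],y)$ and differentiating the lifted identity in the Fr\'echet sense along a direction $\eta \in L^{2}$, one obtains
\begin{equation*}
\E \bigl[ D\hat{\cA}(x,\chi,y)\cdot \eta \bigr] = -[\partial_{\alpha}^{2}F]^{-1} \E \bigl[ \partial_{\mu}[\partial_{\alpha} F](x,[\chi],\hat{\alpha})(\chi) \eta \bigr],
\end{equation*}
which identifies $\partial_{\mu}\hat{\alpha}$ as claimed; the required version is automatically continuous on the support of $\mu$ by \HYP{2} applied to $\partial_{\alpha} F$. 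The $L^{2}$ boundedness and Lipschitz bounds of \HYP{1} then follow from the uniform bound on $[\partial_{\alpha}^{2}F]^{-1}$, the bounds on $\partial_{x\alpha}^{2}F$ and $\partial_{\mu}[\partial_{\alpha} F]$, and the Lipschitz property of $\hat{\alpha}$ established at the previous step (so that the local-Lipschitz control of the derivatives of $\partial_{\alpha} F$ transfers, up to the same exponent $\alpha$ and a possibly enlarged constant $\tilde{L}$, to the derivatives of $\hat{\alpha}$). The uniform integrability property in $w = (x,y)$ transfers from $\partial_{\alpha} F$ to $\hat{\alpha}$ for the same reason.

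For \HYP{2} I would differentiate once more: the second-order derivatives of $\hat{\alpha}$ are given by polynomial expressions in $[\partial_{\alpha}^{2}F]^{-1}$ and the first and second order derivatives of $\partial_{\alpha} F$, evaluated along $\hat{\alpha}$, using the chain rule (in the Euclidean directions) and the composition rule established in Proposition \ref{prop:derivatives} and its consequences for derivatives in the $\mu$-direction. Each such factor satisfies the bounds and local Lipschitz estimates of \HYP{2} by assumption; combined with the $\tilde{L}$-Lipschitz property of $\hat{\alpha}$ already proved, this gives the second-order bounds of \HYP{2} for $\hat{\alpha}$, with the same exponent $\alpha$ (after enlarging $\tilde{L}$ and the functional $\Phi_{\alpha}$ if necessary). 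The main obstacle is bookkeeping in the $\mu$-direction: one must verify that the continuous version of $\partial_{\mu}[\partial_{\alpha}F]$ on the support of $\mu$, composed with $\hat{\alpha}$, produces a continuous version of $\partial_{\mu}\hat{\alpha}$ on the support of $\mu$, and similarly for the cross derivative $\partial_{v}[\partial_{\mu}\hat{\alpha}]$; this can be handled by the same lifting argument used in Subsection \ref{sec:ito:frechet} combined with the fact that matrix inversion and multiplication preserve joint continuity, so no new analytical ingredient is needed beyond those already in force.
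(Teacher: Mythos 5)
Your proposal is correct and follows essentially the same route as the paper: the first-order condition \eqref{eq:optimizer}, strong convexity of $F$ in $\alpha$ giving uniform invertibility of $\partial^2_{\alpha\alpha}F$, the implicit function theorem for the Euclidean directions, and implicit differentiation of the lifted identity along $t\mapsto \xi+t\chi$ to identify $\partial_{\mu}\hat{\alpha}$ and transfer the bounds of \HYP{1}--\HYP{2} from $\partial_{\alpha}F$ to $\hat{\alpha}$. (Your formula for $\partial_{\mu}\hat{\alpha}$ carries the correct minus sign, which the paper's displayed formula omits; this is immaterial for the estimates.)
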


\proof The starting point is 
\eqref{eq:optimizer}.
By \eqref{eq:convexity:H=MKV}
and by the Lipschitz property of $\partial_{\alpha} F$, we can reproduce the argument 
used in \cite{carmona:delarue:aop} to prove that $\hat{\alpha}$ is also Lipschitz continuous.  More generally, the smoothness in $x,y$ follows from a standard application of the implicit function theorem.

We now discuss the regularity of $\hat{\alpha}$ in the direction $\mu$. Given $\xi,\chi \in L^2(\Omega,\cA,\P)$, we 
deduce from 
\eqref{eq:optimizer} that, for any $t \in \R$,
\begin{equation*}
y^{\dagger} b_{2} + \partial_{\alpha} F\bigl(x,[ \xi + t \chi],\hat{\alpha}(x,[\xi + t \chi],y) \bigr) = 0.
\end{equation*}
By the standard implicit function theorem, we deduce
that the function $\R \ni t \mapsto \hat{\alpha}(x,[\xi+t \chi],y) \in \R^k$
is differentiable and that 
\begin{equation*}
\begin{split}
&\E \bigl\{ 
\partial_{\mu} \bigl[ \partial_{\alpha} F \bigl( x,[\xi],\hat{\alpha}(x,[\xi],y) \bigr)(\xi) \chi \bigr] \bigr\}
\\
&\hspace{15pt}+ \partial^2_{\alpha \alpha} F \bigl(x, [\xi], \hat{\alpha}(x,[\xi],y) \bigr) 
\frac{\ud}{\ud t}_{\vert t=0}
\bigl[
\hat{\alpha}(x,[\xi+t \chi],y)
\bigr]
 = 0.
 \end{split}
\end{equation*}
By strict convexity, 
the matrix $\partial_{\alpha \alpha}^2 F(x,[\xi],\hat{\alpha}(x,[\xi],y))$ is invertible.
We easily deduce that the mapping 
$L^2(\Omega,{\mathcal A},\P;\R^d) \ni \xi \mapsto \hat{\alpha}(x,[\xi],y) \in \R^k$
is Fr\'echet differentiable.
In particular, the mapping 
$\cP_{2}(\R^d) \ni \mu \mapsto \hat{\alpha}(x,\mu,y)$
is differentiable in Lions' sense and 
\begin{equation*}
\partial_{\mu} \hat{\alpha}(x,\mu,y)(v) = 
\bigl[ 
\partial_{\alpha \alpha}^2 F\bigl(x,\mu,\hat{\alpha}(x,\mu,y) \bigr) \bigr]^{-1}
\partial_{\mu} \bigl[ \partial_{\alpha} F \bigl( x,\mu,\hat{\alpha}(x,\mu,y) \bigr) \bigr](v).
\end{equation*}
The corresponding bounds in \HYP{1} together with the uniform integrability property are easily checked. 
Now, the smoothness in $v$ follows from that 
one of $\partial_{\mu} [\partial_{\alpha} F]$ and 
the related bounds in \HYP{2} hold true. 
The smoothness of $\partial_{\mu} \hat{\alpha}$ in $x,y$ 
is satisfied once we have the smoothness of 
$\hat{\alpha}$ in $x,y$.  
\eproof

\subsubsection{Master equation}
The point is now to apply 
Proposition \ref{prop:iteration weak tentative}
with $b$ as above, $\sigma$ constant
and 
\begin{equation*}
\begin{split}
&f(x,y,\nu) = 
\partial_{x} H\bigl(x,\pi_{1} \sharp \nu,\hat{\alpha}(x,\pi_{1} \sharp \nu,y) \bigr)
+ \tilde{h}(x,\nu), \quad x,y \in \R^{d}, \ \nu \in {\mathcal P}_{2}(\R^d \times \R^d),
\\
&g(x,\mu) = \partial_{x} G(x,\mu) + \tilde{g}(x,\mu), \ x \in \R^d, \ \mu \in {\mathcal P}_{2}(\R^d).
\end{split}
\end{equation*}
Notice also that \HYP{3} is satisfied, see again \cite{carmona:delarue:aop}. 
It thus remains to check that \HYP{2} is satisfied. 

We thus assume that 
\begin{Assumption}[\HYP{6}(iii)]
The functions $\R^d \times \cP_{2}(\R^d) \times \R^k \ni (x,\mu,\alpha)
\mapsto
\partial_{x} F(x,\mu,\alpha)$ and 
$\R^d \times \cP_{2}(\R^d) \ni (x,\mu) \mapsto
\partial_{x} G(x,\mu)$ satisfy \HYP{0}{\rm (i)}, \HYP{1} and \HYP{2}
(with $w=(x,\alpha)$ and $w=x$ respectively).

\textcolor{black}{
For any $(x,\mu,\alpha) \in \R^d \times \cP_{2}(\R^d) \times 
\R^k$,  
there exist versions of 
$\partial_{\mu} F(x,\mu,\alpha)(\cdot)$
and of $\partial_{\mu} G(x,\mu)(\cdot)$
such that $\R^d \times \cP_{2}(\R^d) \times \R^k \times \R^d 
\ni (x,\mu,\alpha,v) \mapsto
\partial_{\mu} F(x,\mu,\alpha)(v)$ and 
$\R^d \times \cP_{2}(\R^d) \times \R^d \ni (x,\mu,v)
\mapsto 
\partial_{\mu} G(x,\mu)(v)$ that satisfy} \HYP{0}{\rm (i)}, \HYP{1} and \HYP{2}
(with $w=(x,\alpha,v)$ and $w=(x,v)$ respectively).
\end{Assumption}

Under \HYP{6}(i-ii-iii), by Lemma \ref{lem:reg:alpha},
the function $\R^d \times \cP_{2}(\R^d) \times \R^d 
\ni (x,\mu,y) \mapsto \partial_{x} H(x,\mu,\hat{\alpha}(x,\mu,y))$
satisfies \HYP{0}(i), \HYP{1} and \HYP{2}.
We now discuss $\tilde{h}$. By linearity of $b$, we first observe that
(recalling that $\partial_{\mu} [\int_{\R^d} h(v') \ud \mu(v')](v)
 = \nabla h(v)$, see \cite{carmona:delarue:aop})
\begin{equation*}
\tilde{h}(x,\nu) = \int_{\R^d \times \R^d} w^\dagger b_{1} \ud 
\nu(v,w) + \int_{\R^d \times \R^d}
\partial_{\mu} F\bigl(v,\pi_{1} \sharp \nu,\hat{\alpha}(v,\pi_{1} \sharp \nu,w)\bigr)(x) \ud \nu(v,w),
\end{equation*}
The smoothness of the first term is easily handled, the smoothness of the second one in $x$ as well. 
The difficulty is to differentiate the second one with respect to $\nu$. 
We get 
\begin{equation*}
\begin{split}
&\partial_{\nu}
\biggl[ \int_{\R^d \times \R^d}
\partial_{\mu} F\bigl(v',\pi_{1} \sharp \nu,\hat{\alpha}(v',\pi_{1} \sharp \nu,w')\bigr)(x) \ud \nu(v',w')
\biggr](v,w)
\\
&= \partial_{(v,w)} \bigl[ 
\partial_{\mu} F\bigl(v,\pi_{1} \sharp \nu,\hat{\alpha}(v,\pi_{1} \sharp \nu,w)\bigr)(x)
\bigr] 
\\
&\hspace{15pt} + \biggl( \int_{\R^d \times \R^d}
\partial_{\mu} 
\bigl[ \partial_{\mu} F\bigl(v',\pi_{1} \sharp \nu,\hat{\alpha}(v',\pi_{1} \sharp \nu,w')\bigr)(x) \bigr](v) \ud \nu(v',w'),0 \biggr)
\\
&\hspace{15pt} + \biggl( \int_{\R^d \times \R^d}
\Bigl[\partial_{\alpha} 
\bigl[ \partial_{\mu} F\bigl(v',\pi_{1} \sharp \nu,\hat{\alpha}(v',\pi_{1} \sharp \nu,w')\bigr)(x) \bigr]
\partial_{\mu} \bigl[\hat{\alpha}(v',\pi_{1} \sharp \nu,w')
\bigr] \Bigr]
(v) \ud \nu(v',w'),0 \biggr),
\end{split}
\end{equation*}
where the `$0$' indicates that the derivative in the direction $w$ is zero. 
We let the reader check the required conditions for the derivative in the direction 
$\nu$ in \HYP{1} and \HYP{2} are indeed satisfied. 
Derivatives in the direction $x$ are easily handled. 

We deduce that Proposition \ref{prop:iteration weak tentative} applies.
As for mean-field games, the master PDE satisfied by $U$ is not the `natural' equation associated with the optimization problem. Following the previous subsection, we thus define
\begin{equation*}
V(t,x,\mu) = {\mathbb E} \biggl[ G\bigl(X_{T}^{t,x,\mu},[X_{T}^{t,\xi}]\bigr)
+ \int_{t}^T F\bigl(X_{s}^{t,x,\mu},[X_{s}^{t,\xi}],\hat{\alpha}(X_{s}^{t,x,\mu},[X_{s}^{t,\xi}],Y_{s}^{t,x,\mu}) \bigr) ds \biggr],
\end{equation*}
where $\xi \sim \mu$, $(X_{s}^{t,\xi})_{s \in [t,T]}$ denotes the forward component in 
\eqref{eq:FBSDE:pontryagin:MKV}, under the initial condition $X_{t}^{t,\xi}=\xi$, 
and $(X_{s}^{t,x,\mu})_{s \in [t,T]}$ denotes the corresponding solution 
of \eqref{eq X-Y t,x,mu}. 

\textcolor{black}{
As in the proof of Theorem \ref{thm:MFG:1},
we are willing to apply the results from
Section 
\ref{se smoothness}
in order to investigate the smoothness of $V$. 
Again,  this 
requires some precaution as the coefficients may be of quadratic growth in the 
space variable and in the measure argument. 
Proceeding as in the proof of 
Theorem \ref{thm:MFG:1}, we have
\footnote{Pay attention that there is no need for
an analogue of \HYP{4}(iv), 
since
\HYP{4}(iv) is necessarily true
under \HYP{6}(i--iii),
with 
$F_{0}(x,\mu)$
in \eqref{eq:quadratic:growth:linear:derivative}
 replaced by 
$F_{0}(x,\mu,\hat{\alpha}(x,\mu,U(t,x,\mu)))$,
the constants appearing in \HYP{0}(i)--\HYP{1}--\HYP{2}
being uniform in $t \in [0,T]$.}
}

\begin{Theorem}
\label{thm:MKV:1}
Under \HYP{6}{\rm (i--iii)}, the function $V$ 
satisfies the statement of Theorem \ref{thm:MFG:1}, 
with the same master equation expect that $U$ inside 
is the decoupling field of \eqref{eq:FBSDE:pontryagin:MKV}.
\end{Theorem}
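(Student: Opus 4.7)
The proof follows the architecture of Theorem \ref{thm:MFG:1}. Since Proposition \ref{prop:iteration weak tentative} has already produced a decoupling field $U \in \bigcup_{\beta \geq 0}\cD_{\beta}$ for the FBSDE \eqref{eq:FBSDE:pontryagin:MKV} under \HYP{6}(i--iii) (the fact that $f$ and $g$ there satisfy \HYP{2} has been verified in the excerpt, while \HYP{3} is supplied by \cite{carmona:delarue:aop}), the dynamics of $X^{t,\xi}$ and $X^{t,x,\mu}$ become forward SDEs whose coefficients $b(\cdot,\cdot,\hat\alpha(\cdot,\cdot,U(\cdot,\cdot,\cdot)))$ and $\sigma$ are Lipschitz and smooth in the $\cD_\beta$-sense \emph{on all of} $[0,T]$. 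The first step is therefore to upgrade the differentiability estimates of Section \ref{se smoothness} (Lemmas \ref{lem:apriori:derivatives}, \ref{lem:cont:derivatives}, \ref{lem:cont:derivatives:x,xi}, \ref{lem:apriori:bound:2nd:derivatives:t,xi}, \ref{lem:second:t,x,xi}) from short to arbitrary time horizon via a forward concatenation along a grid $0=t_{0}<t_{1}<\dots<t_{N}=T$ of mesh $\leq c(\tilde\Lambda)$, using the flow identity \eqref{eq:ch:5:flow} to transport $C^{2}$-regularity from one sub-interval to the next.

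With the forward flows $\xi\mapsto X^{t,\xi}$ and $(x,\mu)\mapsto X^{t,x,\mu}$ now known to be twice partially differentiable uniformly in $t\in[0,T]$, I would next differentiate the representation
\begin{equation*}
V(t,x,\mu)=\E\Bigl[G\bigl(X_{T}^{t,x,\mu},[X_{T}^{t,\xi}]\bigr)+\int_{t}^{T} F\bigl(X_{s}^{t,x,\mu},[X_{s}^{t,\xi}],\hat\alpha(X_{s}^{t,x,\mu},[X_{s}^{t,\xi}],Y_{s}^{t,x,\mu})\bigr)\,\ud s\Bigr]
\end{equation*}
under the expectation symbol. The principal difficulty is the local (rather than global) Lipschitz structure of $F,G,\partial_{\mu}F,\partial_{\mu}G$ imposed in \HYP{6}(i,iii): the derivatives of $F$ and $G$ grow linearly in $|x|+\|\xi\|_{2}+|\alpha|$. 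This is handled by the normalization argument used after Theorem \ref{thm:MFG:1}: writing $G(x,\mu)=\sqrt{1+|x|^{2}+\int|v|^{2}\ud\mu}\cdot\tilde G(x,\mu)$ with $\tilde G$ satisfying the bounded version of \HYP{0}(i)--\HYP{1}--\HYP{2}, and similarly for $F\circ\hat\alpha\circ U$ (whose smoothness is guaranteed by Lemma \ref{lem:reg:alpha}, \HYP{6}(ii-iii) and $U\in\bigcup_{\beta}\cD_{\beta}$). Since all stability estimates of Section \ref{se smoothness} are $L^{2}$-type and the forward flows enjoy moment bounds of every order, the weight does not destroy integrability and the first- and second-order partial derivatives of $V$ in $(x,\mu)$ enjoy the bounds and joint continuity required for membership in $\bigcup_{\beta\geq 0}\cD_{\beta}$.

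Finally, to identify the PDE solved by $V$ I would use the flow identity
\begin{equation*}
V(t,x,\mu)=\E\bigl[V\bigl(s,X_{s}^{t,x,\mu},[X_{s}^{t,\xi}]\bigr)\bigr]+\E\int_{t}^{s}F\bigl(X_{r}^{t,x,\mu},[X_{r}^{t,\xi}],\hat\alpha(X_{r}^{t,x,\mu},[X_{r}^{t,\xi}],Y_{r}^{t,x,\mu})\bigr)\,\ud r,
\end{equation*}
valid for every $t\leq s\leq T$ by the Markov structure of \eqref{eq:FBSDE:pontryagin:MKV}. Applying the chain rule on $\cP_{2}(\R^{d})$ (Proposition \ref{prop:ito:general}) to $s\mapsto V(s,X_{s}^{t,x,\mu},[X_{s}^{t,\xi}])$, subtracting $V(t,x,\mu)$ from both sides, dividing by $s-t$ and letting $s\downarrow t$ yields, in exact parallel with \eqref{eq:expansion:master:PDE}--\eqref{eq:master:PDE}, the master PDE of the form \eqref{eq:full master PDE:MFG:1}, the only modification being that the argument of $\hat\alpha$ inside the Hamiltonian is $U(t,\cdot,\mu)$ with $U$ the decoupling field of \eqref{eq:FBSDE:pontryagin:MKV} (rather than of \eqref{eq:FBSDE:pontryagin:MFG}).

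The main technical obstacle will be the second bullet: producing second-order differentiability of $V$ in $\mu$ despite the quadratic growth of $G$ and $F$. The key observation that makes it work is that $\partial_{\mu}G$ and $\partial_{\mu}F$ are only required to be locally Lipschitz with \emph{linear} growth of the Lipschitz constant in $\|\xi\|_{2}$, so that after one differentiation the relevant quantities are of \emph{linear} (not quadratic) growth and may be paired by Cauchy--Schwarz against the $L^{2}$-bounded tangent processes $\partial_{\chi}X$, $\partial_{\zeta,\chi}^{2}X$ supplied by Section \ref{se smoothness}. Once this is absorbed, the derivation of the PDE and the verification of the time-continuity of $\partial_{t}V$, $\partial_{xx}^{2}V$, $\partial_{\mu}V$, $\partial_{v}[\partial_{\mu}V]$ are then routine adaptations of Theorem \ref{main:thm:short:time} and the final step of Theorem \ref{thm:MFG:1}.
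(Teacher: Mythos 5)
Your proposal is correct and follows essentially the same route the paper takes: the paper's own proof of Theorem \ref{thm:MKV:1} is merely the remark preceding its statement, which invokes Proposition \ref{prop:iteration weak tentative} (with \HYP{2} verified in the surrounding text and \HYP{3} imported from \cite{carmona:delarue:aop}), observes in a footnote that the analogue of \HYP{4}(iv) holds automatically under \HYP{6}(i--iii), and then says "proceeding as in the proof of Theorem \ref{thm:MFG:1}." You have fleshed out exactly that argument — forward concatenation via \eqref{eq:ch:5:flow}, differentiation under the expectation with the $\sqrt{1+|x|^2+\int|v|^2\ud\mu}$ normalization controlling the quadratic growth, and the flow/chain-rule identification of the PDE — so the two proofs coincide in substance.
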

On the model of Remark \ref{rem:identification}, the identification of $U(t,x,\mu)$ in terms of $V(t,x,\mu)$ now reads
\begin{equation}
\label{eq:identification:MKV}
U(t,x,\mu) = \partial_{x} V(t,x,\mu) + \int_{\R^d} \partial_{\mu} V(t,x',\mu)(x) d\mu(x'),
\end{equation}
which can be proved by differentiating 
the map $L^2(\Omega,\cF_{t},\P;\R^d) \ni \xi \mapsto \E[V(t,\xi,[\xi])]
\in \R$ in the direction $\chi \in L^2(\Omega,\cF_{t},\P;\R^d)$.
By the same kind of expansion as in Remark 
\ref{rem:identification}, we get
\begin{equation*}
\E\Bigl[
\partial_{x} V(t,\xi,[\xi]) \chi + \hat \E \bigl[ \partial_{\mu} V(t,\xi,[\xi])(\cc{\xi}) \cc{\chi} \bigr] \Bigr]
= \E \bigl[ Y_{t}^{t,\xi} \chi \bigr] = \E \bigl[ U(t,\xi,[\xi]) \chi \bigr],
\end{equation*}
where $Y_{t}^{t,\xi}$ and $U(t,\xi,[\xi])$ are seen as row vectors. 
By Fubini's theorem, this identifies 
$U(t,\xi,[\xi])$ with 
$\partial_{x} V(t,\xi,[\xi]) + \hat \E [\partial_{\mu} V(t,\cc{\xi},[\xi])(\xi)]$. 
This proves 
\eqref{eq:identification:MKV} when the law of $\xi$ has $\R^d$ as support. 
In the general case, we can approximate $\xi$ by random variables
with $\R^d$ as support. Passing to the limit in \eqref{eq:identification:MKV}, 
this completes the proof of the identification. 

We refer to 
\cite{carmona:delarue:lyons} for additional comments about the differences between 
the shapes of the master equation in mean-field games and in the control of McKean-Vlasov equations.

\section{Appendix}
\label{se:appendix}

\subsection{Proof of Proposition \ref{prop:lipschitz:lifted}}
\label{subse:appendix:1}
The proof is a straightforward adaptation of Lemma 3.3
in \cite{carmona:delarue:aop}. 
Basically, it suffices to prove the result when 
$\mu$ has a smooth positive density denoted by $p$, and
$p$ and its derivatives being at most of exponential decay at the infinity. 
It is then possible to construct a quantile function 
$U : (0,1)^d \ni (z_{1},\dots,z_{d}) \mapsto U(z_{1},\dots,z_{d}) \in \R^d$ (this {\color{black}is} the notation 
used in \cite{carmona:delarue:aop}, but this 
has nothing to do with the generic notation 
$U$ used in the paper for denoting a function 
of the measure) such that 
$U(\eta_{1},\dots,\eta_{d})$
has law $\mu$ when $\eta_{1},\dots,\eta_{d}$
are i.i.d. random variables with uniform distribution 
on $(0,1)$. Moreover, 
$\partial U_{i}/\partial z_{i} \not =0$
and $\partial U_{j}/\partial z_{i} = 0$ if $i <j$.

Going to (69) therein, 
we see 
from the assumption imposed on $V$
that the bound becomes
\begin{equation*}
\begin{split}
&\int_{\vert r \vert <h}
\bigl\vert V_{n}\bigl[ U \bigl(z^0+r-2r_{d} e_{d} \bigr)
        \bigr] - V_{n} \bigl( U(z^0+r) \bigr) \bigr\vert^2 \ud r
\\
&\leq C_{n}^2 
\int_{\vert r \vert <h}
\biggl[ 1 + \vert 
 U \bigl(z^0+r-2r_{d} e_{d} \bigr) \vert^{2\alpha} 
 +\vert 
 U (z^0+r) \vert^{2\alpha} 
 +
  \biggl( \int_{\R^d} \vert x\vert^2 d\mu(x) \biggr)^{2\alpha}
 \biggr]
 \\
 &\hspace{190pt} \times
\bigl\vert   U \bigl(z^0+r-2r_{d} e_{d} \bigr)
         - ( U(z^0+r)  \bigr\vert^2 \ud r,
\end{split}
\end{equation*}
where $V_{n}$ is a mollification of $V$
that satisfies \eqref{eq:assumption:prop:lipschitz:lifted}
with respect to a constant $C_{n}$
that converges to $C$
as $n$ tends to the infinity. 
Dividing by $h^d$ and following the lines
 of the original argument, we 
get, for a given $z^0 \in \R^d$, 
\begin{equation*} 
\begin{split}
&\Bigl\vert \frac{\partial V_{n}}{\partial x_{d}}
\bigl( U( z^0) \bigr) 
\frac{\partial U_{d}}{\partial z_{d}}
\bigl( z^0 \bigr) \Bigr\vert^2
\leq C_{n}^2 
\biggl[ 1 + 2\vert 
 U \bigl(z^0 \bigr) \vert^{2\alpha} 
 +
  \biggl( \int_{\R^d} \vert x\vert^2 d\mu(x) \biggr)^{2\alpha}
 \biggr]
\Bigl\vert 
\frac{\partial U_{d}}{\partial z_{d}}
\bigl( U( z^0) \bigr)
\Bigr\vert^2. 
\end{split}
\end{equation*}
Dividing by 
$\vert 
[\partial U_{d}/\partial z_{d}]
( U( z^0) )
\vert$ and letting $n$ tend to the infinity, we complete the proof. \qed

\subsection{Differentiability lemma}

\begin{Lemma}
\label{lem:appendix:2}
Consider a function $V : \R^d \times \cP_{2}(\R^d) \times \R^d \rightarrow \R^d$ such 
that, for any $\xi,\chi \in L^2(\Omega,\cA,\P;\R^d)$,
the mapping $\R^d \ni x \mapsto  \E[ \langle V(x,[\xi],\xi), \chi\rangle]$
is differentiable (where $\langle \cdot,\cdot \rangle$ denotes the inner product in $\R^d$). Assume moreover that there exist a constant 
$C \geq 0$ and a function $\Phi_{\alpha}$ as in \HYP{1}
such that, for all $x,x' \in \R^d$ and $\xi,\xi',\chi \in L^2(\Omega,\cA,\P;\R^d)$: 
\begin{equation*}
\begin{split}
&\bigl\vert \frac{\ud}{\ud x} \E\bigl[ \langle V(x,[\xi],\xi), \chi \rangle \bigr] \bigr\vert 
\leq C \| \chi \|_{2},
\\
&\bigl\vert \frac{\ud}{\ud x} \E\bigl[ \langle V(x,[\xi],\xi) ,\chi \rangle \bigr] 
- \frac{\ud}{\ud x} \E\bigl[ \langle V(x',[\xi'],\xi') ,\chi \rangle \bigr]
\bigr\vert 
\leq C \bigl( \vert x - x' \vert + \Phi_{\alpha}(\xi,\xi') \bigr) \| \chi \|_{2}.
\end{split}
\end{equation*}
\textcolor{black}{
Then, for any 
$x \in \R^d$ and any 
$\mu \in \cP_{2}(\R^d)$, 
we can find a continuous 
version of $V(x,\mu,\cdot)$, uniquely defined on 
$\textrm{\rm Supp}(\mu)$, such that 
the mapping
$\R^d \times \textrm{\rm Supp}(\mu) 
\ni (x,v) \mapsto V(x,\mu,v)$ is 
differentiable with respect to 
$x$. Moreover, 
we can find a mapping $\R^d \times \R^d \ni (x,v) 
\mapsto
\partial V(x,\mu,v)$, continuous in $v$ for any given 
$x \in \R^d$, jointly continuous at any point $(x,v)$
with $v \in \textrm{\rm Supp}(\mu)$, 
such that
$\partial V(x,\mu,v)$ identifies with 
$\partial_{x} V(x,\mu,v)$ whenever 
$v \in \textrm{\rm Supp}(\mu)$. In particular, 
$\partial_{x} V(\cdot,\mu,\cdot)$ is continuous 
on $\R^d \times \textrm{\rm Supp}(\mu)$.} 
\end{Lemma}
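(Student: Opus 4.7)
The plan is to extract a Riesz representative of the derivative and identify it with the pointwise derivative of $V$ on $\mathrm{Supp}(\mu)$, then extend continuously to all of $\R^d$.

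First, I would apply Riesz representation to hypothesis (b): for each $(x,\xi)$, the linear form $\chi \mapsto \frac{d}{dx}\E[\langle V(x,[\xi],\xi),\chi\rangle]$ is bounded on $L^2$ with norm at most $C$, so componentwise there exists $W(x,\xi) \in L^2(\Omega,\cA,\P;\R^{d\times d})$ with $\frac{d}{dx}\E[\langle V(x,[\xi],\xi),\chi\rangle] = \E[W(x,\xi)^\dagger \chi]$. Testing with $\chi$ of the form $\phi(\xi)e$ for bounded Borel $\phi$ and unit vectors $e$, and using the fact that the left-hand side depends on $\xi$ only through its joint law with $\chi$, a standard identification argument produces a Borel function $w : \R^d \times \cP_2(\R^d) \times \R^d \to \R^{d\times d}$ (defined $\mu$-a.e. in $v$ for each $(x,\mu)$) such that $W(x,\xi) = w(x,[\xi],\xi)$ a.s.

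Second, I would invoke Proposition~\ref{prop:lipschitz:lifted} to produce the continuous version of $w$. Specializing hypothesis (c) to pairs $\xi \sim \xi' \sim \mu$ yields $\|w(x,\mu,\xi) - w(x,\mu,\xi')\|_{L^2} \leq C\,\Phi_\alpha(\xi,\xi')$, which by \eqref{phialpha} fits the format \eqref{eq:assumption:prop:lipschitz:lifted}. Proposition~\ref{prop:lipschitz:lifted} then gives a locally Lipschitz continuous version of $v \mapsto w(x,\mu,v)$ on the support of $\mu$, with local Lipschitz constant of $\alpha$-polynomial growth uniform in $(x,\mu)$ on bounded sets. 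Using hypothesis (c) in full, together with the same trick as in \eqref{eq:trick:markov} to bound $\inf_{|v|\leq 2\|\xi\|_2} |w(x,\mu,v)|$, one deduces joint continuity of $(x,\mu,v) \mapsto w(x,\mu,v)$ at every $(x,\mu,v)$ with $v \in \mathrm{Supp}(\mu)$, by the same relative-compactness argument used repeatedly in Section~\ref{se smoothness}.

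Third, I would identify $w$ with the pointwise $x$-derivative of $V$ on $\mathrm{Supp}(\mu)$. Integrating in $x$ the derivative relation and using Fubini yields, for any $x,x' \in \R^d$, the $L^2$-identity
\begin{equation*}
V(x,\mu,\xi) - V(x',\mu,\xi) = \int_0^1 w\bigl(sx+(1-s)x',\mu,\xi\bigr)\cdot(x-x')\,ds \quad \P\text{-a.s.},
\end{equation*}
which holds therefore for $\mu$-a.e.~$v$. Fixing a reference $x_0$ and an arbitrary continuous version $\tilde V_0(\mu,\cdot)$ of $V(x_0,\mu,\cdot)$ on $\mathrm{Supp}(\mu)$ (available in the applications of the lemma, where $V = \partial_\mu U$ already has a continuous version on $\mathrm{Supp}(\mu)$), one then sets
\begin{equation*}
\tilde V(x,\mu,v) := \tilde V_0(\mu,v) + \int_0^1 w\bigl(sx+(1-s)x_0,\mu,v\bigr)\cdot(x-x_0)\,ds, \quad v \in \mathrm{Supp}(\mu),
\end{equation*}
which agrees with $V(x,\mu,\cdot)$ $\mu$-a.e., is jointly continuous in $(x,v)$ on $\R^d \times \mathrm{Supp}(\mu)$, and whose $x$-derivative at any $v \in \mathrm{Supp}(\mu)$ is $w(x,\mu,v)$ by continuity of $w$ under the integral.

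Finally, I would extend $w(x,\mu,\cdot)$ from $\mathrm{Supp}(\mu)$ to all of $\R^d$ by a standard continuous extension (e.g.\ a local Lipschitz extension of the chosen continuous version on $\mathrm{Supp}(\mu)$), producing the function $\partial V$ on $\R^d\times\R^d$; the required continuity in $v$ for each $x$ and the joint continuity on $\R^d \times \mathrm{Supp}(\mu)$ are preserved. The main technical obstacle is the transition between the $L^2$ identity, which only holds $\mu$-a.e.~in $v$, and the pointwise identity on $\mathrm{Supp}(\mu)$: this is resolved by the continuity of both sides in $v$ on $\mathrm{Supp}(\mu)$, which allows density of the a.e.\ set in $\mathrm{Supp}(\mu)$ to propagate the identity pointwise. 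The joint-continuity step requires some care to handle points $v$ on the boundary of $\mathrm{Supp}(\mu)$, but this is handled exactly as in the proof of Theorem~\ref{prop:partial:C^2:u}.
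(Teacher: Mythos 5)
Your overall architecture matches the paper's: Riesz representation of the derivative functional, Proposition \ref{prop:lipschitz:lifted} plus the Markov/relative-compactness trick to get continuous versions and joint continuity at support points, and then an identification of the pointwise $x$-derivative. Your final step (fundamental theorem of calculus in $x$ applied to the $L^2$-valued map, then propagation from $\mu$-a.e.\ $v$ to all of $\textrm{Supp}(\mu)$ by continuity) differs from the paper, which instead shows that the difference quotients $h^{-1}[V(x+he_{i},\mu,\cdot)-V(x,\mu,\cdot)]$ themselves admit continuous versions, equicontinuous in $(x,h)$, and converge uniformly on compacts as $h\to 0$; both routes are legitimate, and both share the same implicit reliance on a pointwise version of $V(x_{0},\mu,\cdot)$ being available (which you correctly flag).

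The one step that does not close as written is the very first one: you claim that testing the Riesz representative $W(x,\xi)$ against $\chi$ of the form $\phi(\xi)e$ and invoking law-invariance yields $W(x,\xi)=w(x,[\xi],\xi)$ a.s. Testing against $\sigma(\xi)$-measurable $\chi$ only identifies the conditional expectation $\E[W(x,\xi)\,\vert\,\xi]$; it does not rule out a nontrivial component of $W(x,\xi)$ orthogonal to $L^2(\Omega,\sigma(\xi),\P;\R^d)$. To kill that component you need either (i) the full law-invariance of $\chi\mapsto \E[\langle W(x,\xi),\chi\rangle]$ in the joint law of $(\xi,\chi)$ together with a conditionally independent copy $\chi'$ of $\chi=W-\E[W\vert\xi]$ given $\xi$ (which forces $\E[\vert W-\E[W\vert\xi]\vert^2]=0$), or (ii) the paper's argument, which is shorter and self-contained: by the second hypothesis with $\xi'=\xi$,
\begin{equation*}
\Bigl\vert \E\Bigl[ \Bigl\langle \tfrac1h\bigl(V(x+he_{i},[\xi],\xi)-V(x,[\xi],\xi)\bigr)-W^i(x,\xi),\,\chi\Bigr\rangle\Bigr]\Bigr\vert
\leq C\,\vert h\vert\,\|\chi\|_{2},
\end{equation*}
so the difference quotients converge \emph{strongly} in $L^2$ to $W^i(x,\xi)$; since each difference quotient is a deterministic function of $\xi$, so is the limit. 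Route (ii) is the natural repair, and it is worth adopting because the same strong-convergence estimate is exactly what the paper reuses at the end to control the difference quotients uniformly in $h$. With that step repaired, the rest of your argument goes through.
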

\proof
By Riesz' theorem, 
for any $i \in \{1,\dots,d\}$,
for any $x \in \R^d$ and any $\xi \in L^2(\Omega,\cA,\P;\R^d)$, 
we can find an element $V^i_{x,\xi} \in L^2(\Omega,\cA,\P;\R^d)$
such that 
\begin{equation*}
\frac{\ud}{\ud x_{i}} \E\bigl[ \langle V(x,[\xi],\xi), \chi \rangle \bigr] = 
\E \bigl[ \langle V^i_{x,\xi} ,\chi \rangle \bigr]. 
\end{equation*}
Now, for $h \not =0$, denoting by $e_{i}$ the $i$th vector of the canonical basis,
\begin{equation*}
\begin{split}
&\E \Bigl[ \Bigl\langle h^{-1} \bigl( 
 V(x + h e_{i},[\xi],\xi)
-  V(x,[\xi],\xi), \chi  
 \bigr) - V^i_{x,\xi}, \chi \Big\rangle \Bigr] 
 \\
 \hspace{15pt} &= \int_{0}^1 
 \Bigl(
 \frac{\ud}{\ud x_{i}}  \E\bigl[ \langle V(x +  sh e_{i},[\xi],\xi), \chi \rangle \bigr]
-
 \frac{\ud}{\ud x_{i}}  \E\bigl[ \langle V(x ,[\xi],\xi), \chi \rangle \bigr]
 \Bigr)
\ud s. 
\end{split}
\end{equation*}
By assumption, we thus get 
that 
$h^{-1} ( 
 V(x + h e_{i},[\xi],\xi)
-  V(x,[\xi],\xi)) - V^i_{x,\xi}$
tends to $0$ in $L^2(\Omega,\cA,\P;\R^d)$. 
Therefore $V^i_{x,\xi}$ is a random variable in 
\new{$L^2(\Omega,\sigma(\xi),\P;\R^d)$}
and we can
express it as  $\partial_{i} V(x,[\xi],\xi)$
where  $\partial_{i} V(x,[\xi],\cdot)$ is a function in  
$L^2(\R^d,[\xi];\R^d)$.  

We have 
\begin{equation*}
\E \bigl[ \vert \partial V(x,[\xi],\xi) - \partial V(x',[\xi'],\xi') \vert^2 \bigr] \leq 
C \bigl( \vert x-x' \vert^2 + \Phi_{\alpha}^2(\xi,\xi') \bigr).
\end{equation*}
Choosing $x=x'$, 
\textcolor{black}{
we deduce from Proposition \ref{prop:lipschitz:lifted} that, 
for any $x \in \R^d$ and any $\xi \in L^2(\Omega,\cA,\P;\R^d)$,
there exists a version of the mapping
 $\R^d \ni v \mapsto 
\partial V(x,[\xi],v)=(\partial_1V(x,[\xi],v),...,\partial_dV(x,[\xi],v)) \in \R^{d \times d}$ 
that is continuous on compact subsets of $\R^d$,
uniformly in $x \in \R^d$, such a version being uniquely defined on 
the support of $[\xi]$.}  
By the same method as in \eqref{eq:trick:markov}, we deduce that
the family $(\R^d \ni v \mapsto \partial V(x,[\xi],v) \in \R^{d \times d})_{x \in \R^d}$
is relatively compact for the topology of uniform convergence on compact subsets. 
Considering a sequence $(x_{n})_{n \geq 1}$ that converges to $x \in \R^d$, 
we already know that 
the sequence of functions $(\R^d \ni v \mapsto \partial V(x_{n},[\xi],v) \in \R^{d \times d})_{n \geq 1}$
converges in $L^2(\R^d,[\xi];\R^{d \times d})$ to 
$\R^d \ni v \mapsto \partial V(x,[\xi],v) \in \R^{d \times d}$. 
\textcolor{black}{Since $\partial V(x,[\xi],\cdot)$ is uniquely defined on 
the support of $[\xi]$,
the limit of any converging subsequence (for the topology of uniform convergence 
on compact subsets of $\R^d$) of
$(\partial V(x_{n},[\xi],\cdot))_{n \geq 1}$ 
coincides with 
$\partial V(x,[\xi],\cdot)$ on the support of $[\xi]$. 
We easily deduce that the function $\R^d \times \R^d \ni 
(x,v) \mapsto \partial V(x,[\xi],v) \in \R^{d \times d}$ is continuous
at any point $(x,v)$ such that $v \in \textrm{Supp}([\xi])$.}

Similarly, we deduce from the identity
\begin{equation*}
\begin{split}
&\E \Big[ \Big\langle \frac{1}{h} \Bigl(  \bigl( V(x+h e_{i},[\xi],\xi) - V(x,[\xi],\xi)
\bigr) - \bigl( V(x'+h e_{i},[\xi'],\xi') - V(x',[\xi'],\xi')
\bigr) \Bigr), \chi \Big\rangle \Bigr]
\\
&= \int_{0}^1 
\E \Bigl\{ \Big\langle \bigl(  \partial_{i} V(x+ sh,[\xi],\xi)  -  \partial_{i} V(x'+sh,[\xi'],\xi') \bigr),
 \chi \Big\rangle \Bigr\} \ud s,
\end{split}
\end{equation*}
that 
$\| h^{-1} 
[( V(x+h e_{i},[\xi],\xi) - V(x,[\xi],\xi)
) - ( V(x'+h e_{i},[\xi'],\xi') - V(x',[\xi'],\xi')
)] \|_{2} \leq C (\vert x - x' \vert + \Phi_{\alpha}(\xi,\xi'))$,
from which we get that, for any 
$x \in \R^d$, any $h \not =0$ and any $\mu \in \cP_{2}(\R^d)$,
there exists a version of the mapping $\R^d \ni v \mapsto h^{-1}[ V(x+he_{i},\mu,v) - V(x,\mu,v)]$ that
is continuous on compact subsets of $\R^d$, uniformly in $x \in \R^d$ and in $h \not =0$. 
As above, we deduce that the family 
$(\R^d \ni v \mapsto h^{-1}[ V(x+he_{i},\mu,v) - V(x,\mu,v)])_{x \in \R^d,h \not =0}$
is relatively compact, for the topology of uniform convergence on compact subsets. 
Once again, following the same argument as above, this says that, for any $x \in \R^d$, 
the functions $(\textrm{Supp}(\mu) \ni v \mapsto 
h^{-1}[ V(x+he_{i},\mu,v) - V(x,\mu,v)] \in \R^d )_{h \not =0}$
\textcolor{black}{
converge
uniformly on compact subsets as 
$h$ tends to $0$ 
 to 
some derivative function, which identifies
with
$\textrm{Supp}(\mu) \ni v \mapsto \partial_{i} V(x,\mu,v) \in \R^d$.}
\eproof
\vspace{10pt}

\noindent
{\bf Acknowledgement.} The authors would like to thank Pierre Cardaliaguet for fruitful discussions. The financial support of a CNRS-Royal Society International exchange  grant is also acknowledged.

\end{document}